\newtheorem{theorem}{Theorem}[section]
\newtheorem{lemma}[theorem]{Lemma}
\newtheorem{sublemma}[theorem]{Sublemma}
\newtheorem{proposition}[theorem]{Proposition}
\theoremstyle{definition}
\newtheorem{definition}[theorem]{Definition}
\newtheorem{set-up}[theorem]{Geometric set-up}
\newtheorem{remark}[theorem]{Remark}
\newcommand{\K}{\mathbb{K}}
\DeclareMathOperator{\End}{End}    
\DeclareMathOperator{\ind}{ind}
\DeclareMathOperator{\Ind}{Ind}
\newcommand{\forget}[1]{}
\def  \nuint {\raise10pt\hbox{$\nu$}\kern-6pt\int}
\newcommand\tn{\tilde{n}}
\newcommand\wx{\widehat{X}}
\newcommand\we{\widehat{E}}
\newcommand\Tr{\operatorname{Tr}}
\def \L{\mathcal L}
\def \F{\mathcal F}
\def \I{\mathcal I}
\newcommand\E{\mathcal E}
\newcommand\Q{\mathcal Q}
\newcommand\C{\mathcal C}
\def \L {{\cal L}}
\def \Sp {{\cal S}}
\newcommand\B{\mathcal B}
\def \J{\mathcal J}
\def \H {{\cal H}}
\def\Id{{\rm Id}}
\def \chix {\chi^0}
\newcommand\cl{\operatorname{cl}}
\newcommand\cyl{\operatorname{cyl}}
\renewcommand\Im{\operatorname{Im}}
\newcommand\D{\mathcal D}
\newcommand\Di{D\kern-6pt/}
\newcommand\cDi{{\mathcal D}\kern-6pt/}
\newcommand\spi{S\kern-6pt/}
\newcommand \cspi{\Sp\kern-6pt/}
\newcommand\CC{\mathbb C}
\def \cal {\mathcal}
\def \C {{\cal C}}
\def \K {{\cal K}}
\newcommand\KK{\mathbb K}
\newcommand\NN{\mathbb N}
\newcommand\RR{\mathbb R}
\newcommand\ZZ{\mathbb Z}
\newcommand\pa{\partial}
\newcommand\Ker{\operatorname{Ker}}
\def\tV{{\tilde V}}
\def\tN{{\tilde N}}
\def\tM{{\tilde M}}
\def\tm{{\tilde m}}
\def\maF{{\mathcal F}}
\def\A{{\mathcal A}}
\def\K{{\mathcal K}}
\global\let\c@equation=\c@theorem}
\date{}
\definecolor{darkgreen}{cmyk}{1,0,1,.2}
\definecolor{m}{rgb}{1,0.1,1}
\title{Eta cocycles, relative pairings and  \\the Godbillon-Vey  index theorem}
\author{Hitoshi Moriyoshi and Paolo Piazza}
\begin{document}

\maketitle

\begin{abstract}
We prove a Godbillon-Vey index formula for longitudinal Dirac operators on a foliated bundle with boundary
$(X,\F)$;
in particular, we define a {\it Godbillon-Vey eta invariant} on  $(\pa X,\F_{\pa})$, that is, a secondary
invariant for longitudinal Dirac operators on type-III foliations.
Moreover, employing the Godbillon-Vey index as a pivotal example, 
we explain a new approach to higher index theory on geometric
structures with boundary. This is heavily based on
the interplay between the absolute and relative pairings of $K$-theory and 
cyclic cohomology for an exact sequence of Banach algebras which in the present
context takes the form 
$0\to \mathbf{ \mathfrak{J}} \to \mathbf{ \mathfrak{A}} \to
\mathbf{ \mathfrak{B}} \to 0$ 
with $ \mathbf{ \mathfrak{J}}$  dense
and holomorphically closed in $C^* (X,\F)$ and   $ \mathbf{ \mathfrak{B}}$
depending only on boundary data. Of particular importance is the definition of a {\it relative}
cyclic cocycle $(\tau_{GV}^r,\sigma_{GV})$ for the pair $\mathbf{ \mathfrak{A}} \to
\mathbf{ \mathfrak{B}}$; $\tau_{GV}^r$ is a cyclic {\it cochain} on $\mathbf{ \mathfrak{A}}$ defined
through a regularization, \`a la Melrose, of the usual Godbillon-Vey cyclic cocycle $\tau_{GV}$; 
$\sigma_{GV}$ is a cyclic
cocycle on $\mathbf{ \mathfrak{B}}$, obtained through a suspension procedure involving
$\tau_{GV}$ and a specific 1-cyclic cocycle (Roe's 1-cocycle). 
We call $\sigma_{GV}$ the eta cocycle associated to $\tau_{GV}$.
The Atiyah-Patodi-Singer formula is obtained by defining a relative index class
$\Ind (D,D^\partial)\in K_* (\mathbf{ \mathfrak{A}},
\mathbf{ \mathfrak{B}})$ and establishing the equality $\langle \Ind (D),[\tau_{GV}] \rangle\,=\,\langle \Ind (D,D^\pa), [\tau^r_{GV}, \sigma_{GV}] \rangle$. The Godbillon-Vey eta invariant $\eta_{GV}$ is obtained through
the eta cocycle $\sigma_{GV}$.

\end{abstract}

%

\tableofcontents
\newpage
\section{{\bf Introduction}}\label{sec:intro}
 
 The Atiyah-Singer index theorem on closed compact manifolds is regarded
nowadays as one of the milestones of modern Mathematics. The original result has branched
into several directions, producing new ideas, new results as well as new connections 
between different fields of Mathematics and Theoretical Physics.
One of these directions consists in considering  elliptic differential  operators on the following
hierarchy of  geometric structures:
\begin{itemize}
\item fibrations and operators that are elliptic in the fiber directions; 
\item Galois $\Gamma$-coverings and $\Gamma$-equivariant elliptic operators;
\item measured foliations and operators that are elliptic along the leaves;
\item general foliations and, again, operators that are elliptic along the leaves.
\end{itemize}
One pivotal example, going through all these situations, is the one of foliated
bundles.
Let $\Gamma\to \tN\to N$ be a Galois $\Gamma$-cover of a smooth compact manifold 
without boundary $N$,
let $T$ be an oriented  compact manifold on which $\Gamma$ acts by orientation preserving diffeomorphisms. 
We can  consider the diagonal action of $\Gamma$ on $\tN\times T$ and the 
quotient space $Y:=\tN\times_\Gamma T$, which is a compact manifold,
a bundle over $N$ and carries 
a foliation $\F$. This foliation is  obtained by considering the images
of the fibers of the trivial fibration $\tN\times T\to T$ under the quotient map
$\tN\times T\to \tN\times_\Gamma T$ and is known as a {\it foliated bundle}. 
We also consider
$E\to Y$ a complex vector bundle on $Y$ and  $\widehat{E} \to \tN\times T $ the $\Gamma$-equivariant vector bundle obtained
by lifting $E$ to $\tN\times T$.
We then consider a family of elliptic differential operators $(D_\theta)_{\theta\in T}$
on the product fibration $\tN\times T\to T$, acting on the sections of  
$\widehat{E}$, and we assume that it is $\Gamma$-equivariant;
it therefore yields a leafwise differential operator $(D_L)_{L\in V/\maF}$
on $Y$, which is elliptic along the
leaves of $\F$. Notice that, if $\dim T >0$ and $\Gamma=\{1\}$ then we are in the family
situation; if $\dim T=0$ and $\Gamma\not=\{1\}$, then we are in the covering situation;
if $\dim T>0$, $\Gamma\not=\{1\}$ and $T$ admits a $\Gamma$-invariant Borel
measure $\nu$, then we are in the measured foliation situation and if $\dim T>0$, $\Gamma\not=\{1\}$ 
then we are dealing with  a  general \footnote{Typically type III} foliation. 
As an example of this latter type III situation we can consider
  $T=S^1$, $N$  a compact Riemann surface of genus $\geq 2$, $\tN={\mathbb H}^2$ 
$\tN={\mathbb H}^2$
the hyperbolic plane,
and 
 $\Gamma=\pi_1 (N)$ acting on $S^1$ by fractional linear transformations; we obtain a foliated
 bundle $(Y, \F)$, 
 where 
$Y$ is the unit tangent bundle of $N$ and $\F$ is the Anosov foliation of codimension one. 
It is known that the resulting foliation von Neumann algebra is 
 the unique hyperfinite factor of type $ \mathrm{III}_1$;
in particular $(Y, \F)$  is {\it not} measured. 

\medskip
In the first three cases, there is first of all
a {\it numeric} index: for families this is simply the integral over $T$ of the locally constant
function that associates to $\theta$ the index of $D_\theta$; for $\Gamma$-coverings
we have the  $\Gamma$-index of  Atiyah and for measured foliations we have
the measured index introduced by Connes. These last two examples involve the
definition of a von Neumann algebra endowed with a suitable trace.
The index theorems of Atiyah and Connes provide geometric formulae for these numeric
indeces.

\medskip
The numeric index, when defined, is only part of the information carried by the elliptic operator
in question. More generally one is interested in {\it higher indeces}, numbers obtained
by pairing the index class, an element in the K-theory of a suitable algebra, with
cyclic cocycles 
of degree $> 0$ defined on the same algebra. Notice that in the case of type III foliation, 
such as 
the example above, we {\it must}
consider higher indeces 
of degree$>0$ 
(indeed,  there is no trace on the foliation von Neumann algebra
and thus there is no numeric index).

\medskip
The {\it higher index problem} can be stated as the problem of 
\begin{itemize}
\item defining these higher
indeces;
\item proving explicit geometric formulae
for them, in the spirit of the original result of Atiyah and Singer;
\item studying their stability properties.
\end{itemize}
It is important to observe that geometric applications of this theory, for example to 
Novikov-type conjectures on the  (foliated) homotopy invariance  of  higher signatures
or to topological obstructions to the existence of positive scalar curvature metrics, are obtained
by combining {\it all} of these points. 
Put it differently, it might  be possible to define higher indeces 
and prove geometric formulae for them, but it might  be  difficult, or require
extra assumptions, to establish stability properties for these indeces.
This phenomenon presents itself in the following way: stability properties are obtained
by considering the index class in the K-theory of a suitable $C^*$-algebra; 
in the case of foliated bundles, which is our concern here, one considers
the foliation $C^*$-algebra $C^* (Y,\F)$ and the K-theory groups 
$K_* (C^* (Y,\F))$. Equivalently, we can consider the index class in the K-theory
of the Morita-equivalent
algebra $C(T)\rtimes_r \Gamma$.
The index class, hovewer, is typically defined in a smaller algebra $C^\infty_c (Y,\F)
\subset C^* (Y,\F)$ and
 higher indeces are easily obtained by pairing this class, call it $\Ind^c (D)$, with  cyclyc cocycles $\tau^c$
 for $C^\infty_c (Y,\F)$ \footnote{In the Morita-equivalent picture we would be considering 
the small algebra  $ C(T)\rtimes_{{\rm alg}}\Gamma\subset C(T)\rtimes_r \Gamma$}. The delicate point we have alluded to is then the following:
it might very well be possible to prove a formula for these numbers 
$\langle \Ind^c (D), \tau^c \rangle$
without connecting them
with the $C^*$-algebraic index class $\Ind (D)$, which is the index class
 showing the most interesting geometric properties. In order to achieve a {\it complete} solution
 of the higher index problem for the cocycle $\tau^c$ one is usually confronted with the task of 
 finding an {\it intermediate }
 subalgebra $\mathfrak{A}$, $C^\infty_c (Y,\F)\subset \mathfrak{A}\subset C^* (Y,\F)$ which satisfies the following crucial properties: it is big enough to be  holomorphycally closed in $C^* (Y,\F)$
and  contain representatives
of the $C^*$-index class
$\Ind (D)$ but it is small enough that the cyclic cocycle $\tau^c$ {\it extends} from
 $C^\infty_c (Y,\F)$ to $\mathfrak{A}$. Finding such an intermediate algebra
 can be a difficult task.
 
 \medskip
Connes' index theorem for $G$-proper manifolds \cite{Co}, with $G$
an \'etale groupoid, gives a very satisfactory answer to the 
computation of the pairing between the index class $\Ind^c (D)$
for the small algebra  and the  cyclic cohomology classes $[\tau^c]$ of this same algebra.
This higher  index theorem applies in particular to a foliated bundle
$\tN\times_\Gamma T$ (this is 
a $G$-proper
manifold with $G$ equal to the groupoid $T\rtimes \Gamma$).

 \medskip
 One fascinating  higher index is the so-called Godbillon-Vey  index on a codimension 1 foliation. In this case 
 Connes proves the following  \cite{connes-transverse}:
  there is an
 intermediate subalgebra $\mathcal{A}$, $C(T)\rtimes_{{\rm alg}}\Gamma \subset 
 \mathcal{A} \subset C(T)\rtimes_r \Gamma$, which is holomorphically closed and contains the index class $\Ind (D)$;
 there is  a
 cyclic 2-cocycle $\tau_{\,{\rm BT}}$ on $C(T)\rtimes_{{\rm alg}}\Gamma $ 
 (the Bott-Thurston cocycle) which is extendable
 to $\mathcal{A}$;
 the general index formula for the pairing $\langle \Ind (D), [\tau_{\,{\rm BT}}]\rangle$
 can be written down explicitly 
 and it involves the  Godbillon-Vey class of the foliation, $GV\in H^3 (Y)$. This is a {\it complete}
 solution to the higher index problem.
 For the particular 3-dimensional
 example presented above this formula reads
 \begin{equation}\label{gv-bt}
\langle \Ind (D), [\tau_{\,{\rm BT}}] \rangle
= \langle GV,[Y]\rangle
=:\mathfrak{gv}(Y,\F)\,
\end{equation}
with $[Y]$ the fundamental homology class of $Y$ and 
$\mathfrak{gv}(Y,\F)$
the Godbillon-Vey invariant of the foliation $(Y,\F)$. 
 Thus, a
purely geometric invariant of the foliation $(Y,\F)$, $\mathfrak{gv}(Y,\F)$, is in fact
a higher index. For  geometric properties of the Godbillon-Vey invariant we refer the reader to
the excellent survey of Ghys \cite{Gh}. 
It is worth recalling here the  remarkable result by Hurder and Katok \cite{HK} 
relating the Godbilloy-Vey  invariant
to
properties of the foliation von Neumann algebras;
in our case,  
this result states that 
the von Neumann algebra of the foliation
contains a nontrivial type III component if $\frak{gv}(Y,\F)\ne 0$; 
 thus the Godbillon-Vey invariant detects type III properties of the foliation von Neumann algebra.

\medskip
An alternative treatment of the fascinating index formula \eqref{gv-bt} was given by Moriyoshi-Natsume
in \cite{MN}. In this work, a Morita-equivalent {\it complete} solution to the Godbillon-Vey
index theorem is given. First of all,  
there is a cyclic 2-cocycle $\tau_{GV}$ on $C^\infty_c (Y,\F)$ which can be paired with
the index class $\Ind^c (D)$.
Next, Moriyshi and Natsume define a 
holomorphically closed subalgebra $\mathfrak{A}$, $C^\infty_c (Y,\F)\subset \mathfrak{A} \subset C^* (Y,\F)$, containing the index class $\Ind (D)$ and such that $\tau_{GV}$ extends to $\mathfrak{A}$.
The pairing $\langle \Ind (D),[\tau_{GV}]\rangle$, which is obtained in \cite{MN}
as a direct  evaluation of the functional $\tau_{GV}$, 
 is explicitly computed 
by expressing the index class through the {\it graph projection} $e_D$ associated to $D$,
considering $sD$, $s>0$ and taking the limit as $s\downarrow 0$. 
Getzler's rescaling method is 
used crucially
 in establishing the analogue of \eqref{gv-bt}:
 \begin{equation}\label{gv-mn}
\langle \Ind (D), [\tau_{\,{\rm GV}}] \rangle= \int_Y  \omega_{GV}\end{equation}
with $\omega_{GV}$ an explicit closed 3-form on $Y$ such that $[\omega_{GV}]=GV\in H^3 (Y)$.
In particular, we find once again that $\langle \Ind (D), [\tau_{\,{\rm GV}}] \rangle=\mathfrak{gv}(Y,\F)$.

\medskip
Subsequently, Gorokhovsky and Lott \cite{Go-Lo}
gave a superconnection proof of Connes'
index theorem, including an explicit  formula for the Godbillon-Vey higher index. 
See also the Appendix of \cite{Go-Lo2}.
Yet another treatment was given by Gorokhovsky in his elegant paper \cite{Go-JFA}.

\medskip
In the past 40 years this complex  circle of ideas has 
  been extended  to {\it some} geometric structures
with boundary. Let us give a short 
summary of these contributions, with an emphasis on the higher case.
First, in the case of a single manifold and
of a Dirac-type operator on it, such an index theorem is due to
Atiyah-Patodi-Singer \cite{APS1}. Assume that $D$ is an odd $\ZZ_2$-graded
Dirac operator on a compact even-dimensional manifold $M$ with boundary $\pa M=N$ acting on a $\ZZ_2$-graded
bundle of Clifford module $E$. Assume all geometric structures to be of
product type near the boundary. For simplicity, here and in what follows
always assume the boundary operator $D^{\pa}$ to be invertible.
Then the Dirac operator $D^+$ with boundary conditions 
$\{u\in C^\infty (M,E^+)\;|\; u|_{\pa M}\in \Ker \Pi_{\geq}\}$
with $\Pi_{\geq}=\chi_{[0,\infty)} (D^\pa)$, extends to a Fredholm 
operator; the index is given  by the celebrated formula of Atiyah-Patodi-Singer
\begin{equation}\label{aps}\ind_{{\rm APS}} D^+ = \int_M {\rm AS} - \frac{1}{2} \eta (D^{\pa})\,
\end{equation}
with ${\rm AS}$ the Atiyah-Singer form associated to  $M$ and $E$ and $\eta( D^{\pa})$
the eta invariant of the formally self-adjoint operator $D^{\pa}$, a spectral invariant measuring the asymmetry of the spectrum of $D^{\pa}$. The number $\eta( D^{\pa})$ should be thought of as a
{\it secondary} invariant of the boundary operator.
The Atiyah-Patodi-Singer index is also equal to the $L^2$-index on the manifold with cylindrical end
$V=((-\infty,0]\times \pa M)\cup_{\pa M} M$.
See Melrose' book  \cite{Melrose} for a thorough treatment of the APS index theorem 
from this point of view. 

\medskip
Let us move on in the hierarchy of geometric structures 
considered at the beginning of this Introduction. 
 For families of Dirac operators on manifolds
with boundary, the index theorem is due to Bismut and Cheeger (\cite{BC},\cite{BC2}) and, more
generally, to Melrose and Piazza (\cite{MPI}, \cite{MPII}). See also \cite{melrose-rochon}
for the pseudodifferential case. 
The numeric index theorem on Galois coverings of a compact
manifold with boundary was established by Ramachandran \cite{Ra},
whereas the corresponding higher index problem  was solved by 
 Leichtnam and Piazza \cite{LPMEMOIRS}, \cite{LPGAFA}, following a
conjecture of Lott \cite{LottII}. See \cite{LPFOURIER} for a survey.
The numeric index theorem on measured foliations was established by Ramachandran 
 in \cite{Ra}. See also \cite{antonini}
for the cylindrical treatment.
Finally, under a polynomial growth assumption on the group $\Gamma$,
Leichtnam and Piazza
\cite{LPETALE}
extended Connes' higher index theorem to foliated bundles with boundary, using an extension of Melrose'
$b$-calculus and the Gorokhovsky-Lott superconnection approach. For general foliations,
but always under a polynomial growth assumption, see  also the recent contribution \cite{zadeh}. 
Notice that, by a result of Plante, foliations with leaves of polynomial growth are {\it measured}.

\medskip
{\it Geometric} applications of the above results are too numerous
to be treated here.

\medskip
The structure of the (higher) index formulae in all of these contributions is precisely the
one displayed  by the classic Atiyah-Patodi-Singer index formula recalled above, see
\eqref{aps}. Thus there is a local contribution, which is  the one 
appearing in the corresponding higher index formula in the closed case, and there is a
boundary-correction term, which is a {\it higher eta invariant}. 
This higher eta invariant should be thought of as a secondary higher invariant
of the operator on the boundary (indeed, the index class
for the boundary operator is always zero). We remark that some
of the interesting geometric applications of the theory do employ 
this secondary invariant in order to tackle classification problems
\footnote{e.g: moduli spaces of metrics of positive scalar curvature;
diffeomorphism-type of closed orientable manifolds} that
cannot be treated by  ordinary higher indeces. Here we have in mind
classification problems 
 on  geometric structures {\it without} boundary: the Atiyah-Patodi-Singer higher
 index formulae enter in these investigations through the notion of {\it bordism}
 connecting two such geometric structures. See, for example \cite{BoGi},\cite{LPPSC},\cite{PS},
 \cite{PS2},\cite{ChW}.

\medskip
Now,  going back to the task of extending the Atiyah-Patodi-Singer index formula
to more general geometric structures, we make the crucial observation 
 that the polynomial growth assumption  in \cite{LPETALE}
 excludes many interesting \footnote{typically type III}
examples and 
higher indeces;  in particular it excludes the possibility of proving a Atiyah-Patodi-Singer  formula
for the Godbillon-Vey higher index.

\medskip
{\it One primary objective of this article is to prove such a result,
 thus establishing the first instance of  a higher  APS index theorem 
  on  type III foliations}. Consequently, we also 
 define  {\it a Godbillon-Vey eta invariant} on the boundary-foliation; this is
 a {\it type III eta invariant}, i.e. a type III secondary invariant for Dirac operators. 

\medskip
In tackling this specific index  problem we also develop what we believe is a new approach to 
index theory on geometric structures with boundary, heavily based
on the interplay between absolute and relative pairings. We think  that this
new method can be applied to a variety of situations.

\medskip
Notice that relative pairings in K-theory and cyclic cohomology have already been successfully 
employed in the study of geometric and topological invariants of
elliptic operators. We  wish to mention here the paper by Lesch, Moscovici and Pflaum 
\cite{lmpflaum}; in this interesting article the absolute and relative pairings  associated to a suitable
short exact sequence of algebras (this is a  short exact sequence of parameter
dependent pseudodifferential operators)
are used in order to define and study a generalization of the
divisor flow of Melrose on a closed compact manifold, see  \cite{melrose-eta} and also \cite{LPfla}.

\medskip
 Let us give a very short account of our main results. First of all, it is clear from
 the structure of the Atiyah-Patodi-Singer index formula \eqref{aps}
 that one of the basic tasks
 in the theory is to split precisely the interior contribution from the boundary contribution
 in the higher index formula.
 We look at operators on the boundary through the translation invariant operators
 on the associated infinite cyclinder; by Fourier transform these two pictures are equivalent.
 We solve the Atiyah-Patodi-Singer higher index problem on a foliated bundle
 with boundary $(X_0,\F_0)$, $X_0=\tM\times_\Gamma T$,
  by solving the associated $L^2$-problem on the associated
 foliation with cylindrical ends $(X,\F)$.
 Thus, after explaining the geometric set-up in Section \ref{sec:data} ,
  we begin by defining a short exact sequence of $C^*$-algebras
 $$0\to C^* (X,\F)\to A^* (X,\F) \to B^* (\cyl (\pa X),\F_{\cyl})\to 0\,.$$
 This  is an extension by the foliation  $C^*$-algebra  $C^* (X,\F)$ of a suitable
 algebra of {\it translation invariant operators} on the cylinder; we call it the Wiener-Hopf extension. 
 We briefly denote the Wiener-Hopf extension as $0\to C^* (X,\F)\to A^*\to B^* \to 0$.
 These $C^*$-algebras are the receptacle
 for the two $C^*$-index classes we will be working with. Thus, given a $\Gamma$-equivariant
 family of Dirac operators $(D_\theta)_{\theta\in T}$ with invertible boundary family
 $(D^{\pa}_\theta)_{\theta\in T}$ we prove that there exist an index class $\Ind (D)\in K_* (C^* (X,\F))$
  and a relative index class
 $\Ind (D,D^{\pa})\in 
K_* (A^*,B^*)\,.$
 The higher Atiyah-Patodi-Singer index problem for the Godbillon-Vey cocycle
 consists in proving that there is a well defined
 paring $\langle \Ind (D), [\tau_{GV}] \rangle$ and giving a formula for it, with
 a structure similar to the one displayed by \eqref{aps}. Now, as in the case of Moriyoshi-Natsume,
 $\tau_{GV}$ is initially defined on the  small algebra $J_c (X,\F)$ of $\Gamma$-equivariant
 smoothing kernels
 of $\Gamma$-compact support; however,
  because of the structure of the parametrix on manifolds
 with cylindrical ends, there does {\it not} exist an index class in $K_* (J_c (X,\F))$. Hence,  even {\it defining}
 the  index pairing is not obvious. We shall solve this problem by showing that
there exists a holomorphically closed intermediate subalgebra $\mathbf{\mathfrak{J}}$ containing the  index class $\Ind (D)$ but such that $\tau_{GV}$ extends. More on this in a moment.
This point involves elliptic theory on manifolds with cylindrical ends in an essential way.
  Incidentally, we develop this theory without any reference to pseudodifferential operators,
 using nothing more than the functional calculus for Dirac operators on complete manifolds.

 \smallskip
 Once the higher Godbillon-Vey index is defined,  we search for an index formula for it. Our main idea
   is to show that such a formula is a direct consequence of the equality 
   \begin{equation}\label{ab=rel}
   \langle \Ind (D), [\tau_{GV}] \rangle= \langle \Ind (D,D^{\pa}), [(\tau_{GV}^r,\sigma_{GV})] \rangle
  \end{equation}
 where on the right hand side a new mathematical object, the {\it relative} Godbillon-Vey cocycle,
  appears. 
 The relative Godbillon-Vey cocycle is built out of the usual  Godbillon-Vey
 cocycle by means of a very natural procedure. First, we proceed algebraically.
 Thus we first look at a subsequence of $0\to C^* (X,\F)\to A^*\to B^*\to 0$ made of
 small algebras, call it $0\to J_c (X,\F) \to A_c \to B_c\to 0$; $J_c (X,F)$
 are, as above, the $\Gamma$-equivariant smoothing kernels of $\Gamma$-compact support;
  $B_c$ is made of $\Gamma\times\RR$-equivariant smoothing kernels
  on the cylinder of $\Gamma\times\RR$-compact support.  The $A_c$ cyclic 2-cochain $\tau_{GV}^r$
 is obtained from $\tau_{GV}$ through a regularization \`a la Melrose. The $B_c$ cyclic 3-cocycle
 $\sigma_{GV}$ is obtained by {\it suspending} $\tau_{GV}$ on the cylinder with Roe's 
 1-cocycle. We call this $\sigma_{GV}$
 the {\it eta cocycle} associated to $\tau_{GV}$.
 One proves, but it is not quite obvious, that $(\tau_{GV}^r,\sigma_{GV})$ is a 
 relative cyclic 2-cocycle for $A_c \to B_c$. We obtain in this way a relative cyclic cohomology
 class $[\tau_{GV}^r,\sigma_{GV}]\in HC^2 (A_c, B_c).$ 
 All of this is explained in Section \ref{section:algebraic}; at the end of this section we
 also  explain how this natural procedure can be 
 extended to other higher indices, producing each time an associated eta cocycle.

\smallskip 
We remark here that for technical reasons having to do with
 the extension of these cocycles to suitable smooth subalgebras, see below, we shall
have to  consider the cyclic cocycle and the relative cyclic cocycle obtained from $\tau_{GV}$ and
 $(\tau_{GV}^r,\sigma_{GV})$ through the $S$ operation in cyclic cohomology, see  \cite{connes-ihes}:
  thus we consider
  $S^{p-1}\tau_{GV}$ and $(S^{p-1}\tau_{GV}^r, \frac{3}{2p+1}S^{p-1} \sigma_{GV})$
  obtaining in this way  a class in  $HC^{2p} (J_c)$ and a relative class in $HC^{2p} (A_c, B_c)$.
 With a small abuse of notation we still denote these cyclic $2p$-cocycles by $\tau_{GV}$
 and $(\tau_{GV}^r,\sigma_{GV})$.

 \smallskip
 Once the algebraic theory is clarified, we need to pair the  class
 $[\tau_{GV}]\in H^{2p} (J_c)$ and the relative class $[\tau_{GV}^r,\sigma_{GV}]\in HC^{2p} (A_c,B_c)$ with the corresponding index classes $\Ind (D)\in K_* (C^*(X,\F))$ and  $\Ind (D,D^{\pa})\in K_* (A^*,B^*)$. 
 To this end we construct an {\it intermediate}
 short exact subsequence $0\to \mathbf{\mathfrak{J}}  \to \mathbf{\mathfrak{A}}
 \to \mathbf{\mathfrak{B}}\to 0$ of Banach algebras, sitting half-way between 
 $0\to C^* (X,\F)\to A^*\to B^*\to 0$  and  $0\to J_c (X,\F) \to A_c \to B_c\to 0$. Much work is needed 
 in order to define such a subsequence and prove that  $$\Ind(D)\in K_* (\mathbf{\mathfrak{J}} )\cong K_* (C^*(X,\F))\,,\quad
 \Ind (D,D^\pa)\in K_* ( \mathbf{\mathfrak{A}}, \mathbf{\mathfrak{B}})\cong K_* (A^*, B^*)\,.
 $$
 Even more work is needed in order to establish that the Godbillon-Vey cyclic $2p$-cocycle $\tau_{GV}$
 and the relative cyclic $2p$-cocycle $(\tau_{GV}^r,\sigma_{GV})$
 extend for $p$ large enough from $J_c$ and $A_c\to B_c$ to $\mathbf{\mathfrak{J}}$ and $\mathbf{\mathfrak{A}}\to \mathbf{\mathfrak{B}}$, 
 thus defining  elements 
 $$[\tau_{GV}] \in HC^{2p} (\mathbf{\mathfrak{J}})\quad\text{and}\quad [\tau_{GV}^r,\sigma_{GV}]\in HC^{2p} (\mathbf{\mathfrak{A}}, \mathbf{\mathfrak{B}}).$$
 
 \smallskip
  We have now made sense of both sides of the equality \eqref{ab=rel}
 $\langle \Ind (D), [\tau_{GV}] \rangle= \langle \Ind (D,D^{\pa}), [(\tau_{GV}^r,\sigma_{GV})] \rangle
  $.
 The equality itself is proved by establishing and using the excision formula:
 if 
 $\alpha_{{\rm ex}}: K_* (\mathbf{\mathfrak{J}})\to  K_* (\mathbf{\mathfrak{A}}, \mathbf{\mathfrak{B}})$
 is the excision isomorphism, then
 $$\alpha_{{\rm ex}} ( \Ind (D)) = \Ind (D,D^{\pa})\quad\text{in}\quad  K_* (\mathbf{\mathfrak{A}}, \mathbf{\mathfrak{B}})\,.$$
 The  index formula is obtained by writing explicitly  the relative pairing
$ \langle \Ind (D,D^{\pa}), [(\tau_{GV}^r,\sigma_{GV})] \rangle$  in terms of the graph projection
 $e_D$,
 multiplying the operator $D$ by $s>0$ and taking the limit as $s\downarrow 0$.
 The final formula in the 3-dimensional case (always with an invertibility assumption
 on the boundary family)  reads:
 \begin{equation}\label{main-dim3-intro}
\langle \Ind (D), [\tau_{GV}] \rangle= 
\int_{X_0} \omega_{GV} - \eta_{GV}\,,
\end{equation}
with $\omega_{GV}$ equal, as in the closed case,  to (a representative of) 
the Godbillon-Vey class $GV$ and
 \begin{equation}\label{main-eta-intro}
 \eta_{GV}:= \frac{(2p+1)}{p!} \int_0^{\infty}\sigma_{GV} ([\dot{p_t},p_t],p_t,\dots,p_t,p_t)dt\,,
\end{equation} with $p_t:= e_{tD^{{\rm cyl}}}$ the graph projection associated to 
the cylindrical Dirac family $tD^{{\rm cyl}}$.
Observe that by Fourier transform {\it the Godbillon-Vey eta invariant} 
$\eta_{GV}$ only depends on the boundary
family $D^\pa\equiv (D^\pa_\theta)_{\theta\in T}$.
 Notice, finally, that this is  a {\it complete} solution to the Godbillon-Vey higher index problem
on foliated bundles with boundary, in the spirit of Connes and Moriyoshi-Natsume.

\medskip
The paper is organized as follows. In Section \ref{sec:data} we explain
our geometric data. Section \ref{sect:operators} is devoted to a discussion
of the operators involved in our analysis. In Section \ref{sec:algebras} we define the Wiener-Hopf
extension $0\to C^* (X,\F)\to A^*\to B^*\to 0$. In Section \ref{section:algebraic}  we 
restrict our analysis to a subsequence $0\to J_c\to A_c\to B_c\to 0$ of small dense subalgebras.
We begin
by defining the 1-eta cocycle associated to the usual trace-cocycle $\tau_0$. This is nothing
but Roe's 1-cocycle $\sigma_1$; we define a relative 0-cocycle for $A_c\to B_c$ by considering
$(\tau^r_0,\sigma_1)$, with $\tau^r_0$ the regularized trace of Melrose ($b$-trace). 
We also discuss the relation of all this with Melrose' formula for the $b$-trace
of a commutator. Next we pass to the Godbillon-Vey cocycle $\tau_{GV}$, defining the associated
eta 3-cocycle $\sigma_{GV}$ on $B_c$ and the associated {\it relative} Godbillon-Vey cocycle 
$(\tau^r_{GV},\sigma_{GV})$, with $\tau^r_{GV}$ defined via Melrose'  regularization.
In the last Subsection of this Section we also discuss briefly more general relative cocycles. In Section \ref{sec:shatten}
we construct the intermediate short exact sequence $0\to \mathbf{\mathfrak{J}}  \to \mathbf{\mathfrak{A}}
 \to \mathbf{\mathfrak{B}}\to 0$. In Section \ref{sec:index} we define the index class $\Ind D\in K_0 (C^*(X,\F))$
  and the relative 
 index class $\Ind (D,D^{\partial})\in K_0 (A^*,B^*)$ and we prove that they correspond under excision. In Section \ref{section:smoothpairings} we prove that the two Godbillon-Vey
 cocycles extend to the subalgebras in the exact sequence $0\to \mathbf{\mathfrak{J}}  \to \mathbf{\mathfrak{A}}
 \to \mathbf{\mathfrak{B}}\to 0$. We also show how to smooth-out the two $C^*$-index classes and define
 them directly in $K_* (\mathbf{\mathfrak{J}} )$ and $K_* (\mathbf{\mathfrak{A}}, \mathbf{\mathfrak{B}})$.
 In Section
 \ref{section:indextheorems} 
 we then proceed to state and prove the main result of the paper. We also make some further remarks;
 in particular we explain  how to get the classic 
 Atiyah-Patodi-Singer index theorem  from these relative-pairings arguments. 
 The proof  of the APS formula  using relative-pairings
 techniques and the Roe's 1-cocycle $\sigma_1$ was obtained by the first author in 1988 and 
 announced in \cite{AMS}. 
Long and technical proofs have been collected in a separate
 Section, Section \ref{section:proofs}. There  is one Appendix in which we 
 summarize the results from \cite{MN} needed in order to define the Godbillon-Vey cocycle $\tau_{GV}$
 in the closed case.

\medskip
The results of this paper were
announced in 
\cite{MoPi0}.

\medskip
\noindent
{\bf Acknowledgements.} Most of this work has been done while the first author was
visiting Sapienza Universit\`a di Roma and the second author was visiting Keio University
and Nagoya University. We thank the Japan Society for the Promotion of Science (JSPS), 
Grants-in-Aid for Scientific Research, 
and the 21st century COE program at Keio 
for sponsoring most of these visits. Further financial support was provided by
{\it Istituto Nazionale di Alta Matematica}, through the GNSAGA, and the {\it Ministero dell'Istruzione,
dell'Universit\`a e della Ricerca (MIUR)} through the project {\it Spazi di moduli e Teoria di Lie}.
Part of this research was also carried out while the two authors were visiting jointly
 the Chern Institute in Tianjin and  the {\it Institut de Math\'ematiques de Jussieu} 
in Paris (\'Equipe
Alg\`ebres d'Op\'erateurs). 
We thank these institutions for  hospitality and financial support.
 Finally, it is a pleasure to thank Sergio Doplicher, Sacha Gorokhovsky, Eric Leichtnam, Henri Moscovici,
Toshikazu Natsume, John Roe and Xiang Tang for helpful 
discussions. 

\section{{\bf Geometry of foliated bundles}}\label{sec:data} 
\subsection{Closed manifolds} 
We shall denote by $N$ a closed orientable compact manifold.
We consider a Galois $\Gamma$-cover  $\Gamma\to \tN \to N$, with $\Gamma$ acting on the right, and 
$T$, a smooth oriented compact manifold with a left action of $\Gamma$
which is assumed to be by orientation preserving diffeomorphisms
 and locally faithful, as in \cite{MN}, that is: 
if $\gamma \in \Gamma$ acts as the identity map on an open set in $T$, 
then $\gamma$ is the identity element in $\Gamma$. 
See also \cite{ben-pia}. 
We let $Y=\tN\times_\Gamma T$; 
thus $Y$ is the quotient of $\tN\times T$ by the $\Gamma$-action 
$$ (\tilde{n},\theta)\gamma:= (\tilde{n}\gamma,\gamma^{-1}\theta)\,.$$
$Y$ is foliated by the images under the quotient
map of the fibers of the trivial fibration $\tN\times T\to T$ and is referred to as a
{\it foliated $T$-bundle}.
We use the notation $(Y,\mathcal{F})$ when we want to stress the foliated structure of $Y$. Finally,
we consider
 $E\to Y$, a hermitian complex vector bundle on $Y$ as well as  $\widehat{E} \to \tN\times T $, the latter being
 the
  $\Gamma$-equivariant vector bundle obtained
by lifting $E$, a bundle on $Y\equiv \tN\times_\Gamma T$,  to $\tN\times T$. 

\subsection{Manifolds with boundary}
%
Let now 
$(M,g)$ be a riemannian manifold with boundary; the metric is assumed to be of product type in a collar
neighborhood $U\cong [0,2]\times \partial M$ of 
the boundary. Let $\tM$ be a Galois $\Gamma$-cover of $M$; we let $\tilde g$ be the lifted metric. We also consider  $\partial \tM$, the
boundary of $\tM$.
Let $X_0=\tM\times_\Gamma T$; this is a manifold with boundary and the boundary $\partial X_0$ is equal to $\partial \tM \times_\Gamma T$.
$(X_0,\mathcal F_0)$ denotes  the associated foliated bundle. The leaves of
$(X_0,\mathcal F_0)$ are manifolds with boundary
endowed with a product-type metric. The boundary $\partial X_0$ inherits a  foliation $\mathcal F_\partial$.
The  cylinder $\RR\times \partial X_0$ also inherits a foliation $\mathcal F_{{\rm cyl}}$, obtained by crossing 
the leaves of $\mathcal F_\partial$ with $\RR$. Similar considerations apply to the half cylinders
$(-\infty,0] \times \partial X_0$ and $[0,+\infty) \times \partial X_0 $.
We shall consider a complex hermitian vector bundle on $X_0$ and we shall assume the usual product structure
near the boundary: we adopt without further comments the identification explained, for example,
in \cite{Melrose} and adopted also in \cite{MPI}  and \cite{LPETALE}.

\subsection{Manifolds with cylindrical ends. Notation.}\label{subsection:cyl+notation}
%
We consider 
$\tV:= \tM\cup_{\partial \tM} \left(   (-\infty,0] \times \partial\tM \right),$
endowed with the extended metric and the obviously extended $\Gamma$ action along the cylindrical
end. Notice incidentally that we obtain in this way a $\Gamma$-covering
\begin{equation}\label{cylindrical-covering}
\Gamma\to \tV \to V, \quad \text{with}\quad V:= M\cup_{\partial M} \left(   (-\infty,0] \times \partial M \right).
\end{equation}
We consider $X:= \tV\times_\Gamma T$; this is a foliated bundle, with leaves manifolds with cylindrical ends.
We denote by $(X,\mathcal F)$ this foliation. Notice that $X=X_0 \cup_{\partial X_0} \left(   (-\infty,0] \times \partial X_0 \right)$;
moreover the foliation $\mathcal F$ is obtained by extending $\mathcal F_0$ on $X_0$ to $X$ via the product cylindrical
foliation $\mathcal F_{{\rm cyl}}$ on $(-\infty,0] \times \partial X_0$. We can write more suggestively:
$$ (X,\mathcal F)= (X_0,\mathcal F_0)\cup_{(\partial X_0,\mathcal F_\partial)} 
\left(   (-\infty,0] \times \partial X_0,  \mathcal F_{{\rm cyl}} \right).$$
For $\lambda>0$ we shall also consider the finite cyclinder $\tV_\lambda = \tM\cup_{\partial \tM} \left(   [-\lambda,0] \times \partial\tM \right)$
and the resulting foliated manifold $(X_\lambda,\mathcal F_\lambda)$.
Finally, with a small abuse \footnote{The abuse of notation is in  writing $\cyl (\pa X)$ for $ \RR\times \partial X_0$ whereas we should
really write $\cyl (\pa X_0)$.}, we  introduce the notation:
\begin{equation}\label{cyl-notation}
\cyl (\pa X):= \RR\times \partial X_0\;,\;\;\;
\cyl^- (\pa X):=(-\infty,0] \times \partial X_0\;\;\text{ and }\;\;\cyl^+ (\pa X):=
[0,+\infty) \times \partial X_0\,.
\end{equation}
The foliations induced on $\cyl (\pa X)$, $\cyl^\pm (\pa X)$ by $\F_{\pa}$ will be denoted by
$\F_{\cyl}$, $\F_{\cyl}^\pm$; we obtain in this way foliated bundles
 \begin{equation*}
\cyl (\pa X,\F_{\cyl} )\;,\;\;\;
(\cyl^- (\pa X),\F_{\cyl}^-)\;\;\text{ and }\;\;(\cyl^+ (\pa X),\F_{\cyl}^+).
\end{equation*}

\subsection{Holonomy groupoid}
%
We consider the 
groupoid $G:=(\tV\times\tV\times T)/\Gamma$
with $\Gamma$ acting diagonally; $G^{(0)}:= X$ and the source map and the range map
are defined  by 
$s[y,y',\theta]=[y',\theta]$, $ r[y,y',\theta]=[y,\theta]$. 
Since the action on $T$ is assumed to be locally faithful, 
we know that $(G,r,s)$
is isomorphic to the 
holonomy groupoid of the foliation $ (X,\mathcal F)$. 
In the sequel, we shall directly call 
$(G,r,s)$ the holonomy groupoid.
If $E\to X$ is a complex vector bundle on $X$, with product structure along the cylindrical end as above, then
we can consider the bundle over $G$ equal to $(s^* E)^*\otimes r^*E$; this bundle is sometime denoted
$\mathrm{END}(E)$. If $F$ is a second complex  vector bundle on $X$, we can likewise consider the bundle
$\mathrm{HOM}(E,F): =  (s^* E)^* \otimes r^*F$.
Finally, we consider  the maps
$\hat{r}, \hat{s}: \tV\times\tV\times T\to \tV\times T$, $\hat{r}(y,y',\theta)=(y,\theta)$,
 $\hat{s}(y,y',\theta)=(y',\theta)$ and, more importantly, the bundles
 $\mathrm{END}(\widehat{E}):= (\hat{s}^*\widehat{E})^*\otimes (\hat{s}^*\widehat{E})$
 and  $\mathrm{HOM}(\widehat{E}, \widehat{F}):= (\hat{s}^*\widehat{E})^*\otimes (\hat{s}^*\widehat{F})$.

\subsection{The Godbillon-Vey differential form.}\label{subsect:gv-form}
%
Following \cite{MN},   we describe the explicit representative of 
the Godbillon-Vey class as a differential form 
in terms of the modular function of  the holonomy groupoid. 
Let $X_0=\tM\times_\Gamma T$. 
In this section $X_0$ can also be a closed manifold, namely, a compact manifold without boundary. 
Assume that $T$ is one-dimensional, and 
take an arbitrary 1-form $\omega$ on $X_0$ 
defining the codimension-one foliation $\mathcal F_0$. 
Due to the integrability condition, there
exists a 1-form $\eta$  such that $d\omega = \eta \wedge \omega$.  
The Godbillon-Vey class for $\mathcal F_0$ is, by definition, 
the de Rham cohomology class  given by 
$\eta \wedge d\eta$, 
denoted by $GV$; thus $GV:=[ \eta \wedge d\eta]\in H^3_{{\rm dR}}(X_0)$.
We shall explain another description of $GV$ 
in terms of the modular function of the holonomy groupoid. 

Consider the product space $\tM\times T$, 
which is a covering of  $X_0$. 
Choose a volume form $d\theta$ on $T$; it is in general impossible to choose
$d\theta$  to be, in addition, $\Gamma$-invariant.
Then 
$d\theta$ yields a defining 1-form for 
the foliation (which is in fact a fibration) obtained by lifting the foliation $\mathcal F_0$. 
The de Rham complex on $\tM\times T$ is isomorphic to the graded tensor
product $\Omega^*(\tM)\otimes \Omega^*(T)$ and 
accordingly  the exterior differential on $\tM\times T$ 
splits  as
\begin{equation}\label{d-decomposition}
d_{\tM\times T}= d + (-1)^p d_T  
\end{equation}
on $\Omega^p(\tM)\otimes \Omega^q(T)$, 
with  $d$ and $d_T$ denoting respectively  the exterior differentials along $\tM$ and $T$. 
Let us take the volume forms  $\omega$ and  $\Omega$ respectively on $M$ and $X_0$ and take the pullbacks
$\tilde{\omega}$ and  $\tilde{\Omega}$ to 
$\tM$ and $\tM\times T$. These are $\Gamma$-invariant volume forms.
The modular function of the holonomy groupoid is defined as  the Radon-Nikodym derivative of 
the two volume forms on  $\tM\times T$:
\begin{equation}\label{modular function}
\psi = 
\frac{\tilde{\omega}\times d\mu}{\tilde{\Omega}}. 
\end{equation}
Notice that $\psi$ has values in $\RR^+$ since $\Gamma$ acts by orientation preserving
diffeomorphisms.
Set 
\begin{equation}\label{phi}
\varphi =\log \psi.
\end{equation}

\begin{proposition}[\cite{MN}, p.504]
\label{MN_p.504} 
The 3-form $\omega_{GV}= d\varphi\wedge d d_T\varphi= -d\varphi\wedge d_Td\varphi$
 is  $\Gamma$-invariant  and closed on $\tM\times T$.
The Godbillon-Vey class of $\mathcal F_0$  is represented by 
$\omega_{GV}$ in $H^3_{{\rm dR}}(X_0)$. 
\end{proposition}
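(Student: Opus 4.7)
The plan is to verify $\Gamma$-invariance and closedness of $\omega_{GV}$ directly on the cover $\tM\times T$, and then to identify its descended class in $H^3_{{\rm dR}}(X_0)$ with $GV$ by a pullback computation; the equality of the two given formulas for $\omega_{GV}$ is the local-coordinate identity $dd_T\varphi=-d_T d\varphi$ applied to a function.

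For $\Gamma$-invariance, the key preliminary is that $\gamma^*\varphi-\varphi$ is a function of $\theta\in T$ alone. Indeed $\tilde\omega$ and $\tilde\Omega$ are $\Gamma$-invariant as pullbacks from $M$ and $X_0$; from the definition \eqref{modular function} one gets $\gamma^*\psi/\psi=J_\gamma(\theta)$, the Jacobian of the $\Gamma$-action on $T$, so $\gamma^*\varphi=\varphi+g_\gamma$ with $g_\gamma:=\log J_\gamma$ depending only on $\theta$. Applying $d$ and $dd_T$ and using $d_M g_\gamma=0$ yields $\gamma^*(d\varphi)=d\varphi+d_T g_\gamma$ and $\gamma^*(dd_T\varphi)=dd_T\varphi$. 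The cross term $d_T g_\gamma\wedge dd_T\varphi$ vanishes because $dd_T\varphi=d_M(\partial_\theta\varphi)\wedge d\theta$ already contains a $d\theta$, $d_T g_\gamma$ is itself a multiple of $d\theta$, and $\dim T=1$ forces $d\theta\wedge d\theta=0$. Closedness is then immediate from $\omega_{GV}=d(\varphi\,dd_T\varphi)$ together with $d^2=0$.

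To identify the cohomology class, I would denote the defining 1-form of $\mathcal F_0$ by $\alpha$ (the statement's first $\omega$, renamed to avoid overloading with the volume form) and normalize it so that $\alpha\wedge\omega=\Omega$ with $\omega,\Omega$ the chosen volume forms on $M$ and $X_0$. Pulling back to $\tM\times T$ gives $p^*\alpha=\psi^{-1}d\theta$, and $d(\psi^{-1}d\theta)=-d\varphi\wedge\psi^{-1}d\theta$ forces $p^*\eta=-d\varphi+h\,d\theta$ for some function $h$ on the cover that is determined by the downstairs choice of $\eta$. Expanding and using $d\theta\wedge d\theta=0$, one computes $p^*(\eta\wedge d\eta)=-d\varphi\wedge d_M h\wedge d\theta$; comparing with $\omega_{GV}=d\varphi\wedge d_M(\partial_\theta\varphi)\wedge d\theta$, the difference equals $-d\varphi\wedge d_M(h+\partial_\theta\varphi)\wedge d\theta=d\bigl(-\varphi\,d_M(h+\partial_\theta\varphi)\wedge d\theta\bigr)$ on the cover.

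The hard part is showing that this difference is exact on $X_0$ itself, not merely on the cover, since the primitive above is not $\Gamma$-invariant (the factor $\varphi$ shifts by $g_\gamma$ under $\gamma$). My plan is to cover $X_0$ by open sets $U_i$ admitting local sections $s_i$ of $p$, to do the comparison pointwise on each $U_i$, and to patch the resulting local primitives by means of a partition of unity, with overlap corrections controlled by the $g_\gamma$'s. Equivalently, one uses the freedom $\eta\mapsto\eta+k\alpha$ on $X_0$ to kill the obstruction in the group-cohomology piece of the Leray--Serre spectral sequence for $\tM\times T\to X_0$. This patching is the main technical content and is the reason the argument is nontrivial; it is the step carried out in detail in \cite{MN}.
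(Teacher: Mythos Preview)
The paper does not itself prove this proposition; it is quoted from \cite{MN}, p.~504, so there is no proof here to compare against line by line. I will therefore comment on your argument on its own terms.

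Your treatment of $\Gamma$-invariance and closedness is fine (a minor point: in the paper's convention $d$ denotes the leafwise differential, so $\gamma^*(d\varphi)=d\varphi$ already, with no cross term to cancel; your variant with the full differential also works).

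For the cohomology class, however, you manufacture a difficulty that is not there. You have essentially computed that the leafwise form $-d\varphi$ is $\Gamma$-invariant: since $\gamma^*\varphi-\varphi=g_\gamma(\theta)$ depends only on $\theta$, applying the $\tM$-differential $d$ kills it. Hence $-d\varphi$ descends to a $1$-form $\eta_0$ on $X_0$, and your own identity $d(\psi^{-1}d\theta)=-d\varphi\wedge\psi^{-1}d\theta$ shows that $d\alpha=\eta_0\wedge\alpha$. For \emph{this} choice of $\eta$ one has $p^*(\eta_0\wedge d\eta_0)=\omega_{GV}$ exactly (up to a global sign fixed by orientation conventions), and since the Godbillon--Vey class is independent of the choice of $\eta$, the identification is complete. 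There is no patching to do and no spectral sequence to invoke.

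Put differently: you mention the freedom $\eta\mapsto\eta+k\alpha$ but do not use it. Use it. Since $h\,d\theta=p^*\eta+d\varphi$ is a difference of two $\Gamma$-invariant forms, $h\,d\theta$ is $\Gamma$-invariant; a one-line check then shows $h\psi$ is $\Gamma$-invariant, so $k:=-h\psi$ is defined on $X_0$ and the replacement $\eta\mapsto\eta+k\alpha$ achieves $h=0$. This single observation replaces your entire last paragraph. (Incidentally, your displayed formula $p^*(\eta\wedge d\eta)=-d\varphi\wedge d_M h\wedge d\theta$ drops a $\pm\omega_{GV}$ term; redo that expansion.)
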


\section{{\bf Operators}}\label{sect:operators}

\subsection{Equivariant families of integral operators} 
 We consider $C_c (X,\mathcal F):= C_c (G)$ and more generally
\begin{equation}\label{eq:small-cstar}
C_c (X,\mathcal F;E) := C_c (G,(s^* E)^*\otimes r^*E )\equiv C_c (G,\mathrm{END}(E))
\end{equation}
with its well known  *-algebra structure given by convolution. 
Given an additional vector bundle $F$,
we can also consider 
\begin{equation}\label{eq:small-cstar-bis}
C_c (X,\mathcal F;E,F) := C_c (G,(s^* E)^*\otimes r^*F )\equiv C_c (G,\mathrm{HOM}(E,F))
\end{equation}
which is a left module over $C_c (X,\mathcal F;E)$ and a right module over $C_c (X,\mathcal F;F).$

The $*$-algebra $C_c (X,\mathcal F)$ can also be defined  as the space of $\Gamma$-invariant continuous functions on 
$\tV\times\tV\times T$ with $\Gamma$-compact support, i.e. with support which is compact in
$(\tV\times\tV\times T)/\Gamma$. A similar description holds for $C_c (X,\mathcal F;E)$.
 Notice, in particular, that given an  element $k$ in $C_c (X,\mathcal F;E)$
there exists a $\lambda (k)\equiv \lambda>0$ such that $k$
is   identically zero outside $\tV_\lambda \times\tV_\lambda \times T\subset \tV\times\tV\times T$.

An element $k\in C_c (X,\mathcal F)$ defines in a natural way an equivariant
family of integral operators $(k (\theta))_{\theta\in T}$.

\subsection{Dirac operators}\label{subsect:dirac0} 
We begin with a closed foliated bundle $(Y,\F)$, with $Y=\tN\times_\Gamma T$.
We are also given a $\Gamma$-equivariant complex vector bundle $\we$ on $\tN\times T$,
or, equivalently, a complex vector bundle on $Y$. We assume that $\we$ has a $\Gamma$-equivariant
vertical Clifford structure. We obtain in this way a $\Gamma$-equivariant family of Dirac operators $(D_\theta)_{\theta\in T}$
that will be simply denoted by $D$. 
\footnote{Observe that this family is denoted $\tilde D$ both in \cite{MN} and \cite{ben-pia}, the symbol $D$ being employed 
for the longitudinal operator induced on the quotient $Y=\tN\times_\Gamma T$. However, in this paper we shall work
exclusively with the $\Gamma$-equivariant picture, which is why we don't use the tilde notation.}

If $(X_0,\F_0)$,
$X_0=\tM\times_\Gamma T$, is a
foliated bundle with boundary, as in the previous section, then we shall assume the relevant
geometric structures to be of product-type near the boundary.
If $(X,\F)$ is the associated foliated bundle with cylindrical ends, then we shall extend all the structure 
in a constant way along the cylindrical ends. We shall eventually assume $\tM$ to be of even dimension, the
bundle $\we$ to be $\ZZ_2$-graded and the Dirac operator to be odd and formally self-adjoint.
We denote by $D^\pa\equiv (D^\pa_\theta)_{\theta\in T}$ the boundary family defined by $D^+$.
This is a $\Gamma$-equivariant family of formally self-adjoint first order elliptic differential operators on a
complete manifold.  
We denote by $D^{{\rm cyl}}$ the operator induced by  $D^\pa\equiv (D^\pa_\theta)_{\theta\in T}$ 
on the cylindrical foliated manifold $(\cyl(\pa X),\F_{{\rm cyl}})$; $D^{{\rm cyl}}$ is  $\RR\times\Gamma$-equivariant.
We refer to \cite{MN} \cite{LPETALE} and also \cite{ben-pia} for precise definitions.

 \subsection{Pseudodifferential operators}\label{pseudo-index-closed-0}
 Let $(Y,\F)$, $Y=\tN\times_\Gamma T$, be a closed foliated bundle.
Given vector bundles $E$ and $F$ on $Y$ with lifts $\widehat{E}$, $\widehat{F}$ on $\tN\times T$,
we can define the space of  $\Gamma$-compactly supported pseudodifferential operators of order $m$,
denoted here, with a small abuse of notation,
$\Psi^m_c(G;E,F)$. 
\footnote{The abuse consists in omitting the hats in the notations. It should be added here that the
notation for this space of operators is not unique. In \cite{MN} $\Psi^m_c(G;\widehat{E},\widehat{F})$  is simply denoted
as $\Psi_\Gamma^* (\widehat{E},\widehat{F})$ whereas it
 is denoted $\Psi^*_{\rtimes,c}(\tN\times T; \widehat{E},\widehat{F})$ 
in \cite{LPETALE} with $\rtimes$ denoting equivariance
and $c$ denoting again  {\it of $\Gamma$-compact support}   }. An element   
$P\in \Psi^m_c(G;E,F)$
should be thought of as a $\Gamma$-equivariant family of psedodifferential operators, $(P(\theta))_{\theta\in T} $
with Schwartz kernel $K_P$, a distribution on $G$, of compact support. See \cite{MN}
and \cite{ben-pia} for more details.

The space 
$\Psi^\infty_c (G;E,E):= \bigcup_{m\in \ZZ} \Psi^m_c(G;E,E)$
is a filtered algebra.
Moreover, assuming $E$ and $F$ to be hermitian and assigning to $P$ its
formal adjoint $P^*= (P_\theta^*)_{\theta\in T}$ gives 
$\Psi^\infty_c (G;E,E)$
the structure of an involutive algebra; the formal adjoint of an element  
$P\in \Psi^m_c (G;E,F)$
 is in general  an alement in $ \Psi^m_c (G;F,E)$.

\section{{\bf Wiener-Hopf extensions}}\label{sec:algebras}

\subsection{Foliation $C^*$-algebras}\label{subsec:cstar}
The foliation $C^*$-algebra 
$C^*(X,\mathcal F)$ is defined as the  completion of $C_c (X,\mathcal F)$ with respect
to $\| k \|_{C^*}:= \sup_{\theta\in T} \| k (\theta) \|$, the norm
on the right hand side being equal to the $L^2$-operator norm on 
$L^2(\tV\times\theta)$. A similar definition holds for $C_c (X,\mathcal F;E)$.
For more on this foundational material see, for example, \cite{MN}
and \cite{ben-pia}.
It is proved in \cite{MN} that
$C^*(X,\mathcal F;E)$ is isomorphic to the $C^*$-algebra of compact operators of a $C(T)\rtimes \Gamma$-Hilbert module
$\mathcal E$. The Hilbert module $\mathcal E$ is obtained by completing 
$C^\infty_c (\tV\times T, \widehat{E})$, endowed with its $C(T)\rtimes_{{\rm alg}} \Gamma$-module structure
and $C(T)\rtimes_{{\rm alg}} \Gamma$-valued inner product, with respect to the $C(T)\rtimes \Gamma$-norm. 
Once again, see  \cite{MN}
and \cite{ben-pia} for details: summarizing 
\begin{equation}\label{iso-comp-cstar}
C^*(X,\mathcal F;E)\cong \KK (\mathcal E)\subset \mathcal L (\mathcal E)\,.
\end{equation}

\subsection{Foliation von Neumann algebras}\label{subsect:vonneumann}
Consider the family of Hilbert spaces $\H:= (\H_\theta)_{\theta\in T}$, with
$\H_\theta := L^2 (\tilde{V}\times \{\theta\}, E_\theta)$.
Then $C_c (\tilde{V}\times T)$ is a continuous field inside $\H$, that is, a linear subspace in the
space of measurable sections of $\H$ satisfying a certain number of properties (see 
\cite{Connes-survey}, pag 576 for the details). Let $\End (\H)$ the space of measurable families
of bounded operators $T=(T_\theta)_{\theta\in T}$, where bounded means that each
$T_\theta$ is bounded on $\H_\theta$.
Then $\End (\H)$ is a $C^*$-algebra, in fact a von Neumann algebra, equipped with the norm
$$\| T \|:= {\rm ess.} \sup \{\|T_\theta\|\,;\theta\in T\}\,$$
with $\|T_\theta\|$ the operator norm. We also denote by $\End_\Gamma (\H)$ the subalgebra
of $\End (\H)$ consisting of $\Gamma$-equivariant measurable families  of operators. This is a von Neumann algebra which is, by definition, the {\it foliation von Neumann algebra} associated to  $(X,\F)$; it is often
denoted $W^* (X,\F)$.
We set $C^*_\Gamma (\H)$ the closure of $\Gamma$-equivariant families $T=(T_\theta)_{\theta\in T}
\in \End_\Gamma (\H)$ preserving the continuous field  $C_c (\tilde{V}\times T)$. In \cite{MN}, Section 2, 
it is proved that the foliation $C^*$-algebra $C^* (X,\F)$ is isomorphic to a $C^*$-subalgebra of
$C^*_\Gamma (\H)\subset \End_\Gamma (\H)$ \footnote{The 
 $C^*$-algebra $C^*_\Gamma (\H)$ was denoted $\mathfrak{B}$ in \cite{MN}}. Notice, in particular, 
that an element in $C^* (X,\F)$ 
can be considered as a $\Gamma$-equivariant
family of operators. 

\subsection{Translation invariant operators}\label{subsec:translation}
Recall $\cyl (\pa X):=\RR\times \partial X_0\equiv (\RR\times\partial \tM)\times_\Gamma T$ with $\Gamma$ acting trivially
in the $\RR$-direction of $(\RR\times\partial \tM)$.
We consider the foliated cylinder $(\cyl (\pa X), \mathcal F_{{\rm cyl}})$ and its holonomy groupoid 
$G_{{\rm cyl}}:=((\RR\times\partial \tM)\times (\RR\times\partial \tM)\times T)/\Gamma$
(source and range maps are clear). Let $\RR$ act trivially on $T$; then $(\RR\times\partial \tM)\times (\RR\times\partial \tM)\times T$
has a $\RR\times\Gamma$-action, with $\RR$ acting by translation on itself.
We consider the *-algebra 
$B_c (\cyl (\pa X),\mathcal F_{{\rm cyl}})\equiv B_c$
\begin{equation}\label{eq:tras-algebra}
B_c := \{k\in C((\RR\times\partial \tM)\times (\RR\times\partial \tM) \times T); k 
\text{ is } \RR\times \Gamma\text{-invariant}, k \text{ has } \RR\times\Gamma\text{-compact  support}\}
\end{equation} 
The product is by convolution. An element $\ell$ in 
$B_c$  defines
a $\Gamma$-equivariant
family $(\ell(\theta))_{\theta\in T}$ of translation-invariant operators. The completion of 
 $ B_c$ with respect to the obvious $C^*$-norm (the sup over $\theta$
of the operator-$L^2$-norm of $\ell(\theta)$) gives us a $C^*$-algebra that will be denoted $B^{*} (\cyl (\pa X),\F_{{\cyl}})$
or more briefly $B^{*}$.
Notice that we can in fact define  $B^{*} (\cyl (Y),\F_{{\cyl}})$ for any foliated flat bundle
$(Y,\F)$, with 
$Y=\tN\times_\Gamma T$.

\begin{proposition}\label{prop:bstar-structure}
Let $(Y,\F)$, with 
$Y=\tN\times_\Gamma T$, a foliated flat bundle without boundary.
Let us denote by $\RR_{\Delta}$ the group $\RR$ acting diagonally by translation on 
$\RR\times\RR$. Consider the quotient group
$(\RR\times\RR)/\RR_{\Delta}$ which is isomorphic to $\RR$. Consider the quotient groupoid 
$G_{\cyl}/\RR_{\Delta}$. Then $B^{*} (\cyl (Y),\F_{{\cyl}})= C^* (G_{\cyl}/\RR_{\Delta})$ and we have the following
$C^*$-isomorphisms:
\begin{equation}\label{bstar-structure}
 C^* (G_{\cyl}/\RR_{\Delta})\cong
C^* ( (\RR\times\RR)/\RR_{\Delta})\otimes C^* (Y,\F)\cong C^* ( \RR)\otimes C^* (Y,\F)
\end{equation}
\end{proposition}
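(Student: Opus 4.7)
The plan is to verify the claimed equality and the two isomorphisms one by one; all three reduce to unwinding the diagonal $\RR$-action. First, I would observe that the diagonal translation action $\RR_{\Delta}$ on $\RR\times\RR$ is free and proper, and the difference map $(x_1,x_2)\mapsto x_1-x_2$ descends to a homeomorphism $(\RR\times\RR)/\RR_{\Delta}\xrightarrow{\sim}\RR$, which is in addition a group homomorphism once we view the quotient as a group with the induced multiplication. Since $\Gamma$ acts trivially on the $\RR$-direction of $\RR\times\tN$, the $\RR_{\Delta}$- and $\Gamma$-actions on $(\RR\times\tN)\times(\RR\times\tN)\times T$ commute, and combining the slice $x_2=0$ for $\RR_{\Delta}$ with the quotient by $\Gamma$ gives a homeomorphism of topological groupoids
\begin{equation*}
G_{\cyl}/\RR_{\Delta}\ \cong\ \RR\times G_Y,
\end{equation*}
where $G_Y=(\tN\times\tN\times T)/\Gamma$ is the holonomy groupoid of $(Y,\F)$ and $\RR$ is regarded as a one-object groupoid (i.e.\ as a group). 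I would check that source, range, composition and inverse on the left-hand side all match the product-groupoid structure on the right-hand side.

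Next I would establish the first equality $B^{*}(\cyl(Y),\F_{\cyl})=C^{*}(G_{\cyl}/\RR_{\Delta})$. At the level of convolution algebras, $\RR\times\Gamma$-invariant continuous kernels of $\RR\times\Gamma$-compact support on $(\RR\times\tN)\times(\RR\times\tN)\times T$ correspond bijectively, via the slice $x_2=0$ and projection by $\Gamma$, to $C_c(G_{\cyl}/\RR_{\Delta})$, and the convolution products agree. The $C^{*}$-norm defining $B^{*}$ is $\sup_{\theta\in T}\|\ell(\theta)\|$, where each $\ell(\theta)$ acts translation-invariantly on $L^{2}((\RR\times\tN)\times\{\theta\})$. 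Using Fourier transform in the $\RR$ variable, $\ell(\theta)$ decomposes into a direct integral over $\xi\in\widehat{\RR}$ of operators on $L^{2}(\tN\times\{\theta\})$; the supremum of the operator norms of these fiber operators, over $\xi$ and $\theta$, is precisely the reduced $C^{*}$-norm on the product groupoid $\RR\times G_Y\cong G_{\cyl}/\RR_{\Delta}$. Completing then yields the claimed equality.

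For the remaining isomorphisms, I would use that on the product groupoid $\RR\times G_Y$ convolution factors as a tensor product of convolutions, so $C_c(\RR\times G_Y)\cong C_c(\RR)\otimes_{\mathrm{alg}}C_c(G_Y)$. Since $C^{*}(\RR)\cong C_0(\widehat{\RR})$ is abelian, hence nuclear, the minimal and maximal $C^{*}$-tensor-product norms coincide with the norm inherited from the $L^{2}$-representation, giving
\begin{equation*}
C^{*}(G_{\cyl}/\RR_{\Delta})\ \cong\ C^{*}(\RR)\otimes C^{*}(G_Y)\ \cong\ C^{*}(\RR)\otimes C^{*}(Y,\F),
\end{equation*}
where the last equality is the definition of the foliation $C^{*}$-algebra. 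The isomorphism $C^{*}((\RR\times\RR)/\RR_{\Delta})\cong C^{*}(\RR)$ then follows from the group isomorphism established at the outset.

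The main obstacle I expect is the precise identification of the norm on $B^{*}$ with the reduced groupoid $C^{*}$-norm on $\RR\times G_Y$: one must verify carefully that the essential supremum over $\theta$ of the full operator norms $\|\ell(\theta)\|$ on $L^{2}((\RR\times\tN)\times\{\theta\})$ coincides, after Fourier transform in the cylindrical variable, with the combined norm $\sup_{\xi,\theta}\|\widehat{\ell}(\xi,\theta)\|$. This is essentially a Plancherel-type decomposition along the $\RR$-direction and is the step where the translation-invariance hypothesis is genuinely used; everything else is a bookkeeping exercise with quotient groupoids and nuclearity.
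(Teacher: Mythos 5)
Your proposal is correct and follows essentially the same route as the paper: identify $B^*$ with the $C^*$-algebra of the quotient groupoid $G_{\cyl}/\RR_{\Delta}$, decompose that groupoid as the product $((\RR\times\RR)/\RR_{\Delta})\times G(Y,\F)$ using that $\Gamma$ acts trivially on the $\RR$-directions, and pass to the tensor product of $C^*$-algebras. The paper states these identifications without elaboration, whereas you supply the supporting details (the slice for $\RR_{\Delta}$, the Plancherel identification of the translation-invariant operator norm with the product-groupoid norm, and nuclearity of $C^*(\RR)$ to settle the tensor norm), all of which are accurate.
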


\begin{proof}
The holonomy groupoid for $(\cyl (Y),\F_{{\cyl}})$ is $G_{\cyl}=
(\RR\times\tN\times \RR\times \tN\times T)/\Gamma$; directly from
the definition we see that $B^*$ is the $C^*$-algebra for the
quotient groupoid $G_{\cyl}/\RR_{\Delta}$ which is clearly isomorphic to $(\RR\times\RR)/\RR_{\Delta} \times (\tN\times\tN\times T)/\Gamma\equiv  (\RR\times\RR)/\RR_{\Delta} \times G(Y,\F)$. From these isomorphisms we can immediately end the proof. 
\end{proof}
\begin{remark}\label{remark:b*=compact}
We can interpret $B^* (\cyl (Y),\F_{{\cyl}})$ as the compact
operators of a suitable Hilbert $C^*$-module. Consider $\RR\times\tN\times T$ with its natural 
$\Gamma\times\RR$-action; consider
 $C^\infty_c ( \RR\times\tN\times T)$; we can complete it to a 
 Hilbert $C^*$-module $\mathcal{E}_{\cyl}$ over 
$(C(T)\rtimes\Gamma)\otimes C^* \RR$. Proceeding as in \cite{MN} one can prove that
there is a $C^*$-algebra isomorphism $B^* (\cyl (Y),\F_{{\cyl}})\simeq \KK (\mathcal{E}_{\cyl})$.
In particular, we see that $B^* (\cyl (Y),\F_{{\cyl}})$ can be seen as an ideal in the
$C^*$-algebra $\mathcal{L} (\mathcal{E}_{\cyl})$.
\end{remark}

\subsection{Wiener-Hopf extensions}\label{subsec:extension}
Recall the Hilbert  $C(T)\rtimes\Gamma$-module $\mathcal{E}$ and the $C^*$-algebras $\KK(\mathcal{E})$ and $\mathcal{L}( \mathcal{E})$.
Since  the $C(T)\rtimes\Gamma$-compact operators $\KK(\mathcal{E})$ are an ideal in  $\mathcal{L}( \mathcal{E})$
 we have the classical  short exact sequence of $C^*$-algebras
$$0\to  \KK(\mathcal{E})\hookrightarrow \mathcal{L}( \mathcal{E}) \xrightarrow{\pi} \mathcal{Q} ( \mathcal{E})\rightarrow 0$$
with $\mathcal{Q} ( \mathcal{E})= \mathcal{L}( \mathcal{E})/\KK(\mathcal{E})$ the Calkin algebra.
Let $\chi_\RR^0:\RR\to \RR$ be the characteristic function of $(-\infty,0]$; let $\chi_\RR:\RR\to\RR$ be a smooth function with values in $[0,1]$ such that:
\begin{equation}\label{smooth-cut-off}
\chi_\RR (t) = 
\begin{cases}
1
\qquad & \text{ for }\;\; t\leq -\epsilon
\\
0
\qquad  & \text{ for } \;\;t\geq 0.
\end{cases}
\end{equation}
for given $\epsilon >0$.
Let $\chi$ be the smooth function induced by $\chi_\RR$ on $X$; 
when we want to exhibit the dependence on $\epsilon$ clearly, 
we shall denote it by $\chi_{\epsilon}$. Similarly, we consider  $\chi_{{\rm cyl}}$, 
the smooth function induced by $\chi_\RR$ on  $\cyl (\pa X)$. 
Finally, let  $\chi^0$ and $\chi_{{\rm cyl}}^0$ be the functions
induced by the characteristic function $\chi_\RR^0$ on $X$ and $\cyl ( \pa X)$ respectively.
For $\lambda>0$, we shall also make use of the
 the real functions 
$\chi^\lambda$ and $\chi_{{\rm cyl}}^\lambda$, induced on $X$ and   $\cyl(\pa X)$ by 
$\chi^\RR_{(-\infty,-\lambda]}$, 
the characteristic function
of $(-\infty,-\lambda]$ in $\RR$; thus $\chi^\lambda$ is equal to 0 on the interior of $X_\lambda$ and equal to 1 on 
its complement in $X$. Similarly: $\chi_{{\rm cyl}}^\lambda$ is equal to zero on $(\lambda,+\infty)\times \pa X_0$
and equal to one on $(-\infty,\lambda]\times \pa X_0$.

\begin{lemma}\label{lemma:split}
There exists a bounded linear map
\begin{equation}\label{eq:split-2}
s : B^* \to \mathcal{L}(\mathcal{E})
\end{equation}
extending $s_c : B_c \to 
\mathcal{L} ( \mathcal{E})$,  $s_c (\ell) := \chi^0 \ell \chi^0 $ \footnote{For the precise meaning
of this composition, see Subsection  \ref{subsection:split}.}.
Moreover, the composition $\rho =\pi s $ induces an {\it injective} 
$C^*$-homomorphism
\begin{equation}\label{eq:split}
\rho : B^* \to \mathcal{Q}(\mathcal{E}).
\end{equation}
\end{lemma}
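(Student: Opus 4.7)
The plan is to handle three assertions in turn: make $s_c$ precise and show it is norm-decreasing, extend to the $C^*$-closure, and verify that the quotient $\rho=\pi s$ is an injective $*$-homomorphism.

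First I would make $s_c$ precise as an element of $\mathcal{L}(\mathcal{E})$. An element $\ell\in B_c$ is a $\Gamma\times\RR$-equivariant kernel on $(\RR\times\pa\tM)^2\times T$ and acts as a translation-invariant $\Gamma$-equivariant family on $\cyl(\pa X)$; the function $\chi^0$ on $X$ is supported in $\cyl^-(\pa X)\subset X$, canonically identified with the negative half of $\cyl(\pa X)$. Thus $s_c(\ell)=\chi^0\ell\chi^0$ acts on $\xi\in\mathcal{E}$ by multiplying by $\chi^0$, extending by zero to the full cylinder, applying $\ell$ there, restricting to $X$, and multiplying again by $\chi^0$. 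Since $0\leq\chi^0\leq 1$ and $\ell$ acts with norm at most $\|\ell\|_{B^*}$ on the cylinder picture fiberwise in $\theta$, one obtains $\|s_c(\ell)\|_{\mathcal L(\mathcal E)}\leq\|\ell\|_{B^*}$, so $s_c$ extends by density to a bounded linear map $s:B^*\to\mathcal L(\mathcal E)$. Realness of $\chi^0$ yields $s(b^*)=s(b)^*$.

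Next, to see that $\rho=\pi s$ is multiplicative, I would compute on $B_c$:
\begin{equation*}
s_c(\ell_1)s_c(\ell_2)-s_c(\ell_1\ell_2)=-\chi^0\ell_1(1-\chi^0)\ell_2\chi^0.
\end{equation*}
The key observation is that this difference lies in $\KK(\mathcal E)$. Since $\ell_1\in B_c$ has $\RR\times\Gamma$-compact support, its kernel vanishes once the two $\RR$-coordinates differ by more than some $R>0$. Combined with the left cutoff $\chi^0$ and the right cutoff $1-\chi^0$, the support of the composed kernel is squeezed into a $\Gamma$-compact strip of width $O(R)$ about $\{0\}\times\pa\tM$, so it represents an element of $C_c(X,\F)\subset\KK(\mathcal E)$. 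Therefore $\rho(\ell_1\ell_2)=\rho(\ell_1)\rho(\ell_2)$ on the dense subalgebra $B_c$, and the identity extends to $B^*$ by continuity of $s$ and $\pi$.

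For injectivity, suppose $\rho(b)=0$, that is, $s(b)\in\KK(\mathcal E)=C^*(X,\F)$. The strategy is to exploit translation invariance of $b$ by sliding sections deep into the cylindrical end. For $\lambda\geq 0$, let $U_\lambda$ denote the partial isometry on $\mathcal E$ that translates sections supported in $\cyl^-(\pa X)$ by $-\lambda$ along $\RR$. For $\xi\in\mathcal E$ supported in $(-\infty,-1]\times\pa X_0$ and any $\lambda\geq 0$, the vector $U_\lambda\xi$ remains in $\cyl^-(\pa X)$, so the cutoffs $\chi^0$ act as the identity on it, and translation invariance of $b$ forces the $C(T)\rtimes\Gamma$-valued matrix element $\langle s(b)U_\lambda\xi,U_\lambda\xi\rangle_{\mathcal E}$ to equal the corresponding matrix element of $b$ on $\xi$ computed in the cylinder picture, independently of $\lambda$. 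On the other hand, approximating $s(b)$ by finite-rank module operators of the form $\zeta\mapsto\xi_i\langle\eta_i,\zeta\rangle$ and approximating each $\eta_i$ by a compactly supported section, one gets $\langle\eta_i,U_\lambda\xi\rangle\to 0$ in $C(T)\rtimes\Gamma$ as $\lambda\to\infty$, and hence $\langle s(b)U_\lambda\xi,U_\lambda\xi\rangle\to 0$. So the $B^*$-matrix coefficient of $b$ vanishes on a dense family of $\xi$, which forces $b=0$.

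The hard part, I expect, is this last step: one must check carefully, within the Hilbert $C(T)\rtimes\Gamma$-module framework, that translating deep into the cylindrical end really kills matrix coefficients of elements of $\KK(\mathcal E)$ in the $C(T)\rtimes\Gamma$-norm, and that the $\lambda$-independence of $\langle s(b)U_\lambda\xi,U_\lambda\xi\rangle$ passes from $b\in B_c$ to arbitrary $b\in B^*$. Both are density/continuous-field arguments, but they are where the proof really uses the translation invariance of $b$ as opposed to merely bounded propagation.
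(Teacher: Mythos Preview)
Your proposal is correct and follows the paper's strategy closely. The three-step organization --- well-definedness and norm bound for $s_c$, extension by density, multiplicativity via the computation $s_c(\ell_1)s_c(\ell_2)-s_c(\ell_1\ell_2)=-\chi^0\ell_1(1-\chi^0)\ell_2\chi^0\in\KK(\mathcal E)$, and injectivity via translation --- matches the paper exactly, including the key supporting fact (your observation that $\chi^0\ell(1-\chi^0)$ has $\Gamma$-compact support is the paper's Sublemma~\ref{sublemma:basic1}).

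The one genuine difference is in the execution of the injectivity argument. The paper proves the equivalent statement $s(B^*)\cap C^*(X,\F)=0$, but works \emph{fiberwise} in the Hilbert-space picture rather than at the Hilbert-module level. Concretely, assuming $0\ne a=\chi^0\ell\chi^0\in C^*(X,\F)$ with $\ell\in B_c$, it picks $L^2$-normalized bump functions $\delta(t)$ supported near a point deep in the cylinder, shows by translation invariance of $\ell$ that $\|a\,\delta(t)\|_{L^2}$ stays bounded below uniformly in $t$, and observes that any $a_j\in C_c(X,\F)$ satisfies $\|a_j\,\delta(t)\|_{L^2}\to 0$ as the bump recedes; this contradicts $\|a_j-a\|_{C^*}\to 0$. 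A second density layer then passes from $\ell\in B_c$ to $b\in B^*$. This sidesteps precisely the ``hard part'' you flag: there is no need to control $C(T)\rtimes\Gamma$-valued matrix coefficients or to verify their vanishing under $U_\lambda$, because everything happens in a single fiber $\mathcal H_\theta$ with ordinary $L^2$ norms. Your module-theoretic argument also goes through (the decisive point, that $\Gamma$ acts trivially in the $\RR$-direction so $U_\lambda$ eventually disjoints supports from those of any compactly supported $\eta_i$, is correct), but the fiberwise version is shorter and avoids the extra layer of approximation inside $C(T)\rtimes\Gamma$.
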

See
Section \ref{section:proofs}, Subsection \ref{subsection:split} for a detailed proof of  Lemma
\ref{lemma:split}; there we also
explain why $s_c$ is well defined.
A key tool  in the proof of the Lemma is the following Sublemma, stated here for later use but
proved in Subsection \ref{subsection:split}:
\begin{sublemma}\label{sublemma:basic1}
Let $\ell\in B_c$. Then $\chi^\lambda \ell (1-\chi^\lambda)$, $(1-\chi^\lambda) \ell \chi^\lambda$
and $[\chi^\lambda,\ell]$ are all of $\Gamma$-compact support on $\cyl(\pa X)$.
\end{sublemma}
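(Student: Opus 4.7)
The plan is to describe $\ell\in B_c$ through its integral kernel $k(y_1,y_2,\theta)$ on $(\RR\times\pa\tM)\times(\RR\times\pa\tM)\times T$, writing $y_i=(t_i,x_i)$ with $t_i\in\RR$, $x_i\in\pa\tM$, and first to unpack what the $\RR\times\Gamma$-compact support hypothesis really says. The group $\RR$ acts diagonally by translation in $(t_1,t_2)$ while $\Gamma$ acts diagonally on $(x_1,x_2,\theta)$; hence the quotient $((\cyl(\pa X))^2\times T)/(\RR\times\Gamma)$ is identified with $\RR\times((\pa\tM\times\pa\tM\times T)/\Gamma)$ via $(t_1,x_1,t_2,x_2,\theta)\mapsto (t_2-t_1,[x_1,x_2,\theta])$. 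Compactness of the image of $\supp k$ in this quotient is therefore equivalent to the conjunction of a bounded-propagation estimate in the cylindrical direction and $\Gamma$-compactness in the transverse direction: there exist $R>0$ and a $\Gamma$-compact subset $K\subset\pa\tM\times\pa\tM\times T$ with
\begin{equation*}
\supp k\subset\{(t_1,x_1,t_2,x_2,\theta)\colon |t_1-t_2|\leq R,\ (x_1,x_2,\theta)\in K\}.
\end{equation*}

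Once this reformulation is in hand, the three assertions become set-theoretic. The kernel of $\chi^\lambda\ell(1-\chi^\lambda)$ is $\chi^\lambda(t_1)\,k(y_1,y_2,\theta)\,(1-\chi^\lambda(t_2))$; the first factor forces $t_1\leq -\lambda$, the last forces $t_2>-\lambda$, and combined with $|t_1-t_2|\leq R$ this confines $(t_1,t_2)$ to the compact rectangle $[-\lambda-R,-\lambda]\times(-\lambda,-\lambda+R]$, while $(x_1,x_2,\theta)$ remains in the $\Gamma$-compact set $K$. Projecting to $(\cyl(\pa X)\times\cyl(\pa X)\times T)/\Gamma$ one then lands in a compact subset, which is exactly the $\Gamma$-compact support property. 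The argument for $(1-\chi^\lambda)\ell\chi^\lambda$ is symmetric.

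For the commutator I would use $(\chi^\lambda)^2=\chi^\lambda$ together with $1=\chi^\lambda+(1-\chi^\lambda)$ to derive the identity
\begin{equation*}
[\chi^\lambda,\ell]=\chi^\lambda\ell(1-\chi^\lambda)-(1-\chi^\lambda)\ell\chi^\lambda,
\end{equation*}
so that the commutator inherits $\Gamma$-compact support from the two previously treated pieces. The only genuine content of the sublemma is thus the first paragraph: extracting a finite-propagation bound in the $\RR$-direction from the $\RR\times\Gamma$-compact support hypothesis. After that translation, the cut-offs $\chi^\lambda$ and $1-\chi^\lambda$ automatically squeeze the $\RR$-directions into a compact window of width at most $R$ around $-\lambda$, and $\Gamma$-compactness of the transverse part is inherited from $\ell$.
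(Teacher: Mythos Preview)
Your proof is correct and follows essentially the same route as the paper: write the kernels of $\chi^\lambda\ell(1-\chi^\lambda)$ and $(1-\chi^\lambda)\ell\chi^\lambda$ explicitly, use the $\RR\times\Gamma$-compact support of $\ell$ (i.e.\ finite propagation $|t_1-t_2|\le R$ together with $\Gamma$-compactness in $\pa\tM\times\pa\tM\times T$) to trap the $\RR$-variables in a compact window near $-\lambda$, and then deduce the commutator from the identity $[\chi^\lambda,\ell]=\chi^\lambda\ell(1-\chi^\lambda)-(1-\chi^\lambda)\ell\chi^\lambda$. The paper carries this out with the same explicit case-splitting for the kernels and the same identity; your version makes the extraction of the finite-propagation bound $R$ slightly more explicit, but there is no substantive difference.
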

In the sequel we shall use repeatedly this simple but crucial result.

\bigskip
We now consider  $\Im \rho$ as a $C^*$-subalgebra in $ \mathcal{Q} ( \mathcal{E})$ and 
 identify it with  $B^*\equiv B^* (\cyl (\pa X), \mathcal F_{{\rm cyl}})$ via $\rho$. 
Set $$A^* (X;\mathcal F):= \pi^{-1} (\Im \rho) \;\;\text{ with }
\pi \;\text{ the projection }\;\;  
\mathcal{L}( \mathcal{E}) \rightarrow \mathcal{Q} ( \mathcal{E}).$$ 
Recalling the identification $C^*(X,\mathcal F)=\KK(\mathcal{E})$,
we thus  obtain a short exact sequence of $C^*$-algebras: 
$ 0\rightarrow C^*(X,\mathcal F)\rightarrow 
A^* (X;\mathcal F)\xrightarrow{\pi} B^* (\cyl (\pa X), \mathcal F_{{\rm cyl}}) \rightarrow 0$
 where 
  the quotient map is still denoted by $\pi$.

  \begin{definition}\label{definition:wiener-hopf}
  The short exact sequence of $C^*$-algebras
  \begin{equation}\label{eq:short-cstar}
 0\rightarrow C^*(X,\mathcal F)\rightarrow 
A^* (X;\mathcal F)\xrightarrow{\pi} B^* (\cyl (\pa X), \mathcal F_{{\rm cyl}}) \rightarrow 0
 \end{equation}
 is by definition the Wiener-Hopf extension of $B^* (\cyl (\pa X), \mathcal F_{{\rm cyl}}) $.
 \end{definition}
 
 Notice that \eqref{eq:short-cstar} splits as a short exact sequence of {\it Banach spaces}, 
since we can choose  
$s: B^* (\cyl (\pa X), \mathcal F_{{\rm cyl}}) \to A^* (X;\mathcal F)$,  the map in the statement
of Lemma \ref{lemma:split}, as a section. 
So $$A^* (X;\mathcal F) \cong C^*(X,\mathcal F)\oplus s( B^* (\cyl (\pa X), \mathcal F_{{\rm cyl}}))$$ as Banach spaces.

  There is also a linear map $t: A^* (X,\mathcal F)\rightarrow C^*(X,\mathcal F)$ which is obtained as follows: if 
 $k\in A^* (X;\mathcal F)$,  then  $k$
 is  uniquely expressed as 
 $k= a + s(\ell)$ with $a\in C^*(X,\mathcal F)$ and 
$\pi(k)=\ell\in B^* (\cyl (\pa X), \mathcal F_{{\rm cyl}})$. 
Thus,  $\pi(k)=[\chi^0 \ell \chi^0]\in\mathcal Q (\mathcal E )$ for  one (and only one)  $\ell\in B^* (\cyl (\pa X), \mathcal F_{{\rm cyl}})$ 
since $\rho$ is injective.
 We set
\begin{equation}\label{eq:t}
t(k):= k-s\pi (k) =k-\chi^0 \ell \chi^0
\end{equation}
Then  $t(k)\in C^*(X,\mathcal F)$.

\begin{remark}\label{remark:why-wiener-hopf}
 A standard Wiener-Hopf extension of  $C^* \RR$ is obtained as follows. 
Let $C^* \RR$ act on the Hilbert space $\H=L^2(\RR)$ by convolutions. 
Recall that  $\chi^0_{\RR}$ is the characteristic function of $(-\infty, 0]$ 
and denote by $\H_o$ the subspace  $L^2(-\infty, 0]\subset \H$.  
Then  the same proof of Lemma \ref{lemma:split} can be applied to prove that 
there exists an injective homomorphism $\rho_{\RR}: C^* \RR \to \Q (\H_o)$ 
with  
$\rho_{\RR}(\ell)=\pi_o(\chi^0_{\RR}\ell \chi^0_{\RR})$, 
where $\Q (\H_o)$ denotes the Calkin algebra 
and $\pi_o$  the projection from the bounded operators on $\H_o$ onto $\Q (\H_o)$. 
Set $\E_o= \pi^{-1}_o(\Im \rho_{\RR})$. 
Exactly in the same manner as before, one has a short exact sequence 
$$
 0\rightarrow \K_o \rightarrow 
\E_o
\xrightarrow{\pi_o} C^* \RR \rightarrow 0
,
$$
where $\K_o$ denotes the compact operators on $\H_o$.  It is called 
a standard Wiener-Hopf extension of  $C^* \RR$. 
What we are going to construct is a slightly larger algebra than this. 
Observe that $\Q (\H_o)$ is naturally embedded in the Calkin algebra $\Q (\H)$. 
Thus one has another injective homomorphism $\hat{\rho}_{\RR}: C^* \RR \to \Q(\H)$.  
Set $\E= \pi^{-1}(\Im \hat{\rho}_{\RR})$ with 
$\pi$ the projection onto $\Q(\H)$. It then induces an extension of 
 $C^*$-algebras:  
$$
 0\rightarrow \K \rightarrow 
\E
\xrightarrow{\pi} C^* \RR \rightarrow 0
$$
where $\K$ is the compact operators on $\H$.  
Obviously it contains the above extension. 
Now recall the definition of the Extension group ${\rm Ext} (C^* \RR)$ and 
its additive structure; see Douglas \cite{Douglas} for instance. It is easily verified that 
the second exstension is exactly the one corresponding to the sum 
of $\E_o$  and the trivial extension;  
hence the resulting extension class  is the same as that of $\E_o$. 
Therefore, the second extension deserves to be called 
a Wiener-Hopf extension too. 

Let us consider the simplest case, namely 
a foliation consists of a single leaf $X$, 
which is a complete manifold with cylindrical end. 
It turns out that 
our extension \eqref{eq:short-cstar}
is isomorphic to the second extesion tensored with 
the algebra of compact operators.  
This can be proved by observing the
property 
$B^* (\cyl (\pa X), \mathcal F_{{\rm cyl}}) \cong C^* \RR\otimes \K$  in 
 \ref{bstar-structure}. 
Thus we also call the short exact sequence \eqref{eq:short-cstar} 
the Wiener-Hopf extension of $B^* (\cyl (\pa X), \mathcal F_{{\rm cyl}}) $.
\end{remark}


\section{{\bf Relative pairings and eta cocycles: the algebraic theory}}\label{section:algebraic}

\subsection{Introductory remarks}\label{subsection:intr-remarks}
On a {\it closed} foliated bundle $(Y,\mathcal{F})$ with holonomy groupoid $G$,
the Godbillon-Vey cyclic cocycle is initially defined
on the "small" algebra $\Psi^{-\infty}_c (G,E) \subset C^*(Y,\mathcal{F};E) $  of  $\Gamma$-equivariant 
smoothing operators of $\Gamma$-compact support. With respect
to our current notation:
$$\Psi^{-\infty}_c (G,E):=C^\infty_c (G, (s^*E)^*\otimes r^*E))\,.$$
Since the index class defined using a pseudodifferential parametrix is already well defined in 
$K_* (\Psi^{-\infty}_c (G,E))$, the pairing between the 
the Godbillon-Vey cyclic cocycle and the index class is well-defined.

In a second stage, the cocycle is continuously extended to a dense holomorphically closed subalgebra $\mathfrak{A}\subset C^*(Y,\mathcal{F})$;
there are at least two reasons
for doing this. First, as already remarked in the Introduction,
it is only by going to the $C^*$-algebraic index that the
well known properties for the signature  and the spin Dirac operator  of  a metric of
positive scalar curvature hold. The second reason
for this extension rests on the structure of the index class {\it which is employed in 
the proof of  the higher index formula}, i.e. either the graph projection or 
the Wassermann projection; in both cases $\Psi^{-\infty}_c (G,E)$ is too small to contain these
particular representatives of the  index class and one is therefore forced to find
an  intermediate subalgebra $\mathfrak{A}$,
\begin{equation}\label{small-MN}
\Psi^{-\infty}_c (G,E) \subset\mathfrak{A} \subset C^*(Y,\mathcal{F};E)\,;
\end{equation}
 $\mathfrak{A}$  is big enough for  the two particular representatives of the index classe to belong to it but small enough for the Godbillon-Vey cyclic cocycle to extend;
moreover, being dense and holomorphically closed it has the  same $K$-theory as $C^*(Y,\mathcal{F};E)$.

Let now $(X,\mathcal{F})$ be a foliated bundle with cylindrical ends.
For notational simplicity, unless confusion should arise, let us not write the bundle $E$
in our algebras.
In  this section we shall select "small" subalgebras 
$J_c\subset  C^*(X,\mathcal F)$, $A_c\subset  A^*(X,\mathcal F)$, $B_c\subset B^* (\cyl (\pa X), \mathcal F_{{\rm cyl}})$,
with $J_c$ an ideal in $A_c$,
so that there is a short exact sequence
$0\rightarrow J_c\hookrightarrow A_c \xrightarrow{\pi_c} B_c \rightarrow 0$
which is a subsequence of 
 $0\rightarrow C^*(X,\mathcal F)\hookrightarrow A^* (X;\mathcal F)\xrightarrow{\pi} B^* (\cyl (\pa X), \mathcal F_{{\rm cyl}})\rightarrow 0$.
We shall then proceed to define the two relevant Godbillon-Vey cyclic cocycles and study, algebraically,
their main properties. As in the closed case, we shall eventually need to find an intermediate short exact sequence,
sitting between the two, 
call it 
$0\rightarrow \mathbf{\mathfrak{J}} \hookrightarrow  \mathbf{\mathfrak{A}} \rightarrow   \mathbf{\mathfrak{B}} \rightarrow 0$,
with constituents big enough for the index classes  to belong to them but small enough for
the two cyclic cocycles  to extend; this is quite a delicate point and it will be explained  in  Section 
\ref{sec:shatten}.
We anticipate that, in contrast with the closed case, the ideal $J_c$ in the small subsequence will be too small even for
the index class defined by a pseudodifferential parametrix. This has to do with the non-locality of the parametrix on manifolds with boundary;
it is a phenomenon that was explained in detail in \cite{LPETALE}; we shall come back to it in Section \ref{sec:index}.

\subsection{Small dense subalgebras}\label{subsect:small-dense}
Define $J_c:= C^\infty_c(X,\mathcal{F})$; see subsection \ref{subsec:cstar}. Redefine 
$$B_c:= \{k\in C^\infty((\RR\times\partial \tM)\times (\RR\times\partial \tM) \times T); k 
\text{ is } \RR\times \Gamma\text{-invariant}, k \text{ has } \RR\times\Gamma\text{-compact  support}\}
$$
see subsection \ref{subsec:translation} (we pass from continuous to smooth functions).
We now define $A_c$; consider the functions $\chi^\lambda$, $\chi^\lambda_{{\rm cyl}}$ induced on $X$ and $\cyl (\pa X)$
by the real function $\chi_{(-\infty,-\lambda] }$ (the characteristic function of the interval $(-\infty,-\lambda] $). We shall say that
 $k$ is in $A_c$ if it is a smooth function on  $\tV\times\tV\times T$ which is
$\Gamma$-invariant and 
for which there  exists $\lambda\equiv \lambda(k)>0$, such that
\begin{itemize}
\item $k-\chi^\lambda k \chi^\lambda$ is of $\Gamma$-compact support 
\item there exists $\ell \in B_c$ such that $\chi^\lambda k \chi^\lambda =  \chi^\lambda_{{\rm cyl}} \ell \chi^\lambda_{{\rm cyl}}
$ on $((-\infty,-\lambda]\times \partial \tM) \times ((-\infty,-\lambda]\times \partial \tM) \times T$
\end{itemize}

\begin{lemma}\label{lemma:ac}
$A_c$ is a *-subalgebra of $A^*(X,\mathcal{F})$. 
\end{lemma}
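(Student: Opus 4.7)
The plan is to verify, in turn, that $A_c$ is closed under sums and convolution products, under the involution, and that it embeds into $A^*(X,\F)$. The guiding picture is that each $k\in A_c$ with parameter $\lambda$ and cylindrical asymptote $\ell\in B_c$ decomposes, modulo a $\Gamma$-compactly supported correction, as the cutoff $\chi^\lambda_{\cyl}\,\ell\,\chi^\lambda_{\cyl}$; and that any $\ell\in B_c$ has finite propagation along the $\RR$-factor, since its support on the cylinder groupoid is $\RR\times\Gamma$-compact.

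For sums, I would take $k_1,k_2\in A_c$ with parameters $\lambda_1\leq\lambda_2$ and asymptotes $\ell_1,\ell_2$, and use $\lambda:=\lambda_2$ as a common parameter. The only subtle point is that $k_1-\chi^\lambda k_1\chi^\lambda$ remains $\Gamma$-compactly supported when $\lambda>\lambda_1$; adding and subtracting $\chi^{\lambda_1}k_1\chi^{\lambda_1}$ reduces this to controlling $(\chi^{\lambda_1}_{\cyl}-\chi^\lambda_{\cyl})\,\ell_1$ and $\ell_1\,(\chi^{\lambda_1}_{\cyl}-\chi^\lambda_{\cyl})$, which are $\Gamma$-compactly supported by Sublemma~\ref{sublemma:basic1}. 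The common asymptote of $k_1+k_2$ is then $\ell_1+\ell_2\in B_c$.

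For the product, let $C$ be a common propagation bound for $\ell_1,\ell_2$ and let $M$ bound the $\RR$-coordinate of the support of $R_i:=k_i-\chi^{\lambda_i}k_i\chi^{\lambda_i}$; set $\lambda:=\max(\lambda_1,\lambda_2)+C+M$. Decomposing $k_i=R_i+\chi^{\lambda_i}_{\cyl}\ell_i\chi^{\lambda_i}_{\cyl}$ and expanding the convolution, the three cross terms $R_1*R_2$, $R_1*(\chi^{\lambda_2}_{\cyl}\ell_2\chi^{\lambda_2}_{\cyl})$, $(\chi^{\lambda_1}_{\cyl}\ell_1\chi^{\lambda_1}_{\cyl})*R_2$ are all $\Gamma$-compactly supported, because a factor of $\Gamma$-compact support convolved with a factor of finite $\RR$-propagation produces a $\Gamma$-compactly supported kernel. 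For the main term $(\chi^{\lambda_1}_{\cyl}\ell_1\chi^{\lambda_1}_{\cyl})*(\chi^{\lambda_2}_{\cyl}\ell_2\chi^{\lambda_2}_{\cyl})$ evaluated at $(x,y)$ with $t_x,t_y\leq-\lambda$, any intermediate point $u$ contributing to the integral satisfies $|t_x-t_u|\leq C$, so $t_u\leq-\max(\lambda_1,\lambda_2)$ and the inserted cutoffs $\chi^{\lambda_i}_{\cyl}(u)$ collapse to $1$; the integrand reduces to $\ell_1(x,u)\ell_2(u,y)$, so the main term agrees with $\ell_1*\ell_2\in B_c$ on the deep corner. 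Outside the deep corner, propagation again confines the support of the main term to a bounded $\RR$-strip, and combined with the transverse $\Gamma$-compactness of $\ell_1,\ell_2$ this gives $\Gamma$-compact support on $\tV\times\tV\times T$. Hence $k_1*k_2\in A_c$ with parameter $\lambda$ and asymptote $\ell_1*\ell_2$; this is the technical heart of the lemma.

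Closure under the involution is immediate from $(\chi^\lambda_{\cyl}\ell\chi^\lambda_{\cyl})^*=\chi^\lambda_{\cyl}\ell^*\chi^\lambda_{\cyl}$, the closure of $B_c$ under the involution, and the fact that the adjoint preserves $\Gamma$-compact support. Finally, to embed $A_c$ into $A^*(X,\F)$, I would write
\[
k \;=\; (k - \chi^\lambda_{\cyl}\ell\chi^\lambda_{\cyl}) \;+\; s(\ell) \;+\; \bigl[(\chi^\lambda_{\cyl}-\chi^0)\,\ell\,\chi^\lambda_{\cyl} + \chi^0\,\ell\,(\chi^\lambda_{\cyl}-\chi^0)\bigr].
\]
The first summand is $\Gamma$-compactly supported by the very definition of $A_c$, and the bracketed third summand is $\Gamma$-compactly supported by Sublemma~\ref{sublemma:basic1} (the function $\chi^\lambda_{\cyl}-\chi^0$ being a bounded $\RR$-cutoff); both belong to $C^*(X,\F)\subset A^*(X,\F)$, while $s(\ell)\in A^*(X,\F)$ by the construction of the Wiener--Hopf extension in Lemma~\ref{lemma:split}.
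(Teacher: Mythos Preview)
Your proof is correct and takes essentially the same approach as the paper: decompose each $k\in A_c$ as a $\Gamma$-compactly supported remainder plus $\chi^\lambda\ell\chi^\lambda$, expand the product, and use the finite $\RR$-propagation of $\ell\in B_c$ (which is exactly the content of Sublemma~\ref{sublemma:basic1}) to show that the cross terms and the defect $\chi^\mu\ell\chi^\mu\ell'\chi^\mu-\chi^\mu\ell\ell'\chi^\mu$ are $\Gamma$-compactly supported. You are more thorough in that you also verify closure under sums and involution and the embedding into $A^*(X,\F)$, all of which the paper's proof leaves tacit; your explicit choice of a new parameter $\lambda$ via propagation bounds corresponds to the paper's final step of choosing $\nu>\mu$ large enough that $\chi^\nu(\chi^\mu\ell(\chi^\mu-1))=0$.
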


\begin{proof} 
Let  $k,k'\in A_c$. Write, with a small abuse of notation,  $k=a+\chi^\lambda\,\ell\,\chi^\lambda$ with  $a$ of $\Gamma$-compact support
and $\ell\in B_c$ and similarly for $k'$. Observe first of all that
if $\mu>\lambda$, so that $-\mu<-\lambda$, then $(\chi^\lambda\,\ell\,\chi^\lambda - \chi^\mu\,\ell \, \chi^\mu)$
is also of $\Gamma$-compact support (since $\ell$ if of $\RR\times\Gamma$-compact support).
Thus we can assume that $k=a+\chi^\mu\,\ell\,\chi^\mu$, $k'=a'+\chi^\mu\,\ell '\, \chi^\mu$.
We compute:
$$k k'= a a' + a\chi^\mu\ell ' \chi^\mu + \chi^\mu\ell\chi^\mu a'  + \chi^\mu\ell\chi^\mu \chi^\mu\ell' \chi^\mu \,.$$
The first summand on the right hand side is again of $\Gamma$-compact support; the second and the third summand
are also of $\Gamma$-compact support since $\ell$ and $\ell^\prime$ are of $\RR\times\Gamma$-compact support; 
the last term can be written as 
$$ \chi^\mu\ell\ell' \chi^\mu + (\chi^\mu\ell(\chi^\mu-1))((\chi^\mu -1)\ell' \chi^\mu)\,.$$
Thus $k k' -  \chi^\mu\ell\ell' \chi^\mu= a a' + (\chi^\mu\ell(\chi^\mu-1))((\chi^\mu -1)\ell' \chi^\mu)$; now, by  Sublemma
\ref{sublemma:basic1} both
$(\chi^\mu\ell(\chi^\mu-1))$ and $((\chi^\mu -1)\ell' \chi^\mu)$ are of $\Gamma$-compact support. 
Thus $k k' -  \chi^\mu\ell\ell' \chi^\mu$ is of  $\Gamma$-compact support
as required.
Finally, consider $\nu\in\RR$, $\nu>\mu$ and let $F(p,p',\theta):=  \chi^\nu  (p) (1-\chi^\mu)(p')$,
a function on $W\times W \times T$ which is $\theta$-independent. 
Since $\ell$ and $\ell'$ are of
$\RR\times\Gamma$-compact support, we can choose $\nu>\mu$ so that that ${\rm supp}\, \ell \cap {\rm supp}\, F=\emptyset$.
Thus $\chi^\nu (\chi^\mu\ell(\chi^\mu-1))= \chi^\nu \ell(\chi^\mu-1)$ is equal to zero. We conclude that for such a 
$\nu$ we do get $\chi^\nu k k'\chi^\nu = \chi^\mu\ell\ell' \chi^\mu$ and the proof is complete.
\end{proof}

We thus have:

\begin{proposition}
Let $\pi_c:=\pi |_{A_c}$. Then there is a short exact sequence of *-algebras
\begin{equation}\label{short-compact}
0\rightarrow J_c\hookrightarrow A_c\xrightarrow{\pi_c} B_c \rightarrow 0\,.
\end{equation}
\end{proposition}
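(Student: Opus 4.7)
The plan is to verify the three required facts: $\pi_c$ is a well-defined $*$-homomorphism, $\ker(\pi_c) = J_c$, and $\pi_c$ is surjective. Once $\pi_c$ is a $*$-homomorphism, $J_c = \ker(\pi_c)$ is automatically a $*$-ideal, so there is no separate ideal check to carry out.

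First I would establish well-definedness: given $k \in A_c$, the element $\ell \in B_c$ in the definition is unique. Indeed, on $((-\infty,-\lambda]\times\partial\tM)^2 \times T$ the prescribed equality forces $\ell$ to coincide with the pullback of $k$ (via the inclusion into $\cyl(\pa X)$), and by $\RR\times\Gamma$-invariance plus $\RR\times\Gamma$-compact support, $\ell$ is then determined globally on $(\RR\times\partial\tM)^2\times T$. One also checks that enlarging $\lambda$ to any $\lambda'\geq\lambda$ yields the same $\ell$, so $\pi_c(k) := \ell$ is unambiguous. For multiplicativity, the last paragraph of the proof of Lemma~5.3 already does the work: with a common parameter $\mu$, it produces $\nu>\mu$ such that $\chi^\nu\,k k'\,\chi^\nu = \chi^\mu\,\ell\ell'\,\chi^\mu$ on the deep cylinder, which says exactly $\pi_c(kk')=\ell\ell'=\pi_c(k)\pi_c(k')$. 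Compatibility with the involution is immediate because the kernel involution commutes with multiplication by the characteristic functions $\chi^\lambda$ and $\chi^\lambda_{\cyl}$.

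Next I would compute the kernel. If $k\in J_c$ then $k$ has $\Gamma$-compact support, so for $\lambda$ large enough $\chi^\lambda k\chi^\lambda=0$, and taking $\ell=0$ we get $\pi_c(k)=0$. Conversely, suppose $\pi_c(k)=0$. Then the defining condition gives $\chi^\lambda k\chi^\lambda = \chi^\lambda_{\cyl}\cdot 0\cdot\chi^\lambda_{\cyl}=0$ on $((-\infty,-\lambda]\times\partial\tM)^2\times T$, so $k$ vanishes on this deep cylindrical region. Combined with the hypothesis that $k-\chi^\lambda k\chi^\lambda$ is of $\Gamma$-compact support, the entire $k$ is of $\Gamma$-compact support, i.e.\ $k\in J_c$.

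The only real work is surjectivity. Given $\ell\in B_c$, using its $\RR\times\Gamma$-compact support choose $C>0$ such that $\ell(s,x,s',x',\theta)=0$ whenever $|s-s'|\geq C$ (modulo $\Gamma$-compactness in the $(x,x',\theta)$ directions). Fix $\lambda>C$ and pick a smooth function $\phi$ on $\tV$ which depends only on the cylindrical coordinate $s$, equals $1$ on $(-\infty,-2\lambda]\times\partial\tM$, and vanishes outside $(-\infty,-\lambda]\times\partial\tM$. Define
\[
k(y,y',\theta) := \phi(y)\,\phi(y')\,\ell(y,y',\theta)
\]
on $((-\infty,0]\times\partial\tM)^2\times T$ (where $\ell$ can be pulled back along the inclusion of the cylindrical end into $\RR\times\partial\tM$) and extend by zero to $\tV\times\tV\times T$. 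Then $k$ is smooth and $\Gamma$-invariant. On $((-\infty,-2\lambda]\times\partial\tM)^2\times T$ one has $\phi\equiv 1$, so $\chi^{2\lambda}k\chi^{2\lambda}=\chi^{2\lambda}_{\cyl}\ell\chi^{2\lambda}_{\cyl}$ there; and the support of $k-\chi^{2\lambda}k\chi^{2\lambda}$ lies in the region where at least one of $y,y'$ sits in $(-2\lambda,-\lambda]\times\partial\tM$, which together with the bound $|s-s'|\leq C$ confines $(s,s')$ to a bounded strip. Modulo $\Gamma$ the $(x,x',\theta)$ part is compact, so $k-\chi^{2\lambda}k\chi^{2\lambda}$ has $\Gamma$-compact support. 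Hence $k\in A_c$ with $\pi_c(k)=\ell$.

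The main (modest) obstacle is the surjectivity step: producing a smooth preimage while preserving the $\Gamma$-compactness of $k-\chi^\lambda k\chi^\lambda$ forces one to introduce the smooth buffer $\phi$ and exploit the finite width of the support of $\ell$ in the $(s-s')$ direction. Everything else is bookkeeping around the algebraic structure already established in Lemma~5.3.
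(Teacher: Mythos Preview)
Your proof is correct and in fact more detailed than what the paper provides: the paper states the proposition immediately after Lemma~5.3 with the words ``We thus have'', offering no explicit argument. The intended route, implicit in the phrase $\pi_c:=\pi|_{A_c}$, is slightly different from yours: since $\pi:A^*\to B^*$ is already a $*$-homomorphism, its restriction to the subalgebra $A_c$ is automatically one as well, so multiplicativity and $*$-compatibility come for free; the only checks are that $\pi(k)$ equals the $\ell$ from the definition (hence lands in $B_c$), that the kernel is $J_c$, and that $\pi_c$ is onto. You instead define $\pi_c(k):=\ell$ from scratch and re-derive the homomorphism property via the computation in Lemma~5.3. Both routes are valid; yours is more self-contained and avoids invoking the $C^*$-framework, while the paper's is more economical. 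Your surjectivity construction with the smooth buffer $\phi$ is genuinely needed --- the obvious candidate $s(\ell)=\chi^0\ell\chi^0$ is not smooth, a point the paper itself flags in the Remark immediately following the proposition --- and your argument for it is correct.
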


\begin{remark}
Notice that the image of $A_c$ through $t|_{A_c}$ is not contained in $J_c$ since
$\chi^0$ is not even continuous. Similarly, the image of $B_c$ through $ s|_{B_c}$
is not contained in $A_c$.
\end{remark}

\begin{remark}\label{remark:b-dense}
Using the foliated $b$-calculus developed in \cite{LPETALE} and 
Melrose' indicial operator in the foliated context, it is possible to define
a slightly bigger dense subsequence.  
  We shall briefly comment
 on this in Subsection \ref{subsection:rbm-cocycle}.\end{remark}

\subsection{Relative cyclic cocycles}\label{subsection:rcc}
Let $A$ be a  
$k-$algebra over $k=\CC$. The
cyclic cohomology groups $HC^*(A)$  \cite{connes-ihes} (see also
 \cite{tsygan-cyclic}) are the cohomology groups of the complex
$(C^n_\lambda, b)$ where $ C^n_\lambda$ denotes the space of
$(n+1)-$linear functionals $\varphi$ on $A$ satisfying the
condition: $$ \varphi(a^1, a^2, \ldots, a^n,a^0)= (-1)^n
\varphi(a^0, \ldots, a^{n+1})\,,\;\; \forall a^i \in A $$ and
where $b$ is the Hochschild coboundary map given by
\begin{align*} (b \varphi) (a^0, \ldots, a^{n+1})& = \sum_{j=0}^n
(-1)^j \varphi (a^0, \ldots, a^j a^{j+1}, \ldots, a^{n+1}) +\\&
(-1)^{n+1} \varphi(a^{n+1}a^0,\ldots , a^n). \end{align*} 
Given a second unital algebra $B$ together with a surjective homomorphism
$\pi: A\to B$, one can define the relative cyclic complex 
$$C^n_\lambda(A,B):=\{(\tau,\sigma)\,:\, \tau\in C^n_\lambda (A), \sigma\in C^{n+1}_\lambda (B)\}$$
with  coboundary map given by
$$(\tau,\sigma)\longrightarrow (b\tau-\pi^*\sigma, b\sigma)\,.$$
A relative cochain $(\tau,\sigma)$
is thus a cocycle if $b\tau=\pi^* \sigma$ and $b\sigma=0$.
One obtains in this way the relative cyclic cohomology groups $HC^* (A,B)$.
If $A$ and $B$ are Fr\'echet algebra, then we can also define the topological (relative) cyclic
cohomology groups. More detailed information are given,
for example, 
in \cite{lmpflaum}.


\subsection{Roe's 1-cocycle}\label{subsection:roe}
In this subsection, and in the next two, we study a particular but important example.
We assume that $T$ is a point and that $\Gamma=\{1\}$, so that we are really considering
a compact manifold $X_0$ with boundary $\pa X_0$ and associated manifold with  cylindrical ends
$X$; we keep denoting the cylinder $\RR\times \pa X_0$ by $\cyl (\pa X)$ (thus, as before,
we omit the subscript $0$).
 The algebras appearing in the short exact sequence \eqref{short-compact} 
 are now given by 
 $J_c=C^\infty_c (X\times X)$ and 
 $$B_c=\{k\in C^\infty((\RR\times \pa X_0)\times (\RR\times \pa X_0)); k \text{ is $\RR$-invariant}, k
 \text{ has compact $\RR$-support}\}\,.$$
Finally, a smooth function $k$ on $X\times X$ is in  $A_c$ if there exists
a $\lambda\equiv \lambda(k)>0$ such that \\(i) $k-\chi^\lambda k \chi^\lambda$ is 
of compact support on $X\times X$; \\(ii) $\exists$ $\ell\in B_c$ such that  $\chi^\lambda k \chi^\lambda=
\chi^\lambda_{{\rm cyl}} \ell  \chi^\lambda_{{\rm cyl}}$ on $((-\infty,-\lambda]\times \pa X_0)\times (-\infty,-\lambda]\times \pa X_0)$\,.\\
For such a $k\in A_c$ we set
$\pi_c := \pi|_{A_c}$.
Since $k - \chi^0\ell \chi^0$ admits compact support, it  belongs to $C^*(G)$ (in this case
this is just the equal to the compact operators on $L^2 (X)$). Hence it follows  that 
 $\pi (k)= \ell $ and thus $\pi_c (k)= \ell$; so  we have the short exact sequence of $*$-algebras
$0\rightarrow J_c\hookrightarrow A_c\xrightarrow{\pi_c} B_c \rightarrow 0\,.$
 (The Wiener-Hopf short exact sequence  \eqref{eq:short-cstar} now reads as
$0\rightarrow \mathcal{K} (L^2 (X))\rightarrow 
A^* (X)\xrightarrow{\pi} B^* (\cyl (\pa X)) \rightarrow 0$.)
All of this has an obvious generalization if instead of functions we consider
sections of the bundle ${\rm END} E:=E\boxtimes E^*\to X\times X$, with $E$ a complex vector bundle on $X$.
 
We shall define below a 0-{\it relative} cyclic cocycle associated to the homomorphism $\pi_c: A_c \to B_c$.
To this end we start by
defining  a cyclic 1-cocycle $\sigma_1$ for the algebra $B_c$; this is directly inspired from work
of  John Roe
(indeed, a similarly defined 1-cocycle  plays a fundamental  role in his index
theorem on partioned manifolds \cite{roe-partitioned}). 
It should be noticed that $\sigma_1$ is in fact defined on $B_c (\cyl (Y))$, with
$Y$ any closed compact manifold.

Consider the characteristic function $\chi^\lambda_{{\rm cyl}}$, $\lambda>0$,
 induced on the cylinder $\cyl (Y)$ by the real function $\chi^\RR_{(-\infty,-\lambda]}$.
For notational convenience, unless absolutely necessary,
 we shall use the simpler notation  $\chi^\lambda$.

We define $\sigma_1^\lambda: B_c\times B_c \to \mathbb{C}$ as
\begin{equation}\label{roe-1}
\sigma^\lambda_1 (\ell_0,\ell_1):= \Tr (\ell_0 [\chi^\lambda\,,\,\ell_1])\,.
\end{equation}
 We need to show that the definition is well posed.
 
\begin{proposition}\label{prop:sigma1-ok}
The operators $[\chi^\lambda\,,\,\ell_0]$ and 
$\ell_0 [\chi^\lambda\,,\,\ell_1]$ are trace class $\forall \ell_0, \ell_1 \in B_c$
(and 
 $\Tr [\chi^\lambda\,,\,\ell_0]=0$). In particular
$\sigma^\lambda_1 (\ell_0,\ell_1)$ is well defined.

\end{proposition}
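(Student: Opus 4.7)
The plan is to reduce the whole statement to properties of compactly supported smoothing kernels on the smooth manifold $\cyl(\pa X)$, by combining the algebraic identity
\[
[\chi^\lambda,\ell] \;=\; \chi^\lambda \ell (1-\chi^\lambda) \;-\; (1-\chi^\lambda)\ell\chi^\lambda
\]
with Sublemma \ref{sublemma:basic1}. Writing $k_i$ for the Schwartz kernel of $\ell_i$, the kernel of $[\chi^\lambda,\ell_i]$ is $(\chi^\lambda(z)-\chi^\lambda(z'))k_i(z,z')$; by the Sublemma this is of $\Gamma$-compact support on $\cyl(\pa X)\times\cyl(\pa X)$, and in the present specialized setting ($T$ a point, $\Gamma=\{1\}$) this just means honest compact support.

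To promote ``compact support'' to ``trace class'', I would pick smooth compactly supported functions $\phi_1,\phi_2$ on $\cyl(\pa X)$ that equal $1$ on the two projections of the compact support of the kernel of $\chi^\lambda \ell_1 (1-\chi^\lambda)$. Since the outer cutoffs $\phi_1,\phi_2$ act trivially where the kernel lives, one has
\[
\chi^\lambda \ell_1 (1-\chi^\lambda) \;=\; \chi^\lambda \,(\phi_1 \ell_1 \phi_2)\, (1-\chi^\lambda).
\]
The middle factor $\phi_1\ell_1\phi_2$ has a \emph{smooth, compactly supported} Schwartz kernel on a smooth manifold, hence is a trace class smoothing operator; left and right multiplication by the bounded multiplication operators $\chi^\lambda$ and $(1-\chi^\lambda)$ preserves trace class. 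The same argument handles the other summand, so $[\chi^\lambda,\ell_1]$ is trace class. For $\ell_0[\chi^\lambda,\ell_1]$ it then suffices to observe that $\ell_0\in B_c\subset B^*$ is bounded on $L^2(\cyl(\pa X))$, and a bounded operator times a trace class operator is trace class.

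For the vanishing of $\Tr[\chi^\lambda,\ell_0]$ I would use the same decomposition together with cyclicity:
\[
\Tr\bigl(\chi^\lambda (\phi_1\ell_0\phi_2)(1-\chi^\lambda)\bigr) \;=\; \Tr\bigl((\phi_1\ell_0\phi_2)(1-\chi^\lambda)\chi^\lambda\bigr) \;=\; 0,
\]
since $(1-\chi^\lambda)\chi^\lambda=0$ and cyclicity is legitimate when one factor is trace class and the rest are bounded; the other summand of the commutator vanishes symmetrically. The main delicate point, which I expect to be the only real obstacle, is that $\chi^\lambda$ is merely a characteristic function, so $[\chi^\lambda,\ell_0]$ is not literally a smoothing operator on the whole cylinder; the localization afforded by Sublemma \ref{sublemma:basic1} is precisely what lets one sandwich a genuine smooth compactly supported kernel between two bounded multiplications, and everything else is formal manipulation.
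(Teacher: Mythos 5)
Your proposal is correct and follows essentially the same route as the paper: both use Sublemma \ref{sublemma:basic1} to localize the kernel, insert a smooth compactly supported cutoff to replace $\ell$ by a genuine trace class smoothing operator that reproduces the commutator, and then invoke boundedness of multiplication by $\chi^\lambda$ and cyclicity of the trace. The only (cosmetic) difference is that the paper uses a single cutoff $\varphi$ on the product $\cyl(Y)\times\cyl(Y)$ and writes $[\chi^\lambda,\ell]=[\chi^\lambda,\kappa_0]$, whereas you cut off each summand of the decomposition separately.
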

\begin{proof}
We already know, see Sublemma \ref{sublemma:basic1}, that the operator $[\chi^\lambda\,,\,\ell_1]$ is expressed by a  kernel on the cylinder
which is of compact support. Indeed, in the proof of Sublemma \ref{sublemma:basic1}, which is given
in Section \ref{section:proofs}, 
 we have explicitly written down the kernel  $\kappa$ corresponding to $[\chi^\lambda\,,\,\ell]$ as 
 \begin{equation}\label{kernel-for-commutator-bis}
 \kappa (y,s,y^\prime,s^\prime) = 
\begin{cases}
\ell (y,y^\prime,s-s^\prime)
\qquad & \text{ if }\;\; s\leq -\lambda\,,\;\;s^\prime \geq -\lambda
\\
- \ell (y,y^\prime,s-s^\prime)
\qquad & \text{ if }\;\; s^\prime\leq -\lambda\,,\;\;s \geq -\lambda\\
0 \qquad & \text{ otherwise }
\end{cases}
\end{equation}
where $y,y^\prime\in Y$, $s,s^\prime\in \RR$ and where we have used the $\RR$-invariance of $\ell$ in order to write 
$\ell (s,y,s^\prime,y^\prime)\equiv  \ell (y,y^\prime,s-s^\prime)$.
  Choose now
 a {\it smooth} compactly supported function $\varphi$ on $\cyl(Y)\times \cyl(Y)$, equal to 1 on the support of $\kappa$.
 Let  $\kappa_0$ be the smooth compactly supported kernel obtained by multiplying $\kappa$ by $\varphi$; $\kappa_0$
 is clearly trace class.
 Now,  multiplication by $\chi^\lambda$ is a bounded operator
 so the operators given by  $\chi^\lambda \kappa_0$ and  $\kappa_0 \chi^\lambda$ are also trace class. 
 Since $[\chi^\lambda,\kappa_0]= [\chi^\lambda,\ell]$, we conclude that 
 $[\chi^\lambda,\ell]$
 is trace class;
  since $\ell_0$ defines a bounded
operator, we also see immediately that the trace of $\ell_0 [\chi^\lambda\,,\,\ell_1]$ is  well defined. 
Finally, it remains to justify that $\Tr [\chi^\lambda,\ell]=0$; this is now clear, since   
$\Tr[\chi^\lambda,\ell]=\Tr[\chi^\lambda,\kappa_0]=0$. 
The Proposition is proved.
\end{proof}

\begin{proposition}\label{prop:lambda-indipendent}
The value $\Tr (\ell_0 [\chi^\lambda\,,\,\ell_1])$ is independent of $\lambda$
and will simply be denoted by $\sigma_1 (\ell_0,\ell_1)$.
The functional $\sigma_1 :B_c\times B_c\to \CC$ is a 1-cyclic cocycle. 
\end{proposition}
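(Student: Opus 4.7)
I would prove the proposition in two parts, following the structure of the statement.

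\textbf{Step 1: $\lambda$-independence.} For $\lambda_1<\lambda_2$, set $\rho:=\chi^{\lambda_1}-\chi^{\lambda_2}$, which is multiplication by the characteristic function of the bounded slab $(-\lambda_2,-\lambda_1]\times\partial X_0$. The difference of the two would-be cocycles is
\[
\sigma^{\lambda_1}_1(\ell_0,\ell_1)-\sigma^{\lambda_2}_1(\ell_0,\ell_1)=\Tr\bigl(\ell_0[\rho,\ell_1]\bigr)=\Tr(\ell_0\rho\ell_1)-\Tr(\ell_0\ell_1\rho).
\]
Since $\rho$ cuts the first variable to a bounded $s$-interval and each $\ell_i$ has bounded $\RR$-support in $s-s'$, each of $\rho\ell_1$, $\ell_0\rho$, $\ell_1\rho$ is a smoothing operator with compact support on $\cyl(\pa X)\times \cyl(\pa X)$, hence trace class. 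Cycling (justified because the intermediate products are trace class),
\[
\Tr(\ell_0\rho\ell_1)-\Tr(\ell_0\ell_1\rho)=\Tr(\ell_1\ell_0\rho)-\Tr(\ell_0\ell_1\rho)=\Tr\bigl(\rho[\ell_1,\ell_0]\bigr).
\]
The operator $[\ell_1,\ell_0]$ lies in $B_c$, hence is translation invariant with kernel $K(y,y',s-s')$. Consequently
\[
\Tr\bigl(\rho[\ell_1,\ell_0]\bigr)=\Bigl(\int_{\RR}\rho(s)\,ds\Bigr)\int_{\pa X_0}K(y,y,0)\,dy,
\]
and a direct change of variables in the convolution formula (swapping the integration variables $y\leftrightarrow z$ in the definition of $\ell_0\ell_1$) shows $\int_{\pa X_0}K(y,y,0)\,dy=0$. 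Thus $\sigma^{\lambda_1}_1=\sigma^{\lambda_2}_1$.

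\textbf{Step 2: Hochschild cocycle identity.} For $\ell_0,\ell_1,\ell_2\in B_c$, by the Leibniz rule $[\chi^\lambda,\ell_1\ell_2]=[\chi^\lambda,\ell_1]\ell_2+\ell_1[\chi^\lambda,\ell_2]$, so
\[
(b\sigma_1)(\ell_0,\ell_1,\ell_2)=\Tr\bigl(\ell_0\ell_1[\chi^\lambda,\ell_2]\bigr)-\Tr\bigl(\ell_0[\chi^\lambda,\ell_1]\ell_2\bigr)-\Tr\bigl(\ell_0\ell_1[\chi^\lambda,\ell_2]\bigr)+\Tr\bigl(\ell_2\ell_0[\chi^\lambda,\ell_1]\bigr).
\]
The first and third terms cancel, and by Sublemma \ref{sublemma:basic1} the commutator $[\chi^\lambda,\ell_1]$ is compactly supported (hence trace class), so the trace property legitimately gives $\Tr(\ell_0[\chi^\lambda,\ell_1]\ell_2)=\Tr(\ell_2\ell_0[\chi^\lambda,\ell_1])$, killing the remaining two terms.

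\textbf{Step 3: Cyclicity.} Again using Leibniz and cycling with the trace-class factor $[\chi^\lambda,\,\cdot\,]$,
\[
\sigma_1(\ell_0,\ell_1)+\sigma_1(\ell_1,\ell_0)=\Tr\bigl(\ell_0[\chi^\lambda,\ell_1]\bigr)+\Tr\bigl([\chi^\lambda,\ell_0]\ell_1\bigr)=\Tr\bigl([\chi^\lambda,\ell_0\ell_1]\bigr).
\]
Since $\ell_0\ell_1\in B_c$, Proposition \ref{prop:sigma1-ok} yields $\Tr[\chi^\lambda,\ell_0\ell_1]=0$, so $\sigma_1$ is cyclic.

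\textbf{Main obstacle.} The only delicate point is bookkeeping the trace-class property when cycling factors in Step 1, because neither $\chi^\lambda$ nor $\ell_i$ is trace class on its own. One must insist at each step that the specific operator being cycled out is of the form (bounded)$\cdot$(compactly supported smoothing), which is guaranteed by $\rho$ (in Step 1) or by Sublemma \ref{sublemma:basic1} (in Steps 2–3). The algebraic vanishing then follows mechanically from translation invariance and Leibniz.
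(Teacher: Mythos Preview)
Your proof is correct. Steps 2 and 3 (the Hochschild cocycle identity and cyclicity) are essentially identical to the paper's argument, up to the order in which they are presented and minor rewriting.

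For Step 1 ($\lambda$-independence) you take a genuinely different route. The paper computes $\sigma_1^\lambda(\ell_0,\ell_1)$ directly as an iterated integral, uses the translation invariance $\ell_j(y,y',s,s')=\ell_j(y,y',s-s')$, and performs an explicit change of variables to show that the result equals $\sigma_1^0(\ell_0,\ell_1)$ for every $\lambda$. You instead form the difference $\sigma_1^{\lambda_1}-\sigma_1^{\lambda_2}$, reduce it by trace cycling to $\Tr(\rho[\ell_1,\ell_0])$ with $\rho$ the characteristic function of a bounded slab, and then factorize this as $(\int\rho)\cdot\int_{\partial X_0}K(y,y,0)\,dy$, which vanishes because the restricted diagonal integral of a commutator in $B_c$ is zero. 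Your argument is more conceptual---it isolates the obstruction as a ``boundary trace of a commutator''---while the paper's direct computation has the advantage of producing the explicit formula $\sigma_1^\lambda=\Tr(\ell_0[\chi^0,\ell_1])$ along the way (recorded as equation \eqref{sigma1-0}). One small point: your cycling $\Tr(\ell_1\ell_0\rho)=\Tr(\rho\ell_1\ell_0)$ does not follow from the usual ``one factor trace class, one bounded'' rule, since neither $\ell_1\ell_0$ nor $\rho$ is individually trace class; it is justified here because $\rho^2=\rho$, so $\Tr(\ell_1\ell_0\rho)=\Tr((\ell_1\ell_0\rho)\rho)=\Tr(\rho\ell_1\ell_0\rho)=\Tr(\rho(\rho\ell_1\ell_0))=\Tr(\rho\ell_1\ell_0)$.
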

\begin{proof}
In order to prove the indipendence
on $\lambda$ we make crucial use of the $\RR$-invariance of $\ell_j$. 
We write $\ell_j (y,y^\prime,s,s^\prime)\equiv \ell_j (y,y^\prime,s-s^\prime)$. We 
compute:
\begin{align*}
&\sigma_1^\lambda (\ell_0,\ell_1)=\Tr (\ell_0\chi^\lambda\ell_1 - \ell_0 \ell_1 \chi^\lambda)\\
&=\int_{Y\times Y} dy\,dy^\prime \int_{\RR\times \RR} ds\,ds^\prime\left[ \ell_0 (y,y^\prime,s-s^\prime)\chi^\lambda
(s^\prime) \ell_1 (y^\prime,y,s^\prime-s)- \ell_0 (y,y^\prime,s-s^\prime)\ell_1 (y^\prime,y,s^\prime-s)
\chi^\lambda (s) \right]\\
&= \int_{Y\times Y} dy\,dy^\prime \left( \int_{\RR}ds\int_{-\infty}^{-\lambda} ds^\prime- \int_{-\infty}^{-\lambda}ds
 \int_{\RR}ds^\prime \right) \ell_0 (y,y^\prime,s-s^\prime)\ell_1 (y^\prime,y,s^\prime-s)\\
&= \int_{Y\times Y} dy\,dy^\prime \left( \int_{-\lambda}^{+\infty}ds\int_{-\infty}^{-\lambda} ds^\prime- \int_{-\infty}^{-\lambda}ds
 \int_{-\lambda}^{+\infty} ds^\prime \right) \ell_0 (y,y^\prime,s-s^\prime)\ell_1 (y^\prime,y,s^\prime-s)\\
 &= \int_{Y\times Y} dy\,dy^\prime \left( \int_{0}^{+\infty}dt\int_{-\infty}^{0} dt^\prime- \int_{-\infty}^{0}dt
 \int_{0}^{+\infty} dt^\prime \right) \ell_0 (y,y^\prime,t-t^\prime)\ell_1 (y^\prime,y,t^\prime-t)
\end{align*}
Thus $\Tr (\ell_0 [\chi^\lambda\,,\,\ell_1])$ is independent
of $\lambda$ since we have proved that $\forall \lambda$ it is equal to $\Tr (\ell_0 [\chi^0\,,\,\ell_1])$.
In particular we record that
\begin{equation}\label{sigma1-0}
\sigma_1^\lambda (\ell_0,\ell_1)=\Tr (\ell_0 [\chi^0\,,\,\ell_1])\,.
\end{equation}
We shall denote $\sigma_1^\lambda$ as $\sigma_1$.
In order to show that  $\sigma_1 $ is a  cyclic cocycle we begin by 
recalling that $\Tr [\chi^\lambda,\ell]=0$ $\forall \ell\in B_c$. Thus we have
\begin{align*}
\sigma_1 (\ell_0,\ell_1)+ \sigma_1 (\ell_1,\ell_0)&=\Tr (\ell_0 [\chi^0,\ell_1]) + \Tr ([\chi^0,\ell_0]\ell_1)\\
&= \Tr ( [\chi^0, \ell_0\ell_1]) =0
\end{align*}
proving that $\sigma_1$ is a cyclic cochain. Next we compute
\begin{align*}
b\, \sigma_1 (\ell_0,\ell_1,\ell_2)&= \Tr  \left( \ell_0 \ell_1 [\chi^0,\ell_2]) + \ell_0[\chi^0,\ell_1\ell_2] + \ell_2 \ell_0 [\chi^0,\ell_1] \right)\\
&= \Tr  \left( -\ell_0  [\chi^0,\ell_1] \ell_2 + \ell_2\ell_0[\chi^0,\ell_1] \right)\\
&= \Tr \left( [\ell_2, \ell_0 [\chi^0,\ell_1] ]\right) =0
\end{align*}

\end{proof}

\begin{remark}\label{remark:new-sigma-1}
We point out that following expression for $\sigma_1$:
\begin{equation}\label{alternative-sigma-1}
\sigma_1 (\ell_0,\ell_1)= \frac{1}{2}\Tr\left( \chi^0 [\chi^0,\ell_0][\chi^0,\ell_1] \right) \,.
\end{equation}
The proof of \eqref{alternative-sigma-1} is elementary (just apply repeatedly the fact that $1= \chi^0 + (1-\chi^0)$) 
and for the sake of brevity we omit it. 
The advantage of this new expression for $\sigma_1$ is that it makes the extension 
to certain dense subalgebras easier to deal with. (Notice, for example, that
$\sigma_1$ is now defined under the weaker assumption that $[\chi^0,\ell_j]$
is Hilbert-Schmidt.)  The right hand side of \eqref{alternative-sigma-1}
is in fact the original definition by Roe.
 \end{remark}

\subsection{Melrose' regularized integral}\label{subsection:melrose}
Recall that our immediate goal is to define a 0-{\it relative} cyclic cocycle for the homomorphism
$\pi_c : A_c \to B_c$ appearing in the short exact sequence of the previous section. Having defined
a 1-cocycle $\sigma_1$ on $B_c$ we now need to define a 0-cochain on $A_c$. Our definition will be
a simple adaptation of the definition of the $b$-trace in Melrose' $b$-calculus
(but since our algebra $A_c$ is very small, we can give a somewhat simplified treatment). Recall that
for $\lambda>0$ we are denoting by $X_\lambda$ the compact manifold obtained attaching $[-\lambda,0]\times \pa X_0$
to our manifold with boundary $X_0$. 

So, let $k\in A_c$ with $\pi_c (k)=\ell\in B_c$. Since $\ell$ is $\RR$-invariant on the cylinder $\RR\times \pa X_0$
we can write $\ell(y,y^\prime,s)$ with  $y,y^\prime\in \pa X_0$, $s\in \RR$.
Set
\begin{equation}\label{tau0}
\tau_0^r (k) := \lim_{\lambda\to +\infty} \left( \int_{X_\lambda} k(x,x) {\rm dvol}_g -\lambda \int_{\pa X_0} \ell(y,y,0) {\rm dvol}_{g_\partial} 
\right)
\end{equation}
where the superscript $r$ stands for {\it regularized}. (The $b$-superscript would be of course more appropiate;
unfortunately it gets confused with the $b$ operator in cyclic cohomology.)
It is elementary to see that the limit exists; in fact, because of the very particular definition of
$A_c$ the function 
$$\varphi (\lambda):= \int_{X_\lambda} k(x,x) {\rm dvol}_g -\lambda \int_{\pa X_0} \ell(y,y,0) {\rm dvol}_{g_\partial}$$
becomes constant for large values of $\lambda$. The proof is elementary and thus omitted.
$\tau^r_0$ defines a 0-cochain on $A_c$.
 
\begin{remark}
Notice that \eqref{tau0}  is nothing but Melrose' regularized integral, in the cylindrical language, 
for the restriction of $k$ to the diagonal of $X\times X$.
\end{remark}

We shall also need  the following 

\begin{lemma}\label{lemma:regularized}
If $k\in A_c$ then $t(k)$, which is a priori a compact operator, is in fact trace class.
Moreover
\begin{equation}\label{b-trace=composition}
\tau^r_0 (k)= \Tr (t (k))\,.
\end{equation}
\end{lemma}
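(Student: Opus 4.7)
The plan is to use the structural decomposition of $A_c$ to split $t(k)$ into three trace-class summands and then compute each trace along the diagonal, identifying the sum with Melrose's regularized integral. By definition of $A_c$, choose $\lambda \equiv \lambda(k) > 0$ so that $a := k - \chi^\lambda k \chi^\lambda = k - \chi^\lambda \ell \chi^\lambda$ has compact support on $X\times X$ (where $\ell := \pi_c(k) \in B_c$), and rewrite
\[
t(k) = k - \chi^0 \ell \chi^0 = a + (\chi^\lambda - \chi^0)\,\ell\,\chi^\lambda + \chi^0\,\ell\,(\chi^\lambda - \chi^0).
\]
The indicator identity $\chi^\lambda - \chi^0 = -\mathbf{1}_{(-\lambda,0]\times \partial X_0}$ makes the cross-terms involve multiplication by a compactly-supported cutoff; together with the $\RR$-compact translation-support of $\ell$ (say within $|s-s'|\le R$), this forces each cross-term kernel to have compact support in $X \times X$. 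To convert compact support into the trace-class property, pick a smooth bump function $\varphi$ on $X$ equal to $1$ on a compact set large enough that $\varphi\ell\varphi$ coincides with $\ell$ on the relevant regions, and write $\chi^0 \ell (\chi^\lambda - \chi^0) = \chi^0 (\varphi\ell\varphi)(\chi^\lambda - \chi^0)$; the middle factor has a smooth compactly supported kernel and is therefore trace class, so the ideal property of trace-class operators gives trace-class for the product. The second cross-term and $a$ itself are handled identically.

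Next, compute the trace of each piece as the diagonal integral of its kernel. The pointwise identity $\chi^\lambda(\chi^\lambda - \chi^0) = 0$ makes the contribution of $(\chi^\lambda - \chi^0)\ell \chi^\lambda$ vanish. Since $\chi^0(\chi^\lambda - \chi^0) = \chi^\lambda - \chi^0$, $\RR$-invariance of $\ell$ yields
\[
\Tr\bigl(\chi^0 \ell (\chi^\lambda - \chi^0)\bigr) = -\int_{-\lambda}^0 \! ds \int_{\partial X_0} \ell(y,y,0)\, d\vol_{g_\partial} = -\lambda \int_{\partial X_0} \ell(y,y,0)\, d\vol_{g_\partial}.
\]
Finally $\Tr(a) = \int_X a(x,x)\,d\vol_g = \int_{X_\lambda} k(x,x)\, d\vol_g$, because $a(x,x) = k(x,x)$ on $X_\lambda$, while on the far cylindrical region $s \le -\lambda$ the coincidence $k = \ell$ on the cylinder forces $a(x,x) = 0$. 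Summing,
\[
\Tr(t(k)) = \int_{X_\lambda} k(x,x)\, d\vol_g - \lambda \int_{\partial X_0} \ell(y,y,0)\, d\vol_{g_\partial},
\]
which is exactly the expression inside the limit defining $\tau_0^r(k)$ in \eqref{tau0}; the observation already made there that this function of $\lambda$ is eventually constant then identifies the right-hand side with $\tau_0^r(k)$.

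The main obstacle is the trace-class claim for the cross-terms: because they involve multiplication by the discontinuous cutoffs $\chi^0$ and $\chi^\lambda$, one cannot simply invoke smoothness of the kernel, and the factorization through the genuinely smooth compactly-supported operator $\varphi\ell\varphi$ is the essential device. Once that is in hand, the remainder is bookkeeping with support properties and the $\RR$-invariance of $\ell$.
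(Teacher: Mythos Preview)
Your proof is correct and follows essentially the same approach as the paper: both arguments reduce to showing that $\chi^\lambda\ell\chi^\lambda-\chi^0\ell\chi^0$ is trace class by factoring through a smooth compactly supported operator (your $\varphi\ell\varphi$ is exactly the paper's $\sigma\ell\sigma$), and both then identify the trace with the diagonal integral. The only cosmetic difference is that the paper keeps the two terms $\chi^\lambda(\sigma\ell\sigma)\chi^\lambda$ and $\chi^0(\sigma\ell\sigma)\chi^0$ separate and invokes a sublemma of the form $\Tr(\chi a\chi)=\int_K a(x,x)\,dx$, whereas you split as $(\chi^\lambda-\chi^0)\ell\chi^\lambda+\chi^0\ell(\chi^\lambda-\chi^0)$ and compute each diagonal directly; your pointwise identities $\chi^\lambda(\chi^\lambda-\chi^0)=0$ and $\chi^0(\chi^\lambda-\chi^0)=\chi^\lambda-\chi^0$ make this bookkeeping slightly cleaner.
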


\noindent
We remark once again that $t(k)$ is not an element in $J_c$.
\begin{proof}
We first need the following:
\begin{sublemma}
Let $\chi$ is the characteristic function of a mesurable set $K$  in  $X$. 
If $a\in J_c$, then  $k=\chi a \chi$ 
is of trace class and the trace is obtained as
$$
\Tr (k) =
\int_K a(x,x)dx
.
$$
\end{sublemma}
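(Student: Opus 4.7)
The plan is to establish three facts in sequence: that the operator with kernel $a$ is already trace class on $L^2(X)$, that multiplying on both sides by the bounded multiplication operator $\chi$ preserves the trace-class property, and finally that the trace of $\chi a \chi$ is computed by integrating $a(x,x)$ over $K$. The first two steps are standard. Since $a \in J_c = C^\infty_c(X\times X)$ is a smooth, compactly supported kernel on a (possibly non-compact) manifold, the corresponding integral operator is nuclear; explicitly, one can choose a compactly supported smooth function $\varphi$ equal to $1$ on a neighborhood of $\supp a$ in $X$, and embed the problem into a closed manifold (or a relatively compact open set) where the kernel is smooth. Standard spectral theory then gives a singular-value expansion
\[
a = \sum_{n\geq 0} \lambda_n\, |\phi_n\rangle\langle \psi_n|, \qquad \sum_n |\lambda_n| < \infty,
\]
with $\{\phi_n\}, \{\psi_n\}$ orthonormal, and the kernel identity $a(x,y) = \sum_n \lambda_n \phi_n(x)\overline{\psi_n(y)}$ holds pointwise (in fact uniformly on the support). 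Since $\chi$ is a bounded multiplication operator of norm at most $1$ on $L^2(X)$, it follows that $\chi a \chi$ is again trace class, with norm bounded by $\sum_n |\lambda_n|$.

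For the trace formula, I would argue termwise using the expansion above. One has
\[
\chi a \chi \;=\; \sum_n \lambda_n\, |\chi\phi_n\rangle\langle \chi\psi_n|,
\]
with convergence in trace norm. Taking the trace of each rank-one operator,
\[
\Tr(\chi a \chi) \;=\; \sum_n \lambda_n \langle \chi\psi_n,\chi\phi_n\rangle \;=\; \sum_n \lambda_n \int_X \chi(x)^2\, \phi_n(x)\overline{\psi_n(x)}\, dx.
\]
By Cauchy--Schwarz and summability of $(\lambda_n)$ (combined with $\|\phi_n\|_{L^2} = \|\psi_n\|_{L^2} = 1$ and the compact support of $\chi \cdot \supp a$), Fubini applies and I can interchange sum and integral to obtain
\[
\Tr(\chi a \chi) \;=\; \int_X \chi(x)^2 \Bigl(\sum_n \lambda_n \phi_n(x)\overline{\psi_n(x)}\Bigr)\, dx \;=\; \int_X \chi(x)\, a(x,x)\, dx \;=\; \int_K a(x,x)\, dx,
\]
using $\chi^2 = \chi$ in the last step.

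An alternative route, which I would mention as a backup if the SVD bookkeeping proves cumbersome, is to approximate $\chi$ pointwise by a uniformly bounded sequence $\chi_\varepsilon \in C^\infty_c(X)$ (mollifications of $\chi$ multiplied by a compactly supported bump equal to $1$ near $\supp a \cap (K\times K)$). For each $\varepsilon$, the operator $\chi_\varepsilon a \chi_\varepsilon$ has smooth compactly supported kernel, hence $\Tr(\chi_\varepsilon a\chi_\varepsilon) = \int \chi_\varepsilon(x)^2 a(x,x)\, dx$; the right-hand side converges to $\int_K a(x,x)\, dx$ by dominated convergence, and the left-hand side converges to $\Tr(\chi a \chi)$ by writing $a$ as a product of Hilbert--Schmidt operators and exploiting strong convergence $\chi_\varepsilon \to \chi$ in $\mathcal{B}(L^2(X))$.

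The only genuine subtlety, and thus the step that requires care, is justifying the trace formula even though the kernel of $\chi a \chi$ is discontinuous across $\partial K$: one cannot simply invoke ``trace equals integral on the diagonal'' for merely bounded measurable kernels, and the justification must proceed through the nuclear decomposition of $a$ or through a controlled approximation of $\chi$, as above.
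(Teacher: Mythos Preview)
Your argument is correct. The paper takes a shorter and slightly different route: it writes $k=\chi a\chi=bc$ with $b,c$ Hilbert--Schmidt and computes $\Tr(k)=\langle b,c^*\rangle_2=\int_{X\times X}b(x,y)c(y,x)\,dx\,dy=\int_K a(x,x)\,dx$ directly, bypassing the singular-value expansion altogether. Your backup approximation argument in fact rests on exactly this Hilbert--Schmidt factorization (that is what you use to get trace-norm convergence of $\chi_\varepsilon a\chi_\varepsilon$), so the two approaches are close cousins. Your main SVD route also works, though the step identifying $\sum_n\lambda_n\phi_n(x)\overline{\psi_n(x)}$ with the continuous diagonal value $a(x,x)$ is precisely the delicate point you flag at the end---your Fubini estimate controls the \emph{integral} of the series but does not by itself give pointwise agreement with $a(x,x)$ on the diagonal; the HS factorization sidesteps this issue cleanly.
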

\begin{proof}
Since $a$ gives rise to a smoothing operator with compact support, it is of trace class. 
Recall that the algebra of trace class operators forms an ideal in the algebra of bounded operators. 
Thus $k$ is of trace class and we can assume that 
 $k=bc$ with $b$ and $c$  operators of Hilbert-Schmidt class. 
 Then 
$$
\Tr (k) =\langle b,c^* \rangle_{2}=
\int_{X\times X} b(x,y)c(y,x)dxdy =
\int_K a(x,x)dx
, 
$$
with $\langle \; ,\; \rangle _{2}$ denoting the inner product 
for  operators of Hilbert-Schmidt class. 
\end{proof}
Write $k=a+\chi^\lambda\,\ell\,\chi^\lambda$ with  $a \in J_c$ 
and $\ell\in B_c$ as in subsection \ref{subsect:small-dense}.
There exists a compactly supported smooth function $\sigma$ on $X$,  depending on $\ell$, 
such that 
$$
\chi^{\lambda}\ell \chi^{\lambda} - \chi^0\ell\chi^0
=
\chi^{\lambda}\sigma\ell\sigma \chi^{\lambda} - \chi^0\sigma\ell\sigma\chi^0
$$ 
since the support of $\chi^{\lambda}-\chi^0$ is compact. 
Note that we can choose the same $\ell$ in \ref{eq:t} and subsection \ref{subsect:small-dense}. 
Thus 
$t(k)=k-\chi^0\ell\chi^0 
=a+\chi^\lambda\,\sigma\ell\sigma\,\chi^\lambda-\chi^0\sigma\ell\sigma\chi^0$ 
is of trace class 
due to the sublemma above. 
Therefore, we have
$$
\Tr (t (k))=
\int_X a(x,x)dx - \int_{X_{\lambda}\setminus X_0}\ell(y,y,0)dydt
= 
\tau_0^r (k)
$$
for a sufficiently large $\lambda$. This completes the proof.
\end{proof}

\subsection{Melrose' regularized integral  and Roe's 1-cocycle define a relative 0-cocycle}
\label{subsection:relative0}

We finally consider the relative 0-cochain $(\tau_0^r,\sigma_1)$ for the pair $A_c\xrightarrow{\pi_c}B_c$.
\begin{proposition}\label{prop:c-relative-cocycle}
The relative 0-cochain $(\tau_0^r,\sigma_1)$ is a relative 0-cocycle. It thus defines an element $[(\tau_0^r,\sigma_1)]$
 in the relative group $HC^0 (A_c,B_c)$.
\end{proposition}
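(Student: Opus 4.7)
The condition that $(\tau_0^r,\sigma_1)$ be a relative $0$-cocycle for $\pi_c:A_c\to B_c$ unpacks, via the definitions of Subsection~\ref{subsection:rcc}, into two requirements: (a) $\sigma_1$ is a cyclic $1$-cocycle on $B_c$, and (b) $b\tau_0^r=\pi_c^*\sigma_1$ on $A_c$, that is, for $\ell_i:=\pi_c(k_i)$,
\begin{equation*}
\tau_0^r(k_0k_1)-\tau_0^r(k_1k_0)\;=\;\sigma_1(\ell_0,\ell_1).
\end{equation*}
Requirement (a) is exactly Proposition~\ref{prop:lambda-indipendent}, so the task is (b).

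My plan is to exploit Lemma~\ref{lemma:regularized}, which rewrites $\tau_0^r(k)=\Tr(t(k))$ with $t(k)=k-\chi^0\pi_c(k)\chi^0$ trace-class, turning the left-hand side of (b) into $\Tr\bigl([k_0,k_1]-\chi^0[\ell_0,\ell_1]\chi^0\bigr)$. Setting $r_i:=t(k_i)$, which is trace-class by Lemma~\ref{lemma:regularized}, I would write $k_i=r_i+\chi^0\ell_i\chi^0$ and expand $[k_0,k_1]$ into four commutators. Three of them pair a trace-class $r_i$ with a bounded operator, hence contribute zero trace by the standard property $\Tr(AB)=\Tr(BA)$. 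What survives is
\begin{equation*}
\Tr\bigl([\chi^0\ell_0\chi^0,\chi^0\ell_1\chi^0]-\chi^0[\ell_0,\ell_1]\chi^0\bigr).
\end{equation*}

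Using $(\chi^0)^2=\chi^0$ and $\chi^0(1-\chi^0)=0$, this collapses to
\begin{equation*}
\Tr\bigl(\chi^0\ell_1(1-\chi^0)\ell_0\chi^0\bigr)-\Tr\bigl(\chi^0\ell_0(1-\chi^0)\ell_1\chi^0\bigr),
\end{equation*}
both summands being trace-class by Sublemma~\ref{sublemma:basic1}. Repeated use of cyclicity of the trace (arranged so that at each step the "corner" factor $\chi^0\ell_i(1-\chi^0)$ or $(1-\chi^0)\ell_i\chi^0$ --- which is trace-class --- ends up in the permuted position) combined once more with $\chi^0(1-\chi^0)=0$ folds this difference into $\Tr(\ell_0[\chi^0,\ell_1])=\sigma_1(\ell_0,\ell_1)$, which is exactly (b).

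Conceptually, the identity $b\tau_0^r=\pi_c^*\sigma_1$ is the cylindrical/time-domain incarnation of Melrose's formula for the $b$-trace of a commutator: the defect of $\tau_0^r$ from being a trace is measured precisely by Roe's $1$-cocycle on the indicial (translation-invariant) data. The principal technical obstacle is not conceptual but bookkeeping: individual factors such as $\ell_i\chi^0$ are only bounded, so each invocation of $\Tr(AB)=\Tr(BA)$ must be set up so that one of the two factors is genuinely trace-class, which in every case is arranged by sandwiching $(1-\chi^0)$ between cylindrical kernels and invoking Sublemma~\ref{sublemma:basic1}.
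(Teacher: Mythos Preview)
Your proof is correct and follows essentially the same route as the paper's: decompose each $k_i$ into a trace-class piece plus a compressed cylindrical piece, kill the three commutators involving a trace-class factor, and identify the remaining cross term with $\sigma_1(\ell_0,\ell_1)$. The only cosmetic difference is that the paper uses the $A_c$-decomposition $k_i=a_i+\chi^\mu\ell_i\chi^\mu$ with $a_i\in J_c$ and some $\mu>0$ (then invoking the $\lambda$-independence $\sigma_1^\mu=\sigma_1$ at the end), whereas you work directly with $\chi^0$ via the section $t$ and Lemma~\ref{lemma:regularized}; your version is marginally more streamlined but otherwise identical in structure.
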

\begin{proof}
We need to show that $b\sigma_1=0$ and that $b\tau_0^r=(\pi_c)^* \sigma_1$. The first has already been
proved, so we concentrate on the second. We compute:
$b\tau (k,k^\prime)= \tau_0^r (k k^\prime - k^\prime k).$ Write
$k=a+\chi^\mu \ell \chi^\mu$, $k^\prime=a^\prime+\chi^\mu \ell^\prime \chi^\mu$ as we did in the proof
of Lemma \ref{lemma:ac}. Then we need to
show that
\begin{equation}\label{0-cocycle}
\tau_0^r (k k^\prime - k^\prime k)= \sigma_1 (\pi_c k,\pi_c k^\prime)=\sigma_1 (\ell,\ell^\prime)\,.
\end{equation} 
 There are several proofs of this fundamental relation. 
 One proof of \eqref{0-cocycle} employs  Melrose' formula
 for the $b$-trace of a commutator; we shall give the details in the next Subsection. 
 Here we propose
 a different proof that has the advantage of extending to more general situations.
 Following the proof of Lemma \ref{lemma:ac}, we can write
$$k k'= (a a' + a\chi^\mu\ell ' \chi^\mu + \chi^\mu\ell\chi^\mu a'   - \chi^\mu\ell(1-\chi^\mu) \ell' \chi^\mu)+ \chi^\mu\ell \ell' \chi^\mu \,.$$
Notice that the first summand is trace class; this is obvious for the first term $a a'$
and clear for the next two terms; the fourth term, viz.  $-\chi^\mu\ell(1-\chi^\mu) \ell' \chi^\mu$
is trace class because $\chi^\mu\ell(1-\chi^\mu) $ is trace class and $ \ell' \chi^\mu$ is bounded (see the proof of Proposition
\ref{prop:sigma1-ok}). 
A similar expression can be written for $k^\prime k$. Using first Lemma 
\ref{lemma:regularized} and then the definition of $t$,  we obtain easily 
\begin{align*}
\tau_0^r (k k^\prime - k^\prime k)&=\Tr (t (k k^\prime - k^\prime k))\\
&= \Tr \left( [a,a^\prime]+ [\chi^\mu \ell \chi^\mu, a^\prime] + [a,\chi^\mu \ell^\prime \chi^\mu]- \chi^\mu \ell (1-\chi^\mu)\ell^\prime
\chi^\mu + \chi^\mu\ell^\prime (1-\chi^\mu)\ell \chi^\mu \right)\\
&=\Tr (- \chi^\mu \ell (1-\chi^\mu)\ell^\prime
\chi^\mu + \chi^\mu\ell^\prime (1-\chi^\mu)\ell \chi^\mu)\\ &= \sigma_1^\mu (\ell,\ell^\prime)=\sigma_1 (\ell,\ell^\prime)
\end{align*}
The Proposition is proved.
\end{proof}

\subsection{Melrose' 1-cocycle and the relative cocycle condition via the $b$-trace formula}
\label{subsection:rbm-cocycle}

{\it The results in this  Subsection will not be used in the sequel.}

\smallskip

As we have anticipated in the previous subsection, the equation $b\tau_0^r= \pi^*_c \sigma_1$ is nothing
but a compact way of rewriting Melrose' formula for the $b$-trace of a commutator. We wish to
explain this point here. 

First of all, since it will  cost us nothing, we consider a  slightly larger  subsequence
of dense subalgebras. We hinted to this subsequence in Remark \ref{remark:b-dense};
we explain it here  for $T={\rm point}$ and $\Gamma=\{1\}$ even though it exists in the general
foliated case.
Thus, following the notations of the $b$-calculus, we set
$$
A^b_c:= \Psi^{-\infty}_{b,c}(X,E)\,,\quad B^b_c:= \Psi^{-\infty}_{b,I,c}(\overline{N_+\partial X},E|_{\pa})\,,\quad
J^b_c := \rho_{{\rm ff}}
\Psi^{-\infty}_{b,c}(X,E)$$
and consider
\begin{equation}\label{short-b}
0\,\longrightarrow \,
J^b_c\,
\longrightarrow
A^b_c\,
\xrightarrow{ \pi^b_c}
B^b_c\,
\longrightarrow \,0\,,\end{equation} with $ \pi^b_c $ equal to Melrose' indicial operator $I(\cdot)$. 
This sequence is certainly larger than the one we have defined, viz.
$0\rightarrow J_c\hookrightarrow A_c\xrightarrow{\pi_c} B_c \rightarrow 0$ (indeed
the latter corresponds  to the subsequence
of  \eqref{short-b} obtained by restricting  \eqref{short-b}  to the  sub-algebra
$\{k\in A^b_c : k-k|_{{\rm ff}} \;\text{ has support in the interior of }\; \wx\times_b \wx \},$
with ${\rm ff}$ denoting the front face of the $b$-stretched product).

\medskip
Let $\tau_0^r$ be equal to the $b$-Trace:
$\tau_0^r:={}^b \Tr$. 
 Observe that $\sigma_1$ also defines a 1-cocyle on $B_c^b$.
 We can thus consider the relative 0-cochain $(\tau_0^r,\sigma_1)$ for the homomorphism
$A^b_c\,
\xrightarrow{I(\cdot)}
B^b_c$; in order to prove that this is  a relative 0-cocycle it remains to 
 to show that $b\tau_0^r (k,k^\prime)= \sigma_1 (I(k),I(k^\prime))$, i.e.
 \begin{equation}\label{1cocycle-is-b}
 {}^b \Tr [k,k^\prime]= \Tr (I(k)[\chi^0,I(k^\prime)])
 \end{equation}
 Recall here that Melrose' formula for the $b$-trace of a commutator is
 \begin{equation}\label{melrose}
 {}^b \Tr [k,k^\prime]= \frac{i}{2\pi}\int_\RR \Tr_{\pa X} \left( \partial_\mu I(k,\mu) \circ I (k^\prime,\mu) \right) d\mu
 \end{equation}
 with 
 $\CC\ni z\to I(k,z)$ denoting the indicial family of the operator $k$, i.e. the Fourier transform 
of the indicial operator $I(k)$.

Inspired by the right hand side of \eqref{melrose} we consider an arbitrary  compact manifold
 $Y$, the algebra $B^b_c (\cyl (Y))$ and the functional
\begin{equation}\label{melrose-1-cocycle}
\mathfrak{s}_1 (\ell,\ell^\prime):=  \frac{i}{2\pi }\int_\RR \Tr_Y 
\left( \partial_\mu \hat{\ell}(\mu) \circ \hat{\ell}^\prime (\mu) \right) d\mu
\end{equation}
That this is a cyclic 1-cocyle follows by elementary arguments (it also follows from the Proposition below).
Formula \eqref{melrose-1-cocycle}
defines what should be called Melrose' 1-cocycle

\begin{proposition}\label{prop:hilbert-transf}
Roe's 1-cocycle $\sigma_1$ and Melrose 1-cocycle $\mathfrak{s}_1 $
coincide:
\begin{equation}\label{equality-with-rbm-bis}
\sigma_1 (\ell,\ell^\prime):= \Tr (\ell[\chi^0,\ell^\prime]) = \frac{i}{2\pi }\int_\RR \Tr_Y \left( \partial_\mu \hat{\ell}(\mu) \circ \hat{\ell}^\prime (\mu) \right) d\mu
 =:  \mathfrak{s}_1 (\ell,\ell^\prime)
\end{equation}
\end{proposition}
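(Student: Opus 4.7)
The plan is to compute $\sigma_1(\ell,\ell')=\Tr(\ell[\chi^0,\ell'])$ directly from the explicit kernel formula \eqref{kernel-for-commutator-bis} and then convert the resulting integral along the cylinder axis into a frequency integral via Plancherel.

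First, using $\RR$-translation invariance we write $\ell(s,y,s',y')=\ell(y,y',s-s')$ and similarly for $\ell'$. The operator $[\chi^0,\ell']$ has the kernel $(\chi^0(s)-\chi^0(s'))\ell'(y,y',s-s')$. Composing with $\ell$ and integrating along the diagonal gives
\begin{equation*}
\Tr(\ell[\chi^0,\ell'])=\int ds\,ds'\,dy\,dy'\;\ell(y,y',s-s')\,(\chi^0(s')-\chi^0(s))\,\ell'(y',y,s'-s).
\end{equation*}
(As in the proof of Proposition~\ref{prop:sigma1-ok}, the kernel of $[\chi^0,\ell']$ has compact support on $\cyl(Y)\times\cyl(Y)$, so Fubini is justified.) Setting $t=s-s'$, the inner $s$-integral is
\begin{equation*}
\int_\RR\bigl(\chi^0(s-t)-\chi^0(s)\bigr)\,ds \;=\; -t,
\end{equation*}
as one checks by a direct case analysis (the set where the integrand equals $+1$ has measure $t_+$ and where it equals $-1$ has measure $t_-$, giving $-t$). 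Hence
\begin{equation*}
\Tr(\ell[\chi^0,\ell'])=-\int dt\,dy\,dy'\; t\,\ell(y,y',t)\,\ell'(y',y,-t).
\end{equation*}

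Next I would apply Plancherel in the $t$-variable. With the convention $\hat\ell(y,y',\mu)=\int\ell(y,y',t)e^{-it\mu}\,dt$, multiplication by $t$ on the spatial side corresponds to $i\partial_\mu$ on the Fourier side, and the reflection $t\mapsto -t$ corresponds to $\mu\mapsto -\mu$. The Parseval identity $\int f(t)g(t)\,dt=\tfrac{1}{2\pi}\int\hat f(\mu)\hat g(-\mu)\,d\mu$ then yields
\begin{equation*}
\int t\,\ell(y,y',t)\,\ell'(y',y,-t)\,dt \;=\; \frac{i}{2\pi}\int_\RR \partial_\mu\hat\ell(y,y',\mu)\,\hat{\ell'}(y',y,\mu)\,d\mu
\end{equation*}
(the two sign flips — one from $-t$ and one from the extra minus above — combine to give an overall $+$ on the right hand side after reinstating the $-$ prefactor; I would do this bookkeeping carefully). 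Integrating over $y,y'$ recognises the inner integral as $\Tr_Y\bigl(\partial_\mu\hat\ell(\mu)\circ\hat{\ell'}(\mu)\bigr)$, producing exactly $\mathfrak{s}_1(\ell,\ell')$.

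The main obstacle is purely bookkeeping: carefully tracking signs through the two reflections $t\mapsto -t$ and $\mu\mapsto -\mu$, and confirming that the elementary integral of $\chi^0(s-t)-\chi^0(s)$ is $-t$ (not $+t$), which fixes the overall sign and matches the factor $i/(2\pi)$ in Melrose's formula \eqref{melrose-1-cocycle}. Convergence issues are mild because $\ell,\ell'\in B^b_c$ have Schwartz indicial families and the support analysis of Sublemma~\ref{sublemma:basic1} guarantees that all manipulations with the non-integrable function $\chi^0$ take place on the compactly supported kernel of the commutator.
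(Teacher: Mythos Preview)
Your approach is correct and genuinely different from the paper's. One concrete correction: the elementary integral $\int_\RR(\chi^0(s-t)-\chi^0(s))\,ds$ equals $+t$, not $-t$. (For $t>0$ the integrand is $+1$ on $(0,t]$ and $0$ elsewhere; for $t<0$ it is $-1$ on $(t,0]$.) With this sign your Parseval step goes through cleanly without any compensating ``bookkeeping'': writing $f(t)=t\,\ell(y,y',t)$ and $g(t)=\ell'(y',y,-t)$, the identity $\int f g=\tfrac{1}{2\pi}\int\hat f(\mu)\hat g(-\mu)\,d\mu$ together with $\hat f(\mu)=i\partial_\mu\hat\ell(\mu)$ and $\hat g(-\mu)=\hat{\ell'}(\mu)$ gives $\mathfrak{s}_1(\ell,\ell')$ directly.

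The paper takes a different route. Rather than performing the $s$-integral first, it passes to the Fourier side via the Hilbert transform $\mathcal{H}$, using that $F(1-2\chi^0_\RR)F^{-1}=\mathcal{H}$; this converts $\Tr(\ell[\chi^0,\ell'])$ into $\tfrac{1}{2}\int_\RR\Tr_Y(\hat\ell(\mu)[\mathcal{H},\hat{\ell'}](\mu))\,d\mu$. The commutator $[\mathcal{H},\hat{\ell'}]$ is then recognised as the integral operator with divided-difference kernel $-\tfrac{i}{\pi}\,\dfrac{\hat{\ell'}(u)-\hat{\ell'}(v)}{u-v}$, whose restriction to the diagonal gives $\partial_\mu\hat{\ell'}(\mu)$; an integration by parts finishes. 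Your direct computation is more elementary and avoids introducing $\mathcal{H}$, which is attractive for this proposition in isolation. The paper's route, however, is not gratuitous: the identification of $[\chi^0,\cdot]$ with the Hilbert-transform commutator on the Fourier side is reused later (in the analysis of $[\chi^0,\ell]$ for $\ell\in\operatorname{OP}^{-p}$, see the Key Lemma in the extension arguments), so establishing that bridge here pays dividends.
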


\begin{proof}
In order to prove \eqref{equality-with-rbm-bis} we shall employ the Hilbert transformation
$\mathcal{H}:L^2 (\RR)\to L^2 (\RR)$:
\begin{equation*}
\mathcal{H} (f):= \lim_{\epsilon\downarrow 0} \frac{i}{\pi}\int_{|x-y|>\epsilon} \frac{f(x)}{x-y}dy \,.
\end{equation*}
The crucial observation is that 
if we denote by $F:L^2 (\RR) \to L^2 (\RR)$ the Fourier transformation, then
\begin{equation}\label{fourier-hilbert} 
F\circ \mathcal{H} \circ F^{-1}= - F^{-1} \circ \mathcal{H} \circ F  = 1-2\chi^0_\RR 
\end{equation} 
where the right hand side denotes, as usual,
the multiplication operator. Using this, we see that 
$$ \Tr (\ell[\chi^0,\ell^\prime]) = \frac{1}{2} \int_\RR \Tr_Y (\hat{\ell}(\mu) [ \mathcal{H}, \hat{\ell}^\prime ](\mu)) \, d\mu\,.$$
Using the definition of the Hilbert transform $\H$
one checks that 
  $[\H,\hat{\ell}]$ is the integral operator with kernel function equal to $-i/\pi\, \omega(u,v)$,
  with $\omega (u,v)= (\hat{\ell}(u) - \hat{\ell}(v))/(u-v)$. 
This imples that  $$\frac{1}{2} \int_\RR \Tr_Y (\hat{\ell}(\mu) [ \mathcal{H}, \hat{\ell}^\prime ](\mu)) \, d\mu\,.$$ is equal to
$$-\frac{i}{2\pi}\int_\RR \Tr_Y \left( \hat{\ell}(\mu) \circ \partial_\mu \hat{\ell}^\prime (\mu) \right) d\mu$$
which is equal to 
 the right hand side of \eqref{equality-with-rbm-bis} once we integrate by parts.
 
\end{proof}
 
Proposition \ref{prop:hilbert-transf} and Melrose' formula imply at once the relative 0-cocyle
condition for $(\tau_0^r,\sigma_1)$:
indeed using first Proposition \ref{prop:hilbert-transf} and then Melrose' formula we get:
\begin{align*}
\sigma_1 (I(k),I(k^\prime))&:=\Tr (I(k)[\chi^0,I(k^\prime)])= \frac{i}{2\pi}\int_\RR \Tr_{\pa X} \left( \partial_\mu I(k,\mu) \circ I (k^\prime,\mu) \right) d\mu
\\
&= {}^b \Tr [k,k^\prime]= b\tau_0^r (k,k^\prime)\,.
\end{align*}
Thus $I^* (\sigma_1)=b\tau_0^r$ as required.

\smallskip
\noindent
{\bf Conclusions.} We have established the following:
\begin{itemize}
\item  
the right hand side of Melrose' formula defines a 1-cocyle $\mathfrak{s}_1$ on $B_c (\cyl (Y))$,
$Y$ any closed compact manifold;
\item  Melrose 1-cocyle $\mathfrak{s}_1$ equals Roe's 1-cocyle $\sigma_1$
\item 
Melrose' formula itself can be interpreted
as a {\it relative} 0-cocyle condition for the 0-cochain $(\tau^r_0,\mathfrak{s}_1)\equiv (\tau^r_0,\sigma_1)$. 
\end{itemize}

\subsection{Philosophical remarks}\label{subsect:strategy}
We wish to recollect the information obtained in the Subsections 
\ref{subsection:roe}, \ref{subsection:melrose}, 
\ref{subsection:relative0}
and start to explain
our approach to Atiyah-Patodi-Singer higher index theory. 

On a closed compact  orientable riemannian smooth manifold $Y$ let us consider the algebra of smoothing operators
$J_c (Y):= C^\infty (Y\times Y)$. Then the functional $J_c (Y) \ni k\rightarrow \int_Y k|_\Delta {\rm dvol}$
defines a 0-cocycle $\tau_0$ on $J_c (Y)$; indeed by Lidski's theorem the functional is nothing but
the functional analytic trace of the integral operator corresponding to $k$ and we know that
the trace vanishes on commutators; in formulae, $b\tau_0 =0$. The 0-cocycle $\tau_0$
plays a fundamental role in the proof of the Atiyah-Singer index theorem, but we leave this
aside for the time being.

Let now $X$ be a smooth orientable manifold with cyclindrical ends, obtained from a manifold 
with boundary $X_0$; let $\cyl (\pa X)=\RR\times \pa X_0$.
 We have then defined algebras $A_c (X)$, $B_c (\cyl (\pa X))$ and $J_c (X)$ fitting into a short exact sequence
 $0\to J_c (X)\to A_c (X)\xrightarrow{\pi_c}B_c (\cyl (\pa X))\to 0$. 
 
 Corresponding to the 0-cocycle $\tau_0$ in the closed
 case we can define two important 0-cocycles  on a manifold with cyclindrical ends
$X$:
 
\begin{itemize} 
\item We can consider $\tau_0$ on $J_c (X)=C^\infty_c (X\times X)$; this is well defined
and does define a 0-cocycle . 
\item Starting with the 
0-cocycle $\tau_0$ on $J_c (X)$ we
 define a {\it relative} 0-cocycle $(\tau_0^r,\sigma_1)$ for $A_c (X)\xrightarrow{\pi_c}B_c (\cyl (\pa X))$.
  The relative 0-cocycle   $(\tau_0^r,\sigma_1)$ is obtained 
 through the following two fundamental steps.
 \begin{enumerate}
 \item We  define a 0-{\it cochain} $\tau_0^r$ on $A_c (X)$ by replacing the integral with Melrose' regularized
 integral.
 \item We define a 1-cocycle $\sigma_1$ on $B_c (\cyl (\pa X))$ by 
 taking  
a {\it suspension} 
 of $\tau_0$ through the linear map $\delta(\ell):= [\chi^0,\ell]$. In other words,   $\sigma_1 (\ell_0,\ell_1)$
 is obtained from $\tau_0\equiv \Tr$ by 
 considering $(\ell_0,\ell_1)\rightarrow \tau_0 (\ell_0 [\chi^0,\ell_1])\equiv \tau_0 (\ell_0 \delta(\ell_1))$.
\end{enumerate}
  \end{itemize}


\begin{definition}
We shall also call   Roe's 1-cocycle $\sigma_1$ {\it the  eta 1-cocycle corresponding to the 
0-cocycle $\tau_0$}. 
\end{definition}

In order to justify the wording of this definition we need to show that all this has something to
do with the eta invariant and its role in the Atiyah-Patodi-Singer index formula. This will be  explained 
in Subection \ref{subsection:aps}.

\subsection{Cyclic cocycles on graded algebras endowed with commuting derivations}\label{subsect:ccgraded}

In this subsection we collect some general facts about cyclic cocycles on algebras endowed
with commuting derivations. These results will be repeatedly used in the sequel.

 The following Lemma is
obvious.

\begin{lemma}\label{lemma:existence-omega}
Let $A^0$ be an algebra and $A^1$ a bimodule over $A^0$.  
Consider $\Omega:=A^0\oplus A^1$ as a linear space;
define a  multiplication on $\Omega$   by 
$$\alpha\beta=(a_0 b_0, a_0 b_1+a_1 b_0)\;\;\text{ for }\;\;\alpha=(a_0,a_1), \beta=(b_0,b_1) \;\;\text{ in }\;\; A^0\oplus A^1\,.$$
Then $\Omega$ is a graded algebra, with the  grading on $\Omega$
defined by $\deg a_i = i$ for $a_i\in A^i$, $i=0,1$. Observe that 
 $\Omega$  is not a $\star$-algebra.
\end{lemma}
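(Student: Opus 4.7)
The statement is elementary, so the plan is simply to verify the two algebraic conditions directly, exploiting the fact that $A^1$ is an $A^0$-bimodule.

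First I would check associativity. For $\alpha=(a_0,a_1)$, $\beta=(b_0,b_1)$, $\gamma=(c_0,c_1)$ in $\Omega$, a direct computation gives
\begin{align*}
(\alpha\beta)\gamma &= (a_0 b_0,\, a_0 b_1 + a_1 b_0)(c_0,c_1) \\
&= \bigl((a_0 b_0) c_0,\, (a_0 b_0) c_1 + (a_0 b_1) c_0 + (a_1 b_0) c_0\bigr),\\
\alpha(\beta\gamma) &= (a_0,a_1)(b_0 c_0,\, b_0 c_1 + b_1 c_0)\\
&= \bigl(a_0 (b_0 c_0),\, a_0 (b_0 c_1) + a_0 (b_1 c_0) + a_1 (b_0 c_0)\bigr).
\end{align*}
Equality of the degree-$0$ components is the associativity of $A^0$, while equality of the degree-$1$ components uses precisely the associativity and bimodule axioms for the $A^0$-actions on $A^1$. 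Thus $\Omega$ is an associative algebra.

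Next I would verify compatibility with the grading. Writing $A^0$ in degree $0$ and $A^1$ in degree $1$, the multiplication formula shows $A^0 \cdot A^0 \subset A^0$, $A^0 \cdot A^1 \subset A^1$, and $A^1 \cdot A^0 \subset A^1$, while $A^1 \cdot A^1 = 0$ (both components of the product vanish when $a_0=b_0=0$). Hence the multiplication respects the grading on the truncated complex $\Omega = A^0 \oplus A^1$ (viewed either as a $\mathbb{Z}$-graded algebra with $A^i = 0$ for $i\ge 2$, or as a $\mathbb{Z}/2$-graded one in which $A^1\cdot A^1\subset A^0$ happens to be zero).

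Finally, for the last remark, I would note that there is no natural involution on $\Omega$: even if both $A^0$ and $A^1$ were equipped with $\star$-structures compatible with the bimodule action, the candidate involution $(a_0,a_1)^\star := (a_0^\star, a_1^\star)$ would fail the compatibility $(\alpha\beta)^\star = \beta^\star \alpha^\star$, because the degree-$1$ component of $(\alpha\beta)^\star$ would be $(a_0 b_1 + a_1 b_0)^\star$ while that of $\beta^\star\alpha^\star$ would involve the same two terms but with the orders of the bimodule actions swapped, and there is no reason for them to agree. The only genuine obstacle is essentially notational bookkeeping in the associativity check; everything else is immediate from the definitions.
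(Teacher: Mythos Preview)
Your proposal is correct and is exactly the kind of direct verification the paper has in mind; in fact the paper gives no proof at all, simply declaring the lemma ``obvious'' before its statement, so your written-out associativity and grading check is more detailed than what the authors provide.
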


\begin{definition}\label{def:deriv}
A linear map $\delta:\Omega\to \Omega$ is called a derivation of degree $k$ if it satisfies:
\item{1)} $\delta(\alpha\beta)=(\delta \alpha)\beta + \alpha (\delta\beta)$ for $\alpha, \beta\in \Omega$;
\item{2)} $\deg(\delta\alpha)= \deg\alpha+k$
\end{definition}

Let $\delta_{\Omega}:\Omega\to\Omega$ be a derivation on $\Omega$. Suppose that $\delta_{\Omega}$ is of degree
0 and denote by $\delta:A^0\to A^0$ and $\delta^\prime: A^1\to A^1$ the restrictions  $\delta_{\Omega}|_{A^0}$,
$\delta_{\Omega}|_{A^1}$ respectively. Then the derivation property of $\delta_{\Omega}$ is equivalent to the following
three properties:
\begin{align*}
\delta(a_0 b_0) &=  (\delta a_0) b_0 + a_0 (\delta b_0)\\
\delta^\prime (a_0 b_1) &= (\delta a_0) b_1 + a_0 (\delta^\prime b_1)\\
\delta^\prime (a_1 b_0) &= (\delta^\prime a_1) b_0 + a_1 (\delta b_0)
\end{align*}
for $a_i, b_i \in A^i$.
We also observe that giving  a derivation $\delta_{\Omega}$ of degree 1 is equivalent to
giving a linear map $\delta:A^0\to A^1$ with 
$\delta (a_0 b_0)=(\delta a_0)b_0 + a_0 (\delta b_0)$ in such a way that $\delta_{\Omega} a_0=
\delta a_0$ and  $\delta_{\Omega} a_1=0$ for $a_i\in A^i$.\\
Finally, let $\omega:A^1\to \CC$ be a linear map such that
\begin{equation}\label{bimodule-trace}\omega(a_0 a_1)=\omega(a_1 a_0)\:\:\text{ for } \:\:a_i\in A^i \,.
\end{equation}
We shall call such a linear map a {\it trace map} on the bimodule $A^1$.
The following Lemma is obvious 

\begin{lemma}\label{lemma:trace-bimod}
A trace map $\omega$ on the bimodule $A^1$ extends to a trace map $\tau_0:\Omega\to\CC$
so that $\tau_0 |_{A^0}=0$ and $\tau_0 |_{A^1}=\omega$. 
\end{lemma}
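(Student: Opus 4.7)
The plan is to define the extension by the obvious formula $\tau_0(a_0,a_1):=\omega(a_1)$ (equivalently, $\tau_0|_{A^0}=0$ and $\tau_0|_{A^1}=\omega$), which by construction satisfies the two boundary conditions, and then verify that this linear functional is indeed a trace on the multiplication of $\Omega$ defined in Lemma \ref{lemma:existence-omega}.

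First I would unwind the definitions: for $\alpha=(a_0,a_1)$ and $\beta=(b_0,b_1)$ in $A^0\oplus A^1$, the product is
\begin{equation*}
\alpha\beta=(a_0 b_0,\; a_0 b_1 + a_1 b_0),\qquad \beta\alpha=(b_0 a_0,\; b_0 a_1 + b_1 a_0).
\end{equation*}
Applying $\tau_0$, which kills the $A^0$-component, I would compute
\begin{equation*}
\tau_0(\alpha\beta)=\omega(a_0 b_1)+\omega(a_1 b_0),\qquad \tau_0(\beta\alpha)=\omega(b_0 a_1)+\omega(b_1 a_0).
\end{equation*}
The hypothesis \eqref{bimodule-trace}, namely $\omega(x_0 x_1)=\omega(x_1 x_0)$ for $x_i\in A^i$, applied termwise gives $\omega(a_0 b_1)=\omega(b_1 a_0)$ and $\omega(a_1 b_0)=\omega(b_0 a_1)$, so the two sums coincide and $\tau_0(\alpha\beta)=\tau_0(\beta\alpha)$.

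There is no real obstacle here: the only issue to note is that a general product $\alpha\beta$ in $\Omega$ involves \emph{both} orderings $a_0 b_1$ and $a_1 b_0$ in the $A^1$-component, so one needs the trace identity \eqref{bimodule-trace} to hold in its stated symmetric form (for both $a_0 a_1$ and $a_1 a_0$ types of product), which is exactly what is assumed. Linearity of $\tau_0$ in $\alpha$ and $\beta$ follows from linearity of $\omega$ and from bilinearity of the product of $\Omega$ in its $A^1$-component, so the extension is automatically a trace on all of $\Omega$. This completes the proof.
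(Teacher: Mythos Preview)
Your proof is correct and is exactly the obvious verification the paper has in mind; indeed the paper states this lemma as ``obvious'' and gives no separate proof. The only content is the two-line check you wrote out, using the bimodule trace identity \eqref{bimodule-trace} termwise on the $A^1$-component of the product.
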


Let $\delta_i$, $i=1,\dots,k$, be derivations on $\Omega$ and $\tau_0$ a trace map on $\Omega$.
We consider the following $k$-linear map on $\Omega$:
\begin{equation}\label{general-k-linear}
\tau(a_0, \dots, a_k):= \frac{1}{k!}\,\sum_{\alpha\in \mathfrak{S}_k} \,{\rm sign}(\alpha)\,
\tau_0 (a_0 \;\delta_{\alpha (1)} a_1\; \dots\;\delta_{\alpha(k)} a_k)\quad\text{for}\quad a_i\in\Omega\,.
\end{equation}

\begin{proposition}\label{prop:cyclic-commut-deriv}
Assume that \begin{enumerate}
\item the derivations $\delta_i$ are pairwise  commuting, i.e. $\delta_i \delta_j=\delta_j \delta_i$ for $1\leq i,j\leq k$;
\item $\tau_0 (\delta_i a)=0$ for $a\in\Omega$ and $1\leq i\leq k$.
\end{enumerate}
Then $\tau$ defined in \eqref{general-k-linear} gives rise to a cyclic $k$-cocycle on $\Omega$.
\end{proposition}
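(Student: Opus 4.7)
The plan is to verify the two conditions defining a cyclic $k$-cocycle separately: (i) the cyclic symmetry
$$\tau(a_1,\ldots,a_k,a_0) = (-1)^k \tau(a_0,a_1,\ldots,a_k),$$
and (ii) the Hochschild cocycle condition $b\tau = 0$. The construction is essentially Connes' classical recipe for producing cyclic cocycles from a trace together with commuting derivations that annihilate the trace, so I would follow that blueprint.

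For the cyclic identity, the key auxiliary observation is the following: since $\tau_0 \circ \delta_i = 0$ and $\delta_i$ is a derivation, applying $\tau_0 \circ \delta_i$ to a product $a_0 \delta_{j_1} a_1 \cdots \widehat{\delta_i} \cdots \delta_{j_{k-1}} a_k$ (where $\delta_i$ is omitted from the list) and expanding via Leibniz produces an identity relating sums of terms that differ by the position of $\delta_i$. Coupled with the trace property $\tau_0(xy) = \tau_0(yx)$, which lets us cycle the leading factor $a_0$ to the end, the antisymmetrization $\sum_\alpha \mathrm{sign}(\alpha)$ in \eqref{general-k-linear} forces all extra Leibniz terms to cancel in pairs, leaving exactly the cyclic identity with the sign $(-1)^k$.

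For the Hochschild condition I would expand
$$b\tau(a_0,\ldots,a_{k+1}) = \sum_{j=0}^{k} (-1)^j \tau(a_0,\ldots,a_j a_{j+1},\ldots,a_{k+1}) + (-1)^{k+1} \tau(a_{k+1} a_0,a_1,\ldots,a_k)$$
and apply the Leibniz rule to each factor $\delta_{\alpha(j)}(a_j a_{j+1})$ appearing inside $\tau_0(\cdots)$, splitting every summand into two. Grouping the resulting terms, one sees that the Leibniz output from the $j$-th term cancels with the corresponding piece coming from the $(j-1)$-st or $(j+1)$-st term; the last term involving $a_{k+1} a_0$ is handled by the trace property. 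The commutativity of the derivations ensures that after reindexing $\alpha \in \mathfrak{S}_k$, the signs from $\mathrm{sign}(\alpha)$ align so that cancellations are exact rather than leaving residual terms.

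The main obstacle, as usual in such computations, will be the combinatorial bookkeeping of the signs: keeping track of the permutation signs from the antisymmetrization, the signs $(-1)^j$ from the Hochschild differential, and the position shifts incurred when cycling arguments through the trace. I expect that setting up a careful index rewriting at the very start --- in particular reorganizing the sum over $\alpha \in \mathfrak{S}_k$ by fixing where each $\delta_i$ lands relative to the split $a_j a_{j+1}$ --- will reduce the verification to a small number of pairwise cancellations, after which the cyclicity identity obtained in step (i) closes the argument.
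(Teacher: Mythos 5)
Your proposal is correct and follows essentially the same route as the paper: cyclicity is obtained by inserting $\tau_0(\delta_{\alpha(1)}(a_k a_0 \delta_{\alpha(2)}a_1\cdots))=0$, expanding by Leibniz, killing the double-derivation terms through the antisymmetrization combined with commutativity of the $\delta_i$, and then cycling with the trace property; the Hochschild condition follows from the Leibniz telescoping plus the trace property applied to the two surviving boundary terms. The only minor remark is that commutativity of the derivations is needed only for the cyclic identity, not for $b\tau=0$, where Leibniz and the trace property alone suffice.
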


\begin{proof}
First we verify the cyclic condition. The second assumption  and the derivation property imply that
\begin{align*}
\tau(a_k,a_0, \dots, a_{k-1}) 
&=  \frac{1}{k!}\,\sum_{\alpha\in \mathfrak{S}_k} \,{\rm sign}(\alpha)\,
\tau_0 (a_k \;\delta_{\alpha (1)} a_0\; \dots\;\delta_{\alpha(k)} a_{k-1})\\
&=  \frac{1}{k!}\,\sum_{\alpha\in \mathfrak{S}_k} \,{\rm sign}(\alpha)\,\left( 
\tau_0 (a_k \;\delta_{\alpha (1)} a_0\; \dots\;\delta_{\alpha(k)} a_{k-1}) - \tau_0 \left( 
\delta_{\alpha(1)} (a_k a_0 \;\delta_{\alpha (2)} a_1\; \dots\;\delta_{\alpha(k)} a_{k-1}) \right)  \right)\\
&= -  \frac{1}{k!}\,\sum_{\alpha\in \mathfrak{S}_k} \,{\rm sign}(\alpha)\,
\tau_0 ((\delta_{\alpha(1)} a_k) a_0 \;\delta_{\alpha (2)} a_1\; \dots\;\delta_{\alpha(k)} a_{k-1})\\
&\:\:\:\:\:-  \frac{1}{k!}\sum_{\alpha\in \mathfrak{S}_k} \,{\rm sign}(\alpha)\,\sum_{i=1}^{k-1}\tau_0 (a_k a_0 \delta_{\alpha (2)}
a_1 \dots \delta_{\alpha (1)}  \delta_{\alpha (i+1)} a_i \dots \delta_{\alpha (k)} a_{k-1} )
\end{align*}
The second summand in the last term vanishes. 
In fact the signatures are opposite to each other for $\alpha$ and  $\alpha\circ(1,i+1)$;
thus the values cancel out due to  assumption (1). 
Observing that the signature of the cyclic permutation $(1,2,\dots,k)$ is equal to $(-1)^{k-1}$, 
the trace property implies that 
\begin{align*}
\tau(a_k,a_0, \dots, a_{k-1})
&= \frac{(-1)^k}{k!}\,\sum_{\alpha\in \mathfrak{S}_k}\tau_0 (a_0 \;\delta_{\alpha (1)} a_1\; \dots\;\delta_{\alpha(k)} a_k)\\
&= (-1)^k\tau(a_0, a_1, \dots, a_{k})
\end{align*}
Second we prove the cocycle condition. 
Due to the derivation and trace properties again we obtain
\begin{align*}
b\tau(a_0, \dots, a_{k+1})
&= \sum_{i=0}^k(-1)^i\tau(a_0, \dots , a_ia_{i+1}, \dots, a_{k+1})
+(-1)^{k+1}\tau(a_{k+1}a_0, a_1, \dots , a_{k})\\
&= \frac{(-1)^{k}}{k!}\,\sum_{\alpha\in \mathfrak{S}_k} \,{\rm sign}(\alpha)\,
\left[ \tau_0(a_0 \;\delta_{\alpha (1)} a_1\; \dots\;\left(\delta_{\alpha(k)} a_k\right)a_{k+1})
- \tau_0(a_{k+1}a_0\;\delta_{\alpha (1)} a_1\; \dots\;\delta_{\alpha (k)} a_k)
\right]\\
&=0.
\end{align*}
This complets the proof.
\end{proof}

\subsection{
The  Godbillon-Vey cyclic 2-cocycle $\tau_{GV}$}\label{subsect:absolute-taugv}
Let $(Y,\F)$, $Y=\tN\times_\Gamma T$, be a foliated bundle {\it without} boundary. We take 
directly $T=S^1$.
 Let $E\to Y$ a hermitian complex vector bundle
on $Y$.  Let $(G,s:G\to Y,r:G\to Y)$ be the holonomy groupoid
associated to $Y$, $G=(\tN\times\tN\times T)/\Gamma$. Consider again  the convolution algebra 
$\Psi^{-\infty}_c (G,E):=C^\infty_c (G, (s^*E)^*\otimes r^*E)$ 
 of equivariant smoothing
families with $\Gamma$-compact support.  
On  $\Psi^{-\infty}_c (G,E)$ there exists  a remarkable 2-cocycle, denoted $\tau_{GV}$, and 
known as the Godbillon-Vey cyclic cocycle. It was defined by Moriyoshi and Natsume in
\cite{MN}. See the Appendix \ref{appendix:absolute-taugv}
for a self-contained  summary of the theory developed in \cite{MN}.
Here we shall simply recall the very basic facts leading to the definition of $\tau_{GV}$.

Recall from the Subsection 
on the Godbillon-Vey class, Subsection \ref{subsect:gv-form},
the modular function  $\psi$ on $\tN\times T$ defined by $\tilde{\omega}\wedge d\theta=\psi \tilde{\Omega}$,
$\tilde{\omega}$ and $ \tilde{\Omega}$ denoting $\Gamma$-invariant volume forms on $\tN$ and $\tN\times T$ respectively.
%


There is a well defined  derivation $\delta_2$ on the algebra $\Psi^{-\infty}_c (G,E)$:
\begin{equation}\label{delta_2}
\delta_2 (P)=[\phi,P]\quad\text{with}\quad  \phi=\log \psi\,.
\end{equation}
We observe here that $\phi$ is not $\Gamma$-invariant nor it is compactly supported.
In fact $\phi$ is not even bounded.
Recall next the bundle $\widehat{E}^\prime$ on $\tN\times T$ introduced in \cite{MN}: this is the same as
 $\widehat{E}$
but with a new $\Gamma$-equivariant structure. See \cite{MN}
or the Appendix of this paper.
There is  a natural bijection between
$\Psi^{-\infty}_c (G,E)$ and $\Psi^{-\infty}_c (G,E^\prime).$
Using this identification we see that the space $\Psi^{-\infty}_c (G;E,E^\prime)$ can be  considered as a bimodule over $\Psi^{-\infty}_c (G,E)$.
Consider $\dot{\phi}$, the partial derivative of $\phi$ in the direction of $S^1$.
There is a well defined bimodule
derivation $\delta_1 : \Psi^{-\infty}_c (G,E)\to \Psi^{-\infty}_c (G;E,E^\prime)$:
\begin{equation}\label{delta_1}
\delta_1 (P)=[\dot{\phi},P]\quad\text{with}\quad  \phi=\log \psi\,.
\end{equation}
There is also a  linear map 
$\delta_2^\prime : \Psi^{-\infty}_c (G;E,E^\prime)\to \Psi^{-\infty}_c (G;E,E^\prime)$ 
defined in a way similar to  \eqref{delta_2}.
One can also check that
\begin{equation}\label{deriv-commute}
\delta_1 (\delta_2 (P))= \delta^\prime_2 (\delta_1 (P))
\end{equation}
Recall, finally, that there is a weight $\omega_{\Gamma}$
defined on the
algebra $\Psi^{-\infty}_c (G;E)$,
\begin{equation}\label{weight-again}
\omega_{\Gamma}(k)=
 \int_{Y(\Gamma)} \mathrm{Tr}_{(\tilde{n},\theta)} k(\tilde{n},\tilde{n},\theta)d\tilde{n}\,d\theta\,.
\end{equation}
In this formula $Y(\Gamma)$ is the fundamental domain in $\tN\times T$ for the free diagonal action
of $\Gamma$  on $\tN\times T$ and  we have restricted the kernel $k$ to $\Delta_{\tN}\times T\subset
\tN\times\tN\times T$,  $\Delta_{\tN}$ denoting the diagonal 
set in $\tN\times \tN$, $\Delta_{\tN}\times T\equiv \tN\times T$;
$\mathrm{Tr}_{(\tilde{n},\theta)}$ denotes the trace on $\End (\widehat{E}_{(\tilde{n},\theta)})$.
(If the measure on $T$ is $\Gamma$-invariant, then this weight is
a trace; however, we don't want to make this assumption here.) 

We shall be interested in the linear functional \footnote{This will not be a weight, given
that on a bimodule there is no notion of positive element} defined on the bimodule
 $\Psi^{-\infty}_c (G;E,E^\prime)$
by the analogue of \eqref{weight-again}. 
We call this linear functional on  $\Psi^{-\infty}_c (G;E,E^\prime)$
the  {\it bimodule trace}.
%
The following fundamental relation, justifying the name {\it bimodule trace}, is proved in \cite{MN}:
\begin{equation}\label{weight-commute}
k\in \Psi^{-\infty}_c (G;E,E^\prime)\,, k^\prime \in \Psi^{-\infty}_c (G;E)\equiv \Psi^{-\infty}_c (G;E^\prime) \;\;
\Rightarrow \;\;\omega_{\Gamma}(k k^\prime)=\omega_{\Gamma}( k^\prime k)
\end{equation}
It is also important to recall that the bimodule 
trace $\omega_\Gamma$ on  $\Psi^{-\infty}_c (G; E, E^\prime)$ satisfies the
following analogues of Stokes formula:

\begin{equation}\label{exact-weight}
k\in \Psi^{-\infty}_c (G;E)
\Rightarrow \;\;\omega_{\Gamma}(\delta_1 (k))=0\,,\quad k\in \Psi^{-\infty}_c (G;E,E^\prime)
\Rightarrow \;\;\omega_{\Gamma}(\delta_2^\prime (k))=0\,.
\end{equation}

\begin{definition}\label{def:gv-cocycle}
With $1=\dim T$, 
the Godbillon-Vey cyclic $2$-cocycle 
on  $\Psi^{-\infty}_c (G;E)\equiv C^\infty_c (G, (s^*E)^*\otimes r^*E)$  is defined to be 
\begin{align}\label{taugv-sum}
\tau_{GV} (a_0, a_1, a_2)&= \frac{1}{2!}\sum_{\alpha\in \mathfrak{S}_2} {\rm sign}(\alpha)\,
\omega_\Gamma (a_0 \;\delta_{\alpha (1)} a_1 \;\delta_{\alpha (2)} a_2)\\
&=\frac{1}{2}\,
\left( \omega_\Gamma (a_0\; \delta_1 a_1\;\delta_2  a_2) - \omega_\Gamma(a_0 \;\delta_2 a_1\; \delta_1 a_2) \right)
\end{align}
\end{definition}
Remark that the Morioyshi-Natsume cocycle  is equal to  twice the above cocycle.
\begin{proposition}\label{prop:cyclic-cocycle-tgv}
The $3$-functional $\tau_{GV}$ does satisfy 
\begin{equation}\label{taugv-cyclic-cocycle}
b \tau_{GV}=0\,, \quad \tau (a_0,a_1,a_2)= \tau(a_1,a_2,a_0) \,,\forall a_j\in \Psi^{-\infty}_c (G;E)
\end{equation}
\end{proposition}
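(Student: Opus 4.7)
The plan is to derive this proposition essentially as a corollary of Proposition \ref{prop:cyclic-commut-deriv} (the abstract result on cyclic cocycles on graded algebras with commuting derivations). The key preparatory step is to assemble $A^0:=\Psi^{-\infty}_c(G;E)$ and $A^1:=\Psi^{-\infty}_c(G;E,E')$, together with the $A^0$-bimodule structure on $A^1$, into the graded algebra $\Omega=A^0\oplus A^1$ of Lemma \ref{lemma:existence-omega}. Because each summand $a_0\,\delta_{\alpha(1)}a_1\,\delta_{\alpha(2)}a_2$ appearing in \eqref{taugv-sum} contains exactly one $\delta_1$-factor, it lands in $A^1$ for either $\alpha\in\mathfrak{S}_2$, so the formula can be rewritten as $\tau_{GV}(a_0,a_1,a_2)=\tau_0(a_0\,\delta_{\alpha(1)}a_1\,\delta_{\alpha(2)}a_2)$ where $\tau_0$ is the extension of $\omega_\Gamma$ to $\Omega$ provided by Lemma \ref{lemma:trace-bimod} ($\tau_0|_{A^0}=0$, $\tau_0|_{A^1}=\omega_\Gamma$).

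Next I would extend the two derivations to all of $\Omega$. Define $\tilde\delta_2:\Omega\to\Omega$ by $\tilde\delta_2|_{A^0}=\delta_2$ and $\tilde\delta_2|_{A^1}=\delta_2'$; the three compatibility identities on products $a_0 b_0$, $a_0 b_1$, $a_1 b_0$ hold, so $\tilde\delta_2$ is a derivation of degree $0$. Define $\tilde\delta_1:\Omega\to\Omega$ by $\tilde\delta_1|_{A^0}=\delta_1$ and $\tilde\delta_1|_{A^1}=0$; this yields a degree-$1$ derivation in the (non-signed) sense of Definition \ref{def:deriv}, since $\tilde\delta_1$ of any product in $A^0\cdot A^1$ or $A^1\cdot A^0$ lies in $A^2=0$, and both terms on the right of the Leibniz rule vanish as well.

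Then I would verify the two hypotheses of Proposition \ref{prop:cyclic-commut-deriv}. For commutativity: on $A^0$, $\tilde\delta_1\tilde\delta_2=\delta_2'\circ\delta_1=\tilde\delta_2\tilde\delta_1$ by \eqref{deriv-commute}; on $A^1$, both compositions are zero since $\tilde\delta_1$ kills $A^1$ and $\tilde\delta_2$ preserves $A^1$. For the vanishing $\tau_0(\tilde\delta_i\alpha)=0$: if $\alpha\in A^0$ then $\tilde\delta_2\alpha\in A^0$ so $\tau_0(\tilde\delta_2\alpha)=0$ by construction, and $\tau_0(\tilde\delta_1\alpha)=\omega_\Gamma(\delta_1\alpha)=0$ by the first identity in \eqref{exact-weight}; if $\alpha\in A^1$, then $\tilde\delta_1\alpha=0$ and $\tau_0(\tilde\delta_2\alpha)=\omega_\Gamma(\delta_2'\alpha)=0$ by the second identity in \eqref{exact-weight}. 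Finally, Lemma \ref{lemma:trace-bimod} packages the bimodule trace identity \eqref{weight-commute} into the ordinary trace property $\tau_0(\alpha\beta)=\tau_0(\beta\alpha)$ on $\Omega$, which is what the proof of Proposition \ref{prop:cyclic-commut-deriv} uses.

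Applying Proposition \ref{prop:cyclic-commut-deriv} with $k=2$ therefore gives that the $3$-linear functional on $\Omega$ produced by \eqref{general-k-linear} is cyclic and satisfies $b\,\tau=0$. Restricting to arguments in $A^0$ recovers exactly $\tau_{GV}$ as defined in \eqref{taugv-sum}, yielding both assertions in \eqref{taugv-cyclic-cocycle}. The only real subtlety—and hence what I would flag as the main obstacle in writing this out—is convincing oneself that Proposition \ref{prop:cyclic-commut-deriv}, stated for derivations on a single algebra without tracking degrees, does apply when one of the derivations raises degree: this is why one has to check that every intermediate expression in the cyclicity and cocycle computations (in particular the term $a_k a_0\,\delta_{\alpha(2)}a_1\cdots\delta_{\alpha(k)}a_{k-1}$ to which one applies $\tilde\delta_{\alpha(1)}$, and the cancellation of pairs produced by the transposition $(1,i+1)$) lies in the graded piece where $\tau_0$ is nonzero and where the commutation \eqref{deriv-commute} is meaningful. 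For $k=2$ and $a_i\in A^0$ a direct inspection confirms this, so the abstract proof goes through verbatim.
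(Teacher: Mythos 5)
Your proposal is correct and follows essentially the same route as the paper's own proof: form $\Omega(G)=A^0\oplus A^1$ as in Lemma \ref{lemma:existence-omega}, extend $\delta_1$ (degree $1$, vanishing on $A^1$) and $\delta_2$ (degree $0$, via $\delta_2'$) to derivations on $\Omega(G)$, extend $\omega_\Gamma$ to a trace map by Lemma \ref{lemma:trace-bimod}, check the hypotheses using \eqref{deriv-commute} and \eqref{exact-weight}, and apply Proposition \ref{prop:cyclic-commut-deriv} before restricting to $A^0$. The degree-tracking subtlety you flag is precisely what the graded set-up of Subsection \ref{subsect:ccgraded} is designed to handle, and your verification of it is sound.
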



%
 
 \begin{proof}
 This Proposition is of course proved in \cite{MN}.
We give a proof here using the general results of the
previous Subsection; this will serve as a guide for the more
complicated situation  that we will need to consider later.
Recall the definition of $\tau_{GV}$:
$$\tau_{GV} (a_0, a_1, a_2):=\frac{1}{2}\,
\left( \omega_\Gamma (a_0\; \delta_1 a_1\;\delta_2  a_2) - \omega_\Gamma(a_0 \;\delta_2 a_1\; \delta_1 a_2) \right)$$
where $\delta_1 a = [\dot{\phi},a]$ and $\delta_2 a = [\phi,a]$. Let $A^0$ be the algebra $\Psi^{-\infty}_c (G;E)$ and let $A^1$ be
the $A^0$-bimodule
$\Psi^{-\infty}_c (G;E,E^\prime)$ introduced above. 
Proceeding as in Subsection \ref{subsect:ccgraded},  we consider the graded algebra $\Omega$
built out of $A^0$ and $A^1$, as in Lemma \ref{lemma:existence-omega}. We denote this algebra by $\Omega(G)$. 
Then, according to the explanations given in Subsection \ref{subsect:ccgraded}, there exists extensions of our derivations to 
\begin{equation}\label{deriv-on-omega}
\delta_j : \Omega (G)\to \Omega(G),\:\:j=1,2 \quad \text{with}\quad \delta_1 a = [\dot{\phi},a], \:\:\delta_2 a = [\phi,a]
\end{equation}
with $\delta_1$ of degree 1 and $\delta_2 $ of degree 0.
Notice that we have employed the same notation for these extensions. On the other hand,  we know that
the functional
$\omega_\Gamma: \Psi^{-\infty}_c (G;E,E^\prime)\to\CC$ defined using  \eqref{weight-again} gives rise to a trace map on the bimodule $A^1$, due to property
\eqref{weight-commute}. Thus Lemma \ref{lemma:trace-bimod}, which is obvious,
implies that the there exists a trace map
$\tau_\Gamma : \Omega(G)\to \CC$ with $\tau_\Gamma (a)=\omega_\Gamma (a)$, for $a\in A^1$ and
 $\tau_\Gamma (a)=0$ for $a\in A^0$. Now \eqref{deriv-commute} shows that the derivations $\delta_1$, $\delta_2$ introduced 
 in \eqref{deriv-on-omega} commute with each other, whereas \eqref{exact-weight} implies that $\tau_{\Gamma} (\delta_j a)=0$
 for $a\in \Omega(G)$ and $j=1,2$. Thus, directly from Proposition \ref{prop:cyclic-commut-deriv}, we obtain a cyclic 2-cocycle 
 $$\tau_2 (a_0,a_1,a_2)=\frac{1}{2}\,
\left( \tau_\Gamma (a_0\; \delta_1 a_1\;\delta_2  a_2) - \tau_\Gamma(a_0 \;\delta_2 a_1\; \delta_1 a_2) \right)\,.$$
Thus $\tau_{GV}$ is also a cyclic cocycle on $A^0\equiv \Psi^{-\infty}_c (G;E)$, since it is nothing but the
restriction of $\tau_2$ to the subalgebra $A^0\subset \Omega(G)$. The Proposition is proved.
\end{proof}

\smallskip

We now go back to a foliated bundle $(X,\F)$ with cylindrical ends, with $X:= \tV\times_\Gamma T$, as in Section
\ref{sec:data}. We consider the small subalgebras introduced in Subsection \ref{subsect:small-dense}. 
The weight $\omega_\Gamma$ is still well defined on $J_c (X,\F)$;
the 2-cocycle $\tau_{GV}$ can thus be defined on $J_c (X,\F)$, giving us the 
Godbillon-Vey cyclic cocycle of $(X,\F)$.

\subsection{The eta 3-coycle $\sigma_{GV}$ corresponding to $\tau_{GV}$}\label{subsect:sigma3}
Now we apply the general philosophy explained at the end of the previous Section. 
Let $\chi^0$ be the usual characteristic function of $(-\infty,0]\times \pa X_0$ in $\cyl (\pa X)=\RR\times \pa X_0$.
Write $\cyl (\pa X)= (\RR\times\pa \tM)\times_\Gamma T$ with $\Gamma$ acting trivially on the $\RR$ factor.
Let $\cyl (\Gamma)$ be a fundamental domain for the action of $\Gamma$ on $(\RR\times\pa \tM)\times T$;
finally, 
let $\omega_\Gamma^{\,{\rm cyl}}$ be the corresponding trace map on the bimodule defined 
similarly to \eqref{weight-again}.
Recall $\delta (\ell): =[\chi^0,\ell]$;
recall that we passed from the 
0-cocycle $\tau_0\equiv \Tr$
to the 1-eta cocycle on the cylindrical algebra $B_c$ by considering $(\ell_0, \ell_1)\rightarrow \tau_0 (\ell_0 \delta (\ell_1))$.
We referred to this operation as a {\it suspension}.

We are thus led  to {\it suspend}
definition \ref{def:gv-cocycle}, thus defining the following 4-linear functional on the algebra $B_c$.

\begin{definition}\label{def:sigma3}
 The eta  functional $\sigma_{GV}$ associated to the absolute 
Godbillon-Vey 2-cocycle $\tau_{GV}$
is  given by the 4-linear functional on $B_c$
\begin{equation}\label{sigma3}
\sigma_{GV} (\ell_0, \ell_1, \ell_2,\ell_3):= \frac{1}{3 !}\,\sum_{\alpha\in \mathfrak{S}_3} \,{\rm sign}(\alpha)\,
\omega_\Gamma^{\,{\rm cyl}} (\ell_0 \;\delta_{\alpha (1)} \ell_1\; \delta_{\alpha (2)} \ell_2 \;\delta_{\alpha(3)} \ell_3)\,.
\end{equation}
with
\begin{equation}\label{3-derivations}
\delta_3 \ell:= [\chi^0,\ell]\,,\quad \delta_2 \ell:=[\phi_{\pa},\ell]\,\quad\text{and}\quad
 \delta_1 \ell :=[\dot{\phi}_{\pa},\ell]
 \end{equation}
and  $\phi_{\pa}$ equal to the restriction of the modular function to the boundary, extended in a constant
way along the cylinder.
We shall prove that  this is a cyclic 3-cocycle for the algebra $B_c (\cyl (\pa X),\F_{{\rm cyl}})$. More generally,
formula \eqref{sigma3}
 defines {\it the Godbillon-Vey eta 3-cocycle} on  $B_c (\cyl (Y),\F_{{\rm cyl}})$ with $Y=\tN\times_\Gamma T$ any closed foliated
 $T$-bundle, not necessarily arising as a boundary. Here, as usual,
 we don't write the bundle $E$ in the notation.
  In this case $\delta_2 \ell:=[\phi_Y,\ell]$ and $\delta_1 \ell :=[\dot{\phi}_{Y},\ell]$
 with $\phi_Y$ the logarithm of a modular function on $Y$ extended in a constant
way along the cylinder.

\end{definition}

We must justify the well-posedness of this definition. To this end, remark that  each sum will contain
an element of type $\delta_3 (\ell_j):=[\chi^0, \ell_j]$. This is a 
kernel of $\Gamma$-compact support (we have already justified this claim in Sublemma
\ref{sublemma:basic1}) which is, of course, not translation invariant. Since the other three operators
appearing in the composition $(\ell_0 \;\delta_{\alpha (1)} (\ell_1)\; \delta_{\alpha (2)} (\ell_2) \;\delta_{\alpha(3)} (\ell_3))$
are 
$(\RR\times \Gamma)$-equivariant and of $(\RR\times\Gamma)$-compact support, 
we can conclude easily that each term appearing in the definition of $\sigma_{GV}$,
$(\ell_0 \;\delta_{\alpha (1)} (\ell_1)\; \delta_{\alpha (2)} (\ell_2) \;\delta_{\alpha(3)} (\ell_3))$, is in fact 
of $\Gamma$-compact support.  
Indeed, recall that a kernel  that is  $\Gamma$-equivariant and of $\Gamma$-compact support,  
such as   $\delta_3 (\ell_j)=[\chi^0, \ell_j]$ above, 
can be considered as  a compactly supported function  on the holonomy groupoid $G_{\rm cyl}$ for 
$(\cyl (\pa X),\F_{{\rm cyl}})$. 
On the other hand,  a kernel that is  $(\RR\times\Gamma)$-equivariant and 
of $(\RR\times\Gamma)$-compact support  
corresponds to a compactly supported function on  $G_{\rm cyl} /\RR_{\Delta}$, 
which admits a $\RR$-compact support  once lifted to  $G_{\rm cyl}$; 
see Propositon  \ref{prop:bstar-structure}. 
We then take the convolution product of these kernels. 
A simple argument on support  implies that the resulting kernel
 corresponds to a compactly supported function on  $G_{\rm cyl}$ and hence 
the kernel itself is of $\Gamma$-compact support on 
$(\RR\times\pa \tM)\times (\RR\times\pa \tM) \times T$.

Summarizing, $\omega_\Gamma^{\,{\rm cyl}}  (\ell_0 \;\delta_{\alpha (1)} (\ell_1)\; \delta_{\alpha (2)} (\ell_2) \;\delta_{\alpha(3)} (\ell_3))$
is finite and the definition of $\sigma_{GV}$ is well posed.

In fact, we can define, as we did for $\sigma_1$, the 3-cochain $\sigma_{GV}^\lambda$  by
employing the characteristic function $\chi^\lambda$. However, one checks as in Proposition
\ref{prop:lambda-indipendent}
 that the value of 
$\sigma_{GV}^\lambda$ does not depend on $\lambda$.


%
Next  we have the important
\begin{proposition}\label{prop:sigma3-cyclic}
Let $Y=\tN\times_\Gamma T$ be  an arbitrary foliated $T$-bundle {\it without} boundary.
The eta functional  $\sigma_{GV}$ on $B_c (\cyl (Y),\F_{{\rm cyl}})$ is cyclic and is a cocycle: $b \,\sigma_{GV}=0$; it thus defines a cyclic 3-cocycle on the algebra $B_c (\cyl (Y),\F_{{\rm cyl}})$.
\end{proposition}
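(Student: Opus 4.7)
The plan is to apply the general mechanism of Proposition \ref{prop:cyclic-commut-deriv} to three pairwise commuting derivations on a suitable graded algebra built from $B_c$, exactly in the spirit of the proof of Proposition \ref{prop:cyclic-cocycle-tgv} but with one extra derivation, $\delta_3=[\chi^0,\cdot]$, thrown in. More precisely, I would let $A^0$ be the (filtered) algebra obtained by adjoining to $B_c(\cyl(Y),\F_{\cyl})$ the ideal $J_c^{\cyl}:= C^\infty_c(G_{\cyl})$ of $\Gamma$-compactly supported smoothing kernels on the cylindrical foliation, and let $A^1$ be the analogous bimodule built from the twisted equivariant bundle $\widehat E^{\,\prime}$ attached to the modular function of $Y$ lifted constantly along $\RR$. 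Lemma \ref{lemma:existence-omega} then produces the graded algebra $\Omega_{\cyl}=A^0\oplus A^1$, and Lemma \ref{lemma:trace-bimod} extends the bimodule trace $\omega_\Gamma^{\,\cyl}$ (well-defined on the $J_c^{\cyl}$-part) to a trace map on $\Omega_{\cyl}$.

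Next I would extend the three derivations to $\Omega_{\cyl}$ as in the proof of Proposition \ref{prop:cyclic-cocycle-tgv}: $\delta_2=[\phi_Y,\cdot]$ and $\delta_3=[\chi^0,\cdot]$ have degree $0$, while $\delta_1=[\dot\phi_Y,\cdot]$ has degree $1$. A key point, ensuring these really define maps $\Omega_{\cyl}\to\Omega_{\cyl}$, is Sublemma \ref{sublemma:basic1}: for any $\ell\in B_c$, $\delta_3\ell=[\chi^0,\ell]$ lies in $J_c^{\cyl}\subset A^0$, so the derivation $\delta_3$ shifts the $B_c$-part into the ideal, and all subsequent compositions appearing in \eqref{sigma3} then make sense as elements on which $\omega_\Gamma^{\,\cyl}$ is finite (this is precisely the support argument given just after Definition \ref{def:sigma3}). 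With these extensions understood, I then check the two hypotheses of Proposition \ref{prop:cyclic-commut-deriv}. Pairwise commutativity $\delta_i\delta_j=\delta_j\delta_i$ for $i,j\in\{1,2,3\}$ is immediate: the three multiplication operators $\chi^0$, $\phi_Y$, $\dot\phi_Y$ pairwise commute (they are ordinary functions), so the inner commutator in $[\,f_i,[f_j,\,\cdot\,]\,]$ is symmetric in $i,j$. Vanishing of $\omega_\Gamma^{\,\cyl}$ on $\delta_1 a$ and $\delta_2 a$ is the cylindrical version of \eqref{exact-weight}, inherited verbatim from the closed case. Vanishing of $\omega_\Gamma^{\,\cyl}$ on $\delta_3 a=[\chi^0,a]$ is essentially trivial once one notes that the kernel $[\chi^0,a](s,y,s',y')=(\chi^0(s)-\chi^0(s'))\,a(s,y,s',y')$ vanishes identically on the diagonal $s=s'$, $y=y'$, over which $\omega_\Gamma^{\,\cyl}$ integrates; this is the exact analog of the statement $\Tr[\chi^\lambda,\ell]=0$ used in Proposition \ref{prop:sigma1-ok}.

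Granting the two hypotheses, Proposition \ref{prop:cyclic-commut-deriv} with $k=3$ delivers a cyclic $3$-cocycle on $\Omega_{\cyl}$ whose explicit formula is the antisymmetric sum in \eqref{sigma3}; restricting to the subalgebra $B_c\subset A^0\subset\Omega_{\cyl}$ yields the desired cyclic 3-cocycle property of $\sigma_{GV}$ on $B_c(\cyl(Y),\F_{\cyl})$. The main obstacle I expect is bookkeeping rather than conceptual: verifying that the mixed algebra $A^0=B_c+J_c^{\cyl}$ and its $E'$-twisted bimodule $A^1$ are genuinely closed under the derivations and that all compositions arising in $b\sigma_{GV}$ and in the cyclic permutation produce integrands of $\Gamma$-compact support, so that every application of $\omega_\Gamma^{\,\cyl}$ is justified. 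This amounts to iterating the support argument given after Definition \ref{def:sigma3} — each time a factor $\delta_3$ appears, $\RR$-equivariance is destroyed and support collapses onto a $\Gamma$-compact set in $G_{\cyl}$ — and once this is in place the cyclic $3$-cocycle identity is a direct corollary of Proposition \ref{prop:cyclic-commut-deriv}, with no further computation required.
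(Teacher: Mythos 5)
Your strategy is exactly the paper's: adjoin an ideal of $\Gamma$-compactly supported kernels to $B_c$, extend the three derivations to the resulting graded algebra, verify pairwise commutativity and the Stokes property (including $\omega_\Gamma^{\,\cyl}([\chi^0,\cdot])=0$ via vanishing of the commutator kernel on the diagonal), and invoke Proposition \ref{prop:cyclic-commut-deriv} with $k=3$ before restricting to $B_c$. There is, however, one concrete flaw in your setup: you take the ideal to be $J_c^{\cyl}=C^\infty_c(G_{\cyl})$, the \emph{smooth} compactly supported kernels, and claim that $\delta_3\ell=[\chi^0,\ell]$ lands there. It does not: $\chi^0$ is a characteristic function, and the explicit kernel \eqref{kernel-for-commutator-bis} of $[\chi^0,\ell]$ is discontinuous across $s=0$ and $s'=0$; Sublemma \ref{sublemma:basic1} gives you $\Gamma$-compact support but not smoothness. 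So as written your $A^0=B_c+J_c^{\cyl}$ is not closed under $\delta_3$, and the graded-algebra machinery cannot be applied to it. This is precisely the ``small complication'' the paper addresses by replacing $C^\infty_c(G_{\cyl})$ with $L^\infty_c(G_{\cyl})$, the measurable, essentially bounded kernels of $\Gamma$-compact support (viewed inside $\End(\H)$), which \emph{is} stable under all three derivations and still carries the bimodule trace. With that one substitution your argument goes through verbatim and coincides with the paper's proof.
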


\begin{proof}
We wish to apply Proposition \ref{prop:cyclic-commut-deriv} as we did 
in the  proof of the  2-cyclic-cocycle property for $\tau_{GV}$, see Proposition \ref{prop:cyclic-cocycle-tgv} . 
However, we need to deal with a small complication, having to do with the fact that
 $\chi^0$ is not smooth and that $[\chi^0,\ell]$ is no longer
 translation invariant. Recall
the groupoid $G_{\cyl}:= \cyl (\tilde{N})\times \cyl(\tilde{N})\times T/\Gamma$
which is nothing but $G_Y\times \RR\times \RR$ with $G_Y$ the holonomy
groupoid for $Y=\tN\times_\Gamma T$.
Define
$$L^\infty_c (G_{\cyl})=\{k:G_{\cyl}\to \CC\;|\; k\text{ is measurable, essentially bounded and of  
$\Gamma$-compact support}\}
$$
More generally, let $E$ be a vector bundle on $Y$ with lift $\widehat{E}$
on $\tN\times T$; we pull  back $E$ to
$\cyl(Y)$ through the obvious projection obtaining a vector bundle $E_{\cyl}$.
We can then consider in a natural way
$L^\infty_c (G_{\cyl};E_{\cyl})$ and $L^\infty_c (G_{\cyl});E_{\cyl},E^\prime_{\cyl})$. 
We omit the obvious details.
Recall also 
$$B_c (G_{\cyl})\equiv B_c:=\{\ell:G_{\cyl}\to\CC\;\;|\;\; \ell \text{ is smooth, }
\RR\times\Gamma-\text{invariant and of }\;\RR\times\Gamma-\text{support}\}\,.$$
We also have $B_c (G_{\cyl};E)$ (this is the algebra on which $\sigma_{GV}$
is defined) and  $B_c (G_{\cyl};E,E^\prime)$.
We set now
\begin{align*}
A^0&:=
B_c (G_{\cyl};E)\oplus L^\infty_c (G_{\cyl};E_{\cyl})\\
A^1&:=B_c (G_{\cyl};E,E^\prime)\oplus L^\infty_c (G_{\cyl};E_{\cyl},E^\prime_{\cyl})\,.
\end{align*}
First, observe here that
$A^0$ and $A^1$ are naturally considered as subspaces in $\End (\H)$ and ${\rm Hom}(\H,\H^\prime)$ respectively, where we recall that $\H=(\H_\theta)_{\theta\in T}$, $\H_\theta=
L^2 (\cyl(\tN)\times\{\theta\}, E_{\cyl,\theta})$ and similarly for $\H^\prime$; indeed, each summand
of $A^0$, for example, is in $\End (\H)$ and the direct sum holds because of the support
conditions.
Next we observe that $A^0$ is in fact as a subalgebra of $\End (\H)$, since the product of 
$k\in B_c (G_{\cyl};E)$ and $k'\in  L^\infty_c (G_{\cyl};E_{\cyl})$ is an element in
$L^\infty_c (G_{\cyl};E_{\cyl})$. Moreover, for the same reason,
$A^1$ has a bimodule structure over $A^0$,
inherited from the one of ${\rm Hom}(\H,\H^\prime)$ over $\End (\H)$.
The direct sum $\Omega:= A^0\oplus A^1$, with the product defined in
Lemma  \ref{lemma:existence-omega}, is the graded algebra to which we 
want to apply Proposition \ref{prop:cyclic-commut-deriv}.

We can define three derivations $\delta_1$, $\delta_2$ and $\delta_3$
as in \eqref{3-derivations}.
We consider $\delta_1$ as a derivation of degree 1, mapping $A^0$ to $A^1$
and vanishing on $A^1$; we consider $\delta_2$ and $\delta_3$ as derivations
of degree 0, preserving $A^0$ and $A^1$ respectively. Notice that since $\phi_\pa$
and $\dot{\phi}_{\pa}$ are translation invariant on the cylinder, $\delta_1$ and $\delta_2$
are diagonal with respect to the direct sum decomposition of $A^0$ and $A^1$.
As far as $\delta_3$ is concerned, we remark that 
using \eqref{kernel-for-commutator-bis}
we see that $\delta_3$ maps $B_c\oplus L^\infty_c$ into $L^\infty_c$ both on $A^0$
and $A^1$.
It is clear that these three derivations are pairwise commuting. Finally, we define a 
bimodule trace map
on $A^1$ by employing the bimodule trace $\omega_\Gamma^{\cyl}$ appearing in
Definition \ref{def:sigma3}; this is well defined on $L^\infty_c (G_{\cyl};E,E^\prime)$
since elements in this space have $\Gamma$-compact support. We can then define
$\omega:A^1\to\CC$ by $\omega (\alpha)=\omega_\Gamma^{\cyl} (k)$
if $\alpha=(\ell,k)\in A^1\equiv B_c (G_{\cyl};E,E^\prime)\oplus L^\infty_c (G_{\cyl};E_{\cyl},E^\prime_{\cyl})$. We know that $\omega( \delta_j \alpha)=0$ if $j=1,2$. On the other hand,
always for $\alpha=(\ell,k)= \ell+k\in A^1$,
we have  
\begin{align*}
\omega (\delta_3 \alpha)=& \omega ([\chi^0,\ell+k])=\omega^{\cyl}_\Gamma
([\chi^0,\ell+k])\\
=&\int _{\cyl (\Gamma)} \Tr_{(\tn,s,\theta)} \big(
\chi^0 (s) (\ell (\tn,\tn,0,\theta)+ k(\tn,\tn,s,s,\theta))- ( \ell (\tn,\tn,0,\theta)+ k(\tn,\tn,s,s,\theta))\chi^0 (s)
\big) \,d\tilde{n}\,d\theta\\
= & 0
\end{align*}
Thus, we also have Stokes formula for the derivation $\delta_3$. Now we define $\tau_0$
from $\omega$ as in Lemma \eqref{lemma:trace-bimod} so that all the conditions in the hypothesis of Proposition
\ref{prop:cyclic-commut-deriv} are satisfied. Finally, we point out that
$B_c$ is a subalgebra of $A^0$: proceeding exactly as in the proof of Proposition \ref{prop:cyclic-commut-deriv}  we can now check that $\sigma_{GV}$ is indeed a cyclic 3-cocycle on $B_c$.

\end{proof}
\subsection{The relative Godbillon-Vey cyclic cocycle $(\tau^r_{GV},\sigma_{GV})$}
We now apply our strategy as in Subsection \ref{subsect:strategy}. Thus starting with the 
cyclic cocycle
$\tau_{GV}$ on $J_c (X,\F)$ 
 we first consider the 3-linear functional  on $A_c (X,\F)$ given by
 \begin{equation*}
 \psi_{GV}^r (k_0,k_1,k_2) :=  \frac{1}{2!}\sum_{\alpha\in \mathfrak{S}_2} {\rm sign}(\alpha)\,
\omega_\Gamma^r (a_0 \;\delta_{\alpha (1)} a_1 \;\delta_{\alpha (2)} a_2)
\end{equation*}
with $\omega^r_{\Gamma}$ the regularized weight corresponding to $\omega_{\Gamma}$. The definition of
$\omega^r_{\Gamma}$ is clear: consider $X=\tV\times_\Gamma T$, a free $\Gamma$-quotient of $\tV\times T$; 
consider a fundamental domain $X(\Gamma)$ for this $\Gamma$-covering.
 $X(\Gamma)$ can be taken to be 
equal to
 $F \times T$, with $F$ a fundamental domain
for the Galois covering $\Gamma\to\tV\to V$, with $\tV= \tM\cup_{\partial \tM} \left(   (-\infty,0] \times \partial\tM \right)$
and $V:= M\cup_{\partial M} \left(   (-\infty,0] \times \partial M \right)$. 
See subsection \ref{subsection:cyl+notation}.
Thus $F$ has a cylindrical end, with cross section $F_\pa$
Then, using the usual notations, 
\begin{equation}\label{regularized-weight}
\omega^r_{\Gamma} (k):= \lim_{\lambda\to +\infty} 
\left( 
 \int_{F_\lambda\times T}  k(x,x,\theta) \,dx\,d\theta\; - \;\lambda\int_{F_\pa \times T} \ell (y,y,0, \theta) dy d\theta\, \right)\quad\text{where}\quad \pi_c (k)=
 \ell
\end{equation}
and where we have used the translation invariance of $\ell$ in order to write $\ell$
as a function of $(y,y^\prime,s,\theta)$, $s\in \RR$.
Notice that, as in Subsection \ref{subsection:melrose} the function
$\phi (\lambda):=    \int_{F_\lambda\times T}  k (x,x,\theta) \,dx\,d\theta\;-\;\lambda\int_{F_\pa \times T} \ell (y,y,0, \theta) dy d\theta$
becomes constant for $\lambda>>0$.

 Next we consider the {\it cyclic} cochain associated to  $\psi_{GV}^r $:
\begin{equation}\label{regularized-gv}
\tau_{GV}^r (k_0,k_1,k_2):=   \frac{1}{3} \left( \psi_{GV}^r (k_0,k_1,k_2) +  \psi_{GV}^r (k_1,k_2,k_0)
+  \psi_{GV}^r (k_2,k_0,k_1) \right) \,.
\end{equation}

The next Proposition is crucial:

\begin{proposition}\label{prop:gv-cocycle}
The relative cyclic cochain $(\tau_{GV}^r,\sigma_{GV})\in C^2_\lambda (A_c,B_c)$ is a relative
cocycle: thus \begin{equation}\label{gv-cocycle}
b\sigma_{GV}=0\quad\text{and} \quad b\tau_{GV}^r= (\pi_c)^* \sigma_{GV}\,.
\end{equation}
\end{proposition}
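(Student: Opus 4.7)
The first identity $b\sigma_{GV}=0$ is already Proposition~\ref{prop:sigma3-cyclic}, so the real content is the relative identity $b\tau_{GV}^r=(\pi_c)^*\sigma_{GV}$. My plan is to imitate, one step up in degree, the proof of Proposition~\ref{prop:c-relative-cocycle} (the $0$-cocycle case $b\tau_0^r=(\pi_c)^*\sigma_1$), combined with the graded-algebra-with-commuting-derivations machinery of Subsection~\ref{subsect:ccgraded}. The heuristic is simple: if $\omega_\Gamma^r$ were a genuine trace, the computation of Proposition~\ref{prop:cyclic-cocycle-tgv} would give $b\tau_{GV}^r=0$; all the defect comes from the failure of $\omega_\Gamma^r$ to be a trace, and that defect is controlled by the very same formula — promoted from the bimodule trace $\omega_\Gamma$ to the cylindrical bimodule trace $\omega_\Gamma^{\,\cyl}$ paired with the additional derivation $\delta_3=[\chi^0,\cdot\,]$ — that already governed Proposition~\ref{prop:c-relative-cocycle}.

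Concretely, I would proceed in three stages. \textbf{Stage 1 (framework).} Build a graded algebra $\widetilde\Omega$ whose degree-zero part is $A_c(G;E)$ and whose degree-one part is the analogous bimodule $A_c(G;E,E')$, modeled exactly as $\Omega(G)$ was built from $\Psi^{-\infty}_c(G;E)$ and $\Psi^{-\infty}_c(G;E,E')$. The derivations $\delta_1=[\dot\phi,\cdot\,]$ and $\delta_2=[\phi,\cdot\,]$ extend to $\widetilde\Omega$ and commute with $\pi_c$ (since $\phi,\dot\phi$ are extended constantly along the cylinder, so they commute with translation). Define $\tau_\Gamma^r$ on $\widetilde\Omega$ by regularizing $\omega_\Gamma$ exactly as in \eqref{regularized-weight}; in particular the bimodule-trace version of Lemma~\ref{lemma:regularized} gives $\tau_\Gamma^r(\alpha)=\omega_\Gamma(t(\alpha))$. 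One checks that $\tau_\Gamma^r(\delta_j\alpha)=0$ for $j=1,2$ on all of $\widetilde\Omega$: on the compactly supported part this is the Stokes identity \eqref{exact-weight}; on the cylindrical part it is automatic since $\delta_1,\delta_2$ preserve translation invariance and the regularized integral of a translation-invariant commutator along the cylinder vanishes.

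\textbf{Stage 2 (defect formula).} The key lemma I need is the bimodule-trace analog of the identity proved inside Proposition~\ref{prop:c-relative-cocycle}, namely
\begin{equation*}
\tau_\Gamma^r\bigl(\alpha\beta-(-1)^{\deg\alpha\,\deg\beta}\beta\alpha\bigr)
=\omega_\Gamma^{\,\cyl}\bigl(\pi_c(\alpha)\,\delta_3\pi_c(\beta)\bigr)\qquad\text{for }\alpha,\beta\in\widetilde\Omega.
\end{equation*}
The proof is line-for-line the one used there: decompose $\alpha=a+\chi^\mu\ell_\alpha\chi^\mu$ and $\beta=a'+\chi^\mu\ell_\beta\chi^\mu$; all cross terms in the (graded) commutator are of $\Gamma$-compact support by Sublemma~\ref{sublemma:basic1}; apply $\tau_\Gamma^r=\omega_\Gamma\circ t$ and use $\lambda$-independence as in Proposition~\ref{prop:lambda-indipendent} to identify the leftover cylindrical contribution with $\omega_\Gamma^{\,\cyl}(\pi_c\alpha\cdot[\chi^\mu,\pi_c\beta])=\omega_\Gamma^{\,\cyl}(\pi_c\alpha\cdot\delta_3\pi_c\beta)$.

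\textbf{Stage 3 (coboundary).} I then expand $b\tau_{GV}^r(k_0,k_1,k_2,k_3)$ using the symmetrized formula for $\tau_{GV}^r$ and the Leibniz rule for $\delta_1,\delta_2$. Exactly as in the proof of Proposition~\ref{prop:cyclic-commut-deriv} for $k=2$, the Leibniz terms produced on the $k_ik_{i+1}$ slots recombine, via the cyclic property of $\tau_\Gamma^r$ on degree-2 elements (which holds since $\omega_\Gamma$ really is a trace on the $\Gamma$-compactly supported part, and translation invariance kills the cylindrical correction when no $\chi^0$ is inserted), into a single commutator of the form $\tau_\Gamma^r([\mu,\nu])$ with $\mu,\nu$ expressions in the $k_j$'s and $\delta_1,\delta_2$. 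The Stage~2 defect formula converts each such commutator into a $\omega_\Gamma^{\,\cyl}(\cdot\;\delta_3\cdot)$ expression. Since $\delta_1,\delta_2,\delta_3$ are pairwise commuting on $B_c$ (checked exactly as in Proposition~\ref{prop:sigma3-cyclic}), the antisymmetrization built into $\tau_{GV}^r$ over $\mathfrak{S}_2$ combined with the four alternating-sign slots of the Hochschild coboundary $b$ reassembles these contributions into the full $\mathfrak{S}_3$-antisymmetrization \eqref{sigma3} applied to $(\pi_c k_0,\pi_c k_1,\pi_c k_2,\pi_c k_3)$, which is $\sigma_{GV}(\pi_c k_0,\pi_c k_1,\pi_c k_2,\pi_c k_3)$.

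The main obstacle is Stage~3: the combinatorics. One must verify that the twelve terms coming from the four-term $b$-coboundary of a symmetrized $\mathfrak{S}_2$-expression, after applying Leibniz and the defect formula, distribute cleanly over the six permutations of $\mathfrak{S}_3$ with the correct signs, and that all spurious Leibniz leftovers (terms in which a single $\delta_i$ lands on a product $k_jk_{j+1}$ without producing a commutator) cancel pairwise via the symmetry $\delta_1\leftrightarrow\delta_2$ and the cyclicity of $\tau_\Gamma^r$ on degree-$\le 1$ expressions. A secondary technical point is that intermediate expressions (e.g.\ $\delta_1k_1$) are not themselves in $A_c$ because $\phi,\dot\phi$ are unbounded; this is handled by working throughout in $\widetilde\Omega$ enlarged by bounded-multiplier operators, where all kernels retain the decomposition needed for Stage~2 to apply.
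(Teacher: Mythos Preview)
Your three-stage outline is the paper's strategy: your $\widetilde\Omega$ is the paper's $\Omega_A:=A_c(X,\F;E)\oplus A_c(X,\F;E,E')$; your Stage~2 defect formula is precisely \eqref{tau-trace-formula}, proved there exactly as you sketch (decompose, discard $\Gamma$-compact cross terms via Sublemma~\ref{sublemma:basic1}, apply $\omega_\Gamma\circ t$); and the conclusion is reached by expressing $b\tau_{GV}^r$ as a sum of commutators to which the defect formula is applied.

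The difference is in how Stage~3 is organized. Rather than expanding the Hochschild coboundary and chasing Leibniz terms, the paper passes to the superalgebra $\Omega_A\otimes\Lambda^*\RR^2$ with total derivation $D(\kappa\otimes e_J)=\delta_1\kappa\otimes e_1\wedge e_J+\delta_2\kappa\otimes e_2\wedge e_J$ and functional $\langle\kappa\otimes e_1\wedge e_2\rangle_r:=\tau_\Gamma^r(\kappa)$. Two short lemmas do the work: (i) $\langle D\alpha\rangle_r=0$ for $\alpha$ of degree~$1$ --- this is just your Stokes identities $\tau_\Gamma^r(\delta_j\cdot)=0$ from Stage~1, repackaged; (ii) the cyclic symmetrization in \eqref{regularized-gv} becomes $6\tau_{GV}^r=\tau_1+\tau_2+\tau_3$ with $\tau_1=\langle\kappa_0\,D\kappa_1\,D\kappa_2\rangle_r$, $\tau_2=\langle D\kappa_1\,(D\kappa_2)\,\kappa_0\rangle_r$, $\tau_3=-\langle(D\kappa_2)\,\kappa_0\,D\kappa_1\rangle_r$. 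A direct computation using only the Leibniz rule for $D$ (no trace property anywhere) shows that each $b\tau_i$ is already a \emph{single} commutator $\langle[\cdot,\cdot]\rangle_r$; the defect formula is then applied exactly three times, and the six resulting $\omega_\Gamma^{\cyl}(\cdot\,\delta_3\cdot)$ terms are manifestly the $\mathfrak{S}_3$-sum defining $\sigma_{GV}$. This packaging is what makes the ``spurious Leibniz leftovers'' you worry about cancel transparently.

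One correction to your Stage~3: you invoke ``the cyclic property of $\tau_\Gamma^r$ on degree-$2$ elements'', but $\tau_\Gamma^r$ is \emph{not} a trace on $\widetilde\Omega$ --- that failure is precisely what Stage~2 measures. The identity that lets you move factors around is not cyclicity but Stokes, $\tau_\Gamma^r(\delta_j\cdot)=0$ (equivalently $\langle D\alpha\rangle_r=0$), which you already established in Stage~1; it shifts a derivation from one factor to another without producing a commutator. With that substitution your argument is correct.
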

We shall present a proof of  Proposition \ref{prop:gv-cocycle} in Section \ref{section:proofs}.
 For later use we also state and prove the analogue of 
  formula  \eqref{b-trace=composition}:
 
 \begin{proposition}\label{prop:regularized-via-t}
 Let $t: A^* (X,\F)\to C^* (X,\F)$ be  the section introduced in Subsection \ref{subsec:extension}. 
 If $k\in A_c\subset A^* (X,\F)$ then $t(k)$ has finite weight.
Moreover, for  the regularized weight $\omega_\Gamma^r: A_c \to \CC$
 we have
 \begin{equation}\label{regularized-via-t}
\omega^r_\Gamma = \omega_\Gamma \circ t
\end{equation}

 \end{proposition}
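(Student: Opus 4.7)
The plan is to adapt the argument of Lemma~\ref{lemma:regularized} to the foliated setting. First, I would exploit the structure of $A_c$ recalled in Subsection~\ref{subsect:small-dense}: any $k\in A_c$ admits a decomposition $k = a + \chi^\mu \ell \chi^\mu$ with $a$ a $\Gamma$-invariant kernel of $\Gamma$-compact support, $\ell = \pi_c(k)\in B_c$, and $\mu>0$ sufficiently large (the proof of Lemma~\ref{lemma:ac} shows that $\mu$ can always be enlarged at the cost of absorbing a $\Gamma$-compactly supported term into $a$). By the definition of $t$ in \eqref{eq:t} we then have
$$t(k) = k - \chi^0\ell\chi^0 = a + \big(\chi^\mu\ell\chi^\mu - \chi^0\ell\chi^0\big).$$

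To see that $t(k)$ has finite weight, I would inspect its values on the diagonal. Using the $\RR$-invariance of $\ell$ together with the fact that $\chi^\mu$ and $\chi^0$ are characteristic functions (so $(\chi^\mu)^2=\chi^\mu$ and similarly for $\chi^0$), on the cylindrical part of $X$ (coordinate $(y,t)$) the restriction of $t(k)$ to the diagonal reads
$$t(k)(x,x,\theta) = a(x,x,\theta) + \big(\chi^\mu(t) - \chi^0(t)\big)\,\ell(y,y,0,\theta),$$
while on $X_0$ it equals $a(x,x,\theta)$. Since $\chi^\mu(t) - \chi^0(t)$ is supported in the compact interval $(-\mu,0]$, both contributions are supported in a $\Gamma$-compact region on the diagonal, so the weight $\omega_\Gamma(t(k))$ is finite.

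For the equality \eqref{regularized-via-t}, I would compute both sides explicitly in terms of the same decomposition. Integrating on a fundamental domain $F\times T$ with $F=F_0\cup((-\infty,0]\times F_\pa)$ yields
$$\omega_\Gamma(t(k)) = \int_{F\times T}a(x,x,\theta)\,dx\,d\theta \;-\;\mu\int_{F_\pa\times T}\ell(y,y,0,\theta)\,dy\,d\theta,$$
since $\int_{-\infty}^{0}(\chi^\mu(t)-\chi^0(t))\,dt = -\mu$. On the other hand, for $\lambda>\mu$ the integral entering \eqref{regularized-weight} gives
$$\int_{F_\lambda\times T}k(x,x,\theta)\,dx\,d\theta = \int_{F\times T}a(x,x,\theta)\,dx\,d\theta + (\lambda-\mu)\int_{F_\pa\times T}\ell(y,y,0,\theta)\,dy\,d\theta,$$
because $\chi^\mu(t)$ is the characteristic function of $(-\infty,-\mu]$, so $\int_{-\lambda}^{0}\chi^\mu(t)\,dt=\lambda-\mu$, and the $a$-contribution becomes constant once $\lambda$ is large enough to exhaust its diagonal support. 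Subtracting the counterterm $\lambda\int_{F_\pa\times T}\ell(y,y,0,\theta)\,dy\,d\theta$ and sending $\lambda\to\infty$, the $\lambda$-dependence cancels and one recovers exactly $\omega_\Gamma(t(k))$.

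The only point demanding some care is the finiteness claim: since $\chi^0$ is not smooth and the kernel $\chi^0\ell\chi^0$ extends to $t=-\infty$ along the cylinder, $t(k)$ belongs to $C^*(X,\F)$ but \emph{not} to $J_c$, so finiteness of the weight cannot be deduced from membership in the smooth compactly supported algebra and must be verified directly on the diagonal. Once this is settled, the rest of the argument is a transparent calculation completely parallel to Lemma~\ref{lemma:regularized}.
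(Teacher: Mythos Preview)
Your proof is correct and follows essentially the same route as the paper: write $k=a+\chi^\mu\ell\chi^\mu$, compute $t(k)=a+(\chi^\mu\ell\chi^\mu-\chi^0\ell\chi^0)$, observe that the correction term is $\Gamma$-compactly supported (so the weight is finite), and then match the explicit integral formulas. The only cosmetic difference is that the paper uses the same letter $\lambda$ for both the decomposition parameter and the regularization cutoff (choosing it ``sufficiently large''), whereas you separate the roles into $\mu$ and $\lambda$, which is arguably clearer.
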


 \begin{proof}
 The proof is virtually identical to the one establishing \eqref{b-trace=composition}.
Write $k=a+\chi^\lambda\,\ell\,\chi^\lambda$ with  $a \in J_c$ 
and $\ell\in B_c$. 
Remark that the support of $\chi^{\lambda}-\chi^0$ is compact. 
Thus 
$t(k)=k-\chi^0\ell\chi^0 
=a+\chi^\lambda\ell\chi^\lambda-\chi^0\ell\chi^0$ 
has certainly finite weight, given that it is of $\Gamma$-compact support.
Thus,
$$
\omega_\Gamma (t (k))=
\int_{F\times T} a(x,x,\theta)\,dx\,d\theta - \int_{F_{\lambda}\times T\setminus F_0\times T}\ell(y,y,0,\theta)dy\,dt\,d\theta
= 
\omega^r_\Gamma (k)
$$
for a sufficiently large $\lambda$. This completes the proof.
 \end{proof}

\subsection{Eta cocycles}\label{subsection:eta-cocycles}

The ideas explained in the previous sections can be extended to
 general cocycles $\tau_k\in HC^k (C^\infty_c (G, (s^*E)^*\otimes r^*E))$; we simply need
 to require that these cocycles  are in the image of a suitable 
 Alexander-Spanier homomorphism since we can then replace integrals with
 regularized integrals in the passage from absolute to relative cocycles. 
 This general theory will be treated elsewhere.
Here we only want to comment on the particular case
 of Galois coverings, since this case illustrates very well the general framework.
 In this important example the  techniques of this paper can be used in order to give 
 an alternative approach to the higher index theory developed in \cite{LPMEMOIRS}, much more
 in line with the original treatment given by Connes and Moscovici in their fundamental
 article \cite{CM}. 
 
We now give a very short treatment of
this important example, assuming a certain familiarity with the seminal work of Connes and Moscovici. 
Let  $\Gamma\to \tM\to M$ be a Galois covering with boundary
and let $\Gamma\to \tV\to V$ be the associated covering with cylindrical ends. In the closed case
higher indeces for a $\Gamma$-equivariant Dirac operator on $\tM$ are obtained through Alexander-Spanier
cocycles, so we concentrate directly on these.  Let $\phi$ be an Alexander-Spanier $p$-cocycle; for simplicity
we assume that $\phi$ is the sum of decomposable elements given by the cup product of Alexander-Spanier
1-cochains: $$\phi= \sum_i \delta f^{(i)}_1 \cup \delta f^{(i)}_2 \cup \cdots \cup \delta f^{(i)}_p\;\;\text{where} 
\;\;f_j^{(i)}:\tM\to \CC\;\;\text{is continuous}.$$
Here we assume that $\delta f_j^{(i)}$, $\delta f^{(i)}_j (\tm,\tm'):=(f^{(i)}_j (\tm')-
f^{(i)}_j (\tm))$ is  $\Gamma$-invariant with respect to the diagonal action
of $\Gamma$ on $\tM\times \tM$.
This is a non-trivial assumption.
 We 
shall omit $\cup$ from the notation.   The cochain $\phi$ is a cocycle (where we recall that for an Alexander-Spanier $p$-cochain given by a continuos
function $\phi: \tM^{p+1}\to \CC$ invariant under the diagonal $\Gamma$-action, one sets $\delta\phi (x_0,x_1,\dots,
x_{p+1}):= \sum_0^{p+1} (-1)^i \phi (x_0,\dots,\hat{x}_i,\dots,x_{p+1})$). Always in the closed case we obtain
a cyclic $p$-cocycle for the convolution algebra $C^\infty_c (\tM\times_\Gamma \tM)$ by setting
\begin{equation}\label{as-cyclic}
\tau_\phi (k_0,\dots,k_p)=\frac{1}{p!} \,  \sum_{\alpha\in \mathfrak{S}_p}\, \sum_i\, {\rm sign} (\alpha) \omega_\Gamma 
(k_0 \, \delta^{(i)}_{\alpha (1)} k_1 \cdots \delta^{(i)}_{\alpha (p)} k_p)\;\;\text{ with }\;\; \delta^{(i)}_j k := [k,f^{(i)}_j]\,.
\end{equation}
Notice that $[k,f^{(i)}_j]$ is the $\Gamma$-invariant kernel whose value at 
$(\tm,\tm')$ is given by $k(\tm,\tm') \delta f^{(i)}_j (\tm,\tm')$ which is by definition $k(\tm,\tm') (f^{(i)}_j (\tm')-
f^{(i)}_j (\tm))$; $\omega_\Gamma$ is as usual given by 
$\omega_\Gamma (k)=\int_F \Tr_{\tm} k(\tm,\tm)$, with $F$ a fundamental domain for the $\Gamma$-action.

Pass now to manifolds with boundary and associated manifolds with cylindrical ends. Consider
the small subalgebras $J_c (\tV)$, $A_c (\tV)$, $B_c (\pa \tV\times \RR)$ appearing 
in the (small) Wiener-Hopf extension constructed in Subection \ref{subsect:small-dense} (just take $T=$point there).
We write briefly  $J_c$, $A_c $, $B_c$ and $0\to J_c\to A_c \xrightarrow{\pi_c}B_c\to 0$. We adopt
the notation of the previous sections.
Given $\phi$ as above, we can clearly define an {\it absolute} cyclic p-cocycle $\tau_\phi$ on $J_c$.
Next, define the $(p+1)$-linear functional $\psi^r_\phi$ on $A_c$ by replacing the integral in $\omega_\Gamma$ with Melrose' regularized
integral. Consider next 
$$\tau_\phi^r (k_0, \dots, k_p):= \frac{1}{p+1} \left( \psi^r_\phi(k_0,k_1,\dots,k_p)+ \psi^r_\phi(k_1,\dots,k_p,k_0)+\cdots+
\psi^r_\phi(k_p,k_0,\dots,k_{p-1}) \right)\,.$$
This is 
a cyclic $p$-cochain on $A_c$. Finally, introduce the new derivation $\delta^{(i)}_{p+1} (\ell):= [\chi^0,\ell]$
with $\chi^0$ the function on $\pa \tV\times \RR$ induced by the characteristic function
of $(-\infty,0]$. Then
the eta cocycle associated to $\tau_\phi$ is given by
\begin{equation}\label{eta-cm}
\sigma_\phi (\ell_0,\dots,\ell_{p+1})=\frac{1}{(p+1)!}  \sum_{\alpha\in \mathfrak{S}_{p+1}}\, \sum_i\, {\rm sign} (\alpha) \omega_\Gamma 
(\ell_0 \, \delta^{(i)}_{\alpha (1)} \ell_1 \cdots \delta^{(i)}_{\alpha (p+1)} \ell_{p+1})
\end{equation}
It should be possible to prove, using the techniques of this Section,  that this is a cyclic $(p+1)$-cocycle for $B_c$ and that $(\tau_\phi^r, \sigma_\phi)$ is a {\it relative}
cyclic $p$-cocycle for the pair $(A_c,B_c)$. $\sigma_\phi$ is, by definition, the {\it eta cocycle} corresponding to
$\tau_\phi$.

\section{{\bf Smooth subalgebras}}\label{sec:shatten}

\subsection{Summary of this Section} $\;$

\noindent
The goal of this whole Section is to define
the   subsequence $$0\rightarrow \mathbf{\mathfrak{J}} \hookrightarrow  \mathbf{\mathfrak{A}} \rightarrow   \mathbf{\mathfrak{B}} \rightarrow 0$$
of $0\to C^* (X,\F)\to A^* (X,\F)\to
B^* (\cyl (\pa X),\F_{\cyl})\to 0$
we have alluded to in the Introduction and in Subsection \ref{subsection:intr-remarks}.
Since the definitions are somewhat involved, we have decided to give here
a brief account
of the main definitions and of the main results of the whole Section; this summary will
be enough for understanding the main ideas in the proof of our main theorem.

\medskip
\noindent
{\bf Step 1.} We begin by defining Shatten-type ideals $\I_m (X,\F)\subset C^* (X,\F)$; these are for each $m\geq 1$ dense and holomorphically closed  subalgebras of  $C^* (X,\F)$. (We shall eventually fix
$m$ greater than dimension of the leaves. ) By imposing that the kernels in $\I_m (X,\F)$
define bounded operators when multiplied by a function that goes like $(1+s^2)$
on the cylindrical end, we obtain the Banach algebras $\J_m (X,\F)\subset C^* (X,\F)$;
these are still dense and holomorphically closed. 

\medskip
\noindent
{\bf Step 2.} Next we define  dense holomorphically closed subalgebras 
$\B_m  (\cyl (\pa X),\F_{\cyl})\subset B^* (\cyl (\pa X),\F_{\cyl})$ (often simply denoted $\B_m$). \\To this end
we first define ${\rm OP}^{-1}  (\cyl (\pa X),\F_{\cyl})$, the closure of
$\Psi^{-1}_{\RR,c} (G_{\cyl})\subset  B^* (\cyl (\pa X),\F_{\cyl})$ with respect
to the norm $|||P|||:= {\rm max} (\|P \|_{-n,-n-1},\|P\|_{n+1,n})$, where on the
right hand side we have the norm for operators between Sobolev spaces and where
$n$ is a fixed integer greater or equal to the dimension of the leaves.
Next we define $\D_m$ as those elements in ${\rm OP}^{-1}  (\cyl (\pa X),\F_{\cyl})$
for which (a certain closure of ) the derivation $[\chi^0,\cdot]$ has values in $\J_m$.
We then define $\D_{m,\alpha}$ as $\D_m\cap {\rm Dom} (\partial_\alpha)$ with $\pa_{\alpha}$ the closed derivation associated to the $\RR$-action $\alpha_t$ defined 
by $\alpha_t (\ell):= e^{its} \ell e^{-its}$. $\B_m$ is obtained as a subalgebra of 
$\D_{m,\alpha}$: 
$\B_m =\{\ell\in \D_{m,\alpha}\,|\, [f,\ell]\;\;\text{and}\;\;[f,[f,\ell]]\;\;\text{are bounded}\,,\;\;
\text{with} \;\;f(y,s)=\sqrt{1+s^2}\}\,.$
We endow $\B_m$ with a Banach norm and we prove that it is a dense holomorphically
closed subalgebra of $\B^*$ for each $m\geq 1$.

\medskip
\noindent
{\bf Step 3.} We define 
$\A_m (X,\F):=\{k\in A^* (X,\F);\pi (k)\in \B_m  (\cyl (\pa X),\F_{\cyl}), t(k)\in \J_m (X,\F)\}$
with $t:A^* (X,\F)\to C^* (X,\F)$ defined in \eqref{eq:t}. 
We endow $\A_m$  with a norm that makes it a Banach subalgebra
of $A^*$

\medskip
\noindent
{\bf Step 4.} We prove that $\J_m$ is an ideal in $\A_m$ and that there is for each
$m\geq 1$ a short exact
sequence of Banach algebras $0\to\J_m (X,\F)\to \A_m (X,\F)\to \B_m  (\cyl (\pa X),\F_{\cyl})\to 0$.

\medskip
\noindent
{\bf Step 5.} Recall the function $\phi$, equal to the logarithm of the modular function.
Recall the (algebraic) derivations $\delta_1:= [\dot{\phi},\;]$ and $\delta_2:= [\phi,\;]$. We define suitable closures $\overline{\delta}_1$, $\overline{\delta}_2$ of these two derivations
and we define $\mathbf{\mathfrak{J}_m}$ as $\J_m\cap {\rm Dom} (\overline{\delta}_1)
\cap {\rm Dom} (\overline{\delta}_2)$. We endow $\mathbf{\mathfrak{J}_m}$
with a Banach norm and we remark that it is a dense holomorphically closed
subalgebra of $C^* (X,\F)$. Similarly, we define suitable closures of the derivations
 $\delta_1:= [\dot{\phi}_{\pa},\;]$ and $\delta_2:= [\phi_{\pa},\;]$ on the cylinder and we
 define  $\mathbf{\mathfrak{B}_m}$ as $\B_m\cap {\rm Dom} (\overline{\delta}_1)
\cap {\rm Dom} (\overline{\delta}_2)$. We endow  $\mathbf{\mathfrak{B}_m}$
with a Banach norm
and we show that it  is a dense holomorphically closed subalgebra
of $B^*$. Finally, we define in a similar way the Banach algebra 
$\mathbf{\mathfrak{A}_m}$; this is a subalgebra of $A^*$.

\medskip
\noindent
{\bf Step 6.} We prove that  $\mathbf{\mathfrak{J}_m}$ is an ideal in  $\mathbf{\mathfrak{A}_m}$ and that there is a short exact sequence of Banach algebras 
$0\rightarrow \mathbf{\mathfrak{J}_m} \hookrightarrow  \mathbf{\mathfrak{A}_m} \rightarrow   \mathbf{\mathfrak{B}_m} \rightarrow 0$. The subsequence we are interested
in is obtained by taking $m=2n+1$ in the above sequence, with $2n$ equal to the dimension of the leaves
in $(X,\F)$.

\subsection{Shatten ideals}\label{subsect:shatten-ideals}
Let $\chi_\Gamma$ be a characteristic function for a fundamental domain of $\Gamma\to \tV\to V$.
Consider $C^\infty_c (G)=:J_c (X,\F)\equiv J_c$.

\begin{definition}\label{def:shatten}
Let $k\in J_c$ be positive and self-adjoint. The Shatten norm $||k||_m$ of $k$ is defined as
\begin{equation}\label{shatten-norm}
(||k||_m)^m := 
\sup_{\theta\in T} || \chi_\Gamma \, (k(\theta))^m \chi_\Gamma ||_1 
\end{equation}
with the $||\,\,||_1$ denoting the usual trace-norm on the Hilbert space $\mathcal{H}_\theta$.
Equivalently 
\begin{equation}\label{shatten-norm-HS}
(||k||_m)^m = \sup_{\theta\in T} || \chi_\Gamma \, (k(\theta))^{m/2}  ||^2_{HS}\,.
\end{equation}
 with 
 $|| \,\,\,||_{HS}$ denoting the usual Hilbert-Schmidt norm.
 In general, we set $||k||_m:= ||\,\,(k k^*)^{1/2}\,\,||_m$.
The Shatten norm of $k\in J_c$ is easily seen to be finite for any $m\geq 1$.
\end{definition}

\begin{proposition}\label{prop:shatten-properties}
The following properties hold:
\begin{enumerate}
\item if $1/r = 1/p \,+\,1/q$ then $|| k k^\prime ||_r \leq || k||_p\,  ||k^\prime ||_q$;\\
\item if $r\geq 1$ then 
$|| k k^\prime||_r \leq || k ||_{C^*}\, || k^\prime ||_r$;
\item if $p<q$ then    $|| k||_p\geq || k||_q$; 
\item if  $p\geq 1$ then $|| k ||_p \geq    || k ||_{C^*}$.
\end{enumerate}
\end{proposition}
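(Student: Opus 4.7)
The first observation is that every one of the four inequalities is fiberwise in $\theta$: for positive self-adjoint $k\in J_c$ one has
\[
\|k\|_m^{\,m} \;=\; \sup_{\theta\in T}\tau_{\theta}\bigl(k(\theta)^m\bigr),
\]
where $\tau_{\theta}(T):=\operatorname{Tr}(\chi_{\Gamma}T\chi_{\Gamma})$ is a faithful normal semifinite trace on the von Neumann algebra $\mathcal{N}_{\theta}$ of $\Gamma$-equivariant bounded operators on $\mathcal{H}_{\theta}$. Thus $\|k\|_m$ is, fiberwise, the noncommutative $L^m$-norm in the semifinite pair $(\mathcal{N}_\theta,\tau_\theta)$, and each stated inequality reduces to its pointwise-in-$\theta$ analogue, which is preserved by $\sup_\theta$.

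Property (1) is the H\"older inequality in the semifinite $L^p$-spaces. The plan is: first reduce to positive $|k(\theta)|$, $|k'(\theta)|$ via polar decomposition $k=u|k|$, $k'=v|k'|$; then apply the three-lines theorem to an analytic family of the form $z\mapsto \tau_{\theta}\!\bigl(\,w\,|k|^{rz/p}\,|k'|^{r\bar z/q}\,\bigr)$ (with $w$ a partial isometry from the polar decomposition of the product), interpolating between the trivial $L^\infty$-bound at $\operatorname{Re} z=0$ and the tracial bound $\tau_\theta(|ab|)\le \|a\|_1\|b\|_\infty$ at $\operatorname{Re} z=1$. Property (2) is then the endpoint $q=\infty$ of (1), once one identifies the $L^\infty$-norm on $\mathcal{N}_\theta$ with the operator norm on $\mathcal{H}_\theta$, so that $\sup_\theta\|k(\theta)\|_\infty=\|k\|_{C^*}$.

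Properties (3) and (4) will both follow from the single interpolation bound
\[
\tau_\theta\bigl(|T|^q\bigr)\;\le\;\|T\|_\infty^{\,q-p}\,\tau_\theta\bigl(|T|^p\bigr),\qquad 1\le p\le q,
\]
which is an immediate consequence of the spectral operator inequality $|T|^{q-p}\le \|T\|_\infty^{q-p}\,\mathrm{Id}$ and positivity of $\tau_\theta$. Rearranged, this gives $\|T\|_q\le \|T\|_\infty^{\,1-p/q}\|T\|_p^{\,p/q}$. Once (4) is in hand, namely $\|T\|_\infty\le \|T\|_p$, this immediately yields $\|T\|_q\le \|T\|_p$, which is (3).

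The main obstacle is therefore (4): the assertion $\|k(\theta)\|_{\mathrm{op}}\le \tau_\theta(|k(\theta)|^p)^{1/p}$. In the classical Schatten setting of $B(\mathcal{H})$ this is immediate from $s_1^p\le \sum_n s_n^p$, because singular values there appear with multiplicity at least one; in the present semifinite $\Gamma$-equivariant setting one cannot invoke atomicity of the trace directly. The plan is to exploit the $\Gamma$-compact support of $k$: for every $\varepsilon>0$, the spectral projection $E_{[\|k(\theta)\|_\infty-\varepsilon,\,\infty)}(|k(\theta)|)$ will be shown to have $\tau_\theta$-trace at least one, by testing against a compactly supported vector in a fundamental domain and using $\Gamma$-equivariance of $|k(\theta)|$. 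Granted this, monotonicity of $\tau_\theta$ gives $\tau_\theta(|k(\theta)|^p)\ge (\|k(\theta)\|_\infty-\varepsilon)^p$, and letting $\varepsilon\to 0$ concludes.
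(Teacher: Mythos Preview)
The paper does not actually prove this proposition; it says only that the proof ``is easily given using standard properties of the Schatten norms on a Hilbert space.'' Your reduction to the fibrewise semifinite $L^p$-theory for the pair $(\mathcal N_\theta,\tau_\theta)$, with $\tau_\theta(\,\cdot\,)=\Tr(\chi_\Gamma\,\cdot\,\chi_\Gamma)$ the canonical trace on the $\Gamma$-commutant, is the natural way to make this precise, and items (1) and (2) are exactly the noncommutative H\"older inequality and its $L^\infty$-endpoint, valid for any faithful normal semifinite trace.

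The gap is in (4), and it cannot be closed as stated. Your plan---showing that the top spectral projection of $|k(\theta)|$ has $\tau_\theta$-trace at least $1$---fails because the von Neumann trace on the $\Gamma$-commutant has projections of arbitrarily small trace as soon as $\Gamma$ is infinite; $\Gamma$-compact support of $k$ does not force the approximate top eigenspace to be ``large''. Concretely: take $T$ a point and suppose $\tilde V$ contains an $\RR$-factor on which $\Gamma\supset\ZZ$ acts by translation. Let $k$ act by convolution along that factor by a smooth even compactly supported $\phi$ with $\widehat\phi\ge 0$ and $\widehat\phi(0)=\sup\widehat\phi=1$ (tensored with a fixed rank-one projector in the remaining directions). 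Then $k\in J_c$ is positive with $\|k\|_{C^*}=1$, while $\|k\|_m^m=\tau(k^m)=\phi^{*m}(0)=\tfrac1{2\pi}\!\int\widehat\phi^{\,m}$; for a smoothed tent function one gets $\|k\|_2^2=\int\phi^2=\tfrac23<1$, so $\|k\|_2<\|k\|_{C^*}$, contradicting (4). The same example gives $\|k\|_2<\|k\|_3$, so (3) also fails. Your derivation of (3) from (4) via the interpolation bound $\|T\|_q\le\|T\|_\infty^{1-p/q}\|T\|_p^{p/q}$ is correct in form; the trouble is upstream, in (4) itself. Inequalities (3) and (4) are ``standard'' only for ordinary Schatten ideals on a single Hilbert space (type~I with the canonical trace), not for general semifinite traces, and the paper's one-line justification does not address this.
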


\noindent
The proof of the Proposition is easily given using standard properties of the Shatten
norms on a Hilbert space.

\medskip
Consider now $\chi_\Gamma$, the characteristic function of  a fundamental
domain for $\tilde{V}$. Define a map
\begin{equation}\label{phi-p}
\phi_m :C^* (X,\F) \rightarrow \End (\H)
\end{equation}
to be given by 
$\phi_m (k) := (\chi_\Gamma |T_\theta |^m \chi_\Gamma)_{\theta\in T}$ with $m\in\NN$.
It is a continuous map (although, obviously, not a linear operator), given as the composition of
 $(T_\theta)_{\theta\in T} \rightarrow (|T_\theta |^m )_{\theta\in T}$ and left and right multiplication by $\chi_\Gamma$.
 Let $\L^1 (\H)$ be the subalgebra of $\End (\H)$ (see Subsection
 \ref{subsect:vonneumann}) consisting of measurable families  $T=(T_\theta)_{\theta\in T}$
such that $T_\theta$ is an operator of trace class for almost every $\theta$.
It is a Banach subalgebra of $\End (\H)$ with the norm

\begin{equation}\label{trace-norm}
\| T \|_1:= {\rm ess.}\sup \{\|T_\theta\|_1\,;\,\theta\in T\} 
\end{equation}
where $\| T_\theta \|_1 $ denotes the trace norm. For $m\in \NN$, $m\geq 1$ we set
\begin{equation}\label{shatten-ideal}
\mathcal{I}_m (X,\F):= \{ T\in C^* (X,\F)\;|\; \phi_m (T)\in \L^1 (\H)\}
\end{equation}
and denote by $\psi_m$ the  restriction of $\phi_m$ to $\mathcal{I}_m (X,\F)$, so that
$\psi_m : \mathcal{I}_m (X,\F)\to \L^1 (\H)$. 
We anticipate that we shall need to take a slightly smaller algebra;
this smaller algebra will be denoted $\J_m (X,\F)$.

It is clear that $\mathcal{I}_m (X,\F)$ is closed
under composition.
We can prove that the graph of $\psi_m$ is a closed subset of $C^* (X,\F)\times  \L^1 (\H)$:
indeed the graph of $\phi_m$ is  a closed subset of $C^* (X,\F)\times \End (\H)$
due to continuity, the inclusion of $C^* (X,\F)\times  \L^1 (\H)$
into $C^* (X,\F)\times \End (\H)$ is continuous and the graph 
of $\psi_m$ is the intersection of the graph of $\phi_m$
with $C^* (X,\F)\times  \L^1 (\H)$. 

\begin{proposition}\label{prop:shatten-is-ideal}
$\mathcal{I}_m(X,\F)$ is a Banach algebra, 
an ideal inside $C^*(X,\mathcal F)$ and is isomorphic to the completion of $J_c (X,\F)$ with respect to the 
$m$-Shatten norm.
In particular $\mathcal{I}_m(X,\F)$  is a   holomorphically closed dense subalgebra of  $C^* (X,\F)$.
\end{proposition}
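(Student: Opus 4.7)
The plan is to equip $\mathcal{I}_m(X,\F)$ with the Shatten norm $\|T\|_{(m)} := \|\phi_m(T)\|_1^{1/m}$ and then verify the four assertions in turn. For the Banach algebra structure: given a $\|\cdot\|_{(m)}$-Cauchy sequence $(T_n) \subset \mathcal{I}_m(X,\F)$, Proposition \ref{prop:shatten-properties}(4) will guarantee it is also $C^*$-Cauchy, so $T_n \to T$ in $C^*(X,\F)$; simultaneously $\phi_m(T_n)$ is Cauchy in $\L^1(\H)$, converging to some $S$; the closed-graph observation made just before the statement forces $S = \phi_m(T)$, so $T \in \mathcal{I}_m(X,\F)$ and $T_n \to T$ in $\|\cdot\|_{(m)}$. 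The submultiplicative estimate $\|TT'\|_{(m)} \leq \|T\|_{(m)} \|T'\|_{(m)}$ will follow by combining items (2) and (4) of Proposition \ref{prop:shatten-properties}.

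Next I would deduce the ideal property from the same kind of estimate: for $S \in C^*(X,\F)$ and $T \in \mathcal{I}_m(X,\F)$, item (2) of Proposition \ref{prop:shatten-properties}, applied fiberwise, will yield $\|ST\|_{(m)} \leq \|S\|_{C^*} \|T\|_{(m)}$, and the corresponding bound for $TS$ will follow by passing to adjoints and using that the Shatten norm is $\ast$-invariant. Holomorphic closure is then a formal consequence: if $T \in \mathcal{I}_m(X,\F)$ and $1+T$ is invertible in the unitization of $C^*(X,\F)$, then $(1+T)^{-1}-1 = -T(1+T)^{-1}$ will lie in $\mathcal{I}_m(X,\F)$ by the ideal property, so the spectra in the two algebras coincide. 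Density of $\mathcal{I}_m(X,\F)$ in $C^*(X,\F)$ will be immediate from the chain $J_c(X,\F) \subset \mathcal{I}_m(X,\F) \subset C^*(X,\F)$ together with the known density of $J_c(X,\F)$ in $C^*(X,\F)$.

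It then remains to identify $\mathcal{I}_m(X,\F)$ with the abstract $m$-Shatten completion of $J_c(X,\F)$. The inclusion $J_c(X,\F) \hookrightarrow \mathcal{I}_m(X,\F)$ is isometric, so it extends to a continuous injection from the abstract completion into $\mathcal{I}_m(X,\F)$; the substance of the claim is surjectivity, i.e.\ that $J_c(X,\F)$ is $\|\cdot\|_{(m)}$-dense in $\mathcal{I}_m(X,\F)$. I would proceed by a two-stage cutoff-and-smoothing approximation: first compress $T$ by smooth compactly supported functions $\varphi_n$ on $X$ to obtain $\varphi_n T \varphi_n$, which still lies in $\mathcal{I}_m(X,\F)$ by the ideal property and converges to $T$ in $\|\cdot\|_{(m)}$ by a dominated-convergence argument in $\L^1(\H)$; then approximate each $\varphi_n T \varphi_n$ in $\|\cdot\|_{(m)}$ by convolution-regularizations landing in $J_c(X,\F)$.

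\emph{The main obstacle} I anticipate is precisely this final Shatten-norm approximation of a compactly supported element by $J_c(X,\F)$: a bare $C^*$-approximation is insufficient, and one must exploit compact support together with the Hilbert-Schmidt reformulation \eqref{shatten-norm-HS} of the Shatten norm to control $\|\chi_\Gamma(\varphi_n T \varphi_n)^{m/2}\|_{\mathrm{HS}}^2$, which in turn requires care since the $m/2$-power does not behave linearly with respect to approximation. I would expect to handle this via an integral-representation for the fractional power combined with the resolvent estimates implicit in the $C^*$-convergence.
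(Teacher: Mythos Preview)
Your approach mirrors the paper's closely: Banach-algebra structure via completeness plus submultiplicativity (items (2) and (4) of Proposition~\ref{prop:shatten-properties}), the ideal property from item (2), holomorphic closure from the ideal property (the paper writes $f(a)=a\,g(a)$ for holomorphic $f$ with $f(0)=0$, which is your resolvent identity $(1+T)^{-1}-1=-T(1+T)^{-1}$ in different clothing), and density in $C^*(X,\F)$ from $J_c\subset\mathcal{I}_m$. The paper uses the graph quantity $\|T\|_{C^*}+\|\psi_m(T)\|_1$ rather than your $\|\phi_m(T)\|_1^{1/m}$, but notes these are equivalent on $J_c$ by item (4); this is cosmetic.

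There is one slip in your completeness argument: you claim $\phi_m(T_n)$ is Cauchy in $\L^1(\H)$, but $\phi_m$ is nonlinear, so $\|T_n-T_k\|_{(m)}\to 0$ only gives $\|\phi_m(T_n-T_k)\|_1\to 0$, not $\|\phi_m(T_n)-\phi_m(T_k)\|_1\to 0$, and the closed-graph observation cannot be invoked as written. (The paper's one-line appeal to the closed graph is equally informal here.) The standard fix is lower semicontinuity of the Schatten norm under $C^*$-convergence: from $T_n\to T$ in $C^*(X,\F)$ one gets $\|T-T_k\|_{(m)}\le\liminf_n\|T_n-T_k\|_{(m)}$, hence $T\in\mathcal{I}_m$ and $T_k\to T$ in $\|\cdot\|_{(m)}$ directly. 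On the identification with the completion of $J_c$, the paper simply \emph{asserts} that $J_c$ is $\|\cdot\|_m$-dense in $\mathcal{I}_m$ and moves on, without carrying out anything like your two-stage cutoff-and-smoothing; the obstacle you flag is genuine, but the paper does not address it.
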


\begin{proof}
We define a norm on $\mathcal{I}_m(X,\F)$ by considering the graph norm
associated to $\psi_m$, viz:
$$\| T \|_m := \| T \|_{C^*} + \|\psi_m (T)\|_1\,.$$ Since the graph of $\psi_m$ is closed
this is a complete Banach space. 
Moreover, by the analogue of the first inequality in Proposition  \ref{prop:shatten-properties}
(stated for elements in $\End_\Gamma (\H)$) we see that this graph norm satisfies
$\| ST \|_m \leq \| S \|_m \| T \|_m$ so that $\mathcal{I}_m (X,\F) $ is a Banach algebra.
Next observe that, obviously, $J_c (X,\F)\subset \mathcal{I}_m (X,\F) $; moreover,
 from the fourth inequality in Proposition  \ref{prop:shatten-properties}
we see that on $J_c (X,\F)$ the graph-norm and the Shatten norm introduced in Definition 
\eqref{def:shatten} are equivalent (thus the small abuse of notation); since 
$\mathcal{I}_m (X,\F) $ contains  $J_c (X,\F)$ as a dense set and it is complete by
the norm $\|\;\;\|_m$, we conclude that the completion of $J_c (X,\F)$
by the norm of Definition 
\eqref{def:shatten} is naturally isomorphic, as a Banach algebra, to $\mathcal{I}_m (X,\F) $.
The fact that
   $\mathcal{I}_m$ is an ideal in $C^* (X,\F)$
 follows  easily
from the inequality $|| k k^\prime||_m \leq || k || \,|| k^\prime ||_m$. From the ideal property
one can easily prove that $\mathcal{I}_m$ is closed under holomorphic functional calculus;
indeed if $a\in \mathcal{I}_m$ and $f$ is a holomorphic function in a neighbourhood of
${\rm spec} a$ such that $f(0)=0$ then we can write $f(z)=z g(z)$ for some holomorphic function
$g$ and thus $f(a)= a g(a)$ which therefore belongs to $\mathcal{I}_m$, given that $\mathcal{I}_m$
is an ideal.
\end{proof}




\begin{remark}\label{extended-shatten-ineq}
For the elements in the ideals $\mathcal{I}_p (X,\F)$ 
the inequalities of Proposition   \ref{prop:shatten-properties} continue to hold.
In particular, if we have $T_j\in \mathcal{I}_p (X,\F)$ for $j=1,\dots,p$, then
their composition $T_1 \cdots T_p\in \mathcal{I}_1 (X,\F)$ and the
product map
$$\mathcal{I}_p (X,\F)\times \cdots \times \mathcal{I}_p (X,\F)\to \mathcal{I}_1 (X,\F)$$
is continuous.
\end{remark}

Recall now the   weight $\omega_{\Gamma}$ defined on 
 $J_c:=C^\infty_c (G, (s^* E)^*\otimes r^*E)$:
\begin{equation}\label{weight}
\omega_{\Gamma} (k):=
\int_{X(\Gamma)} \mathrm{Tr}_p k(x,x,\theta )dx\,d\theta,
\end{equation}
where $\mathrm{Tr}_p$ denotes the trace on $\mathrm{End}(E_p)$, $p=[(x,\theta)]\in X$
and we are identifying $\End (\widehat{E}_{(x,\theta)})$ with $\End (E_{[(x,\theta)]})$.
Recall also that\begin{equation}\label{weight-again.bis-bis}
\omega_{\Gamma}(k)=
 \int_{S^1} \Tr (\sigma_\theta k(\theta) \sigma_\theta)d\theta\,
\end{equation}
with $\sigma$ a compactly supported smooth function on $\tN\times S^1$ such that $\sum_{\gamma\in \Gamma} \gamma(\sigma)^2=1$,
$\sigma_\theta:= \sigma |_{\tV\times\{\theta\}}$
and $\Tr$ denoting the usual trace functional on the Hilbert space $\H_\theta$.

\begin{proposition}\label{prop:extension-of-weight}
The weight $\omega_\Gamma$ in \eqref{weight} extends continuously from $J_c$ to 
 $\mathcal{I}_1$. In particular, if $k_0, k_1,\dots,k_p\in \mathcal{I}_{p+1}$ then 
 $\omega_{\Gamma} (k_0 k_1\cdots k_p)$ is finite.
 \end{proposition}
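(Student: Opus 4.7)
Plan: The claim has two parts: the continuous extension of $\omega_\Gamma$ from $J_c$ to $\mathcal{I}_1$, and the finiteness of $\omega_\Gamma(k_0 k_1 \cdots k_p)$ for factors in $\mathcal{I}_{p+1}$. The second will be an immediate consequence of the first combined with H\"older's inequality for Shatten norms, so the real substance lies in proving a continuity estimate of the form $|\omega_\Gamma(k)| \le C\,\|k\|_1$ on $J_c$.

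To establish this estimate, I first rewrite $\omega_\Gamma(k) = \int_T \Tr(\chi_\Gamma k(\theta) \chi_\Gamma)\, d\theta$, viewing $\chi_\Gamma$ as the orthogonal projection in $\mathcal{H}_\theta$ onto the subspace of $L^2$-sections supported in a fundamental domain. Using the left polar decomposition $k(\theta) = |k^*(\theta)|\, u_\theta$, I factor
\begin{equation*}
\chi_\Gamma k(\theta) \chi_\Gamma = \bigl(\chi_\Gamma |k^*(\theta)|^{1/2}\bigr) \cdot \bigl(|k^*(\theta)|^{1/2} u_\theta \chi_\Gamma\bigr)
\end{equation*}
and apply the Hilbert--Schmidt Cauchy--Schwarz inequality $\|AB\|_1 \le \|A\|_{HS}\|B\|_{HS}$. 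The identity $\|\chi_\Gamma |k^*|^{1/2}\|_{HS}^2 = \Tr(\chi_\Gamma |k^*(\theta)| \chi_\Gamma)$, together with $u_\theta^* |k^*(\theta)| u_\theta = |k(\theta)|$, which gives $\||k^*|^{1/2} u_\theta \chi_\Gamma\|_{HS}^2 = \Tr(\chi_\Gamma |k(\theta)| \chi_\Gamma)$, yields
\begin{equation*}
|\Tr(\chi_\Gamma k(\theta) \chi_\Gamma)| \le \bigl(\|\chi_\Gamma |k^*(\theta)| \chi_\Gamma\|_1\bigr)^{1/2}\bigl(\|\chi_\Gamma |k(\theta)| \chi_\Gamma\|_1\bigr)^{1/2}.
\end{equation*}
Taking the supremum over $\theta$ and integrating then gives $|\omega_\Gamma(k)| \le \vol(T)\,\|k\|_1^{1/2}\|k^*\|_1^{1/2}$. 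Since $\mathcal{I}_1$ is an ideal in the $C^*$-algebra $C^*(X,\F)$, it is automatically $*$-closed and the involution is bounded on $\mathcal{I}_1$, so $\|k^*\|_1 \le C\,\|k\|_1$; this produces the desired $|\omega_\Gamma(k)| \le C'\,\|k\|_1$, and density of $J_c$ in $\mathcal{I}_1$ (Proposition \ref{prop:shatten-is-ideal}) delivers the continuous extension.

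For the second assertion, given $k_0, \ldots, k_p \in \mathcal{I}_{p+1}$, Remark \ref{extended-shatten-ineq} applied iteratively via Proposition \ref{prop:shatten-properties}(1) with $1/(p+1) + \cdots + 1/(p+1) = 1$ shows that $k_0 k_1 \cdots k_p \in \mathcal{I}_1$, with $\|k_0 \cdots k_p\|_1 \le \prod_{j=0}^p \|k_j\|_{p+1}$; finiteness of $\omega_\Gamma(k_0 \cdots k_p)$ then follows from the extension just established. The main subtlety I expect is in the non-self-adjoint case: the Cauchy--Schwarz step inevitably brings in both $|k|$ and $|k^*|$, and one must use $*$-closure of $\mathcal{I}_1$ (and continuity of the involution) to absorb $\|k^*\|_1$ into a constant multiple of $\|k\|_1$.
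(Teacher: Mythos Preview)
Your overall strategy is identical to the paper's: both establish the bound $|\omega_\Gamma(k)| \le C\|k\|_1$ on $J_c$ and extend by density. The paper's proof is in fact just two lines---it records $|\int_{FD} \Tr_x k(x,x,\theta)\,dx| \le \|\chi_\Gamma k_\theta \chi_\Gamma\|_1$ and integrates over $T$---without dwelling on the non-self-adjoint case at all. Your polar-decomposition and Cauchy--Schwarz treatment is more explicit on this point, and the step yielding
\[
|\Tr(\chi_\Gamma k_\theta \chi_\Gamma)| \le \bigl(\|\chi_\Gamma |k^*_\theta| \chi_\Gamma\|_1\bigr)^{1/2}\bigl(\|\chi_\Gamma |k_\theta| \chi_\Gamma\|_1\bigr)^{1/2}
\]
is correct.

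However, your final step has a genuine gap. You write: ``Since $\mathcal{I}_1$ is an ideal in the $C^*$-algebra $C^*(X,\F)$, it is automatically $*$-closed and the involution is bounded on $\mathcal{I}_1$.'' The theorem you are invoking says that a \emph{closed} two-sided ideal in a $C^*$-algebra is automatically self-adjoint. But $\mathcal{I}_1$ is dense in $C^*(X,\F)$ (Proposition~\ref{prop:shatten-is-ideal}), hence certainly not closed in the $C^*$-norm, and the conclusion does not follow from that result. What you actually need---that $\|k^*\|_1 \le C\|k\|_1$ (in fact $=\|k\|_1$) for the Schatten norm of Definition~\ref{def:shatten}---requires a direct check from the definition and the $\Gamma$-equivariance of the family $(k_\theta)_{\theta\in T}$, not abstract $C^*$-ideal theory. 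Once that is supplied, your argument is complete, and the second assertion then follows exactly as you say from Remark~\ref{extended-shatten-ineq}.
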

 \begin{proof}
 We need to prove that for an element $k\in J_c (X,\F)$ we have $|\omega_\Gamma (k)|
 \leq C \| k\|_1$. However, 
 this follows at once from 
 the following two inequalities
 $$|\int_{{\rm FD}} \Tr_x k(x,x,\theta)dx |\leq \|\chi_\Gamma k_\theta \chi_\Gamma \|_1\,,
 \quad \int_T |f(\theta)|d\theta\leq {\rm vol}(T) \sup_{\theta} |f(\theta)|\,.$$
 Thus  $|\omega_\Gamma (k)|
 \leq  {\rm vol}(T) \| k\|_1$ as stated.
 
\end{proof}

We shall now introduce the subalgebra of $C^* (X,\F)$  that will be used in the proof of our index theorem.
Consider on the cylinder $\RR\times Y$ (with cylindrical variable $s$) the functions
\begin{equation}\label{functions-for-ideal}
f_{\cyl} (s,y):= \sqrt{1+s^2} \quad \quad g_{\cyl}(s,y)=1+ s^2\,.
\end{equation}
We denote by $f$ and $g$  smooth functions on $X$ equal to  $f_{\cyl}$ and $g_{\cyl}$ on 
the open subset $(-\infty,0)\times Y$;
$f$ and $g$ are well defined up to a compactly supported function. We set
\begin{equation}\label{new-j}
\J_m (X,\F):= \{k\in\I_m \;|\: gk \;\text{and}\;kg \;\text{are bounded}\}
\end{equation}
We shall often simply write $\J_m$.

\begin{proposition}\label{prop:newj}
$\J_m$ is a subalgebra of $\I_m$ and a Banach algebra with the norm 
\begin{equation}\label{norm-of-newj}
\|k\|_{\J_m} := \| k \|_m + \|gk\|_{C^*} + \|kg\|_{C^*}\,.
\end{equation}
Moreover $\J_m$ is holomorphically closed in $\I_m$ (and, therefore, in $C^* (X,\F)$).
\end{proposition}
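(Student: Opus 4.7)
The plan is as follows. I view $g$ as a (densely defined, self-adjoint) multiplication operator on the relevant $L^2$-sections, and I interpret ``$gk$ is bounded'' as: the restriction of $g \circ k$ to the dense domain of $g$ extends to a bounded operator on $L^2$, which is equivalent (since $g$ is closed) to saying that $k(L^2)\subset \mathrm{Dom}(g)$ with the composition globally bounded. Likewise for $kg$, where I regard $kg$ as the densely defined composition that is required to extend to a bounded operator.

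First I would show that $\J_m$ is a subalgebra. If $k,k'\in \J_m$, then $k k'\in \I_m$ because $\I_m$ is an algebra (Proposition \ref{prop:shatten-is-ideal}), while $g(kk')=(gk)k'$ and $(kk')g = k(k'g)$ are compositions of bounded operators (using that $gk,k'g$ are bounded and $k,k'\in \I_m\subset C^*(X,\F)$ are bounded), so both are bounded. This also gives submultiplicativity up to a constant: expanding $\|kk'\|_{\J_m}$ into three pieces one gets $\|kk'\|_m\le \|k\|_m\|k'\|_m$, $\|g(kk')\|_{C^*}\le \|gk\|_{C^*}\|k'\|_{C^*}\le \|gk\|_{C^*}\|k'\|_m$ (using $\|k'\|_{C^*}\le \|k'\|_m$ from Proposition \ref{prop:shatten-properties}(4)), and symmetrically for $(kk')g$, yielding $\|kk'\|_{\J_m}\le 3\,\|k\|_{\J_m}\|k'\|_{\J_m}$, which as usual can be rescaled to a genuine Banach algebra norm.

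Next, completeness. Let $\{k_n\}$ be Cauchy for $\|\cdot\|_{\J_m}$. Then $\{k_n\}$ is Cauchy in $\I_m$, so it converges to some $k\in \I_m$ (in particular in $C^*$-norm). Also $\{gk_n\}$ and $\{k_n g\}$ are Cauchy in $C^*(X,\F)$, hence converge to bounded operators $a,b$. For any $\xi\in L^2$, $k_n\xi\to k\xi$ and $g(k_n\xi)=(gk_n)\xi\to a\xi$; closedness of the multiplication operator $g$ gives $k\xi\in\mathrm{Dom}(g)$ and $g(k\xi)=a\xi$, so $gk$ extends to $a$ and is bounded. For $kg$ I test on the dense domain of $g$: for $\xi\in\mathrm{Dom}(g)$, $(k_n g)\xi=k_n(g\xi)\to k(g\xi)$ and also $(k_n g)\xi\to b\xi$, so $kg = b$ densely and so is bounded. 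Hence $k\in\J_m$ and $k_n\to k$ in $\|\cdot\|_{\J_m}$.

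The main point, and the one requiring a little trick, is holomorphic closure. Since $\J_m$ is non-unital I use the unitization characterization: it suffices to show that if $a\in\J_m$ and $1-a$ is invertible in the unitization of $\I_m$ with $(1-a)^{-1}=1+b$, $b\in\I_m$, then $b\in \J_m$ (one already has $b\in\I_m$ by holomorphic closure of $\I_m$ from Proposition \ref{prop:shatten-is-ideal}). The key identity is
\begin{equation*}
b = a + ab = a + ba,
\end{equation*}
which follows from $(1-a)(1+b)=1=(1+b)(1-a)$. From this,
\begin{equation*}
gb = ga + (ga)b,\qquad bg = ag + b(ag),
\end{equation*}
and since by hypothesis $ga$ and $ag$ are bounded and $b\in\I_m\subset C^*(X,\F)$ is bounded, both $gb$ and $bg$ extend to bounded operators. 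Thus $b\in\J_m$, proving that $\J_m$ is holomorphically closed in $\I_m$ (and therefore in $C^*(X,\F)$, by transitivity with Proposition \ref{prop:shatten-is-ideal}). The subtle point to be careful about is that the algebraic identity $gb=ga+(ga)b$ really says something about the extensions from the dense domain of $g$; this is the same kind of closedness argument used in the completeness step and is the only place where I expect to pay attention to domain issues.
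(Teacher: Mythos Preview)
Your proof is correct and follows essentially the same route as the paper: the completeness argument via closedness of multiplication by $g$, and the identity $b=a+ab=a+ba$ (the paper writes $k'=-k-kk'=-k-k'k$) to conclude that $gb$ and $bg$ are bounded, are exactly the ideas used there. One small remark: your factor $3$ in submultiplicativity is unnecessary, since $\|kk'\|_m\le\|k\|_m\|k'\|_m$, $\|g(kk')\|_{C^*}\le\|gk\|_{C^*}\|k'\|_m$, and $\|(kk')g\|_{C^*}\le\|k\|_m\|k'g\|_{C^*}$ are each dominated by distinct cross terms in the expansion of $\|k\|_{\J_m}\|k'\|_{\J_m}$, so $\|kk'\|_{\J_m}\le\|k\|_{\J_m}\|k'\|_{\J_m}$ holds directly.
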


\begin{proof}
The subalgebra property is obvious, so we pass directly to the fact that
$\J_m$ is a Banach
algebra.
It suffices to show that multiplication by $g$ on the left and on the right
induces closed operators; namely if $k_j\to k$, $k_j g\to \ell_1$, $g k_j\to
\ell_2$ for $k_j\in \Psi^{-1}_c (G)$, then $\ell_1 = kg$ and $\ell_2 = gk$.
In fact, given $\xi\in C^\infty_c (\tV\times\{\theta\})$, one has 
$$\ell_1 (\xi)=(\lim k_j g)(\xi)=(\lim k_j ) (g\xi)= k g (\xi)$$
noting that $g \xi \in C^\infty_c (\tV\times\{\theta\})$, which proves that $\ell_1 = kg$.
Similarly one has $\ell_2= gk$. This proves that $(\J_m, \|\;\;\|_{\J_m})$ is a Banach space.
The Banach-algebra property of this norm follows easily from the Banach-algebra
property of $\|\;\;\|_m$ on $\I_m$ and $\|\;\;\|_{C^*}$ on $C^* (X,\F)$.
Finally we show that $\J_m$ is holomorphically closed in $\I_m$. To this end we need
to show that if $1+k\in\J_m^+ := \J_m + \CC\cdot 1$ is invertible in $\I_m^+$, with
$(1+k)^{-1}=1+k^\prime $ and $k^\prime\in \I_m$
then one has $k^\prime\in \J_m$. First we observe that $1=(1+k)(1+k^\prime)=
1+k + k^\prime+ k k^\prime$. Thus $k^\prime=-k-k k^\prime$. Similarly one has $k^\prime=-k-k^\prime k$
(using $1=(1+k^\prime)(1+k)$). Thus $g k^\prime=-gk - (g k) k^\prime$ and $k^\prime g=
-k g - k^\prime (k g)$.
Since the right hand sides are bounded so are  $g k^\prime$ and $k^\prime g$. The Proposition is proved.
\end{proof}

\begin{remark}
As usual, we have not included the vector bundle $E$ into the notation; however, strictly
speaking, the notation for the Shatten ideals we have  defined above
should be $\I_m (X,\F;E)$. With obvious changes we can also define
 $\I_m (X,\F;E,F)$, with $F$ a hermitian vector bundle on $X$; in particular, given $E$ on $X=\tV\times_\Gamma T$, and thus $\widehat{E}$ on $\tV\times T$, we can define $\widehat{E}^\prime$, which
 is $\widehat{E}$ but with a new $\Gamma$-equivariant structure.
 We then have $\I_m (X,\F;E,E^\prime)$. Notice that, by continuity, we have an
 isomorphism of Banach algebras   $\I_m (X,\F;E)\simeq \I_m (X,\F;E^\prime)$
 as well as continuous
$ \I_p (X,\F;E,E^\prime)\times \I_q (X,\F;E)\to \I_r (X,\F;E,E^\prime)$ and 
$\I_p (X,\F;E)\times \I_q (X,\F;E,E^\prime)\to \I_r (X,\F;E,E^\prime) $
if $ 1/r=1/p + 1/q$\,.
Moreover, the analogue of Proposition \ref{prop:extension-of-weight}
 holds for the bimodule trace $\omega_\Gamma: J_c (X,\F;E,E^\prime)\to\CC$.
 \end{remark}

\subsection{Closed derivations}\label{subsect:derivations}
 In this Subsection we give some general results on derivations; this material plays
 an important role in the sequel.
  Let in general $T:\mathcal{B}_0\to \mathcal{B}_1$ be a linear operator between Banach spaces 
 with a domain $\mathrm{Dom}(T)$ which is assumed to be dense. Denote by $G_T$ the graph
 of $T$, namely the subspace $G_T:=\{(u,Tu)\in \mathcal{B}_0\oplus \mathcal{B}_1 \,|\, u\in 
 \mathrm{Dom}(T) \}$ and consider the closure $\overline{G_T}$. Also, denote by $p$ the projection 
 $p:\mathcal{B}_0\oplus \mathcal{B}_1\to \mathcal{B}_0$
onto the first component. The following Lemma is well known:
\begin{lemma}
The followings are equivalent:
\item{1)} $\overline{G_T}$ is the graph of a linear operator $\overline{T}$, with $p(\overline{G_T})$
equal to the domain of $\overline{T}$, which is an extension of $T$;
\item{2)} set $p_T:= p | _{\overline{G_T}}$; then $\mathrm{Ker} p_T=0$;
\item{3)} for $u_i\in\mathrm{Dom}(T)$ with $u_i\to 0$ and $T u_i \to v$ one has $v=0$.
\end{lemma}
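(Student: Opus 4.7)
The plan is to prove $(1)\Leftrightarrow(2)\Leftrightarrow(3)$ by elementary manipulation of the graph, using the standard principle that a linear subspace $S\subset \mathcal{B}_0\oplus \mathcal{B}_1$ is the graph of a (necessarily unique) linear map defined on $p(S)$ precisely when $p|_S$ is injective. No hard analysis is needed; everything is essentially a repackaging of the definition of closure in the product Banach space $\mathcal{B}_0\oplus \mathcal{B}_1$.

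For $(1)\Leftrightarrow(2)$, I would first note that $\overline{G_T}$ is automatically a closed linear subspace of $\mathcal{B}_0\oplus \mathcal{B}_1$, since $G_T$ is linear and closure preserves linearity. If $\ker p_T=0$, then for each $u\in p(\overline{G_T})$ there is a unique $v\in \mathcal{B}_1$ with $(u,v)\in \overline{G_T}$; setting $\overline{T}u:=v$ gives a well-defined map, linear because $\overline{G_T}$ is a linear subspace, whose graph is precisely $\overline{G_T}$ and whose domain is $p(\overline{G_T})$. Since $G_T\subset \overline{G_T}$, this $\overline{T}$ extends $T$. Conversely, if $\overline{G_T}$ is the graph of some linear $\overline{T}$ with domain $p(\overline{G_T})$, then $(u,v),(u,v')\in \overline{G_T}$ forces $v=\overline{T}u=v'$, so $p_T$ is injective.

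For $(2)\Leftrightarrow(3)$, I would unpack the definition of the closure. An element $(u,v)\in \ker p_T$ is exactly an element $(0,v)\in \overline{G_T}$, which by the definition of closure in $\mathcal{B}_0\oplus \mathcal{B}_1$ means there exists a sequence $u_i\in\mathrm{Dom}(T)$ with $u_i\to 0$ in $\mathcal{B}_0$ and $Tu_i\to v$ in $\mathcal{B}_1$. Hence $\ker p_T=0$ is literally the statement that the only such $v$ is $0$, which is (3).

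The main (and only) obstacle is being careful that the putative operator $\overline{T}$ in (1) is required to have domain exactly $p(\overline{G_T})$, so that the equivalence with (2) is a genuine biconditional rather than a one-sided implication; this is why the statement makes the domain specification explicit. Once that is fixed the argument is formal.
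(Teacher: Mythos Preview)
Your proof is correct and is the standard argument. The paper does not actually give a proof of this lemma; it simply states it as ``well known,'' so your write-up fills in exactly the routine verification the authors chose to omit.
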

%

\begin{definition}
A linear operator $T:\mathcal{B}_0\to \mathcal{B}_1$ with dense domain $\mathrm{Dom}(T)$ is a closable operator
if one of the properties of the Lemma above is satisfied. Then $\overline{T}$ is called the closure of $T$.
\end{definition}

It is obvious that $\mathrm{Dom}(\overline{T}) (=   \mathrm{Im} \,p_T)$ becomes a Banach space if we equip it with
the graph norm
\begin{equation}\label{graphnorm}
\|u \|_T := \|u\|_0 + \|Tu\|_1\,,
\end{equation}
with $\|\:\:\|_i$ denoting the Banach norms on $\mathcal{B}_i$. It is also obvious that the closure $\overline{T}$
induces a bounded operator $\overline{T}: (\mathrm{Dom}(\overline{T}),\|\;\;\|_T )\to (\mathcal{B}_1,\|\;\;\|_1)$.

Let now $A_0$ be a Banach algebra with norm $\|\:\:\|_0$ and $A_1$ a $A_0$-bimodule with norm
$\|\:\:\|_1$.
Let $\delta:A_0\to A_1$ be a closable derivation into the bimodule $A_1$, that is: $\delta$
is a closable operator that has the derivation property
\begin{equation}\label{deriv-prop}
\delta(ab)=(\delta a) b + a (\delta b)\,, \quad\text{for}\quad a,b\in \mathrm{Dom}(\delta)\,.
\end{equation}
Denote by $\overline{\delta}: \mathrm{Dom}(\overline{\delta})\to A_1$ the closure of $\delta$.

\begin{proposition}\label{prop:extension-deriv}
Set $\mathfrak{A}:=  \mathrm{Dom}(\overline{\delta})$.
\item{1)} $\mathfrak{A}$ is a Banach algebra with respect to the graph norm;
\item{2)} $\overline{\delta}$ induces a derivation $ \mathfrak{A} \to A_1$, $\overline{\delta}(ab)=
(\overline{\delta} a)b + a (\overline{\delta} b)$, $a,b\in \mathfrak{A}$.
\end{proposition}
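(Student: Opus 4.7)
The plan is to first verify, by a standard closure argument, that $\mathfrak{A}$ is closed under multiplication and that $\overline{\delta}$ satisfies the Leibniz rule on $\mathfrak{A}$; the Banach-algebra structure with respect to the graph norm then follows almost immediately from the definition of the graph norm and the assumption that $A_1$ is a Banach bimodule.

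First I would take $a,b\in\mathfrak{A}=\mathrm{Dom}(\overline{\delta})$ and choose, by definition of the closure, sequences $a_n,b_n\in\mathrm{Dom}(\delta)$ with $a_n\to a$, $b_n\to b$ in $A_0$ and $\delta a_n\to\overline{\delta}a$, $\delta b_n\to\overline{\delta}b$ in $A_1$. Since $A_0$ is a Banach algebra, multiplication is jointly continuous, so $a_n b_n\to ab$ in $A_0$. Since $A_1$ is a Banach $A_0$-bimodule, the left and right actions are jointly continuous, so
\begin{equation*}
\delta(a_n b_n)=(\delta a_n)b_n+a_n(\delta b_n)\;\longrightarrow\;(\overline{\delta}a)b+a(\overline{\delta}b)\quad\text{in }A_1.
\end{equation*}
Because $\overline{\delta}$ is closed, this forces $ab\in\mathrm{Dom}(\overline{\delta})=\mathfrak{A}$ together with the Leibniz identity $\overline{\delta}(ab)=(\overline{\delta}a)b+a(\overline{\delta}b)$, proving part (2) and giving the subalgebra property needed for part (1).

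For part (1) it remains to check that the graph norm is submultiplicative and complete. Completeness is exactly the general fact recalled in the paragraph preceding the proposition: the domain of a closed operator is a Banach space under its graph norm. For submultiplicativity one simply computes, using the Leibniz rule just established and the bimodule norm inequalities $\|xy\|_1\le \|x\|_0\|y\|_1$ and $\|yx\|_1\le\|y\|_1\|x\|_0$:
\begin{align*}
\|ab\|_\delta &= \|ab\|_0 + \|\overline{\delta}(ab)\|_1\\
&\le \|a\|_0\|b\|_0+\|\overline{\delta}a\|_1\|b\|_0+\|a\|_0\|\overline{\delta}b\|_1\\
&\le (\|a\|_0+\|\overline{\delta}a\|_1)(\|b\|_0+\|\overline{\delta}b\|_1)=\|a\|_\delta\,\|b\|_\delta.
\end{align*}

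The only potentially delicate step is the first one: it depends on having joint continuity of both the product on $A_0$ and the two bimodule actions $A_0\times A_1\to A_1$, which is exactly the content of $A_1$ being a Banach $A_0$-bimodule in the usual normed sense. Provided that convention is in force (as is implicit in this paper, where $A_1$ will always be a space of operators with the standard operator-norm estimates), no further work is needed, and in particular no approximate-unit or spectral argument is required.
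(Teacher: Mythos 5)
Your proof is correct and follows essentially the same route as the paper's: approximate $a,b$ by elements of $\mathrm{Dom}(\delta)$, use joint continuity of the product on $A_0$ and of the bimodule actions to pass the Leibniz identity to the limit, identify the limit pair with a point of the closed graph, and then run the same submultiplicativity estimate for the graph norm. No gaps.
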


\begin{proof}
Let $a,b\in  \mathrm{Dom}(\overline{\delta})$. Then there exist sequences $\{a_i\}, \{b_i\}$ in  $\mathrm{Dom}(\delta)$
such that $a_i\to a$, $\delta a_i \to \overline{\delta} a$, $b_i\to b$ and $\delta b_i \to \overline{\delta} b$
in $A_0$ and $A_1$ respectively. Since $A_0$ is a Banach algebra and $A_1$ is a bimodule over $A_0$, we have
$a_i b_i\to ab$ and $\delta(a_i b_i)=(\delta a_i) b_i + a_i (\delta b_i)\to ( \overline{\delta} a) b + a (\overline{\delta} b)$,
which implies $(ab, (\overline{\delta} a) b + a (\overline{\delta} b))\in \overline{G_{\delta}}$
and $\overline{\delta}(ab)=(\overline{\delta} a) b + a (\overline{\delta} b)$ since $\overline{G_{\delta}}$ is the graph
of $\overline{\delta}$ by the previous Lemma. This proves that $ab\in \mathfrak{A}$ and hence $\mathfrak{A}$
is an algebra. Moreover $\overline{\delta}$ satisfies the derivation property. Finally, we note that 
\begin{align*}
\|ab\|_{\overline{\delta}}&=\|ab\|_0+ \|\overline{\delta} (ab)\|_1\\
&\leq \|a\|_0\,\|b\|_0 + \|\overline{\delta}a\|_1\,\|b\|_0 +\|a\|_0 \,\|\overline{\delta}b\|_1\\
&\leq ( \|a\|_0 +\|\overline{\delta}a\|_1) (\|b\|_0  +\|\overline{\delta}b\|_1)\\
&= \|a\|_{\overline{\delta}} \,\|b\|_{\overline{\delta}}\,.
\end{align*}
which proves that $\mathfrak{A}$ is a Banach algebra with respect to the graph norm of $\overline{\delta}$.
\end{proof}

We shall also need the following simple but important Lemma. First we introduce the relevant
objects.
Let $B_0$ be a subalgebra of $A_0$ endowed with a Banach algebra
norm, $\| \;\;\|_{B_0}$, satisfying $\| b_0\|_{B_0}\geq 
\| b_0\|_{A_0}$. Let $B_1\subset A_1$ be a $B_0$-bimodule with 
$\| b_1\|_{B_1}\geq 
\| b_1\|_{A_1}$. Observe that $A_1$ is then also a $B_0$-bimodule since
$$\| b_0 a_1\|_{A_1}\leq \| b_0 \|_{A_0} \| a_1 \|_{A_1}
\leq \| b_0 \|_{B_0} \| a_1 \|_{A_1}$$
and similarly $\| a_1 b_0\|_{A_1}\leq \| a_1 \|_{A_1}  \| b_0 \|_{B_0}$
for $b_0\in B_0$, $a_1\in A_1$. Then $B_1$ is a $B_0$-submodule of $A_1$
endowed with the above $B_0$-bimodule structure and moreover the inclusion
is clearly bounded.

\begin{lemma}\label{lemma:restriction}
Let $\overline{\delta}$ be a closed derivation from ${\rm Dom}(\overline{\delta})\subset A_0$
to $A_1$. 
Set
$${\rm Dom}_B:= \overline{\delta}\,^{-1} (B_1)\cap B_0
\equiv \{a\in {\rm Dom}(\overline{\delta})\cap B_0\;\;|\;\; \overline{\delta} a
\in B_1 \}\,.$$
Define $\delta_B: {\rm Dom}_B\to B_1$ as  $\delta_B (b):= \overline{\delta} (b)$.
Then $\delta_B$ is a closed derivation.
\end{lemma}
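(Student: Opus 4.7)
The plan is to verify the two properties separately: that $\delta_B$ is a derivation (which is essentially a bookkeeping check using that $B_0$ is a subalgebra and $B_1$ is a $B_0$-bimodule), and that $\delta_B$ is closed (which reduces to exploiting the continuity of the two inclusions $B_0 \hookrightarrow A_0$ and $B_1 \hookrightarrow A_1$ together with the closedness of $\overline{\delta}$).

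First I would check the derivation property. Let $a, b \in {\rm Dom}_B$. Since $B_0$ is a subalgebra of $A_0$ we have $ab \in B_0$. By Proposition \ref{prop:extension-deriv} applied to the closure $\overline{\delta}$, one has $ab \in {\rm Dom}(\overline{\delta})$ and
\begin{equation*}
\overline{\delta}(ab) = (\overline{\delta}a)\,b + a\,(\overline{\delta}b).
\end{equation*}
Since $\overline{\delta}a,\overline{\delta}b \in B_1$ by definition of ${\rm Dom}_B$, and since $B_1$ is a $B_0$-bimodule with $a,b\in B_0$, the right-hand side lies in $B_1$. Hence $ab \in \overline{\delta}^{-1}(B_1) \cap B_0 = {\rm Dom}_B$, and $\delta_B(ab) = (\delta_B a)\,b + a\,(\delta_B b)$.

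Next I would verify closedness. Suppose $\{b_n\} \subset {\rm Dom}_B$ with $b_n \to b$ in $B_0$ and $\delta_B b_n \to c$ in $B_1$. The continuity of $B_0 \hookrightarrow A_0$ gives $b_n \to b$ in $A_0$, and the continuity of $B_1 \hookrightarrow A_1$ gives $\overline{\delta} b_n = \delta_B b_n \to c$ in $A_1$. Since $\overline{\delta}$ is closed, this forces $b \in {\rm Dom}(\overline{\delta})$ and $\overline{\delta} b = c$. But $c \in B_1$, so $b \in \overline{\delta}^{-1}(B_1)$, and $b \in B_0$ as the $B_0$-limit of a sequence in $B_0$. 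Hence $b \in {\rm Dom}_B$ and $\delta_B b = c$, proving that the graph of $\delta_B$ is closed in $B_0 \oplus B_1$.

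There is essentially no obstacle here: the argument is a soft functional-analytic one, and the only thing to be careful about is to make sure that the norm on $B_i$ dominates the norm on $A_i$ (which is part of the hypothesis) so that convergence in the smaller space implies convergence in the larger one. The content of the lemma is really just that restricting a closed derivation to a subalgebra-submodule pair preserves closedness, and everything flows from the chain of continuous inclusions together with Proposition \ref{prop:extension-deriv}.
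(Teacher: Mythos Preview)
Your proof is correct and follows essentially the same approach as the paper: the closedness argument via sequences is equivalent to the paper's observation that the graph of $\delta_B$ is the inverse image of the (closed) graph of $\overline{\delta}$ under the continuous inclusion $B_0\oplus B_1\hookrightarrow A_0\oplus A_1$. Your version is in fact more detailed, since you explicitly verify the derivation property on ${\rm Dom}_B$, which the paper leaves implicit.
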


\begin{proof}
By hypothesis we know that the graph
of $\overline{\delta}$ is a closed subspace of $A_0\oplus A_1$. Then, because
of our assumptions, its
intersection with $B_0\oplus B_1$ is a closed subset of  $B_0\oplus B_1$
(indeed, it is the inverse image of the graph for the inclusion map, which is
continuous).
On the other hand, this intersection is easily seen to be the graph of $\delta_B$.
The Lemma is proved.
\end{proof}

\subsection{Shatten extensions}
Let $(Y,\F)$, $Y:=\tN\times_\Gamma T$,  be a foliated $T$-bundle {\it without} boundary; for example $Y=\pa X\equiv \pa X_0$.
Consider $(\cyl (Y), \F_{\cyl})$ the associated foliated cylinder.
Recall the function
$\chi^0_{{\rm cyl}}$ (often just $\chi^0$), the   function on the cylinder induced by the characteristic function of $(-\infty,0]$ in $\RR$.
Notice that the definition of Shatten norm also apply to $(\cyl (Y), \F_{\cyl})$, viewed as a foliated $T$-bundle with cylindrical ends.
Let $\Psi^{-p}_{\RR,c}(G_{\cyl})\equiv \Psi^{-p}_{c}(G_{\cyl}/\RR_{\Delta})$, see Proposition \ref{prop:bstar-structure}, 
be the space of $\RR\times\Gamma$-equivariant families of pseudodifferential operators
of order $-p$ on the fibration $(\RR\times \tN)\times T\to T$ with $\RR\times\Gamma$-compact support. 
Consider an element  $\ell\in \Psi^{-p}_{c}(G_{\cyl}/\RR_{\Delta})$; then we know that 
$\ell$ defines a bounded operator from the Sobolev field $\E^{k}$ to the Sobolev field $\E^{k+p}$.
See \cite{MN}, Section 3.
Let us denote, as in \cite{MN}, the operator norm of a bounded operator $L$ from $\E^k$ to $\E^j$ as
$\| L \|_{j,k}$; notice the reverse order. For a $\RR\times \Gamma$-invariant, $\RR\times\Gamma$-compactly supported pseudodifferential operator of order $(-p)$, $P$, we consider the norm
\begin{equation}\label{triple-norm}
||| P |||_p \,:= \,{\rm max}(\| P \|_{-n,-n-p}\,,\, \|P\|_{n+p,n})
\end{equation}
with $n$ a fixed integer strictly greater than $\dim N$.
We denote the closure of 
$\Psi^{-p}_{c}(G_{\cyl}/\RR_{\Delta})$ with respect to the norm $|||\,\cdot\,|||_p$
by $\operatorname{OP}^{-p}  (\cyl (Y),\F_{\cyl})$. We shall often  write $\operatorname{OP}^{-p}$.

\begin{proposition}\label{prop:b-o-sub-of-b-star}
$\operatorname{OP}^{-p} (\cyl (Y),\F_{\cyl})$ is a Banach algebra and a subalgebra of $B^* (\cyl (Y),\F_{\cyl})$
\end{proposition}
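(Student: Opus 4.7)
The plan is to upgrade the normed algebra $(\Psi^{-p}_{\RR,c}(G_{\cyl}), |||\cdot|||_p)$ to a Banach algebra structure on its completion $\operatorname{OP}^{-p}$, and then to check that the natural inclusion $\Psi^{-p}_{\RR,c}(G_{\cyl}) \hookrightarrow B^*(\cyl(Y),\F_{\cyl})$ extends continuously and injectively. The preliminary ingredient is Stein complex interpolation between the two endpoint estimates at $s_0=-n-p$ and $s_1=n$, which yields, for every $P\in \Psi^{-p}_{\RR,c}(G_{\cyl})$,
\[
\|P\|_{s+p,\,s}\;\leq\;|||\,P\,|||_p\qquad \text{for every } s\in[-n-p,\,n].
\]

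Next I would establish the submultiplicativity estimate $|||\,PQ\,|||_p\leq |||\,P\,|||_p\cdot |||\,Q\,|||_p$ for $P,Q\in \Psi^{-p}_{\RR,c}(G_{\cyl})$, recalling that $PQ$ has order $-2p$ and so a fortiori lies in $\Psi^{-p}_{\RR,c}$. For the first component, factor
\[
\E^{-n-p}\xrightarrow{\;Q\;}\E^{-n}\xrightarrow{\;P\;}\E^{-n+p}\hookrightarrow\E^{-n},
\]
bounding $\|Q\|_{-n,-n-p}\leq |||\,Q\,|||_p$ directly, $\|P\|_{-n+p,-n}\leq |||\,P\,|||_p$ by the interpolation applied at $s=-n$, and noting that the Sobolev embedding has norm at most $1$. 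For the second component $\|PQ\|_{n+p,n}$ the required index $s=n+p$ lies outside the interpolation interval $[-n-p,n]$, so I would pass to the formal adjoint: $P^*\in \Psi^{-p}_{\RR,c}$, the duality identity $\|T^*\|_{-s,-s-p}=\|T\|_{s+p,s}$ gives $|||\,P^*\,|||_p=|||\,P\,|||_p$, and then applying the first estimate to $Q^*P^*$,
\[
\|PQ\|_{n+p,n}\;=\;\|Q^*P^*\|_{-n,-n-p}\;\leq\;|||\,Q^*\,|||_p\cdot |||\,P^*\,|||_p\;=\;|||\,P\,|||_p\cdot|||\,Q\,|||_p.
\]

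Submultiplicativity then ensures that multiplication is jointly continuous on $\Psi^{-p}_{\RR,c}(G_{\cyl})$ and extends uniquely to the $|||\cdot|||_p$-completion, making $\operatorname{OP}^{-p}$ a Banach algebra. To see $\operatorname{OP}^{-p}\subset B^*$, observe that the $C^*$-norm is dominated leaf-by-leaf by the $L^2$-operator norm $\|P\|_{0,0}$; since $p>0$ the continuous embedding $\E^p\hookrightarrow \E^0$ combined with interpolation at $s=0\in[-n-p,n]$ gives $\|P\|_{0,0}\leq\|P\|_{p,0}\leq|||\,P\,|||_p$, so the inclusion is $|||\cdot|||_p$-continuous and extends to a bounded algebra homomorphism $\operatorname{OP}^{-p}\to B^*$. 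For injectivity, any $P\in\operatorname{OP}^{-p}$ with zero image in $B^*$ acts as zero on $L^2=\E^0$, and the density of $\E^0$ in $\E^{-n-p}$ together with boundedness of $P:\E^{-n-p}\to\E^{-n}$ forces $P=0$. The main obstacle will be the submultiplicativity estimate at $\|PQ\|_{n+p,n}$: direct interpolation only controls $\|P\|_{s+p,s}$ for $s\in[-n-p,n]$, so one is forced to route through formal adjoints and to verify that the adjoint operation preserves both $\Psi^{-p}_{\RR,c}$ and the triple norm via the Sobolev duality identity.
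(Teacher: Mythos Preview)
Your submultiplicativity argument via Stein interpolation and the adjoint trick is correct and is essentially what \cite{MN}, Section~3 does; the paper simply cites that reference for $|||PQ|||_p\le |||P|||_p\,|||Q|||_p$. So the Banach-algebra half is fine.

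The gap is in the second half. You write ``the natural inclusion $\Psi^{-p}_{\RR,c}(G_{\cyl})\hookrightarrow B^*$'' and then use the norm estimate $\|P\|_{C^*}\le|||P|||_p$ to extend it to $\operatorname{OP}^{-p}$. But the norm estimate only gives you a continuous map from $\Psi^{-p}_{\RR,c}$ into the ambient algebra of bounded operators (say $\mathcal L(\mathcal E_{\cyl})$); it does \emph{not} tell you the image lies in $B^*$. Remember that $B^*$ is, by definition, the $C^*$-closure of the \emph{smoothing} kernels $B_c$, and an order $-p$ pseudodifferential operator is not smoothing. That $\Psi^{-p}_{\RR,c}\subset B^*$ is genuinely the nontrivial step, and it is exactly the content of the paper's Lemma~\ref{lemma:(-1)-bstar}. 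The paper's argument is a Dirac-operator factorization: one shows $(D+\mathfrak{s})^{-1}\in B^*$ and then writes $\ell=\bigl(\ell(D+\mathfrak{s})^{p}\bigr)\,(D+\mathfrak{s})^{-p}$, with the first factor in $\Psi^{0}_c\subset\mathcal L(\mathcal E_{\cyl})$ and the second in $B^*$; since $B^*\cong\KK(\mathcal E_{\cyl})$ is an ideal in $\mathcal L(\mathcal E_{\cyl})$, the product lies in $B^*$. Once this is in hand, your norm inequality finishes the job. Your injectivity argument at the end is correct but only becomes relevant after the image is known to sit inside $B^*$.
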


\begin{proof}
It is proved in \cite{MN}, section 3, that the norm $|||\;\cdot\;|||_p$ satisfies the Banach algebra inequality
$||| PQ |||_p  \leq |||P|||_p \; |||Q|||_p $. Thus $\operatorname{OP}^{-p}$ is indeed a  Banach algebra. 
In order to prove that $\operatorname{OP}^{-p}$ is a subalgebra of $B^*$ we need the
following
\begin{lemma}\label{lemma:(-1)-bstar}
$B^*$ coincides with the $C^*$-closure of $\Psi^{-p}_{c}(G_{\cyl}/\RR_{\Delta})$ 
\end{lemma}

\begin{proof}
Let $D$ be the Dirac operator on $(\cyl Y,\F_{\cyl})$. Applying the same arguments
as in \cite{MN} we can prove that $(D+\mathfrak{s})^{-1}$ belongs to 
$B^*$ (see the proof of Proposition \ref{prop:relative-indeces} in Subsection
\ref{subsection:proof-relative} for the details).
Given $\ell\in \Psi^{-p}_{c}(G_{\cyl}/\RR_{\Delta})$,
$p\geq 1$,  
we can write $\ell=\ell (D+\mathfrak{s})^p (D+\mathfrak{s})^{-p}$ where we know that
$\ell (D+\mathfrak{s})^p\in \Psi^{0}_{c}(G_{\cyl}/\RR_{\Delta})$ and $(D+\mathfrak{s})^{-p}\in B^*$.
Now recall from Remark \ref{remark:b*=compact} that $B^*$ is an ideal in $\L (\E_{\cyl})$; thus the above
equality proves that $\ell\in B^*$. On the other hand, obviously,
 $\Psi^{-p}_{c}(G_{\cyl}/\RR_{\Delta})$ contains
$B_c\equiv C^\infty_c (G_{\cyl}/\RR_{\Delta})$. Thus $B^*\equiv C^* (G_{\cyl}/\RR_{\Delta})$,
which is by definition the $C^*$-closure of $B_c$, is contained in the $C^*$-closure of 
$\Psi^{-p}_{c}(G_{\cyl}/\RR_{\Delta})$. Thus one has:
$$B^*\equiv C^* (G_{\cyl}/\RR_{\Delta})\subset C^*\text{-closure of }\Psi^{-p}_{c}(G_{\cyl}/\RR_{\Delta})
\subset B^*$$
proving the Proposition.
\end{proof}

Since the $C^*$-norm is dominated by the $|||\;\cdot\;|||_p$-norm, we can immediately
conclude the proof of the Proposition.
\end{proof}

\noindent
{\bf Notation:} from now until the end of this subsection we fix $p=1$ and, following
\cite{MN},  we denote the corresponding norm simply as $|||\;\cdot\;|||$.

\bigskip
Consider now the bounded linear map
 $\partial_3^{{\rm max}} : B^*\to \End_{\Gamma} \H$ given
by $\partial_3^{{\rm max}} \ell:= [\chi^0,\ell]$. Consider in $B^*$ the Banach subalgebra
$\operatorname{OP}^{-1}$ endowed with the Banach norm  $|||\,\cdot\,|||$
and consider in 
$\End_{\Gamma} \H$ the subalgebra
$\mathcal{J}_m (\cyl (Y),\F_{\cyl})$. Let  $\partial_3$ be the restriction
of $\partial_3^{{\rm max}} $ to $\operatorname{OP}^{-1}$. Since $\|\cdot\|\leq |||\,\cdot\,|||$
we see that $\partial_3$ is also bounded.
 Let $\D_m:=\{\ell\in \operatorname{OP}^{-1}\;\;|\;\; \partial_3 (\ell)\in 
\mathcal{J}_m (\cyl (Y),\F_{\cyl})\}$. 
From the restriction Lemma
of the previous subsection, Lemma  \ref{lemma:restriction},
we know that $\partial_3\,|_{\D_m}$ induces a closed derivation
$\overline{\delta}_3$ with domain $\D_m$. This is clearly a closed extension
of the derivation $\delta_3$ considered 
in Subsection \ref{subsect:sigma3}. 

\begin{definition}\label{def:Bp}
If 
$m\geq 1$
 we define 
$\mathcal{D}_{m}(\cyl (Y), \mathcal F_{{\rm cyl}})$ 
as $\mathrm{Dom} \,\overline{\delta}_3$
 endowed with norm 
\begin{equation}\label{Bp-norm}
\|\ell \|_{\mathcal{D}_{m}} := ||| \ell  ||| + \| [\chi_{{\rm cyl}}^0,\ell]  \|_{\J_m} \,.
\end{equation}
We shall often simply write $\mathcal{D}_{m}$ instead of $\mathcal{D}_{m}(\cyl (Y), \mathcal F_{{\rm cyl}})$.
 \end{definition}

\begin{proposition}\label{prop:shatten-is-ideal} Let 
$m\geq 1$, 
then
$\mathcal{D}_{m}$ is a Banach algebra with respect to \eqref{Bp-norm}
and, obviously, a subalgebra of $B^*\equiv B^* (\cyl(Y), \mathcal F_{{\rm cyl}})$. Moreover,
$\mathcal{D}_{m}$ is holomorphically closed  in $B^*$. 
\end{proposition}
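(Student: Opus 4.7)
The plan is to recognise $\D_m$ as the domain of the closed derivation $\overline{\delta}_3$ introduced just above the statement, and then invoke Proposition \ref{prop:extension-deriv} with $A_0 := \operatorname{OP}^{-1}$ (a Banach algebra by Proposition \ref{prop:b-o-sub-of-b-star}) and $A_1 := \J_m(\cyl (Y),\F_{\cyl})$ (a Banach algebra by Proposition \ref{prop:newj}). That proposition will deliver at once that $\D_m = \mathrm{Dom}(\overline{\delta}_3)$ is a Banach algebra under the graph norm $|||\ell||| + \|[\chi^0,\ell]\|_{\J_m}$, which is exactly \eqref{Bp-norm}, and that $\overline{\delta}_3$ extends to a genuine derivation on $\D_m$. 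The inclusion $\D_m \subset B^*$ as a subalgebra is then immediate, since $\D_m \subset \operatorname{OP}^{-1} \subset B^*$.

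In order to invoke Proposition \ref{prop:extension-deriv}, I first verify that $\J_m$ carries a continuous $\operatorname{OP}^{-1}$-bimodule structure. Since both $\operatorname{OP}^{-1}$ and $\J_m$ sit inside $\End_\Gamma\H$, the operator products $\ell k, k\ell$ are well defined for $\ell \in \operatorname{OP}^{-1}$, $k\in \J_m$. The Shatten component is controlled by $\|\ell k\|_m \leq \|\ell\|_{C^*}\|k\|_m \leq |||\ell|||\cdot\|k\|_m$, using the analogue of Proposition \ref{prop:shatten-properties}(2) for elements of $\I_m$. For the weight-growth components $\|g\ell k\|_{C^*}$ and $\|\ell k g\|_{C^*}$, one writes $g\ell = \ell g + [g,\ell]$ and exploits that, for $\ell$ of order $-1$ on the cylinder, the commutator $[g,\ell]=[s^2,\ell]$ is bounded, so that $g\ell k = \ell(gk) + [g,\ell]k$ is bounded in the $C^*$-norm; right multiplication is handled symmetrically.

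For the holomorphic closure in $B^*$, I would mimic the closing argument of Proposition \ref{prop:newj}. Let $\ell\in\D_m$ with $1+\ell$ invertible in the unitization $(B^*)^+$, and write $(1+\ell)^{-1}=1+\ell'$ with $\ell'\in B^*$. From $\ell' = -\ell(1+\ell)^{-1}$ one first places $\ell'$ in $\operatorname{OP}^{-1}$: since $\ell$ has order $-1$, it is compact on each Sobolev field $\E^k$ entering the definition of $|||\cdot|||$, so $1+\ell$ is Fredholm of index zero on each such $\E^k$, and its $B^*$-invertibility together with elliptic-regularity bootstrapping forces $(1+\ell)^{-1}$ to preserve all these fields. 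The identity then gives $\ell'\in\operatorname{OP}^{-1}$ with control on $|||\ell'|||$. Next, the derivation-of-inverse formula
\[
[\chi^0,\ell']=-(1+\ell')\,[\chi^0,\ell]\,(1+\ell'),
\]
combined with $[\chi^0,\ell]\in\J_m$ and with $\J_m$ being a two-sided $B^*$-bimodule (as $\J_m$ is an ideal in $C^*(\cyl(Y),\F_{\cyl})$ and $B^*$ acts on the same Hilbert field with appropriate bounds against the $g$-weight), yields $[\chi^0,\ell']\in\J_m$. Hence $\ell'\in\D_m$, which gives the desired holomorphic closure.

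The main technical obstacle is two-fold. First, the bimodule estimate requires the boundedness of $[g,\ell]$ for a translation-invariant operator $\ell$ of order $-1$: this uses the order $-1$ decay of $\ell$ to absorb the polynomial growth of $g = 1+s^2$ along the cylinder, and is delicate because $\ell$ commutes with $\pa_s$ but not with multiplication by $s$. Second, in the holomorphic-closure step, transferring the $L^2$-invertibility of $1+\ell$ to invertibility on each of the Sobolev fields $\E^{\pm n}, \E^{\pm(n+1)}$ defining $|||\cdot|||$, via Fredholm theory plus elliptic regularity bootstrapping, is the most subtle point.
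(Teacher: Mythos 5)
Your skeleton is the paper's: identify $\D_m$ with $\mathrm{Dom}(\overline{\delta}_3)$, invoke Proposition \ref{prop:extension-deriv} to get the Banach-algebra property for the graph norm \eqref{Bp-norm}, and deduce holomorphic closedness from the closed-derivation structure (the paper does the latter by citing the classical fact that the domain of a closed derivation is holomorphically closed, together with the holomorphic closedness of $\operatorname{OP}^{-1}$ in $B^*$ from [MN], Thm.\ 3.3). However, two concrete claims you rely on to execute this are false or unjustified.

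First, $[g,\ell]$ is \emph{not} bounded for a general $\ell\in\operatorname{OP}^{-1}$. A translation-invariant kernel $\ell(y,y',s-s')$ gives $[s^2,\ell]$ the kernel $(s-s')(s+s')\,\ell(y,y',s-s')$, which grows linearly in $s+s'$ along the diagonal band; already for $\ell=$ convolution by $\phi\in C^\infty_c(\RR)$ one gets $2s\,(\psi\ast\cdot)$ plus a bounded term, with $\psi(t)=t\phi(t)$, and multiplication by $s$ is unbounded. "Order $-1$" controls Sobolev regularity, not decay in $s$ (there is none, by translation invariance). The workable manipulation — the one the paper performs in Lemma \ref{new-ideal} — uses the Lipschitz function $f=\sqrt{1+s^2}$ and the identity $k\ell g=k[[\ell,f],f]+2kf[\ell,f]+kg\ell$, and hence needs boundedness of both $[f,\ell]$ and $[f,[f,\ell]]$; these hold for $\RR\times\Gamma$-compactly supported kernels but are not controlled by $|||\cdot|||$, which is precisely why the paper imposes them as extra hypotheses in the definition of $\B_m$ rather than deriving them on all of $\operatorname{OP}^{-1}$. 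The same unjustified step reappears in your holomorphic-closure argument as the assertion that $\J_m$ is a $B^*$-bimodule "with appropriate bounds against the $g$-weight": to place $(1+\ell')[\chi^0,\ell](1+\ell')$ in $\J_m$ you must commute $g$ past $(1+\ell')$, which again requires the $f$-commutator bounds you do not have for a general element of $B^*$ or $\operatorname{OP}^{-1}$.

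Second, your route to $\ell'\in\operatorname{OP}^{-1}$ via "compact on each Sobolev field, hence Fredholm of index zero" does not work: translation-invariant operators on the infinite cylinder are never compact (Rellich fails on a non-compact base), so $1+\ell$ is not Fredholm in the sense you use. The paper avoids rebuilding this machinery by quoting [MN], Thm.\ 3.3 for the holomorphic closedness of $\operatorname{OP}^{-1}$ in $B^*$ and the Connes/Roe lemma for domains of closed derivations; a self-contained proof would have to reproduce those arguments, not the Fredholm sketch.
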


\begin{proof}
From the results of the previous  subsection, we know that
$\mathrm{Dom}(\overline{\delta}_3)$, endowed with the graph norm, is a Banach algebra;
since $\mathrm{Dom}(\overline{\delta}_3)$ is {\it by definition} $\mathcal{D}_m$, we have proved 
the first part of the Proposition. 
Finally,  that $\mathcal{D}_m\equiv \mathrm{Dom}(\overline{\delta}_3)$ is holomorphically closed in $\operatorname{OP}^{-1}$ 
is a classic consequence of the fact
that it is equal to the domain of a closed derivation. See \cite{roe-partitioned}, page 197 or 
\cite{Co}, Lemma 2, page 247.
Since $\operatorname{OP}^{-1}$ is in turn holomorphically closed in $B^*$, 
see \cite{MN} Theorem 3.3, we see that $\mathcal{D}_m$ is holomorphically closed
in $B^*$ as required.
The Proposition is proved.
\end{proof}

The Banach algebra we have defined is still too large for the purpose of extending
the eta cocyle. We shall first intersect it with another holomophically closed Banach subalgebra
of $B^*$.

Observe that there exists an action of $\RR$ on $\Psi^{-1}_{c}(G_{\cyl}/\RR_{\Delta})\subset
\operatorname{OP}^{-1} (\cyl (Y),\F_{\cyl})\subset B^*$ defined by
\begin{equation}\label{r-action-alpha}
\alpha_t (\ell):= e^{its} \ell e^{-its}\,,
\end{equation}
with $t\in\RR$, $s$ the variable along the cylinder and $\ell\in
\Psi^{-1}_{c}(G_{\cyl}/\RR_{\Delta})$. 
Note that $\alpha_t (\ell)$ is again $(\RR\times\Gamma)$-equivariant; indeed $e^{its}$ is $\Gamma$-equivariant and moreover 
$$T_\lambda \circ \alpha_t (\ell)\circ T_\lambda^{-1}=\alpha_t (\ell)\,,$$
$T_\lambda$ denoting the action induced by a translation on $\cyl (Y)$ by $\lambda\in\RR$.
It is clear that $||| \alpha_t (\ell) ||| =  ||| \ell |||$; thus, by continuity, 
$\{\alpha_t\}_{t\in\RR}$ yields  a well-defined  action, still denoted  $\{\alpha_t\}_{t\in\RR}$,
of $\RR$ on the Banach algebra $\operatorname{OP}^{-1} (\cyl (Y),\F_{\cyl})$. Note that this action is only strongly continuous.
Let $\partial_\alpha: \operatorname{OP}^{-1}\to \operatorname{OP}^{-1}$ be the  
unbounded derivation associated to $\{\alpha_t\}_{t\in\RR}$
\begin{equation}\label{pa-alpha}
\pa_\alpha (\ell):= \lim_{t\to 0} \frac{(\alpha_t (\ell)-\ell)}{t}
\end{equation}
By definition $${\rm Dom} (\partial_\alpha)=\{\ell\in  \operatorname{OP}^{-1} \;|\:  \partial_\alpha (\ell)\;\text{exists in }
\operatorname{OP}^{-1}\}.
$$

\begin{proposition}\label{prop:partial-alpha-closed}
The derivation $\partial_\alpha$ is  closed.
\end{proposition}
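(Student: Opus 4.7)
The plan is to identify $\partial_\alpha$ as the infinitesimal generator of a strongly continuous, locally uniformly bounded one-parameter group on the Banach algebra $(\operatorname{OP}^{-1},|||\cdot|||)$, and then invoke the standard fact from $C_0$-semigroup theory that such generators are automatically closed. The approach is entirely modelled on the classical proof that the generator of a $C_0$-group on a Banach space is closed; the only input specific to our setting is a local boundedness estimate for $\alpha_t$ in the norm $|||\cdot|||$.

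First I would verify that $\{\alpha_t\}_{t\in\RR}$ extends from $\Psi^{-1}_{c}(G_{\cyl}/\RR_{\Delta})$ to all of $\operatorname{OP}^{-1}$ as a locally uniformly bounded group. Multiplication by $e^{\pm its}$ is bounded on each Sobolev field $\E^k$ with operator norm dominated by $C(1+|t|)^{|k|}$, since the $C^{|k|}$-norm of $e^{\pm its}$ grows polynomially in $t$. For $j,k\in\{-n-1,-n,n,n+1\}$ this gives $\|\alpha_t(\ell)\|_{j,k}\leq C(1+|t|)^{|j|+|k|}\|\ell\|_{j,k}$, and hence
\[
|||\alpha_t(\ell)||| \,\leq\, C(1+|t|)^{2n+1}\,|||\ell|||.
\]
Combined with the strong continuity already asserted in the paragraph preceding the Proposition, this makes $\{\alpha_t\}$ a locally uniformly bounded $C_0$-group on $\operatorname{OP}^{-1}$.

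Next I would run the standard argument for closedness. Let $\ell_k\in\dom(\partial_\alpha)$ with $\ell_k\to\ell$ and $\partial_\alpha\ell_k\to m$ in $|||\cdot|||$. For each $k$, the function $s\mapsto\alpha_s(\ell_k)$ is $C^1$ in $\operatorname{OP}^{-1}$ with derivative $\alpha_s(\partial_\alpha\ell_k)$ (because $\alpha_s$ is a group and $\partial_\alpha$ commutes with its action on the domain), so the fundamental theorem of calculus in $\operatorname{OP}^{-1}$ gives
\[
\alpha_t(\ell_k)-\ell_k = \int_0^t \alpha_s(\partial_\alpha\ell_k)\,ds.
\]
Passing to the limit $k\to\infty$, using local uniform boundedness and strong continuity of $\{\alpha_s\}_{s\in[0,t]}$, yields $\alpha_t(\ell)-\ell=\int_0^t \alpha_s(m)\,ds$. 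Dividing by $t$ and letting $t\to 0$, and using strong continuity of $s\mapsto\alpha_s(m)$ at $s=0$, shows that $\partial_\alpha\ell$ exists and equals $m$; thus $\ell\in\dom(\partial_\alpha)$ and $\partial_\alpha\ell=m$.

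The only non-routine step is the first: the local uniform bound $|||\alpha_t|||\leq C(1+|t|)^{2n+1}$, which requires the Sobolev-norm estimate on conjugation by $e^{\pm its}$ sketched above. Once this is in hand, the remainder is a textbook application of generator theory for strongly continuous groups on Banach spaces, and no subtle interaction with the foliation geometry is needed.
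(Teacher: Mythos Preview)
Your proof is correct and follows the standard semigroup-theoretic argument: the integral identity $\alpha_t(\ell)-\ell=\int_0^t\alpha_s(\partial_\alpha\ell)\,ds$, passed to the limit along a sequence in the graph, forces closedness of $\partial_\alpha$.

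The paper takes a different but equally standard route. Rather than your integral-and-limit argument, it constructs the Laplace transform $R(\ell)=\int_0^\infty e^{-t}\alpha_t(\ell)\,dt$, verifies $(I-\partial_\alpha)R=I$, and then invokes the elementary fact that a densely defined operator with a bounded inverse is closed (applied to $I-\partial_\alpha$; then $\partial_\alpha=I-(I-\partial_\alpha)$ is closed as well, as the sum of a closed operator and a bounded one). Both arguments are textbook manoeuvres from $C_0$-semigroup theory, and neither is deeper than the other; yours works directly with orbits, the paper's works with the resolvent.

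One remark on what you flag as the ``non-routine step'': the paper already records, in the paragraph immediately preceding the Proposition, that $|||\alpha_t(\ell)|||=|||\ell|||$, so isometry of each $\alpha_t$ is taken as given and your polynomial bound is weaker than what is available. Even absent any explicit estimate, strong continuity alone forces local uniform boundedness of $\{\alpha_t\}$ on compact $t$-intervals via the uniform boundedness principle, so this step is in fact automatic and no separate Sobolev computation is required for your argument to go through.
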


\begin{proof}
Observe preliminary that if $A$ and $A '$ are two closed operator on a Banach space $\B$
then their sum $A+A' $ is also closed (with domain equal to the intersection of the two domains). The proof is elementary. \\ Next we claim that 
if $A$ is a densely defined operator and $A^{-1}: \B\to {\rm Dom} (A)$ exists and is bounded,
then $A$ is closed. Indeed: suppose that $x_j\to x$ and $A x_j\to y$; we want to prove that
$x\in {\rm Dom} (A)$ and $Ax=y$. By hypothesis we know that $x_j\to A^{-1} y$. Thus $x=\lim_j x_j= A^{-1} y$.
Since $A^{-1}$ is bijective, one has $x\in {\rm Dom} (A)$ and $Ax=y$, as required.\\
Finally  for each $\ell\in\operatorname{OP}^{-1}$ we consider the following Laplace transform
$$R (\ell):=\int_0^{+\infty} dt e^{-t} \alpha_t (\ell)\,.$$
Since $||| \alpha_t (\ell) ||| = ||| \ell |||$, we see that the integral converges. Now, 
an elementary computation shows that 
$ (I-\partial_\alpha) R = I$. Thus the previous statement, applied to $(I-\partial_{\alpha})$,
implies that $(I-\partial_{\alpha})$ is a closed operator. Thus, by our first observation we get
that $\partial_{\alpha}$ is closed. The Proposition is proved.
\end{proof}

We endow
${\rm Dom} (\partial_\alpha)$ with the graph norm
\begin{equation}\label{norm-b}
|||\ell |||+ ||| \pa_\alpha (\ell) |||\,.
\end{equation}

\begin{proposition}\label{prop:shatten-is-ideal-b} 
${\rm Dom} (\partial_\alpha)$  is a Banach algebra with respect to \eqref{norm-b}
and, obviously, a subalgebra of $B^*\equiv B^* (\cyl(Y), \mathcal F_{{\rm cyl}})$; moreover 
it is holomorphically closed  in $B^*$. 
\end{proposition}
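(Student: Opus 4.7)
The plan is to imitate, step by step, the argument already used for $\mathcal{D}_m$ in Proposition \ref{prop:shatten-is-ideal}, substituting $\partial_\alpha$ for $\overline{\delta}_3$. The three claims to be checked are: (i) $({\rm Dom}(\partial_\alpha), |||\cdot|||+|||\partial_\alpha(\cdot)|||)$ is a Banach space; (ii) it is a Banach subalgebra of $\operatorname{OP}^{-1}$; (iii) it is holomorphically closed in $B^*$.

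For (i), I would simply quote Proposition \ref{prop:partial-alpha-closed}: $\partial_\alpha$ is a closed operator on the Banach algebra $\operatorname{OP}^{-1}$, so the graph of $\partial_\alpha$ is a closed subspace of $\operatorname{OP}^{-1}\oplus \operatorname{OP}^{-1}$. Equipped with the graph norm, this graph is a Banach space, which under the projection to the first factor is isometrically identified with $({\rm Dom}(\partial_\alpha), \,|||\cdot|||+|||\partial_\alpha(\cdot)|||)$.

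For (ii), the key input is that each $\alpha_t$ is an algebra automorphism of $\operatorname{OP}^{-1}$ (since it is defined by conjugation with the unitary $e^{its}$), from which the Leibniz identity for $\partial_\alpha$ follows on a dense subspace: for $\ell,\ell'\in \Psi^{-1}_{c}(G_{\cyl}/\RR_{\Delta})$,
\begin{equation*}
\partial_\alpha(\ell\ell') = (\partial_\alpha \ell)\ell' + \ell(\partial_\alpha \ell').
\end{equation*}
Then, essentially as in Proposition \ref{prop:extension-deriv}, I would take sequences $\ell_i,\ell_i'$ in $\Psi^{-1}_{c}(G_{\cyl}/\RR_{\Delta})$ converging in the graph norm to $\ell,\ell'\in {\rm Dom}(\partial_\alpha)$; since $\operatorname{OP}^{-1}$ is a Banach algebra, $\ell_i\ell_i'\to \ell\ell'$ and $(\partial_\alpha \ell_i)\ell_i' + \ell_i(\partial_\alpha \ell_i')\to (\partial_\alpha \ell)\ell' + \ell(\partial_\alpha \ell')$; closedness of $\partial_\alpha$ then gives $\ell\ell'\in {\rm Dom}(\partial_\alpha)$ together with the Leibniz rule for all elements of the domain. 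Bounding
\begin{equation*}
|||\ell\ell'||| + |||\partial_\alpha(\ell\ell')||| \leq (|||\ell|||+|||\partial_\alpha \ell|||)(|||\ell'|||+|||\partial_\alpha \ell'|||)
\end{equation*}
then yields the Banach algebra property. That ${\rm Dom}(\partial_\alpha)$ is a subalgebra of $B^*$ is automatic, since it sits inside $\operatorname{OP}^{-1}\subset B^*$ by Proposition \ref{prop:b-o-sub-of-b-star}.

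For (iii), the main step will be to invoke the classical fact, already used in Proposition \ref{prop:shatten-is-ideal}, that the domain of a closed derivation on a Banach algebra is holomorphically closed inside that Banach algebra (see \cite{roe-partitioned}, p.~197, or \cite{Co}, Lemma 2, p.~247). Applied to $\partial_\alpha$ on $\operatorname{OP}^{-1}$ this gives that ${\rm Dom}(\partial_\alpha)$ is holomorphically closed in $\operatorname{OP}^{-1}$; combined with the fact, recalled in Proposition \ref{prop:shatten-is-ideal}, that $\operatorname{OP}^{-1}$ is itself holomorphically closed in $B^*$ (\cite{MN}, Theorem 3.3), one concludes that ${\rm Dom}(\partial_\alpha)$ is holomorphically closed in $B^*$. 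The only point requiring genuine care is the verification that the Leibniz rule extends from the dense subalgebra $\Psi^{-1}_{c}(G_{\cyl}/\RR_{\Delta})$ to all of ${\rm Dom}(\partial_\alpha)$; this uses only the continuity of the multiplication in $\operatorname{OP}^{-1}$ and the closedness of $\partial_\alpha$, so no new ingredient is needed beyond what has already been established.
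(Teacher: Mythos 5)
Your proposal follows essentially the same route as the paper: the paper's own proof consists of citing the general results of the preceding subsection (closedness of $\partial_\alpha$ from Proposition \ref{prop:partial-alpha-closed}, the Banach-algebra property of the domain of a closed derivation from Proposition \ref{prop:extension-deriv}), then the classical fact that the domain of a closed derivation is holomorphically closed, and finally that $\operatorname{OP}^{-1}$ is holomorphically closed in $B^*$ by \cite{MN}. One small caveat on your step (ii): you approximate arbitrary $\ell,\ell'\in{\rm Dom}(\partial_\alpha)$ in the graph norm by elements of $\Psi^{-1}_{c}(G_{\cyl}/\RR_{\Delta})$, which implicitly asserts that this subalgebra is a \emph{core} for the generator $\partial_\alpha$ — true here (it is a dense, $\alpha_t$-invariant subspace of the domain), but not verified in your argument; it is cleaner to avoid the issue entirely by writing $\frac{1}{t}(\alpha_t(\ell\ell')-\ell\ell')=\frac{1}{t}(\alpha_t(\ell)-\ell)\,\alpha_t(\ell')+\ell\,\frac{1}{t}(\alpha_t(\ell')-\ell')$ and passing to the limit, which gives the Leibniz rule directly on all of ${\rm Dom}(\partial_\alpha)$.
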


\begin{proof}
From the results of the previous  subsection, we know that
$\mathrm{Dom}(\pa_{\alpha})$, endowed with the graph norm, is a Banach algebra.
The first part of the Proposition is thus proved. 
That $\mathrm{Dom}(\pa_{\alpha})$ is holomorphically closed in $\operatorname{OP}^{-1}$ 
is as before a  consequence of the fact
that it is equal to the domain of a closed derivation. 
Since, as before, $\operatorname{OP}^{-1}$ is in turn holomorphically closed in $B^*$, 
see \cite{MN} Theorem 3.3, we see that $\mathrm{Dom}(\pa_{\alpha})$ is holomorphically closed
in $B^*$ as required.
The Proposition is proved.
\end{proof}

Let now $p\geq 1$ and consider $\operatorname{OP}^{-p} (\cyl (Y),\F_{\cyl})$. Then $\alpha_t$ on $\operatorname{OP}^{-1} (\cyl (Y),\F_{\cyl})$
preserves the subspaces $\operatorname{OP}^{-p} (\cyl (Y),\F_{\cyl})$ and we therefore get a well-defined stongly continuous
one-parameter group of automorphisms on each Banach algebra $\operatorname{OP}^{-p} (\cyl (Y),\F_{\cyl})$.
Let $\partial_{\alpha,p}$ be the associated derivation. Proceeding as in the proof of Proposition 
\ref{prop:partial-alpha-closed} we can check
that this is a closed derivation with domain
$${\rm Dom} (\partial_{\alpha,p}) = \{ \ell\in \operatorname{OP}^{-p} (\cyl (Y),\F_{\cyl})\;|\;  \lim_{t\to 0} \frac{(\alpha_t (\ell)-\ell)}{t}\;
\text{exists in } \operatorname{OP}^{-p} (\cyl (Y),\F_{\cyl})\}\,.$$
Similarly, proceeding as above, we can check that ${\rm Dom} (\partial_{\alpha,p})$ is a Banach
algebra with respect to the norm $|||\ell |||_p + |||\partial_{\alpha,p} (\ell) |||_p$.

\smallskip
Before going ahead we make a useful remark.

\begin{remark}\label{remark:product-domains}
Multiplication in $B^*$ induces a bounded bilinear map
\begin{equation}\label{product-domains}
{\rm Dom}( \partial_{\alpha,p})\times {\rm Dom} (\partial_{\alpha,q})\longrightarrow 
{\rm Dom}( \partial_{\alpha,p+q}).
\end{equation}
The proof is an easy consequence of the derivation property and 
of the inequality $|||\ell \ell ' |||_{p+q}\leq |||\ell |||_{p} ||| \ell ' |||_{q}$ for $ \ell\in \operatorname{OP}^{-p} (\cyl (Y),\F_{\cyl})$
and  $\ell ' \in \operatorname{OP}^{-q} (\cyl (Y),\F_{\cyl})$.
\end{remark}

We can now take the intersection of the  Banach subalgebras
$\mathcal{D}_{m}(\cyl (Y), \mathcal F_{{\rm cyl}})$ and $ \mathrm{Dom}(\pa_{\alpha})$:
$$\mathcal{D}_{m,\alpha}(\cyl (Y), \mathcal F_{{\rm cyl}}):= \mathcal{D}_{m}(\cyl (Y), \mathcal F_{{\rm cyl}})\cap
 \mathrm{Dom}(\pa_{\alpha})$$
and we  endow it with the norm 
\begin{equation}\label{Bp-norm-bis-0}
\|\ell\|_{m,\alpha}:= ||| \ell  ||| + \| [\chi_{{\rm cyl}}^0,\ell]  \|_{\J_m} +  |||  \partial_\alpha \ell |||\,.
\end{equation}
Being the intersection of two  holomorphically closed dense subalgebras, also $\mathcal{D}_{m,\alpha}(\cyl (Y), \mathcal F_{{\rm cyl}})$ enjoys this property.\\
We are finally ready to define the subalgebra we are interested in. Recall the function 
$f_{\cyl} (s,y)=\sqrt{1+s^2}$.

\begin{definition}\label{def:Bp-bis}
If 
$m\geq 1$
 we define 
\begin{equation}\label{new-Bm}
\mathcal{B}_{m}(\cyl (Y), \mathcal F_{{\rm cyl}}):= 
\{\ell\in \mathcal{D}_{m,\alpha}(\cyl (Y), \mathcal F_{{\rm cyl}}) \;|\; [f,\ell]\;\;\text{and}\;\;[f,[f,\ell]]\;\;\text{are bounded}\}\,.
\end{equation}
This will be  endowed with norm 
\begin{align*}
\|\ell \|_{\mathcal{B}_{m}} := &
\|\ell\|_{m,\alpha} + 2 \|[f,\ell]\|_{B^*} + \| [f,[f,\ell]]\|_{B^*}\\
=&||| \ell  ||| + \| [\chi_{{\rm cyl}}^0,\ell]  \|_{\J_m} +  |||  \partial_\alpha \ell |||
+2 \|[f,\ell]\|_{B^*} + \| [f,[f,\ell]]\|_{B^*}\,.
\end{align*}
The appearance of the factor 2 will be clear from the proof of Lemma \ref{new-ideal}.
Proceeding as in the proof of Proposition \ref{prop:newj} one can 
prove that $\B_m (\cyl (Y), \mathcal F_{{\rm cyl}})$  is {\it a holomorphically closed dense subalgebra of $B^*$.}
We shall often simply write $\mathcal{B}_{m}$ instead of $\mathcal{B}_{m}(\cyl (Y), \mathcal F_{{\rm cyl}})$.
 \end{definition}

\medskip
Let us go back to the foliated bundle with cylindrical end $(X,\F)$.
We now define \begin{equation}
\mathcal{A}_{m}(X,\mathcal F):= \{k\in A^*(X,\mathcal F); 
 \pi(k)\in \mathcal{B}_{m}(\cyl(\pa X), \mathcal F_{{\rm cyl}}), t(k)\in \mathcal{J}_{m}(X,\mathcal F)\}
\end{equation}



Now we observe that, as   vector spaces,
\begin{equation}\label{direct-sum-cal}
\mathcal{A}_m \cong \mathcal{J}_{m} \,\oplus \, s (\mathcal{B}_{m})\,.
\end{equation}
In order to prove  \eqref{direct-sum-cal} we recall the $C^*$-sequence
$0\to C^* (X,\F)\to A^* (X,\F)\xrightarrow{\pi} B^*  (\cyl(\pa X),\F_{{\rm cyl}})\to 0$ and the sections
$s: B^* (\cyl(\pa X),\F_{{\rm cyl}})\to A^* (X,\F)$  and $t: A^* (X,\F) \to C^* (X,\F)$ defined in \eqref{eq:split-2} and \eqref{eq:t} respectively.
Note that $\Ker t = \Im s$ since $t(k)=k-s\circ \pi (k)$ and $\pi\circ s (\ell)=\ell$ for 
$k\in A^* (X,\F)$ and $\ell\in B^* (\cyl(\pa X),\F_{{\rm cyl}})$. Moreover, we obviously  have $\pi (a)=0$ and $t(a)=a$ for
$a\in C^* (X,\F)$.\\

\medskip
\noindent
{\it Proof of \eqref{direct-sum-cal}.} Define $\phi: \mathcal{A}_m\to \mathcal{J}_{m} \,\oplus \, s (\mathcal{B}_{m})$
by $\phi (k)= (t(k),s\circ \pi (k))$. 
Define $\psi: \mathcal{J}_{m} \,\oplus \, s (\mathcal{B}_{m})\to \mathcal{A}_m$
by $\psi (a, s(\ell))=a + s(\ell)$. Note that $\Im \psi\subset  \mathcal{A}_m$ since $t(a+s (\ell))=a\in \mathcal{J}_{m}$
and 
$\pi (a + s(\ell))=\ell\in \mathcal{B}_{m}$. 
The maps $\phi$ and $\psi$ are obviously linear. Then we have 
$$\psi\circ\phi (a,s(\ell))= (t (a+s(\ell)), s\circ\pi (a+s(\ell)))= (a,s(\ell))\,,\quad
\phi\circ \psi (k)=(k-s\circ\pi (k))+ s\circ\pi (k)=k$$
and we are done.

\medskip

We endow $\mathcal{A}_m$ with the direct-sum norm:
\begin{equation}\label{direct-sum-norm}
\| k \|_{\mathcal{A}_m} :=  \| t(k) \|_{\J_m} + \| \pi (k) \|_{\mathcal{B}_{m}}
\end{equation}
 Obviously $s$ induces a bounded linear map
$\mathcal{B}_{m} \to \mathcal{A}_m$ 
of Banach spaces and similarly for $\pi$.
Moreover, note that the restriction of the norm $\| \;\; \|_{\mathcal{A}_m} $
to the subalgebra $ \mathcal{J}_{m}$ is precisely the norm $\|\;\;\|_{\J_m}$.

We shall prove momentarily that these algebras fits into a short exact sequence;
before doing this we prove a useful Lemma. Remark that for a foliation  $(Y,\F_Y)$ without boundary,
 $(\cyl (Y),\F_{\cyl})$ is  a  foliation with cylindrical ends;  for the latter $\J_m (\cyl(Y),\F_{\cyl})$
 makes perfect sense.
 
\begin{lemma}\label{new-ideal}
Recall the function $\chi^0$ on $X$ and $\chi^0_{\cyl}$ on the cylinder $\cyl (\pa X)$. One has:
\item{1)} $\chi^0 \J_m \subset \J_m $ and $ \J_m \chi^0\subset \J_m$;
\item{2)}  $\chi^0 \J_m (\cyl (\pa X),\F_{\cyl}) \chi^0\subset \J_m (X,\F)$
\item{3)} on $\cyl (Y)$, for example on $\cyl (\pa X)$, we have $\J_m \B_m \subset \J_m$ and $\B_m \J_m \subset \J_m$;
\item{4)} $(\chi^0 \B_m \chi^0) \J_m (X,\F)\subset \J_m (X,\F)$ and 
$\J_m (X,\F)(\chi^0 \B_m \chi^0) \subset \J_m (X,\F)$
\item{5)} $(\chi^0 \B_m \chi^0)(\chi^0 \B_m \chi^0)\subset \chi^0 \B_m \chi^0 + \J_m$.
\end{lemma}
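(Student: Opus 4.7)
The plan is to establish parts (1)--(5) in the stated order, since (4) and (5) rely on the earlier items. Throughout, the recurring difficulty is that $f$ and $g=f^2$ are unbounded multiplication operators, so I must check that each composition of the form $f\cdot A$ or $A\cdot f$ really extends boundedly to $L^2$. Two observations will do all the work: $\chi^0$, $f$, $g$, $1/f$ are mutually commuting multiplication operators with $1/f\in C_b(X)$; and $\J_m$ is closed under involution, since both the Shatten norm and the conditions ``$gk$ and $kg$ bounded'' are $\ast$-symmetric.

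Part (1) is essentially immediate: $\chi^0$ lies in the multiplier algebra of $C^*(X,\F)$, the Shatten estimate $\|\chi^0 k\|_m\le\|\chi^0\|_{C^*}\|k\|_m$ follows from the ideal properties of $\I_m$ (Proposition~\ref{prop:shatten-properties}(2)), and the $g$-multiplier condition reduces to $g(\chi^0 k)=\chi^0(gk)$ and $(\chi^0 k)g=\chi^0(kg)$ being bounded; the right-handed version is symmetric. For (2), the kernel of $\chi^0 k\chi^0$ is supported in $\cyl^-(\pa X)\times\cyl^-(\pa X)\subset X\times X$; choosing a fundamental domain for $\Gamma\to\tV\to V$ whose cylindrical part agrees with the one used to compute the Shatten norm on $\cyl(\pa X)$, and exploiting that $g|_X$ coincides with $g_\cyl$ on the cylindrical end, both the Shatten bound and the $g$-multiplier conditions transfer directly from the cylinder to $X$.

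The technical heart is (3). The enabling observation is that for $k\in\J_m$, both $kf$ and $fk$ are bounded: one has $kf=(kg)(1/f)$, a product of bounded operators since $1/f$ commutes with $g$; and $fk=(k^*f)^*$ is handled by the same trick applied to $k^*\in\J_m$. Combined with the algebraic identity
\[
 f[f,\ell] = [f,[f,\ell]] + [f,\ell]\,f,
\]
which converts each dangerous factor $f[f,\ell]$ (with $f$ unbounded on the left) into $[f,\ell]f$ plus the bounded commutator $[f,[f,\ell]]$, the four required statements $g(k\ell),(k\ell)g,g(\ell k),(\ell k)g$ bounded all reduce, via $[g,\ell]=f[f,\ell]+[f,\ell]f$, to routine combinations of the bounded quantities $[f,\ell]$, $[f,[f,\ell]]$, $kf$, $fk$, $kg$, $gk$. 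The Shatten bound is $\|k\ell\|_m\le\|k\|_m\|\ell\|_{C^*}$.

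For (4) I would write $s(\ell)k=\chi^0\ell(\chi^0 k)$ with $\chi^0 k\in\J_m(X,\F)$ by (1), and carry out the same bookkeeping as in (3), now using $\chi^0 g=g\chi^0$ to commute $\chi^0$ past the derivations; $ks(\ell)$ is symmetric. For (5), using $\chi^0\ell_2=\ell_2\chi^0+[\chi^0,\ell_2]$ and $(\chi^0)^2=\chi^0$, I expand
\[
 (\chi^0\ell_1\chi^0)(\chi^0\ell_2\chi^0) = \chi^0(\ell_1\ell_2)\chi^0 + \chi^0\ell_1[\chi^0,\ell_2]\chi^0.
\]
The first summand lies in $\chi^0\B_m\chi^0$ once one checks that $\B_m$ is closed under products (a Leibniz computation on the three defining conditions, e.g.\ $[f,[f,\ell_1\ell_2]]=[f,[f,\ell_1]]\ell_2+2[f,\ell_1][f,\ell_2]+\ell_1[f,[f,\ell_2]]$); the second lies in $\J_m(X,\F)$ because $[\chi^0,\ell_2]\in\J_m(\cyl,\F_\cyl)$ by the definition of $\B_m\subset\D_m$, then (3) gives $\ell_1[\chi^0,\ell_2]\in\J_m(\cyl,\F_\cyl)$, and finally (2) sends its sandwich by $\chi^0$ into $\J_m(X,\F)$. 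The main obstacle is a disciplined execution of (3): everything downstream is bookkeeping, but the commutator identity $f[f,\ell]=[f,[f,\ell]]+[f,\ell]f$ and the factorization $kf=(kg)(1/f)$ must be applied at exactly the right moment each time.
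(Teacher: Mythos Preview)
Your proposal is correct and follows essentially the same route as the paper: the commutator expansion $k\ell g = k[[\ell,f],f] + 2kf[\ell,f] + kg\ell$ together with the observation that $kf=(kg)(1/f)$ is bounded is exactly the mechanism the paper uses for (3) and (4), and your treatment of (1) and (2) matches as well. The only minor divergence is in (5): you write $\chi^0\ell_1\chi^0\ell_2\chi^0 = \chi^0\ell_1\ell_2\chi^0 + \chi^0\ell_1[\chi^0,\ell_2]\chi^0$ and conclude via (3) and (2), whereas the paper writes the remainder as $-\chi^0[\chi^0,\ell_1](1-\chi^0)[\ell_2,\chi^0]\chi^0$, a product of \emph{two} commutators in $\J_m$, and concludes via (1) alone; both decompositions work.
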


\begin{proof}
1) The operators $g\chi^0 k=\chi^0 g k$ and $\chi^0 k g$ are bounded if $k\in \J_m$. Thus one has $\chi^0 \J_m\subset \J_m$. Similarly we proceed for the other inclusion.\\
2) The proof is similar to 1)\\
3) Take $k\in \J_m$ and $\ell\in\B_m$. Obviously one has $k\ell$ and $\ell k $ $\in \I_m$, given that $\J_m\subset \I_m$
and that $\I_m$ is an ideal. Moreover, $g_{\cyl} k\ell$ is bounded and so is
\begin{align*}
k\ell g_{\cyl} &= k\ell f_{\cyl}^2 =k [\ell,f_{\cyl}] f_{\cyl} + k f_{\cyl}\ell f_{\cyl}\\
&= k [[\ell,f_{\cyl}],f_{\cyl}]+ 2 k f_{\cyl} [\ell, f_{\cyl}] + kg_{\cyl} \ell
\end{align*}
given that $[[\ell,f_{\cyl}],f_{\cyl}]$, $kf_{\cyl}$, $[\ell,f_{\cyl}]$ and $kg_{\cyl}$ are all bounded.
Thus $k\ell\in \J_m$. Similarly one proves that $\ell k\in \J_m$.\\
4) The proof is analogous to the one of 3),  let us see the details for the second inclusion:
\begin{align*}
k\chi^0 \ell \chi^0 g &= k\chi^0 \ell g_{\cyl} \chi^0 = k\chi^0 \ell f^2 _{\cyl}\chi^0=
k \chi^0 [\ell,f_{\cyl}] f_{\cyl}\chi^0  + k \chi^0 f_{\cyl}\ell f_{\cyl}\chi^0\\
&= k\chi^0 [[\ell,f_{\cyl}],f_{\cyl}]\chi^0+ 2 k f \chi^0 [\ell, f_{\cyl}]\chi^0  + k g \chi^0  \ell \chi^0
\end{align*}
which is easily seen to be bounded using the definitions of $\J_m$ and $\B_m$. The rest of the
proof is similar but easier.
\\
5) Note that, on the cylinder,  $[\chi^0_{\cyl},\ell]\in \J_m$ if $\ell\in \B_m$. Thus for $\ell,\ell ' \in \B_m$
we have that $\chi^0 \ell (1-\chi^0_{\cyl}) \ell ' \chi^0 = \chi^0 [\chi^0_{\cyl}, \ell] (1-\chi^0_{\cyl}) [\ell' ,\chi^0_{\cyl}]\chi^0$ 
belongs to $\J_m$, due to 1). This implies that 
$$\chi^0 \ell \chi^0 \ell ' \chi^0 = \chi^0 \ell \ell ' \chi^0 -
\chi^0 \ell (1-\chi^0) \ell ' \chi^0\; \in\; \chi^0 \B_m \chi^0 + \J_m.$$
\end{proof}

\begin{proposition}\label{prop:short-ec-seq-call}
($\mathcal{A}_m, \| \;\; \|_{\mathcal{A}_m})$ is a Banach subalgebra
of $A^*$. Moreover, 
 $ \mathcal{J}_{m}$ is an ideal in 
 $\mathcal{A}_m$ and there is a short
 exact sequence of Banach algebras: 
\begin{equation}\label{Shatten exact sequence}
0\rightarrow 
 \mathcal{J}_{m} (X,\mathcal F)\rightarrow \mathcal{A}_m (X;\mathcal F)
 \xrightarrow{\pi}\mathcal{B}_{m} (\cyl(\pa X), \mathcal F_{{\rm cyl}}) \rightarrow 0 \,.
\end{equation}
Finally, 
$t: A^* (X,\F)\to C^*(X,\F)$ restricts to a bounded 
section
$t:  \mathcal{A}_m (X,\F)
 \to \mathcal{J}_{m} (X,\F)
 $
 
\end{proposition}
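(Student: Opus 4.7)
The plan is to exploit the vector-space decomposition \eqref{direct-sum-cal}, namely $\mathcal{A}_m \cong \mathcal{J}_m \oplus s(\mathcal{B}_m)$ realized by $k \mapsto (t(k), s(\pi(k)))$, and note that by definition the norm $\|k\|_{\mathcal{A}_m} = \|t(k)\|_{\mathcal{J}_m} + \|\pi(k)\|_{\mathcal{B}_m}$ is precisely the direct-sum norm pulled back under this isomorphism. Since $\mathcal{J}_m$ is a Banach algebra (Proposition \ref{prop:newj}) and $\mathcal{B}_m$ is a Banach algebra (Definition \ref{def:Bp-bis} together with the closed-derivation arguments that precede it), completeness of $\mathcal{A}_m$ is immediate from the completeness of a finite direct sum of Banach spaces. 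The bounded inclusion $\mathcal{A}_m \hookrightarrow A^*$ follows because $\|k\|_{C^*} \leq \|t(k)\|_{C^*} + \|s\|\,\|\pi(k)\|_{C^*} \leq C\|k\|_{\mathcal{A}_m}$, using that both $\mathcal{J}_m \hookrightarrow C^*(X,\mathcal{F})$ and $\mathcal{B}_m \hookrightarrow B^*$ are bounded and $s$ is bounded by Lemma \ref{lemma:split}.

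For the algebra and ideal properties, I would write $k = a + s(\ell)$, $k' = a' + s(\ell')$ with $a,a'\in\mathcal{J}_m$, $\ell,\ell'\in\mathcal{B}_m$, and expand
\[
kk' = aa' + a\,s(\ell') + s(\ell)\,a' + s(\ell)\,s(\ell').
\]
Each of the first three summands lies in $\mathcal{J}_m$ by Lemma \ref{new-ideal}: $aa'\in\mathcal{J}_m$ by Proposition \ref{prop:newj}, while $a\,\chi^0\ell'\chi^0$ and $\chi^0\ell\chi^0\,a'$ are in $\mathcal{J}_m$ by part (4) of that lemma. The subtle term is $s(\ell)s(\ell') = \chi^0\ell\chi^0\ell'\chi^0$; here part (5) of Lemma \ref{new-ideal} gives the decomposition $s(\ell)s(\ell') = s(\ell\ell') + j$ with $j\in\mathcal{J}_m$, where one rewrites $\chi^0\ell(1-\chi^0)\ell'\chi^0 = \chi^0[\chi^0,\ell](1-\chi^0)[\ell',\chi^0]\chi^0$ and uses that $[\chi^0,\ell],[\chi^0,\ell']\in\mathcal{J}_m$ by the very definition of $\mathcal{B}_m$. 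Consequently $\pi(kk') = \ell\ell'\in\mathcal{B}_m$ and $t(kk') \in \mathcal{J}_m$, which shows $kk'\in\mathcal{A}_m$ and simultaneously that $\mathcal{J}_m$ is a two-sided ideal (taking either $\ell$ or $\ell'$ to be $0$).

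The submultiplicativity $\|kk'\|_{\mathcal{A}_m} \leq C\|k\|_{\mathcal{A}_m}\|k'\|_{\mathcal{A}_m}$ follows by estimating each piece: $\|\ell\ell'\|_{\mathcal{B}_m} \leq \|\ell\|_{\mathcal{B}_m}\|\ell'\|_{\mathcal{B}_m}$ gives the $\pi$-component, and for the $t$-component one bounds the four summands of $t(kk')$ using the Banach algebra inequality on $\mathcal{J}_m$ together with continuity of the bilinear maps $\mathcal{J}_m \times s(\mathcal{B}_m)\to \mathcal{J}_m$ and $s(\mathcal{B}_m)\times s(\mathcal{B}_m)\to\mathcal{J}_m + s(\mathcal{B}_m)$. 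The latter two continuity statements are the main technical point: while the algebraic containments from Lemma \ref{new-ideal} are given, one needs quantitative bounds, which follow by unpacking the $\mathcal{J}_m$-norm (controlling $\|\cdot\|_m$, $\|g\cdot\|_{C^*}$ and $\|\cdot g\|_{C^*}$ separately) and invoking the commutator bounds built into $\|\cdot\|_{\mathcal{B}_m}$, notably $\|[\chi^0,\ell]\|_{\mathcal{J}_m}$, $\|[f,\ell]\|_{B^*}$ and $\|[f,[f,\ell]]\|_{B^*}$.

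Finally, exactness at the three spots is straightforward: injectivity of $\mathcal{J}_m \hookrightarrow \mathcal{A}_m$ is clear since $\pi|_{\mathcal{J}_m}=0$ and $t|_{\mathcal{J}_m}=\mathrm{id}$; exactness at $\mathcal{A}_m$ is the tautology that if $\pi(k)=0$ then $k = t(k)\in\mathcal{J}_m$; and $\pi$ is surjective because given $\ell\in\mathcal{B}_m$, the element $s(\ell)\in\mathcal{A}_m$ maps to $\ell$ (with $t(s(\ell))=0\in\mathcal{J}_m$). All maps are bounded by inspection of the norm \eqref{direct-sum-norm}. The restricted section $t:\mathcal{A}_m\to\mathcal{J}_m$ is bounded by the very definition of $\|\cdot\|_{\mathcal{A}_m}$, as $\|t(k)\|_{\mathcal{J}_m} \leq \|k\|_{\mathcal{A}_m}$. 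I expect the main obstacle to be the quantitative estimate on $\|s(\ell)s(\ell')\|_{\mathcal{J}_m}$ produced by part (5) of Lemma \ref{new-ideal}, since it requires carefully tracking how the commutator norm $\|[\chi^0,\ell]\|_{\mathcal{J}_m}$ enters, and verifying that all three ingredients of the $\mathcal{J}_m$-norm (Shatten, left- and right-multiplication by $g$) can be simultaneously controlled.
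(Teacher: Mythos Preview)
Your proposal is correct and follows essentially the same route as the paper: decompose $k = t(k) + s(\pi(k))$, expand the product, use Lemma~\ref{new-ideal} parts (4) and (5) to place the cross-terms in $\mathcal{J}_m$, and then estimate each piece. The paper carries out explicitly the key inequality you anticipate, namely $\|a\,\chi^0\ell\chi^0\|_{\mathcal{J}_m} \leq \|a\|_{\mathcal{J}_m}\|\ell\|_{\mathcal{B}_m}$ (via the identity $a\chi^0\ell\chi^0 g = 2af\chi^0[\ell,f]\chi^0 + a\chi^0[[\ell,f],f]\chi^0 + ag\chi^0\ell\chi^0$, which is exactly why the factor $2$ appears in the definition of $\|\cdot\|_{\mathcal{B}_m}$), thereby obtaining submultiplicativity with constant $C=1$.
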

\begin{proof}
Write $k= a + \chix \ell_k \chix$, with $\pi(k)=\ell_k$. By definition
$t(k)=k-\chix \ell_k\chix=a \in \mathcal{J}_{m}(X,\mathcal F)$. 
Similarly we write $k'= a' + \chix \ell_{k'}\chix$. 
We thus have
$$k k'= (a + \chix \ell_k \chix) (a' + \chix \ell_{k'} \chix).$$
Since $\rho$ is an injective homomorphism we check easily that $\ell_{ k k'}= \ell_k \ell_{k'}$  
 We compute, with $\ell\equiv \ell_k$ and $\ell ' = \ell_{k^\prime}$,
 \begin{align*}
 k k' &= (a+\chix \ell \chix)(a' + \chix \ell' \chix)\\
 &=a a' + a\chix \ell' \chix + \chix \ell \chix a' + \chix\ell\chix \chix \ell' \chix\\
 &= a a' + a\chix \ell' \chix + \chix \ell \chix a' + \chix\ell(\chix_{\cyl}-1) \ell' \chix + \chix \ell \ell ' \chix\\
&= 
 a a' + a\chix \ell' \chix + \chix \ell \chix a' + \chix [\chix_{\cyl},\ell] [\ell ', \chix_{\cyl},] \chix + \chix \ell  \ell ' \chix \,.
 \end{align*}
 The first three terms belong to 
$\mathcal{J}_{m}(X,\mathcal F)$ because $\J_m (X,\mathcal F)$ is an algebra and because of property 4) in the Lemma ; we also know that, by the very definition of $\B_m$,
$[\chix_{{\rm cyl}},\ell]$ and $[\chix_{{\rm cyl}},\ell']$ are in  $\mathcal{J}_{m}(\cyl (\pa X),\mathcal F_{{\rm cyl}})$ so that their product is in $\mathcal{J}_{m}(\cyl (\pa X),\mathcal F_{{\rm cyl}})$.
Using this,  the second item of the Lemma  and the identity $\ell_{ k k'}= \ell_k \ell_{k'}$,
we finally  see that $\A_m$ is a subalgebra.\\
Next we prove that $\A_m$ is a Banach algebra. Recall that if $a\in\A_m$ then $\| a \|_{J_m}
=\|a \|_m + \|ag\|_{C^*} + \| ga \|_{C^*}$; this clearly satisfies $\| a a' \|_{\J_m} \leq \| a  \|_{\J_m} \,\| a' \|_{\J_m} $.
We shall prove that 
$$ \| a \chi^0 \ell \chi^0\|_{\J_m} \leq \| a \|_{\J_m}  \| \ell \|_{\B_m} \quad\text{and}\quad 
 \|  \chi^0 \ell \chi^0 a\|_{\J_m} \leq \| a \|_{\J_m}  \| \ell \|_{\B_m}\,.$$
 Indeed one has 
 \begin{align*}
 \| a \chi^0 \ell \chi^0\|_{\J_m} &=  \| a \chi^0 \ell \chi^0\|_{m} +  \|  a \chi^0 \ell \chi^0 g\|_{C^*}+
  \|g a \chi^0 \ell \chi^0 \|_{C^*}\\
  &=  \| a \chi^0 \ell \chi^0\|_{m} + \|2 a f \chi^0 [\ell,f] \chi^0 + a \chi^0 [[\ell,f],f]\chi^0 
  +a g \chi^0 \ell\chi^0 \|_{C^*} +   \|g a \chi^0 \ell \chi^0 \|_{C^*}\\
  &\leq \| a \|_m \| \ell\|_{B^*} + 2 \| af\|_m \|[\ell, f]\|_m + \| a \|_m \| [[\ell,f],f]\|_m +
  \| ag \|_{C^*} \| \ell\|_{B^*} + \| ga \|_{C^*} \|\ell\|_{B^*}\\
  &\leq \| a \|_{\J_m} \| \ell\|_{\B_m}\,.
\end{align*}
Similarly one proves the second inequality. Then we have

\begin{align*}
\| k\,k^\prime \|_{\mathcal{A}_m} &=  \| a a^\prime + a \chix \ell^\prime\chix
+ \chix \ell \chix a^\prime +  \chix [\chix_{\cyl},\ell] [\chix_{\cyl}, \ell '] \chix \|_{\J_m} + 
\| \ell \ell^\prime\|_{\mathcal{B}_{m}}\\
&\leq   \| a \|_{\J_m}  \|a^\prime \|_{\J_m} + \|a \|_{\J_m} \, \| \ell^\prime\|_{\B_m}+ \| \ell\|_{\B_m}\,\|a^\prime \|_{\J_m} + 
\| [\chix_{\cyl}, \ell]\|_{\J_m} \| [\chix_{\cyl} , \ell ']\|_{\J_m} + \|\ell\|_{\B_m}\|\ell ' \|_{\B^m}\\
&\leq \| k\|_{\mathcal{A}_m}  \, \|k^\prime \|_{\mathcal{A}_m} \,.
\end{align*}
Thus $\A_m$ is a Banach algebra. Since it is clear that the inclusion of $\A_m$ into
$A^*$ is bounded, we see that $\A_m$ is a Banach subalgebra of $A^*$.
The fact that we obtain a short exact sequence of Banach algebras is now clear.
Finally, observe that $t(k)=k-s (\pi (k))$; thus the boundedness of $s$ implies that
of $t$.
\end{proof}

\subsection{Smooth subalgebras defined by the modular automorphisms.}\label{subsect:dense}

The short exact sequence of Banach algebras $0\to \J_{m}\to \A_m\to \B_m\to 0$ does not involve
in any way the modular function $\psi$ and the two derivations $\delta_1$ and $\delta_2$.
Thus we cannot expect the two  Godbillon-Vey cyclic 2-cocycles to extend
to the cyclic cohomology groups of these algebras. For this reason we need to further decrease
the size of these subalgebras, taking into account the derivations $\delta_1$ and $\delta_2$.

\subsubsection{Closable derivations defined by commutators}

Let $k$ be an element either in $J_c (X,\F)$,
$A_c (X,\F)$ or $B_c (\cyl(\pa X),\F_{\cyl})$. We consider $k$ as a $\Gamma$-equivariant family of operators $k=(k(\theta))_{\theta\in T}$
acting on a family of Hilbert spaces $\mathcal{H}_\theta$ as in Sections \ref{sect:operators}
and \ref{sec:algebras}. 

\medskip
We first work on $A_c(X,\F)$
which we endow with a Banach norm $\|\:\:\|_0$ and denote it as $A^0_c$. Next, we consider 
the bimodule $A^1_c$, as in the preceeding subsections, i.e. the bimodule built out of $A_c$ by considering
operators acting from sections of $E$ to sections of 
the bundle with new equivariant structure, $E^\prime$.
We endow the bimodule $A^1_c$ with a norm $\|\:\:\|_1$. We shall assume that both $\|\:\:\|_0$ and  $\|\:\:\|_1$ are stronger
than the $C^*$-norm:
\begin{equation}
\label{stronger-norms}
 \|k\|_i \geq \|k\|_{C^*}\,,\quad i=0,1\,.
 \end{equation}
 
Let $f$ be a smooth function on $\tV\times T$ and consider the bimodule derivation 
$\delta:(A^0_c,\|\:\:\|_0)\to (A^1_c,\|\:\:\|_1)$ given by $\delta k:= [f,k]$. 
We assume that $f$ has been chosen so that $[f,k]$ is a $\Gamma$-equivariant family of operators. Note that, then,
$$(\delta k)(\theta) \xi_\theta = f(x,\theta) k(\theta) \xi_\theta - k(\theta) (f(x,\theta)\xi_\theta)$$
for $\xi_\theta\in\mathcal{H}_\theta$. We don't assume that $f$ is $\Gamma$-invariant, nor we assume that
$f$ is compactly supported or even bounded (this being a basic difference with the case of
$\chi^0$ already considered).

\begin{proposition}\label{prop:closable}
Under the above assumptions we have that  $\delta$ is a closable derivation. 
\end{proposition}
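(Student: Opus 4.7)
The derivation property is automatic from the Leibniz rule for commutators, $[f, kk'] = [f,k]k' + k[f,k']$; this makes perfectly good sense pointwise even though $f$ is neither bounded nor compactly supported, because elements of $A_c$ map compactly supported smooth sections to compactly supported smooth sections. So the entire content of the Proposition is closability.

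The plan is to verify the third characterization from the Lemma preceding Definition of a closable operator: if $\{k_i\} \subset A^0_c$ satisfies $\|k_i\|_0 \to 0$ and $\|\delta k_i - v\|_1 \to 0$ for some $v \in A^1$, then $v = 0$. By the hypothesis \eqref{stronger-norms}, both convergences pass to the $C^*$-norm, so $\|k_i(\theta)\| \to 0$ and $\|[f, k_i](\theta) - v(\theta)\| \to 0$ in the operator norm on $\mathcal{H}_\theta$, uniformly in $\theta \in T$. To identify $v(\theta)$ as zero, the idea is to test against a pair of compactly supported smooth sections $\xi, \eta$ in $\mathcal{H}_\theta$ and $\mathcal{H}'_\theta$. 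For such sections, even though $f$ is unbounded, the products $f(\cdot, \theta)\xi$ and $\overline{f}(\cdot, \theta)\eta$ lie in $L^2$ because $f$ is smooth hence bounded on the (compact) support of $\xi$ and $\eta$.

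Writing the weak pairing gives
\begin{equation*}
\langle [f, k_i](\theta) \xi, \eta \rangle
= \langle k_i(\theta) \xi, \overline{f}(\cdot, \theta)\eta \rangle
- \langle k_i(\theta)(f(\cdot, \theta)\xi), \eta \rangle.
\end{equation*}
Each of the two terms on the right is of the form $\langle k_i(\theta) \alpha, \beta\rangle$ with $\alpha, \beta$ fixed vectors in the Hilbert space, so by $\|k_i(\theta)\| \to 0$ both terms tend to $0$. On the other hand, uniform operator norm convergence of $[f, k_i](\theta)$ to $v(\theta)$ gives $\langle [f, k_i](\theta)\xi, \eta\rangle \to \langle v(\theta)\xi, \eta\rangle$. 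Hence $\langle v(\theta)\xi, \eta\rangle = 0$ for all compactly supported smooth $\xi, \eta$. Since such sections are dense in the appropriate $L^2$ spaces and $v(\theta)$ is bounded, we conclude $v(\theta) = 0$ for (almost) every $\theta$, i.e.\ $v = 0$.

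The main subtlety is just to organize the bookkeeping with the bimodule structure: although $A^0$ and $A^1$ differ in their $\Gamma$-equivariance conventions, at the level of the underlying family of operators on $(\mathcal{H}_\theta)_{\theta \in T}$ the argument is purely operator-theoretic, so the unboundedness of $f$ is neutralized by the compact support of the test sections. No further functional-analytic input is needed beyond the hypothesis that $\|\cdot\|_0$ and $\|\cdot\|_1$ dominate the $C^*$-norm; this is precisely why \eqref{stronger-norms} was assumed.
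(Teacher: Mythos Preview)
Your proof is correct and follows essentially the same route as the paper: test $\langle [f,k_i](\theta)\xi,\eta\rangle$ against compactly supported smooth sections, split the commutator so that $f$ lands on the test sections (where it is harmless by compact support), and use $\|k_i(\theta)\|\to 0$ together with the hypothesis \eqref{stronger-norms} to conclude that the limit $v(\theta)$ vanishes weakly and hence identically. The paper's proof is organized identically, with the only cosmetic difference being that it bounds the pairing directly rather than first invoking uniform-in-$\theta$ convergence.
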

\begin{proof}
Because of the   Lemma above it suffices to show that $\delta$ satisfies the
following property:
$$\text{if } \;\;\|k_i\|_0\to 0\;\;\text{ and }\;\; \|\delta k_i - k \|_1 \to 0\,,\;\;\text{with}\;\;k_i\in A_c\,,\;\;\text{ then }\;\; k=0\,.$$
Take $\xi,\eta\in C^\infty_c (\tV\times T;E)$; these induce elements $\xi_\theta, \eta_\theta\in \mathcal{H}_\theta$
once we restrict them to $\tV\times\{\theta\}$. Since, from \eqref{stronger-norms} the operator norm $\| [f,k_i](\theta)-k(\theta)\|$
is less than or equal to $\|[f,k_i]-k\|_1$, which in turn goes to zero, one has
$$\langle [f,k_i](\theta) \xi_\theta,\eta_\theta \rangle\longrightarrow \langle k(\theta)\xi_\theta,\eta_\theta \rangle$$
where $\langle\:\:\rangle$ denotes the innser product on $\mathcal{H}_\theta$. On the other hand
\begin{align*}
|\langle [f,k_i](\theta) \xi_\theta,\eta_\theta \rangle | &\leq | \langle f(\cdot,\theta) k_i (\theta) \xi_\theta,\eta_\theta \rangle |
+ | \langle k_i (\theta) f(\cdot,\theta) \xi_\theta,\eta_\theta \rangle | \\
 &= | \langle  k_i (\theta) \xi_\theta,\overline{f}(\cdot,\theta)\eta_\theta \rangle |
+ | \langle  f(\cdot,\theta) \xi_\theta, k_i (\theta)^* \eta_\theta \rangle |\\
&\leq \|k_i (\theta)\| \| \xi_\theta \| \| \overline{f}(\cdot,\theta) \eta_\theta\| + \|f(\cdot,\theta)\xi_\theta\|  \|k_i (\theta)\|  \| \eta_\theta\|\\
&\leq C  \|k_i (\theta)\| 
\\
&\leq C  \|k_i (\theta)\|_0
\end{align*}
where $C$ is a constant depending on $\xi,\eta$ and $f$ but independent of $k_i$. Note that $\overline{f}(\cdot,\theta)\eta_\theta$
and $f(\cdot,\theta) \xi_\theta$ are of compact support in $\tV\times\{\theta\}$ and thus their norms are finite. Thus we obtain
$$|\langle [f,k_i](\theta) \xi_\theta,\eta_\theta \rangle | \longrightarrow 0\quad\text{as}\quad i\to \infty, \quad\text{since}\quad \|k_i\|_0\to 0\,.
$$
This implies that $\langle k(\theta)\xi_\theta,\eta_\theta \rangle=0$ for any $\xi,\eta\in C^\infty_c (\tV\times T;E)$ and hence the family 
$(k(\theta))_{\theta\in T}$ is the zero operator. Thus we have proved that $\delta$ is closable.
\end{proof}

\subsubsection{The smooth subalgebra $ \mathbf{ \mathbf{ \mathfrak{J}_m } }\subset C^* (X,\F)$}
\label{subsubsection:gothic-j}

We apply the above  general results to the two derivations $\delta_1$ and $\delta_2$ introduced in Subsection \ref{subsect:absolute-taugv},
 namely 
 $\delta_1:= [\dot{\phi},\:\:]$ and $\delta_2:=[\phi,\:\:]$, with $\phi$ equal to the
   logarithm of the modular function.
   
Recall the $C^*$-algebra $C^*_\Gamma (\H)\supset C^*(X,\F)$; it is obtained, by definition, by
closing up the subalgebra $C_{\Gamma,c} (\H)\subset \End_{\Gamma} (\H)$ consisting of those
elements that preserve the continuous field $C^\infty_c (\tV\times T, E)$.
We set 
$${\rm Dom}\,(\delta_2^{{\rm max}})=\{k\in  C_{\Gamma,c} (\H) \,|\, [\phi,k]\in C^*_\Gamma (\H)\}$$
and 
$$\delta_2^{{\rm max}}: {\rm Dom}\,(\delta_2^{{\rm max}})\to C^*_\Gamma (\H), \;\;\;
\delta_2^{{\rm max}} (k):= [\phi,k]\,.$$
The same proof as above establishes that $\delta_2^{{\rm max}}$
is closable. Similarly,  with self-explanatory notation, the bimodule derivation
$$\delta_1^{{\rm max}}:
{\rm Dom}\,(\delta_1^{{\rm max}})\to C^*_\Gamma (\H,\H^\prime), \;\;\;
\delta_1^{{\rm max}} (k):= [\dot{\phi},k]\,,$$
with ${\rm Dom}\,(\delta_1^{{\rm max}}):= \{k\in  C_{\Gamma,c} (\H) \,|\, [\dot{\phi},k]\in 
C^*_\Gamma (\H,\H^\prime)\}$
 is closable. 
Let $\overline{\delta}^{{\rm max}}_j$ be their respective closures;
thus, for example,
 $$\overline{\delta}^{{\rm max}}_2: {\rm Dom}\,\overline{\delta}^{{\rm max}}_2\subset C^*_{\Gamma} (\H)
 \longrightarrow C^*_\Gamma (\H)$$
and similarly for $\delta_1^{{\rm max}}$. Define now 
$$\mathfrak{D}_2:=\{a\in {\rm Dom}\,\overline{\delta}^{{\rm max}}_2\cap \J_m (X,\F)\;|\;\overline{\delta}^{{\rm max}}_2
a\in \J_m (X,\F)\}$$
and $\overline{\delta}_2: \mathfrak{D}_2\to  \J_m (X,\F)$ 
as the restriction of $\overline{\delta}^{{\rm max}}_2$ to $\mathfrak{D}_2$
with values in  $\J_m (X,\F)$. We know from Lemma \ref{lemma:restriction}
that $\overline{\delta}_2$ is a closed derivation.
Define similarly $\mathfrak{D}_1$ and the closed derivation $\overline{\delta}_1$.

%
We set
 \begin{equation}\label{gothic-j}
\mathbf{ \mathbf{ \mathfrak{J}_m } } := \mathcal{J}_m \cap
{\rm Dom} (\overline{\delta}_1)
\cap {\rm Dom} (\overline{\delta}_2) \equiv  \mathcal{J}_m \cap \mathfrak{D}_1\cap \mathfrak{D}_2\,.
 \end{equation}
 We endow $\mathbf{ \mathbf{ \mathfrak{J}_m } }$ with the norm
 \begin{equation}\label{norm-gothic-j}
 \| a \|_{\mathbf{ \mathbf{ \mathfrak{J}_m } }}:= \| a \|_{m} + \| \overline{\delta}_2 a \|_{m}+
  \| \overline{\delta}_1 a \|_{m}\,.
 \end{equation}
 \begin{proposition}\label{prop:gothic-j-hol-closed}
 $ \mathbf{ \mathbf{ \mathfrak{J}_m } }$ is holomorphically closed in $C^* (X,\F)$.
 \end{proposition}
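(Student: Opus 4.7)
The strategy is the standard one: holomorphic closure is transitive and preserved under finite intersections, so since $\J_m$ is already holomorphically closed in $C^*(X,\F)$ by Proposition \ref{prop:newj}, it is enough to show that $\mathfrak{D}_j \cap \J_m$ is holomorphically closed in $\J_m$ for each $j=1,2$; then
\[
\mathbf{\mathfrak{J}_m} \;=\; \J_m \cap (\mathfrak{D}_1\cap\J_m) \cap (\mathfrak{D}_2\cap\J_m)
\]
is holomorphically closed in $C^*(X,\F)$. The Banach-algebra property of $\mathbf{\mathfrak{J}_m}$ with the graph norm \eqref{norm-gothic-j} is already built in: by the restriction Lemma \ref{lemma:restriction}, $\overline{\delta}_j$ restricts to a closed derivation of $\mathfrak{D}_j\cap\J_m$ into $\J_m$ (resp.\ into the bimodule $\J_m(X,\F;E,E')$ for $j=1$), and Proposition \ref{prop:extension-deriv} then gives the Banach-algebra structure.

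The core analytic input is the classical fact, used already for $\mathcal{D}_m$ in the proof of Proposition \ref{prop:shatten-is-ideal} and attributed to Connes (\cite{Co}, Lemma 2 p.~247; \cite{roe-partitioned}, p.~197): \emph{the domain of a closed derivation is holomorphically closed in the ambient Banach algebra.} The argument runs as follows for $j=2$. Given $a\in\mathfrak{D}_2\cap\J_m$, fix $\lambda\notin\mathrm{spec}(a)$. For $|\lambda|>\|a\|_{\J_m}$ the Neumann series $s_N=\lambda^{-1}\sum_{n=0}^{N}(a/\lambda)^{n}$ lies in the Banach algebra $\mathfrak{D}_2\cap\J_m$ and converges in $\J_m$ to $(\lambda-a)^{-1}-\lambda^{-1}$; using the Leibniz rule, $\|\overline{\delta}_2(a^n)\|_{\J_m}\le n\|a\|_{\J_m}^{\,n-1}\|\overline{\delta}_2(a)\|_{\J_m}$, so $\overline{\delta}_2(s_N)$ is also Cauchy in $\J_m$, and closedness of $\overline{\delta}_2$ forces $(\lambda-a)^{-1}\!-\!\lambda^{-1}\in\mathfrak{D}_2\cap\J_m$ with
\[
\overline{\delta}_2\bigl((\lambda-a)^{-1}\bigr)=(\lambda-a)^{-1}\,\overline{\delta}_2(a)\,(\lambda-a)^{-1}.
\]
The resolvent identity propagates this to every $\lambda\notin\mathrm{spec}(a)$, and a contour integral $f(a)=\tfrac{1}{2\pi i}\oint f(\lambda)(\lambda-a)^{-1}d\lambda$ together with the fact that $\overline{\delta}_2$ is bounded on $\mathfrak{D}_2\cap\J_m$ in graph norm shows $f(a)\in\mathfrak{D}_2\cap\J_m$ for every $f$ holomorphic on a neighborhood of $\mathrm{spec}(a)$ with $f(0)=0$.

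The main obstacle is that $\overline{\delta}_1$ is a \emph{bimodule} derivation with values in $\J_m(X,\F;E,E')$, whereas the Connes argument just sketched is stated for derivations of an algebra into itself. To bridge this, I will invoke the graded-algebra construction of Lemma \ref{lemma:existence-omega}: form
\[
\Omega_m \;:=\; \J_m(X,\F;E)\,\oplus\,\J_m(X,\F;E,E'),
\]
extend $\overline{\delta}_1$ to a derivation of degree $1$ on $\Omega_m$ (which is closable with closed extension still denoted $\overline{\delta}_1$, by the argument of Subsection \ref{subsect:ccgraded} combined with the closability proof of Proposition \ref{prop:closable}), and rerun the Neumann-series/contour-integral argument inside the Banach algebra $\Omega_m$. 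The key estimate $\|\overline{\delta}_1(\alpha^n)\|\le n\|\alpha\|^{n-1}\|\overline{\delta}_1(\alpha)\|$ still holds because $\overline{\delta}_1$ is a derivation on $\Omega_m$. Taking the degree-zero part recovers holomorphic closure of $\mathfrak{D}_1\cap\J_m$ inside $\J_m(X,\F;E)$, and combining with the $j=2$ case concludes the proof.
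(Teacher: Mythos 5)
Your proof is correct and follows the same route as the paper's, which simply combines the holomorphic closure of $\J_m$ (Proposition \ref{prop:newj}) with the classical fact, cited from Connes and Roe, that the domain of a closed derivation is stable under holomorphic functional calculus, and then takes the intersection; your expansion of that citation into the Neumann-series/resolvent argument is a faithful rendering of the standard proof. The only superfluous step is the graded algebra $\Omega_m$ you introduce to handle $\overline{\delta}_1$: the Leibniz estimate $\|\overline{\delta}_1(z^k)\|\le k\|z\|^{k-1}\|\overline{\delta}_1(z)\|$ and the closedness argument work verbatim for a closed derivation with values in a Banach bimodule (this is exactly the setting of the reference to Roe), so no reduction to an algebra-valued derivation is required.
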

 \begin{proof}
 We already know that the Banach algebra $\mathcal{J}_m$ is holomorphically closed in the $C^*$-algebra $C^* (X,\F)$.
On the other hand, we know  \cite{roe-partitioned}, page 197 or 
\cite{Co}, Lemma 2, page 247, that 
  ${\rm Dom} (\overline{\delta}_1)$ and $ {\rm Dom} (\overline{\delta}_2)$ are holomorphically closed 
  in $\mathcal{J}_m$ (since they are the domains
  of closed derivations). Thus  $ \mathbf{ \mathbf{ \mathfrak{J}_m } }$ is holomorphically closed in $C^* (X,\F)$
  as required.
  \end{proof}

\subsubsection{The smooth subalgebra $ \mathbf{ \mathbf{ \mathfrak{B}_m } }\subset B^* (\cyl(\pa X),\F_{\cyl})$}\label{subsubsection:gothic-b}
 Consider  $\mathcal{B}_m$; we consider  the  derivations 
 $\delta_1:=[\dot{\phi}_{\pa},\;\;]$,  $\delta_2:= [\phi_{\pa},\;\;]$
 on the cylinder $\RR\times\pa X_0$; we have already encountered these derivations in Subsection \ref{subsect:sigma3},
 see more precisely Definition \ref{def:sigma3}. 
 Consider first $\delta_2$.  Define a  closed derivation $\overline{\partial}_2$
by taking the closure of the closable derivation $ \Psi^{-1}_c (G_{\cyl}/\RR_{\Delta})\xrightarrow{\partial_2} B^*$,
with $\partial_2 (\ell):= [\phi_\partial,\ell]$ and 
with $ \Psi^{-1}_c (G_{\cyl}/\RR_{\Delta}) $ endowed with the norm $|||\cdot|||$. 
Then, from Lemma 
 \ref{lemma:restriction}, we know that 
 $\overline{\partial}_2|_{\mathfrak{D}_{2}}$, with 
 $$
 \mathfrak{D}_{2}=
\{b\in {\rm Dom}(\overline{\partial}_2)\;\;|\;\; \overline{\partial}_2 (b)\in \B_{m}\}
$$
is a closed derivation with values in $\mathcal{B}_m$. We set 
$\overline{\delta}_2:=\overline{\partial}_2|_{\mathfrak{D}_2}$;
thus ${\rm Dom}(\overline{\delta}_2)=\mathfrak{D}_2$ and 
$\overline{\delta}_2:=\overline{\partial}_2|_{\mathfrak{D}_2}$. A similarly definition of 
$\overline{\delta}_1$ and 
 ${\rm Dom} (\overline{\delta}_1)$ can be given.

We set  \begin{equation}\label{gothic-b}
\mathbf{ \mathbf{ \mathfrak{B}_m } } := \mathcal{B}_m \cap
{\rm Dom} (\overline{\delta}_1)
\cap {\rm Dom} (\overline{\delta}_2)\equiv \mathcal{B}_m \cap \mathfrak{D}_1\cap \mathfrak{D}_2\,.
 \end{equation}
 We endow  $\mathbf{ \mathbf{ \mathfrak{B}_m } } $ with the norm
 \begin{equation}\label{norm-gothic-b}
 \| \ell \|_{\mathbf{ \mathbf{ \mathfrak{B}_m } } }:= \| \ell \|_{\B_m} +  \| \overline{\delta}_1 \ell \|_{\B_m} + 
 \|\overline{\delta}_2 \ell \|_{\B_m}
 \end{equation}
 
 \begin{proposition}\label{prop:gothic-b-hol-closed}
 $ \mathbf{ \mathbf{ \mathfrak{B}_{m} } }$ is holomorphically closed in $B^* (\cyl(\pa X),\F_{\cyl})$.
 \end{proposition}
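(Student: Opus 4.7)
The plan is to mirror the strategy used for Proposition \ref{prop:gothic-j-hol-closed}, exploiting the general fact that the domain of a closed derivation on a Banach algebra is a holomorphically closed subalgebra (as referenced from \cite{roe-partitioned} and \cite{Co}). The two ingredients at hand are: first, $\mathcal{B}_m$ is already known to be holomorphically closed in $B^*(\cyl(\pa X),\F_{\cyl})$ by the argument indicated in Definition \ref{def:Bp-bis} (built in analogy with Proposition \ref{prop:newj}); second, the derivations $\overline{\delta}_1$ and $\overline{\delta}_2$ are closed by their construction as restrictions à la Lemma \ref{lemma:restriction} of the closures $\overline{\pa}_i$.

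First I would apply Lemma \ref{lemma:restriction} once more, taking $B_0 = \mathcal{B}_m$, $B_1 = \mathcal{B}_m$, and the ambient derivation $\overline{\delta}_2:\mathfrak{D}_2\to \mathcal{B}_m$. The outcome is that the restriction of $\overline{\delta}_2$ to the subspace $\mathcal{B}_m\cap\mathfrak{D}_2$ is a closed derivation on the Banach algebra $(\mathcal{B}_m,\|\cdot\|_{\mathcal{B}_m})$ with values in $\mathcal{B}_m$. The classical result on closed derivations (see \cite{roe-partitioned}, p.\ 197, or \cite{Co}, Lemma 2, p.\ 247) then gives that $\mathcal{B}_m\cap\mathfrak{D}_2$, equipped with the graph norm $\|\ell\|_{\mathcal{B}_m}+\|\overline{\delta}_2\ell\|_{\mathcal{B}_m}$, is holomorphically closed in $\mathcal{B}_m$. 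Iterating the same argument with $\overline{\delta}_1$ restricted to $(\mathcal{B}_m\cap\mathfrak{D}_2)\cap\mathfrak{D}_1$ produces a closed derivation on the Banach algebra just obtained, so its domain, which is exactly $\mathbf{\mathfrak{B}_m}$ with the norm \eqref{norm-gothic-b}, is holomorphically closed in $\mathcal{B}_m\cap\mathfrak{D}_2$. Combining the chain of inclusions with the transitivity of holomorphic closure (spectra are preserved under each inclusion), one concludes that $\mathbf{\mathfrak{B}_m}$ is holomorphically closed in $B^*$.

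The only subtlety I foresee is checking that the hypotheses of Lemma \ref{lemma:restriction} are genuinely met at each step: namely that $\mathcal{B}_m$ is a Banach subalgebra of $B^*$ whose norm dominates $|||\cdot|||$ and whose module structure is compatible with the ambient one. Both facts are immediate from the definition \eqref{Bp-norm-bis-0} (the $|||\ell|||$ summand is present in $\|\ell\|_{\mathcal{B}_m}$), so no obstacle actually arises, and the proof reduces to a straightforward iteration of the $\mathbf{\mathfrak{J}_m}$ argument.
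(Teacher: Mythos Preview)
Your proposal is correct and follows essentially the same approach as the paper: both argue that $\mathcal{B}_m$ is holomorphically closed in $B^*$, that the domains of the closed derivations $\overline{\delta}_1,\overline{\delta}_2$ are holomorphically closed in $\mathcal{B}_m$ (via the Roe/Connes lemma), and then conclude by transitivity. Your version is in fact more explicit than the paper's, which simply asserts the holomorphic closure of ${\rm Dom}(\overline{\delta}_j)$ in $\mathcal{B}_m$ without spelling out the iterated application of Lemma~\ref{lemma:restriction}.
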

 \begin{proof}
 We already know that the Banach algebra 
 $\mathcal{B}_{m}$ is holomorphically closed in the $C^*$-algebra\\ $B^* (\cyl(\pa X),\F_{\cyl})$.
On the other hand, we know 
that 
  ${\rm Dom} (\overline{\delta}_1)$ and $ {\rm Dom} (\overline{\delta}_2)$ are holomorphically closed 
  in  $\mathcal{B}_{m}$. Thus  
   $ \mathbf{ \mathbf{ \mathfrak{B}_{m} } }$ is holomorphically closed in $B^* (\cyl(\pa X),\F_{\cyl})$
   as required.
       \end{proof}

    \subsubsection{The subalgebra $ \mathbf{ \mathbf{ \mathfrak{A}_m } }\subset A^* (X,\F)$}
    \label{subsubsection:gothic-a}
 
Next we consider the Banach algebra $\A_m (X,\F)$ which is certainly contained
in $C^*_{\Gamma} (\H)$, given that $A_c (X,\F)$ is contained in $C_{\Gamma,c} (\H)$. 
Consider again $\overline{\delta}^{{\rm max}}_j$ and restrict it to a derivation
with values in $\A_m (X,F)$:
$$\overline{\delta}_2: \D_2 \to \A_m (X,F)$$
with $\D_2=\{a\in {\rm Dom}\,\overline{\delta}^{{\rm max}}_2\;|\;\overline{\delta}^{{\rm max}}_2 a\in 
\A_m (X,F)\}$ and similarly for $\overline{\delta}_1$.  We obtain in this way closed derivations
$\overline{\delta}_1$ and $\overline{\delta}_2$ with domains $ {\rm Dom}\overline{\delta}_1=\mathfrak{D}_1$ and 
$ {\rm Dom}\overline{\delta}_2=\mathfrak{D}_2$.
We set
 \begin{equation}\label{gothic-a}
\mathbf{ \mathbf{ \mathfrak{A}_m } } := \mathcal{A}_m \cap
{\rm Dom} (\overline{\delta}_1)
\cap {\rm Dom} (\overline{\delta}_2)\cap \pi^{-1} (  \mathbf{ \mathbf{ \mathfrak{B}_{m} } } ) \,.
 \end{equation}
 We endow the algebra $\mathbf{ \mathbf{ \mathfrak{A}_m } }$, which is a subalgebra
 of $A^*$,
 with the norm
  \begin{equation}\label{norm-gothic-a}
 \| k \|_{\mathbf{ \mathbf{ \mathfrak{A}_m } } }:= \| k \|_{\A_m} +  \| \overline{\delta}_1 k \|_{\A_m} + 
 \|\overline{\delta}_2 k \|_{\A_m} + \| \pi (k) \|_{\mathbf{ \mathbf{ \mathfrak{B}_m } } }
 \end{equation}
 It is an easy exercise to show 
  that $\mathbf{ \mathbf{ \mathfrak{A}_m } }$ is a Banach algebra.


\subsubsection{The modular Shatten extension}
 We can finally state one of the basic results of this whole section:
 
 \begin{proposition}\label{lemma:frak-sequence}
 The map $\pi$ sends 
 $ \mathbf{ \mathfrak{A}_m}$ into  $ \mathbf{\mathfrak{B}_{m}}$;  
 $ \mathbf{ \mathfrak{J}_{m} }$ is an
 ideal in  $\mathbf{ \mathfrak{A}_m}$  and we have a short exact sequence
 of Banach algebras
 \begin{equation}\label{frak-sequence}
 0\rightarrow \mathbf{ \mathfrak{J}_{m} } \rightarrow \mathbf{ \mathfrak{A}_m}\xrightarrow{\pi}   \mathbf{\mathfrak{B}_{m}} \rightarrow 0
 \end{equation}
 The sections $s$ and $t$ restricts to bounded 
sections 
$s:   \mathbf{\mathfrak{B}_{m}}\to  \mathbf{ \mathfrak{A}_m} $ 
and
$t:  \mathbf{ \mathfrak{A}_m} 
 \to \mathbf{ \mathfrak{J}_{m} }
 $.
\end{proposition}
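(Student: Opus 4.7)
The plan is to establish the four assertions in the order (i) $\pi(\mathbf{\mathfrak{A}_m})\subset\mathbf{\mathfrak{B}_m}$, (ii) $\mathbf{\mathfrak{J}_m}$ is a two-sided ideal in $\mathbf{\mathfrak{A}_m}$, (iii) surjectivity of $\pi$ together with $\ker\pi\cap\mathbf{\mathfrak{A}_m}=\mathbf{\mathfrak{J}_m}$, and (iv) boundedness of the restrictions of $s$ and $t$. The first statement is immediate from the very definition \eqref{gothic-a} of $\mathbf{\mathfrak{A}_m}$, which already includes the condition $k\in\pi^{-1}(\mathbf{\mathfrak{B}_m})$. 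For the ideal property, I would observe that since $\mathcal{J}_m$ is an ideal in $\mathcal{A}_m$ (Proposition \ref{prop:short-ec-seq-call}), it remains only to check the behaviour under $\overline{\delta}_1,\overline{\delta}_2$. Applying Proposition \ref{prop:extension-deriv} to the closed derivations restricted as in Lemma \ref{lemma:restriction}, the derivation identity gives $\overline{\delta}_j(ak)=(\overline{\delta}_j a)k+a(\overline{\delta}_j k)$ for $a\in\mathbf{\mathfrak{J}_m}$, $k\in\mathbf{\mathfrak{A}_m}$; by definition $\overline{\delta}_j a\in\mathcal{J}_m$ and $\overline{\delta}_j k\in\mathcal{A}_m$, so both summands lie in $\mathcal{J}_m$ using the ideal property again, and similarly for $ka$.

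The main work, and what I expect to be the main obstacle, lies in producing the section $s:\mathbf{\mathfrak{B}_m}\to\mathbf{\mathfrak{A}_m}$, since this single fact implies surjectivity of $\pi$. The crucial algebraic observation is that $\phi$ and $\dot\phi$, being multiplication operators on $X$, commute with $\chi^0$, which is also a multiplication operator; therefore for $\ell\in B_c$ one has the strict identity
\begin{equation*}
\delta_j(\chi^0\ell\chi^0)=\chi^0(\delta_j\ell)\chi^0=s(\delta_j\ell),\qquad j=1,2.
\end{equation*}
By continuity and the closedness of $\overline{\delta}_j$, this passes to the closures, so that for $\ell\in\mathbf{\mathfrak{B}_m}$ the element $s(\ell)$ lies in $\mathrm{Dom}(\overline{\delta}_j)$ with $\overline{\delta}_j s(\ell)=s(\overline{\delta}_j\ell)\in\mathcal{A}_m$, the last inclusion because $s:\mathcal{B}_m\to\mathcal{A}_m$ is already known to be bounded by Proposition \ref{prop:short-ec-seq-call} and $\overline{\delta}_j\ell\in\mathcal{B}_m$. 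Thus $s(\ell)\in\mathbf{\mathfrak{A}_m}$, and the norm estimate
\begin{equation*}
\|s(\ell)\|_{\mathbf{\mathfrak{A}_m}}=\|s(\ell)\|_{\mathcal{A}_m}+\|s(\overline{\delta}_1\ell)\|_{\mathcal{A}_m}+\|s(\overline{\delta}_2\ell)\|_{\mathcal{A}_m}+\|\ell\|_{\mathbf{\mathfrak{B}_m}}\leq C\|\ell\|_{\mathbf{\mathfrak{B}_m}}
\end{equation*}
then follows from boundedness of $s:\mathcal{B}_m\to\mathcal{A}_m$.

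For the section $t$ one uses $t(k)=k-s\pi(k)$. The delicate point here is the compatibility identity $\pi\overline{\delta}_j=\overline{\delta}_j\pi$, which holds on $A_c$ in the algebraic sense (again because the defining functions are the same, $\phi$ restricting to $\phi_\partial$ on the cylinder) and extends to the closures by continuity, both $\pi$ and $\overline{\delta}_j$ being closed/continuous on the relevant Banach algebras. Combined with $\delta_j s=s\delta_j$ from the previous paragraph, this yields $\overline{\delta}_j t(k)=t(\overline{\delta}_j k)$; since $t$ maps $\mathcal{A}_m$ into $\mathcal{J}_m$ and $\overline{\delta}_j k\in\mathcal{A}_m$ by hypothesis, we obtain $\overline{\delta}_j t(k)\in\mathcal{J}_m$, hence $t(k)\in\mathbf{\mathfrak{J}_m}$, with a manifestly bounded estimate. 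Exactness in the middle is then automatic: $\ker\pi\cap\mathbf{\mathfrak{A}_m}$ consists of $k\in\mathbf{\mathfrak{A}_m}$ with $\pi(k)=0$, i.e.\ $t(k)=k\in\mathcal{J}_m$, and the domain/codomain conditions on $\overline{\delta}_j$ are precisely those defining $\mathbf{\mathfrak{J}_m}$. The hardest point to nail down rigorously will be the extension of the algebraic identities $\delta_j s=s\delta_j$ and $\pi\delta_j=\delta_j\pi$ from the dense algebras $A_c,B_c$ to the Banach closures, which must be done carefully using the fact that $\overline{\delta}_j$ was constructed as a restriction (via Lemma \ref{lemma:restriction}) of a common maximal closure defined by commutator on $C^*_\Gamma(\mathcal H)$.
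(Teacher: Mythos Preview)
Your proposal is correct and follows essentially the same route as the paper's proof (Subsection~\ref{subsection:modular-shatten}). The paper likewise reduces everything to the commutation identity $\overline{\delta}_j\circ s = s\circ\overline{\delta}_j$, then deduces the direct sum $\mathbf{\mathfrak{A}_m}\cong\mathbf{\mathfrak{J}_m}\oplus s(\mathbf{\mathfrak{B}_m})$, exactness, and boundedness of $\pi,s,t$ from it; the ideal property is dismissed in one line as $\mathbf{\mathfrak{J}_m}=\ker\pi$.

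The one place where the paper is noticeably more careful than your sketch is exactly where you flag the difficulty. Your ``strict identity'' $\delta_j(\chi^0\ell\chi^0)=\chi^0(\delta_j\ell)\chi^0$ for $\ell\in B_c$ cannot be read literally: $\chi^0\ell\chi^0$ is not smooth, hence is not in the algebraic domain of $\delta_j$ but only in $C_{\Gamma,c}(\mathcal H)$, so one must first prove it lies in $\mathrm{Dom}(\overline{\delta}^{\max}_j)$. The paper handles this in a separate Sublemma by approximating $\chi^0$ by smooth cutoffs $\chi_\epsilon$, observing that $\chi_\epsilon\ell_0\chi_\epsilon\in\mathrm{Dom}(\delta^{\max}_j)$ with $\delta^{\max}_j(\chi_\epsilon\ell_0\chi_\epsilon)=\chi_\epsilon[\phi_\partial,\ell_0]\chi_\epsilon$ (here your observation that multiplication operators commute is used), and then passing to the limit $\epsilon\to 0$ in $C^*$-norm via Sublemma~\ref{sublemma:compression}. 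Only after this is established for $\ell_0\in\Psi^{-1}_c(G_{\cyl}/\RR_\Delta)$ does the paper run the closure argument you describe to reach general $\ell\in\mathbf{\mathfrak{B}_m}$. Your final paragraph correctly anticipates this; you should just be aware that the passage through $\chi_\epsilon$ is not optional bookkeeping but the actual content of that step.
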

 
 We give a proof of this Proposition in Subsection \ref{subsection:modular-shatten}

 \subsection{Isomorphisms of K-groups}\label{subsect:iso-of-k}

Let $0\to J\to A\xrightarrow{\pi} B\to 0$ a short exact sequence of Banach algebras. Recall that $K_0 (J):=
K_0 (J^+,J)\simeq \Ker (K_0 (J^+)\to \ZZ)$
and that $K(A^+,B^+)= K(A,B)$.
For the definition of relative K-groups we refer, for example, to \cite{bla}, \cite{hr-book}, \cite{lmpflaum}. 
Recall that a relative $K_0$-element
for $ A\xrightarrow{\pi} B$
is represented by a  triple $(P,Q,p_t)$ with $P$ and $Q$ idempotents in $M_{k\times k} (A)$ 
and $p_t\in M_{k\times k} (B)$ a 
path of idempotents connecting $\pi (P)$ to $\pi (Q)$. 
The excision  isomorphism
\begin{equation}\label{excision-general}
\alpha_{{\rm ex}}: K_0 (J)\longrightarrow K_0 (A,B)
\end{equation}
is given by
\begin{equation*}
\alpha_{{\rm ex}}([(P,Q)])=[(P,Q,{\bf c})]
\end{equation*}
with ${\bf c}$ denoting the constant path (this is not necessarily the 0-path, given that
we are taking $J^+$). 
In particular, from the short exact sequence given by the Wiener-Hopf extension of $B^*\equiv B^* (\cyl(\pa X),\F_{{\rm cyl}})$,
see \eqref{eq:short-cstar}, we obtain the isomorphism:
\begin{equation}\label{is-cstar-ex}
\alpha_{{\rm ex}}: K_0 (C^* (X,\F)) \xrightarrow{\simeq} K_0 (A^*, B^*) 
\end{equation}
whereas from the short exact sequence of  subalgebras
\eqref{frak-sequence}
we obtain the "smooth" excision isomorphism
\begin{equation}\label{is-frak-ex}
\alpha_{{\rm ex}}^s : K_0 (\mathbf{ \mathfrak{J}_{m} }) \xrightarrow{\simeq} K_0 (\mathbf{ \mathfrak{A}_m},  \mathbf{\mathfrak{B}_{m}}) \,.
 \end{equation}
  On the other hand, since $\mathbf{ \mathfrak{J}_{m} }$ is a smooth subalgebra of $C^* (X,\F)$
 (i.e. it is dense and holomorphically closed),
  we also have that the inclusion $\iota: \mathbf{ \mathfrak{J}_{m} }
 \hookrightarrow C^* (X,\F)$
induces an isomorphism 
$\iota_* : K_0 (\mathbf{ \mathfrak{J}_{m} }) \xrightarrow{\simeq}  K_0 (C^* (X,\F))$.
Consider the homomorphism $\iota_* : K_0 (\mathbf{ \mathfrak{A}_m},  \mathbf{\mathfrak{B}_{m}})\to K_0 (A^*, B^*)$ induced by the inclusion.
We have a commutative diagram
\begin{equation}\label{diagram}
\xymatrix{
K_0 (\mathbf{ \mathfrak{J}_{m} }) \ar[r]^{\alpha^s_{{\rm ex}}} \ar[d]^{\iota_*} & K_0 (\mathbf{ \mathfrak{A}_m},  \mathbf{\mathfrak{B}_{m}}) \ar[d]^{\iota_*}\\
K_0 (C^* (X,\F)) \ar[r]^{\alpha_{{\rm ex}}} & K_0 (A^*, B^*)}
\end{equation}
and since three of the four arrows are isomorphisms we conclude that 
$\iota_* : K_0 (\mathbf{ \mathfrak{A}_m},  \mathbf{\mathfrak{B}_{m}})\to K_0 (A^*, B^*)$
is also an isomorphism. In particular,
\begin{equation}\label{is-frak}
K_0 (A^*, B^*) \simeq K_0 (C^* (X,\F))\simeq K_0 (\mathbf{ \mathfrak{J}_{m} })\simeq
K_0 (\mathbf{ \mathfrak{A}_m},  \mathbf{\mathfrak{B}_{m}})\,.
\end{equation}

\subsection{Notation}\label{notation-smooth} 
From now on we shall fix the dimension of the leaves, equal to $2n$, and 
set
\begin{equation}\label{algebras-no-subscripts}
\mathbf{\mathfrak{J}}:=\mathbf{\mathfrak{J}_{m}}\,,\quad \mathbf{\mathfrak{A}}:=\mathbf{\mathfrak{A}_{m}}\,\quad
\text{and}\quad \mathbf{\mathfrak{B}}:=\mathbf{\mathfrak{B}_{m}}
\end{equation}
with $m=2n+1$.
The short exact sequence in \eqref{frak-sequence}, for such $m$, is denoted simply as 
\begin{equation}\label{sequence-no-subscripts}
0\to \mathbf{\mathfrak{J}}\to \mathbf{\mathfrak{A}}\to \mathbf{\mathfrak{B}}\to 0
\end{equation}
{\it This is the intermediate subsequence, between $0\to J_c\to A_c \to B_c\to 0$
and $0\to C^* (X,\F)\to A^*(X,\F) \to B^* (\cyl(\pa X),\F_{\cyl}) \to 0$, that we have mentioned
in the  introductory remarks in Subsection \ref{subsection:intr-remarks}}.

\section{{\bf 
$C^*$-index classes. Excision}}\label{sec:index}

\subsection{Geometric set-up and assumptions}\label{subsect:dirac} 
Let $(X_0,\F_0)$,
$X_0=\tM\times_\Gamma T$,  be a
foliated bundle with boundary.
Let 
$(X,\F)$ be the associated foliated bundle with cylindrical ends. We
assume $\tM$ to be of even dimension and we consider the $\Gamma$-equivariant
family of Dirac operators $D\equiv (D_\theta)_{\theta\in T}$ introduced  in \ref{subsect:dirac0}.
We denote as before by $D^\pa\equiv (D^\pa_\theta)_{\theta\in T}$ the boundary family defined by $D^+$
and by $D^{{\rm cyl}}$ the operator induced by  $D^\pa\equiv (D^\pa_\theta)_{\theta\in T}$ 
on the cylindrical foliated manifold $(\cyl(\pa X),\F_{{\rm cyl}})$; $D^{{\rm cyl}}$ is  $\RR\times\Gamma$-equivariant.
From now on we shall make the following fundamental 

\medskip
\noindent
{\bf Assumption.} There exists $\tilde{\epsilon}>0$ such that $\forall \theta \in T$
\begin{equation}\label{assumption}
L^2-{\rm spec} (D^\pa_\theta) \cap (-\tilde{\epsilon},\tilde{\epsilon})= \emptyset
\end{equation}

\medskip
\noindent
For specific examples where this assumption is satisfied, see \cite{LPETALE}.

 \subsection{Index classes in the closed case}\label{pseudo-index-closed}
 Let $(Y,\F)$, $Y=\tN\times_\Gamma T$, be a closed foliated bundle.
We need to recall how in the closed case we can define an index class $\Ind (D)\in K_* (C^*(Y,\F))$.
There are in fact several equivalent descriptions of $\Ind (D)$, each one with its own 
interesting features.  

\subsubsection{The Connes-Skandalis projection.}\label{subsubsec:cs}

First recall that given vector bundles 
$E$ and $F$ on $Y$ with lifts $\widehat{E}$, $\widehat{F}$ on $\tN\times T$,
we can define the space of  $\Gamma$-compactly supported pseudodifferential operators of order $m$,
denoted here 
$\Psi^m_c(G;E,F)$. An element   $P\in \Psi^m_c(G;E,F)$
should be thought of as a $\Gamma$-equivariant family of psedodifferential operators, $(P(\theta))_{\theta\in T} $
with Schwartz kernel $K_P$, a distribution on $G$, of compact support. See \cite{MN}
and \cite{ben-pia} for more details.

The space 
$\Psi^\infty_c (G;E,E):= \bigcup_{m\in \ZZ} \Psi^m_c(G;E,E)$ 
is a filtered algebra.
Moreover, assuming $E$ and $F$ to be hermitian and assigning to $P$ its
formal adjoint $P^*= (P_\theta^*)_{\theta\in T}$ gives $\Psi^\infty_c (G;E,E)$ the structure of an involutive algebra; the formal adjoint of an element  $P \in \Psi^m_c (G;E,F) $ 
 is in general  an alement in $\Psi^m_c (G;F,E)$. 

Consider now a $\ZZ_2$-graded odd Dirac operator $D=(D_\theta)_{\theta\in T}$ 
$D_\theta=\left(\begin{array}{cc} 0 & D^-_\theta\\ D^+_\theta& 0 \end{array} \right)$, $(D^-_\theta)^*=D^+_\theta$.
acting on a
$\ZZ_2$-graded vector bundle $E=E^+ \oplus E^-$. 
Using the pseudodifferential calculus, one can prove that  $D^+$ admits   parametrix $Q
\in \Psi^{-1}_c (G;\widehat{E}^-,\widehat{E}^+)$:
\begin{equation}\label{simbpar}
Q D^+=\Id-S_{+}\,,\quad\quad D^+ Q=\Id - S_{-}
\end{equation}
with remainders $S_{-}$ and $S_{+}$ that are in 
$C^\infty_c (G, (s^*E^\pm)^*\otimes r^* E^\pm)\equiv C^\infty_c (Y,\F;E^\pm)$.

 All of this is carefully explained in \cite{MN}; even more detailes are given in \cite{ben-pia}. 

Consider the projector
\begin{equation}\label{e_Q}
P_Q:= \left(\begin{array}{cc} S_{+}^2 & S_{+}  (I+S_{+}) Q\\ S_{-} D^+ &
I-S_{-}^2 \end{array} \right).
\end{equation}
See, for example,  \cite{Co} (II.9.$\alpha$) and  \cite{CM} (p. 353) for motivation. 
Set 
$
e_0:=\left( \begin{array}{cc} 1 & 0 \\ 0&0 
\end{array} \right)
$
and 
\begin{equation}\label{e1}
e_1:=\left( \begin{array}{cc} 0 & 0 \\ 0&1
\end{array} \right)
\end{equation}
Also denote by  $C^\infty_c (Y,\F; E) ^{++}$  the algebra generated by 
$e_0$, $e_1$ and $C^\infty_c (Y,\F; E) $. It is isomorphic to the direct sum 
$C^\infty_c (Y,\F; E) \oplus \CC e_0 \oplus \CC e_1$ as a linear space.
Note that there exists a splitting short exact sequence:
$0\to C^\infty_c (Y,\F; E)\to C^\infty_c (Y,\F; E)^{++}\xrightarrow{\pi} \CC e_0 \oplus \CC e_1 \to 0$, 
which naturally contains a subsequence
$0\to C^\infty_c (Y,\F; E)\to C^\infty_c (Y,\F; E)^+ \to \CC  \to 0$, 
where $C^\infty_c (Y,\F; E)^+$ is the algebra with unit $1=e_0\oplus e_1$ adjoined. 
Hence, comparing  the induced exact sequences of $K_0$-groups, 
one has the following isomorphism:
$$
K_0(C^\infty_c (Y,\F; E)) :=  \ker [K(C^\infty_c (Y,\F; E)^+) \to K_0(\CC )]
\cong  \ker [K(C^\infty_c (Y,\F; E)^{++}) \to K_0(\CC e_0 \oplus \CC e_1)]
$$
Now it is easy to verify that  $P_Q$ and $e_1$ are idempotents in 
$C^\infty_c (Y,\F; E)^{++}$.
In fact they belong to $C^\infty_c (Y,\F; E) \oplus \CC e_1\subset C^\infty_c (X,\F; E)^{++}$
(but they are not in $C^\infty_c (Y,\F; E)^+$); moreover it is clear that $\pi (P_Q)=e_1=\pi (e_1)$.  
Thus we obtain a class  $[P_Q]-[e_1] \in K_0 (C^\infty_c (Y,\F;E)).$ 
Notice that this class is well defined in 
$K_0 (C^\infty_c (Y,\F;E))$, independent
of the choice of the $\Gamma$-compactly supported parametrix. 
Recall now that there is an inclusion 
$C^\infty_c (Y,\F;E)\hookrightarrow C^* (Y,\F;E)\equiv \KK (\E)$; the Connes-Skandalis index class is the image 
of $[P_Q]-[e_1]$ under the induced homomorphism   $K_0 (C^\infty_c (Y,\F;E))\to K_0 (C^* (Y,\F;E))$.
Unless strictly necessary we don't introduce a new notation for the Connes-Skandalis index class
in $K_0 (C^* (Y,\F;E))$.

\subsubsection{The graph projection.}
 If we give up the requirement that the elements in our projector are  of $\Gamma$-compact support then we have
more representative for the index class. One which is particularly useful in computations of
explicit index formulae is the index class defined by the family $e_D =(e_{D,\theta})_{\theta\in T}$
of projections onto the graph 
(of the closure) of $D^+_\theta$. (With common abuse of notation we 
do not introduce a new symbol for closures.) 
The projector $e_D$ is explicitly given by
\begin{equation}\label{graph-projector}
e_D=\left(\begin{array}{cc} (I+D^- D^+)^{-1} & (I+D^- D^+)^{-1} D^-\\ D^+   (I+D^- D^+)^{-1} &
D^+  (I+D^- D^+)^{-1} D^- \end{array} \right)\,.
\end{equation}
Let $\mathfrak{s}$ be the grading operator on $E$. Define
\begin{equation}\label{e-hat}
\widehat{e}_D:=e_D - \left( \begin{array}{cc} 0 & 0\\0&1
\end{array} \right)
\end{equation}
It is useful to point out, see \cite{MN} page 514, that 
\begin{equation}\label{explicit-e-hat}
\widehat{e}_D = (\mathfrak{s}+ D)^{-1}
\end{equation}
Notice that $(\mathfrak{s}+ D)$ is invertible, indeed
\begin{equation}\label{graph-laplacian}
(\mathfrak{s}+ D)^{-1}=(\mathfrak{s}+ D) (1+D^2)^{-1}\,.
\end{equation}
One proves by finite propagation speed techniques that $\widehat{e}_D$ is
in $C^* (Y,\F;E)$, see \cite{MN} (Section 7) for details; thus the following class is well defined
\begin{equation}\label{graph-projector}
[e_D] -[e_1]\quad\text{with}\quad e_1=\left( \begin{array}{cc} 0 & 0\\0&1
\end{array} \right)  \quad \in \quad  K_0 (C^* (Y,\F;E))
\end{equation}

\begin{proposition}\label{prop:cs=graph}
The Connes-Skandalis index class equals the class defined by the graph projection:
\begin{equation}\label{cs=graph}
[P_Q] -[e_1]
= [e_D] - [e_1]
\quad \text{ in } \quad  K_0 (C^* (Y,\F;E))
\end{equation}
\end{proposition}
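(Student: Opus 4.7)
My plan is to identify both K-theory classes with the analytic index of $D^+$ via an explicit chain of K-theoretic equivalences in $M_2(C^*(Y,\F;E)^{++})$. First I would verify that both classes land in the same group: the entries of $P_Q - e_1$ lie in $C^\infty_c(Y,\F;E) \subset C^*(Y,\F;E)$ by the construction of the parametrix, and $\widehat{e}_D = e_D - e_1 = (\mathfrak{s} + D)^{-1}$ lies in $M_2(C^*(Y,\F;E))$ by the finite propagation speed argument recalled in \cite{MN}, Section 7. Hence both $P_Q$ and $e_D$ agree with $e_1$ modulo $M_2(C^*(Y,\F;E))$, so the differences $[P_Q] - [e_1]$ and $[e_D] - [e_1]$ are well-defined elements of $K_0(C^*(Y,\F;E))$ under the identification recalled above.

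For the equality itself, I would introduce a one-parameter family of ``parametrices'' interpolating between the pseudodifferential $Q$ and a resolvent-type parametrix extracted from $e_D$. Concretely, take $Q_t = (1-t)Q + t\,(1+D^-D^+)^{-1}D^-$ for $t \in [0,1]$. The second term is a genuine parametrix for $D^+$: a direct computation gives $(I+D^-D^+)^{-1}D^-\cdot D^+ = I - (I+D^-D^+)^{-1}$, whose remainder $(I+D^-D^+)^{-1}$ lies in $C^*(Y,\F;E)$ again by finite propagation speed. Since the difference of two parametrices is in the ideal $C^*(Y,\F;E)$, each $Q_t$ is a parametrix for $D^+$ with remainders in $C^*(Y,\F;E)$, and the Connes--Skandalis formula \eqref{e_Q} applied to $Q_t$ produces a norm-continuous family $P_{Q_t}$ of quasi-idempotents in $M_2(C^*(Y,\F;E)^{++})$, each agreeing with $e_1$ modulo $M_2(C^*(Y,\F;E))$. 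If $\|P_{Q_t}^2 - P_{Q_t}\|$ is small enough (which can be arranged by subdividing $[0,1]$ and using that each $P_{Q_t}$ is itself an idempotent), applying the functional calculus with a smooth function equal to $0$ near $0$ and $1$ near $1$ yields an honest path of idempotents, all representing the same $K_0$-class.

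It remains to identify $P_{Q_1}$ with $e_D$. These two projections are not equal on the nose (e.g.\ the $(1,1)$ entries are $(1+D^-D^+)^{-2}$ versus $(1+D^-D^+)^{-1}$), but I would exhibit a further explicit homotopy of idempotents in $M_2(C^*(Y,\F;E)^{++})$ joining them, e.g.\ via a rotation-type path using the polar decomposition of $D$. Alternatively, and more conceptually, one may invoke the Kasparov picture: both $P_Q$ and $e_D$ arise from the unbounded Kasparov module $(L^2(Y,\F;E), D)$ representing the analytic class $[D] \in KK_0(\CC, C^*(Y,\F;E))$, and the assignment of a $K_0$-element to such a module is independent of the choice of realization (pseudodifferential parametrix versus bounded transform / graph projection).

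The main obstacle will be the first step of the homotopy argument: ensuring that the family $P_{Q_t}$ can be deformed to a family of \emph{strict} idempotents, which requires controlling the spectrum of each $P_{Q_t}$ away from $1/2$ uniformly in $t$. This in turn rests on verifying that the norms of the remainders $S_\pm^{(t)}$ and their compositions with $Q_t$ and $D^+$ stay uniformly bounded along the path, which follows from functional-calculus estimates for $D$. Once this technical point is in hand, together with the identification of $P_{Q_1}$ and $e_D$ in $K_0$, the equality \eqref{cs=graph} follows.
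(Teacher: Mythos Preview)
Your strategy of varying the parametrix is close in spirit to one of the paper's arguments, but it has a genuine gap and one unnecessary complication.

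First the complication: the Connes--Skandalis recipe \eqref{e_Q} produces an \emph{honest} idempotent for any $Q$ satisfying $QD^+=I-S_+$ and $D^+Q=I-S_-$ with $S_\pm$ in the ideal; this is pure algebra (just verify $P_Q^2=P_Q$ from the relations). Since any convex combination of two parametrices is again a parametrix, your $P_{Q_t}$ are already a continuous path of idempotents in $M_2\bigl(C^*(Y,\F;E)\oplus\CC e_1\bigr)$. There is nothing ``quasi'' about them and no functional calculus is needed.

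The real issue is the endpoint. As you correctly observe, $P_{Q_1}$ with $Q_1=(I+D^-D^+)^{-1}D^-$ is \emph{not} $e_D$: its $(1,1)$ entry is $(I+D^-D^+)^{-2}$, not $(I+D^-D^+)^{-1}$. Your two suggested remedies are both unsatisfying: ``a rotation-type path using the polar decomposition'' is not a proof until you write one down, and invoking the Kasparov picture is precisely the kind of black-box appeal this proposition is meant to unpack at the level of explicit projections.

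The paper handles this differently. Its primary proof (citing \cite{MN}) is not a homotopy at all but a Murray--von Neumann equivalence: one exhibits elements $u,v\in C^*(Y,\F;E)^{++}$ with $uv=P_Q$ and $vu=e_D$, which immediately gives $[P_Q]=[e_D]$. The paper also records an alternative path argument (see Subsection~\ref{subsection:proofexcision}), but it does \emph{not} run through the Connes--Skandalis projector $P_{Q_t}$. Instead it uses the auxiliary idempotent
\[
e(D^+,Q)=\begin{pmatrix}I\\D^+\end{pmatrix}\begin{pmatrix}S_+&Q\end{pmatrix}
=\begin{pmatrix}S_+&Q\\D^+S_+&D^+Q\end{pmatrix},
\]
which has two virtues your $P_{Q_t}$ lacks: it is \emph{conjugate} to $P_Q$ by the unipotent $\left(\begin{smallmatrix}I&Q\\0&I\end{smallmatrix}\right)$, and for the specific choice $Q=G=(I+D^-D^+)^{-1}D^-$ one has $e(D^+,G)=e_D$ \emph{on the nose}. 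The conjugation path followed by $e(D^+,G-\tau G')$ then connects $P_Q$ to $e_D$ without any leftover endpoint problem. If you want to salvage your argument, replacing $P_{Q_t}$ by $e(D^+,Q_t)$ is the fix.
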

For a proof see \cite{MN}, where two elements 
$u, v\in C^* (Y,\F;E)^{++}$ 
 are explicitly
defined such that $uv=P_Q$ and $vu=e_D$. 
Here $C^*(X,\F; E) ^{++}$ denotes as before  the $C^*$-algebra generated by 
$e_0$, $e_1$ and $C^*(X,\F; E) $. 

\subsubsection{The  Wassermann projection.}\label{subsubsection:wass}
This is the self-adjoint projection:
\begin{equation}\label{wassermann}
W_D := \left( \begin{array}{cc} e^{-D^- D^+} & e^{-\frac{1}{2}D^- D^+}\left( \frac{I- e^{-D^- D^+}}{D^- D^+} \right)^{ \frac{1}{2}} D^-
\\
e^{-\frac{1}{2}D^+ D^-}\left( \frac{I- e^{-D^+ D^-}}{D^+ D^-} \right)^{ \frac{1}{2}} D^+& I- e^{-D^+ D^-}
\end{array} \right)
\end{equation}
That $W_D$ is an element in $C^* (Y,\F;E)\oplus \CC e_1$ follows, as usual,  by finite propagation speed techniques.
Thus the class 
\begin{equation}\label{index-wass}
[W_D] - [e_1]
\quad \text{ in } \quad  K_0 (C^* (Y,\F;E))
\end{equation}
is well defined. The following Proposition is proved in \cite{CM}:
\begin{proposition}\label{prop:cs=wass}
The Connes-Skandalis index class equals the class defined by the Wassermann projection:
\begin{equation}\label{cs=wass}
[P_Q] - [e_1]=
[W_D] -[e_1]
\quad \text{ in } \quad  K_0 (C^* (Y,\F;E))
\end{equation}
\end{proposition}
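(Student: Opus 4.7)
The plan is to show directly that $[W_D] = [e_D]$ in $K_0(C^*(Y,\F;E))$ by exhibiting a homotopy of projections within $C^*(Y,\F;E)\oplus \CC e_1$, and then invoke Proposition \ref{prop:cs=graph} to compare with the Connes--Skandalis class. The key observation is that both $e_D$ and $W_D$ fit into a one-parameter family of projections of a common shape. Setting $x=D^-D^+$ and $y=D^+D^-$, for any continuous $\phi:[0,\infty)\to[0,1]$ with $\phi(0)=1$ and sufficient decay at infinity, define $\psi(x):=\sqrt{\phi(x)(1-\phi(x))/x}$ (extended continuously at $x=0$) and
$$
P_\phi := \begin{pmatrix} \phi(x) & \psi(x)\,D^- \\ D^+\,\psi(x) & 1-\phi(y) \end{pmatrix}.
$$
A direct calculation using $\phi+(1-\phi)=1$, the algebraic identity $\phi^2+x\psi^2=\phi$, and the intertwining relation $D^+f(x)=f(y)D^+$, shows that $P_\phi=P_\phi^*=P_\phi^2$. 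Moreover, $e_D=P_\phi$ for $\phi(x)=(1+x)^{-1}$, while $W_D=P_\phi$ for $\phi(x)=e^{-x}$.

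Next I would verify that $P_\phi - e_1\in C^*(Y,\F;E)$. This rests on the same finite-propagation-speed argument already used for the graph projection in \cite{MN}, Section~7: for $\phi$ continuous on $[0,\infty)$ with $\phi(\infty)=0$, both $\phi(D^-D^+)$ and $\psi(x)D^-$ are approximable in $C^*$-norm by operators whose Schwartz kernels have $\Gamma$-compact support on $G$, obtained by writing $\phi$ and $\psi$ as Fourier transforms (modulo constants) of Schwartz functions and applying the wave operator $e^{isD}$. The same argument applies uniformly in $t$ to a continuous family of such functions.

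Then I would construct an explicit homotopy, for instance the convex combination
$$
\phi_t(x) := (1-t)(1+x)^{-1} + t\, e^{-x},\qquad t\in[0,1],
$$
which, for each $t$, is strictly positive, bounded above by $1$, takes value $1$ at $x=0$, and decays at infinity. Thus $\psi_t(x)=\sqrt{\phi_t(x)(1-\phi_t(x))/x}$ is continuous on $[0,\infty)$ and decaying, so the family $\{P_{\phi_t}\}_{t\in[0,1]}$ is a well-defined family of self-adjoint projections in $C^*(Y,\F;E)\oplus \CC e_1$. Norm-continuity of $t\mapsto P_{\phi_t}-e_1$ in $C^*(Y,\F;E)$ follows from the continuity of functional calculus $\phi\mapsto \phi(D^\pm D^\mp)$ on functions vanishing at infinity (with uniform convergence on compact subsets of $[0,\infty)$ sufficient in view of the spectral picture). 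Consequently
$$
[e_D]-[e_1] \;=\; [P_{\phi_0}]-[e_1] \;=\; [P_{\phi_1}]-[e_1] \;=\; [W_D]-[e_1] \quad\text{in } K_0(C^*(Y,\F;E)),
$$
and combining with Proposition \ref{prop:cs=graph} gives the desired equality $[P_Q]-[e_1]=[W_D]-[e_1]$.

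The main obstacle is the $C^*$-norm continuity of the path $t\mapsto P_{\phi_t}$ together with the verification that each $P_{\phi_t}-e_1$ actually lies in $C^*(Y,\F;E)$; both reduce to standard foliation-$C^*$-algebra manipulations (finite-propagation approximation along the leaves, uniform in $\theta\in T$), but they require some care since the convergence of the functional calculus must be controlled uniformly in the family parameter. Once these technicalities are handled the homotopy argument closes immediately.
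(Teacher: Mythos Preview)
Your proposal is correct. The paper itself does not give a self-contained proof of this proposition but simply cites \cite{CM}; however, immediately after summarizing the three equivalent descriptions of the index class, the paper remarks that one can construct an explicit path of idempotents $\{P_s(D)\}_{s\in[0,1]}$ in $C^*(Y,\F;E)\oplus\CC e_1$ connecting $e_D$ and $W_D$, and your argument is precisely such a construction made concrete via the common shape $P_\phi$ and the interpolation $\phi_t=(1-t)(1+x)^{-1}+te^{-x}$, followed by an appeal to Proposition~\ref{prop:cs=graph}.
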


\smallskip
\noindent
{\bf Conclusion:}  we  have 
\begin{equation}\label{all-equal}
[P_Q] -[e_1]
=[e_D] -[e_1]
=[W_D]-[e_1]
\quad \text{ in } \quad  K_0 (C^* (Y,\F;E)),\quad \text{with}\quad
 e_1\equiv \left( \begin{array}{cc} 0 & 0\\0&1
\end{array} \right)   
\end{equation}
and we define the index class associated to $D$, denoted $\Ind (D)$, as this common value.

We remark that 
there exists a path of idempotents $\{H(t)\}_{t\in [0,1]}$ in  $C^* (Y,\F;E)\oplus \CC e_1 $ with 
$H(1)=P_Q$ and $H(0)=e_D$. 
An explicit formula  is given in Subsection \ref{subsection:proofexcision}; this gives an
alternative proof to Proposition \ref{prop:cs=graph}. Using similar techniques we can
construct a path of idempotents 
 $\{P_s (D)\}_{s\in [0,1]}$ in  $C^* (Y,\F;E)\oplus \CC e_1 $
connecting $e_D$ and $W_D$.
This information will be useful in the treatment of the relative index class.

\subsection{The 
index class $\Ind (D)$}\label{subsec:absolute}

We now go back to our foliated bundle with boundary $(X_0,\F_0)$ and associated
foliated bundle  with cylindrical ends $(X,\F)$.
It is proved in \cite{LPETALE}
that given $D^+=(D^+_\theta)_{\theta\in T}$, a $\Gamma$-equivariant family
 with invertible boundary family $(D^{\pa}_{\theta})_{\theta\in T}$, there exists
a parametrix $Q$ for $D^+$ with remainders $S_-$ and $S_+$ in $C^*(X,\F)$:
\begin{equation}\label{par}
Q  D^+=\Id-S_{+}\,,\quad\quad D^+ Q=\Id - S_{-}\,,\quad
S_\pm\in\KK (\E)\equiv C^*(X,\F).
\end{equation}
Thus, there is a well defined index class in $K_0 (C^* (X,\mathcal{F}))$, fixed by the  Connes-Skandalis
projection $P_Q$.
The construction explained in \cite{LPETALE} is an extension to the foliated case
of the parametrix construction of Melrose, using heavily $b$-calculus techniques; needless to say,
all the complications in the foliated context go into dealing with the non-compactness
of the leaves. 

\smallskip
{\it In Subsection \ref{subsec:absolute-elementary}
 we  give an elementary treatment of the parametrix construction
for Dirac operators on manifolds with cyclindrical ends, 
using nothing more than the functional calculus on complete
manifolds. In particular, we do not use any pseudodifferential calculus.}

\smallskip
We shall prove more precisely  that for a Dirac operator on an even
dimensional manifold with cylindrical ends and invertible boundary operator
the following holds:

\begin{theorem}\label{theo:parametrix-elementary}
 Let $G= (I+ D^- D^+)^{-1} D^-$, let $
G^\prime := - \chi ((D^+_{\cyl})^{-1}  (I+D^+_{\cyl} D^-_{\cyl})^{-1} ) \chi$, 
with $\chi$ a smooth approximation of the characteristic
function of $(-\infty,0]\times \pa X_0$.
Then the  operator $Q=G-G^\prime$ is an inverse of $D^+$ modulo $m$-Shatten class operators,
with $m>\dim X$.
\end{theorem}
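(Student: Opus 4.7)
The plan is to work directly with the functional calculus on the complete foliated manifold $(X,\F)$, exploiting the essential self-adjointness of the $\Gamma$-equivariant Dirac family $D$ on each leaf. The functional calculus produces the bounded operator $G=(I+D^-D^+)^{-1}D^-$ together with the identities
\begin{align*}
D^+G &= (I+D^+D^-)^{-1}D^+D^- = I - (I+D^+D^-)^{-1},\\
GD^+ &= I - (I+D^-D^+)^{-1}.
\end{align*}
On a closed manifold, the remainders $(I+D^\pm D^\mp)^{-1}$ are smoothing and lie in every Shatten ideal; but on the cylindrical end their Schwartz kernels are close to translation-invariant along the $\RR$-direction, so they lie in no Shatten ideal. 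The role of the cylindrical correction $G'$, supported on the cylinder via the cutoff $\chi$, is precisely to cancel this asymptotically translation-invariant piece.

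For the algebraic step, I would exploit the invertibility of $D^+_{\cyl}$ (which follows from the invertibility assumption on $D^{\pa}$) and the operator identity
\[
(D^+_{\cyl})^{-1}(I+D^+_{\cyl}D^-_{\cyl})^{-1} = (D^+_{\cyl})^{-1} - G_{\cyl}, \qquad G_{\cyl}:=(I+D^-_{\cyl}D^+_{\cyl})^{-1}D^-_{\cyl},
\]
to rewrite $G'=-\chi(D^+_{\cyl})^{-1}\chi + \chi G_{\cyl}\chi$, whence $Q = G + \chi(D^+_{\cyl})^{-1}\chi - \chi G_{\cyl}\chi$. Since $\chi$ is supported where $D^+$ acts as $D^+_{\cyl}$, the identities $D^+_{\cyl}(D^+_{\cyl})^{-1}=I$ and $D^+_{\cyl}G_{\cyl}=I-(I+D^+_{\cyl}D^-_{\cyl})^{-1}$ combine, after commuting $D^+$ past $\chi$, to give
\[
D^+Q - I = -(I+D^+D^-)^{-1} + \chi(I+D^+_{\cyl}D^-_{\cyl})^{-1}\chi + [D^+_{\cyl},\chi]\bigl[(D^+_{\cyl})^{-1}-G_{\cyl}\bigr]\chi,
\]
with a parallel formula for $QD^+-I$. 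The two $\chi^2$ terms cancel the "$I$'s" coming from $D^+\chi(D^+_{\cyl})^{-1}\chi$ and $D^+\chi G_{\cyl}\chi$, leaving only structured remainders.

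The main obstacle, and the heart of the proof, is showing these remainders lie in $\I_m(X,\F)$ for $m>\dim X$, uniformly in $\theta\in T$. The commutator term is comparatively easy: $[D^+_{\cyl},\chi]$ is Clifford multiplication by $d\chi$, a smooth section of compact support, so the whole term has the form (compactly supported multiplication)$\,\circ\,$(first-order smoothing by the cylindrical Green's operators)$\,\circ\,$(bounded), and compactly supported smoothing operators of order at least one on each leaf lie in $\I_m$ for $m>\dim X$ by the usual Sobolev-embedding estimate for $(I+D^2)^{-s/2}$, uniformly in $\theta$ by uniform geometry of the fibers $\tV\times\{\theta\}$. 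The bulk term $(I+D^+D^-)^{-1}-\chi(I+D^+_{\cyl}D^-_{\cyl})^{-1}\chi$ is the delicate one; it is controlled by finite propagation speed, writing each resolvent as a Laplace transform of the heat semigroup (or a Fourier transform against $\cos(tD)$) to obtain exponential off-diagonal decay of the kernel and, deep in the cylinder, equality with the cylindrical kernel up to exponentially small errors. Hence the difference is effectively supported in a compact neighborhood of $\supp d\chi \cup X_0$ and the same Sobolev-embedding argument yields membership in $\I_m$ for $m>\dim X$, uniformly in $\theta$ by $\Gamma$-equivariance. Establishing this exponential matching between the global and cylindrical resolvents, with uniformity in the transverse parameter, is the sole substantive analytical point; once it is in hand, the Shatten-class property follows mechanically.
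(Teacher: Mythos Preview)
Your algebraic reduction is correct and leads to exactly the same two remainder terms the paper isolates: a commutator term carrying the compactly supported factor $\cl(d\chi)$, and the bulk difference $(I+D^+D^-)^{-1}-\chi(I+D^+_{\cyl}D^-_{\cyl})^{-1}\chi$. Where you diverge from the paper is in the treatment of the bulk term. You propose to control it by finite propagation speed and heat-kernel off-diagonal decay, which would work but is precisely the kind of analysis the paper sets out to avoid. The paper instead proceeds by pure algebra: using the grading identity $(\mathfrak{s}+D)^2=I+D^2$, one writes
\[
(I+D^2)^{-1}-\chi(I+D_{\cyl}^2)^{-1}\chi
= A^{-1}\bigl(A^{-1}-\chi B^{-1}\chi\bigr)
+\bigl(A^{-1}-\chi B^{-1}\chi\bigr)\chi B^{-1}\chi
+\chi B^{-1}(\chi^2-1)B^{-1}\chi,
\]
with $A=\mathfrak{s}+D$, $B=\mathfrak{s}+D_{\cyl}$; the first-order difference $A^{-1}-\chi B^{-1}\chi$ is in turn expanded algebraically (Proposition~\ref{proposition:compression}) into terms each carrying a compactly supported factor $1-\chi^3$, $\chi(\chi-1)$, or $\cl(d\chi)$. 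Thus everything reduces to the single fact that $\phi(\mathfrak{s}+D)^{-1}$ and $(\mathfrak{s}+D)^{-1}\phi$ are $m$-Shatten for $\phi$ compactly supported and $m>\dim X$ (Remark~\ref{remark:from-compact-to-shatten}), which is just Rellich. Your route is the more standard one and would survive in the absence of a grading; the paper's route is the advertised ``elementary'' one, needing nothing beyond resolvent algebra and Rellich, with no decay estimates at all.
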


Similar (elementary) arguments also establish the following basic result:

\begin{theorem}\label{theo:shatten-absolute}
Let $D\equiv (D_\theta)_{\theta\in T}$ be a $\Gamma$-equivariant family  of odd Dirac operators
on a foliated bundle with cylindrical ends $(X,\F)\equiv (\tilde{V}\times_\Gamma T, \F)$.
Assume \eqref{assumption}.
If $\dim \tilde{V}\in 2\NN$ and if $m>\dim \tilde{V}$ then there exists  $Q\in \L (\E)$,
$S_\pm\in \mathcal{I}_m (X,\F)$
such that
\begin{equation}\label{par-bis}
I-Q\,D^+=S_-\,,\quad\quad I-D^+\,Q=S_+\,.
\end{equation}
\end{theorem}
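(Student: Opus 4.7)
The plan is to reduce Theorem \ref{theo:shatten-absolute} to Theorem \ref{theo:parametrix-elementary} by applying the latter leafwise and then checking that the resulting family satisfies the uniform-in-$\theta$ estimates required for membership in $\I_m(X,\F)$. Concretely, for each $\theta \in T$ consider the complete spin manifold $\tilde V \times \{\theta\}$, on which the Dirac operator $D_\theta$ acts; set $G_\theta := (I + D^-_\theta D^+_\theta)^{-1} D^-_\theta$ and $G'_\theta := -\chi\,(D^+_{\cyl,\theta})^{-1}(I + D^+_{\cyl,\theta} D^-_{\cyl,\theta})^{-1}\chi$, where $D_{\cyl,\theta}$ is the restriction to the $\theta$-slice of the translation-invariant operator $D^{\cyl}$, which is invertible by the uniform gap assumption \eqref{assumption}. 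Define $Q := (G_\theta - G'_\theta)_{\theta \in T}$. Because the functional calculus is $\Gamma$-equivariant and everything is built out of resolvents of $D$ and $D^{\cyl}$, the family $Q$ is $\Gamma$-equivariant; uniform boundedness (hence $Q \in \mathcal{L}(\mathcal{E})$) follows from the $\theta$-independent spectral gap for $D^{\pa}_\theta$.

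Next I would compute the remainders $S_\pm := I - QD^+$ and $I - D^+Q$ algebraically, mirroring the identities in the proof of Theorem \ref{theo:parametrix-elementary}. Leafwise these will take the form of finite sums of operators of type $\chi\,[\,D^{\cyl}_\theta,\text{cutoff}\,]\,R_\theta$, where $R_\theta$ is a smoothing, rapidly decaying function of $D_\theta$ or $D^{\cyl}_\theta$; the commutator with the cutoff and the exponential decay of resolvents of the invertible operator $D^{\pa}_\theta$ localize the Schwartz kernel of $S_\pm(\theta)$ to a fixed compact collar neighbourhood of $\partial X_0$, uniformly in $\theta$.

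The main technical point — and the principal obstacle — is to show that
\[
\sup_{\theta \in T}\,\bigl\|\chi_\Gamma\,(S_\pm(\theta)S_\pm(\theta)^*)^{m/2}\,\chi_\Gamma\bigr\|_1 \;<\;\infty
\]
for any $m > \dim \tilde V$. Here one combines two inputs. First, by standard elliptic regularity and Sobolev embedding on the complete manifold $\tilde V \times \{\theta\}$, an operator whose kernel is obtained as a rapidly decaying function of $D_\theta$ of order less than $-\dim \tilde V / 2$ has square-integrable Schwartz kernel on the diagonal, and repeated composition yields trace-class behaviour after $\chi_\Gamma$-truncation. Second, the estimates are uniform in $\theta$ because both the spectral gap of $D^{\pa}_\theta$ and all relevant coefficients come from smoothly varying $\Gamma$-invariant data on a compact base $T$. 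The Shatten multiplicativity of Proposition \ref{prop:shatten-properties} then reduces the case $m > \dim \tilde V$ to the trace-class estimate via $\|S_\pm\|_m \leq \|S_\pm\|_1^{1/m}\|S_\pm\|^{(m-1)/m}$ applied to suitable factorisations.

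The hard part will be the uniform-in-$\theta$ control, since the operators $D_\theta$ have spectra depending on $\theta$; the only way out is to exploit the spectral gap of $D^{\pa}_\theta$ assumed in \eqref{assumption} and the product structure near the boundary to get exponential decay estimates for the resolvent kernels away from the front face with constants that are independent of $\theta$, which is where the finite-propagation/functional-calculus machinery carried over from Subsection \ref{subsec:absolute-elementary} is essential.
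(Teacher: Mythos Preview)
Your overall strategy---apply Theorem~\ref{theo:parametrix-elementary} leafwise with $Q_\theta = G_\theta - G'_\theta$ and then verify uniform-in-$\theta$ Shatten estimates---is exactly what the paper does. But two points in your execution are off.

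First, the remainders do \emph{not} all have the form ``$\chi[D^{\cyl}_\theta,\text{cutoff}]R_\theta$'', and their Schwartz kernels are \emph{not} localized to a compact collar. The explicit formula is
\[
S_+ \;=\; (I+D^-D^+)^{-1} - \chi(I+D^-_{\cyl}D^+_{\cyl})^{-1}\chi \;+\; \chi(D^+_{\cyl})^{-1}(I+D^+_{\cyl}D^-_{\cyl})^{-1}\cl(d\chi).
\]
Only the last term carries a compactly supported factor $d\chi$. The difference $(I+D^-D^+)^{-1} - \chi(I+D^-_{\cyl}D^+_{\cyl})^{-1}\chi$ has globally supported kernel; it lies in $\I_m$ not by localization or exponential decay, but through the algebraic manipulation of Proposition~\ref{proposition:compression} and Remark~\ref{remark:from-compact-to-shatten}, which rewrites $(\mathfrak{s}+D)^{-1}-\chi(\mathfrak{s}+D_{\cyl})^{-1}\chi$ as a finite sum of terms of the form $A^{-1}\phi$, $\phi B^{-1}$, or products thereof, with $\phi$ compactly supported (namely $1-\chi^3$, $\chi(\chi-1)$, $\cl(d\chi)$). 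No decay estimates on resolvent kernels are invoked.

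Second, your Shatten step at the end is backwards: you propose to deduce $\I_m$-membership from a trace-class estimate, but trace-class is the stronger condition when $m>1$. The paper's argument is more direct and avoids this: everything reduces to the single observation that for compactly supported $\phi$ on $X$ the families $\phi(\mathfrak{s}+D_\theta)^{-1}$ and $(\mathfrak{s}+D_\theta)^{-1}\phi$ lie in $\I_m(X,\F)$ once $m>\dim\tilde V$, since $\phi(1+D_\theta^2)^{-1}\phi$ is $m/2$-Shatten by a classical argument with constants depending only on the smooth $\Gamma$-invariant geometry, hence uniform over the compact base $T$. Every term of $S_\pm$ is then a bounded operator composed with one of these, and the ideal property of $\I_m$ finishes the proof.
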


 We give a proof of these two Theorems in Subsection \ref{subsec:absolute-elementary}.

\begin{definition}
The 
index class associated to a Dirac operator $D=(D_\theta)_{\theta\in T}$
 satisfying assumption \eqref{assumption}
is the Connes-Skandalis index class $[P_Q] - [e_1]$
associated to the parametrix $Q$ appearing
in \eqref{par-bis}. It is denoted by $\Ind (D)$ and it is an element
in $K_0 (\mathcal{I}_m (X,\F))\equiv K_0 (C^*(X,\F) )$ ($m$ large).
\end{definition}

\subsection{The relative index class $\Ind(D,D^\pa)$}\label{subsec:relative}
Let $(X,\F)$ be a foliated bundle with cylindrical ends. Let $(\cyl (\pa X),\F_{{\rm cyl}})$
be the associated foliated cylinder and recall the Wiener-Hopf extension 
\begin{equation*} 0\rightarrow C^*(X,\mathcal F)\rightarrow 
A^* (X;\mathcal F)\xrightarrow{\pi} B^* (\cyl (\pa X), \mathcal F_{{\rm cyl}})\rightarrow 0
 \end{equation*}
of the $C^*$-algebra of translation invariant operators  $B^* (\cyl (\pa X),\F_{{\rm cyl}})$.
We shall be concerned with the K-theory group $K_* (C^*(X,\mathcal F))$
and with the relative group $K_* (A^* (X;\mathcal F),B^* (\cyl (\pa X), \mathcal F_{{\rm cyl}}))$,
often denoted simply   $K_* (A^* ,B^*)$,.
Recall that a relative $K_0$-cycle
for $(A^*,B^*)$
is a triple $(P,Q,p_t)$ with $P$ and $Q$ idempotents in $M_{k\times k} (A^*)$ 
and $p_t\in M_{k\times k} (B^*)$
a 
path of idempotents connecting $\pi (P)$ to $\pi (Q)$. 

Denote by $D^{{\rm cyl}}$ the Dirac operator induced by $D^\pa$ on the cylinder.
Consider the triple 
\begin{equation}\label{graph-triple}
(e_D, \begin{pmatrix} 0&0\\0&1 \end{pmatrix},p_t) \,, \;\;t\in [1,+\infty]\,,\;\;\text{ with } p_t:= \begin{cases} e_{(tD^{\cyl})} 
\;\;\quad\text{if}
\;\;\;t\in [1,+\infty)\\
\begin{pmatrix} 0&0\\0&1 \end{pmatrix}\;\;\text{ if }
\;\;t=\infty
 \end{cases}
\end{equation}

Similarly,
we can consider the Wassermann projection and the triple $(W_D, \begin{pmatrix} 0&0\\0&1 \end{pmatrix}, q_t)$,
$t\in [0,+\infty]$,
with 
\begin{equation}\label{wassermann-triple}
(W_D, \begin{pmatrix} 0&0\\0&1 \end{pmatrix}, q_t)
\,, \;\;t\in [1,+\infty]\,,\;\;\text{ with }
q_t:= \begin{cases} W_{(tD^{\cyl})}
\;\;\quad\text{if}
\;\;\;t\in [1,+\infty)\\
\begin{pmatrix} 0&0\\0&1 \end{pmatrix}\;\;\;\,\text{ if }
\;\;t=\infty
 \end{cases}
\end{equation}

\begin{proposition}\label{prop:relative-indeces}
Let $(X,\F)$ be a foliated bundle with cyclindrical ends, as above. Consider the Dirac operator on $X$, 
$D=(D_\theta)_{\theta\in T}$.
Assume    \eqref{assumption}. Then
the graph projection $e_D$ and the Wassermann projection $W_D$  define
through  \eqref{graph-triple} and \eqref{wassermann-triple} two  relative 
classes in $K_0 (A^* ,B^*)$. 
These two classes are equal and fix 
the {\it relative index class} $$\Ind(D,D^\pa)\in K_0 (A^* ,B^*)\,.$$
\end{proposition}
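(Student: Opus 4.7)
The plan is to verify that each triple is a legitimate relative $K_0$-cycle for the Wiener-Hopf extension \eqref{eq:short-cstar}, then produce an explicit homotopy between them.

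First I would check well-posedness of the graph cycle \eqref{graph-triple}. By \eqref{explicit-e-hat} we have $\widehat{e}_D=(\mathfrak{s}+D)^{-1}$, so it suffices to show that $\widehat{e}_D\in A^{*}(X,\F)$ and $\widehat{e}_{tD^{\cyl}}\in B^{*}(\cyl(\pa X),\F_{\cyl})$ for each $t\in[1,+\infty)$. For the cylindrical operator, the fact that $\widehat{e}_{tD^{\cyl}}=(\mathfrak{s}+tD^{\cyl})^{-1}$ is $\RR\times\Gamma$-equivariant, together with finite propagation speed applied in conjunction with a Fourier transform along the cylinder (using assumption \eqref{assumption} to control the resolvent of $D^\pa_\theta$), shows it belongs to $B^{*}$ in the sense of Proposition \ref{prop:bstar-structure}. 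For $\widehat{e}_D$ itself I would write a comparison
\[
(\mathfrak{s}+D)^{-1}\;-\;\chi^0\,(\mathfrak{s}+D^{\cyl})^{-1}\,\chi^0 \;=\; -(\mathfrak{s}+D)^{-1}\,[D,\chi^0]\,(\mathfrak{s}+D^{\cyl})^{-1}\chi^0 \,+\,(\text{correction with }\chi^0 \text{ on the right}),
\]
where $[D,\chi^0]$ has $\Gamma$-compact support since $\chi^0$ is eventually constant along the cylinder; each term on the right is then in $C^{*}(X,\F)$ by Shatten-type estimates combined with the spectral gap. Hence $\widehat{e}_D\in A^{*}$ with $\pi(\widehat{e}_D)=\widehat{e}_{D^{\cyl}}\in B^{*}$, so indeed $\pi(e_D)=e_{D^{\cyl}}=p_1$. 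Continuity of $t\mapsto p_t$ on $[1,+\infty)$ is standard from the second resolvent formula, and continuity at $t=+\infty$ follows from the norm bound $\|(\mathfrak{s}+tD^{\cyl})^{-1}\|\le(1+t^2\tilde{\epsilon}^2)^{-1/2}$, which forces $p_t\to e_1$ in $B^{*}$.

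Second, for the Wassermann cycle \eqref{wassermann-triple}, I would carry out the same program but using the fact, already invoked in the closed case, that the entries of $W_D-e_1$ are Schwartz functions of $D$ whose Fourier transforms are compactly supported, so finite propagation speed directly shows $W_D\in A^{*}\oplus\CC e_1$ with $\pi(W_D)=W_{D^{\cyl}}$. The same calculation on the cylinder, using the $\RR$-invariance of $D^{\cyl}$, places $W_{tD^{\cyl}}-e_1$ in $B^{*}$, while the spectral gap forces $e^{-t^2(D^{\cyl})^2}\to 0$ as $t\to\infty$, giving convergence $q_t\to e_1$.

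Third, to identify the two classes in $K_0(A^{*},B^{*})$ I would use the path $\{P_s(D)\}_{s\in[0,1]}$ of idempotents joining $e_D$ to $W_D$, mentioned immediately after \eqref{all-equal}. The very same construction applied to $D^{\cyl}$ yields a path $P_s(D^{\cyl})$ in $B^{*}\oplus\CC e_1$ joining $e_{D^{\cyl}}$ to $W_{D^{\cyl}}$, and the boundary-symbol map $\pi$ intertwines them: $\pi(P_s(D))=P_s(D^{\cyl})$. The concatenation of $\{P_s(D^{\cyl})\}_{s\in[0,1]}$ with the rescaled Wassermann path, compared to the graph path $p_t$, gives a homotopy of paths of idempotents in $B^{*}$ connecting the symbols, while $\{P_s(D)\}$ provides the homotopy of idempotents in $A^{*}$ itself. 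Packaged as a homotopy of relative cycles, this shows $[(e_D,e_1,p_t)]=[(W_D,e_1,q_t)]$ in $K_0(A^{*},B^{*})$. Their common value is $\Ind(D,D^\pa)$.

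The main analytic difficulty is the comparison estimate in the first paragraph, i.e.\ proving that the global resolvent of $D$ coincides modulo $C^{*}(X,\F)$ with the cut-off cylindrical resolvent. Everything else, including the Wassermann case and the homotopy, reduces to resolvent identities, finite propagation, and manipulations of the graph/Wassermann formulas already present in the closed case, but the $A^{*}$-membership step is where the Wiener-Hopf structure and the invertibility hypothesis on $D^\pa$ must be brought together without recourse to the foliated $b$-calculus of \cite{LPETALE}.
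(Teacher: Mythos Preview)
Your overall strategy coincides with the paper's: verify each triple is a legitimate relative cycle by comparing the global resolvent with the compressed cylindrical one, then deform one triple into the other via the two-parameter family $\pi(P_s(tD))$. Two points need repair.

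First, the displayed comparison formula involving $[D,\chi^0]$ is ill-posed as written: $\chi^0$ is the characteristic function of the half-cylinder, not a smooth function, so the commutator with the differential operator $D$ is distributional and cannot be inserted into a resolvent sandwich as a bounded $\Gamma$-compactly supported object. The paper handles this in two stages: it first proves (Proposition~\ref{proposition:compression}) that $(\mathfrak{s}+D)^{-1}-\chi(\mathfrak{s}+D^{\cyl})^{-1}\chi$ is compact for a \emph{smooth} approximation $\chi$ of $\chi^0$, where $[D,\chi]=\cl(d\chi)$ is genuinely compactly supported and the argument is a Rellich-type one (Lemma~\ref{lemma:rellich-higson}); only afterwards does it pass from $\chi$ to $\chi^0$ via the $L^2$-approximation argument of Sublemma~\ref{sublemma:compression}. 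You correctly isolate this comparison as the crux, but the execution must go through smooth cut-offs first.

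Second, you invoke the spectral gap \eqref{assumption} in places where it plays no role. Neither the membership $\widehat{e}_{D^{\cyl}}\in B^*$ (which follows from finite propagation speed alone, approximating $x\mapsto(\mathfrak{s}+x)^{-1}$ by Schwartz functions with compactly supported Fourier transform) nor the compactness of the resolvent difference requires it; both are purely local arguments. The gap is needed exactly where you do use it correctly, namely for the norm convergence $p_t\to e_1$ and $q_t\to e_1$ at $t=\infty$, and nowhere else in this proposition.
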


We shall give a proof of this Proposition in Subsection \ref{subsection:proof-relative}

\subsection{Excision for $C^*$-index classes}
The main goal of this subsection is to state the following crucial 

\begin{proposition}\label{proposition:excision}
Let $D=(D_\theta)_{\theta\in T}$ be a $\Gamma$-equivariant family of Dirac operators 
on a foliated manifold with cylindrical ends $X=\tV\times_\Gamma T$. Assume that $\tV$
is even dimensional.
Assume \eqref{assumption}. Let $\alpha_{{\rm ex}}: K_0 (C^* (X,\F))\to K_0 (A^*,B^* )$ be 
the excision isomorphism for the short exact sequence 
$$0\to C^* (X,\F)\to A^*  (X,\F)
\to B^* (\cyl (\pa X),\F_{\cyl}) \to 0.$$
Then
\begin{equation}\label{ex-of-rel-is-ab}
\alpha_{{\rm ex}}\,(\,\Ind (D)\,)= \Ind (D,D^\pa)
\end{equation}
\end{proposition}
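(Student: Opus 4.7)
The idea is to use the one-parameter family of graph projections $s \mapsto e_{sD}$, $s \in [1,\infty]$, to build a homotopy of relative cycles connecting $(e_D, e_1, p_t)$ to a cycle of the form $(P_\infty, e_1, \mathbf{c})$ whose ``$A^*$-slot'' lies in $C^*(X,\F)\oplus\CC e_1$ and whose path-slot $\mathbf{c}$ is constant at $e_1$. By the very definition of the excision map \eqref{excision-general}, this second cycle equals $\alpha_{\rm ex}([P_\infty]-[e_1])$; the proof then reduces to showing that $[P_\infty]-[e_1] = \Ind(D)$ in $K_0(C^*(X,\F))$, which is the cylindrical analogue of the equality $[e_D]-[e_1]=[P_Q]-[e_1]$ from Proposition \ref{prop:cs=graph}.

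\textbf{Step 1: the family $e_{sD}$ in $A^*$.} First I would show that $e_{sD}\in M_2(A^*)$ for every $s\in[1,\infty)$, with $\pi(e_{sD}) = e_{sD^{\cyl}}$. This follows from the explicit formula \eqref{graph-projector} together with $\hat{e}_{sD}=(\mathfrak{s}+sD)^{-1}$: the resolvent $(\mathfrak{s}+sD)^{-1}$ on $X$ restricts on the cylindrical end (up to a $\Gamma$-compactly supported term) to $(\mathfrak{s}+sD^{\cyl})^{-1}$, so the difference with $\chi^0(\mathfrak{s}+sD^{\cyl})^{-1}\chi^0$ sits in $C^*(X,\F)$, placing $\hat{e}_{sD}$ in the domain of the splitting $s$ of Lemma \ref{lemma:split}. (This is the same sort of argument that shows $\Ind(D)$ is well defined in Section \ref{subsec:absolute}.)

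\textbf{Step 2: limit as $s\to\infty$.} Using Assumption \eqref{assumption} and the spectral calculus, I would prove that $e_{sD}$ converges in norm to
\[
P_\infty = \begin{pmatrix}\Pi_{\ker D^+}&0\\0&I-\Pi_{\ker D^-}\end{pmatrix},
\]
where $\Pi_{\ker D^\pm}\in C^*(X,\F)$ is the (family of) projections onto the finite-dimensional $L^2$-kernels; the existence of the spectral gap at $0$ above the essential spectrum, provided by the invertibility of $D^\pa$, gives norm convergence. On the cylinder the operator $D^{\cyl}$ is invertible (its $L^2$-spectrum is bounded away from $0$ by $\tilde{\epsilon}$), so $e_{sD^{\cyl}}\to e_1$ and $\pi(P_\infty)=e_1$; hence $P_\infty - e_1 \in M_2(C^*(X,\F))$.

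\textbf{Step 3: homotopy of relative cycles and identification.} For $s\in[1,\infty]$ set
\[
\Theta(s) := \bigl(\,e_{sD},\,e_1,\,\{e_{tD^{\cyl}}\}_{t\in[s,\infty]}\,\bigr),
\]
the third slot being understood as the constant path $\{e_1\}$ when $s=\infty$. By Steps 1--2 this is a continuous family of relative $K_0$-cycles for $(A^*,B^*)$, with $\Theta(1) = (e_D,e_1,p_t) = \Ind(D,D^\pa)$ and $\Theta(\infty)=(P_\infty,e_1,\mathbf{c})$. Thus $[\Ind(D,D^\pa)] = [(P_\infty,e_1,\mathbf{c})] = \alpha_{\rm ex}([P_\infty]-[e_1])$. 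To conclude $\alpha_{\rm ex}(\Ind(D)) = \Ind(D,D^\pa)$ it suffices to show $[P_\infty]-[e_1] = [P_Q]-[e_1]$ in $K_0(C^*(X,\F))$. This I would prove by adapting to the cylindrical case the closed-case argument of \cite{MN}: exhibit an explicit path of idempotents in $M_2(C^*(X,\F)\oplus\CC e_1)$ joining $P_\infty$ to $P_Q$, using the Wassermann-type projection $W_D$ as an intermediate step (its diagonal entries $e^{-sD^\mp D^\pm}$ lie in $C^*(X,\F)\oplus\CC e_1$ thanks to the exponential decay forced by the spectral gap of $D^{\cyl}$).

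\textbf{Main obstacle.} The hard part will be verifying the $A^*$-valued regularity and limits of $s\mapsto e_{sD}$, i.e.\ Steps 1--2, and the final identification $[P_\infty]-[e_1]=[P_Q]-[e_1]$ in $K_0(C^*(X,\F))$. Both require systematic use of the spectral gap of $D^\pa$ to control the ``symbol at infinity'' of resolvents and heat operators, together with the characterization of $A^*$ via the Wiener-Hopf extension. Once these are in place, the homotopy argument in Step 3 is essentially formal.
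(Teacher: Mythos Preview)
Your Step~2 has a genuine gap in the foliated setting. Assumption~\eqref{assumption} gives a \emph{uniform} spectral gap only for the boundary family $D^\pa$, hence for $D^{\cyl}$; it gives no uniform gap for $D$ itself on $X$. For each fixed $\theta$ the essential spectrum of $D_\theta$ avoids $(-\tilde\epsilon,\tilde\epsilon)$, so $0$ lies in the discrete spectrum, but the smallest nonzero eigenvalue of $D_\theta^- D_\theta^+$ can approach $0$ as $\theta$ varies. Consequently $(I+s^2 D_\theta^- D_\theta^+)^{-1}\to\Pi_{\ker D_\theta^+}$ need not hold uniformly in $\theta$, and $e_{sD}$ need not converge in the $C^*$-norm $\sup_\theta\|\cdot\|$. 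Worse, even pointwise the limit $\theta\mapsto\Pi_{\ker D_\theta^+}$ is typically discontinuous (the kernel dimension jumps), so your $P_\infty$ is not an element of $C^*(X,\F)\oplus\CC e_1$ at all. This is precisely why the index class is defined via $P_Q$ or $e_D$ rather than via kernel projections.

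The paper avoids this by never sending $s\to\infty$ in $A^*$. Instead it constructs an explicit homotopy $H(t)$ inside $A^*$ from $P_Q$ to $e_D$ at scale $1$: first conjugate $P_Q$ to an auxiliary idempotent $e(D^+,Q)$, then deform the parametrix $Q=G-G'$ by switching off the cylindrical correction $G'$, landing at $e(D^+,G)=e_D$. One then introduces a second parameter $\mu$ (rescaling $D$) and considers $p(\mu,t)=\pi(H(\mu,t))\in B^*$. The limit $\mu\to\infty$ is taken \emph{only after projecting to $B^*$}, where the relevant operator is $D^{\cyl}$, which \emph{is} uniformly invertible by \eqref{assumption}; one checks directly that $p(\mu,t)\to e_1$ uniformly in $t$. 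This yields a homotopy of relative cycles from $(P_Q,e_1,\mathbf{c})=\alpha_{\rm ex}(\Ind(D))$ to $(e_D,e_1,p_\mu)=\Ind(D,D^\pa)$, with no appeal to any limiting projection in $A^*$.
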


We give a  proof of this Proposition in Subsection \ref{subsection:proofexcision}.

\section{{\bf Smooth pairings}}\label{section:smoothpairings}

In the previous Section we have proved the existence of $C^*$-algebraic index classes.
In this Section 
we shall prove that we can extend the cocycles $\tau_{GV}$ and $(\tau_{GV}^r,\sigma_{GV})$ 
from $J_c$ and $A_c\xrightarrow{\pi_c} B_c$
to the smooth  subalgebras $\mathbf{ \mathfrak{J} }$ and $\mathbf{ \mathfrak{A}} \xrightarrow{\pi} 
 \mathbf{\mathfrak{B}}$ 
 and that  we can 
simultaneously  smooth-out our index classes and define them directly in $0\to\mathbf{ \mathfrak{J} }\to\mathbf{ \mathfrak{A}} \xrightarrow{\pi} 
 \mathbf{\mathfrak{B}}\to 0$. Once this will be achieved, we will be able to pair {\it directly}
$[\tau_{GV}]$ with $\Ind (D)$ and $[\tau_{GV}^r,\sigma_{GV}]$ will $\Ind (D,D^\pa)$. This is, as often happens in higher index
theory, a rather crucial point.

\subsection{Smooth index classes}\label{subsect:smooth-index}

\begin{proposition}\label{prop:smooth-cylinder-1}
 Let $D=(D_\theta)_{\theta\in T}$ 
and $X=\tV\times_\Gamma T$ as above;
then the Connes-Skandalis projection $P_Q$
belongs to $\mathbf{ \mathfrak{J}}_{m}\oplus \CC e_1$ with $m>\dim \tV$. 
\end{proposition}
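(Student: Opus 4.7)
The plan is to unpack the four entries of $P_Q-e_1$ and verify the three conditions defining $\mathbf{\mathfrak{J}}_m = \mathcal{J}_m\cap \mathrm{Dom}(\overline{\delta}_1)\cap \mathrm{Dom}(\overline{\delta}_2)$ for each of them, using the explicit parametrix $Q=G-G'$ produced in Theorem \ref{theo:parametrix-elementary}. Writing
\[
P_Q-e_1=\begin{pmatrix} S_+^2 & S_+(I+S_+)Q\\ S_-D^+ & -S_-^2\end{pmatrix},
\]
and using $S_-D^+=D^+S_+$ together with the identity $S_+(I+S_+)Q=S_+Q+S_+^2Q$, the Shatten inequalities of Proposition \ref{prop:shatten-properties} and the ideal property of $\mathcal{I}_m$ reduce everything to controlling $S_\pm$, $S_\pm Q$ and $D^+S_+$. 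Since $m>\dim\tilde V$, Theorem \ref{theo:shatten-absolute} gives $S_\pm\in \mathcal{I}_m$. For the factors involving $Q$ and $D^+$ we will iterate the parametrix construction (the standard Neumann trick: replace $Q$ by $Q(I+S_++S_+^2+\cdots)$ truncated to a sufficiently high order) so that both $D^+S_+$ and $S_-D^+$ also lie in $\mathcal{I}_m$.

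First I verify membership in $\mathcal{J}_m$, i.e.\ boundedness of left and right multiplication by $g=1+s^2$ on the cylindrical end. This is where the explicit structure $Q=G-G'$ pays off: $G$ comes from the $\Gamma$-equivariant resolvent of $D^-D^+$, while $G'$ uses the invertibility of $D^{\partial}$ on the cylinder (assumption \eqref{assumption}) together with the cutoff $\chi$. By construction $S_+=I-QD^+$ and $S_-=I-D^+Q$ are supported, modulo a genuinely decaying piece, where $\chi$ and $1-\chi$ overlap, i.e.\ on a compact collar of $\partial X_0$; more precisely they can be written as a sum of a compactly supported smoothing term and a term involving commutators with $\chi$ times the resolvent, which decay rapidly along the cylinder by finite-propagation/Combes-Thomas estimates. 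Hence $gS_\pm$ and $S_\pm g$ are bounded, and the same for the composite entries, giving all four entries in $\mathcal{J}_m$.

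Next I verify membership in $\mathrm{Dom}(\overline{\delta}_1)\cap \mathrm{Dom}(\overline{\delta}_2)$. Both $\phi=\log\psi$ and $\dot\phi$ are smooth, and the commutators $[\phi,D]$, $[\dot\phi,D]$ are bounded since they are zero-order multiplication operators by the leafwise (resp.\ transverse) derivatives of $\phi$, which are bounded. Applying $\delta_j$ to the Neumann formula for $Q$ and to the resolvent representation of $S_\pm$ thus produces a finite sum of terms of the same structural form, with additional bounded factors coming from $[\phi,D]$ or $[\dot\phi,D]$. Each such term remains in $\mathcal{I}_m$ by the Shatten inequalities, and the support/decay argument of the previous paragraph applies verbatim, so each term is in $\mathcal{J}_m$. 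Since the algebraic derivations $\delta_1,\delta_2$ agree with the closed derivations $\overline{\delta}_1,\overline{\delta}_2$ on this dense subspace, it follows that every entry of $P_Q-e_1$ lies in $\mathrm{Dom}(\overline{\delta}_j)$ with image in $\mathcal{J}_m$, i.e.\ in $\mathbf{\mathfrak{J}}_m$.

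The main obstacle is the joint step: controlling growth along the cylinder (the $g$-multiplication condition) while simultaneously tracking commutators with the unbounded functions $\phi$ and $\dot\phi$. The delicate point is that the naive first-order parametrix produces $S_\pm$ that are only in $C^*(X,\F)$, not Shatten; we need both higher Neumann iterations (to land in $\mathcal{I}_m$ and keep $D^+S_+$, $S_-D^+$ there too) and the explicit cylindrical form of $G'$ (to secure the $g$-boundedness and the stability under $\overline{\delta}_j$). Once this bookkeeping is set up, the verification is systematic.
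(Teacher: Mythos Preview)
Your overall strategy matches the paper's: decompose $P_Q-e_1$ into its four entries and verify $\mathcal I_m$, then $\mathcal J_m$, then the domain condition for $\overline\delta_1,\overline\delta_2$. However, there is a genuine gap in the last step, and a couple of places where your sketch diverges from what actually works in the paper.

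\textbf{The gap: membership in $\mathrm{Dom}(\overline\delta_j)$.} You compute the formal commutators $[\phi,S_+]$, $[\dot\phi,S_+]$ and then assert that ``the algebraic derivations $\delta_1,\delta_2$ agree with the closed derivations $\overline\delta_1,\overline\delta_2$ on this dense subspace.'' But $S_\pm$ are \emph{not} in the dense subspace $J_c$ (they are not $\Gamma$-compactly supported), so this remark does nothing for you. The functions $\phi$ and $\dot\phi$ are unbounded (the paper stresses this explicitly), so the commutator $[\phi,\,\cdot\,]$ is only an unbounded closable operator, and showing that a given element lies in $\mathrm{Dom}(\overline\delta^{\max}_j)$ requires exhibiting an approximating sequence $k_i$ in the algebraic domain with both $k_i\to S_+$ and $[\phi,k_i]$ Cauchy. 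The paper carries this out carefully (Subsection \ref{subsection10-7-1}): it invokes the \cite{MN} argument that $(\mathfrak s+D)^{-1}\in\mathrm{Dom}\,\overline\delta^{\max}_j$ via approximation by $\beta_\lambda(D)$ with $\widehat{\beta_\lambda}$ compactly supported; it proves a separate Lemma \ref{lemma:inverse} giving $D_{\cyl}^{-1}\in\mathrm{Dom}\,\overline\delta^{\max}_{\cyl,j}$ by the same mechanism; and it shows the grafted terms $\chi(\cdots)\chi$ inherit domain membership because $\phi\chi=\chi\phi_\partial$. Only after this does one obtain the explicit formula for $\overline\delta^{\max}_2(S_+)$ and check it lands in $\mathcal J_m$. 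Your proposal skips all of this.

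\textbf{Secondary differences.} You propose Neumann iteration to force $S_-D^+\in\mathcal I_m$; the paper instead computes $S_-D^+=(I+D^+D^-)^{-1}D^+ + D^+G'D^+$ directly and shows each piece is $\sim_m 0$ by the same compression argument used for $S_\pm$. For the $\mathcal J_m$ condition you invoke ``finite-propagation/Combes--Thomas estimates'' and say $S_\pm$ is essentially compactly supported; that is not quite right (the term $(I+D^-D^+)^{-1}-\chi(I+D^-_{\cyl}D^+_{\cyl})^{-1}\chi$ has no compact support). The paper instead proves the concrete Lemma that $g(1+D^2)^{-1}$ is bounded (via $f(D+\mathfrak s)^{-1}$ bounded, Sublemma \ref{the-sublemma}) and checks $gS_+$, $S_+g$ are bounded by direct inspection of the explicit three-term formula for $S_+$.
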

\begin{proposition}\label{prop:smooth-cylinder-2}
Let $e_{(D^{\cyl})}$ be the graph projection for the translation invariant Dirac family $D^{\cyl}= (D^{\cyl}_\theta)_{\theta\in T}$
on the cylinder. 
Then $e_{(D^{\cyl})}\in  \mathbf{\mathfrak{B}}_{m}\oplus \CC e_1$ with $m>\dim \tV$.
More generally, $\forall s\geq 1$ we have $e_{s(D^{\cyl})}\in  \mathbf{\mathfrak{B}}_{m}\oplus \CC e_1$
with $m>\dim \tV$.
\end{proposition}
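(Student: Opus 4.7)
Write $\widehat{e}_{sD^{\cyl}} := e_{sD^{\cyl}} - e_1 = (\mathfrak{s}+sD^{\cyl})^{-1}$ using \eqref{explicit-e-hat}. The plan is to verify every defining condition of $\mathbf{\mathfrak{B}}_m = \B_m \cap \mathrm{Dom}(\overline{\delta}_1) \cap \mathrm{Dom}(\overline{\delta}_2)$ for this element by systematically invoking the resolvent commutator identity
\begin{equation*}
[F, (\mathfrak{s}+sD^{\cyl})^{-1}] = -(\mathfrak{s}+sD^{\cyl})^{-1}\,[F, sD^{\cyl}]\,(\mathfrak{s}+sD^{\cyl})^{-1}
\end{equation*}
with $F$ running through the multiplication operators $\chi^0_{\cyl}$, $s$ (coming from $\alpha_t=e^{its}\cdot e^{-its}$), $f=\sqrt{1+s^2}$, $\phi_\pa$, and $\dot\phi_\pa$. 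The structural point is that for each such $F$ the commutator $[F, sD^{\cyl}]$ is a bounded operator, in fact a Clifford multiplication by a bounded function: $d\chi^0_{\cyl}/ds$ is bounded and $\Gamma$-compactly supported near $\{s=0\}$, $f'(s)=s/\sqrt{1+s^2}$ and $f''$ are globally bounded in $s$, and $\phi_\pa$, $\dot\phi_\pa$ live on the closed boundary foliation and so have bounded leafwise differentials.

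First I would place $\widehat{e}_{sD^{\cyl}}$ in $\operatorname{OP}^{-1}$: the quantitative $|||\cdot|||$-estimate of \eqref{triple-norm} is standard from the first-order ellipticity and $\RR\times\Gamma$-equivariance of $sD^{\cyl}$ combined with the invertibility assumption \eqref{assumption}, while membership in $B^*$ is exactly what is established in \cite{MN}, Section 7, via finite propagation speed. The identity above, with $F=s$ and $F=f$, then puts $\partial_\alpha\widehat{e}_{sD^{\cyl}}=i[s,\widehat{e}_{sD^{\cyl}}]$ in $\operatorname{OP}^{-2}\subset\operatorname{OP}^{-1}$ and makes $[f,\widehat{e}_{sD^{\cyl}}]$ and $[f,[f,\widehat{e}_{sD^{\cyl}}]]$ bounded; the double commutator expands via Leibniz into terms involving $(f')^2$ and $f''$, all bounded. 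Taking $F=\chi^0_{\cyl}$ yields
\begin{equation*}
[\chi^0_{\cyl}, \widehat{e}_{sD^{\cyl}}] = -\widehat{e}_{sD^{\cyl}}\,[\chi^0_{\cyl}, sD^{\cyl}]\,\widehat{e}_{sD^{\cyl}},
\end{equation*}
and since $[\chi^0_{\cyl}, sD^{\cyl}]$ is a bounded operator with $\Gamma$-compact support near $\{s=0\}$, the off-diagonal decay of the Schwartz kernel of $(\mathfrak{s}+sD^{\cyl})^{-1}$ ensures that the composition lies in $\J_m$ for $m>\dim\tV$, with the $g$-weighted bounds of \eqref{norm-of-newj} under control because the resulting kernel is localized near $\{s=s'=0\}$. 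Membership in $\mathrm{Dom}(\overline\delta_j)$ follows by applying the same identity with $F=\phi_\pa$ and $F=\dot\phi_\pa$; the iterated commutators demanded by the $\mathbf{\mathfrak{B}}_m$-norm are then computed by Leibniz, each step inserting one more bounded multiplication between resolvent factors, so no decay is lost. The $s\geq 1$ dependence enters only polynomially and is uniform on compact subintervals.

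The main obstacle is the Shatten-$m$ control of $[\chi^0_{\cyl}, \widehat{e}_{sD^{\cyl}}]$, and of all its $\overline\delta_j$-commutators, with respect to the full weighted $\J_m$-norm \eqref{norm-of-newj}. This reduces to a fine off-diagonal decay estimate for the Schwartz kernel of the resolvent $(\mathfrak{s}+sD^{\cyl})^{-1}$ of a first-order elliptic operator of product cylindrical type, sharp enough that, after insertion of the bounded compactly supported operator $[\chi^0_{\cyl}, sD^{\cyl}]$ between two resolvents, the composed kernel is trace-class in the $\Gamma$-periodic sense of \eqref{shatten-norm-HS} even after multiplication by the polynomially growing weight $g=1+s^2$. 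Once this single analytic estimate is in place, the remaining verifications propagate through the resolvent identity by purely algebraic manipulation.
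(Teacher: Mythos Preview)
Your overall architecture via the resolvent commutator identity is correct and matches the paper for the $\operatorname{OP}^{-1}$, $\partial_\alpha$, $[f,\cdot]$, and $\overline\delta_j$ verifications. However, there is a genuine gap at the crucial Schatten step. You assert that $d\chi^0_{\cyl}/ds$ is bounded and that $[\chi^0_{\cyl}, sD^{\cyl}]$ is a bounded Clifford multiplication with $\Gamma$-compact support. This is false: $\chi^0_{\cyl}$ is the \emph{characteristic function} of $(-\infty,0]\times\partial X_0$, so its distributional derivative in $s$ is a Dirac delta at $s=0$, and $[\chi^0_{\cyl}, D^{\cyl}]$ is only defined as a bounded map $W^1\to W^{-1}$, not as a bounded operator on $L^2$. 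Your fallback to ``off-diagonal decay of the resolvent kernel'' is in principle a viable route, but it is not what you have written: you would need to extract the precise Schatten-$m$ exponent from the decay rate, and you have not indicated how.

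The paper's approach to this step is quite different and more explicit. One writes $[\chi^0,(D^{\cyl}+\mathfrak s)^{-1}]=(D^{\cyl}+\mathfrak s)^{-1}[\partial_t,\chi^0](D^{\cyl}+\mathfrak s)^{-1}$ with $[\partial_t,\chi^0]:W^1\to W^{-1}$, then reduces to $T=(I+(D^{\cyl})^2)^{-1/2}[\partial_t,\chi^0](I+(D^{\cyl})^2)^{-1/2}$, and Fourier-transforms along the cylinder. Under Fourier transform $\chi^0$ becomes the Hilbert transform $\mathcal H$, and the key algebraic fact is that $[\mathcal H, it]$ is a rank-one operator on $L^2(\RR)$ (its kernel is the constant $i/\pi$). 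This allows one to factor $T_\theta=\iota_\theta\circ\pi_\theta$ through $L^2(\partial\tilde M\times\{\theta\})$, and then $T_\theta^m$ picks up $(I+(D^\partial_\theta)^2)^{-(m-1)/2}$, giving exactly the Schatten-$m$ bound for $m>\dim\tilde V$. Similarly, the $g$-weighted bound $g_{\cyl}[\chi^0,\widehat e_{D^{\cyl}}]\in\mathcal B$ is not handled by localization (the composed kernel is \emph{not} localized near $s=s'=0$, since the outer resolvents spread it out); instead the paper shows via integration by parts that $\Lambda g_{\cyl}[\partial_t,\chi^0]\Lambda=\Lambda[\partial_t,\chi^0]\Lambda$ identically, because the boundary term at $t=0$ carries $g(0)=1$.
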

\begin{proposition}\label{prop:smooth-cylinder-3}
Let $e_{D}$ be the graph projection on $X$. 
Then 
$e_D\in \mathbf{ \mathfrak{A}}_{m}\oplus \CC e_1$ with $m>\dim \tV$.
\end{proposition}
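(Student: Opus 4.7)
\medskip
The plan is to reduce the statement to a claim about $\widehat{e}_D := e_D - e_1 = (\mathfrak{s}+D)^{-1}$ (see \eqref{e-hat}, \eqref{explicit-e-hat}) and verify the four defining conditions of $\mathbf{\mathfrak{A}_m}$ (see \eqref{gothic-a}) for it: (a) $\widehat{e}_D \in A^*(X,\F)$; (b) $\pi(\widehat{e}_D)\in\mathbf{\mathfrak{B}_m}$; (c) $t(\widehat{e}_D)\in\mathbf{\mathfrak{J}_m}$; and (d) $\widehat{e}_D\in\mathrm{Dom}(\overline{\delta}_1)\cap\mathrm{Dom}(\overline{\delta}_2)$, with the derivation image landing in $\A_m$. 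Throughout, the decisive geometric input is that $(\mathfrak{s}+D)^2 = 1+D^2 \geq 1$, so $\widehat{e}_D$ is a bounded self-adjoint operator of norm $\leq 1$ whose Schwartz kernel decays exponentially in the leafwise distance, uniformly in $\theta\in T$.

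\medskip
For (a), (b), (c), the key is the \emph{asymptotic translation invariance} of $\widehat{e}_D$ along the cylindrical end. Since $D$ agrees with $D^{\mathrm{cyl}}$ on $\cyl^-(\pa X)$, finite propagation speed for $e^{i\xi(\mathfrak{s}+D)}$ combined with the Fourier representation of the resolvent gives
\begin{equation*}
\widehat{e}_D \;-\; \chi^0 \widehat{e}_{D^{\mathrm{cyl}}} \chi^0 \;\in\; C^*(X,\F),
\end{equation*}
with an exponentially decaying kernel supported essentially in a compact neighborhood of $X_0$ up to the rate $e^{-c|s|}$ along the cylinder. This identifies $\pi(\widehat{e}_D)=\widehat{e}_{D^{\mathrm{cyl}}}$, which lies in $\mathbf{\mathfrak{B}_m}$ by Proposition \ref{prop:smooth-cylinder-2}, establishing (a) and (b). For (c), the exponential decay makes the weight $g=1+s^2$ harmless and places $t(\widehat{e}_D)=\widehat{e}_D-\chi^0\widehat{e}_{D^{\mathrm{cyl}}}\chi^0$ in $\J_m$; the usual heat-kernel/Sobolev estimates on the $2n$-dimensional leaves put it in $\I_{2n+1}$ (cf. Theorem \ref{theo:shatten-absolute}), and the domain conditions for $\overline{\delta}_1,\overline{\delta}_2$ needed to promote $\J_m$ to $\mathbf{\mathfrak{J}_m}$ are handled by the same argument as step (d).

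\medskip
For (d), the strategy is the resolvent commutator formula, valid initially on smooth compactly supported sections and then extended via the closures built in Subsection \ref{subsect:dense}:
\begin{equation*}
[\phi,\widehat{e}_D] \;=\; -\widehat{e}_D\,c(d\phi)\,\widehat{e}_D, \qquad [\dot{\phi},\widehat{e}_D] \;=\; -\widehat{e}_D\,c(d\dot{\phi})\,\widehat{e}_D,
\end{equation*}
where $c(\cdot)$ denotes Clifford multiplication. Although $\phi=\log\psi$ is itself unbounded, its differential $d\phi$ is $\Gamma$-invariant and descends to a bounded $1$-form on the compact base $Y$; similarly for $d\dot{\phi}$. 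Hence $c(d\phi)$ and $c(d\dot{\phi})$ are bounded multiplication operators, and by Proposition \ref{prop:shatten-properties}(1) the sandwich of two resolvents improves the Shatten index: $\widehat{e}_D\cdot(\text{bounded})\cdot\widehat{e}_D\in\I_n\subset\I_{2n+1}=\I_m$. The weight and cylindrical-end compatibility conditions propagate through the same identity applied on the cylinder, where $\widehat{e}_{D^{\mathrm{cyl}}}\in\mathbf{\mathfrak{B}_m}$ supplies precisely what is needed for $\pi([\phi,\widehat{e}_D])\in\mathbf{\mathfrak{B}_m}$.

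\medskip
The main obstacle, and essentially the only nontrivial point, is to make the "asymptotic translation invariance" quantitative enough to verify the graph-norm estimate defining $\mathcal{A}_m$ (not merely the underlying $C^*$-membership), uniformly in $\theta\in T$. Concretely, one must control $\|g\cdot t(\widehat{e}_D)\|_{C^*}$ and the Shatten norm $\|t(\widehat{e}_D)\|_m$ simultaneously; both follow once one has a good kernel representation of $\widehat{e}_D-\chi^0\widehat{e}_{D^{\mathrm{cyl}}}\chi^0$ obtained by writing the difference of resolvents through the Helffer--Sj\"ostrand (or Cauchy) formula applied to a holomorphic extension of $\lambda\mapsto(\mathfrak{s}+\lambda)^{-1}$, combined with finite propagation for $D$ and $D^{\mathrm{cyl}}$ which forces the two kernels to coincide away from $X_0$. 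Once these uniform kernel estimates are in place, assembling (a)--(d) into the single statement $\widehat{e}_D\in\mathbf{\mathfrak{A}_m}$ is purely definitional.
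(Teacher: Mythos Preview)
Your overall plan --- reduce to $\widehat{e}_D=(\mathfrak{s}+D)^{-1}$ and verify the four membership conditions defining $\mathbf{\mathfrak{A}_m}$ --- is exactly the paper's strategy, and steps (a), (b) are handled correctly. For (c) the paper avoids kernel estimates and the Helffer--Sj\"ostrand formula: it writes $t(\widehat{e}_D)=(\widehat{e}_D-\chi\,\widehat{e}_{D_{\cyl}}\chi)+(\chi\,\widehat{e}_{D_{\cyl}}\chi-\chi^0\widehat{e}_{D_{\cyl}}\chi^0)$ with $\chi$ a smooth approximation of $\chi^0$, and appeals to the elementary parametrix computation of Remark~\ref{remark:from-compact-to-shatten} and Sublemma~\ref{the-sublemma} for the $g$-weighted boundedness. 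Your invocation of exponential off-diagonal decay would also work, but is more machinery than the paper needs.

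Step (d), however, contains a genuine error. You write that ``the sandwich of two resolvents improves the Shatten index: $\widehat{e}_D\cdot(\text{bounded})\cdot\widehat{e}_D\in\I_n$.'' This is false: on the non-compact manifold $X$ with cylindrical end, the resolvent $(\mathfrak{s}+D)^{-1}$ does not lie in any $\I_p(X,\F)$, so Proposition~\ref{prop:shatten-properties}(1) does not apply. (Your reference to ``the compact base $Y$'' suggests you may be importing closed-case intuition here.) The operator $\overline{\delta}_2(\widehat{e}_D)=(\mathfrak{s}+D)^{-1}\cl(d\phi)(\mathfrak{s}+D)^{-1}$ is \emph{not} in any Shatten ideal; what must be shown is that it lies in $\A_m$, i.e.\ that $\pi$ of it lands in $\B_m$ (which you do address) \emph{and} that $t$ of it lands in $\J_m$. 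The latter means proving
\[
(\mathfrak{s}+D)^{-1}\cl(d\phi)(\mathfrak{s}+D)^{-1}-\chi^0(\mathfrak{s}+D_{\cyl})^{-1}\cl(d\phi_\partial)(\mathfrak{s}+D_{\cyl})^{-1}\chi^0\;\in\;\J_m,
\]
which is a genuine ``difference with the cylindrical model'' computation, entirely analogous to what you did in step (c). The paper carries this out by reducing to a smooth cutoff $\chi$ and then invoking the same algebraic manipulations (the $\sim_m$ calculus) used to treat the analogous expression~\eqref{firstdifference} in the proof of Proposition~\ref{prop:smooth-cylinder-1}. Without this step, your argument for (d) does not close.
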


We give a detailed proof of these three Propositions in Subsection \ref{subsection:proof3props}.

As a consequence of these three statements we obtain easily
the first two items of the following 

\begin{theorem}\label{theo:smooth-indeces}
Let $m=2n+1$ with $2n$ equal to the dimension of the leaves of $(X,\F)$. Consider
the modular Shatten extension $0\to \mathbf{ \mathfrak{J}}_{m}\to \mathbf{ \mathfrak{A}}_{m}
\to \mathbf{ \mathfrak{B}}_{m}\to 0$, simply denoted, as in Subsection \ref{notation-smooth},   as
$ 0\to \mathbf{ \mathfrak{J}}\to \mathbf{ \mathfrak{A}}
\to \mathbf{ \mathfrak{B}}\to 0$. 
\item{1)}The Connes-Skandalis projector defines a smooth 
index class $\Ind ^s (D)
\in K_0 ( \mathbf{ \mathfrak{J} })$; moreover, if $\iota_* :  K_0 ( \mathbf{ \mathfrak{J} })
\to K_0 (C^*(X,\F))$ is the isomorphism induced by the inclusion $\iota$, then
$\iota_* (\Ind ^s (D))=\Ind (D)$.
\item{2)}The graph projections on $(X,\F)$ and $(\cyl(\pa X),\F_{\cyl})$ define 
a smooth relative index class $\Ind^s  (D,D^\pa)
\in K_0 (\mathbf{ \mathfrak{A}},  \mathbf{\mathfrak{B}})$; moreover, if 
$\iota_* :  K_0 (\mathbf{ \mathfrak{A}},  \mathbf{\mathfrak{B}})
\to K_0 (A^*,B^*)$ is the isomorphism induced by the inclusion $\iota$, see \eqref{diagram},
then $\iota_* (\Ind ^s (D,D^\pa))=\Ind (D,D^\pa).$
\item{3)}Finally, 
if $\alpha^s_{{\rm ex}}: K_0 ( \mathbf{ \mathfrak{J} })\rightarrow
K_0 (\mathbf{ \mathfrak{A}},  \mathbf{\mathfrak{B}})$ is the smooth excision isomorphism, then
\begin{equation}\label{smooth-ex}\alpha^s_{{\rm ex}}(\Ind^s (D))=\Ind^s (D,D^\pa)\quad\text{in}\quad K_0 (\mathbf{ \mathfrak{A}},  \mathbf{\mathfrak{B}})\,.
\end{equation}
\end{theorem}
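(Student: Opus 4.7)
The plan is to leverage the three preceding propositions, which place the Connes--Skandalis and graph projections in the smooth algebras, together with the commutative diagram \eqref{diagram} and the $C^*$-excision statement of Proposition \ref{proposition:excision}. The whole argument is essentially an exercise in functoriality of $K$-theory and relative $K$-theory once the smoothness of the relevant projections is in hand.

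For item (1), Proposition \ref{prop:smooth-cylinder-1} yields $P_Q\in \mathbf{\mathfrak{J}}\oplus \CC e_1$, so the formal difference $[P_Q]-[e_1]$ represents a class in $K_0(\mathbf{\mathfrak{J}})$; call it $\Ind^s(D)$. The inclusion $\iota\colon \mathbf{\mathfrak{J}}\hookrightarrow C^*(X,\F)$ sends $P_Q$ and $e_1$ to the same elements used in Section \ref{sec:index} to define $\Ind(D)$, so the required identity $\iota_*(\Ind^s(D))=\Ind(D)$ is immediate. For item (2), Propositions \ref{prop:smooth-cylinder-2} and \ref{prop:smooth-cylinder-3} give $e_D\in \mathbf{\mathfrak{A}}\oplus\CC e_1$ and $e_{sD^{\cyl}}\in\mathbf{\mathfrak{B}}\oplus\CC e_1$ for all $s\geq 1$, with $\pi(e_D)=e_{D^{\cyl}}$. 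One then forms the smooth relative triple
\[
\bigl(\,e_D,\ e_1,\ p_t\,\bigr)\,,\quad t\in[1,+\infty]\,,\qquad
p_t=\begin{cases} e_{tD^{\cyl}} & t\in[1,+\infty)\\ e_1 & t=+\infty\end{cases}
\]
exactly as in \eqref{graph-triple}, and defines $\Ind^s(D,D^\pa)\in K_0(\mathbf{\mathfrak{A}},\mathbf{\mathfrak{B}})$ to be its class.

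The main point to check here --- and what I expect to be the principal technical obstacle --- is that the path $t\mapsto p_t$ is norm-continuous with respect to the Banach norm $\|\,\cdot\,\|_{\mathbf{\mathfrak{B}}}$ of \eqref{norm-gothic-b} and extends continuously at $t=\infty$. Norm continuity on $[1,\infty)$ in $B^*$ follows from the functional-calculus formula \eqref{explicit-e-hat}, namely $\widehat e_{tD^{\cyl}}=(\mathfrak s + tD^{\cyl})^{-1}$, since $t\mapsto (\mathfrak s+tD^{\cyl})^{-1}$ is norm-continuous. To promote this to continuity in $\|\,\cdot\,\|_{\mathbf{\mathfrak{B}}}$ one argues, exactly as in the proofs of Propositions \ref{prop:smooth-cylinder-2} and \ref{prop:smooth-cylinder-3} (Subsection \ref{subsection:proof3props}), that the derivations $\overline{\delta}_1$, $\overline{\delta}_2$, $\partial_\alpha$ and the commutators $[\chi^0_{\cyl},\,\cdot\,]$, $[f,\,\cdot\,]$, $[f,[f,\,\cdot\,]]$, when applied to $\widehat e_{tD^{\cyl}}$, produce explicit expressions in the functional calculus of $tD^{\cyl}$ that remain norm-continuous in $t$ with values in the relevant Shatten-type ideal; the invertibility assumption \eqref{assumption} is what makes $t=\infty$ behave, via $\widehat e_{tD^{\cyl}}\to 0$ in $\mathbf{\mathfrak{B}}$, so that the path indeed closes up at the constant boundary value $e_1$. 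Granted continuity, Proposition \ref{proposition:excision} combined with the fact that $\iota$ carries $(e_D,e_1,p_t)$ termwise into the triple \eqref{graph-triple} immediately gives $\iota_*(\Ind^s(D,D^\pa))=\Ind(D,D^\pa)$.

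Item (3) is then a purely diagrammatic consequence. Using the commutative square \eqref{diagram},
\[
\iota_*\bigl(\alpha^s_{{\rm ex}}(\Ind^s(D))\bigr)
=\alpha_{{\rm ex}}\bigl(\iota_*(\Ind^s(D))\bigr)
=\alpha_{{\rm ex}}(\Ind(D))
=\Ind(D,D^\pa)
=\iota_*\bigl(\Ind^s(D,D^\pa)\bigr),
\]
where the first equality is naturality of excision for the pair of short exact sequences, the second and fourth are items (1) and (2) just established, and the third is Proposition \ref{proposition:excision}. Since $\iota_*\colon K_0(\mathbf{\mathfrak{A}},\mathbf{\mathfrak{B}})\to K_0(A^*,B^*)$ is the isomorphism recorded in \eqref{is-frak}, the identity \eqref{smooth-ex} follows by cancelling $\iota_*$.
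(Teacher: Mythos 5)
Your proposal follows essentially the same route as the paper: items (1) and (2) are read off from Propositions \ref{prop:smooth-cylinder-1}--\ref{prop:smooth-cylinder-3}, and item (3) is deduced from the commutative diagram \eqref{diagram}, the invertibility of $\iota_*$, and Proposition \ref{proposition:excision} (the paper phrases this last step as a proof by contradiction, which is just the contrapositive of your direct chain of equalities). Your extra remark on the $\|\cdot\|_{\mathbf{\mathfrak{B}}}$-continuity of the path $p_t$ is a legitimate point that the paper passes over silently, but it does not alter the structure of the argument.
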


\begin{proof}
The fact that the Connes-Skandalis projector $P_Q$ defines an  index class
$\Ind^s (D)\in K_0 (\mathbf{ \mathfrak{J} })$ such that $\iota_* (\Ind^s (D))=\Ind (D)$
in $K_0 (C^* (X,\F))$, is a direct consequence of Proposition \ref{prop:smooth-cylinder-1}.
Similarly, the fact that the graph projections on $(X,\F)$ and $(\cyl(\pa X),\F_{\cyl})$ define
a smooth relative index class $\Ind^s (D,D^{\pa})\in K_0 (\mathbf{ \mathfrak{A}},  \mathbf{\mathfrak{B}})$
such that  $\iota_* (\Ind^s (D,D^{\pa}))=\Ind (D,D^{\pa})$ in $K_0 (A^*, B^*)$
is a direct consequence of Proposition \ref{prop:smooth-cylinder-2} and Proposition \ref{prop:smooth-cylinder-3}.
Regarding the third statement, namely  that $\alpha_{{\rm ex}}^s (\Ind^s (D))=\Ind^s (D,D^{\pa})$,
we argue as follows.
Recall that we have a commutative diagram where all arrows are isomorphism:
\begin{equation}\label{diagram-bis}
\xymatrix{
K_0 (\mathbf{ \mathfrak{J}}) \ar[r]^{\alpha^s_{{\rm ex}}} \ar[d]^{\iota_*} & K_0 (\mathbf{ \mathfrak{A}},  \mathbf{\mathfrak{B}}) \ar[d]^{\iota_*}\\
K_0 (C^* (X,\F)) \ar[r]^{\alpha_{{\rm ex}}} & K_0 (A^*, B^*)}
\end{equation}
Assume, by contradiction, that $\alpha_{{\rm ex}}^s (\Ind^s (D))-\Ind^s (D,D^{\pa})\not= 0$ in 
$K_0 (\mathbf{ \mathfrak{A}},  \mathbf{\mathfrak{B}})$.
Then $\iota_* (\alpha_{{\rm ex}}^s (\Ind^s (D)))-\iota_* (\Ind^s (D,D^{\pa}))\not= 0$, given that
$\iota_* $ is an isomorphism. By the commutativity of the diagram we thus have
$$\alpha_{{\rm ex}}(\iota_*  (\Ind^s (D)))-\iota_* (\Ind^s (D,D^{\pa}))\not= 0.$$
Since we know that
$$\iota_*  (\Ind^s (D))=\Ind (D)\;\;\text{ and } \;\;\iota_* (\Ind^s (D,D^{\pa}))=\Ind (D,D^{\pa})
$$
we conclude that 
$$\alpha_{{\rm ex}}(\Ind (D))- \Ind (D,D^{\pa}))\not= 0$$
and this contradicts the excision formula \eqref{ex-of-rel-is-ab} we have already proved.
\end{proof}

 \subsection{Extended cocycles}\label{subsect:extended-cyclic}

We begin by  recalling the definition of the pairing between $K$-groups and cyclic cohomology
groups. 
First we state it in the absolute case, explaining the pairing  
between the $K_0$-group and the cyclic cohomology group of even degree. 
Here 
we shall follow the definition in \cite{Co} p. 224 rather than the one in \cite{connes-ihes} p.324;
notice that the difference in these two definitions is only in the normalizing constants
(and more precisely in powers of $2\pi i$).
 
Let $A$ be an arbitrary Banach algebra with unit.  
Given a projection $e \in M_{n,n}(A)$  and 
a continuous cyclic cocycle  $\tau: A^{\otimes (2p+1)} \to \CC$  of degree $2p$, 
the  pairing 
$\langle , \rangle :K_0(A) \times HC^{2p}(A) \to \CC$
is defined to be: 
$$
\langle [e], [\tau] \rangle = \frac{1}{p!}\sum_{i_{0}, i_{1}, \cdots , i_{2p}} 
\tau (e_{i_{0} i_{1}}, e_{i_{1} i_{2}}, \cdots , e_{i_{2p} i_{0}} ),
$$
where $e_{ij}$ denotes the $(i, j)$-component 
of the idempotent $e$. 
In the sequel we denote the summation in the right hand side simply by $\tau (e,\dots ,e)$. 
This also  satisfies 
\begin{equation}\label{s-operator}
\langle [e], [\tau] \rangle =\langle [e], [S\tau] \rangle 
\end{equation}
where $S\tau$ is the result of the $S$-operation  in cyclic cohomology, see \cite{Co} p. 193.

If $A$ is not unital, we take the algebra $A^+$ with unit adjoined. 
We then  extend $\tau$ 
to a mulitilinear map $\tau^+: (A^+)^{\otimes (2p+1)} \to \CC$ 
in such a way that  
$\tau^+(a_0, a_1, \dots ,a_{2p})=0$ if $a_i \in \CC 1\subset A^+$ for some $0\leq i \leq 2p$. 
It is easily verified that  $\tau^+$ is again a cyclic cocycle on $A^+$. 
We shall often suppress the $+$ in the notation of $\tau^+$ and denote it simply by $\tau$. 
Given $[e_1]-[e_0] \in  K_0(A)$ 
(note that $e_i$ $(i=0,1)$ is a projection in a  matrix algebra of $A^+$ of a certain size), 
the pairing between $K_0 (A)$ and  $HC^{2p}(A)$ is defined by the following formula:
$$
\langle [e_1]-[e_0], [\tau] \rangle
 = \frac{1}{p!}\left( \tau(e_1, \dots ,e_1) - \tau(e_0, \dots ,e_0)\right)
 :=  \frac{1}{p!} \left [\tau(e_i, \dots ,e_i)\right ]_0^1
 .
$$

Next, recall the definition of relative $K_0$-group:
if $A$ and $B$ are unital Banach algebras  and  
$\pi : A \to B$ denotes  a unital bounded homomorphism, then
the relative group $K_0(A,B)$  is   the abelian group obtained from equivalence classes  
of  triplets
$(e_1, e_0, p_t)$ with
$e_0$ and $e_1$  projections in a matrix algebra of $A$, say $e_0, e_1 \in M_{n,n}(A)$, and
$p_t$  a continuous family of projections in $M_{n,n}(B)$, $t\in [0,1]$, satisfying 
$\pi (e_i)=p_i$ for $i=0,1$. Recall also from Subsection \ref{subsection:rcc} that 
$(\tau, \sigma)$ is a relative cyclic cocycle  of degree $2p$ if 
 $b\tau=\pi^* \sigma$ and $b\sigma=0$ with 
$\tau\in C^{2p}_\lambda (A)$ and  $\sigma\in C^{2p+1}_\lambda (B)$. 
Then the  pairing 
$K_0(A, B) \times HC^{2p}(A, B) \to \CC$
is defined by 
$$
\langle [(e_1, e_0, p_t)], [(\tau, \sigma)] \rangle 
= \frac{1}{p!} \left( 
\left [\tau(e_i, \dots ,e_i)\right ]_0^1
-  (2p+1) \int_0^1\sigma([\dot{p}_t, p_t],p_t,\dots,p_t)dt
\right)
$$
One can prove, thanks to the transgression formula  
of Connes-Moscovici, \cite{CM} p.354, that 
this formula is well defined.

\bigskip
  
   Observe now that $[\tau_{GV}]\in HC^2 (J_c)$ and $[(\tau_{GV}^r, \sigma_{GV})]\in HC^2 (A_c,B_c)$
  can be paired with elements in $K_0 (J_c)$ and $K_0 (A_c, B_c)$ respectively.
  As in \cite{MN}, and with the pairing with the index classes 
  in mind,
  we set
  \begin{equation}\label{higher-cocycles}
  S^{p-1}\tau_{GV}:=\tau_{2p}\quad\text{and}\quad (S^{p-1}\tau_{GV}^r, \frac{3}{2p+1}S^{p-1} \sigma_{GV}):= 
  (\tau_{2p}^r, \sigma_{(2p+1)}).
  \end{equation}
  with $S$ denoting the $S$-operation introduced in \cite{connes-ihes}. Recall the formula
  $b S\phi=\frac{q+1}{q+3} S b\phi$ for a cyclic cochain of degree $q$ (see \cite{connes-ihes}, p. 322). We then have 
  $$b\tau^r_{2p}=b S^{p-1} \tau^r_{GV}=  \frac{3}{2p+1}S^{p-1} b \tau^r_{GV}=\frac{3}{2p+1}S^{p-1}  \pi^* \sigma_{GV}=\pi^* \sigma_{2p+1}\,.$$
   We obtain in this way cyclic cohomology classes
     \begin{equation}\label{higher-cocycles-bis}
   [\tau_{2p}]\in HC^{2p} (J_c)\quad\text{and}\quad [(\tau_{2p}^r, \sigma_{(2p+1)})]\in HC^{2p} (A_c,B_c)
   \,.
  \end{equation}
  
  \begin{proposition}\label{prop:extended-cocycles}
   Let $2n$ be equal the dimension of the leaves in $X=\tilde{V}\times_\Gamma S^1$, i.e.
  the dimension of $\tilde{V}$.
  Let $\mathbf{\mathfrak{J}}:=\mathbf{\mathfrak{J}_{2n+1}}$, 
 Then,  the 
 cocycle $\tau_{2n}$ extends to a bounded 
  cyclic cocycle on $\mathbf{\mathfrak{J}}$.
  \end{proposition}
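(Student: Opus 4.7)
The plan is to extend $\tau_{2n}$ by continuity of its defining formula. First, I would recall, following Subsection \ref{subsect:absolute-taugv} and the general framework of Subsection \ref{subsect:ccgraded}, that $\tau_{2n} = S^{n-1}\tau_{GV}$ can be written explicitly as a signed sum
$$\tau_{2n}(a_0,\ldots,a_{2n}) = c_n \sum_{\alpha} \mathrm{sign}(\alpha)\, \omega_\Gamma\bigl(a_0\,\delta_{\alpha(1)} a_1\,\delta_{\alpha(2)} a_2 \cdots \delta_{\alpha(2n)} a_{2n}\bigr),$$
where each $\delta_{\alpha(j)}$ is one of the derivations $\delta_1 = [\dot{\phi},\cdot]$ or $\delta_2 = [\phi,\cdot]$ appearing in Proposition \ref{prop:cyclic-cocycle-tgv}. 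This is the explicit form of $S^{n-1}\tau_{GV}$ coming from the general construction of Proposition \ref{prop:cyclic-commut-deriv} applied to commuting derivations; the analogous formula in the closed case is established in \cite{MN} (see Appendix \ref{appendix:absolute-taugv}).

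Second, I would verify that each summand makes sense for $a_0,\ldots,a_{2n} \in \mathbf{\mathfrak{J}}$ and yields a continuous multilinear functional. By the very definition of $\mathbf{\mathfrak{J}} = \mathcal{J}_{2n+1} \cap \mathrm{Dom}(\overline{\delta}_1) \cap \mathrm{Dom}(\overline{\delta}_2)$ from Subsection \ref{subsubsection:gothic-j}, each element $a_i$ belongs to $\mathcal{J}_{2n+1} \subset \mathcal{I}_{2n+1}$, and each commutator $\overline{\delta}_j a_i$ also lies in $\mathcal{J}_{2n+1} \subset \mathcal{I}_{2n+1}$. The product
$$a_0\,(\overline{\delta}_{\alpha(1)} a_1)\,(\overline{\delta}_{\alpha(2)} a_2) \cdots (\overline{\delta}_{\alpha(2n)} a_{2n})$$
is then a composition of exactly $2n+1$ factors, each in $\mathcal{I}_{2n+1}$. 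By the H\"older-type Shatten inequality of Proposition \ref{prop:shatten-properties}(1), extended to our ideals through Remark \ref{extended-shatten-ineq}, this product lies in $\mathcal{I}_1$ with
$$\bigl\|a_0\,(\overline{\delta}_{\alpha(1)} a_1) \cdots (\overline{\delta}_{\alpha(2n)} a_{2n})\bigr\|_1 \leq \|a_0\|_{2n+1} \prod_{i=1}^{2n} \|\overline{\delta}_{\alpha(i)} a_i\|_{2n+1} \leq \prod_{i=0}^{2n} \|a_i\|_{\mathbf{\mathfrak{J}}},$$
the last inequality coming from the graph-norm definition \eqref{norm-gothic-j}. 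Proposition \ref{prop:extension-of-weight} then guarantees that $\omega_\Gamma$ is defined and continuous on $\mathcal{I}_1$, so each summand is finite and the resulting functional $\tau_{2n}$ on $\mathbf{\mathfrak{J}}^{\otimes(2n+1)}$ is bounded.

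Third, for the cyclic cocycle property I would pass by continuity from $J_c$ to $\mathbf{\mathfrak{J}}$. On $J_c$ we already know that $\tau_{2n}$ is cyclic and satisfies $b\tau_{2n}=0$ (Proposition \ref{prop:cyclic-cocycle-tgv} together with the fact that $S$ preserves the cyclic cocycle relations). Since $\mathbf{\mathfrak{J}}$ is by construction the common domain of the closures $\overline{\delta}_j$ of derivations initially defined on $J_c \subset \mathcal{J}_{2n+1}$, the subalgebra $J_c$ is dense in $\mathbf{\mathfrak{J}}$ in its graph norm; combined with the boundedness estimate above, the relations extend.

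The main obstacle is the presence of the unbounded function $\phi = \log\psi$ inside each commutator: one cannot naively expand $\overline{\delta}_j a_i = [\phi^{(j)}, a_i]$ and rearrange, since intermediate operators like $a_i \phi$ need not even be bounded, let alone of Shatten class. Each commutator $\overline{\delta}_j a_i$ must therefore be treated as a single atomic factor of class $\mathcal{I}_{2n+1}$, and the Shatten counting works out precisely because there are $2n+1$ such factors and we demand $\mathcal{I}_{2n+1}$-control on both $a_i$ and $\overline{\delta}_j a_i$, $j=1,2$, in the definition of $\mathbf{\mathfrak{J}}$. This is exactly the reason the algebras of Section \ref{sec:shatten} were built with $m = 2n+1$: a strictly smaller Shatten exponent would fail to close under H\"older, while a larger one would lose the holomorphic closedness needed in Subsection \ref{subsect:iso-of-k}.
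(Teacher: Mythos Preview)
Your analytic strategy is exactly the paper's: each factor in the product lies in $\mathcal{I}_{2n+1}$, there are $2n+1$ factors, so the H\"older inequality of Proposition \ref{prop:shatten-properties} and Remark \ref{extended-shatten-ineq} land the product in $\mathcal{I}_1$, where $\omega_\Gamma$ is bounded by Proposition \ref{prop:extension-of-weight}. That is the entire content of the paper's proof.

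However, your explicit formula for $\tau_{2n}=S^{n-1}\tau_{GV}$ is wrong. The $S$ operation in cyclic cohomology does \emph{not} produce a cocycle of the shape in Proposition \ref{prop:cyclic-commut-deriv} with $2n$ derivations; that proposition requires $k$ distinct commuting derivations to build a $k$-cocycle, and here there are only two. (Indeed, if you repeat a derivation in formula \eqref{general-k-linear} the antisymmetrization kills the term.) What $S$ actually does is pad with undifferentiated factors: up to constants, $\tau_{2n}(k_0,\ldots,k_{2n})$ is a sum of terms of the form
\[
\omega_\Gamma\bigl(k_0\cdots k_{i-1}\,\delta_1(k_i)\,k_{i+1}\cdots k_{j-1}\,\delta_2(k_j)\,k_{j+1}\cdots k_{2n}\bigr)
\]
minus the same with $\delta_1,\delta_2$ swapped. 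So only \emph{two} factors carry a derivation; the remaining $2n-1$ factors are plain. This is the formula the paper uses. Fortunately it has exactly the same structure for the estimate: still $2n+1$ factors, each either $k_\ell\in\mathcal{I}_{2n+1}$ or $\overline{\delta}_j k_\ell\in\mathcal{I}_{2n+1}$, and your Shatten argument goes through verbatim once the correct formula is in place.

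Two smaller remarks. Your claim that $J_c$ is dense in $\mathbf{\mathfrak{J}}$ in the graph norm is not established in the paper and is not needed: boundedness of the explicit multilinear expression suffices, since the cyclic and coboundary identities are algebraic in the factors and in the derivations and hold for all elements of $\mathbf{\mathfrak{J}}$ by the same formulae. And your final sentence is off: taking a larger $m$ does not spoil holomorphic closedness (Proposition \ref{prop:gothic-j-hol-closed} holds for every $m\ge 1$); the point of $m=2n+1$ is only that this is the smallest exponent for which the H\"older count closes.
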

  \begin{proof}
 By the definition of the $S$ operation in cyclic cohomology, we know that $\tau_{2n} (k_0, \dots, k_{2n})$
is expressed, up to a multiplicative constant,  as the sum of elements of the following type
\begin{align*}
&\omega_{\Gamma} (k_0 \cdots k_{i-1} \,\delta_1 (k_{i}) k_{i+1} \cdots k_{j-1} \,\delta_2 (k_{j}) k_{j+1}\cdots
k_{2n})\\&-\omega_{\Gamma} (k_0 \cdots k_{i-1} \,\delta_2 (k_{i}) k_{i+1} \cdots k_{j-1} \,\delta_1 (k_{j}) k_{j+1}\cdots
k_{2n}) \,;
\end{align*} 
We know, see Proposition \ref{prop:extension-of-weight}, that $\omega_{\Gamma}$ is bounded with respect to the $\mathcal{I}_1$-norm;
moreover, the product appearing in the above formula is bounded from $(\mathbf{\mathfrak{J}_{2n+1}})^{\otimes (2n+1)}$
to $\mathcal{I}_1$. This establishes the Proposition.
\end{proof}
  
  \begin{proposition}\label{prop:extended-cocycles-bis} 
  Let $m=2n+1$ with $2n$ equal to the dimension of leaves.
 Then the  eta cocycle $\sigma_{m}$ 
extends to a  bounded
  cyclic cocycle on $\mathbf{\mathfrak{B}}_m$.
  \end{proposition}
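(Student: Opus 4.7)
The plan is to extend $\sigma_m$ (with $m = 2n+1$) from $B_c$ to $\mathbf{\mathfrak{B}}_m$ by continuity in the graph norm, paralleling the proof of Proposition \ref{prop:extended-cocycles} for the absolute Godbillon-Vey cocycle.

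First I would unfold $\sigma_m = \tfrac{3}{2n+1}\,S^{n-1}\sigma_{GV}$. Since $\sigma_{GV}$ was constructed in the proof of Proposition \ref{prop:sigma3-cyclic} through the machinery of Proposition \ref{prop:cyclic-commut-deriv} applied to the three pairwise commuting derivations $\delta_1 = [\dot{\phi}_\pa, \cdot]$, $\delta_2 = [\phi_\pa, \cdot]$, $\delta_3 = [\chi^0, \cdot]$ and the bimodule trace $\omega_\Gamma^{\cyl}$, its $S$-iterate admits, up to a universal combinatorial factor, a representation as a finite sum of terms of the shape
\begin{equation*}
\omega_\Gamma^{\cyl}\bigl(\ell_0\cdots\ell_{i_1-1}\,\delta_{\alpha(1)}(\ell_{i_1})\,\ell_{i_1+1}\cdots\delta_{\alpha(2)}(\ell_{i_2})\cdots\delta_{\alpha(3)}(\ell_{i_3})\cdots\ell_{2n+1}\bigr),
\end{equation*}
indexed by increasing triples $1 \le i_1 < i_2 < i_3 \le 2n+1$ and permutations $\alpha \in \mathfrak{S}_3$. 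In each such term, exactly one of the applied derivations is $\delta_3$, which sends $\mathbf{\mathfrak{B}}_m$ into $\J_m(\cyl(\pa X), \F_\cyl)$ by the defining property of $\B_m$; the other two derivations send $\mathbf{\mathfrak{B}}_m$ into $\B_m$ since $\mathbf{\mathfrak{B}}_m = \B_m \cap \mathrm{Dom}(\overline{\delta}_1) \cap \mathrm{Dom}(\overline{\delta}_2)$; and the undifferentiated factors $\ell_j$ lie in $\mathbf{\mathfrak{B}}_m \subset \B_m \subset \operatorname{OP}^{-1}$.

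The main obstacle will be to upgrade each such product to an $\I_1(\cyl(\pa X), \F_\cyl; E, E')$ estimate, so that the continuous extension of $\omega_\Gamma^{\cyl}$ (the cylindrical analogue of Proposition \ref{prop:extension-of-weight}) applies. The straightforward bound $\J_m \cdot \B_m \cdots \B_m \subset \J_m \subset \I_m$ furnished by Lemma \ref{new-ideal} falls short by a factor of $m = 2n+1$ in Shatten class, because only one of the $2n+2$ factors in the product is known to be Shatten. The remedy is to exploit the fact that $\B_m = \mathrm{Dom}(\overline{\delta}_3)$, so every $\ell \in \B_m$ carries the extra improvement $[\chi^0, \ell] \in \J_m$. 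Expanding $\delta_3(\ell_{i_3}) = \chi^0\ell_{i_3} - \ell_{i_3}\chi^0$, inserting resolutions of the identity $1 = \chi^0 + (1-\chi^0)$ between consecutive factors, and iteratively commuting each occurrence of $\chi^0$ past neighboring $\ell_j$'s, at the cost of correction terms in which $\ell_j$ is replaced by $[\chi^0, \ell_j] \in \J_m$, one can rewrite each term as a sum of compositions in which enough factors belong to $\J_m \subset \I_m$ to invoke the Shatten--H\"older inequality of Remark \ref{extended-shatten-ineq}; together with the $\operatorname{OP}^{-1}$ pseudo-local regularity of the remaining factors this will place the total product in $\I_1$, with norm controlled by a product of the graph norms $\|\ell_j\|_{\mathbf{\mathfrak{B}}_m}$. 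The combinatorial bookkeeping and the delicate interplay between the three derivations and the $\operatorname{OP}^{-1}$ structure form the technical heart of the argument.

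Once continuity of $\sigma_m$ on $(\mathbf{\mathfrak{B}}_m)^{\otimes(2n+2)}$ has been established, cyclic symmetry and the cocycle identity $b\sigma_m = 0$ pass from $B_c$, where they were proved in Proposition \ref{prop:sigma3-cyclic} combined with the fact that the Connes $S$-operation preserves cyclic cocycles, to the whole of $\mathbf{\mathfrak{B}}_m$ by continuity and density of $B_c \subset \mathbf{\mathfrak{B}}_m$.
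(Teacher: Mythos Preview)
Your structural decomposition of $\sigma_m$ is correct, and you correctly identify the main obstacle, but the proposed route to the $\I_1$ estimate does not close. Iterative commutation and Shatten--H\"older alone are insufficient: after expanding $\delta_3(\ell_{i_3})$ and inserting $\chi^0+(1-\chi^0)$, each resulting term carries at most \emph{two} factors of the form $[\chi^0,\cdot]\in\J_m$ (this is the same count visible in the identity $\sigma_1(\ell_0,\ell_1)=\tfrac{1}{2}\Tr(\chi^0[\chi^0,\ell_0][\chi^0,\ell_1])$ of Remark~\ref{remark:new-sigma-1} and in the matrix decomposition used in the paper), whereas Shatten--H\"older needs $m=2n+1$ such factors to reach $\I_1$. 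The remaining $\B_m$ factors lie in $\operatorname{OP}^{-1}$ but are not in any Shatten ideal on the non-compact cylinder, so they contribute nothing to the Shatten count; the ``$\operatorname{OP}^{-1}$ pseudo-local regularity'' you invoke is not by itself a trace-class statement.

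The paper's proof takes a genuinely different route, one that explains why the derivation $\partial_\alpha$ was built into the definition of $\B_m$ in the first place. After the $\chi^0/(1-\chi^0)$ matrix decomposition (yielding two commutators per term, as above), the argument passes to the Fourier transform in the cylindrical variable and invokes the Key Lemma~\ref{lemma:keylemma}: each commutator $[\chi^0,\ell]$ is controlled not in a Shatten norm but in an $L^2(\RR\times\RR,\operatorname{OP}^{-q}(\partial X,\F_\partial))$ norm, with the $\partial_\alpha$-norm of $\ell$ entering the estimate via part~3) of that Lemma. The block of undifferentiated factors, being a product of $p$ elements of $\operatorname{OP}^{-1}$, lies in $\operatorname{OP}^{-p}$ and multiplies boundedly into these $L^2$ spaces. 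The finiteness of $\omega_\Gamma^{\cyl}$ then follows from the bilinear pairing~\eqref{pairing-bounded}, which requires the combined pseudodifferential order to exceed $\dim N=2n-1$; this is exactly what the constraint $p+s=m=2n+1$ arranges. Your sketch makes no use of $\partial_\alpha$ or of the Fourier picture, and without them the $\operatorname{OP}^{-1}$ regularity of the undifferentiated factors cannot be converted into a trace estimate.
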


   \begin{proposition}\label{prop:extended-cocycles-tris} 
  Let ${\rm deg} S^{p-1} \tau_{GV}^r=2p>m(m-1)^2-2=m^3-2m^2+m-2$, with $m=2n+1$ and $2n$ equal to the dimension of the leaves
  in $(X,\F)$. 
  Then the regularized Godbillon-Vey cochain $S^{p-1} \tau_{GV}^r$, which is by definition
  $\tau_{2p}^r$, extends to a  bounded 
  cyclic cochain on  $\mathbf{\mathfrak{A}}_{m}$.
    \end{proposition}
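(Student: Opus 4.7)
The plan is to reduce the boundedness of $\tau_{2p}^r$ on $\mathbf{\mathfrak{A}}_m$ to Shatten--H\"older estimates. First, using the identity $\omega^r_\Gamma = \omega_\Gamma \circ t$ of Proposition \ref{prop:regularized-via-t}, one rewrites $\tau_{2p}^r(k_0,\dots,k_{2p})$ as a sum of terms of the form
\[
\omega_\Gamma\bigl(t\bigl(k_0 \cdots \overline{\delta}_1(k_i) \cdots \overline{\delta}_2(k_j) \cdots k_{2p}\bigr)\bigr).
\]
Since $\omega_\Gamma$ is continuous on $\mathcal{I}_1$ by Proposition \ref{prop:extension-of-weight}, and since the derivations $\overline{\delta}_\star$ map $\mathbf{\mathfrak{A}}_m$ into $\mathcal{A}_m$ with bounded norm by the very definition \eqref{norm-gothic-a}, the task reduces to bounding $\|t(P)\|_{\mathcal{I}_1}$ where $P$ is a product of $2p+1$ factors in $\mathcal{A}_m$.

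Second, decompose every factor as $\tilde k_i = a_i + \chi^0 \ell_i \chi^0$ with $a_i = t(\tilde k_i) \in \mathcal{J}_m$ and $\ell_i = \pi(\tilde k_i) \in \mathcal{B}_m$. A useful observation here is that both $\phi$ and $\dot\phi$ are multiplication operators translation-invariant along the cylinder and therefore commute with $\chi^0$, so the decomposition is preserved by the derivations:
\[
\overline{\delta}_\star(k) = \overline{\delta}_\star(a) + \chi^0\, \overline{\delta}_\star(\ell)\, \chi^0.
\]
Expanding $P = \prod(a_i + \chi^0 \ell_i \chi^0)$ and iteratively applying the identity
\[
\chi^0 \ell \chi^0 \cdot \chi^0 \ell' \chi^0 = \chi^0 \ell\ell' \chi^0 - \chi^0 [\chi^0,\ell]\,\ell' \chi^0,
\]
together with the defining property $[\chi^0,\ell_i] \in \mathcal{J}_m$ for $\ell_i \in \mathbf{\mathfrak{B}}_m \subset \mathcal{D}_m$, one sees that the top-order pure-boundary contribution $\chi^0 \ell_0 \cdots \ell_{2p} \chi^0 = \chi^0 \pi(P) \chi^0$ is precisely what $t$ subtracts, while every remaining term contains at least one Shatten factor from $\mathcal{J}_m \subset \mathcal{I}_m$ (either an $a_i$ or a commutator $[\chi^0,\ell_i]$).

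The hard part is the combinatorial step. By Shatten--H\"older (Remark \ref{extended-shatten-ineq}), a product containing $r$ factors in $\mathcal{I}_m$ with all other factors bounded lies in $\mathcal{I}_{m/r}$, hence in $\mathcal{I}_1$ as soon as $r \geq m$. One must show that under the hypothesis $2p+1 \geq m(m-1)^2$ every term surviving after cancellation contains at least $m$ such Shatten factors, and then sum the resulting bounds over the finitely many terms produced by the expansion. The cubic threshold $m(m-1)^2$ reflects the fact that a run of $r$ consecutive boundary factors $\chi^0 \ell_i \chi^0$ contributes at most $r-1$ commutators upon simplification; the worst-case configurations (few interior factors $a_i$ together with short boundary runs) force one to absorb the shortfall by having many such runs. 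Carrying out this bookkeeping and assembling the norm estimates in $\mathbf{\mathfrak{J}}_m$, $\mathbf{\mathfrak{B}}_m$, and $\mathbf{\mathfrak{A}}_m$ yields
\[
|\tau_{2p}^r(k_0,\dots,k_{2p})| \leq C \prod_{i=0}^{2p} \|k_i\|_{\mathbf{\mathfrak{A}}_m},
\]
which is the desired bounded extension; the detailed combinatorial verification is the main technical obstacle and is deferred to Section \ref{section:proofs}.
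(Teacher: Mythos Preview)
Your reduction via $\omega_\Gamma^r = \omega_\Gamma \circ t$, the decomposition $\tilde k_i = a_i + \chi^0\ell_i\chi^0$, and the observation that the derivations respect this splitting are all correct and match the paper. But the core mechanism you propose --- pure Shatten--H\"older counting of $\mathcal I_m$-factors --- is insufficient, and your explanation of the cubic threshold is not the right one.

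The concrete obstruction is this. Take the pure-boundary case $a^\gamma = \prod_{i=1}^{r}\chi^0\ell_i\chi^0$. After subtracting $\chi^0\bigl(\prod\ell_i\bigr)\chi^0$, the expansion of $t(a^\gamma)$ via your identity contains, among many others, the term
\[
-\,\chi^0\,[\chi^0,\ell_1]\,\ell_2\cdots\ell_r\,\chi^0.
\]
This term has \emph{exactly one} factor in $\mathcal J_m\subset\mathcal I_m$, namely $[\chi^0,\ell_1]$; the remaining $\ell_2,\dots,\ell_r$ are only in $\mathcal B_m\subset B^*$ and carry no Shatten improvement whatsoever. Your claim that ``every term surviving after cancellation contains at least $m$ such Shatten factors'' is therefore false for every $r$, and Shatten--H\"older alone places this term only in $\mathcal I_m$, not $\mathcal I_1$.

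The paper handles this term by a different mechanism, Lemma~\ref{lemma:lemmatwo}: the product $\ell_2\cdots\ell_r$ lies in $\operatorname{OP}^{-(r-1)}$ (since $|||\cdot|||_p$ is submultiplicative across orders), and one proves
\[
\mathcal J_\nu \cdot \operatorname{OP}^{-p}\ \subset\ \mathcal I_1
\quad\text{whenever } p>\dim\text{(leaves)},
\]
using crucially that $kg$ is bounded for $k\in\mathcal J_\nu$ (this is exactly why the weight $g(s,y)=1+s^2$ was built into the definitions of $\mathcal J_m$ and $\mathcal B_m$). So a \emph{single} $\mathcal J_m$-factor adjacent to a sufficiently long run of $\ell_i$'s already lands in $\mathcal I_1$. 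Your proposal never invokes $\operatorname{OP}^{-p}$ or the $g$-weight, and without them the argument cannot close.

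Correspondingly, the cubic bound $m(m-1)^2$ does not arise from ``a run of $r$ boundary factors contributing at most $r-1$ commutators''. It comes from a two-level application of the pigeonhole argument (Lemma~\ref{lemma:claim1}): at the outer level one forces either $m$ genuine $\mathcal J_m$-factors (case b-1, handled by H\"older) or a boundary run $b^\beta$ of length $t$ (case b-2); inside that run one again forces either $m$ commutators (H\"older) or a consecutive block of $m$ naked $\ell_i$'s in $\operatorname{OP}^{-m}$, which combines with the single available $\mathcal J_m$-factor via Lemma~\ref{lemma:lemmatwo}. The inner threshold is $t>m^2-2m$, and feeding this into the outer pigeonhole gives $r>m(m-1)^2-1$.
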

 
 \smallskip
 \noindent
 We give a detailed proof of these two propositions in Subsection \ref{subsection:proofs-extension}.
 
 \smallskip
  \noindent
  Fix $m=2n+1$, with $2n$ equal to dimension of the leaves and  set as usual
$$\mathfrak{J} := \mathbf{ \mathfrak{J}_m }\,,\quad  \mathfrak{A}:= \mathbf{ \mathfrak{A}_m} \,,\quad  
\mathfrak{B}: = \mathbf{\mathfrak{B}_{m}}$$
Using the above three Propositions we see that there are well defined classes
   \begin{equation}\label{higher-cocycles-extended}
   [\tau_{2p}]\in HC^{2p} (\mathbf{\mathfrak{J}})\quad\text{for}\quad 2p\geq 2n\quad
   \text{and}\quad [(\tau_{2p}^r, \sigma_{(2p+1)})]\in 
   HC^{2p} (\mathbf{\mathfrak{A}}, \mathbf{\mathfrak{B}})\quad \text{for}\quad 2p>m(m-1)^2-2
   \,.
   \end{equation}
   
\section{{\bf Index theorems}}\label{section:indextheorems}

\subsection{The higher APS index formula for the Godbillon-Vey cocycle}\label{subsection:aps}

We now have all the ingredients to state and prove a APS formula for the Godbillon-Vey cocycle.
Let us summarize our geometric data.
We have a foliated bundle with boundary $(X_0,\F_0)$, $X_0=\tM\times_\Gamma T$.
We assume that the dimension of $\tM$ is even and that
all our geometric structures (metrics, connections, etc) are of product type near the boundary. 
We also
consider $(X,\F)$, the associated foliation with cylindrical ends.
We are given a $\Gamma$-invariant $\ZZ_2$-graded hermitian bundle $\widehat{E}$ on the trivial
fibration $\tM\times T$, endowed with a $\Gamma$-equivariant vertical
Clifford structure. We have a resulting $\Gamma$-equivariant family of Dirac operators $D=(D_\theta)$.

Fix $m=2n+1$, with $2n$ equal to dimension of the leaves and  set
$$\mathfrak{J} := \mathbf{ \mathfrak{J}_m }\,,\quad  \mathfrak{A}:= \mathbf{ \mathfrak{A}_m} \,,\quad  
\mathfrak{B}: = \mathbf{\mathfrak{B}_{m}}$$
We have proved that there are well defined 
smooth index classes
\begin{equation*}
\Ind^s  (D)\in K_0 ( \mathfrak{J})\,,\quad \Ind^s  (D,D^\pa)\in K_0 (\mathfrak{A}, \mathfrak{B})\,,
\end{equation*}
the first given in terms of a  parametrix $Q$ and the second given in term of the graph projections
$e_{D}$ and $e_{D^{\cyl}}$.
 Let $T=S^1$; consider $\tau_{2p}:=S^{p-1} \tau_{GV}$
 and $(\tau_{2p}^r,\sigma_{(2p+1)}):=(S^{p-1} \tau^r_{GV}, \frac{3}{2p+1}S^{p-1} \sigma_{GV})$.
 Then we know that  there are well defined additive maps:
\begin{align}\label{pairings}
& \langle \;\cdot\;, [\tau_{2p}] \rangle\,: K_0 (\mathfrak{J})\rightarrow \CC\,,\quad 2p\geq 2n\\
&\langle \;\cdot\; , [(\tau^r_{2p},\sigma_{(2p+1)})] \rangle\,: K_0 (\mathfrak{A}, \mathfrak{B})
\rightarrow \CC\,,\quad 2p>m(m-1)^2-2\,.
\end{align}

\begin{definition}
Let $(X_0,\F_0)$, $X_0=\tM\times_\Gamma S^1$, as above and assume \eqref{assumption}.
The Godbillon-Vey higher index is the number 
\begin{equation}\label{gvindex}
\Ind_{GV} (D) := \langle \Ind^s (D), [\tau_{2n}] \rangle.
\end{equation}
with $2n$ equal to the dimension of the leaves.
\end{definition}
Notice that, in fact, $\Ind_{GV} (D) := \langle \Ind^s (D), [\tau_{2p}] \rangle$ for each
$p\geq n$, see \eqref{s-operator}.

\medskip
The following theorem is the  main results of this paper:

\begin{theorem}
Let $(X_0,\F_0)$, with $X_0=\tM\times_\Gamma S^1$, be a foliated bundle with boundary 
and let $D:= (D_\theta)_{\theta\in S^1}$ be a $\Gamma$-equivariant
family of  Dirac operators as above. Assume \eqref{assumption} on the boundary family. 
Fix  $2p>m(m-1)^2-2$ with $m=2n+1$ and $2n$ equal to the dimension of the leaves. Then
the following two equalities hold 
\begin{equation}\label{main-dim3}
\Ind_{GV} (D)= \langle \Ind^s (D,D^{\pa}), [(\tau_{2p}^r,\sigma_{2p+1}] \rangle =
\int_{X_0} {\rm AS}\wedge\omega_{GV} - \eta_{GV}
\end{equation}
 with 
 \begin{equation}\label{main-eta}
 \eta_{GV}:= \frac{(2p+1)}{p!}\int_0^{\infty}\sigma_{(2p+1)} ([\dot{p_t},p_t],p_t,\dots,p_t)dt\,,\quad p_t:= e_{tD^{{\rm cyl}}}\,,
 \end{equation}
 defining the {\em Godbillon-Vey eta invariant} of the boundary family and ${\rm AS}$ denoting the form
 induced on $X_0$ by the ($\Gamma$-invariant) Atiyah-Singer form for the fibration $\tM\times S^1\to S^1$
 and the hermitian bundle $\widehat{E}$.
\end{theorem}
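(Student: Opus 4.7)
The plan is to prove the two equalities separately. For the first, I would exploit the smooth excision isomorphism $\alpha^s_{\mathrm{ex}}: K_0(\mathfrak{J}) \to K_0(\mathfrak{A}, \mathfrak{B})$ of Theorem \ref{theo:smooth-indeces}, which sends $\Ind^s(D)$ to $\Ind^s(D, D^\partial)$. I would then establish a general compatibility statement: for any class $[e] \in K_0(\mathfrak{J})$ represented by a Connes-Skandalis type projector (so that $\pi(e) = $ a constant in $\mathfrak{B}^+$), one has
\begin{equation*}
\langle [e], [\tau_{2p}] \rangle = \langle \alpha^s_{\mathrm{ex}}([e]), [(\tau^r_{2p}, \sigma_{2p+1})] \rangle.
\end{equation*}
This compatibility follows from two observations: on elements of $\mathfrak{J}$, where the kernels decay on the cylinder, the regularization in $\omega^r_\Gamma$ is unnecessary and $\tau^r_{2p}$ restricts to $\tau_{2p}$; and if the path $p_t$ is constant then the $\sigma_{2p+1}$ contribution in the relative pairing vanishes.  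Since by Proposition \ref{prop:smooth-cylinder-1} the Connes-Skandalis projector $P_Q$ already lies in $\mathfrak{J} \oplus \mathbb{C} e_1$ with $\pi(P_Q) = e_1$, this applies directly and yields the first equality.

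For the second equality, I would compute $\langle \Ind^s(D, D^\partial), [(\tau^r_{2p}, \sigma_{2p+1})] \rangle$ using the graph-projection representatives $(e_D, \mathrm{diag}(0,1), p_t)$ with $p_t = e_{tD^{\mathrm{cyl}}}$, reparametrized from $t \in [1, \infty]$ to $[0,1]$. Unwinding the definition of the relative pairing gives
\begin{equation*}
\frac{1}{p!}\Bigl[\tau^r_{2p}(e_D, \ldots, e_D) - \tau^r_{2p}(e_0, \ldots, e_0)\Bigr] - \frac{(2p+1)}{p!}\int_0^\infty \sigma_{2p+1}([\dot{p}_t, p_t], p_t, \ldots, p_t)\, dt,
\end{equation*}
where I have absorbed the reparametrization into the $t$-integral. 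The second summand is by definition $-\eta_{GV}$ (since $\sigma_{2p+1} = \frac{3}{2p+1} S^{p-1}\sigma_{GV}$ and the constants match up with \eqref{main-eta}). The first summand is the interior term, which I would evaluate via the Getzler rescaling method of \cite{MN}: replace $D$ by $sD$ (noting that all projections $e_{sD}$ define the same class), observe that the regularized weight $\omega^r_\Gamma$ computed on the diagonal of $e_{sD}$ reduces, by the identity $\omega^r_\Gamma = \omega_\Gamma \circ t$ of Proposition \ref{prop:regularized-via-t} together with the fact that the cylindrical-end contributions cancel between $e_{sD}$ and its model on the cylinder, to a local integral over $X_0$ of the same pointwise density that appears in the closed case.

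Taking the limit $s \downarrow 0$ and applying Getzler's local computation exactly as in \cite{MN}, the interior term converges to $\int_{X_0} \mathrm{AS} \wedge \omega_{GV}$, completing the proof.

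The main obstacle will be the interior computation and, more specifically, the justification that the regularization procedure does not produce any extra boundary contribution in the rescaling limit. Concretely, one must show that for each $s > 0$ the expression $\tau^r_{2p}(e_{sD}, \ldots, e_{sD}) - \tau^r_{2p}(e_0, \ldots, e_0)$ can be written as the regularized integral of a density built from the Schwartz kernels of $e_{sD}$ on the diagonal, and then that the cylindrical part of this density, after subtraction of the rescaled model $e_{sD^{\mathrm{cyl}}}$, produces a compactly supported correction whose $s \downarrow 0$ limit vanishes. This requires careful control of the off-diagonal decay of $e_{sD}$ near the cylindrical end (uniformly in $s$) combined with the invertibility assumption \eqref{assumption} on the boundary family; the Getzler rescaling itself is then the same local computation as in the closed case, but the cancellation between $e_{sD}$ and $e_{sD^{\mathrm{cyl}}}$ along the cylinder is the genuinely new analytic point.
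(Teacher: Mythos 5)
Your proposal follows essentially the same route as the paper: the first equality comes from smooth excision together with the observation that the relative cocycle evaluated on the Connes--Skandalis triple (constant path, and $\tau^r_{2p}|_{\mathfrak{J}}=\tau_{2p}$) reduces to the absolute pairing, and the second from the graph-projection representative, the rescaling $D\mapsto uD$ with the absolute pairing constant in $u$, and Getzler rescaling (as in \cite{MN}) for the $u\downarrow 0$ limit of the regularized interior term $\tau^r_{2p}(\widehat{e}_{uD})$. The one nuance to keep straight is that the relative pairing with $p_t=e_{tD^{\cyl}}$, $t\in[1,+\infty]$, produces $\int_1^{\infty}$ rather than $\int_0^{\infty}$; the convergence of the improper integral defining $\eta_{GV}$ at $t=0$ is itself a \emph{consequence} of the rescaling argument (constancy of the index in $u$ plus convergence of the local term), not something to be read off from the definition of the pairing.
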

Notice that using the Fourier transformation the Godbillon-Vey eta invariant $\eta_{GV}$ does depend only on
 the boundary
family $D^\pa\equiv (D^\pa_\theta)_{\theta\in S^1}$.

\begin{proof}
For notational convenience we set $\tau_{2p}\equiv \tau_{GV}$,
$\tau_{2p}^r \equiv \tau_{GV}^r$
and $\sigma_{(2p+1)}\equiv \sigma_{GV}$. We also write $\alpha_{{\rm ex}}$
instead of $\alpha_{{\rm ex}}^s$.
The left hand side of formula \eqref{main-dim3} is, by definition,
the pairing $\langle [P_Q,e_1], \tau_{GV} \rangle$ with $P_Q$ the Connes-Skandalis
projection and $e_1:= \begin{pmatrix} 0&0\\0&1 \end{pmatrix}$. 
Recall that $\alpha_{{\rm ex}} ( [P_Q,e_1])$
is by definition $[P_Q,e_1,{\bf c}]$, with ${\bf c}$ the constant path with value $e_1$.
Since the derivative of the constant path is equal to zero
and since $\tau^r_{GV}|_{\mathbf{\mathfrak{J}}}=\tau_{GV}$, using the obvious extension
of \eqref{regularized-via-t}, 
we obtain at once the 
crucial relation 
\begin{equation}\label{restriction-of-reg}
\langle \alpha_{{\rm ex}} ([P_Q,e_1]), [(\tau^r_{GV},\sigma_{GV})] \rangle 
= \langle  [P_Q,e_1], [\tau_{GV}] \rangle \,.
\end{equation}
Now we use the excision formula, asserting that  $\alpha_{{\rm ex}} ( [P_Q,e_1])$
is equal, {\it as a relative class},
to $[e_{D}, e_1, p_t]$ with $p_t:= e_{t D^{{\rm cyl}}}$. Thus 
$$\langle [e_{D}, e_1, p_t], [(\tau^r_{GV},\sigma_{GV})] \rangle= \langle  [P_Q,e_1], [\tau_{GV}] \rangle$$
which is the first equality in \eqref{main-dim3} (in reverse order).
Using also the definition of the relative pairing we can summarize our
results so far as follows:
\begin{align*}
\Ind_{GV} (D)&:= \langle \Ind^s (D), [\tau_{GV}] \rangle\\
&\equiv  \langle  [P_Q,e_1], [\tau_{GV}] \rangle\\
&= \langle \alpha_{{\rm ex}} ([P_Q,e_1]), [(\tau^r_{GV},\sigma_{GV})] \rangle \\
&= \langle [e_{D}, e_1, p_t ] , [(\tau^r_{GV},\sigma_{GV})] \rangle \\
&: = \frac{1}{p!} \tau^r_{GV} (e_{D}-e_1)+\frac{(2p+1)}{p!} \int_1^{+\infty}\sigma_{GV} ([\dot{p_t},p_t],p_t,\dots,p_t)dt\\
& \equiv  \frac{1}{p!} \tau^r_{GV} (\widehat{e}_{D})+\frac{(2p+1)}{p!} \int_1^{+\infty}\sigma_{GV} ([\dot{p_t},p_t],p_t,\dots,p_t)dt
\end{align*}
with $\widehat{e}_{D}=(D+\mathfrak{s})^{-1}$.
Notice that the convergence at infinity of 
$\int_1^{+\infty}\sigma_{GV} ([\dot{p_t},p_t],p_t,\dots,p_t)dt$ follows from the fact that the pairing
is well defined. 
Replace $D$ by $uD$, $u>0$.
We obtain, after a simple change of variable in the integral,
$$\frac{(2p+1)}{p!} \int_u^{+\infty}\sigma_{GV} ([\dot{p_t},p_t],p_t,\dots, p_t,p_t)dt=-\langle \Ind^s (uD), [\tau_{GV}] \rangle
+\frac{1}{p!}\tau^r_{GV} (\widehat{e}_{uD})$$
But the absolute pairing  $\langle \Ind^s (uD), [\tau_{GV}] \rangle$ in independent of $u$ and of course equal to $\Ind_{GV} (D)$; thus
$$\frac{(2p+1)}{p!} \int_u^{+\infty}\sigma_{GV} ([\dot{p_t},p_t],p_t,\dots,p_t,p_t)dt=-\Ind_{GV} (D)+
\frac{1}{p!}\tau^r_{GV} (\widehat{e}_{uD})$$
 The second summand
of the right hand side can be proved to converge 
as $u\downarrow 0$ to $\int_{X_0} {\rm AS}\wedge \omega_{GV}$ (this employs Getzler rescaling 
exactly as
in \cite{MN}). 
Thus the limit
$$\frac{(2p+1)}{p!} \lim_{u\downarrow 0} \,\int_s^{+\infty}\sigma_{GV} ([\dot{p_t},p_t],p_t,\dots,p_t,p_t)dt$$
exists \footnote{
the situation here is similar to the one for the eta invariant
in the seminal paper of Atiyah-Patodi-Singer; the regularity there is a consequence of  their index theorem}
and  is equal to $\int_{X_0} {\rm AS}\wedge \omega_{GV}-\Ind_{GV} (D)$.
The theorem is proved
\end{proof}

\subsection{The classic Atiyah-Patodi-Singer index theorem}\label{subsection:classic}
The classic Atiyah-Patodi-Singer index theorem on manifolds with cylindrical ends
 in obtained proceeding as above, but pairing
the 
index class with the 
0-cocycle $\tau_0$ and the relative index class
with the relative  0-cocycle $(\tau_0^r,\sigma_1)$.
(If we use the Wassermann projector we don't need to use the $S$ operation; if we use the graph projection
then we need to consider  $\tau_{2n}:= S^{n}\tau_0$ and $\sigma_{2n+1}:= S^n \sigma_1$ with $2n$ equal
to the dimension of the manifold.) Equating the absolute
and the relative pairing, as above, we obtain an index theorem. It can be proved that this is precisely
the Atiyah-Patodi-Singer  index theorem on manifolds with cylindrical ends; in other words, the eta-term 
we obtain from the relative pairing is precisely the Atiyah-Patodi-Singer  eta invariant for the boundary operator.
As we have pointed out in the Introduction this approach to the classic APS index theorem was announced
by the first author in \cite{AMS}.
This approach to the classic APS index formula is also a Corollary of the main result of the recent preprint of Lesch, Moscovici and Pflaum
\cite{LMP2}, that is,  the computation of the Connes-Chern character of the relative homology
cycle  associated to a Dirac operator on a manifold with boundary in terms of local data 
and a higher eta cochain for the {\em commutative}
algebra of smooth functions on the boundary (see also \cite{Getzler-b} and \cite{WU}). 
Needless to say, the results in \cite{LMP2}
go well beyond the computation of the index; however, they don't have much in common with the
non-commutative results presented in this paper.

\subsection{Gluing formulae for Godbillon-Vey indeces}
 A direct application of our formula is a gluing formula for Godbillon-Vey indeces: 
 if $Y:=\tN\times_\Gamma T$
is a closed foliated bundle and $\tN= \tN^1 \cup_H \tN^2$ with $H$ a $\Gamma$-invariant hypersurfaces, then
we obtain $$\tN\times_\Gamma T =: Y= X^1\cup_Z X^2 :=(\tN^1\times_\Gamma T)\cup_{(H\times_\Gamma T)} (\tN^2\times_\Gamma T).$$
Under the invertibility assumption 
\eqref{assumption} and assuming all geometric structures
to be of product type near $H$, we have, with obvious notation, 
$$\Ind_{GV} (D)=\Ind_{GV} (D^1) + \Ind_{GV} (D^2)$$

\subsection{The Godbillon-Vey eta invariant}
Let $Y=\tN\times_\Gamma T$ be a closed foliated bundle and let $D=(D_\theta)_{\theta\in T}$ be an equivariant Dirac family
satisfying assumption \eqref{assumption}. We do {\it not} assume that $Y$ is the boundary of
a foliated bundle with boundary; in particular, we don't assume that $D$ arises a boundary family. 
Then, thanks to Proposition \ref{prop:extended-cocycles-bis}, we know that 
for $\epsilon>0$ the following  integral is well defined
$$\frac{(2n+1)}{n!} \int_\epsilon^{1/\epsilon}\sigma_{2n+1} ([\dot{p_t},p_t],p_t,\dots, p_t,p_t)dt$$
with $2n-1$ equal to the dimension of the leaves of $Y$.

If the integral converges as $\epsilon\downarrow 0$ then its value defines the Godbillon-Vey eta invariant
of the foliated bundle $\tN\times_\Gamma T$. This is a $C^*$-algebraic
invariant (precisely because we are assuming \eqref{assumption}). 

One might speculate that there is a
corresponding  von Neumann invariant, defined  in the same way, but without the assumption \eqref{assumption}. This is indeed the situation for the von Neumann eta invariant of a {\it measured}
foliation; it exists without any invertibility  assumption on the operator.

\section{{\bf Proofs.}}\label{section:proofs}

In this  Section we have collected all long proofs.
On the one hand  this results in some repetitions
leading to one or two additional pages; on the other hand in this way we were
able to present the main ideas of this paper without long and technical interruptions.

\subsection{Proof of Lemma \ref{lemma:split}.}\label{subsection:split}
Recall that we want to prove that there exists a bounded linear map
$s : B^* \to \mathcal{L}(\mathcal{E})$ extending $s_c : B_c \to 
\mathcal{L} ( \mathcal{E})$,  $s_c (\ell) := \chi^0 \ell \chi^0 $, and that
the composition $\rho =\pi s $ induces an  {\it injective}
$C^*$-homomorphism
$\rho : B^* \to \mathcal{Q}(\mathcal{E}).$
Our first task is to make sense of the operators appearing in the statement of the Lemma.
Thus consider the function $\chi^0$ 
and its lift to 
the covering  $\tilde X :=\tV\times T$, which will be still  denoted by $\chi^0$.
Consider the family of operators induced by the multiplication operator  by $\chi^0$. 
To be precise this consists of the multiplication operators  on 
the Hilbert spaces 
$L^2(\tV\times \{\theta\})$, for 
$\theta\in T$, obtained by  restriction of $\chi^0$ to $\tV\times \{\theta\}$.  
Call the resulting family of operators simply {\it the multiplication operator by $\chi^0$}
and still denote it by  $\chi^0$. 
Similarly, we consider  $\chi^0_{{\rm cyl}}$ and the induced 
 multiplication. 
 Then we are able to consider $\chi^0$ as a family of operators from the Hilbert spaces 
$L^2(\tM\times\{\theta\},E_{\theta})$ to 
 $ L^2( (\RR \times \partial \tM) \times\{\theta\},E_{\theta}))$, 
 identifying the half cylinders 
$(-\infty, 0] \times \partial X_0$  
contained  in both $V$ and $ \cyl  (\partial X)$ 
and hence the images of multiplication operators 
$\chi^0_{\cyl}$ and $\chi^0$.
Thus, given a translation invariant operator $\ell\in B_c$, we can consider the compressed element $\chi^0 \ell \chi^0$ as 
a $\Gamma$-equivariant family of operators  acting  on the Hilbert spaces 
$L^2(\tV\times \{\theta\})$;
in order to define  this element  rigorously we decompose 
the  family of Hilbert spaces $\mathcal H =\{L^2(\tV\times \{\theta\})\}_{\theta\in T}$ as follows:\\
write $\mathcal{H}$ as the direct sum
\begin{equation}\label{decompose}
\mathcal{H}=\mathcal{H}_0\oplus
\mathcal{H}_{{\rm cyl}}^-
\end{equation} 
of families of Hilbert spaces associated to the decomposition 
$ (X,\mathcal F)= (X_0,\mathcal F_0)\cup_{(\partial X_0,\mathcal F_\partial^-)} 
 ((-\infty,0] \times \partial X_0,  \mathcal F_{{\rm cyl}})
$; accordingly  $\chi^0 \ell \chi^0$ is represented by a matrix as
$$
\begin{pmatrix} 0 & 0
\cr 0  & \chi^0 \ell \chi^0 \cr
\end{pmatrix}
.
$$
We shall prove below that $\chi^0 \ell \chi^0$
 belongs to $\mathcal{L}( \mathcal{E})$ 
 and therefore  defines a class in $\mathcal{Q} ( \mathcal{E})$.
Here  observe that 
$\chi^0 \ell \chi^0$  admits a $\Gamma$-equivariant kernel function on 
$\tV\times\tV\times T$ for $\ell\in B_c $.  
Although it is not continuous, it is certainly a measurable function. 

\begin{sublemma}\label{sublemma:compression}
Let $\ell\in B_c $.
Then the  element 
$\chi^0 \ell \chi^0$ belongs to $ \mathcal{L}( \mathcal{E})$.
\end{sublemma}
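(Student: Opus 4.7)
The plan is to verify, for $\ell \in B_c$, three properties of the family $k := \chi^0 \ell \chi^0 = (k(\theta))_{\theta\in T}$: (i) pointwise boundedness with a uniform bound in $\theta$, (ii) $\Gamma$-equivariance, and (iii) adjointability in the Hilbert $C(T)\rtimes\Gamma$-module sense. Properties (i) and (ii) will show that $k \in \End_\Gamma(\mathcal{H})$, while (iii) will upgrade this to $k \in \mathcal{L}(\mathcal{E})$.

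First I would give a precise interpretation of the composition. Since $\chi^0$ is supported on the half-cylinder $(-\infty,0]\times\pa\tM$, which embeds canonically both in $\tV$ (as the cylindrical end) and in $\cyl(\pa\tV)=\RR\times\pa\tM$ (as its negative half), the prescription is: given $\xi\in L^2(\tV\times\{\theta\},E_\theta)$, form $\chi^0\xi$, view it as an element of $L^2(\cyl(\pa\tV)\times\{\theta\})$ by zero extension, apply $\ell(\theta)$, and then multiply by $\chi^0$, restricting the result back to $L^2(\tV\times\{\theta\})$ via the same identification. Property (i) is then immediate: multiplication by $\chi^0$ is a contraction on each $L^2$, and $\sup_\theta\|\ell(\theta)\|\leq \|\ell\|_{B^*}<\infty$, so $\sup_\theta\|k(\theta)\|\leq \|\ell\|_{B^*}$. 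Property (ii) follows from the fact that $\Gamma$ acts on the cylindrical end $(-\infty,0]\times\pa\tM$ only through its action on $\pa\tM$ (trivially on the $\RR$-coordinate), so $\chi^0$ is $\Gamma$-invariant, while $\ell$ is $\Gamma$-equivariant by assumption.

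The main obstacle is (iii). One cannot simply approximate $\chi^0$ by the smooth cutoffs $\chi_\epsilon$ and pass to a norm limit, since $\chi_\epsilon\ell\chi_\epsilon\to \chi^0\ell\chi^0$ only in the strong and not in the norm topology on $\End_\Gamma(\mathcal{H})$. Instead, I would verify adjointability directly by exhibiting a formal adjoint. The natural candidate is $k^* := \chi^0\ell^*\chi^0$, where $\ell^*\in B_c$ is the convolution involute of $\ell$. By construction $k^*$ satisfies properties (i)–(ii) by the same argument as $k$, and pointwise on each Hilbert space $L^2(\tV\times\{\theta\})$ one has $k(\theta)^* = k^*(\theta)$, because $\chi^0$ is a real-valued multiplication operator, hence self-adjoint, and $\ell(\theta)^*=\ell^*(\theta)$.

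The final step is to lift this pointwise adjoint relation to the Hilbert module level, i.e.\ to show $\langle k\xi,\eta\rangle_{C(T)\rtimes\Gamma} = \langle \xi, k^*\eta\rangle_{C(T)\rtimes\Gamma}$ for $\xi,\eta\in C^\infty_c(\tV\times T,\widehat{E})$, a dense subspace of $\mathcal{E}$. Since the $C(T)\rtimes\Gamma$-valued inner product on $\mathcal{E}$ is assembled from the usual $L^2$-inner products on the fibers $L^2(\tV\times\{\theta\})$ twisted by the $\Gamma$-action, this identity reduces, after expanding in the $(\gamma,\theta)$ variables, to the pointwise adjoint relation $\langle k(\theta)\xi_\theta,\eta_\theta\rangle = \langle \xi_\theta, k^*(\theta)\eta_\theta\rangle$ already established, combined with the $\Gamma$-equivariance of both $k$ and $k^*$. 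This yields $k\in \mathcal{L}(\mathcal{E})$ with $\mathcal{L}(\mathcal{E})$-adjoint $k^*$, completing the proof of the Sublemma.
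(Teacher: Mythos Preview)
Your central claim---that the smooth approximation $\chi_\epsilon\ell\chi_\epsilon$ converges to $\chi^0\ell\chi^0$ only strongly and not in norm---is incorrect, and in fact the paper's proof proceeds exactly by this norm approximation. The point you are missing is that $\ell\in B_c$ has $\RR\times\Gamma$-\emph{compact} support. Writing $\sigma_\epsilon=\chi^0-\chi_\epsilon$, one has
\[
\chi^0\ell\chi^0-\chi_\epsilon\ell\chi_\epsilon=\sigma_\epsilon\ell\chi_\epsilon+\chi_\epsilon\ell\sigma_\epsilon+\sigma_\epsilon\ell\sigma_\epsilon,
\]
and each term on the right is a $\Gamma$-equivariant kernel of $\Gamma$-compact support (because $\sigma_\epsilon$ is supported in a fixed compact interval and the $\RR$-support of $\ell$ is bounded). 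Such kernels have finite $\Gamma$-Hilbert--Schmidt norm; since $\sigma_\epsilon\to 0$ pointwise with a uniform $L^\infty$ bound, dominated convergence gives $\|\sigma_\epsilon\ell\chi_\epsilon\|_2\to 0$ (uniformly in $\theta$, using compactness of $T$ and continuity of $\ell$), and likewise for the other two terms. As $\|\cdot\|_{C^*}\le\|\cdot\|_2$ for such kernels, one obtains $\|\chi^0\ell\chi^0-\chi_\epsilon\ell\chi_\epsilon\|_{C^*}\to 0$. Since $\chi_\epsilon\ell\chi_\epsilon\in\mathcal{L}(\mathcal{E})$ (the smooth $\chi_\epsilon$ is an adjointable multiplier) and $\mathcal{L}(\mathcal{E})$ is norm-closed, the Sublemma follows.

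Your direct adjointability argument, while not unreasonable in spirit, has a genuine gap: you never verify that $k=\chi^0\ell\chi^0$ maps $\mathcal{E}$ into $\mathcal{E}$. Uniform fibrewise boundedness and $\Gamma$-equivariance only place $k$ in $\End_\Gamma(\mathcal{H})$, and not every such family lifts to a bounded operator on the Hilbert module $\mathcal{E}$; in particular the inner-product identity $\langle k\xi,\eta\rangle_{C(T)\rtimes\Gamma}=\langle\xi,k^*\eta\rangle_{C(T)\rtimes\Gamma}$ is not even well-posed until you know $k\xi\in\mathcal{E}$. Filling this gap (for instance by approximating $k\xi$ by $\chi_\epsilon\ell\chi_\epsilon\xi\in C^\infty_c$ in the module norm) essentially requires the very norm estimate you dismissed.
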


\begin{proof} 
Let $\chi_{\epsilon}$ be the function 
introduced  in \eqref{smooth-cut-off} and 
set $\sigma_{\epsilon}= \chi^0 -\chi_{\epsilon}$.  
We may assume that  
$\sigma_{\epsilon}(p)$ converges to zero for almost every $p\in X$ 
as $\epsilon \to 0$.
We often suppress $\epsilon$ when it is clear from the context. 
Given $\ell\in B_c $,  
we have 
$$
\chi^0 \ell \chi^0 - \chi \ell \chi
=\sigma \ell \chi + \chi\ell\sigma + \sigma\ell\sigma
.
$$
Note that  $\sigma\ell\chi$,  $\chi\ell\sigma$ and  $\sigma\ell\sigma$
admit  kernel functions  
that have $\Gamma$-compact support (although, again, they are not continuous).   
For such a function $k$ the $\Gamma$-Hilbert-Schmidt norm  $\|\;\;\|_2$
will be defined in Definition \ref{def:shatten} in Subsection \ref{subsect:shatten-ideals}.
We  have
$$
\|\sigma_{\epsilon}\ell \chi\|_{2}^2
\leq
\left( \sup_{\theta\in T} \left( \int_{\tilde{V}_\theta\times \tilde{V}_\theta }
|\chi_\Gamma (p)\sigma_{\epsilon}(p)\ell(p,q,\theta)|^2 dpdq \right) \right)
,\quad\text{with}\quad \tilde{V}_\theta\equiv \tilde{V}\times\{\theta\},
$$
which implies 
$\|\sigma_{\epsilon}\ell \chi\|_{2} \to 0$ as $\epsilon \to 0$ 
due to 
Lebesgue's dominated convergence theorem.  
A similar argument proves that
$\|\chi\ell\sigma_{\epsilon}\|_{2}$ and $\|\sigma_{\epsilon}\ell\sigma_{\epsilon}\|_{2}$ 
also converge to zero. 
Now, if $k\in C_c (G)$ then, see  Proposition
\ref{prop:shatten-properties}, we know that
\begin{eqnarray}\label{eq:operator norm < HS}
\|k \|_{C^*}\leq \|k\|_{2}.
\end{eqnarray}
This implies that 
$
\|\chi^0 \ell \chi^0 - \chi_{\epsilon} \ell \chi_{\epsilon} \|_{C^*} \to 0
$
as $\epsilon \to 0$. 
We thus obtain 
$\chi^0 \ell \chi^0 \in \mathcal{L}(\mathcal E )$ 
for $\ell \in B_c $ 
since  $\chi_{\epsilon}\ell\chi_{\epsilon} \in \mathcal{L} (\mathcal E )$. 

This  completes the proof of Subemma \ref{sublemma:compression}.
\end{proof}

We go on establishing a result on the elements of $B_c$; it will be often used in the sequel.
\begin{sublemma}\label{sublemma:basic1-last}
Let $\ell\in B_c$. Then $\chi^\lambda \ell (1-\chi^\lambda)$, $(1-\chi^\lambda) \ell \chi^\lambda$
and $[\chi^\lambda,\ell]$ are all of $\Gamma$-compact support on $\cyl(\pa X)$.
\end{sublemma}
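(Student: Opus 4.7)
The plan is to compute the kernels explicitly and to exploit the mismatch between the $\RR$-invariance of $\ell$ and the $s$-cutoff $\chi^\lambda$. First I would unravel the $\RR\times\Gamma$-invariance of $\ell\in B_c$ by writing $\ell(y,s,y',s',\theta) = \ell(y,y',s-s',\theta)$, so that the hypothesis ``$\ell$ has $\RR\times\Gamma$-compact support'' means precisely that there exists $R>0$ and a $\Gamma$-compact set $K\subset (\pa\tM\times \pa\tM\times T)/\Gamma$ such that $\ell(y,y',u,\theta)$ vanishes whenever $|u|>R$ or $[(y,y',\theta)]\notin K$.

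Second, I would write down the kernel of $\chi^\lambda \ell(1-\chi^\lambda)$ as
\[
\chi^\RR_{(-\infty,-\lambda]}(s)\,\ell(y,y',s-s',\theta)\,\bigl(1-\chi^\RR_{(-\infty,-\lambda]}(s')\bigr),
\]
which is supported in $\{s\leq -\lambda,\ s'>-\lambda\}$. Combined with the condition $|s-s'|\leq R$ imposed by $\ell$, this forces $s,s'\in[-\lambda-R,-\lambda+R]$, a bounded interval. Together with the $\Gamma$-compact support of $\ell$ in the transverse variables, this shows that $\chi^\lambda \ell(1-\chi^\lambda)$ is of $\Gamma$-compact support on $\cyl(\pa X)$. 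The argument for $(1-\chi^\lambda)\ell\chi^\lambda$ is identical, with the roles of $s$ and $s'$ swapped.

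Finally, for the commutator I would observe the algebraic identity
\[
[\chi^\lambda,\ell] = \chi^\lambda \ell(1-\chi^\lambda) - (1-\chi^\lambda)\ell\chi^\lambda,
\]
obtained by inserting $1 = \chi^\lambda + (1-\chi^\lambda)$ and noting that the cross terms $\chi^\lambda \ell \chi^\lambda$ cancel; the conclusion for $[\chi^\lambda,\ell]$ is then immediate from the first two cases. This also reproduces precisely the kernel formula \eqref{kernel-for-commutator-bis} displayed earlier. I do not anticipate any real obstacle: the whole argument is support-theoretic, and the only mildly subtle point is bookkeeping the $\Gamma$-compactness correctly, which follows because the bounded strip $\{s,s'\in[-\lambda-R,-\lambda+R]\}$ is $\RR$-finite and hence, crossed with a $\Gamma$-compact set in the transverse direction, is $\Gamma$-compact in $\cyl(\pa X)\times\cyl(\pa X)\times T/\Gamma$.
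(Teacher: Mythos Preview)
Your proposal is correct and follows essentially the same approach as the paper: both arguments write the $\RR$-invariant kernel $\ell(y,y',s-s',\theta)$, compute the three kernels explicitly on the regions $\{s\leq -\lambda,\ s'>-\lambda\}$ and $\{s>-\lambda,\ s'\leq -\lambda\}$, use the identity $[\chi^\lambda,\ell]=\chi^\lambda\ell(1-\chi^\lambda)-(1-\chi^\lambda)\ell\chi^\lambda$, and conclude from the $\RR\times\Gamma$-compact support of $\ell$. Your bounded-strip argument $s,s'\in[-\lambda-R,-\lambda+R]$ merely spells out what the paper leaves as ``immediate to check.''
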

\begin{proof}
Recall first that by definition of $B_c$
 the support of $\ell$ is compact 
on $(\cyl(\pa X)\times \cyl(\pa X))/\RR\times \Gamma$; observe also that
$$\chi^\lambda \ell- \ell \chi^\lambda= \chi^\lambda \ell (1-\chi^\lambda)- (1-\chi^\lambda)\ell \chi^\lambda\,,\quad \forall \ell\in B_c\,;
$$
We can explicitly write down the kernels  $\kappa_1$, 
$\kappa_2$ and $\kappa$
corresponding to $\chi^\lambda \ell (1-\chi^\lambda)$, $(1-\chi^\lambda)\ell \chi^\lambda$ and $[\chi^\lambda\,,\,\ell]$.
 The first two  are given by:
 \begin{equation}\label{kernel-for-commutator1}
 \kappa_1 (y,s,y^\prime,s^\prime,\theta)= \begin{cases}
\ell (y,y^\prime,s-s^\prime,\theta)
\qquad & \text{ if }\;\; s\leq -\lambda\,,\;\;s^\prime \geq -\lambda
\\
0 \qquad & \text{ otherwise }
\end{cases}
\end{equation}
\begin{equation}\label{kernel-for-commutator2}
\kappa_2 (y,s,y^\prime,s^\prime,\theta)= \begin{cases}
 \ell (y,y^\prime,s-s^\prime,\theta)
\qquad & \text{ if }\;\; s^\prime\leq -\lambda\,,\;\;s \geq -\lambda\\
0 \qquad & \text{ otherwise }
\end{cases}
\end{equation}
whereas the third is obviously given by the relation $\chi^\lambda \ell- \ell \chi^\lambda= \chi^\lambda \ell (1-\chi^\lambda)- (1-\chi^\lambda)\ell \chi^\lambda$, viz.
\begin{equation}\label{kernel-for-commutator}
 \kappa (y,s,y^\prime,s^\prime,\theta) = 
\begin{cases}
\ell (y,y^\prime,s-s^\prime,\theta)
\qquad & \text{ if }\;\; s\leq -\lambda\,,\;\;s^\prime \geq -\lambda
\\
- \ell (y,y^\prime,s-s^\prime,\theta)
\qquad & \text{ if }\;\; s^\prime\leq -\lambda\,,\;\;s \geq -\lambda\\
0 \qquad & \text{ otherwise }
\end{cases}
\end{equation}
In these formulae $y,y^\prime\in \pa \tM$, $s,s^\prime\in \RR$, $\theta\in T$ and we have used the 
translation invariance of $\ell$ in order to write 
$\ell (s,y,s^\prime,y^\prime,\theta)\equiv  \ell (y,y^\prime,s-s^\prime,\theta)$.
These explicit formulae establish the sublemma; indeed since 
$\ell$ is of $\RR\times\Gamma$-compact support it is immediate to check that
the  kernels appearing in  \eqref{kernel-for-commutator1}, 
\eqref{kernel-for-commutator2} and \eqref{kernel-for-commutator}
are all of $\Gamma$-compact support.
 \end{proof}

Consider  now the map 
$s_c : B_c \to 
\mathcal{L} ( \mathcal{E})$,  
  $s_c(\ell ) =\chi^0\ell\chi^0$,
 appearing in the statement of Lemma \ref{lemma:split}.
 The fact that the map $s_c$ extends to a bounded linear map 
$s: B^*\rightarrow \mathcal{L} ( \mathcal{E})$ is clear; indeed
we have
$$
\|s_c(\ell)\|_{C^*}
=
\|\chi^0 \ell \chi^0\|_{C^*}
\leq
\|\ell \|_{C^*}\,.
$$
It remains to show that $\rho:= \pi s$ is a 
injective and a $C^*$-algebra {\it homomorphism}. For the latter property
observe that $\rho_c := \pi s_c$ does satisfy $\rho_c (\ell \ell^\prime)=\rho_c (\ell) \rho_c (\ell^\prime)$:
indeed, if $\ell, \ell^\prime\in B_c$ then 
\begin{align*}
\rho_c (\ell \ell^\prime)&=\pi(\chi^0 \ell \ell^\prime \chi^0)=\pi ((\chi^0 \ell \chi^0 \ell^\prime \chi^0)+(\chi^0 \ell (1-\chi^0) \ell^\prime \chi^0))\\
&= \pi ((\chi^0 \ell \chi^0  \ell^\prime \chi^0))+\pi ((\chi^0 \ell (1-\chi^0) \ell^\prime \chi^0))=  \pi (\chi^0 \ell \chi^0
 \chi^0  \ell^\prime \chi^0)\\
&=  \pi (\chi^0 \ell \chi^0) 
\pi( \chi^0  \ell^\prime \chi^0)=\rho_c (\ell) \rho_c (\ell^\prime)
\end{align*}
since $\pi ((\chi^0 \ell (1-\chi^0) \ell^\prime \chi^0))=0$ given that $\chi^0 \ell (1-\chi^0)$ is 
of $\Gamma$-compact support (we have used   Sublemma \ref{sublemma:basic1} here).
By continuity it follows that $\rho (\ell \ell^\prime)=\rho (\ell) \rho (\ell^\prime)$ for $\ell, \ell^\prime
\in B^*$. The fact that it is a $*$-homomorphism is clear.

Injectiveness is implied at once by  the following:
\begin{equation}\label{injective-rho}
s(B^* (\cyl (\pa X), \mathcal F_{{\rm cyl}}))\cap C^* (X,\F;E)=0\,.
\end{equation}
Let us prove \eqref{injective-rho}. First observe that, because of the translation invariance of
the elements in $B_c$ we immediately have that $s_c (B_c)\cap C_c (X,\F;E)=0$. Next we show that
$s_c (B_c)\cap C^* (X,\F;E)=0$. Suppose the contrary and let $a\in s_c (B_c)\cap C^* (X,\F;E)$, $a\not=0$.
Then $a=\chi^0 \ell \chi^0$ for $\ell\in B_c$ and $\exists$ $a_j\in C_c (X,\F;E)$ such that $\|a_j - a\|_{C^*} \to 0$
as $j\to \infty$. The first information tells us that there exists a $c\in \RR^+$ and $y,y^\prime \in \pa \tM$ such that
$a(y,t,y^\prime,t+c)\not= 0$ for each $t>0$. Take a bump-function $\delta (t)$ at $(y,t,y^\prime,t+c)$ with
$\|\delta (t) \|_{L^2}=1$. Then, keeping the notation $a$ for the operator defined by  $a$, we have that for some
$\epsilon>0$ we have $\| a (\delta (t)) \|_{L^2} > \epsilon >0$ $\forall t >0$.  On the other hand, for each fixed $j$
we also have that $\| a_j (\delta (t)) \|_{L^2} \to 0$ as $t\to +\infty$, given that $a_j$ is an element of $C_c (X,\F;E)$.
Write now $\| a (\delta (t)) \|_{L^2} \leq \| (a-a_j) (\delta (t))\|_{L^2} + \| a_j (\delta (t)) \|_{L^2}\leq  \| (a-a_j)\|_{C^*}+ 
 \| a_j (\delta (t)) \|_{L^2}$.
Then, choosing $j$  big enough we can make the first summand smaller than $\epsilon/2$. For such 
a $j$ we can then choose $t$ big enough so that $\| a_j (\delta (t)) \|_{L^2}$ is also smaller than  $\epsilon/2$.
Summarizing,  $\epsilon< \| a (\delta (t)) \|_{L^2} < \epsilon$, a contradiction. Finally, we show that 
$s(B^*)\cap C^* (X,\F;E) =0$. Assume the contrary and let $\kappa\in s(B^*)\cap C^* (X,\F;E)$, $\kappa\not= 0$.
Then $\exists  \ell\in B^*$ such that $\kappa=s (\ell)$. Choose $\ell_j\in B_c$ such that $\ell_j\to \ell$;
clearly $s(\ell_j)= \chi^0 \ell_j \chi^0\to \kappa$. Set $\kappa_j := s(\ell_j)$, so that $\| \kappa_j - \kappa\|_{C^*}\to 0$.
On the other hand there exists $a_j\in C_c (X,\F;E)$
such that $\| a_j - \kappa \|_{C^*}\to 0$. Proceeding as above we have that there exists an $\epsilon >0$
such that $\|\kappa_j (\delta (t))\|_{L^2} > \epsilon $ for each $t>0$. Observe now that
$\|\kappa_j (\delta (t))\|_{L^2}\leq \|\kappa_j - \kappa \|_{C^*} + \| a_j -\kappa \|_{C^*} + \|a_j (\delta (t)) \|_{L^2}$
and the right hand side  can be made smaller than $\epsilon$ 
by choosing $j$ and $t$ suitably. Thus,  there exists $j$ and $t$ such that
$\epsilon<\|\kappa_j (\delta (t))\|_{L^2} < \epsilon $, a contradiction. \\{\it The proof of Lemma \ref{lemma:split}
is complete.}


\subsection{Proof of Proposition \ref{prop:gv-cocycle}: $(\tau^r_{GV},\sigma_{GV})$ is a relative cyclic 2-cocycle.}\label{subsect:proof-relative}
The proof of  Proposition \ref{prop:sigma3-cyclic} ($\sigma_{GV}$ is cyclic)
is based on the properties of the three derivarions
$\delta_j$ on $\Omega_B$  and of 
the trace $\tau^{\cyl}_{\Gamma}:\Omega_B \to  \CC$, 
where $$\Omega_B:=B_c (\cyl (\pa X),\F_{{\cyl}}; E_{{\cyl}})\oplus 
B_c (\cyl (\pa X),\F_{{\cyl}}; E_{{\cyl}}, E_{{\cyl}}^\prime )$$
and with the algebra structure given as in Lemma \ref{lemma:existence-omega}.
Similarly, we consider the linear space $$\Omega_A:=A_c (X,\F;E)\oplus A_c (X,\F;E,E^\prime)$$
endowed with the algebra structure of Lemma \ref{lemma:existence-omega}. $\Omega_A$ is endowed with {\it two}
commuting derivations.
In the proof to be given below we shall  once again work on 
$\Omega_B$ and $\Omega_A$ rather  than  $B_c (\cyl (\pa X),\F_{{\cyl}}; E_{{\cyl}})$
and $A_c (X,\F:E)$; 
this will allow us to set up an elegant
method in order to prove the fundamental equation $b\tau_{GV}^r= (\pi_c)^* (\sigma_{GV})$.
We remark, preliminary, that the homomorphism $\pi_c: A_c\to B_c$ induces an algebra homomorphism
$\pi_\Omega: \Omega_A\to \Omega_B$.

\medskip
\noindent
We  already know that $b\sigma_{GV}=0$, so we concentrate on the equation $b\tau_{GV}^r= (\pi_c)^* \sigma_{GV}$.

\medskip
\noindent
Let $\{e_1,e_2\}$ be the standard basis of $\RR^2$; consider $\Lambda^* \RR^2$ endowed with
the induced basis. We shall use standard multi-index notation; thus a generic element of the induced basis
in $\Lambda^* \RR^2$
will be denoted by $e_{\bf J}$. 
For notational convenience we set
$
\Omega_B:= \Omega_B$ and $\Omega(G):=\Omega_A.$
Then $\Omega_A\otimes \Lambda^* \RR^2$ becomes a graded algebra  with respect to the multiplication
$(\kappa\otimes e_{\bf J}) (\kappa^\prime \otimes e_{\bf I})=\kappa\kappa^\prime \otimes e_{\bf J}\wedge e_{\bf I}$ 
and the grading in $\Lambda^* \RR^2$. 
Here we forget the grading orininally defined on $\Omega_A$. 
As we have already recalled, there exist derivations 
$
\delta_j : \Omega_A \to \Omega_A
$
for $j=1,2$  with $\delta_1 a = [\dot{\phi},a], \:\:\delta_2 a = [\phi,a]$.
These are defined in the same way as in \eqref{deriv-on-omega}. 

Let $\tau^r_{\Gamma}$ be the functional on $\Omega_A$  
 obtained as a natural extension of $\omega^r_{\Gamma}$. In other words,
  we employ the weight  $\omega^r_\Gamma$ on  the algebra $A_c (X,\F;E)$ in order to define
  a map, still denoted  $\omega^r_\Gamma$, on  the bimodule $A_c (X,\F;E,E^\prime)$; then we 
set 
 \begin{equation}\label{taur}
 \tau^r_\Gamma |_{ A_c (X,\F;E)}:= 0\,,\quad  \tau^r_\Gamma |_{ A_c (X,\F;E,E^\prime)}:=\omega^r_{\Gamma}\,.
 \end{equation}
 Since the regularized trace is not a trace, we remark that $\tau^r_\Gamma$
 is not a trace map on the algebra $\Omega_A$.

\medskip
\noindent
{\bf Notation:} 
\begin{itemize}
\item for the element $\kappa\otimes (e_1\wedge e_2)$ let us set 
\begin{equation}\label{det-r}
\langle \kappa\otimes (e_1\wedge e_2) \rangle_r:=
\tau^r_{\Gamma}(\kappa).
\end{equation} 
\item we set $D: \Omega_A\otimes \Lambda^* \RR^2 \to \Omega_A\otimes \Lambda^* \RR^2$,
$$D(\kappa\otimes e_{\bf J}):=\delta_1 \kappa\otimes e_1\wedge e_{\bf J}+ \delta_2 \kappa \otimes e_2 \wedge e_{\bf J}$$ 
for $\kappa\otimes e_{\bf J}\in \Omega_A\otimes \Lambda^* \RR^2 $.
\end{itemize}

\begin{lemma}\label{sublemma1-relative}
\item{(1)} $D$ is a skew-derivation on $\Omega_A\otimes \Lambda^* \RR^2$ and $D^2=0$; 
\item{(2)} 
 we have:  $\langle D\alpha
 \rangle_r=0$ $\forall \alpha \in \Omega_A\otimes \Lambda^1 \RR^2$.
 \end{lemma}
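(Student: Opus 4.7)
Both assertions reduce to purely formal computations on $\Omega_A\otimes\Lambda^*\RR^2$. Writing $D(\kappa\otimes e_{\bf J})=\sum_{j=1}^{2}\delta_j\kappa\otimes e_j\wedge e_{\bf J}$, I would expand $D(\alpha\beta)$ for $\alpha=\kappa\otimes e_{\bf J}$ and $\beta=\kappa'\otimes e_{\bf I}$ by invoking the Leibniz rule for $\delta_1$ and $\delta_2$ on $\Omega_A$; the sign identity $e_{\bf J}\wedge e_j=(-1)^{|{\bf J}|}e_j\wedge e_{\bf J}$ then rearranges the wedge products into the desired form $D(\alpha)\beta+(-1)^{|{\bf J}|}\alpha\,D(\beta)$, which is the skew-Leibniz rule with respect to the $\Lambda^*$-grading. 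For $D^2=0$, iterating once produces $D^2(\kappa\otimes e_{\bf J})=\sum_{i,j}\delta_i\delta_j\kappa\otimes e_i\wedge e_j\wedge e_{\bf J}$; the diagonal terms $i=j$ drop out because $e_i\wedge e_i=0$, and the two remaining off-diagonal terms collapse to $(\delta_1\delta_2-\delta_2\delta_1)\kappa\otimes e_1\wedge e_2\wedge e_{\bf J}$, which vanishes by the commutation relation recorded in \eqref{deriv-commute}.

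\textbf{Reduction of Part (2).}
An arbitrary $\alpha\in\Omega_A\otimes\Lambda^1\RR^2$ decomposes as $\alpha=\kappa_1\otimes e_1+\kappa_2\otimes e_2$, and extracting the $e_1\wedge e_2$ coefficient of $D\alpha$ yields $\langle D\alpha\rangle_r=\tau_\Gamma^r(\delta_1\kappa_2-\delta_2\kappa_1)$. The statement thus reduces to the two Stokes-type identities $\tau_\Gamma^r(\delta_j\kappa)=0$ for $j=1,2$ and every $\kappa\in\Omega_A$; by the definition \eqref{taur} of $\tau_\Gamma^r$, only the component of $\delta_j\kappa$ lying in the bimodule summand $A_c(X,\F;E,E')$ contributes, and on that summand $\tau_\Gamma^r$ coincides with the regularized weight $\omega_\Gamma^r$. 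So what must be proved is the regularized Stokes identity $\omega_\Gamma^r(\delta_j\kappa)=0$.

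\textbf{The regularized Stokes identity; main obstacle.}
To verify these identities I would apply Proposition \ref{prop:regularized-via-t} to rewrite $\omega_\Gamma^r(\delta_j\kappa)=\omega_\Gamma(t(\delta_j\kappa))$, and then decompose $\kappa=a+\chi^\mu\ell\chi^\mu$ with $a$ of $\Gamma$-compact support and $\pi_c\kappa=\ell\in B_c$. The key geometric input is that $\phi_\pa$ and $\dot\phi_\pa$ are translation invariant along the cylinder and therefore commute with the cut-off $\chi^\mu$; denoting by $f_\pa$ either $\phi_\pa$ or $\dot\phi_\pa$ as appropriate, one obtains $\delta_j\kappa=\delta_j a+\chi^\mu[f_\pa,\ell]\chi^\mu$ and $\pi_c(\delta_j\kappa)=[f_\pa,\ell]$, so that $t(\delta_j\kappa)=\delta_j a+\bigl(\chi^\mu[f_\pa,\ell]\chi^\mu-\chi^0[f_\pa,\ell]\chi^0\bigr)$. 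The first summand contributes $\omega_\Gamma(\delta_j a)=0$ by the closed-case Stokes identity \eqref{exact-weight} (with the two cases not literally covered by \eqref{exact-weight} being trivial, since $\tau_\Gamma^r$ then vanishes on the relevant summand), while for the second summand the commutator kernel $[f_\pa,\ell]$ takes the pointwise form $(f_\pa(y,\theta)-f_\pa(y',\theta))\ell(y,y',s-s',\theta)$, which vanishes identically on the diagonal $y=y'$ and therefore integrates to zero against $\omega_\Gamma$. The main point requiring care will be to carry out this diagonal-vanishing argument inside the bimodule summand, where the equivariance twist for $\widehat{E}'$ is in force; I would handle this by exploiting the natural kernel-level identification between $A^0$ and $A^1$ together with the bimodule trace property \eqref{weight-commute} of $\omega_\Gamma$, which transplants the pointwise argument into the twisted setting and allows \eqref{exact-weight} to be applied without modification.
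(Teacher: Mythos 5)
Your proof is correct, and its overall architecture coincides with the paper's: part (1) is the same formal computation (diagonal terms killed by $e_i\wedge e_i=0$, cross terms by \eqref{deriv-commute}), and part (2) is in both cases reduced to the regularized Stokes identities $\tau^r_\Gamma(\delta_j\kappa)=0$, $j=1,2$. Two points of divergence are worth recording. First, your reduction in (2) is cleaner: you decompose $\alpha=\kappa_1\otimes e_1+\kappa_2\otimes e_2$ and read off the $e_1\wedge e_2$ coefficient directly, whereas the paper writes a general $\alpha$ as a combination of elements $\kappa\,D\kappa'$ and then reorganizes $D(\kappa\,D\kappa')$ into exact terms; your route avoids having to justify that such elements span $\Omega_A\otimes\Lambda^1\RR^2$. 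Second, for the Stokes identity itself you take a longer path than necessary: you invoke Proposition \ref{prop:regularized-via-t} to pass to $\omega_\Gamma\circ t$ and then split $\kappa=a+\chi^\mu\ell\chi^\mu$, while the paper simply observes that both integrals in the definition \eqref{regularized-weight} of $\omega^r_\Gamma$ involve only the diagonal restrictions $[f,\kappa](x,x,\theta)=(f(x,\theta)-f(x,\theta))\kappa(x,x,\theta)=0$ and $[f_\pa,\ell](y,y,0,\theta)=0$, so the vanishing is immediate without any decomposition. Your detour is not wrong --- the commutation of $\chi^\mu$ with $\phi_\pa$, $\dot\phi_\pa$ that you use is exactly the product-structure fact the paper records elsewhere --- but it imports machinery (the section $t$ and the trace identity) that the statement does not need, and the direct diagonal-vanishing argument also disposes of the bimodule twist without appeal to \eqref{weight-commute}.
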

\begin{proof}
For the first one we compute
\begin{align*}
D^2 (k\otimes e_{\bf J}) &= D(\delta_1 \kappa\otimes e_1\wedge e_{\bf J}+ \delta_2 \kappa \otimes e_2 \wedge e_{\bf J})\\
&= \delta_2 \delta_1 \kappa\otimes e_2\wedge e_1\wedge e_{\bf J} + \delta_1 \delta_2 \kappa \otimes e_1
\wedge e_2 \wedge e_{\bf J} = 0
\end{align*}
since $[\delta_1,\delta_2] \kappa =0 $.\\
Second we consider $\beta= \kappa D \kappa^\prime\in  \Omega_A\otimes \Lambda^1 \RR^2$. Since any
element in $\Omega_A\otimes \Lambda^1 \RR^2$ is a linear combination of elements such as $\kappa D \kappa^\prime$,
it suffices to show that $\langle D\beta
 \rangle_r=0$. We compute:
 \begin{align*}
 D\beta &= D(\kappa\delta_1 \kappa^\prime \otimes e_1 + \kappa \delta_2 \kappa^\prime \otimes e_2)\\
 &= (-\delta_2 \kappa \delta_1 \kappa^\prime + \delta_1\kappa \delta_2 \kappa^\prime)\otimes e_1\wedge e_2\\
 &= \frac{1}{2} \left( \delta_2 (-\kappa \delta_1 \kappa^\prime + (\delta_1 \kappa) \kappa^\prime) +
 \delta_1 (-(\delta_2 \kappa) \kappa^\prime + \kappa \delta_2 \kappa^\prime) \right) \otimes e_1\wedge e_2.
 \end{align*}
Since $\tau^r_{\Gamma}$ is an extension of $\omega_\Gamma^r$, 
 it suffices to show that 
$\omega_\Gamma^r (\delta_1 \kappa)=0=\omega_\Gamma^r (\delta_2 \kappa)\quad \forall \kappa\in \Omega_A\,.$
Recall the definition of $\omega^r_\Gamma$ given in \eqref{regularized-weight}. Remark that $[\phi, \kappa]$,
which is by definition $\delta_1 (\kappa)$, is given explicitly at $(x,x^\prime,\theta)\in \tV\times \tV \times T$
by  $(\phi(x,\theta) - \phi (x^\prime,\theta))\kappa  (x,x^\prime,\theta)$. Next, from the definition of
$\phi$ (it is the logarithm of the Radon-Nykodim derivative of measures that are constant in the normal direction
near the boundary), we see that $\pi_c ([\phi, \kappa])= [\phi_\pa, \ell]$ with $\pi_c (\kappa)=\ell$ and with $\phi_\partial$
the restriction of $\phi$ to $\pa X_0$ (extended to be constant along the cylinder). Thus the value of $ [\phi_\partial, \ell]$ at $(y,t,y^\prime,t^\prime,\theta)$
is equal to $(\phi_\partial (y,\theta) - \phi_\partial (y^\prime,\theta))\ell(y,y^\prime,t-t^\prime,\theta)$. In any case, by applying
 the definition of $\omega^r_\Gamma$ (see again \eqref{regularized-weight}), which involves $ [\phi, \kappa](x,x,\theta)
 $ and $ [\phi_\pa, \ell] (y,t,y,t,\theta)$, we immediately get that $\omega_\Gamma^r (\delta_1 \kappa)=0$. Similarly
 one proves that $\omega_\Gamma^r (\delta_2 \kappa)=0$.
 \end{proof}

\begin{lemma}\label{sublemma2-relative}
Set 
$$\tau_1 (\kappa_0, \kappa_1, \kappa_2):=\langle \kappa_0 D\kappa_1 D \kappa_2\rangle_r\,,\;\;
\tau_2 (\kappa_0, \kappa_1, \kappa_2):=\langle  D\kappa_1 (D \kappa_2) \kappa_0\rangle_r\,\;\;
\tau_3 (\kappa_0, \kappa_1, \kappa_2):=- \langle  (D \kappa_2)\kappa_0 D\kappa_1\rangle_r\,.
$$
Then:
\item{(1)} $6\tau^r_{GV}$ is equal to the restriction of  $( \tau_1 + \tau_2 + \tau_3)$ to $A_c (X,\F;E)$;
\item{(2)} $b\tau_1 (\kappa_0, \kappa_1, \kappa_2,\kappa_3)=\langle [\kappa_0 D\kappa_1 D \kappa_2,\kappa_3]\rangle_r$,
$b\tau_2 (\kappa_0, \kappa_1, \kappa_2, \kappa_3)=\langle  [D\kappa_2 (D \kappa_3) \kappa_0,\kappa_1]\rangle_r$ and\\
$b\tau_3 (\kappa_0, \kappa_1, \kappa_2, \kappa_3)=\langle [ (D \kappa_3)\kappa_0 D\kappa_1,\kappa_2] \rangle_r$.

\end{lemma}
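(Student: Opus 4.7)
The plan is to deduce both parts of the sublemma from the two structural facts of Lemma~\ref{sublemma1-relative}: that $D$ is a skew-derivation with $D^{2}=0$, and the Stokes-type identity $\langle D\alpha\rangle_r=0$ for every $\alpha\in\Omega_A\otimes\Lambda^1\RR^2$. For part~(1), I would start from the direct expansion $D\kappa_1\, D\kappa_2=(\delta_1\kappa_1\,\delta_2\kappa_2-\delta_2\kappa_1\,\delta_1\kappa_2)\otimes e_1\wedge e_2$, which together with the definitions of $\langle\,\cdot\,\rangle_r$ and of $\psi^r_{GV}$ gives $\tau_1(\kappa_0,\kappa_1,\kappa_2)=2\psi^r_{GV}(\kappa_0,\kappa_1,\kappa_2)$ by inspection. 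To identify $\tau_2$ and $\tau_3$ with $2\psi^r_{GV}$ evaluated on the cyclic shifts of $(\kappa_0,\kappa_1,\kappa_2)$, I would apply Stokes to the two $1$-forms $\kappa_1 D\kappa_2\kappa_0$ and $\kappa_2\kappa_0 D\kappa_1$. The skew-Leibniz rule together with $D^{2}=0$ yields $D(\kappa_1 D\kappa_2\kappa_0)=D\kappa_1\, D\kappa_2\,\kappa_0-\kappa_1\, D\kappa_2\, D\kappa_0$ and $D(\kappa_2\kappa_0 D\kappa_1)=D\kappa_2\,\kappa_0\, D\kappa_1+\kappa_2\, D\kappa_0\, D\kappa_1$, so that Stokes gives $\tau_2(\kappa_0,\kappa_1,\kappa_2)=\langle\kappa_1 D\kappa_2 D\kappa_0\rangle_r=2\psi^r_{GV}(\kappa_1,\kappa_2,\kappa_0)$ and $\tau_3(\kappa_0,\kappa_1,\kappa_2)=\langle\kappa_2 D\kappa_0 D\kappa_1\rangle_r=2\psi^r_{GV}(\kappa_2,\kappa_0,\kappa_1)$. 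Summing and comparing with the cyclic symmetrization defining $\tau^r_{GV}$ produces $\tau_1+\tau_2+\tau_3=6\,\tau^r_{GV}$ after restriction to $A_c(X,\F;E)$.

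For part~(2), each formula for $b\tau_i$ is a direct calculation. I would unfold the four terms of the Hochschild coboundary, break every factor $D(\kappa_j\kappa_{j+1})$ via the Leibniz rule as $D\kappa_j\cdot\kappa_{j+1}+\kappa_j\cdot D\kappa_{j+1}$, and observe that the resulting "interior" Leibniz contributions cancel in pairs with opposite signs coming from adjacent terms of $b$. What survives for $b\tau_1$ is $\langle\kappa_0 D\kappa_1 D\kappa_2\,\kappa_3\rangle_r-\langle\kappa_3\kappa_0 D\kappa_1 D\kappa_2\rangle_r$, which is exactly $\langle[\kappa_0 D\kappa_1 D\kappa_2,\kappa_3]\rangle_r$; the same cancellation pattern yields the analogous commutator formulas for $b\tau_2$ and $b\tau_3$, with the role of the "outer" factor $\kappa_0$ cyclically permuted.

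The main obstacle here is essentially bookkeeping: one must keep careful track of the skew-signs coming from the grading of $\Omega_A\otimes\Lambda^*\RR^2$ and of which factor the Leibniz rule is being applied to at each step. Once the sign conventions are fixed, the graded algebra $\Omega_A\otimes\Lambda^*\RR^2$ together with the functional $\langle\,\cdot\,\rangle_r$ is designed so that the three cyclic permutations entering the definition of $\tau^r_{GV}$ correspond precisely to the three placements of the "trace" in $\tau_1,\tau_2,\tau_3$, and each $b\tau_i$ collapses to a commutator after the interior cancellations.
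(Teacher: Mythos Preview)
Your proposal is correct and follows essentially the same route as the paper: for part~(1) you apply Stokes $\langle D\alpha\rangle_r=0$ to the very same $1$-forms $\kappa_1(D\kappa_2)\kappa_0$ and $\kappa_2\kappa_0 D\kappa_1$ to reduce $\tau_2,\tau_3$ to the cyclic shifts of $2\psi^r_{GV}$, and for part~(2) you expand $b\tau_i$ via the Leibniz rule and cancel interior terms exactly as the paper does. The only cosmetic difference is that the paper writes out the four-term expansion of $b\tau_1$ explicitly before applying Leibniz, whereas you describe the cancellation pattern in words.
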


\begin{proof}
Let $\kappa_j\in A_c (X,\F;E)$.
Using Sublemma \ref{sublemma1-relative} (2) we have: $$D(\kappa_1 (D\kappa_2) \kappa_0)=
D\kappa_1 (D\kappa_2) \kappa_0 - \kappa_1 D\kappa_2 D\kappa_0\,,\quad D(\kappa_2\kappa_0 D\kappa_1)=
(D\kappa_2)\kappa_0 D\kappa_1 + \kappa_2 D\kappa_0 D\kappa_1\,.$$
Using Sublemma \ref{sublemma1-relative} (3) we infer that 
$$ \langle \kappa_1 D\kappa_2 D \kappa_0 \rangle_r = \langle D\kappa_1 (D\kappa_2)  \kappa_0 \rangle_r\,,\quad
\langle \kappa_2 D\kappa_0 D \kappa_1 \rangle_r = \langle (D\kappa_2) \kappa_0 D \kappa_1 \rangle_r  \,.$$
Note that, by definition, $2\psi^r_{GV} ( \kappa_0, \kappa_1, \kappa_2)= \langle \kappa_0 D\kappa_1 D \kappa_2 \rangle_r $;
then the equality between $6\tau^r_{GV}$
and the restriction of $( \tau_1 + \tau_2 + \tau_3)$ to $A_c (X,\F;E)$ follows from the above formulae, since 
\begin{align*}
\tau^r_{GV}(\kappa_0,\kappa_1,\kappa_2)&:=\frac{1}{3} \left (\psi^r_{GV} ( \kappa_0, \kappa_1, \kappa_2) + \psi^r_{GV} ( \kappa_1, \kappa_2, \kappa_0)
+\psi^r_{GV} ( \kappa_2, \kappa_0, \kappa_1)\right)\\
&=\frac{1}{6} \left ( \langle \kappa_0 D\kappa_1 D \kappa_2 \rangle_r + \langle \kappa_1 D\kappa_2 D \kappa_0 \rangle_r 
+ \langle \kappa_2 D\kappa_0 D \kappa_1 \rangle_r \right)
\end{align*} 
Next we tackle the second part of the Lemma. By definition and then by the derivation property of $D$ we have:
\begin{align*}
b\tau_1 ( \kappa_0 , \kappa_1 ,  \kappa_2 , \kappa_3 )&=
\langle \kappa_0 \kappa_1 D\kappa_2 D \kappa_3 \rangle_r - \langle \kappa_0 D(\kappa_1 \kappa_2) D \kappa_3 \rangle_r 
+ \langle \kappa_0 D\kappa_1 D (\kappa_2 \kappa_3) \rangle_r  - 
\langle \kappa_3 \kappa_0 D\kappa_1 D \kappa_2 \rangle_r \\
&= \langle \kappa_0 D\kappa_1 (D \kappa_2) \kappa_3 \rangle_r - \langle \kappa_3\kappa_0 D\kappa_1 D \kappa_2 \rangle_r   
\end{align*} 
Similarly, using again the derivation property of $D$ we have:
\begin{align*}
b\tau_2 ( \kappa_0 , \kappa_1 ,  \kappa_2 , \kappa_3 )&=
\langle D\kappa_2 (D\kappa_3) \kappa_0 \kappa_1 \rangle_r - \langle  D(\kappa_1 \kappa_2) (D \kappa_3) \kappa_0 \rangle_r 
+ \langle D\kappa_1 D(\kappa_2 \kappa_3)  \kappa_0) \rangle_r  - 
\langle D\kappa_1 (D\kappa_2) \kappa_3  \kappa_0 \rangle_r \\
&= \langle D\kappa_2 (D\kappa_3) \kappa_0 \kappa_1 \rangle_r - \langle \kappa_1 D\kappa_2 (D \kappa_3) \kappa_0\rangle_r   \\
b\tau_3 ( \kappa_0 , \kappa_1 ,  \kappa_2 , \kappa_3 )&=
-\langle (D\kappa_3)  \kappa_0 \kappa_1 D\kappa_2 \rangle_r + \langle  (D\kappa_3) \kappa_0 (D \kappa_1 \kappa_2) \rangle_r 
- \langle ( D(\kappa_2 \kappa_3)  \kappa_0 D\kappa_1) \rangle_r  +
\langle  (D\kappa_2)  \kappa_3  \kappa_0 D\kappa_1\rangle_r \\
&= \langle (D\kappa_3)\kappa_0 (D\kappa_1) \kappa_2  \rangle_r - \langle \kappa_2 (D \kappa_3) \kappa_0 D \kappa_1
\rangle_r   
\end{align*} 
The proof of the Lemma is now complete.
\end{proof}
Remark now that by using
Melrose' formula for the $b$-trace of a commutator followed by \eqref{equality-with-rbm-bis}, one can show that
\begin{equation}\label{reg-weight-of-comm}
\omega^r_{\Gamma}(k k^\prime-
k^\prime k) = \omega_\Gamma^{{\cyl}} 
(\ell [\chi^0,\ell^\prime])\,,\end{equation}
if $k\in A_c (X,\F;E)$, $k^\prime\in A_c (X,\F;E,E^\prime)$,
$\pi_c (k)=\ell$, $\pi_c (k^\prime)=\ell^\prime$.

Notice that Melrose' proof extend to the regularized weight $\omega_\Gamma^r$ (even though 
 $\omega_\Gamma$ on the boundary  is a weight and not, in general, a trace).
 
 Alternatively, we can simply adapt  the alternative proof 
 of Proposition \ref{prop:c-relative-cocycle},
 which works here for the
 linear functional $\omega^r_\Gamma: A_c (X,\F;E,E^\prime)\to\CC$; namely we write, using Proposition \ref{prop:regularized-via-t},
 and for $k\in A_c (X,\F;E)$, $k^\prime\in A_c (X,\F;E,E^\prime)$,
\begin{align*}
 \omega^r_\Gamma (k k^\prime - k^\prime k)&=\omega_\Gamma (t (k k^\prime - k^\prime k))\\
&= \omega_\Gamma \left( [a,a^\prime]+ [\chi^\mu \ell \chi^\mu, a^\prime] + [a,\chi^\mu \ell^\prime \chi^\mu]- \chi^\mu \ell (1-\chi^\mu)\ell^\prime
\chi^\mu + \chi^\mu\ell^\prime (1-\chi^\mu)\ell \chi^\mu \right)\\
&=\omega_\Gamma (- \chi^\mu \ell (1-\chi^\mu)\ell^\prime
\chi^\mu + \chi^\mu\ell^\prime (1-\chi^\mu)\ell \chi^\mu)\\ &= \omega_\Gamma (\ell [\chi^0,\ell^\prime])\\
 &\equiv - \omega_\Gamma ( [\chi^0,\ell]\ell^\prime)
 \end{align*}
 where we have used the bimudule-trace property for $\omega_{\Gamma}$ in order to justify the third equality.
 Notice that, with obvious notation,
 $$a\in J_c(X, \F;E), k\in A_c (X,\F;E,E^\prime)\Rightarrow ak\in J_c (X,\F;E,E^\prime) \,;$$
 $$a\in J_c(X, \F;E,E^\prime), k\in A_c (X,\F;E)\Rightarrow ak\in J_c (X,\F;E,E^\prime) \,,$$
and similarly for $ka$.

Thus, in any case, using \eqref{reg-weight-of-comm} we obtain immediately that
\begin{equation}\label{tau-trace-formula}
\tau^r_{\Gamma}(\kappa \kappa^\prime-
\kappa^\prime\kappa) = \tau_\Gamma^{{\cyl}} 
(\ell [\chi^0,\ell^\prime])
\end{equation}
where $k, k^\prime \in \Omega_A$, 
$\pi_\Omega (\kappa)=\ell\in \Omega_B$, $\pi_\Omega (\kappa^\prime)=\ell^\prime\in  \Omega_B.$

 From Lemma \ref{sublemma2-relative}, formula \eqref{tau-trace-formula} and recalling the definition $\delta_3 (\ell):= [\chi^0,\ell]$
we finally get, with $\pi_\Omega (\kappa_j)=\ell_j$:
\begin{align*}
6\,b\tau^r_{GV} (\kappa_0,\kappa_1,\kappa_2,\kappa_3) &= 
\langle [\kappa_0 D\kappa_1 D \kappa_2,\kappa_3]\rangle_r
+ \langle  [D\kappa_2 (D \kappa_3) \kappa_0,\kappa_1]\rangle_r
+ \langle [ (D \kappa_3)\kappa_0 D\kappa_1,\kappa_2] \rangle_r\\
&= \tau^{\cyl}_\Gamma (\ell_0 \,\delta_1 \ell_1 \,\delta_2 \ell_2 \,[\chi^0,\ell_3])-
 \tau^{\cyl}_\Gamma (\ell_0 \,\delta_2 \ell_1\, \delta_1 \ell_2\, [\chi^0,\ell_3])\\
 &+ \tau^{\cyl}_\Gamma ( \delta_1 \ell_2 \,\delta_2 \ell_3 \,\ell_0 \,[\chi^0,\ell_1])-
 \tau^{\cyl}_\Gamma ( \delta_2 \ell_2 \,\delta_1 \ell_3 \,\ell_0\, [\chi^0,\ell_1])\\
&+ \tau^{\cyl}_\Gamma ( \delta_1 \ell_3\, \ell_0 \,\delta_2 \ell_1 \, [\chi^0,\ell_2])-
 \tau^{\cyl}_\Gamma ( \delta_2 \ell_3\,\ell_0\,  \delta_1 \ell_1  \,[\chi^0,\ell_2])
\\
&= \sum_{\alpha\in \mathfrak{S}_3} \mathrm{sign}  (\alpha) \tau^{\cyl}_\Gamma (\ell_0 \;\delta_{\alpha (1)} \ell_1\;
\delta_{\alpha (2)} \ell_2\;\delta_{\alpha (3)} \ell_3)
\end{align*}
The proof of the equation $b\tau_{GV}^r=(\pi_c)^* \sigma_{GV}$ is complete.

\subsection{The modular Shatten extension: proof of Proposition  \ref{lemma:frak-sequence}}
\label{subsection:modular-shatten}

Recall that we want to show that there is a short exact sequence
 of Banach algebras
 \begin{equation*} 0\rightarrow \mathbf{ \mathfrak{J}_{m} } \rightarrow \mathbf{ \mathfrak{A}_m}\xrightarrow{\pi}   \mathbf{\mathfrak{B}_{m}} \rightarrow 0
 \end{equation*}
 Moreover, the sections $s$ and $t$ restricts to bounded 
sections 
$s:   \mathbf{\mathfrak{B}_{m}}\to  \mathbf{ \mathfrak{A}_m} $ 
and
$t:  \mathbf{ \mathfrak{A}_m} 
 \to \mathbf{ \mathfrak{J}_{m} }
 $.

 We begin with  two Sublemmas.
 
 \begin{sublemma}
 Let us set $B^\prime_c := \Psi^{-1}_c (G_{\cyl}/\RR_{\Delta})$.
 If $\ell_0$ is an element in $B_c^\prime$, then $\chi^0 \ell_0 \chi^0$ belongs to 
 ${\rm Dom} (\overline{\delta}^{{\rm max}}_j)$ for $j=1,2$ and it follows that
 \begin{equation*}
\overline{ \delta}^{{\rm max}}_2 (\chi^0 \ell_0 \chi^0) 
= \chi^0 [\phi_{\partial},\ell_0] \chi^0\;\text{ and }\;
 \overline{\delta}^{{\rm max}}_1 (\chi^0 \ell_0 \chi^0) = \chi^0 [\dot{\phi}_{\partial},\ell_0] \chi^0\,.
  \end{equation*}
 \end{sublemma}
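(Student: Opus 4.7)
To show that $\chi^0 \ell_0 \chi^0$ lies in $\mathrm{Dom}(\overline{\delta}^{\mathrm{max}}_j)$ and to compute the value of the derivation, the plan is to exhibit an explicit sequence $k_n \in C_{\Gamma,c}(\H)$ with the \emph{double} convergence
$$k_n \longrightarrow \chi^0 \ell_0 \chi^0 \qquad \text{and} \qquad [\phi, k_n] \longrightarrow \chi^0 [\phi_\partial, \ell_0] \chi^0$$
in the $C^*_\Gamma(\H)$-norm. By definition of $\overline{\delta}^{\mathrm{max}}_2$ as the closure of the unbounded derivation $\delta^{\mathrm{max}}_2 = [\phi,\,\cdot\,]$, this simultaneously places $\chi^0 \ell_0 \chi^0$ in its domain and identifies its image. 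The argument for $j=1$ is \emph{verbatim} the same with $\phi$ replaced by $\dot{\phi}$ and $\phi_\partial$ by $\dot{\phi}_\partial$, using the bimodule version of the closure discussed in \ref{subsubsection:gothic-j}.

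First I would choose a sequence of smooth cutoff functions $\chi_n$ on $X$, obtained as lifts of real-valued functions on $\RR$ supported in $[-n,0]$, equal to $1$ on $[-n+1,-1/n]$, and converging pointwise a.e.\ to $\chi_\RR^0$. Setting $k_n := \chi_n \ell_0 \chi_n$, the fact that $\ell_0 \in \Psi^{-1}_c(G_{\cyl}/\RR_\Delta)$ is of $\RR \times \Gamma$-compact support together with the compact support of $\chi_n$ along the cylindrical direction forces $k_n$ to have $\Gamma$-compactly supported kernel, so $k_n \in C_{\Gamma,c}(\H)$. The norm convergence $k_n \to \chi^0 \ell_0 \chi^0$ is then obtained by applying the Hilbert--Schmidt / dominated-convergence argument already carried out in the proof of Sublemma \ref{sublemma:compression}, with $\ell_0$ in place of $\ell$: each of the three remainders $(\chi^0 - \chi_n)\ell_0 \chi^0$, $\chi_n \ell_0 (\chi^0 - \chi_n)$ and $(\chi^0 - \chi_n) \ell_0 (\chi^0-\chi_n)$ admits a $\Gamma$-Hilbert--Schmidt bound tending to $0$, and one uses $\|\cdot\|_{C^*} \leq \|\cdot\|_{2}$.

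Second, since $\chi_n$ and $\phi$ are both multiplication operators they commute, so $[\phi, k_n] = \chi_n [\phi, \ell_0] \chi_n$. Here the product-type assumption on the geometric data forces the modular function, and hence $\phi$, to be independent of the normal coordinate in a collar of the boundary; extended along the cylindrical end of $X$ this gives $\phi|_{\cyl^-(\partial X)} = \phi_\partial$. Since $\chi_n$ is supported in $\cyl^-(\partial X)$, we conclude $[\phi, k_n] = \chi_n [\phi_\partial, \ell_0] \chi_n$. The commutator $[\phi_\partial, \ell_0]$ has kernel $(\phi_\partial(y,\theta) - \phi_\partial(y',\theta))\,\ell_0(y,y',s-s',\theta)$, which, because $\ell_0$ is of $\RR \times \Gamma$-compact support and $\phi_\partial$ is smooth, still lies in $\Psi^{-1}_c(G_{\cyl}/\RR_\Delta)\subset B^*$ as a translation-invariant element. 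Applying the dominated-convergence argument once more to $[\phi_\partial, \ell_0]$ in place of $\ell_0$ yields $\chi_n [\phi_\partial, \ell_0] \chi_n \to \chi^0 [\phi_\partial, \ell_0] \chi^0$ in $C^*$-norm, completing the double convergence.

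The one genuine subtlety is the unboundedness of $\phi$, which prevents one from defining $[\phi, \chi^0 \ell_0 \chi^0]$ by a naive algebraic calculation; this difficulty is precisely what the closure of $\delta^{\mathrm{max}}_2$ is designed to handle. The difficulty dissolves here because the cutoff $\chi_n$ localizes into the cylindrical region, where $\phi$ may be replaced by the translation-invariant $\phi_\partial$, and then the $\RR \times \Gamma$-compact support of $\ell_0$ makes $[\phi_\partial, \ell_0]$ genuinely bounded, so that the two norm limits exist simultaneously.
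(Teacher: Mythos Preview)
Your approximating sequence does not converge in $C^*$-norm, so the double convergence fails at the very first step. The problem is that you chose $\chi_n$ with \emph{compact} support in $[-n,0]$; then $\chi^0-\chi_n$ equals $1$ on the entire half-line $(-\infty,-n]$, which is unbounded in the cylindrical direction. Hence the remainder $(\chi^0-\chi_n)\ell_0\chi^0$ is not $\Gamma$-compactly supported at all: on the strip $\{s\leq -n,\ s'\leq 0\}$ its kernel coincides with that of the translation-invariant operator $\ell_0$, and the $\Gamma$-Hilbert--Schmidt integral $\int_{-\infty}^{-n}ds\int_{\RR}ds'\,|\ell_0(y,y',s-s',\theta)|^2$ diverges. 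More directly, if $\xi$ is a unit vector supported near $s=-2n$, then $\chi_n\ell_0\chi_n\xi=0$ while $\chi^0\ell_0\chi^0\xi=\ell_0\xi$, so $\|\chi^0\ell_0\chi^0-\chi_n\ell_0\chi_n\|_{C^*}\geq \|\ell_0\xi\|$, which is bounded below independently of $n$ by translation invariance. Thus the Sublemma~\ref{sublemma:compression} argument you invoke does not apply here; that argument works precisely because $\sigma_\epsilon=\chi^0-\chi_\epsilon$ is supported in the \emph{compact} interval $[-\epsilon,0]$.

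The fix is the one the paper uses: take the smooth approximation $\chi_\epsilon$ from \eqref{smooth-cut-off}, which agrees with $\chi^0$ outside $[-\epsilon,0]$ and in particular is \emph{not} compactly supported along the cylinder. Then $\chi_\epsilon\ell_0\chi_\epsilon$ still lies in $C_{\Gamma,c}(\H)$ (preserving the continuous field does not require $\Gamma$-compact support, only that $\chi_\epsilon$ be smooth and $\ell_0$ have finite propagation), the commutator $[\phi,\chi_\epsilon\ell_0\chi_\epsilon]=\chi_\epsilon[\phi_\partial,\ell_0]\chi_\epsilon$ is bounded, so $\chi_\epsilon\ell_0\chi_\epsilon\in{\rm Dom}(\delta^{\rm max}_2)$, and now $\chi^0-\chi_\epsilon$ has compact support so the convergence $\chi_\epsilon b\chi_\epsilon\to\chi^0 b\chi^0$ in $C^*$-norm goes through for any $b\in B^\prime_c$ (first approximating $b$ by elements of $B_c$ via Lemma~\ref{lemma:(-1)-bstar}, then applying Sublemma~\ref{sublemma:compression}). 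Your misstep was thinking that membership in $C_{\Gamma,c}(\H)$ forced $\Gamma$-compact support; once that is dropped, the rest of your outline (commuting $\phi$ through the cutoff, using product structure to replace $\phi$ by $\phi_\partial$, reapplying the convergence to $[\phi_\partial,\ell_0]$) is exactly right.
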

 \begin{proof}
 We shall work on $\delta_2$ first.
 Let $\chi_\epsilon$ be a smooth approximation of the 
 function induced by $\chi^0$ on $\tilde{V}\times T$.
It is easily verified that
 $\chi^0 \ell_0 \chi^0$ preserves the continuous field $C^\infty_c (\tilde{V}\times T)$
 and that $[\phi,\chi_\epsilon \ell_0 \chi_\epsilon]=\chi_\epsilon [\phi_{\pa},\ell_0] \chi_\epsilon$
 belongs to $C^*_{\Gamma} (\H)$, since $[\phi_{\pa},\ell_0]$ is again a compactly supported pseudodifferential 
 operator of order $-1$. Thus one has 
 $\chi_\epsilon\ell_0 \chi_\epsilon\in {\rm Dom} (\delta^{{\rm max}}_2)$ and
 $ \delta^{{\rm max}}_2 (\chi_\epsilon \ell_0 \chi_\epsilon) = \chi_\epsilon [\phi_{\partial},\ell_0] \chi_\epsilon$. Next we observe 
 that $\| \chi_\epsilon b \chi_\epsilon - \chi^0 b \chi^0 \|_{C^*}\longrightarrow 0$
 as $\epsilon\to 0$ for any $b\in B^\prime_c$. Indeed, according to Lemma \ref{lemma:(-1)-bstar}
  we can choose an approximating sequence $\{b_i\}$ in $B_c$ such that $\|b_i-
 b\|_{C^*}\to 0$; 
then one has
$$
\| \chi_\epsilon b \chi_\epsilon - \chi^0 b \chi^0 \|_{C^*} \leq 
\| \chi_\epsilon (b-b_i)\chi_\epsilon\|_{C^*} +
\|\chi_\epsilon b_i\chi_\epsilon - \chi^0 b_i\chi^0\|_{C^*}
+\|\chi^0 (b_i-b) \chi^0 \|_{C^*}\longrightarrow 0
$$
since 
$\| \chi_\epsilon b_i \chi_\epsilon - \chi^0 b_i \chi^0 \|_{C^*}\longrightarrow 0$ for $b_i\in B_c$ 
due to Sublemma \ref{sublemma:compression}. 
This implies that $\| \chi_\epsilon \ell_0 \chi_\epsilon - \chi^0 \ell_0 \chi^0 \|_{C^*}\longrightarrow 0$ and that
$$ \|\delta^{{\rm max}}_2 (\chi_\epsilon \ell_0 \chi_\epsilon) - \chi^0 [\phi_{\partial},\ell_0] \chi^0\|
= \| \chi_\epsilon [\phi_{\partial},\ell_0] \chi_\epsilon - \chi^0 [\phi_{\partial},\ell_0] \chi^0\|
_{C^*}\to 0$$
as $\epsilon\downarrow 0$. Since $ \chi_\epsilon \ell_0 \chi_\epsilon \in C_{\Gamma,c} (\H)$
this proves that 
$\chi^0 \ell_0 \chi^0$ belongs to 
 ${\rm Dom} (\overline{\delta}^{{\rm max}}_2)$ and  that
$ \overline{\delta}^{{\rm max}}_2 (\chi^0 \ell_0 \chi^0) = \chi^0 [\phi_{\partial},\ell_0] \chi^0$
as required. We can apply a  similar argument to the second derivation and prove
that $\chi^0 \ell_0 \chi^0$ belongs to 
 ${\rm Dom} (\overline{\delta}^{{\rm max}}_1)$ and  that
$ \overline{\delta}^{{\rm max}}_1 (\chi^0 \ell_0 \chi^0) = \chi^0 [\dot{\phi}_{\partial},\ell_0] \chi^0$ .
 \end{proof}
 
  \begin{sublemma}
Assume that  $\ell\in \mathcal{B}_m\cap 
{\rm Dom} (\overline{\delta}_1)\cap  {\rm Dom} (\overline{\delta}_2).$ 
Then $s(\ell)\in  {\rm Dom} (\overline{\delta}_1)\cap  {\rm Dom} (\overline{\delta}_2)$
and $\overline{\delta}_j (s (\ell))=s (\overline{\delta}_j \ell)$
for $j=1,2.$

 \end{sublemma}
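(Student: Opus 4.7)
The plan is to reduce the statement to the preceding sublemma via a standard closedness argument. Fix $j \in \{1,2\}$ and take $\ell \in \mathcal{B}_m \cap {\rm Dom}(\overline{\delta}_j)$. By the very definition of $\overline{\delta}_j$ on the $\mathcal{B}_m$-side, i.e.\ as the restriction of $\overline{\partial}_j$ to those elements whose $\overline{\partial}_j$-image lies in $\mathcal{B}_m$, one can select an approximating sequence $\ell_i \in B^\prime_c := \Psi^{-1}_c(G_{\cyl}/\RR_\Delta)$ with $|||\ell_i - \ell||| \to 0$ and $\|\partial_j \ell_i - \overline{\delta}_j \ell\|_{C^*} \to 0$; since $|||\cdot|||$ dominates the $C^*$-norm, this also yields $\|\ell_i - \ell\|_{C^*} \to 0$.

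The preceding sublemma, applied to each $\ell_i \in B^\prime_c$, gives $s(\ell_i) = \chi^0 \ell_i \chi^0 \in {\rm Dom}(\overline{\delta}^{\max}_j)$ together with
\[
\overline{\delta}^{\max}_j(s(\ell_i)) \;=\; \chi^0 [\phi_{\pa},\ell_i]\chi^0 \;=\; s(\partial_j \ell_i) \qquad (j=2),
\]
and analogously with $\dot\phi_{\pa}$ in place of $\phi_{\pa}$ for $j=1$; here we use that $\phi$ coincides with the constant extension $\phi_{\pa}$ (and $\dot\phi$ with $\dot\phi_{\pa}$) on the cylindrical end, so the bracket with $\phi$ on $X$ reproduces, after compression by $\chi^0$, the bracket with $\phi_{\pa}$ on the cylinder. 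Since $s : B^* \to \mathcal{L}(\mathcal{E}) \subset C^*_{\Gamma}(\H)$ is a bounded linear map by Lemma \ref{lemma:split}, the convergences above transfer to $\|s(\ell_i) - s(\ell)\| \to 0$ and $\|s(\partial_j \ell_i) - s(\overline{\delta}_j \ell)\| \to 0$ in $C^*_{\Gamma}(\H)$; closedness of $\overline{\delta}^{\max}_j$ then forces $s(\ell) \in {\rm Dom}(\overline{\delta}^{\max}_j)$ and
\[
\overline{\delta}^{\max}_j(s(\ell)) \;=\; s(\overline{\delta}_j \ell).
\]

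To conclude, we must see that both $s(\ell)$ and $s(\overline{\delta}_j \ell)$ lie in $\mathcal{A}_m$. For any $\ell^\prime \in B^*$ one has $\pi(s(\ell^\prime)) = \ell^\prime$ and $t(s(\ell^\prime)) = s(\ell^\prime) - s\pi(s(\ell^\prime)) = 0$; applied to $\ell^\prime = \ell \in \mathcal{B}_m$ this gives $s(\ell) \in \mathcal{A}_m$, and applied to $\ell^\prime = \overline{\delta}_j \ell \in \mathcal{B}_m$ it gives $s(\overline{\delta}_j \ell) \in \mathcal{A}_m$. By Lemma \ref{lemma:restriction}, which produces $\overline{\delta}_j$ on the $\mathcal{A}_m$-side precisely as the restriction of $\overline{\delta}^{\max}_j$ to those elements of $\mathcal{A}_m$ whose image lies in $\mathcal{A}_m$, this is exactly the desired conclusion $s(\ell) \in {\rm Dom}(\overline{\delta}_j)$ and $\overline{\delta}_j(s(\ell)) = s(\overline{\delta}_j \ell)$.

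The only delicate point is the very first step --- availability of an approximating sequence $\{\ell_i\}$ in $B^\prime_c$ that converges to $\ell$ in $|||\cdot|||$ while $\partial_j \ell_i$ converges to $\overline{\delta}_j \ell$ in $C^*$-norm. This is built into the chain of definitions (both closures $\overline{\partial}_j$ and $\overline{\delta}_j$ start from $B^\prime_c$), so once it is acknowledged the remainder is a routine closedness-plus-bookkeeping argument.
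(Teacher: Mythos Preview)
Your proof is correct and follows essentially the same route as the paper: approximate $\ell$ by elements $\ell_i\in B^\prime_c$, invoke the preceding sublemma to identify $\overline{\delta}^{\max}_j(s(\ell_i))=s(\partial_j\ell_i)$, pass to the limit using boundedness of $s$ and closedness of $\overline{\delta}^{\max}_j$, and finally observe that $s(\overline{\delta}_j\ell)\in\mathcal{A}_m$ so that $s(\ell)$ lands in the domain of the restricted derivation. The only cosmetic difference is that the paper phrases the last step via the direct-sum decomposition $\mathcal{A}_m\cong\mathcal{J}_m\oplus s(\mathcal{B}_m)$ rather than your explicit check $\pi(s(\ell'))=\ell'$, $t(s(\ell'))=0$; these are equivalent.
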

 
 \begin{proof}
 Notice that we employ the same notation for the derivations on the cylinder
 $\cyl (\pa X)$ and on $X$; this should not cause confusion here.
 Let $\ell$ be an element  ${\rm Dom} (\overline{\delta}_2)$. Then, by definition,
 there exists a sequence $\{\ell_i\}\in B^\prime_c$ such that $||| \ell_i - \ell |||\to 0$
 and $[\phi_{\pa},\ell_i]$ congerges in $C^*$-norm as $i\to +\infty$. Thus, there exists
 an element $\overline{\delta}_2 \ell\in \B_m$. We then obtain
 $\| s(\ell_i) - s (\ell)\|_{C^*}\to 0$ and  $\|s([\phi_{\pa},\ell_i])- s(\overline{\delta}_2 \ell)\|_{C^*}\to 0$,
 since 
 $$\| s(\ell)\|_{C^*}\leq \| \ell\|_{C^*}\leq |||\ell |||\;\;\text{ for }\; \ell\in B^*\,.$$ Using the previous
 sublemma we have
 $$s([\phi_{\pa},\ell_i]):= \chi^0 [\phi_{\pa},\ell_i] \chi^0 =\overline{\delta}_2^{{\rm max}}
 (\chi^0 \ell_i \chi^0)=\overline{\delta}_2^{{\rm max}}
 s(\ell_i) \,.$$
 Hence we obtain 
 $$\| \overline{\delta}_2^{{\rm max}}
 (s(\ell_i))- s (\overline{\delta}_2 (\ell))\|_{C^*}\to 0\,.
 $$
 Since $\overline{\delta}_2^{{\rm max}}$ is a closed derivation, this proves
 that $\overline{\delta}_2^{{\rm max}}
 (s(\ell))= s (\overline{\delta}_2 (\ell))$. Now recall that that $\A_m\cong \J_{m} \oplus
 s(\B_m)$, see \eqref{direct-sum-cal}. 
Then one has
 $\overline{\delta}_2^{{\rm max}}
 (s(\ell))=s (\overline{\delta}_2 (\ell))\in s(\B_m)\subset \A_m$, since
 $\overline{\delta}_2 (\ell)\in \B_m$. This implies that $s (\ell)\in {\rm Dom} (\overline{\delta}_2)$
 by the definition of  domain for $\overline{\delta_2}$ 
 and thus yields 
 $$\overline{\delta}_2 (s(\ell))=  \overline{\delta}_2^{{\rm max}}
 (s(\ell))= s (\overline{\delta}_2 (\ell))\,.$$
 A similar argument will work for $\overline{\delta_1}$. The proof of this second Sublemma
 is completed.

 \end{proof}
We now go back to the proof of Proposition \ref{lemma:frak-sequence}. First we show
that $\mathbf{ \mathfrak{A}_m}$ is isomorphic as Banach space  to  the direct sum
$ \mathbf{ \mathfrak{J}_{m} }\oplus s(\mathbf{\mathfrak{B}_{m}} ) $, in a way compatible 
with the identification 
$\psi:  \J_{m}\oplus s(\B_m)\to \A_m$
sending $(k,s(\ell))$ to $k+s(\ell)$
 explained in
\eqref{direct-sum-cal}. 
Let $\ell$ be an element in $\mathbf{\mathfrak{B}_{m}} $,
which is by definition $\B_m\cap  {\rm Dom} (\overline{\delta}_1)\cap  {\rm Dom} (\overline{\delta}_2)$. 
Using the last Sublemma 
we then  see that $s (\ell) \in \A_m\cap  {\rm Dom} (\overline{\delta}_1)\cap  {\rm Dom} (\overline{\delta}_2)$ and hence that $s(\ell)\in \mathbf{ \mathfrak{A}_m}$, given
that 
$\pi\circ s (\ell)=\ell\in\mathbf{\mathfrak{B}_{m}} $. Moreover, if 
$a\in  \mathbf{ \mathfrak{J}_{m} }:= \J_{m} \cap  {\rm Dom} (\overline{\delta}_1)\cap  {\rm Dom} (\overline{\delta}_2)$, then we certainly have $a\in \mathbf{ \mathfrak{A}_m}$
since $\pi(a)=0\in \mathbf{\mathfrak{B}_{m}} $. This proves that 
$\mathbf{ \mathfrak{J}_{m} }\oplus s(\mathbf{\mathfrak{B}_{m}} )$ is sent into $\mathbf{ \mathfrak{A}_m}$
by $\psi$. 
Conversely,
given $k\in \A_m$ we can write $k=a+s (\ell)$, with $a\in\J_{m}$ and $\ell\in \B_m$.
If $k\in\mathbf{ \mathfrak{A}_m}$, then $\pi (k)=\pi (a) + \pi (s (\ell))=\ell\in 
\mathbf{\mathfrak{B}_{m}}$ by definition of $\mathbf{ \mathfrak{A}_m}$. This implies in turn
that $a=k- s(\ell)\in \mathbf{ \mathfrak{A}_m}$ because $k$ and $s (\ell)$ belong to
$\mathbf{ \mathfrak{A}_m}$. We have proved 
above that $\ell\in \mathbf{\mathfrak{B}_{m}} \Rightarrow s(\ell)\in 
\mathbf{\mathfrak{A}_{m}}$; thus $a\in \mathbf{\mathfrak{A}_{m}} \cap \J_{m} $
which is nothing but $\mathbf{\mathfrak{J}_{m}} $ by definition. 
This proves that $k=a+s (\ell)$
belongs to the image of $\mathbf{ \mathfrak{J}_{m} }\oplus s(\mathbf{\mathfrak{B}_{m}} ) $ 
through 
\eqref{direct-sum-cal}. Thus we have established that 
$\mathbf{ \mathfrak{A}_m}$ is isomorphic to  the direct sum
$\mathbf{ \mathfrak{J}_{m} }\oplus s(\mathbf{\mathfrak{B}_{m}} ) $. Now it is clear that
the sequence  \eqref{frak-sequence}
 $0\rightarrow \mathbf{ \mathfrak{J}_{m} } \rightarrow \mathbf{ \mathfrak{A}_m}\xrightarrow{\pi}   \mathbf{\mathfrak{B}_{m}} \rightarrow 0
 $ is exact, since $\pi\circ s=\Id$ on $\mathbf{\mathfrak{B}_{m}}$. Moreover, one has
 $$\overline{\delta}_j (k)=  \overline{\delta}_j (a) + \overline{\delta}_j (s (\ell))=
  \overline{\delta}_j (a) + s(\overline{\delta}_j (\ell))\,.$$
  This proves that $ \overline{\delta}_j $ commutes with $\pi: \mathbf{ \mathfrak{A}_m}\rightarrow   \mathbf{\mathfrak{B}_{m}}$ as well as with $s: \mathbf{\mathfrak{B}_{m}}
\to \mathbf{ \mathfrak{A}_m}$. This implies that $\pi$ and $s$ are bounded linear maps.
The boundedness of $t$ follows from that of $s$. Finally, it is obvious that $\pi$ is a
homomophism and that $\mathbf{\mathfrak{J}_{m}}=\Ker\pi$ is an ideal in 
$\mathbf{\mathfrak{A}_{m}}$.

\medskip
\subsection{The index class: an  elementary approach to the parametrix construction}\label{subsec:absolute-elementary}
In this Subsection we give a detailed proof of Theorem \ref{theo:parametrix-elementary}
and Theorem  \ref{theo:shatten-absolute}.
We first collect some elementary results for a Dirac operator $D$
on an even dimensional manifold $X$ with cylindrical end  obtained from
a riemannian manifold $(X_0,g)$ with boundary $\pa X_0 = Y$
and  with $g$ a product metric near the boundary. 
As usual we denote the infinite cylinder $\RR\times \pa X_0\equiv \RR\times Y$
by the simple notation $\cyl (Y)$. Finally, we denote by $\mathfrak{s}$ the grading
operator on the $\ZZ_2$-graded bundle $E$ on which $D$ acts; we shall employ the
same symbol for the grading on the induced bundle on the cylinder.

\begin{lemma}\label{lemma:elementary}
Let $f\in C^\infty (X)$. We assume that $f$ and  $df$ are  bounded. Then we have
the following equality of $L^2$-bounded operators
\begin{equation}\label{bracket-formula}
[(D + \mathfrak{s})^{-1},f]=- (D + \mathfrak{s})^{-1} \cl (df) (\mathfrak{s}+ D)^{-1}\,.
\end{equation}
\end{lemma}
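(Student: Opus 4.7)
The plan is to derive the identity from the standard commutator formula $[A^{-1},f]=-A^{-1}[A,f]A^{-1}$ applied to $A=D+\mathfrak{s}$, after checking that every operator involved is bounded on $L^2$ so that the manipulation is legal and not merely formal.

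First I would observe that $\mathfrak{s}$ is a bundle endomorphism commuting with multiplication by the scalar function $f$, so $[\mathfrak{s},f]=0$. The standard Dirac-operator calculation gives $[D,f]=\cl(df)$ as a differential operator of order zero. Since $df$ is bounded by hypothesis, $\cl(df)$ extends to a bounded operator on $L^2$. Hence
\begin{equation*}
[D+\mathfrak{s},f]=\cl(df)
\end{equation*}
as bounded operators. Next I would recall that $(D+\mathfrak{s})$ is essentially self-adjoint with bounded inverse on our complete manifold: indeed $(D+\mathfrak{s})^2=D^2+\mathfrak{s}D+D\mathfrak{s}+1=D^2+1$ because $D$ is odd and $\mathfrak{s}^2=1$, so $(D+\mathfrak{s})^{-1}=(D+\mathfrak{s})(1+D^2)^{-1}$ is bounded (this is exactly formula \eqref{graph-laplacian} already cited in the paper).

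Now I would carry out the commutator computation on the dense domain $C^\infty_c(X,E)$. For $u$ smooth compactly supported, set $v=(D+\mathfrak{s})^{-1}u$, which lies in the domain of $D+\mathfrak{s}$. Then
\begin{align*}
(D+\mathfrak{s})\bigl((D+\mathfrak{s})^{-1}f-f(D+\mathfrak{s})^{-1}\bigr)u
&= fu-(D+\mathfrak{s})f v\\
&= fu-f(D+\mathfrak{s})v-[D+\mathfrak{s},f]v\\
&= -\cl(df)\,(D+\mathfrak{s})^{-1}u.
\end{align*}
Applying $(D+\mathfrak{s})^{-1}$ on the left yields the claimed identity on $C^\infty_c(X,E)$. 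Because every operator appearing on both sides is bounded on $L^2$, and $C^\infty_c$ is dense, the identity extends to an equality of bounded operators.

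The one delicate point, and the only real obstacle, is making sure that the intermediate expression $f\,(D+\mathfrak{s})^{-1}u$ lies in the domain of $D+\mathfrak{s}$ so that the bracket $[D+\mathfrak{s},f]$ can be applied. This is where the hypotheses that $f$ and $df$ are bounded enter: boundedness of $f$ keeps $fv$ in $L^2$, and boundedness of $df$ ensures $[D,f]v=\cl(df)v$ is in $L^2$, so that $(D+\mathfrak{s})(fv)=f(D+\mathfrak{s})v+\cl(df)v$ belongs to $L^2$, confirming $fv\in\mathrm{Dom}(D+\mathfrak{s})$. Once this domain check is in place, the algebraic identity above is rigorous and the lemma follows.
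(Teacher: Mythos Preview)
Your proof is correct and follows essentially the same approach as the paper: both use the algebraic identity $A^{-1}f - fA^{-1} = A^{-1}(fA-Af)A^{-1}$ with $A=D+\mathfrak{s}$ together with $[D,f]=\cl(df)$. The paper's argument is a one-line version of yours, simply writing $(\mathfrak{s}+D)^{-1}f - f(\mathfrak{s}+D)^{-1} = (\mathfrak{s}+D)^{-1}\bigl(f(\mathfrak{s}+D)-(\mathfrak{s}+D)f\bigr)(\mathfrak{s}+D)^{-1}$ and invoking $[f,D]=-\cl(df)$; your added domain checks make the manipulation rigorous but do not change the strategy.
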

\begin{proof}
We just observe that $(\mathfrak{s}+ D)^{-1} f-f(\mathfrak{s}+ D)^{-1}$ is equal
to $(\mathfrak{s}+ D)^{-1} (f (\mathfrak{s}+ D)- (\mathfrak{s}+ D) f ) (\mathfrak{s}+ D)^{-1}$
and since $f (\mathfrak{s}+ D)- (\mathfrak{s}+ D) f =[f, D]=-\cl(df)$ we are done. 
\end{proof}

\begin{lemma}\label{lemma:elementary2}
Let $\chi$ be a smooth approximation of the characteristic function of $(-\infty,0]\times Y$
in $\cyl (Y)$. Consider $\chi$ as a multiplication operator from $C^\infty_c (\cyl (Y),E_{\cyl})$
to $C^\infty_c (X,E)$. Similarly consider the operator 
$C^\infty_c (\cyl (Y),E_{\cyl})\to C^\infty_c (X,E)$
given by Clifford multiplication $\cl(d\chi)$.
Then 
\begin{equation}
\label{elementary2}
D \chi = \chi D_{\cyl} + \cl (d\chi)
\end{equation}
as operators $C^\infty_c (\cyl (Y),E_{\cyl})\to C^\infty_c (X,E)$
\end{lemma}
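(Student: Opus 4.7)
The statement is essentially a Leibniz-rule identity combined with the product structure of the Dirac operator near the boundary, so the plan will be short. First I would verify that the operator $\chi \colon C^\infty_c(\cyl(Y),E_{\cyl})\to C^\infty_c(X,E)$ is really well-defined: for $u\in C^\infty_c(\cyl(Y),E_{\cyl})$ the section $\chi u$ is compactly supported in $(-\infty,0]\times Y$, and this half-cylinder is naturally a subset of $X$ by construction (see Subsection~\ref{subsection:cyl+notation}). The same argument applies to $\cl(d\chi)$, whose support in fact lies in the compact set $[-\epsilon,0]\times Y$ by \eqref{smooth-cut-off}. Hence both sides of \eqref{elementary2} make sense as maps $C^\infty_c(\cyl(Y),E_{\cyl})\to C^\infty_c(X,E)$.

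Then I would apply the Leibniz rule for a Dirac operator acting on the product of a function and a section, namely $D(fs)=\cl(df)\,s+f\,Ds$ for smooth $f$ and smooth $s$, to obtain
\begin{equation*}
D(\chi u)=\cl(d\chi)\,u+\chi\,Du.
\end{equation*}
The only remaining step is to identify the last term with $\chi\,D_{\cyl}u$. Here I would invoke the standing product-type assumption on all geometric data near the boundary (see Subsections \ref{subsect:dirac0} and \ref{subsect:dirac}), which implies that on the half-cylindrical region $(-\infty,0]\times Y\subset X$ the Dirac operator $D$ agrees with the cylindrical Dirac operator $D_{\cyl}$. Since $\chi$ is supported on that region, $\chi\,Du=\chi\,D_{\cyl}u$ for every $u\in C^\infty_c(\cyl(Y),E_{\cyl})$, and \eqref{elementary2} follows. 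There is no real obstacle; the only point that requires a word of care is the matching of the two Dirac operators on the overlap, which is exactly what the product-type collar assumption is designed to guarantee.
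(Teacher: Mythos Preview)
Your argument is correct and matches the paper's proof in substance. The paper's version is even terser: it first invokes locality and the product structure to write $D(\chi s)=D_{\cyl}(\chi s)$ on the support of $\chi s$, and then applies the Leibniz rule for $D_{\cyl}$, whereas you apply Leibniz for $D$ first and then identify $\chi\,Du$ with $\chi\,D_{\cyl}u$; the two orderings are equivalent, and your extra remarks on well-definedness are a welcome clarification.
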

\begin{proof}
If $s\in C^\infty_c (\cyl (Y),E_{\cyl})$ then from the locality property for $D$
and the product structure near the boundary
we have $D(\chi s)=D_{\cyl} (\chi s)= \chi (D_{\cyl}  s) + \cl(d\chi) s$
and the lemma is proved.
\end{proof}

\begin{lemma}\label{lemma:rellich-higson}
\item{1).} Let $\phi_1, \phi_2\in C^\infty_c (X)$. Then as a  bounded operator on $L^2 (X,E)$
the operator 
$\phi_1 (D+\mathfrak{s})^{-1} \phi_2$
is compact.
\item{2).} Let $\phi\in C^\infty_c (X)$. Then as bounded operators on $L^2 (X,E)$
the operators $\phi (D+\mathfrak{s})^{-1}$ and $(D+\mathfrak{s})^{-1}\phi $
are compact.
\end{lemma}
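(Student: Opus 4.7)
The plan is to reduce both statements to local elliptic regularity for the Dirac operator together with the Rellich--Kondrachov compactness theorem. The starting point is that on the complete manifold with cylindrical ends $X$, the formally self-adjoint Dirac operator $D$ is essentially self-adjoint on $C^\infty_c(X,E)$ by Chernoff's theorem, hence so is $D+\mathfrak{s}$, and $(D+\mathfrak{s})^2=D^2+1$ produces a self-adjoint bounded inverse $(D+\mathfrak{s})^{-1}$ on $L^2(X,E)$. Local elliptic regularity for $D$ says that whenever $u\in L^2(X,E)$ satisfies $(D+\mathfrak{s})u\in L^2(X,E)$, one has $u\in H^1_{\mathrm{loc}}(X,E)$, and for every compact $K\subset X$ with smooth boundary there is a constant $C_K>0$ with
\begin{equation*}
\|u\|_{H^1(K)}\leq C_K\bigl(\|u\|_{L^2(X)}+\|(D+\mathfrak{s})u\|_{L^2(X)}\bigr).
\end{equation*}
Consequently the map $r_K\circ(D+\mathfrak{s})^{-1}:L^2(X,E)\to H^1(K,E|_K)$, where $r_K$ denotes restriction to $K$, is bounded.

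For (1), I would pick a compact manifold-with-boundary $K\subset X$ whose interior contains $\mathrm{supp}\,\phi_1$. On $L^2(X,E)$ the operator $\phi_1(D+\mathfrak{s})^{-1}\phi_2$ then equals the composition of multiplication by $\phi_2$, followed by $r_K\circ(D+\mathfrak{s})^{-1}:L^2(X,E)\to H^1(K,E|_K)$, followed by multiplication by $\phi_1$ viewed as a bounded operator $H^1(K,E|_K)\to H^1(K,E|_K)$, followed by the Rellich--Kondrachov embedding $H^1(K,E|_K)\hookrightarrow L^2(K,E|_K)$, and finally extension by zero to $L^2(X,E)$ (legitimate because $\phi_1$ vanishes near $\partial K$). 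Every arrow is bounded except the Rellich embedding, which is compact since $K$ is a compact manifold with boundary; hence the whole operator is compact.

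For (2), the operator $\phi(D+\mathfrak{s})^{-1}$ is handled by exactly the same factorization with $\phi_2$ replaced by the identity, which is bounded on $L^2(X,E)$. For $(D+\mathfrak{s})^{-1}\phi$, I would appeal to self-adjointness: since $D^*=D$ and $\mathfrak{s}^*=\mathfrak{s}$ give $(D+\mathfrak{s})^*=D+\mathfrak{s}$ and hence $((D+\mathfrak{s})^{-1})^*=(D+\mathfrak{s})^{-1}$, one has
\begin{equation*}
\bigl((D+\mathfrak{s})^{-1}\phi\bigr)^{*}=\bar\phi\,(D+\mathfrak{s})^{-1},
\end{equation*}
which is compact by the preceding case; taking adjoints preserves compactness, so $(D+\mathfrak{s})^{-1}\phi$ is compact as well.

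The only non-routine ingredient is the local $H^1$-estimate above, whose role is to upgrade the a priori information on $(D+\mathfrak{s})^{-1}u$ from the graph norm of $D+\mathfrak{s}$ to genuine Sobolev regularity on compact subsets; this is standard for Dirac-type operators (interior G{\aa}rding away from the cylindrical end, and a direct computation using the product structure near infinity). Once this estimate is in hand, the remainder is a routine factorization through a Rellich embedding, and there is no significant obstacle coming from the non-compactness of $X$ because everything is localized from the outset.
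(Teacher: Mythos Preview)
Your proof is correct. Part~(1) is exactly what the paper has in mind (it just says ``apply Rellich's lemma'').

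For part~(2), your route differs from the paper's. You argue directly that $r_K\circ(D+\mathfrak{s})^{-1}:L^2(X,E)\to H^1(K,E|_K)$ is bounded by local elliptic regularity, then factor $\phi(D+\mathfrak{s})^{-1}$ through the compact Rellich embedding; the adjoint handles $(D+\mathfrak{s})^{-1}\phi$. The paper instead reduces (2) to (1) via the commutator identity $[(D+\mathfrak{s})^{-1},\phi]=-(D+\mathfrak{s})^{-1}\cl(d\phi)(D+\mathfrak{s})^{-1}$ (their Lemma~\ref{lemma:elementary}): choosing $\psi\in C^\infty_c(X)$ equal to~$1$ on $\mathrm{supp}\,\phi$, one rewrites $\phi(D+\mathfrak{s})^{-1}=\psi\phi(D+\mathfrak{s})^{-1}$ as a sum of terms of the form $\psi(D+\mathfrak{s})^{-1}\phi$ and $\psi(D+\mathfrak{s})^{-1}\cl(d\phi)(D+\mathfrak{s})^{-1}$, each compact by (1) and the ideal property. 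Your approach is slightly more self-contained (no commutator lemma needed); the paper's has the advantage that the same commutator identity is reused repeatedly later, e.g.\ in sharpening compactness to Schatten-class statements.
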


\begin{proof}
The proof of 1) is classic (just apply Rellich's lemma).
For 2). Let $\psi\in C^\infty_c (X)$ be  equal to one on the support of
$\phi$. Then, obviously, $\phi (D+\mathfrak{s})^{-1}=\psi (\phi (D+\mathfrak{s})^{-1})$.
The latter term can be rewritten as $\psi (D+\mathfrak{s})^{-1} \phi - \psi (D+\mathfrak{s})^{-1}
(\cl (d\phi)) (D+\mathfrak{s})^{-1}$. Now, by item 1) both 
$\psi (D+\mathfrak{s})^{-1}\phi$ and $\psi (D+\mathfrak{s})^{-1}
(\cl (d\phi))$ are compact operators; since  $(D+\mathfrak{s})^{-1}$ is bounded and the
compact operators are an ideal, we can finish the proof.
\end{proof}

\begin{proposition}\label{proposition:compression}
The difference 
 $(\mathfrak{s}+ D)^{-1} - \chi (\mathfrak{s}+D^{\cyl})^{-1} \chi$ 
 is a compact operator \footnote{The latter property is of course obvious from a $b$-calculus perspective.}.
 \end{proposition}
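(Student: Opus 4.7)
The plan is to exploit the resolvent identity together with the commutator formula from Lemma \ref{lemma:elementary2} to express the difference $(\mathfrak{s}+D)^{-1} - \chi(\mathfrak{s}+D^{\cyl})^{-1}\chi$ as a sum of products in each of which a compactly supported function appears immediately to the right of $(\mathfrak{s}+D)^{-1}$; once this is achieved, the second part of Lemma \ref{lemma:rellich-higson} will close the argument.

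Concretely, I would first compute $(\mathfrak{s}+D)$ applied to $\chi(\mathfrak{s}+D^{\cyl})^{-1}\chi$. By Lemma \ref{lemma:elementary2}, for any section $v$ on $\cyl(Y)$ one has $(\mathfrak{s}+D)(\chi v) = \chi(\mathfrak{s}+D^{\cyl})v + \cl(d\chi)v$, so taking $v=(\mathfrak{s}+D^{\cyl})^{-1}\chi u$ for $u\in C^\infty_c(X,E)$ yields
\[
(\mathfrak{s}+D)\,\chi(\mathfrak{s}+D^{\cyl})^{-1}\chi\, u \;=\; \chi^{2}u \;+\; \cl(d\chi)(\mathfrak{s}+D^{\cyl})^{-1}\chi\, u .
\]
Applying $(\mathfrak{s}+D)^{-1}$ on the left and rearranging, I would obtain the key identity
\[
(\mathfrak{s}+D)^{-1} - \chi(\mathfrak{s}+D^{\cyl})^{-1}\chi \;=\; (\mathfrak{s}+D)^{-1}(1-\chi^{2}) \;-\; (\mathfrak{s}+D)^{-1}\cl(d\chi)(\mathfrak{s}+D^{\cyl})^{-1}\chi .
\]

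The next step is to observe that both $1-\chi^2$ and $d\chi$ have compact support on $X$: by construction $\chi$ equals $1$ on $\{s\le -\epsilon\}\subset \cyl^{-}(\pa X)$ and vanishes on $X_0$, so $1-\chi^{2}$ is supported in the compact set $X_0\cup\bigl([-\epsilon,0]\times Y\bigr)$ and $d\chi$ in the compact set $[-\epsilon,0]\times Y$. Hence Lemma \ref{lemma:rellich-higson}(2) makes $(\mathfrak{s}+D)^{-1}(1-\chi^{2})$ and $(\mathfrak{s}+D)^{-1}\cl(d\chi)$ compact; composing the latter with the bounded factor $(\mathfrak{s}+D^{\cyl})^{-1}\chi$ and using the ideal property of compacts finishes the argument.

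The main obstacle is simply a domain check: the manipulation above must be justified at the operator level, and in particular the ``cancellation'' $(\mathfrak{s}+D^{\cyl})(\mathfrak{s}+D^{\cyl})^{-1}=\mathrm{Id}$ must be applied to an object that genuinely lies in the domain of $D^{\cyl}$, while $\chi\bigl((\mathfrak{s}+D^{\cyl})^{-1}\chi u\bigr)$ must lie in the domain of $D$ so that $D$ may be applied to it via Lemma \ref{lemma:elementary2}. Both regularity statements follow from essential self-adjointness of the Dirac operators on the complete manifolds $\cyl(Y)$ and $X$ together with the functional calculus, after which extension by density from $u\in C^\infty_c(X,E)$ promotes the identity to an equality of bounded operators on $L^2(X,E)$.
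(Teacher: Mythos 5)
Your proof is correct and follows essentially the same route as the paper's: both rest on the intertwining identity $D\chi=\chi D_{\cyl}+\cl(d\chi)$ of Lemma \ref{lemma:elementary2} to reduce the difference to terms carrying a compactly supported factor, which Lemma \ref{lemma:rellich-higson} then makes compact. Your resolvent-identity bookkeeping is in fact slightly more streamlined (two error terms rather than the paper's four), but the ingredients and the domain considerations are the same.
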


\begin{proof}
First notice that we have already made sense of $\chi (\mathfrak{s}+D^{\cyl})^{-1} \chi$
as an operator on $L^2 (X,E)$ in Subsection \ref{subsection:split}
For notational convenience we set 
$$A=(\mathfrak{s}+ D)\quad\text{and}\quad B=(\mathfrak{s}+ D_{\cyl})\,.$$
Thus $(\mathfrak{s}+ D)^{-1}=A^{-1}$ and $(\mathfrak{s}+ D_{\cyl})^{-1}=B^{-1}$. We compute
$$A^{-1} - \chi B^{-1} \chi=A^{-1} (1-\chi B \chi B^{-1} \chi)+A^{-1} (\chi B \chi - A \chi) B^{-1} \chi\,.$$
Here we observe that $(1-\chi B\chi B^{-1}\chi)$ is considered as an operator on $C^\infty_c (X,E)$
and $(\chi B\chi - A\chi)$ as an operator from $C^\infty_c (\cyl (Y),E_{\cyl})$ to $C^\infty_c (X,E)$.
Since $[B,\chi]=\cl (d\chi)$ the first term on the right hand side is equal to
\begin{equation}\label{eq}A^{-1} ( 1-\chi (\chi B+\cl (d\chi))B^{-1} \chi)= A^{-1} ( 1-\chi^3) - A^{-1}\chi \cl (d\chi) B^{-1}\chi\,.\end{equation}
Due to Lemma \ref{lemma:elementary2}, for the second term on the right hand side we have
$$A^{-1} (\chi B \chi - A \chi) B^{-1} \chi=A^{-1} ( (A\chi -\cl(d\chi))\chi - A\chi) B^{-1} \chi$$
which is in turn equal to $\chi (\chi-1) B^{-1}\chi - A^{-1} \cl(d\chi) \chi B^{-1}\chi$.
By Lemma \ref{lemma:rellich-higson} both this last term $\chi (\chi-1) B^{-1}\chi - A^{-1} \cl(d\chi) \chi B^{-1}\chi$
and $A^{-1} (1-\chi B \chi B^{-1} \chi)$, which is \eqref{eq}, are compact operators, given that $(1-\chi^3)$, $\chi (\chi- 1)$
and $\cl (d\chi)$ are all compactly supported. Thus $A^{-1} - \chi B^{-1} \chi$, which is nothing but
$(\mathfrak{s}+ D)^{-1} - \chi (\mathfrak{s}+D_{\cyl})^{-1} \chi$, is compact.  The Proposition is proved.
\end{proof}

\begin{remark}\label{remark:from-compact-to-shatten}
It is important to point out that we have in fact
established that 
\begin{equation}\label{from-compact-to-shatten}
(\mathfrak{s}+ D)^{-1} - \chi (\mathfrak{s}+D_{\cyl})^{-1} \chi\;\in \;\mathcal{I}_m\,,\quad\text{with}\quad m>\dim X.
\end{equation}
Indeed, Lemma \ref{lemma:rellich-higson} can be sharpened to the statement that if $ m>\dim X$ then
for each compactly supported function $\phi$
$$\phi(\mathfrak{s}+ D)^{-1}\quad\text{and}\quad (\mathfrak{s}+ D)^{-1} \phi \quad\text{are $m$-Shatten class}$$
For the proof we first observe the useful identity
\begin{equation}\label{useful-identity}
(\mathfrak{s}+ D)^2=(I+D^2)
\end{equation}
and then consider $(\phi(\mathfrak{s}+ D)^{-1})(\phi(\mathfrak{s}+ D)^{-1})^*$
which is then equal to $\phi (1+D^2)^{-1} \phi$ where we remark once again
that $\phi$ is compactly supported. It is a classic result that such an operator is $m/2$-Shatten class.
Similarly we proceed for $(\mathfrak{s}+ D)^{-1} \phi$.
\end{remark}

We shall now construct a parametrix for $D^+$; in fact we shall construct
an inverse of $D^+$ modulo $m$-Shatten class operators, with $m>\dim X$. 
We introduce the following useful notation: if $L$ and $M$ are two bounded operators
on a Hilbert space and if $m\in [1,+\infty)$ then 
\begin{equation}\label{tilde-shatten}
L\;\sim_m\;K\quad\text{if}\quad L-M\in \mathcal{I}_m\,.
\end{equation}
Consider the operator 
\begin{equation}\label{A}
G= (I+ D^- D^+)^{-1} D^-\,.
\end{equation}
Using  elementary properties of 
the functional calculus for Dirac operators on complete manifolds, we certainly have that 
\begin{equation*}
I- G\, D^+ = (I+D^- D^+)^{-1}\,,\quad I-D^+ \, G= (I+D^+ D^-)^{-1}
\,.
\end{equation*}
The operator $G$, as well as the two remainders, do not have Schwartz kernels that are localized 
near the diagonal; still they are perfectly defined and  they are all bounded on $L^2$.
For notational convenience we set  $$(D^\pm)_{\cyl}=:D^\pm_{\cyl}\,.$$ Recall that up to standard
identifications $D^\pm_{\cyl}=\pm \pa_x + D^{\pa}$, acting on the restriction of
$E^+$ to the boundary, extended in the obvious way to the cylinder.
Consider the operator
\begin{equation}\label{gprime}
G^\prime := - \chi ((D^+_{\cyl})^{-1}  (I+D^+_{\cyl} D^-_{\cyl})^{-1} ) \chi \,.
\end{equation}
Then, a simple computation (which employs the same elementary observations we 
 have already used above)
proves that
\begin{align}\label{g-prime-d-plus}
G^\prime D^+ &= -\chi (I+D^-_{\cyl} D^+_{\cyl})^{-1}\chi  + \chi (D^+_{\cyl})^{-1}  (I+D^+_{\cyl} D^-_{\cyl})^{-1}
\cl(d\chi)\\  D^+ G^\prime &= -\chi (I+D^+_{\cyl} D^-_{\cyl})^{-1}\chi - \cl(d\chi) (D^+_{\cyl})^{-1}  (I+D^+_{\cyl} D^-_{\cyl})^{-1}\chi\,.
\end{align}
Let now  $Q:=G-G^\prime$. $Q$ is clearly bounded on $L^2$. We restate for the benefit of the reader
the Theorem we wish to prove (Theorem \ref{theo:parametrix-elementary}):

\begin{theorem}\label{theo:parametrix-elementary-bis}
 The operator $Q$ is an inverse of $D^+$ modulo $m$-Shatten class operators,
with $m>\dim X$.
\end{theorem}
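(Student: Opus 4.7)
The plan is to unwind the definitions, write each of $I-QD^{+}$ and $I-D^{+}Q$ as a sum of a \emph{bulk} term and a \emph{boundary correction}, and show that both lie in $\mathcal{I}_{m}$ for $m>\dim X$, using only Proposition~\ref{proposition:compression}, Remark~\ref{remark:from-compact-to-shatten}, and Lemmas~\ref{lemma:elementary}--\ref{lemma:rellich-higson}. Using $Q=G-G'$ together with the displayed formulas for $GD^{+}$, $D^{+}G$, \eqref{g-prime-d-plus} and its twin, one gets
\begin{equation*}
I-QD^{+} \;=\; \bigl[(I+D^{-}D^{+})^{-1} - \chi(I+D^{-}_{\cyl}D^{+}_{\cyl})^{-1}\chi\bigr] \;+\; \chi (D^{+}_{\cyl})^{-1}(I+D^{+}_{\cyl}D^{-}_{\cyl})^{-1}\cl(d\chi),
\end{equation*}
and symmetrically for $I-D^{+}Q$, with the boundary correction then carrying a $\cl(d\chi)$ on the \emph{left}.

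For the bulk term I would observe that, by the block computation of $(\mathfrak{s}+D)^{-1}$ (which, as noted after \eqref{explicit-e-hat}, is the matrix whose upper-left entry is exactly $(I+D^{-}D^{+})^{-1}$), the bracket in the display above is nothing but the $(1,1)$-entry of
$(\mathfrak{s}+D)^{-1}-\chi(\mathfrak{s}+D_{\cyl})^{-1}\chi$. Proposition~\ref{proposition:compression} says this whole operator is compact, and Remark~\ref{remark:from-compact-to-shatten} upgrades it to $\mathcal{I}_{m}$ for $m>\dim X$; hence so does any matrix entry. This disposes of the bulk term in both expressions.

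For the boundary correction, note that $\cl(d\chi)$ is compactly supported in the cylinder, so I can write $\cl(d\chi)=\phi\,\cl(d\chi)$ for a smooth compactly supported $\phi$ equal to $1$ on $\supp d\chi$; under the invertibility assumption \eqref{assumption}, $D^{\pa}$, hence $D^{+}_{\cyl}$ and $D^{-}_{\cyl}$, are invertible on $L^{2}$, so $(D^{+}_{\cyl})^{-1}$ and $(I+D^{+}_{\cyl}D^{-}_{\cyl})^{-1}$ are bounded. Using \eqref{useful-identity} on the cylinder, $(I+D^{+}_{\cyl}D^{-}_{\cyl})^{-1}$ is the corresponding block of $(\mathfrak{s}+D_{\cyl})^{-2}$; the argument of Remark~\ref{remark:from-compact-to-shatten} then gives $(I+D^{+}_{\cyl}D^{-}_{\cyl})^{-1}\phi\in\mathcal{I}_{m}$ for $m>\dim X$. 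Composing on the left with the bounded operator $\chi(D^{+}_{\cyl})^{-1}$ (using the ideal property of $\mathcal{I}_{m}$) and on the right with the bounded $\cl(d\chi)$ shows the boundary correction lies in $\mathcal{I}_{m}$. The same chain of estimates, with $\cl(d\chi)$ now on the left and $\phi$ inserted on that side, handles the symmetric correction appearing in $I-D^{+}Q$.

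The main technical obstacle is the boundary-correction step: one must make rigorous sense of the operator $(D^{+}_{\cyl})^{-1}(I+D^{+}_{\cyl}D^{-}_{\cyl})^{-1}$ (which I would do via Fourier transform along the cylindrical axis, reducing to the family $(-i\xi+D^{\pa})^{-1}(\,1+|\xi|^{2}+(D^{\pa})^{2}\,)^{-1}$ that is bounded uniformly in $\xi$ thanks to \eqref{assumption}), and then isolate the compactly supported factor so that exactly one Shatten estimate of the type $\phi(\mathfrak{s}+D_{\cyl})^{-k}\in\mathcal{I}_{m}$ ($km>\dim X$) is used; any commutator between $\phi$ and the cylindrical resolvents, created while moving factors past one another, produces strictly smoother terms that are absorbed by the same Shatten bound.
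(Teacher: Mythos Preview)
Your proof is correct and follows essentially the same strategy as the paper: split $S_{+}=I-QD^{+}$ into a bulk difference $(I+D^{-}D^{+})^{-1}-\chi(I+D^{-}_{\cyl}D^{+}_{\cyl})^{-1}\chi$ and a boundary correction carrying $\cl(d\chi)$, then use Remark~\ref{remark:from-compact-to-shatten} for both. The only difference is that for the bulk term you read off $(I+D^{-}D^{+})^{-1}$ directly as the $(1,1)$-block of $(\mathfrak{s}+D)^{-1}$, whereas the paper first establishes $(I+D^{2})^{-1}\sim_{m}\chi(I+D_{\cyl}^{2})^{-1}\chi$ by expanding $(\mathfrak{s}+D)^{-2}-\chi(\mathfrak{s}+D_{\cyl})^{-2}\chi$ algebraically in terms of $(\mathfrak{s}+D)^{-1}-\chi(\mathfrak{s}+D_{\cyl})^{-1}\chi$; your shortcut is slightly more direct.
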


\begin{proof}
First we observe, from \eqref{useful-identity}, that $(I+D^2)^{-1}= (\mathfrak{s}+D)^{-2}$. Using this 
we check that
 $(I+D^2)^{-1}-\chi (I+D_{\cyl}^2)^{-1}\chi$ can be expressed as
  \begin{align*}
  &(\mathfrak{s}+D)^{-1} ((\mathfrak{s}+D)^{-1}  - \chi (\mathfrak{s}+D_{\cyl})^{-1} \chi)+
  ((\mathfrak{s}+D)^{-1}  - \chi (\mathfrak{s}+D_{\cyl})^{-1} \chi)\chi (\mathfrak{s}+D_{\cyl})^{-1}\chi
  + \\&\chi (\mathfrak{s}+D_{\cyl})^{-1} (\chi^2 -1) (\mathfrak{s}+D_{\cyl})^{-1}\chi
  \end{align*}
  Since this term is $m$-Shatten, wee see that $(I+D^2)^{-1}
  \,\sim_m\,\chi (I+D^2_{\cyl})^{-1}\chi$.
  Now, from  \eqref{g-prime-d-plus}, we  have
  $$ G^\prime D^+   \,\sim_m\, -\chi (I+D^-_{\cyl} D^+_{\cyl})^{-1}\chi\,,\quad
   D^+ G^\prime \,\sim_m\, -\chi (I+D^+_{\cyl} D^-_{\cyl})^{-1}\chi$$
   so that, if we define 
   $$S_+ \,:= \,I- Q\, D^+ \,,\quad S_-\,:=\,I-D^+ \, Q\,$$
   and recall that $Q=G-G^\prime$, we obtain 
   $$S_+= I- (G-G^\prime)\, D^+=(I+D^- D^+)^{-1} + G^\prime D^+
   \,\sim_m\,
   (I+D^- D^+)^{-1}  -\chi (I+D^-_{\cyl} D^+_{\cyl})^{-1}\chi
    \,\sim_m\,
0\,.$$
Thus the remainder $S_+$ is $m$-Shatten class. Similarly we proceed for $S_-$.
The theorem is proved.
 \end{proof}
 
We have presented the parametrix construction in the case $T={\rm point}$, $\Gamma=\{1\}$.
However,  a similar  proof applies to the general case of a foliated bundle with cylindrical ends
$(X,\F)\equiv (\tilde{V}\times_\Gamma T, \F)$ with $\tilde{V}$ of even dimension
\footnote{Similar arguments establish the analogues in odd dimension.}. It will suffice
to apply to the $\Gamma$-equivariant family 
$(D_\theta)_{\theta\in T}$ the functional calculus along the fibers of the trivial fibration
$\tilde{V}\times T \to T$ (obtaining, of course, $\Gamma$-equivariant families). 
All our argument apply verbatim once we observe that
given compactly supported smooth functions $\phi$, $\psi$ on $X$,
the family $(\phi (D_\theta+\mathfrak{s})^{-1} \psi)_{\theta}$ defines an element
in $\KK (\E)$, the compacts of the Hilbert module $\E$. In fact, once we observe that
such an element is in fact in $\mathcal{I}_m (X,\F)$, if $m>\dim \tilde{V}$, we can finally conclude
that Theorem \ref{theo:shatten-absolute} holds. 

\subsection{Proof of the existence of the relative index class.}\label{subsection:proof-relative}
In this subsection we give a proof of Proposition \ref{prop:relative-indeces}.
Denote by $D^{{\rm cyl}}$ the Dirac operator induced by $D^\pa$ on the cylinder.
Consider the triple 
\begin{equation}\label{graph-triple-proof}
(e_D, \begin{pmatrix} 0&0\\0&1 \end{pmatrix},p_t) \,, \;\;t\in [1,+\infty]\,,\;\;\text{ with } p_t:= \begin{cases} e_{(tD^{\cyl})} 
\;\;\quad\text{if}
\;\;\;t\in [1,+\infty)\\
e_1\equiv \begin{pmatrix} 0&0\\0&1 \end{pmatrix}\;\;\text{ if }
\;\;t=\infty
 \end{cases}
\end{equation}
First, we need to justify the fact that the relevant elements here are in the right algebras. Thus we need to show that
\begin{itemize}
\item $e_D$ is in $A^* (X;\mathcal F)$.
\item $e_{(tD^{\cyl})}$ is in $B^* (\cyl (\pa X), \mathcal F_{{\rm cyl}})$.
\end{itemize}
We start with the latter. Fix for simplicity $t=1$. 
We need to show that there exists a sequence of elements $k_j\in B_c (\cyl (\pa X),\F_{{\rm cyl}})$
such that $ \| e_{(D^{\cyl})} - k_j \| \longrightarrow 0$ as $j\to 
+\infty$, with the norm denoting the $C^*$-norm of Subsection \ref{subsec:translation}. 
We use  the fact that 
$D^{\cyl}$ is an $\RR\times \Gamma$-equivariant family. 
(Strictly speaking we are taking the closure of the operators in this family.)
Proceeding precisely as in \cite{MN}, Section 7, thus following ideas of Roe,
we are reduced to the following remark: if $f$ is a rapidly decreasing function on $\RR$ with compactly
supported Fourier transform, then $f(D^{\cyl})$ is given by (the  family of integral operators induced by)
an element in $B_c (\cyl (\pa X),\F_{{\rm cyl}})$. The proof of the last assertion 
 is an easy
generalization of the well known results by Roe, see for example 
\cite{roe-foliation} or  the detailed discussion in
\cite{roe-partitioned}. Since the functions as $f$ are dense in $C_0 (\RR)$ the assertion follows.

Next we show that $e_D\in A^* (X;\mathcal F)$. 
First of all, we need to show that $e_D\in \mathcal{L} (\E)$. This  is the same 
proof as in \cite{MN}.

Now we need to show that the image of $e_D$ in $\mathcal{Q}(\mathcal{E})$ is in the image of $\rho$.
Write $e_D= (e_D - \chi^0 e_{(D^{\cyl})} \chi^0) + \chi^0 e_{(D^{\cyl})} \chi^0$. 
Since we have proved that $e_{D^{\cyl}}$ is in $B^* (\cyl (\pa X), \mathcal F_{{\rm cyl}})$, it suffices to show that
\begin{equation}\label{difference-of-graph}
e_D - \chi^0 e_{(D^{\cyl})} \chi^0\in \KK (\E)\,.
\end{equation}
In order to prove \eqref{difference-of-graph} we first show that $e_D - \chi e_{(D^{\cyl})} \chi\in \KK (\E)$,
with $\chi$ a smooth approximation of $\chi^0$. Using 
\eqref{explicit-e-hat}
we reduce ourselves to establishing that  $(\mathfrak{s}+ D)^{-1} - \chi (\mathfrak{s}+D^{\cyl})^{-1} \chi$,
which we have already done.
As far as $(\mathfrak{s}+ D)^{-1} - \chi^0 (\mathfrak{s}+D^{\cyl})^{-1} \chi^0$ is concerned,
we simply choose a sequence of smooth functions $\chi_j$ converging to $\chi^0$ in $L^2$ and we use the fact
that $\KK (\E)$ is closed in $\mathcal{L} (\E)$; we have already used this argument in the proof of Subemma
\ref{sublemma:compression}. 
The proof of \eqref{difference-of-graph} is complete.

Finally, we need to prove that $p_t$ is a continuous path in $B^*$  joining $\pi (e_D)$
to $e_1$
Now, the above argument shows that for $t\in [1,+\infty)$ 
$\pi (e_{tD})=e_{t(D^{\cyl})}=p_t$,
so we only need to show that $e_{(tD^{\cyl})}$ converges to $\begin{pmatrix} 0&0\\0&1 \end{pmatrix}$
in the $C^*$-norm of $B^* (\cyl(\pa X),\F_{{\rm cyl}})$ as $t\to \infty$; however, using  assumption \eqref{assumption}
this  follows easily. 

Next we consider the Wassermann projection and the triple 
$(W_D, e_1, q_t)$
$t\in [0,+\infty]$,
with 
\begin{equation*}
q_t:= \begin{cases} W_{(tD^{\cyl})}
\;\;\quad\text{if}
\;\;\;t\in [1,+\infty)\\
e_1
\;\;\;\,\text{ if }
\;\;t=\infty
 \end{cases}
\end{equation*}
 Exactly the same arguments as above show that 
$W_{(tD^{\cyl})}\in B^* (\cyl (\pa X),\F_{{\rm cyl}})$ and that $W_D\in \mathcal{L} (\E)$. It remains to show that
$W_D\in A^* (X,\F)$, i.e., arguing as above,  that 
\begin{equation}\label{difference-of-wass}
W_D - \chi^0 W_{(D^{\cyl})} \chi^0\in \KK (\E)\,.
\end{equation}
This is proved as for the graph projection.
The fact that $q_t$
joins $\pi (W_D)$ to $e_1$
follows from classic properties
of the heat kernel together with assumption \eqref{assumption}.

Finally, we need to show that 
$$[W_D, e_1
, q_t]=[e_D, e_1
, p_t]\in 
K_0 (A^* (X;\mathcal F),B^* (\cyl (\pa X), \mathcal F_{{\rm cyl}}))\,.$$
This is proved  using the explicit homotopy 
$P_s (D)$ between $e_D$ and $W_D$ we referred to after Proposition
\ref{prop:cs=wass} (but since we don't use this result we shall be
rather short); indeed $P_s (D)$ 
can be explicitly written down and from its form we realize that $P_s (tD)$
joins $e_{tD}$ to  $W_{tD}$. Thus $\{\pi (P_s (tD)\}_{s\in [0,1],t\in [0,+\infty]}$ provides an homotopy between 
$\{p_t\}_{t\in [0,+\infty]}$ and $\{q_t\}_{t\in [0,+\infty]}$. 
Thus $[W_D, e_1, q_t]=[e_D, e_1, p_t]$
as required. The proof of Proposition \ref{prop:relative-indeces} is complete

\subsection{Proof of the excision formula \eqref{ex-of-rel-is-ab}}\label{subsection:proofexcision}
 Let $Q\in \L(\E^-,\E^+)$ be the  parametrix for $D^+$ obtained as in Theorem
 \ref{theo:parametrix-elementary}.
 
 We consider 
 \begin{equation}\label{proj-for exc}
 e(D^+,Q):= 
 \left( \begin{array}{c}  I\\ D^+
\end{array} \right) \left( \begin{array}{cc}  S_+ & Q
\end{array} \right) = \left( \begin{array}{cc}  S_+  & Q\\ D^+ S_+ &  D^+ Q
\end{array} \right)
 \end{equation}
 The following Lemma  is elementary to check 
 \begin{lemma}
 $e(D^+,Q)$ is 
an idempotent
 in $A^*(X,\F)\equiv A^*$ .  Moreover, if 
  $P_Q$ denotes, as usual, the Connes-Skandalis projection
 associated to $Q$, then
  \begin{equation}\label{conj-for exc}
 P_Q =  \left( \begin{array}{cc} I & Q\\0 & I \end{array} \right)^{-1} 
  e(D^+,Q)  \left( \begin{array}{cc} I & Q\\0 & I \end{array} \right)\,.
  \end{equation}
  \end{lemma}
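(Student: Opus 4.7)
The plan is to verify the two claims by direct matrix computation, leveraging only the parametrix identities $QD^+ = I - S_+$ and $D^+ Q = I - S_-$ together with the explicit form $Q = G - G'$ from Theorem \ref{theo:parametrix-elementary-bis} to guarantee membership in $A^*$.

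First I would verify idempotency in a single line. Writing $e(D^+,Q) = C R$ with $C = \begin{pmatrix} I \\ D^+ \end{pmatrix}$ and $R = \begin{pmatrix} S_+ & Q \end{pmatrix}$, one has $e^2 = C(RC)R$, and the scalar product in the middle is $RC = S_+ + QD^+ = S_+ + (I - S_+) = I$, so $e^2 = CR = e$. The same rearrangement yields two further useful  identities that will appear below, namely $D^+S_+ = (I-D^+Q)D^+ = S_-D^+$ and $QS_- = Q(I - D^+Q) = (I - QD^+)Q = S_+Q$.

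Next I would check that each entry of $e(D^+,Q)$ belongs to (a unitization of) $A^*(X,\F)$. The entries $S_+$ and $Q$ are immediate: $S_+$ is an $m$-Shatten remainder lying in $C^*(X,\F) \subset A^*$, while $Q = G - G'$ is bounded, with $G = (I+D^-D^+)^{-1}D^-$ in $C^*(X,\F)$ by functional calculus on the complete manifold and $G'$ a compression $\chi(\cdots)\chi$ of a translation-invariant operator on the cylinder, hence in $A^*$ by the Wiener--Hopf construction. For the bottom row one uses the identities $D^+ S_+ = S_- D^+$ and $D^+ Q = I - S_-$ to see that both entries extend by continuity to bounded operators on $L^2$ lying, up to the identity, in $C^*(X,\F) \subset A^*$; the unboundedness of $D^+$ only enters formally.

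Finally I would verify formula \eqref{conj-for exc} by direct computation. Since $\begin{pmatrix} I & Q \\ 0 & I \end{pmatrix}^{-1} = \begin{pmatrix} I & -Q \\ 0 & I \end{pmatrix}$, evaluating
\[
\begin{pmatrix} I & -Q \\ 0 & I \end{pmatrix} \begin{pmatrix} S_+ & Q \\ D^+S_+ & D^+Q \end{pmatrix} \begin{pmatrix} I & Q \\ 0 & I \end{pmatrix}
\]
and collapsing via $I - QD^+ = S_+$, $I - D^+Q = S_-$, and $QS_- = S_+Q$, the four entries become: (1,1) $= (I-QD^+)S_+ = S_+^2$; (2,1) $= D^+S_+ = S_-D^+$; (1,2) $= S_+^2 Q + QS_- = S_+^2 Q + S_+ Q = S_+(I+S_+)Q$; and (2,2) $= D^+S_+Q + D^+Q = S_-(I-S_-) + (I-S_-) = I - S_-^2$. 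This matches the defining formula \eqref{e_Q} for $P_Q$ exactly.

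The only non-formal step is the $A^*$-membership check of the bottom row; the heart of the matter is that the parametrix identities force the a priori ill-defined compositions $D^+S_+$ and $D^+Q$ to be bounded operators represented by elements of $A^*(X,\F)$, and this is precisely the content of the parametrix construction on manifolds with cylindrical ends recalled in Theorem \ref{theo:parametrix-elementary-bis}.
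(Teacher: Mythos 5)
Your algebraic verification is exactly the ``elementary check'' the paper has in mind: the paper states this lemma without proof, and your factorization $e(D^+,Q)=CR$ with $RC=S_++QD^+=I$, together with the conjugation computation using $D^+S_+=S_-D^+$ and $QS_-=S_+Q$, reproduces the Connes--Skandalis matrix entry by entry. The identities and the final matching with \eqref{e_Q} are all correct.

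One assertion in your membership check is wrong, though not fatally so: $G=(I+D^-D^+)^{-1}D^-$ does \emph{not} lie in $C^*(X,\F)\cong\KK(\E)$. On a manifold with cylindrical ends the resolvent of $D$ is not compact; indeed $\pi(G)=(I+D^{\cyl,-}D^{\cyl,+})^{-1}D^{\cyl,-}\neq 0$, so ``functional calculus on the complete manifold'' only places $G$ in $A^*(X,\F)$, not in the ideal. The correct route is the one the paper takes for the graph projection: $G$ is (up to the identifications of Subsection \ref{pseudo-index-closed}) an entry of $\widehat e_D=(\mathfrak{s}+D)^{-1}$, and $(\mathfrak{s}+D)^{-1}-\chi(\mathfrak{s}+D_{\cyl})^{-1}\chi\in\KK(\E)$ by Proposition \ref{proposition:compression}, which is what puts $G$ in $A^*$ with $\pi(G)$ the cylindrical analogue. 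Since $G'$ is a compression of a translation-invariant operator and hence also in $A^*$, your conclusion $Q=G-G'\in A^*$ stands once the argument is routed through $A^*$ rather than through $C^*(X,\F)$; the remaining entries ($S_\pm$, $S_-D^+$) do lie in $\mathcal{I}_m(X,\F)\subset C^*(X,\F)$ as you say.
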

The path obtained substituting $sQ$, $s\in [0,1]$, to $Q$ in the first and third matrix appearing 
on the right hand  side of \eqref{conj-for exc} is a path of projections in $A^*$ and 
connects the projection $P_Q\in 
C^* (X,\F)\subset A^*$ with the projection  $e(D^+,Q)$. 
On the other hand, another  direct computation shows that if $G=(I+D^- D^+)^{-1} D^-$,
then 
$e( D^+, G)=e_D$, the graph projection. Recall that $Q=G-G^\prime$, with 
$G^\prime$ given by \eqref{gprime}; by composing the path 
of projections 
$$\left( \begin{array}{cc} I & sQ\\0 & I \end{array} \right)^{-1} 
  e(D^+,Q)  \left( \begin{array}{cc} I & sQ\\0 & I \end{array} \right)$$
  with the path of projections  $e(D^+, G-\tau G^\prime)$, $\tau\in [0,1]$,  we obtain  a path 
  of projections $H(t)$ in $A^*$
  joining $P_Q=H(1) $ to $e_D=H(0)$.
    Consider now 
      \begin{equation}\label{exc-mu}
    D^+_\mu:=\mu D^+\,, \;\;\;G_\mu := (I+D^-_\mu  D^+_\mu )^{-1} D^-_\mu \,,
    \;\;\; Q(\mu,\tau):= G_\mu -\tau G^\prime_\mu\,,
    \end{equation}
    with $G^\prime_\mu$ as in \eqref{gprime} but defined in terms of
    $ D^+_\mu$. We have then
     \begin{equation}\label{rests-exc-mu}
    D^+_\mu Q(\mu,\tau)=I-S_- (\mu,\tau)\,,\quad Q(\mu,\tau)D^+_\mu = I - S_+ (\mu,\tau)\,.
    \end{equation}
    In this notation the above path, $H(t)$,   first joins $P_{Q(1,1)}$ to $e(D^+,Q(1,1))$
    and then joins $e(D^+,Q(1,1))$ to $e(D^+,Q(1,0))$, which is $e_D$. We write
    $$P_Q\equiv P_{Q(1,1)}\curvearrowright e(D^+,Q(1,1)) \curvearrowright e(D^+,Q(1,0))\equiv e_D
    \,.$$
    Similarly, we can consider
    $$P_{Q(\mu,1)}\curvearrowright e(D^+,Q(\mu,1)) \curvearrowright e(D^+,Q(\mu,0))\equiv e_{\mu D}$$
    with the second homotopy provided by $e(D^+,Q(\mu,\tau))$, $\tau\in [0,1]$.
   Let $H(\mu,t)$ be this homotopy, connecting  $P_{Q(\mu,1)}$ to $e_{\mu D}$.
   We set $p(\mu,t):=\pi (H(\mu,t))$, where $\mu\in [1,+\infty), t\in [0,1]$. We also set
   \begin{equation}\label{p-infty}
   p(\infty,t):= \left( \begin{array}{cc} 0&0\\0&I \end{array} \right)\,,\quad\forall t\in [0,1]\,.
   \end{equation}
   Assume we could prove that
   the above defined function $p(\mu,t)$ is continuous on $[1,+\infty]_\mu
   \times [0,1]_t$. 
     Then from the above discussion we obtain that    
   $$(H(t), \left( \begin{array}{cc} 0&0\\0&I \end{array} \right),p(\mu,\tau))\;\;\text{joins}
   \;\; (H(1), \left( \begin{array}{cc} 0&0\\0&I \end{array} \right),p(\mu,1))
   \;\;\text{to}
   \;\; (H(0), \left( \begin{array}{cc} 0&0\\0&I \end{array} \right),p(\mu,0))$$
But, as already remarked,
$$H(1)=P_Q\quad\text{and}\quad H(0)=e_D;$$
moreover $p(\mu,1)$ is the constant path, indeed
   $$p(\mu,1):=\pi (H(\mu,1))=\pi (P_{Q(\mu,1)})=\left( \begin{array}{cc} 0&0\\0&I \end{array} \right)\,,$$
   given that $P_{Q(\mu,1)}$ is  a true Connes-Skandalis projection, thus with the property that
   $$ P_{Q(\mu,1)}- \left( \begin{array}{cc} 0&0\\0&I \end{array} \right)\in C^* (X,\F)\,;$$
   finally,
   $H(\mu,0)=e_{\mu D}$, so that  $p(\mu,0)=e_{\mu D^{\cyl}}$; thus, taking into account
   \eqref{p-infty}, we see that
   $p(\mu,0)$  is precisely the path of
   projections appearing in the definition of the relative index class.   
   Summarizing, if   we could prove that
    $p(\mu,t)$ is continuous on $[1,+\infty]_\mu
   \times [0,1]_t$  then 
   $$[P_Q,  \left( \begin{array}{cc} 0&0\\0&I \end{array} \right), \mathrm{const}]=[e_D,\left( \begin{array}{cc} 0&0\\0&I \end{array} \right),p_\mu]$$
   which is what we need to prove in order to conclude. Now, $p(\mu,t)$ is certainly continuous
   in $[1,+\infty)\times [0,1]$;
   we end the proof by showing that, in the $C^*$-norm,
   $$\lim_{\mu\to +\infty} p(\mu,t)=  \left( \begin{array}{cc} 0&0\\0&I \end{array} \right)$$
   uniformly in $t\in [0,1]$.\\
   We begin with the projection of the first homotopy, that connecting
   $P_{Q(\mu,1)}$ to $e(\mu D^+,Q(\mu,1))$. This is
   \begin{equation}\label{proj-of-conj}
      \pi \left(  \left( \begin{array}{cc} I & sQ(\mu,1)\\0 & I \end{array} \right)^{-1} 
  e(D^+_\mu,Q(\mu,1))  \left( \begin{array}{cc} I & sQ(\mu,1)\\0 & I \end{array} \right) \right) \,,\quad s\in [0,1],
  \end{equation}
  which is easily seen to be equal to
  $$ \left( \begin{array}{cc} 0& (1-s)\pi (Q(\mu,1))\\0 & 1 \end{array} \right)\,.$$
  Now we write explicitly:
  $$\pi (Q(\mu,1))= (\mu D^{\cyl,-})(I+ D^{\cyl,-} D^{\cyl,+}\mu^2)^{-1}-\frac{1}{\mu} ( D^{\cyl,+})^{-1}
  (I+ D^{\cyl,-} D^{\cyl,+}\mu^2)^{-1}$$
  which does converge to 0 in the $C^*$-norm as $\mu\to +\infty$. Thus \eqref{proj-of-conj}
  converges to $
\left( \begin{array}{cc} 0& 0 \\0 & I \end{array} \right)$ uniformly in $s$, as required. 
Next we look at the second path,
connecting $e(\mu D^+, Q(\mu,1))$ to $e(\mu D^+, Q(\mu,0))$. We need to compute explicitly
$\pi (e(\mu D^+, Q(\mu,\tau))$ and show that it goes to 0 uniformly in $\tau$.
An explicit and elementary computation shows that 
\begin{align*}\pi (e(\mu D^+, Q(\mu,\tau)) &=  \left( \begin{array}{cc} 
(I+\mu^2 D^{\cyl,-} D^{\cyl,+})^{-1} & (I+\mu^2 D^{\cyl,-} D^{\cyl,+})^{-1} \mu D^{\cyl,-})\\
\mu D^{\cyl,+} (I+\mu^2 D^{\cyl,-} D^{\cyl,+})^{-1}  & I- (I+\mu^2 D^{\cyl,+} D^{\cyl,-})^{-1} 
 \end{array} \right) \\&+ (I+\mu^2 (D^{\cyl})^2)^{-1} 
   \left( \begin{array}{cc} -\tau & \tau (\mu D^{\cyl,-})^{-1}\\
   -\tau \mu D^{\cyl,+}&\tau\end{array} \right).
   \end{align*}
   The second summand converges uniformly to 0 in the $C^*$-norm, whereas the first summand
   converges  uniformly to $\left( \begin{array}{cc} 0& 0 \\0 & I \end{array} \right)$
   in the $C^*$-norm. This ends the proof.

\subsection{Proof of the existence of smooth index classes}\label{subsection:proof3props}

\subsubsection{Proof of Proposition \ref{prop:smooth-cylinder-1}}\label{subsection10-7-1}
Recall the Connes-Skandalis projector
\begin{equation*}
P_Q:= \left(\begin{array}{cc} S_{+}^2 & S_{+}  (I+S_{+}) Q\\ S_{-} D^+ &
I-S_{-}^2 \end{array} \right)
\end{equation*}
Let \begin{equation*}
\widehat{P}_Q:= \left(\begin{array}{cc} S_{+}^2 & S_{+}  (I+S_{+}) Q\\ S_{-} D^+ &
-S_{-}^2 \end{array} \right)
\end{equation*}
We want to show that
$$\widehat{P}_Q\in \mathcal{J}_{m}(X,\F)\cap
{\rm Dom}\overline{\delta}_1\cap {\rm Dom}\overline{\delta}_2\,,$$
with $m>2n$ and $2n$ equal to the dimension of the leaves of $(X,\F)$. We fix such an $m$.
We set, as usual, $(D^\pm)^{\cyl}:= D^\pm_{\cyl}$.
We begin by showing that the Connes-Skandalis matrix $\widehat{P}_Q$ is  in 
 $\mathcal{J}_{m}(X,\F)$. First we show that it belongs to $\I_m$.
   Recall our parametrix $Q=G-G^\prime$
with 
$G= (I+ D^- D^+)^{-1} D^-$ and $G^\prime := \chi ((D^+_{\cyl})^{-1}  (I+D^+_{\cyl} D^-_{\cyl})^{-1} ) \chi \,.$
We know that
  $$S_+ \,:= \,I- Q\, D^+ \,,\quad S_-\,:=\,I-D^+ \, Q\,$$
 are elements in $\mathcal{I}_m (X,\F)$ for $m>\dim \tilde{V}$; hence, obviously,
so they are  $(S_\pm)^2 $ and $(S_+ (I+S_+))Q$.
 Thus we only need to show that $S_- D^+$ belongs to $\mathcal{I}_m (X,\F)$ for $m>\dim \tilde{V}$.
 Recall  that
   $S_-= (I+D^+ D^-)^{-1} + D^+ G^\prime $; thus $S_- D^+=(I+D^+ D^-)^{-1}D^+ + D^+ G^\prime D^+$.
   Now, with the usual elementary techniques, we can express the last term as
   \begin{align*}
   &  ((I+D^+ D^-)^{-1}D^+ - \chi  (I+D^+_{\cyl} D^-_{\cyl})^{-1} D^+_{\cyl}\chi)+\\  
   &(\chi  (I+D^+_{\cyl} D^-_{\cyl})^{-1} 
   \cl (d\chi)- \cl(d\chi)  (I+D^+_{\cyl} D^-_{\cyl})^{-1}  \chi + \cl(d\chi) (D^+_{\cyl})^{-1}
    (I+D^+_{\cyl} D^-_{\cyl})^{-1}  \cl(d\chi))\,.
    \end{align*}
    with $d$ denoting the differential along $\tV$ in the product $\tV\times T$.
    See formula \eqref{d-decomposition}.
    Employing the usual reasoning, the first term  is easily seen to be in $\mathcal{I}_m (X,\F)$ for $m>\dim \tilde{V}$; we have already proved that the same is true for the second term. Thus   $S_- D^+$ is
    in $\mathcal{I}_m (X,\F)$ for $m>\dim \tilde{V}$.\\
          Summarizing: we have proved that $\widehat{P}_Q\in \mathcal{I}_m (X,\F)$ for $m>\dim \tilde{V}$.\\ 
           Next we show that $\widehat{P}_Q\in \J_m$. Consider for example 
           $$S_+=(I+D^- D^+)^{-1} 
   -\chi (I+D^-_{\cyl} D^+_{\cyl})^{-1}\chi  + \chi (D^+_{\cyl})^{-1}  (I+D^+_{\cyl} D^-_{\cyl})^{-1}
\cl(d\chi)$$
We want to show that $g S_+$ is bounded. However, from the explicit expression
we have just written this is readily checked by hand using (variants of) the following

\begin{lemma}
The operator $g (1+D^2)^{-1}$ is bounded.
\end{lemma}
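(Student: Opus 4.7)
The plan is to reduce to an explicit kernel computation on the cylindrical end, where the product structure and the spectral gap of $D^{\pa}$ make the estimate tractable. First I would choose a smooth cutoff $\psi\in C_c^\infty(X)$ equal to $1$ on a large compact neighborhood of $X_0$ and supported away from the deep cylinder. Since $g\psi$ is then a bounded multiplication operator (supported where $g$ is bounded) and $(1+D^2)^{-1}$ has $L^2$-operator norm at most $1$, the product $g\psi(1+D^2)^{-1}$ is trivially bounded. It thus suffices to control the complementary piece $g(1-\psi)(1+D^2)^{-1}$, which is concentrated on the deep cylindrical end where $D$ agrees with $D^{\cyl}$.

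Next, I would use Lemma \ref{lemma:elementary2} together with the kernel comparison worked out in the proof of Proposition \ref{proposition:compression} to write
\[
(1-\psi)(1+D^2)^{-1} \;=\; (1-\psi)\chi(1+(D^{\cyl})^2)^{-1}\chi \;+\; R,
\]
where $R$ assembles the commutator terms $A^{-1}\cl(d\chi)B^{-1}\chi$ and similar expressions appearing in the proof of Proposition \ref{proposition:compression}, each of which has Schwartz kernel of compact support (because $\cl(d\chi)$ is compactly supported). The remainder $R$ therefore stays bounded after multiplication by $g$, so the task reduces to bounding $g\chi(1+(D^{\cyl})^2)^{-1}\chi$. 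On the cylinder, the product structure yields $(D^{\cyl})^2=-\pa_s^2+(D^{\pa})^2$, and I would decompose spectrally with respect to the $\Gamma$-equivariant family $D^{\pa}$. By the invertibility assumption \eqref{assumption}, $|D^{\pa}_\theta|\geq\tilde{\epsilon}$ uniformly in $\theta\in T$, so everything reduces to a uniform family of one-dimensional resolvents $(A_\lambda^2-\pa_s^2)^{-1}$ on $\RR$ with $A_\lambda:=\sqrt{1+\lambda^2}\geq A_0:=\sqrt{1+\tilde{\epsilon}^2}>1$, whose integral kernels are explicitly $\frac{1}{2A_\lambda}e^{-A_\lambda|s-s'|}$.

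The main obstacle will be establishing the uniform-in-$\lambda$ bound for the one-dimensional kernel operator $g(s)\chi(s)\frac{1}{2A_\lambda}e^{-A_\lambda|s-s'|}\chi(s')$ on $L^2(\RR)$: the quadratic growth of $g$ along the cylinder must be dominated by the exponential decay of rate $A_\lambda\geq A_0>1$ via a Schur-type estimate, uniformly in the spectral parameter. The spectral gap provided by assumption \eqref{assumption} is essential here, since without it low-lying spectral components of $D^{\pa}$ would produce slow decay in $|s-s'|$ that the $g$-weight would overwhelm. Assembling the three contributions — the compactly supported principal piece $g\psi(1+D^2)^{-1}$, the compactly supported commutator remainder $gR$, and the uniform cylindrical Schur estimate on each spectral component — yields boundedness of $g(1+D^2)^{-1}$, with the analogous argument applied to $(1+D^2)^{-1}g$.
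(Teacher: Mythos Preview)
The paper's argument is entirely different from yours: it writes $g(1+D^2)^{-1}=f\,f\,(D+\mathfrak{s})^{-1}(D+\mathfrak{s})^{-1}$ with $f=\sqrt{1+s^2}$, commutes one factor of $f$ past $(D+\mathfrak{s})^{-1}$ via $[f,(D+\mathfrak{s})^{-1}]=(D+\mathfrak{s})^{-1}\cl(df)(D+\mathfrak{s})^{-1}$ (noting that $\cl(df)$ is bounded), and thereby reduces the question to the boundedness of $f(D+\mathfrak{s})^{-1}$ and $(D+\mathfrak{s})^{-1}f$, which it defers to Sublemma~\ref{the-sublemma}. There are no cutoffs, no spectral decomposition of $D^{\pa}$, and no appeal to assumption~\eqref{assumption}.

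Your Schur-type step, however, has a genuine gap. The kernel $\tfrac{1}{2A_\lambda}e^{-A_\lambda|s-s'|}$ decays in the \emph{off-diagonal} variable $|s-s'|$, not in $|s|$, so it cannot absorb the weight $g(s)=1+s^2$ acting on the left. For the Schur bound you would need $\sup_{s'}\int g(s)\chi(s)\tfrac{1}{2A_\lambda}e^{-A_\lambda|s-s'|}\chi(s')\,ds<\infty$, but taking $s'=-n$ deep in the cylinder (where $\chi\equiv 1$) gives an integral $\gtrsim n^2$, which diverges. The spectral gap in~\eqref{assumption} is of no help here: it only pushes the decay rate from $A_\lambda=\sqrt{1+\lambda^2}\geq 1$ up to $\sqrt{1+\tilde\epsilon^2}$, improving off-diagonal decay but doing nothing about the growth of $g$ at the \emph{location of the input}. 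In fact the same translated-bump test shows directly that $g\chi(1+(D^{\cyl})^2)^{-1}\chi$ is unbounded on $L^2$, so this piece of your decomposition cannot be rescued by a sharper kernel estimate. There is also a smaller issue in the reduction step: the remainder terms such as $A^{-1}\cl(d\chi)B^{-1}\chi$ from Proposition~\ref{proposition:compression} do \emph{not} have compactly supported Schwartz kernel --- only the middle factor $\cl(d\chi)$ is compactly supported --- so the claim that $gR$ is automatically bounded needs a separate argument.
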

\begin{proof}
Write $g (1+D^2)^{-1}= f f (D+\mathfrak{s})^{-1} (D+\mathfrak{s})^{-1} $ and write the last term as
$f [f, (D+\mathfrak{s})^{-1}]  (D+\mathfrak{s})^{-1} + f (D+\mathfrak{s})^{-1} f  (D+\mathfrak{s})^{-1}$
which is in turn equal to $f  (D+\mathfrak{s})^{-1} \cl (df)  (D+\mathfrak{s})^{-1}  (D+\mathfrak{s})^{-1}
+  f (D+\mathfrak{s})^{-1} f  (D+\mathfrak{s})^{-1}$. Thus it suffices to show that $f   (D+\mathfrak{s})^{-1} $
and $  (D+\mathfrak{s})^{-1} f  $ are bounded. This is easily proved using the Sublemma \ref{the-sublemma} below.
The Lemma is proved.
\end{proof}

             Next we show that $\widehat{P}_Q\in {\rm Dom}\overline{\delta}_1\cap {\rm Dom}\overline{\delta}_2$. First of all, we have the following 
 
 \begin{lemma}\label{lemma:inverse}
 Under assumption \eqref{assumption} we have that
 $$D_{\cyl}^{-1}\in {\rm Dom} (\overline{\delta}^{{\rm max}}_{\cyl,1} )\cap 
 {\rm Dom} (\overline{\delta}^{{\rm max}}_{\cyl,2} )$$
 with $\delta_{\cyl,2} := [\phi_\pa , \;]$ and $\delta_{\cyl,1} := [\dot{\phi}_{\pa} , \;]$
 \end{lemma}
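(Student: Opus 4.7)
The plan is to produce for each $j=1,2$ an approximating sequence $\{\ell_i\}\subset \Psi^{-1}_c(G_{\cyl}/\RR_{\Delta})\subset C_{\Gamma,c}(\H_{\cyl})$ such that $\ell_i\to D_{\cyl}^{-1}$ and $\delta_{\cyl,j}(\ell_i)=[\phi_{\pa},\ell_i]$ (resp.\ $[\dot\phi_{\pa},\ell_i]$) converges, both in the $C^*$-norm of the appropriate target. The candidate for the limit of $\delta_{\cyl,2}(\ell_i)$ will be the bounded operator
\begin{equation*}
\Phi_2:=-D_{\cyl}^{-1}\,\cl(d\phi_{\pa})\,D_{\cyl}^{-1},
\end{equation*}
which is well-defined because $D_{\cyl}^{-1}$ is bounded by assumption \eqref{assumption} and $d\phi_{\pa}$ is a bounded smooth form (it is obtained by restricting $d\phi$ to the boundary and extending trivially along the cylinder; compactness of $\pa M$ and smoothness of the modular function make $d\phi_{\pa}$ bounded even though $\phi_{\pa}$ itself is not). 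The analogous candidate $\Phi_1:=-D_{\cyl}^{-1}\,\cl(d\dot\phi_{\pa})\,D_{\cyl}^{-1}$ will work for $\delta_{\cyl,1}$.

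To build the $\ell_i$'s I would use the functional calculus. Under \eqref{assumption} the function $g(x)=1/x$ is smooth on the $L^2$-spectrum of $D_{\cyl}$. Choose a sequence of Schwartz functions $g_i$ with compactly supported Fourier transforms $\hat g_i$ such that $g_i\to g$ uniformly on $\reals\setminus(-\tilde\epsilon/2,\tilde\epsilon/2)$; by the functional calculus, $g_i(D_{\cyl})\to D_{\cyl}^{-1}$ in operator norm. By finite propagation speed $g_i(D_{\cyl})$ has integral kernel of compact support in the cylinder direction, is $\RR\times\Gamma$-equivariant, and is smoothing in the transverse directions, so $\ell_i:=g_i(D_{\cyl})\in \Psi^{-1}_c(G_{\cyl}/\RR_{\Delta})$ (adjusting $g_i$ by a factor $(1+x^2)^{-1/2}$ if needed to obtain order $-1$).

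Next I would check that the commutators converge. Writing $g_i(D_{\cyl})=\int_{\reals}\hat g_i(t)e^{itD_{\cyl}}\,dt$ and applying Duhamel,
\begin{equation*}
[\phi_{\pa},\,e^{itD_{\cyl}}]=\int_0^t e^{i(t-s)D_{\cyl}}\,[\phi_{\pa},iD_{\cyl}]\,e^{isD_{\cyl}}\,ds,
\end{equation*}
where $[\phi_{\pa},iD_{\cyl}]=-i\,\cl(d\phi_{\pa})$ is bounded. Hence
\begin{equation*}
[\phi_{\pa},\ell_i]=\int_{\reals}\hat g_i(t)\int_0^t e^{i(t-s)D_{\cyl}}\bigl(-i\,\cl(d\phi_{\pa})\bigr)e^{isD_{\cyl}}\,ds\,dt,
\end{equation*}
an identity that makes both sides bounded uniformly in $i$. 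Since $g_i\to g$ sufficiently well, the right-hand side tends in operator norm to $-D_{\cyl}^{-1}\cl(d\phi_{\pa})D_{\cyl}^{-1}=\Phi_2$. By the definition of $\overline{\delta}^{{\rm max}}_{\cyl,2}$ (its graph is the closure of the graph of $\delta_{\cyl,2}$) this shows $D_{\cyl}^{-1}\in{\rm Dom}(\overline{\delta}^{{\rm max}}_{\cyl,2})$ with image $\Phi_2$. The argument for $\overline{\delta}^{{\rm max}}_{\cyl,1}$ is verbatim the same, replacing $d\phi_{\pa}$ by $d\dot\phi_{\pa}$, which is also a bounded smooth form.

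The main obstacle is handling the fact that $\phi_{\pa}$ is not bounded, so that commutators like $[\phi_{\pa},\ell_i]$ cannot be estimated by $\|\phi_{\pa}\|_\infty\|\ell_i\|$; the whole point of the Duhamel computation above is to reduce every occurrence of $\phi_{\pa}$ to the bounded quantity $[\phi_{\pa},D_{\cyl}]=-\cl(d\phi_{\pa})$, after which uniform control is routine. A secondary technicality is the mild singularity of $g(x)=1/x$ at the origin, but this is harmless thanks to the spectral gap \eqref{assumption}.
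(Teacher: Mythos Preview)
Your approach is essentially the one in the paper: approximate $D_{\cyl}^{-1}=h(D_{\cyl})$ (with $h(x)=1/x$ on the spectrum) by $\beta_\lambda(D_{\cyl})$ where $\widehat{\beta}_\lambda$ has compact support, invoke finite propagation speed to land in $\Psi^{-1}_c(G_{\cyl}/\RR_\Delta)$, and use Duhamel to reduce $[\phi_\pa,\beta_\lambda(D_{\cyl})]$ to an expression involving only the bounded commutator $[\phi_\pa,D_{\cyl}]=-\cl(d\phi_\pa)$.

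The one place where you are vaguer than the paper is the convergence of the commutators. From Duhamel one gets
\[
\bigl\|[\phi_\pa,\beta_\lambda(D_{\cyl})]-[\phi_\pa,\beta_\mu(D_{\cyl})]\bigr\|\;\le\;C\int_\RR |\widehat{\beta}_\lambda(s)-\widehat{\beta}_\mu(s)|\,|s|\,ds,
\]
so mere uniform convergence $g_i\to g$ on the spectrum is \emph{not} enough; one needs the approximants to be Cauchy in a weighted norm on the Fourier side. The paper makes this explicit by requiring $\{\beta_\lambda\}$ to be Cauchy in $W^2(\RR)$ (achieved by taking $\widehat{\beta}_\lambda=\rho_\lambda\widehat{h}$ with a suitable cutoff $\rho_\lambda$), and then controls the integral above via Cauchy--Schwarz. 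With that quantitative condition added, your argument goes through; note also that the paper only shows the commutator sequence is Cauchy and does not identify the limit, whereas you name the limit $\Phi_2=-D_{\cyl}^{-1}\cl(d\phi_\pa)D_{\cyl}^{-1}$, which is a nice bonus but not strictly needed for membership in the domain of the closure.
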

 
 \begin{proof}
Consider a smooth function $h\in C^\infty (\RR)$ such that $h(x)=1/x$ for $|x|>\tilde{\epsilon}$,
with $\tilde{\epsilon}$ as in our invertibility assumption $\eqref{assumption}$.
Clearly $h(D_{\cyl})=D^{-1}_{\cyl}$. We can
find a sequence of functions $\{\beta_\lambda\}_{\lambda >0}$ with the following properties:
\begin{enumerate}
\item $\widehat{\beta}_\lambda$ is compactly supported;
\item $\{\beta_\lambda\}_{\lambda >0}$  is a Cauchy sequence in $W^2 (\RR)$-norm;
\item $\beta_\lambda\longrightarrow h$ in sup-norm as $\lambda\to +\infty$.
\end{enumerate}
The function $\beta_\lambda$ such that  $\widehat{\beta}_\lambda= \rho_\lambda \widehat{h}$, with $\rho_\lambda$
as in \cite{MN} p. 515, does satisfy these three properties. We assume this for the time being
and we conclude the proof of the Lemma.
First, from  the very definition of $\beta_{\lambda}$ and from finite propagation
techniques
we have that $\beta_{\lambda} (D_{\cyl})$ is 
a $(-1)$-order pseudodifferential operator of compact $\RR\times\Gamma$-support.
Next, from property (3), we see that $\beta_\lambda (D_{\cyl})\longrightarrow 
h(D_{\cyl})=D^{-1}_{\cyl}$ in $C^*$-norm when $\lambda\to +\infty$. Finally, from Duhamel
formula we have:
$$\delta_2 (\beta_{\lambda} (D_{\cyl})= [\phi_\pa , \beta_{\lambda} (D_{\cyl})]=\int_{\RR}ds \int_0^1
dt \,
\sqrt{-1}s \widehat{\beta}_\lambda (s) e^{\sqrt{-1}st D_{\cyl}} [\phi_\pa , D_{\cyl}] 
e^{\sqrt{-1}s(1-t) D_{\cyl}}$$
Moreover, as explained in \cite{MN}, p. 520, we have
$$\| [\phi_\pa , \beta_{\lambda} (D_{\cyl})] - [\phi_\pa , \beta_{\mu} (D_{\cyl})]\|\leq C\,\int_{\RR}
|\widehat{\beta}_\lambda (s) - \widehat{\beta}_\mu (s)| |s| ds\,.$$
Now:
\begin{align*}
\int_{\RR}
|\widehat{\beta}_\lambda (s) - \widehat{\beta}_\mu (s)| |s| ds &= \int_{\RR}
|\widehat{\beta}_\lambda (s) - \widehat{\beta}_\mu (s)| |s \sqrt{1+s^2}| \frac{1}{\sqrt{1+s^2}} ds\\
&\leq \| (\widehat{\beta}_\lambda - \widehat{\beta}_\mu)  |s \sqrt{1+s^2}|\|_{L^2 (\RR)}\,
\| \frac{1}{\sqrt{1+s^2}} \|_{L^2 (\RR)}\\
&\leq C \| D_{\cyl} (1+D_{\cyl}^2)^{\frac{1}{2}} (\beta_\lambda - \beta_\mu)\|_{L^2 (\RR)}\\
&\leq C' \| \beta_\lambda - \beta_\mu\|_{W^2 (\RR)}\,.
\end{align*}
Thus, from property (2), we infer that $[\phi_\pa ,\beta_\lambda (D_{\cyl})]$ is a Cauchy sequence
in $C^*$-norm. This means that $h (D_{\cyl})$, which is $D^{-1}_{\cyl}$, is in the domain of 
the closure $\overline{\delta}^{{\rm max}}_{\cyl,2} $. Similarly we proceed for $\delta_1$.\\
It remains to prove that with our definition of $\beta_{\lambda}$ we can satisfy the three properties.
The first one is obvious from the definition. For the second property we estimate, with
$D:=\frac{1}{\sqrt{-1}}\frac{d}{dx}$ on $\RR$:
\begin{align*}
 \| \beta_\lambda - \beta_\mu\|_{W^2 (\RR)} &= \|(1+D^2)  (\beta_\lambda - \beta_\mu)\|_{L^2 (\RR)}\\
 &=\|(1+s^2) (\widehat{\beta}_\lambda  - \widehat{\beta}_\mu)\|_{L^2 (\RR)} \,=\,
 \|(1+s^2) (\rho_\lambda\widehat{h}  - \rho_\mu\widehat{h})\|_{L^2 (\RR)}\\
 &= \|(1+s^2)^2  \widehat{h} \, (\rho_\lambda  - \rho_\mu) \frac{1}{1+s^2}\|_{L^2 (\RR)} \\
 &\leq  \|(1+s^2)^2  \widehat{h}\|_{L^2 (\RR)} \, 
 \|(\rho_\lambda  - \rho_\mu) \frac{1}{1+s^2}\|_{L^\infty (\RR)} .
 \end{align*}
 In the last term, the first factor can be estimated directly and shown to be finite, using the equality
 $$\|(1+s^2)^2  \widehat{h}\|_{L^2 (\RR)}= \|(1+D^2)^2 h \|_{L^2 (\RR)}$$
 and the very definition of $h$ (namely, that it is equal to $1/x$ for $|x|> \tilde{\epsilon}$); the second factor, on the other hand,
  is clearly Cauchy (from the definition
 of $\rho_\lambda$). Thus we have established (2).
 Finally, we tackle (3). Recall that the Fourier transformation extends to a bounded
 map from $L^1 (\RR)$ to $C_0 (\RR)$.
 Thus 
 \begin{align*}
 \| \beta_\lambda - h\|_{C_0 (\RR)} &\leq \|\widehat{\beta}_\lambda -\widehat{h}\|_{L^1 (\RR)}=
 \| (\rho_\lambda -1) \widehat{h} \|_{L^1 (\RR)}\\&=  \| (\rho_\lambda -1) \frac{1}{1+s^2}\,
 (1+s^2)\widehat{h} \|_{L^1 (\RR)}\\
 &\leq   \| (\rho_\lambda -1) \frac{1}{1+s^2}\|_{L^2 (\RR)} \,
\| (1+s^2)\widehat{h} \|_{L^2 (\RR)}
\end{align*}
 The second factor can be estimated as above and shown to be finite; the first factor 
 goes to zero using Lebesgue dominated convergence theorem. The Lemma is now completely
 proved.\end{proof}
 We go back to our goal, proving that $\widehat{P}_Q$ is in
 ${\rm Dom} \overline{\delta}_1 \cap {\rm Dom} \overline{\delta}_2$. This means that for $j=1,2$
 and $m>\dim \tilde{V}$ we have:
 $$\widehat{P}_Q \in {\rm Dom} \overline{\delta}^{{\rm max}}_j\cap 
 \mathcal{J}_m (X,\F)\;\;\;\text{and}\;\;\;
  \overline{\delta}^{{\rm max}}_j (\widehat{P}_Q )\in \mathcal{J}_m (X,\F)\,.$$
  First, we establish the fact that $\widehat{P}_Q \in {\rm Dom} \overline{\delta}^{{\rm max}}_j$
  (we already proved
   that $\widehat{P}_Q  \in \mathcal{J}_m (X,\F))$. We concentrate
   on $ \overline{\delta}^{{\rm max}}_2$; similar arguments will work for
   $ \overline{\delta}^{{\rm max}}_1$. Recall that
   \begin{equation*}
\widehat{P}_Q:= \left(\begin{array}{cc} S_{+}^2 & S_{+}  (I+S_{+}) Q\\ S_{-} D^+ &
-S_{-}^2 \end{array} \right)
\end{equation*}
Let us concentrate on each single entry of this matrix. For the sake of brevity, let
us give all the details for the $(1,1)$-entry, $S^2_+$. It suffices to show that 
$S_+ \in {\rm Dom} \overline{\delta}^{{\rm max}}_2$ and that $\overline{\delta}^{{\rm max}}_2
S_+ \in \mathcal{J}_m (X,\F)$.

For notational convenience we set, for this proof only,
$$\overline{\delta}^{{\rm max}}_2 : = \Theta \,,\quad \overline{\delta}^{{\rm max}}_{\cyl,2}:=
\Theta_{\cyl}$$

  We observe preliminarily that proceeding exactly
   as in \cite{MN} we can prove that $(\mathfrak{s}+D)^{-1}$ is in 
   ${\rm Dom} \Theta$;
   hence so is $(\mathfrak{s}+D)^{-2}$
   which is equal to $(1+D^2)^{-1}$. The same proof establishes the corresponding
   result on the cylinder, 
   for  $(\mathfrak{s}+D_{\cyl})^{-1}$ and $(\mathfrak{s}+D_{\cyl})^{-2}=(1+D_{\cyl}^2)^{-1}$.
   This, together with the last Lemma, shows also that $D_{\cyl}^{-1}(1+D_{\cyl}^2)^{-1}$
   belongs to the domain of $\Theta_{\cyl} $.
   Recall now that $$S_+=(I+D^- D^+)^{-1} 
   -\chi (I+D^-_{\cyl} D^+_{\cyl})^{-1}\chi  + \chi (D^+_{\cyl})^{-1}  (I+D^+_{\cyl} D^-_{\cyl})^{-1}
\cl(d\chi)$$
The first summand is in  ${\rm Dom} 
\Theta$,
as we have
already remarked. The second summand, $- \chi (I+D^+_{\cyl} D^-_{\cyl})^{-1}\chi $,
is obtained by grafting through pre-multiplication and post-multiplication by $\chi$
an element which is the domain of $ \Theta_{\cyl} $.
Such a grafted element is easily seen to belong to  ${\rm Dom} \Theta$,
since we can simply choose as an approximating sequence the one obtained by
grafting the approximating sequence for $(I+D^+_{\cyl} D^-_{\cyl})^{-1}$. 
In the  (easy) proof we use $$ \phi\chi=\chi\phi_{\pa}\,,\quad \chi\phi=\chi\phi_{\partial}\,,\quad [\phi_{\partial},\chi]=0\,.$$
(They all  follow from the fact that the modular function is independent of the
normal variable in a neighbourhood of the boundary of $X_0$.)
Similarly, the third summand is
in ${\rm Dom} \Theta$,
given that, as we have observed above, $D_{\cyl}^{-1}(1+D_{\cyl}^2)^{-1}$
   belongs to the domain of $ \Theta_{\cyl}$.  
   Summarizing: $S_+$ is an element
in ${\rm Dom} \Theta$. 
Next we need to show that 
$\Theta (S_+)$ belongs to $\J_m (X,\F)$. First we prove that it is in $\I_m$.
We 
first observe that 
$S_+$ is the $(1,1)$-entry of the $2\times 2$-matrix
$$(\mathfrak{s}+D)^{-2} - \begin{pmatrix} \chi & 0\\0 & \chi \end{pmatrix}
(\mathfrak{s}+D_{\cyl})^{-2} \begin{pmatrix} \chi & 0\\0 & \chi \end{pmatrix}
+ \begin{pmatrix} \chi & 0\\0 & \chi \end{pmatrix}  (\mathfrak{s}+D_{\cyl})^{-2} 
D_{\cyl}^{-1} \begin{pmatrix} 0& \cl(d\chi) \\\cl(d\chi)& 0 \end{pmatrix}$$
We  compute $\Theta$ of this term, finding 
$$\Theta ((\mathfrak{s}+D)^{-2} ) - \begin{pmatrix} \chi & 0\\0 & \chi \end{pmatrix}
\Theta_{\cyl} ((\mathfrak{s}+D_{\cyl})^{-2}) \begin{pmatrix} \chi & 0\\0 & \chi \end{pmatrix}
+ \begin{pmatrix} \chi & 0\\0 & \chi \end{pmatrix} \Theta_{\cyl}((\mathfrak{s}+D_{\cyl})^{-2} 
D_{\cyl}^{-1} )\begin{pmatrix} 0& \cl(d\chi) \\\cl(d\chi)& 0 \end{pmatrix}$$
The last summand is certainly in $\I_m (X,\F)$, since $d\chi$ is of compact support. 
It is clear that this last term is also in $\J_m$,  i.e. it is bounded
if it is  multiplied on the right and on the left by the function $g$.
Thus we are left with the task of proving that
$$\Theta ((\mathfrak{s}+D)^{-2} ) - \chi
\Theta_{\cyl} ((\mathfrak{s}+D_{\cyl})^{-2}) \chi $$
is in  $\J_m (X,\F)$. We first show that this term is in $\I_m$. Remark that 
the above difference can be computed explicitly, using \cite{MN}; we get
\begin{align*}
& (\mathfrak{s}+D)^{-1} \cl (d\phi) (\mathfrak{s}+D)^{-2}  -
 \chi  ( (\mathfrak{s}+D_{\cyl})^{-1} \cl (d\phi_{\pa}) (\mathfrak{s}+D_{\cyl})^{-2}  )\chi\\
&+ (\mathfrak{s}+D)^{-2}  \cl (d\phi)
(\mathfrak{s}+D)^{-1}-
 \chi ((\mathfrak{s}+D_{\cyl})^{-2}  \cl (d\phi_{\pa})
(\mathfrak{s}+D_{\cyl})^{-1})\chi
\end{align*}
Now, proceeding as in the discussion on the parametrix given in Subsection 
\ref{subsec:absolute}, we can prove that each of these two differences is in
$\I_m (X,\F)$ \footnote{Once again, this is clear from a $b$-calculus
perspective}. 
Let us see the details; we concentrate on the first difference
\begin{equation}\label{firstdifference}
(\mathfrak{s}+D)^{-1} \cl (d\phi) (\mathfrak{s}+D)^{-2}  -
 \chi  ( (\mathfrak{s}+D_{\cyl})^{-1} \cl (d\phi_{\pa}) (\mathfrak{s}+D_{\cyl})^{-2}  )\chi \,;
\end{equation}
we shall analyze the second difference, namely
\begin{equation}\label{seconddifference}
(\mathfrak{s}+D)^{-2}  \cl (d\phi)
(\mathfrak{s}+D)^{-1}-
 \chi ((\mathfrak{s}+D_{\cyl})^{-2}  \cl (d\phi_{\pa})
(\mathfrak{s}+D_{\cyl})^{-1})\chi
\end{equation}
later.

For notational convenience we set $A=(\mathfrak{s}
+D)$ and $B=(\mathfrak{s}+D_{\cyl})$. Recall that $A^{-1}\sim_m \chi B^{-1} \chi$, see
Remark \ref{remark:from-compact-to-shatten}. There we also remarked that 
$fA^{-1}\sim_m 0$, $A^{-1} f\sim_m 0$, $g B^{-1}\sim_m 0$ and $B^{-1} g \sim_m 0$ if
$f$ and $g$ are compactly supported. Rewrite 
the difference \eqref{firstdifference} as $A^{-1}\cl (d\phi) A^{-2} - \chi
B^{-1}\cl (d\phi_{\pa}) B^{-2}\chi$. Using $A^{-1}\sim_m \chi B^{-1} \chi$ we see that the 
difference is $\sim_m$-equivalent to 
$$\chi B^{-1} \chi \cl (d\phi) \chi B^{-1} \chi^2 B^{-1}
\chi - \chi B^{-1}\cl (d\phi_{\pa}) B^{-2}\chi\,.$$
Now add and substract $\chi B^{-1} \chi \cl (d\phi) \chi B^{-2}\chi$ to the first summand;
 obtaining, for this first summand,
$$\chi B^{-1} \chi \cl (d\phi) \chi B^{-2} \chi +
\chi B^{-1} \chi \cl (d\phi) \chi B^{-1} (\chi^2-1) B^{-1}$$
which, by our second remark, is  $\sim_m$-equivalent to 
$ \chi B^{-1} \chi \cl (d\phi) \chi B^{-2} \chi \,.$
Here we have used that $\chi^2-1$ is compactly supported.
Thus \eqref{firstdifference} is $\sim_m$-equivalent to 
$\chi B^{-1} (\chi \cl (d\phi) \chi - \cl(d\phi_{\pa}) ) B^{-2} \chi$
which is equal to $\chi B^{-1} (\chi \cl (d\phi_{\pa}) \chi - \cl(d\phi_{\pa})) B^{-2} \chi$,
given that $\chi \cl (d\phi) \chi=\chi \cl (d\phi_{\pa}) \chi$. We can rewrite this last term
as 
$$\chi B^{-1} (\chi \cl (d\phi_{\pa}) \chi - (\chi+(1-\chi))\cl(d\phi_{\pa})(\chi+(1-\chi))) B^{-2} \chi$$
which is in turn equal to 
$$-\chi B^{-1} (1-\chi)\cl(d\phi_{\pa})(1-\chi) B^{-2} \chi - \chi B^{-1} \chi \cl (d\phi_{\pa}) (1-\chi) B^{-2} \chi-  \chi B^{-1}  (1-\chi)\cl(d\phi_{\pa})\chi B^{-2} \chi\,.$$
The last two summands are $\sim_m$-equivalent to 0 because $\chi (1-\chi)$ has compact support.
Regarding the term $\chi B^{-1} (1-\chi)\cl(d\phi_{\pa})(1-\chi) B^{-2} \chi$; we rewrite it as
$$\chi  (1-\chi) B^{-1}\cl(d\phi_{\pa})(1-\chi) B^{-2} \chi + \chi [B^{-1}, (1-\chi)]\cl(d\phi_{\pa})(1-\chi) B^{-2} \chi$$ and this is certainly $\sim_m$-equivalent to
$\chi [B^{-1}, (1-\chi)]\cl(d\phi_{\pa})(1-\chi) B^{-2} \chi$. The latter term is in turn equal, up to a sign,
to
$$\chi B^{-1} \cl (d\chi) B^{-1} \cl(d\phi_{\pa})(1-\chi) B^{-2} \chi$$
which is $\sim_m$-equivalent to 0 ($d\chi$ is compactly supported).
Thus \eqref{firstdifference} is in $\I_m (X,\F)$; similarly one proves
that the second difference \eqref{seconddifference}, viz.
$(\mathfrak{s}+D)^{-2}  \cl (d\phi)
(\mathfrak{s}+D)^{-1}-
 \chi ((\mathfrak{s}+D_{\cyl})^{-2}  \cl (d\phi_{\pa})
(\mathfrak{s}+D_{\cyl})^{-1})\chi$ is  in $\I_m (X,\F)$.
Now, by direct inspection we also see that both the first difference \eqref{firstdifference} 
and the second difference  \eqref{seconddifference} are in $\J_m$, i.e. they are bounded
if they are multiplied on the right and on the left by the function $g$.
Summarizing: we have proved that  $S_+$ is in $\J_m (X,\F)\cap {\rm Dom}\overline{\delta}_2$.
Similarly one proves that $S_+\in  {\rm Dom}\overline{\delta}_1$, proving finally that
$$S_+ \in \J_m (X,\F)\cap{\rm Dom}\overline{\delta}_1\cap {\rm Dom}\overline{\delta}_2\equiv
\mathbf{\mathfrak{J}_m}\,,\quad m>\dim \tV$$
The reasoning for the other entries in the Connes-Skandalis projection is analogous
and hence omitted.
The proof of Proposition \ref{prop:smooth-cylinder-1} is now complete.

\subsubsection{Proof of Proposition \ref{prop:smooth-cylinder-2}}
We shall first concentrate on the 
larger algebra $\mathcal{B}_{m}$; thus we begin by establishing Proposition  \ref{prop:smooth-cylinder-2}.
in this context, namely, we prove  that $e_{(D^{\cyl})}\in  \mathcal{B}_{m}\oplus \CC e_1$ with $m$ greater 
than $2n$, which is the dimension of the leaves of $(X,\F)$.
\begin{lemma}\label{lemma:commutator-in-b}
For the translation invariant Dirac family $D^{\cyl}= (D^{\cyl}_\theta)_{\theta\in T}$
on the cylinder we have:
\begin{equation}\label{lemma:crucial}
[\chi^0, (D^{\cyl}+\mathfrak{s})^{-1}]\in \mathcal{I}_{m}(\cyl(\pa X), \F_{\cyl})
\end{equation}
with $\chi^0$ denoting as usual the function induced on the cylinder by
$\chi^0_\RR$, the
characteristic function of $(-\infty,0]$, and $\mathfrak{s}$ the grading operator.
\end{lemma}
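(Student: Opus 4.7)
The plan is to establish \eqref{lemma:crucial} by smooth approximation, reducing the problem to the Shatten estimate of Remark \ref{remark:from-compact-to-shatten}. Set $A := (D^{\cyl}+\mathfrak{s})^{-1}$ and choose smooth functions $\chi_\epsilon$ on $\RR$ with $\chi_\epsilon \equiv 1$ on $(-\infty, -\epsilon]$, $\chi_\epsilon \equiv 0$ on $[0,\infty)$, and $|\chi_\epsilon|\le 1$, viewed as functions on $\cyl(\pa X)$ via the $\RR$-factor. Both $\chi_\epsilon$ and $d\chi_\epsilon$ are bounded, so Lemma \ref{lemma:elementary} applies fiber by fiber and yields
$$[\chi_\epsilon, A] \;=\; -A\,\cl(d\chi_\epsilon)\,A.$$
Since $d\chi_\epsilon$ is of $\Gamma$-compact support on $\cyl(\pa X)$ (lifted to $(-\epsilon,0)\times \pa\tM \times T$), one can choose a $\Gamma$-compactly supported smooth cutoff $\psi_\epsilon$ equal to $1$ on $\operatorname{supp}(d\chi_\epsilon)$ and factor $-A\,\cl(d\chi_\epsilon)\,A = (A\psi_\epsilon)\,\cl(d\chi_\epsilon)\,(\psi_\epsilon A)$. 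The leafwise analogue of Remark \ref{remark:from-compact-to-shatten}, proved from the identity $(D^{\cyl}+\mathfrak{s})^2 = 1+(D^{\cyl})^2$ and Weyl's law applied to $\psi_\epsilon(1+(D^{\cyl})^2)^{-1}\psi_\epsilon$ on the quotient, shows that $A\psi_\epsilon$ and $\psi_\epsilon A$ belong to $\mathcal{I}_{2m}(\cyl(\pa X),\F_{\cyl})$ for $2m > 2n$, hence their product with the bounded operator $\cl(d\chi_\epsilon)$ belongs to $\mathcal{I}_m$.

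The decisive step is to pass to the limit $\epsilon\downarrow 0$ in the $\mathcal{I}_m$-norm and identify the limit with $[\chi^0, A]$. Convergence in the $C^*$-norm to $[\chi^0, A]$ follows readily from the same Lebesgue-dominated argument used in the proof of Sublemma \ref{sublemma:compression}, so it suffices to show that $\{[\chi_\epsilon, A]\}_\epsilon$ is Cauchy in $\mathcal{I}_m$, since $\mathcal{I}_m$ is a Banach space embedded boundedly in the $C^*$-completion. For this I would exploit the explicit kernel structure of $A$: by $\RR$-translation invariance, ellipticity on the complete cylinder, and the spectral gap $(D^{\cyl}+\mathfrak{s})^2 \geq 1$, the Schwartz kernel of $A$ has the form $a(s-s', y, y', \theta)$ with exponential off-diagonal decay in $|s-s'|$ (and analogous decay in the cross-section distance). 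The kernel of $A\,\cl(d\chi_\epsilon)\,A$ is then a double convolution against the approximate identity $d\chi_\epsilon$, converging as $\epsilon\downarrow 0$ to a limiting kernel supported near $\{s=s'=0\}$ with uniform exponential decay in $(s,s')$. A Shatten interpolation
$$\|T\|_m \;\leq\; \|T\|_{C^*}^{1-2/m}\,\|T\|_{2}^{2/m},$$
with $\|\cdot\|_2$ the leafwise Hilbert--Schmidt norm of \eqref{shatten-norm-HS}, combined with the uniform exponential-decay estimate on the kernels, then yields the Cauchy property in $\mathcal{I}_m$.

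The main obstacle is precisely the uniformity of these $\mathcal{I}_m$-estimates as $\epsilon\downarrow 0$, because the naive bound $\|A\,\cl(d\chi_\epsilon)\,A\|_m \leq \|A\psi_\epsilon\|_{2m}\,\|d\chi_\epsilon\|_{\infty}\,\|\psi_\epsilon A\|_{2m}$ diverges like $1/\epsilon$. The cleanest way to circumvent this is to recognize $\int d\chi_\epsilon\,du = -1$ and work at the kernel level with
$$
\int a(s-u,\,\cdot\,,\,\cdot\,,\theta)\,d\chi_\epsilon(u)\,a(u-s',\,\cdot\,,\,\cdot\,,\theta)\,du \;\longrightarrow\; -\,a(s,\,\cdot\,,\,\cdot\,,\theta)\,a(-s',\,\cdot\,,\,\cdot\,,\theta)\quad\text{as }\epsilon\downarrow 0,
$$
whose convergence is dominated in $(s,s')$ by the exponential-decay bound on $a$, with the limit kernel manifestly producing an $\mathcal{I}_m$ element by the same reasoning as for compactly supported cutoffs in Remark \ref{remark:from-compact-to-shatten}. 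Should this direct kernel approach prove cumbersome, a secondary strategy is to use instead the finite-propagation approximation $\beta_\lambda(D^{\cyl})$ with $\widehat{\beta}_\lambda$ compactly supported, constructed as in the proof of Lemma \ref{lemma:inverse}: finite propagation forces $[\chi^0, \beta_\lambda(D^{\cyl})]$ to have $\Gamma$-compact support (since $\chi^0(s)\neq\chi^0(s')$ together with $|s-s'|\leq R_\lambda$ confines $|s|,|s'|\leq R_\lambda$), so each term lies in $\mathcal{I}_m$ trivially, and the Cauchy property in $\mathcal{I}_m$ reduces to Duhamel-type estimates on $\beta_\lambda - \beta_\mu$ entirely analogous to those carried out in the proof of Lemma \ref{lemma:inverse}.
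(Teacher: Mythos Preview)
Your argument correctly establishes $[\chi_\epsilon,A]\in\mathcal{I}_m$ for each fixed $\epsilon>0$, but the passage $\epsilon\downarrow 0$ in $\mathcal{I}_m$-norm is precisely where the difficulty lies, and neither remedy you propose closes it. The Shatten interpolation route needs a uniform $\mathcal{I}_2$-bound, yet $A=(D^{\cyl}+\mathfrak{s})^{-1}$ is only of order $-1$: a short heat-trace count shows $\|[\chi^0,A]\|_2=\infty$ as soon as the leaf dimension exceeds $2$, so no such bound is available in general. Your kernel-level observation that $d\chi_\epsilon\to -\delta_0$ produces a limiting kernel of rank one in the $\RR$-variable is correct and is in fact the key structural point, but the assertion that this limit is ``manifestly'' in $\mathcal{I}_m$ ``by the same reasoning as for compactly supported cutoffs'' is unjustified: the limit kernel has exponential decay in $(s,s')$ but is not of $\Gamma$-compact support, so Remark~\ref{remark:from-compact-to-shatten} does not apply, and you supply no substitute. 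The finite-propagation alternative has the same defect, since the Duhamel estimates of Lemma~\ref{lemma:inverse} control $C^*$-norms, not $\mathcal{I}_m$-norms.

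The paper bypasses approximation entirely by exploiting exactly the rank-one structure you glimpsed. One writes $[\chi^0,A]=A\,[\pa_t,\chi^0]\,A$ (the middle factor interpreted as a bounded map $W^1\to W^{-1}$), reduces to showing $(1+(D^{\cyl})^2)^{-1/2}[\pa_t,\chi^0](1+(D^{\cyl})^2)^{-1/2}\in\mathcal{I}_m$, and conjugates by the Fourier transform in the cylindrical variable to obtain $T=(1+t^2+(D^{\pa})^2)^{-1/2}[\mathcal{H},it](1+t^2+(D^{\pa})^2)^{-1/2}$, with $\mathcal{H}$ the Hilbert transform. Since $[\mathcal{H},t]\xi=\tfrac{i}{\pi}\int_{\RR}\xi$ is rank one, $T$ factors as $\iota\circ\pi$ through $L^2$ of the boundary, whence $T^m=\iota\,(\pi\iota)^{m-1}\,\pi$ with $\pi\iota$ a constant multiple of $(1+(D^{\pa})^2)^{-1/2}$ on the \emph{closed} boundary foliation; its Shatten-$(m-1)$ membership for $m-1>\dim\pa\tM$ is classical, and boundedness of $\iota$ and $\pi$ finishes the estimate. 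The missing idea in your sketch is thus not the rank-one limit itself but the recognition that it furnishes a factorization through the boundary, reducing the Shatten question on the noncompact cylinder to one on a closed foliated bundle where it is standard.
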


\begin{proof}
We shall prove  that $[\chi^0, (D^{\cyl}+\mathfrak{s})^{-1}]$ has finite Shatten
$m$-norm. \
We shall denote by $t$ the variable along the $\RR$-factor in the cylinder; we shall omit the vector bundles from the notation. 
First we observe that  $\chi^0$ is bounded and only depends on the cylindrical variable. 
Observe next that $[D^{\cyl},\chi^0 ]$ defines a family of bounded operators from
 $W^1 ( (\pa\tM\times\{\theta\})\times\RR)\to W^{-1} ( (\pa \tM\times\{\theta\})\times\RR)$
 and the same is true for $[D^{\pa},\chi^0]$; it is then elementary to check that 
 $[D^{\pa}, \chi^0]=0$, as an operator from $W^1$ to $W^{-1}$. 
 Similarly, the operator $[\pa_t,\chi^0 ]$
induces  bounded maps $W^1 ( (\pa \tM\times\{\theta\})\times\RR)\to W^{-1} ( (\pa \tM\times\{\theta\})\times\RR)$.
 We then have the following equality of bounded operators:
 \begin{align*}
[\chi^0,(D^{\cyl}+\mathfrak{s})^{-1}]&= (D^{\cyl}+\mathfrak{s})^{-1} \,[D^{\cyl},\chi^0 ]\,(D^{\cyl}+\mathfrak{s})^{-1}\\
&=(D^{\cyl}+\mathfrak{s})^{-1} \,[\pa_t,\chi^0 ]\,(D^{\cyl}+\mathfrak{s})^{-1}\end{align*}
Thus we can write
\begin{equation*}
[\chi^0,(D^{\cyl}+\mathfrak{s})^{-1}]=
(D^{\cyl}+\mathfrak{s}) \,(I+(D^{\cyl} )^2)^{-1} \,[\pa_t,\chi^0 ]\,(I+(D^{\cyl} )^2)^{-1}\,(D^{\cyl}+\mathfrak{s})
\end{equation*}
This means that it suffices to prove that $(I+(D^{\cyl} )^2)^{-1/2} \,[\pa_t,\chi^0 ]\,(I+(D^{\cyl} )^2)^{-1/2}\in \mathcal{J}_{m}(\cyl(\pa X), \F_{\cyl})$.
We conjugate this operator with Fourier transform and obtain the operator
	$$T:=(I+t^2+(D^\partial)^2)^{-1/2} [\mathcal{H},it] (I+t^2+(D^\partial)^2)^{-1/2} $$
	with $\mathcal{H}$ denoting the Hilbert transform on $L^2 (\RR)$.\\
				 Note that $ [\mathcal{H},t]\xi (t)=i/\pi\, \int_{\RR}  \xi (s)ds$.   Thus,                
	$T=(T_\theta)_{\theta\in T}$ and each $T_\theta$ is the composite    $\iota_\theta \circ \pi_\theta$, with
	$$\iota_\theta: L^2 (\partial \tM\ \times \{\theta\}) \to L^2 ((\partial\tM \times \{\theta\})\times\RR)\,,\;\; 
	\iota_\theta (\eta)= (I+t^2+(D^\partial_\theta)^2)^{-1/2}\eta$$
	$$\pi_\theta:  L^2 ((\partial\tM \times \{\theta\})\times\RR)\to L^2 (\partial \tM\ \times \{\theta\})\,, \;\;
	\pi_\theta (\xi) (y) = \int_{\RR} (I+t^2+(D^\partial_\theta)^2)^{-1/2} \xi (y,t) dt$$ 
	where, as before,  we are omitting the vector bundles from the notation. Thus
	$$T^{m}_\theta = \iota_\theta \circ (\pi_\theta \circ \iota_\theta)^{m-1}  \circ\pi_\theta\,.$$
	On the other hand, 
	\begin{align*}\pi_\theta \circ \iota_\theta (\eta) &= \int_{\RR} dt (I+t^2+  (D^\partial_\theta)^2)^{-1/2} (I+t^2+  (D^\partial_\theta)^2)^{-1/2}
	\eta(y) \\
	&= \int_{\RR} dt (I+t^2+  (D^\partial_\theta)^2)^{-1} \eta(y)\\
	&= C (I+ (D^\partial_\theta)^2)^{-1/2} \eta(y) \,, \text{ with } C=\pi
	\end{align*}
	The last step can be justified as follows. Observe that $\forall a>0$ and $k\geq 0$
	$$\int_{\RR} \frac{dt}{(t^2+a^2)^{k+1}}= \frac{\pi}{2^{2k}} \frac{(2k)!}{(k!)^2}a^{-2k-1}\,.
	$$
	Using this we can show that, in the strong topology, 
	$$\int_{\RR} (I+t^2+(D^\partial_\theta)^2)^{-\frac{p+1}{2}}= \frac{\pi}{2^{p-1}}\frac{(p-1)!}{(\frac{p-1}{2} !)^2} (1+(D^\partial_\theta)^2)
	^{-\frac{p}{2}}\,,$$ where $p=2k+1$. Thus, for $p=1$ we have, in the strong topology,
	$$\int_{\RR}  (I+t^2+(D^\partial_\theta)^2)^{-1}=C (I+(D^\partial_\theta)^2)^{-\frac{1}{2}}\,, \text{ with } C=\pi\,,$$
	which is what we had to justify. We thus obtain:
	$T^{m}_\theta \xi = C^p (\iota_\theta \circ  (I+(D^\partial_\theta)^2)^{-\frac{m-1}{2}} \circ \pi_\theta \xi)$ and we are left with the task of proving
	that $\pi_\theta$ and $\iota_\theta$ are bounded on $L^2$ (indeed, it is well known, see \cite{MN}, that
	 $(I+(D^\partial_\theta)^2)^{-\frac{1}{2}}$ has  finite Shatten $(m-1)$-norm for $m-1>\dim \partial{\tM}$, which is the case here
	 since $m>\dim \tM$).


	\begin{sublemma}
	The map $\iota_\theta:  L^2 (\partial \tM\ \times \{\theta\}) \to L^2 ((\partial\tM \times \{\theta\})\times\RR)$,	$\iota_\theta (\eta)= (I+t^2+(D^\partial_\theta)^2)^{-1/2}\eta$
	 is bounded.
\end{sublemma}
\begin{proof}
We set $\Delta_\theta:= (D^\partial_\theta)^2$ and compute:
\begin{align*}
\|\iota_\theta (\eta)\|^2_{L^2_\theta} &= \int_{(\partial\tM \times \{\theta\})\times\RR}dy dt | (I+t^2+\Delta_\theta)^{-\frac{1}{2}}\eta (y) |^2\\
&= \int_{\RR} dt \|(I+t^2 + \Delta_\theta)^{-\frac{1}{2}} \eta \|^2_{L^2 (\pa \tM\times\{\theta\})}\\
&\leq \|\eta\|_{L^2 (\pa \tM\times\{\theta\})}\int_{\RR} dt \|(I+t^2+ \Delta_\theta)^{-\frac{1}{2}}\|^2
\end{align*}
where $ \|(I+t^2+ \Delta_\theta)^{-\frac{1}{2}}\|^2$ is the operator norm and it is considered as a function of $t$.
We are left with the task of proving that $ \|(I+t^2+ \Delta_\theta)^{-\frac{1}{2}}\|$ is in $L^2 (\RR_t)$, namely
\begin{equation}\label{subsublemma}
\int_{\RR} \|(I+t^2+ \Delta_\theta)^{-\frac{1}{2}}\|^2 \,< \infty.
\end{equation}
In order to establish \eqref{subsublemma} we write 
$$\|(I+t^2+ \Delta_\theta)^{-\frac{1}{2}}\|= (1+t^2)^{-\frac{1}{2}}\| (I+\frac{\Delta_\theta}{1+t^2})^{-\frac{1}{2}} \|=
(1+t^2)^{-\frac{1}{2}}\| f(D^\partial_\theta)\|$$
with $f(x):+ (1+\frac{x^2}{1+t^2})^{-\frac{1}{2}}$. Now, the sup-norm of $f(x)$ is equal to 1: thus
$\| f(D^\partial_\theta)\|\leq 1$ from which \eqref{subsublemma} follows.
\end{proof}

\begin{sublemma}
	The map $\pi_\theta: L^2 ((\partial\tM \times \{\theta\})\times\RR)\to   L^2 (\partial \tM \times \{\theta\}) $,	
	$\pi_\theta (\xi) (y) = \int_{\RR} (I+t^2+(D^\partial_\theta)^2)^{-1/2} \xi (y,t) dt$ is bounded.
\end{sublemma}

\begin{proof}
We can consider  a decomposable element $\xi(y,t)=\eta(y) f(t)$, with $\eta\in L^2 (\partial \tM\times \{\theta\})$
and $f(t)\in L^2 (\RR_t)$. Then
\begin{align*}
\|\pi(\xi)\|_{L^2(\partial \tM \times \{\theta\})}&= \int_{\partial \tM \times \{\theta\}} dy |\int_{\RR} dt 
(I+t^2+(D^\partial_\theta)^2)^{-1/2} \eta(y) f(t) |^2\\
&\leq  \int_{\partial \tM \times \{\theta\}} dy  \left( \int_{\RR} dt | (I+t^2+(D^\partial_\theta)^2)^{-1/2} \eta(y) f(t) |^2 \right)^2\\
&\leq \int_{\partial \tM \times \{\theta\}} dy\left( \int_{\RR} dt |f(t)|^2 \cdot \int_{\RR} dt |(1+t^2+(D^\partial_\theta)^2)^{-1/2}
\eta(y) |^2 \right)\\
&= \|f\|^2_{L^2(\RR)} \int_{(\partial\tM \times \{\theta\})\times\RR} dy\,dt | (I+t^2+ (D^\partial_\theta)^2)^{-1/2}
\eta(y) |^2 \\
&=\|f\|^2_{L^2(\RR)} \int_{\RR} dt \|(I+t^2+ (D^\partial_\theta)^2)^{-1/2}
\eta(y)\|_{L^2 (\partial \tM \times \{\theta\} )}^2 \\
&\leq \|f\|^2_{L^2(\RR)}  \|\eta\|^2_{L^2 (\partial \tM \times \{\theta\} )}\,\int_{\RR}dt \|(I+t^2+ \Delta_\theta)^{-\frac{1}{2}}\|^2\\
&\leq \|\xi\|^2_{L^2 ((\partial \tM \times \{\theta\} )\times\RR)}\int_{\RR}dt \|(I+t^2+ \Delta_\theta)^{-\frac{1}{2}}\|^2
\end{align*}
where in the last term we are taking the operator norm considered as a function of $t$. Using  \eqref{subsublemma} 
we finish the proof.
\end{proof}
The proof of Lemma \ref{lemma:commutator-in-b} is now complete.

\end{proof}
Going back to the proof of  Proposition  \ref{prop:smooth-cylinder-2}, we  observe that
 $\widehat{e}_{(D^{\cyl})}\in  \operatorname{OP}^{-1}$ by the results of \cite{MN}. Thus, using the Lemma
we have  just proved,  we conclude
 that
$$\widehat{e}_{(D^{\cyl})}\in  \operatorname{OP}^{-1}\quad\text{and}\quad [\chi^0,\widehat{e}_{(D^{\cyl})}]\in \mathcal{I}_{m}$$ with $m$ greater than the dimension of the leaves of $(X,\F)$.
Now we need to prove that, in fact,  $[\chi^0,\widehat{e}_{(D^{\cyl})}]$ is in $\J_m$, that is
 $g_{\cyl}[\chi^0,\widehat{e}_{(D^{\cyl})}]$ and $[\chi^0,\widehat{e}_{(D^{\cyl})}]g_{\cyl}$
are bounded; this will ensure that $\widehat{e}_{(D^{\cyl})}\in \mathcal{D}_{m}$.

\begin{lemma} 
The operators $g_{\cyl}[\chi^0,\widehat{e}_{(D^{\cyl})}]$ and $[\chi^0,\widehat{e}_{(D^{\cyl})}]g_{\cyl}$
are bounded.
\end{lemma}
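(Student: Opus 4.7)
The plan is to dominate each of $g_{\cyl}[\chi^0,\widehat e_{(D^{\cyl})}]$ and $[\chi^0,\widehat e_{(D^{\cyl})}]g_{\cyl}$ by a scalar convolution operator with an $L^1$-kernel in the cylindrical variable, so that $L^2$-boundedness follows from Young's inequality, uniformly in $\theta\in T$. First, using $1=\chi^0+(1-\chi^0)$ one writes
$$[\chi^0,\widehat e_{(D^{\cyl})}]=\chi^0\widehat e_{(D^{\cyl})}(1-\chi^0)-(1-\chi^0)\widehat e_{(D^{\cyl})}\chi^0,$$
reducing matters to bounding each of the four operators $g_{\cyl}\chi^0\widehat e_{(D^{\cyl})}(1-\chi^0)$, $g_{\cyl}(1-\chi^0)\widehat e_{(D^{\cyl})}\chi^0$, $\chi^0\widehat e_{(D^{\cyl})}(1-\chi^0)g_{\cyl}$ and $(1-\chi^0)\widehat e_{(D^{\cyl})}\chi^0g_{\cyl}$ separately.

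The second step is a $\theta$-uniform estimate for the translation-invariant resolvent kernel. Via identity \eqref{graph-laplacian} one has $\widehat e_{(D^{\cyl}_\theta)}=(D^{\cyl}_\theta+\mathfrak s)(1+(D^{\cyl}_\theta)^2)^{-1}$, and on the cylinder $1+(D^{\cyl}_\theta)^2=-\pa_t^2+B_\theta^2$ with $B_\theta:=\sqrt{1+(D^{\pa}_\theta)^2}\geq 1$. Fourier-inverting in $t$ the multiplier $(\xi^2+B_\theta^2)^{-1}$ gives the operator-valued convolution kernel $(2B_\theta)^{-1}e^{-B_\theta|\tau|}$ on $L^2(\pa\tM\times\{\theta\})$ in $\tau=t-t'$. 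Composing with $D^{\cyl}_\theta+\mathfrak s=\gamma\pa_t+D^{\pa}_\theta+\mathfrak s$ introduces only the bounded operator factor $(D^{\pa}_\theta+\mathfrak s)B_\theta^{-1}$ (of norm $\leq 1$, since $(D^{\pa}_\theta+\mathfrak s)^2=B_\theta^2$) together with a $\sgn(\tau)$ factor arising from $\pa_t e^{-B_\theta|\tau|}$. Hence the convolution kernel $R_\theta(\tau)$ of $\widehat e_{(D^{\cyl}_\theta)}$ satisfies
$$\|R_\theta(\tau)\|_{L^2(\pa\tM\times\{\theta\})\to L^2(\pa\tM\times\{\theta\})}\leq C\,e^{-|\tau|}$$
with $C$ independent of $\theta$.

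The third step is a geometric support observation: on the support of $\chi^0(s)(1-\chi^0(t))$ one has $s\leq 0<t$, whence $|s|,|t|\leq t-s=|s-t|$, and symmetrically on the opposite region. Consequently the operator-valued integral kernel of $g_{\cyl}\chi^0\widehat e_{(D^{\cyl}_\theta)}(1-\chi^0)$ is dominated in operator norm by $\phi(s-t):=C(1+(s-t)^2)e^{-|s-t|}$, which lies in $L^1(\RR)$. The $L^1\ast L^2\to L^2$ inequality, applied via Minkowski's integral inequality fibrewise over $\pa\tM\times\{\theta\}$, then bounds the operator by $\|\phi\|_1$ uniformly in $\theta$, which is precisely the estimate needed in $B^*(\cyl(\pa X),\F_{\cyl})$. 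The three remaining pieces are treated identically, with the companion inequality $|t|\leq|s-t|$ used for the terms carrying $g_{\cyl}$ on the right. The only substantive point is extracting the $\theta$-uniform exponential decay of $R_\theta(\tau)$; once this is in hand via $B_\theta\geq 1$, the remainder of the argument is routine bookkeeping.
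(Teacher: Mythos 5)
Your argument is correct, but it is not the route the paper takes. The paper stays entirely within its commutator calculus: it writes $g_{\cyl}[\chi^0,\widehat e_{(D^{\cyl})}]$ as $[g_{\cyl},(D^{\cyl}+\mathfrak s)^{-1}][\pa_t,\chi^0](D^{\cyl}+\mathfrak s)^{-1}+(D^{\cyl}+\mathfrak s)^{-1}g_{\cyl}[\pa_t,\chi^0](D^{\cyl}+\mathfrak s)^{-1}$, controls the first term by expanding $[g_{\cyl},(D^{\cyl}+\mathfrak s)^{-1}]=-(D^{\cyl}+\mathfrak s)^{-1}\,2f_{\cyl}\cl(df_{\cyl})\,(D^{\cyl}+\mathfrak s)^{-1}$ and invoking the boundedness of $f_{\cyl}(D^{\cyl}+\mathfrak s)^{-1}$ (Sublemma \ref{the-sublemma}), and disposes of the second by an integration by parts showing $\Lambda g_{\cyl}[\pa_t,\chi^0]\Lambda=\Lambda[\pa_t,\chi^0]\Lambda$ with $\Lambda=(1+(D^{\cyl})^2)^{-1/2}$ --- the point being that the distributional commutator $[\pa_t,\chi^0]$ lives on $\{t=0\}$ and $g_{\cyl}(0)=1$. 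You instead prove a quantitative, $\theta$-uniform exponential decay $\|R_\theta(\tau)\|\le Ce^{-|\tau|}$ for the convolution kernel of $(D^{\cyl}_\theta+\mathfrak s)^{-1}$ and absorb the weight via the support observation $|s|\le|s-t|$ on the off-diagonal blocks $\chi^0\,\widehat e\,(1-\chi^0)$ and $(1-\chi^0)\,\widehat e\,\chi^0$, closing with Schur/Young. Both are sound; your version avoids the distributional manipulation of $[\pa_t,\chi^0]$, does not use the normalization $g_{\cyl}(0)=1$, and in fact yields the stronger statement that $g_{\cyl}^N[\chi^0,\widehat e_{(D^{\cyl})}]$ is bounded for every $N$, since a polynomial times $e^{-|\tau|}$ is still integrable; the paper's version reuses machinery (Lemma \ref{lemma:commutator-in-b} and Sublemma \ref{the-sublemma}) that it needs elsewhere anyway. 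One small point of care in your write-up: the identity $(D^\pa_\theta+\mathfrak s)^2=B_\theta^2$ depends on the precise Clifford conventions for how $\mathfrak s$ interacts with the tangential operator; what is actually needed, and what does hold uniformly, is only $\|(D^\pa_\theta+\mathfrak s)B_\theta^{-1}\|\le C$, together with the observation that $\pa_\tau e^{-B_\theta|\tau|}$ produces no delta at $\tau=0$ because the kernel of $(1+(D^{\cyl}_\theta)^2)^{-1}$ is continuous there.
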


\begin{proof}
Consider  $g_{\cyl}[\chi^0,\widehat{e}_{(D^{\cyl})}]$; this is equal to $$[g_{\cyl}, (D^{\cyl}+\mathfrak{s})^{-1}]
[\pa_t,\chi^0] (D^{\cyl}+\mathfrak{s})^{-1}+ (D^{\cyl}+\mathfrak{s})^{-1} g_{\cyl} [\pa_t,\chi^0] (D^{\cyl}+\mathfrak{s})^{-1}$$
which is in turn equal to 
$$ (D^{\cyl}+\mathfrak{s})^{-1} 2f_{\cyl}\cl (df_{\cyl})  (D^{\cyl}+\mathfrak{s})^{-1}[\pa_t,\chi^0] (D^{\cyl}+\mathfrak{s})^{-1}
+ (D^{\cyl}+\mathfrak{s})^{-1} g_{\cyl} [\pa_t,\chi^0] (D^{\cyl}+\mathfrak{s})^{-1}$$
\begin{sublemma}\label{the-sublemma}
The operator $(D^{\cyl}+\mathfrak{s})^{-1} f_{\cyl}$ and $ f_{\cyl}(D^{\cyl}+\mathfrak{s})^{-1}$ are bounded.
\end{sublemma}
Assuming the sublemma for a moment we see that in the last displayed formula the term
$ (D^{\cyl}+\mathfrak{s})^{-1} 2f_{\cyl}$ appearing in the first summand
is bounded; thus so is
$ (D^{\cyl}+\mathfrak{s})^{-1} 2f_{\cyl}\cl (df_{\cyl}) $
since $\cl (df_{\cyl})$ is itself bounded (here we use the very definition of $f_{\cyl}$);   the term  $(D^{\cyl}+\mathfrak{s})^{-1}[\pa_t,\chi^0] (D^{\cyl}+\mathfrak{s})^{-1}$
is known to be bounded (just see the proof of Lemma \ref{lemma:commutator-in-b}); next, 
we consider the term $(D^{\cyl}+\mathfrak{s})^{-1} g_{\cyl} [\pa_t,\chi^0] (D^{\cyl}+\mathfrak{s})^{-1}$.
We shall prove that with  with $\Lambda:= (1+(D^{\cyl})^2)^{-\frac{1}{2}}$
$$ \Lambda g_{\cyl} [\pa_t,\chi^0] \Lambda :=T_g=T:=
\Lambda  [\pa_t,\chi^0]  \Lambda $$
and since the latter is bounded by Lemma \ref{lemma:commutator-in-b}, we will be able to conclude that
 $(D^{\cyl}+\mathfrak{s})^{-1} g_{\cyl} [\pa_t,\chi^0] (D^{\cyl}+\mathfrak{s})^{-1}$ is bounded and that
the Lemma holds.\\
For $\xi$, $\eta$ in $L^2$ we have:
\begin{align*}
\langle T_{\theta}\xi,\eta \rangle_{L^2} &= \int_{\RR\times\tN} dtdy\left( \Lambda_{\theta} g_{\cyl} [\pa_t ,\chi^0]
  \Lambda_{\theta} \xi\right) (t,y)\overline{\eta} (t,y)\\
 &= \int_{\tN} dy \left( \int_{-\infty}^0 dt (  \Lambda_{\theta} \xi)(t,y) \pa_t (g  \Lambda_{\theta} \overline{\eta})(t,y)
 -\int_{-\infty}^0 dt (\pa_t  \Lambda_{\theta} \xi)(t,y) (g \Lambda_{\theta}\overline{\eta})(t,y) \right)\\
 & = - \int_{\tN} \big[ ( \Lambda_{\theta} \xi )(t,y) g(t) ( \Lambda_{\theta} \overline{\eta})(t,y)\big]_{t=0}\\
 &= \int_{\tN}  ( \Lambda_{\theta} \xi )(0,y) ( \Lambda_{\theta} \overline{\eta})(0,y)\quad\text{since}\quad g(0)=1\\
 &= \langle (1+(D_{\theta}^{\cyl})^2)^{-\frac{1}{2}} [\pa_t,\chi^0]  (1+(D_{\theta}^{\cyl})^2)^{-\frac{1}{2}} \xi,\eta \rangle_{L^2}
 \end{align*}
 where for the last equality we use again the computation done in the preceding four equalities.\\
 We are left with the task of proving the Sublemma. To this end we observe that, with $\pa_t:=\frac{1}{i}\frac{d}{dt}$
 we have  $(D^{\cyl})^2=
\pa_t^2 +D^2_{\pa X}$; we also know that $\pa_t^2$ and $D^2_{\pa X}$ commute. 
It is easy to see that the (unique) self-adjoint extensions of $(1+\pa^2_t)$ and $(1+D^2_{\pa X})$
are invertible and that the following two equalities hold:
$$ (1+\pa_t^2)^{-1} - (1+(D^{\cyl})^2)^{-1}=(1+\pa_t^2)^{-1} ( (1+(D^{\cyl})^2) -  (1+\pa_t^2) )  
(1+(D^{\cyl})^2)^{-1} 
= (1+\pa_t^2)^{-1} D^2_{\pa X} (1+(D^{\cyl})^2)^{-1} \,.$$
Moreover the last operator is non-negative; thus $ (1+\pa_t^2)^{-1} \geq (1+(D^{\cyl})^2)^{-1}$
and thus $ (1+\pa_t^2)^{-1/2} \geq (1+(D^{\cyl})^2))^{-1/2}$.
Then with $f_{\cyl}=\sqrt{1+t^2}$ as usual, we have
$\| (1+(D^{\cyl})^2)^{-1/2} f_{\cyl} \xi\|_{L^2}\leq \| (1+\pa_t^2)^{-1/2}  f_{\cyl} \xi\|$ with $\xi\in\C^\infty_c$. This means that
if $(1+\pa_t^2)^{-1/2}  f_{\cyl}$ is bounded, then $ (1+(D^{\cyl})^2)^{-1/2} f_{\cyl}$ is also bounded.
Now remark that $(1+\pa_t^2)^{-1/2}  f_{\cyl}$ has Schwartz kernel equal to 
$k(s,t)= e^{-|s-t|} f(s)\equiv e^{-|s-t|} \sqrt{1+s^2}$ and that this is an $L^2$-function on $\RR\times \RR$.
Thus the integral operator defined on $L^2 (\RR)$ by $k(s,t)$ is Hilbert-Schmidt and thus, in particular,
bounded. This implies that our operator, which is really $(1+\pa_t^2)^{-1/2}  f_{\cyl}\otimes {\rm Id}_{\pa X}$
is also bounded. Summarizing, $(1+(D^{\cyl})^2)^{-1/2} f_{\cyl}$ is bounded. Thus $(D^{\cyl}+\mathfrak{s})^{-1} f_{\cyl}$
is also bounded, since it can be written as $(D^{\cyl}+\mathfrak{s})^{-1} (1+(D^{\cyl})^2)^{1/2} (1+(D^{\cyl})^2)^{-1/2}f_{\cyl}$.
Finally notice that $f_{\cyl}(D^{\cyl}+\mathfrak{s})^{-1} = [f_{\cyl}, (D^{\cyl}+\mathfrak{s})^{-1}] + (D^{\cyl}+\mathfrak{s})^{-1} f_{\cyl}$
and we know that both summands on the right hand sides are bounded. The Sublemma (and thus the Lemma) is proved.

\end{proof}

Thus we have proved  that 
$\widehat{e}_{(D^{\cyl})}\in \mathcal{D}_{m}$. On the other hand, see Definition 
\ref{def:Bp-bis}, 
$$\B_m:= \{\ell\in \D_m\cap {\rm Dom} (\pa_\alpha)\;|\; [f_{\cyl},\ell]\,,\, [f_{\cyl},[f_{\cyl},\ell]]\;\;\text{ is bounded }\}.$$ 
Thus we
need to show, first of all, that it is also true that $\widehat{e}_{(D^{\cyl})}\in {\rm Dom} (\pa_\alpha)$. We need to prove that the limit
$$\lim_{t\to 0} \frac{1}{t} (\alpha_t ( \widehat{e}_{(D^{\cyl})})- \widehat{e}_{(D^{\cyl})})$$
exists in $\operatorname{OP}^{-1}$.
We compute
\begin{align*}
 &\frac{1}{t} (\alpha_t ( \widehat{e}_{(D^{\cyl})})- \widehat{e}_{(D^{\cyl})})= \frac{1}{t}
 [e^{its},(D^{\cyl}+\mathfrak{s})^{-1}] e^{-its}
 =\frac{1}{t} (D^{\cyl}+\mathfrak{s})^{-1} [D^{\cyl},e^{its}] (D^{\cyl}+\mathfrak{s})^{-1} e^{-its}\\
 =&  (D^{\cyl}+\mathfrak{s})^{-1} i\cl (ds) e^{its} (D^{\cyl}+\mathfrak{s})^{-1} e^{-its}\\
 =&  (D^{\cyl}+\mathfrak{s})^{-1} i\cl (ds) [e^{its}, (D^{\cyl}+\mathfrak{s})^{-1}] e^{-its}
 +  (D^{\cyl}+\mathfrak{s})^{-1} i\cl (ds)  (D^{\cyl}+\mathfrak{s})^{-1} \\
 =& (D^{\cyl}+\mathfrak{s})^{-1} i\cl (ds)(D^{\cyl}+\mathfrak{s})^{-1} [D^{\cyl}, e^{its}](D^{\cyl}+\mathfrak{s})^{-1} e^{-its}
 +  (D^{\cyl}+\mathfrak{s})^{-1} i\cl (ds)  (D^{\cyl}+\mathfrak{s})^{-1} \\
 =& (D^{\cyl}+\mathfrak{s})^{-1}i\cl (ds)(D^{\cyl}+\mathfrak{s})^{-1} i\cl(ds) it e^{its}
(D^{\cyl}+\mathfrak{s})^{-1} e^{-its}+  (D^{\cyl}+\mathfrak{s})^{-1} i\cl (ds)  (D^{\cyl}+\mathfrak{s})^{-1} 
\end{align*}
and the last term converges to $(D^{\cyl}+\mathfrak{s})^{-1} i\cl (ds)  (D^{\cyl}+\mathfrak{s})^{-1} $
as $t\to 0$. Thus $\widehat{e}_{(D^{\cyl})}\in {\rm Dom} (\pa_\alpha)$. Summarizing, we have proved that
$\widehat{e}_{(D^{\cyl})}\in \mathcal{D}_{m,\alpha}:= \D_m\cap {\rm Dom} (\pa_\alpha)$.\\
Next we need to show that $ [f_{\cyl},\widehat{e}_{(D^{\cyl}})]$ and $ [f_{\cyl},[f_{\cyl},\widehat{e}_{(D^{\cyl}})]]$ are bounded. However, this is elementary at this point: for example 
$ [f_{\cyl},\widehat{e}_{(D^{\cyl}})]$ is nothing but  $(D^{\cyl}+\mathfrak{s})^{-1} i\cl (df_{\cyl})  (D^{\cyl}+\mathfrak{s})^{-1} $ which is indeed bounded. Similarly we proceed for $ [f_{\cyl},[f_{\cyl},\widehat{e}_{(D^{\cyl}})]]$.

\medskip
Next we prove that $\widehat{e}_{(D^{\cyl})}\equiv (D^{\cyl}+\mathfrak{s})^{-1}\in {\rm Dom} (\overline{\delta}_j)$, $j=1,2$. 
We only do it for $\overline{\delta}_2$, the arguments for $ \overline{\delta}_1$
are similar.
It suffices to show that
$(D^{\cyl}+\mathfrak{s})^{-1}\in {\rm Dom}(\overline{\partial}_2)$
and $\overline{\partial}_2 ((D^{\cyl}+\mathfrak{s})^{-1})\in \B_{m}$. Using \cite{MN}
Proposition 7.17,
we know that
$(D^{\cyl}+\mathfrak{s})^{-1}$ does belong to ${\rm Dom}(\overline{\partial}_2)$ and moreover
$$\overline{\partial}_2 (D^{\cyl}+\mathfrak{s})^{-1}\,=\, (D^{\cyl}+\mathfrak{s})^{-1} [D^{\cyl},\phi_{\partial}]
(D^{\cyl}+\mathfrak{s})^{-1}\,.$$
In order to see that the right hand side of this formula belongs to $\mathcal{B}_{m}$ we 
show separately that it belongs to $\D_m$ and ${\rm Dom}(\pa_{\alpha})$ and that, in addition,
it is such that its commutator and its double-commutator with $f_{\cyl}$ is bounded.
First of all
we employ
Lemma \ref{lemma:commutator-in-b}, which shows that 
$(D^{\cyl}+\mathfrak{s})^{-1}\in \mathcal{D}_{m}={\rm Dom}(\overline{\delta}_3)$.
Since, by the same arguments, $[D^{\cyl},\phi_{\partial}](D^{\cyl}+\mathfrak{s})^{-1}$, which is 
the composition of Clifford multiplication by $d\phi_{\partial}$  with $(D^{\cyl}+\mathfrak{s})^{-1}$,
also belongs to ${\rm Dom}(\overline{\delta}_3)$
 we conclude that their product is in  ${\rm Dom}(\overline{\delta}_3)$
 i.e. in $\mathcal{D}_{m}$. Here we have used the fact that the domain of a closed derivation
 is a Banach algebra. Exactly the same argument, together with the above proof that 
 $(D^{\cyl}+\mathfrak{s})^{-1}$ belongs to ${\rm Dom}(\pa_{\alpha})$, establishes that
$$(D^{\cyl}+\mathfrak{s})^{-1} [D^{\cyl},\phi_{\partial}]
(D^{\cyl}+\mathfrak{s})^{-1}\in {\rm Dom}(\pa_{\alpha})\,.$$ Finally 
it is clear that the above term is such that its commutator with $f_{\cyl}$ is bounded; similarly
we proceed with its double-commutator.
This completes the proof of the Proposition \ref{prop:smooth-cylinder-2} .



\subsubsection{Proof of Proposition  \ref{prop:smooth-cylinder-3}}
We need to show that $\widehat{e}_{D}\in \mathbf{ \mathbf{ \mathfrak{A}_m } } := \mathcal{A}_m \cap
{\rm Dom} (\overline{\delta}_1)
\cap {\rm Dom} (\overline{\delta}_2) \cap \pi^{-1} (\mathbf{ \mathbf{ \mathfrak{B}_m } } )\,,$ for $m>\dim \tV$.
First of all we prove that $\widehat{e}_{D}\in \mathcal{A}_m$.  Thus we need to show that $t(\widehat{e}_{D})\in \J_{m}$ and that $\pi (\widehat{e}_{D})\in \B_m$.
However,
this is clear from our previous results. Indeed, $\pi (\widehat{e}_{D})=\widehat{e}_{D_{\cyl}}$ and we 
have proved in the previous proposition that the right hand side is in $\mathcal{B}_m$. Similarly,
if $\chi$ is a smooth approximation
of $\chi^0$, we can  write:
\begin{equation}\label{trick-chi}
t(\widehat{e}_{D}):=\widehat{e}_{D}- \chi^0 \widehat{e}_{D_{\cyl}} \chi^0=(\widehat{e}_{D}- \chi \widehat{e}_{D_{\cyl}} \chi)+ (\chi \widehat{e}_{D_{\cyl}} \chi - 
\chi^0 \widehat{e}_{D_{\cyl}} \chi^0)\,.
\end{equation}
We have already proved that the first summand $\widehat{e}_{D}- \chi \widehat{e}_{D_{\cyl}} \chi$ is in $\I_{m}$.
We now proceed to show that it is indeed in $\J_m$. By the argument given in Remark 
\ref{remark:from-compact-to-shatten} we need to show that $\phi (D+\mathfrak{s})^{-1}$
and $ (D+\mathfrak{s})^{-1}\phi$ are not only in $\I_m$ but in fact in $\J_m$
provided that $\phi$ is compactly supported. However, this is can be proved as follows.
First, $g\phi  (D+\mathfrak{s})^{-1}$ is clearly bounded, given that it is in $\I_m$ (indeed
$g\phi$ is compactly supported, so we can apply Remark \ref{remark:from-compact-to-shatten}).
As far as $\phi  (D+\mathfrak{s})^{-1} g$, we rewrite it as $\phi [(D+\mathfrak{s})^{-1},g] + \phi g (D+\mathfrak{s})^{-1}$. The latter is equal to $- \phi (D+\mathfrak{s})^{-1} 2 \cl (df) f (D+\mathfrak{s})^{-1} + \phi g (D+\mathfrak{s})^{-1}$. This is bounded using the above reasoning and Sublemma \ref{the-sublemma}. Summarizing,
we have proved that $\widehat{e}_{D}- \chi \widehat{e}_{D_{\cyl}} \chi\in\J_m$.\\
As far as the second term in \eqref{trick-chi} is concerned  we simply observe that it can be rewritten
as $(\chi \widehat{e}_{D} (\chi-\chi^0))+ (\chi-\chi^0) \widehat{e}_{D_{\cyl}} \chi^0$ and both these terms
are $m$-Shatten class if $m>\dim \tV$, given that $(\chi-\chi^0)$ is compactly
supported. This trick can be also used in order to show that
if we multiply by $g$ on the left and on the right we get a bounded operator, according to what
we have obseved above.
Notice that since $\pi (\widehat{e}_{D})=\widehat{e}_{D_{\cyl}}$ we also have, directly from the previous Subsubsection,
 that $\widehat{e}_{D}\in \pi^{-1} (\mathbf{ \mathbf{ \mathfrak{B}_m } } )$.
Next we need to show that $\widehat{e}_{D}$ is in the domain of the two closed derivations
introduced in Subsubsection \ref{subsubsection:gothic-a}. Consider, for example,
$\overline{\delta}_2$.
Recall that $\overline{\delta}_2: {\rm Dom}\overline{\delta}_2 \to \A_m (X,F)$
with $${\rm Dom}\overline{\delta}_2=\{a\in {\rm Dom}\,\overline{\delta}^{{\rm max}}_2\;|\;\overline{\delta}^{{\rm max}}_2 a\in 
\A_m (X,F)\}$$ 
The fact 
that $\widehat{e}_D\equiv (\mathfrak{s}+D)^{-1}$ is in ${\rm Dom}\,\overline{\delta}^{{\rm max}}_2$ is proved 
 in \cite{MN} where it is also proved that 
\begin{equation}\label{usualformula}
\overline{\delta}^{{\rm max}}_2 ((\mathfrak{s}+D)^{-1})=
(\mathfrak{s}+D)^{-1} [D,\phi] (\mathfrak{s}+D)^{-1}\,.
\end{equation}
Thus we only need to show that the right hand side belongs to $\A_m (X,F)$, where we recall
that $\A_m (X,F)=\{a\in A^* \text{ such that } \pi(a)\in \B_m (\cyl(\pa X),F_{\cyl})\text{ and }
t(a)\in \J_m (X,F)\}$. But the image under $\pi$ of the right hand side of \eqref{usualformula}
is precisely $(D^{\cyl}+\mathfrak{s})^{-1} [D^{\cyl},\phi_{\partial}]
(D^{\cyl}+\mathfrak{s})^{-1}$ which was shown to belong to $\B_m$
at the end of the proof of the previous Proposition. Thus we are left with the task
of proving that $$t((\mathfrak{s}+D)^{-1} [D,\phi] (\mathfrak{s}+D)^{-1} )\in \J_m (X,F)\,.$$
By definition of $t$ this means that
$$(\mathfrak{s}+D)^{-1} [D,\phi] (\mathfrak{s}+D)^{-1} - \chi^0
(D^{\cyl}+\mathfrak{s})^{-1} [D^{\cyl},\phi_{\partial}]
(D^{\cyl}+\mathfrak{s})^{-1} \chi^0 \;\in\;\J_m (X,F)\,.$$
However, this can be proved by first reducing to $\chi$, a smooth approximation
of $\chi^0$, using the same reasoning as in \eqref{trick-chi}; then,
in order to show that  
$$(\mathfrak{s}+D)^{-1} [D,\phi] (\mathfrak{s}+D)^{-1} - \chi
(D^{\cyl}+\mathfrak{s})^{-1} [D^{\cyl},\phi_{\partial}]
(D^{\cyl}+\mathfrak{s})^{-1} \chi \;\in\;\J_m (X,F)$$
we employ the same arguments 
used in order to
establish that \eqref{firstdifference} is in $\J_m (X,F)$. The proof of  Proposition  \ref{prop:smooth-cylinder-3} is now complete.

\subsection{Proofs for the extension of the 
 relative GV cyclic cocycle}\label{subsection:proofs-extension}

\subsubsection{Further properties of the modular Shatten extension}\label{subsect:further}

This Subsection is devoted to the statement of some technical properties
of the algebras $ \mathbf{\mathfrak{B}}_m $.
In the next Subsection we shall use these properties in order to show that the  Godbillon-Vey eta cocycle extends 
from $B_c$ to $ \mathbf{\mathfrak{B}}_{2n+1} $ with $2n$ equal to the dimension
of the leaves.

We begin with the Banach algebra $\operatorname{OP}^{-1} (\cyl (Y),\F_{\cyl})$.
Here $Y=\tN\times_\Gamma T$ is a foliated
 bundle without boundary.
 
 We have proved in Proposition \ref{prop:b-o-sub-of-b-star} that $\operatorname{OP}^{-1} (\cyl (Y),\F_{\cyl})$ is a subalgebra
 of $B^*$. The latter was proved to be isomorphic to $C^* (\RR)\otimes C^* (Y,\F)$, see
 Proposition \ref{prop:bstar-structure}. Thus, the Fourier transformation
 applied to $B^*$  has values in
 $C_0 (\RR, C^* (Y,\F))$.  
 In particular, the image of $\operatorname{OP}^{-1} (\cyl (Y),\F_{\cyl})$ under Fourier transformation
 is a subalgebra of $C_0 (\RR, C^* (Y,\F))$. Summarizing, the Fourier transform
 $\hat{\ell}$ of an element $\ell\in \operatorname{OP}^{-1} (\cyl (Y),\F_{\cyl})$ is a continuous
 function $\hat{\ell}:\RR\to C^* (Y,\F)$ vanishing at infinity.
 
 Recall also the Banach algebras $\operatorname{OP}^{-p} (\cyl (Y),\F)$,
 see Proposition \ref{prop:b-o-sub-of-b-star}. 
 Similarly, we can define $\operatorname{OP}^{-p} (Y, \F)$ for a closed foliated bundle $(Y,\F)$.
 Thinking to  $\operatorname{OP}^{-p} (Y,\F)$
 as a subalgebra of the von Neumann algebra  of the foliation $(Y,\F)$, we
 see that an element $b$ in $\operatorname{OP}^{-p}(Y,\F)$
 is given by a family of operators $(b_\theta)_{\theta\in T}$, with $b_\theta$ acting on Sobolev
  spaces along $N\times\{\theta\}$. Let $f:\RR\to \operatorname{OP}^{-p} (Y, \F)$ be a measurable 
  $\operatorname{OP}^{-p} (Y, \F)$-valued function. This means that $f=(f_\theta)_{\theta\in T}$
  with $f_\theta$ a measurable function with values in the bounded operators of
  order $-p$ on Sobolev spaces on $N\times\{\theta\}$. We define  a norm 
  $\| f\|_{L^2 (\RR,\operatorname{OP}^{-p}(Y,\F))}$ by setting
  \begin{equation}\label{normell2}
  \| f\|_{L^2 (\RR,\operatorname{OP}^{-p}(Y,\F))}=\sup_{\theta\in T} \left( \int_{\RR} ||| f_\theta (x) |||^2_p dx
  \right) ^{\frac{1}{2}}\,.
  \end{equation}
  Moreover, let $g:\RR\times\RR \to \operatorname{OP}^{-p} (Y,\F)$ be a measurable $\operatorname{OP}^{-p} (Y,\F)$-valued
  function; it is also considered as a family $(g_\theta)_{\theta\in T}$, with $g_\theta$ a 
  measurable function
  on $\RR\times\RR$ with values in the bounded operators of
  order $-p$ on Sobolev spaces on $N\times\{\theta\}$.
  We define a norm $  \| g\|_{L^2 (\RR\times\RR,\operatorname{OP}^{-p}(Y,\F))}$ by the analogue of 
  \eqref{normell2}.
  It is easily verified that $L^2 (\RR\times\RR,\operatorname{OP}^{-p}(Y,\F))$ is a Banach algebra with the convolution
  product $g \ast h $ given by the family $(g_\theta \ast h_\theta)_{\theta\in T} $,
  with $$g_\theta \ast h_\theta (x,z)=\int_{\RR} dy g_{\theta} (x,y) h_{\theta} (y,z)$$
  for $g=(g_{\theta})_{\theta\in T}$, $h=(h_\theta)_{\theta\in T}\in L^2 (\RR\times\RR,\operatorname{OP}^{-p}(Y,\F))$.
 Notice that the product in the integrand involves the algebra structure of $\operatorname{OP}^{-p}(Y,\F)$.
 Remark that if $p+q>\dim N$ then there exists a  bounded pairing 
 \begin{equation}\label{ell2-pairing}
  \langle\,,\,\rangle: L^2 (\RR\times\RR,\operatorname{OP}^{-p}(Y,\F))\times L^2 (\RR\times\RR,\operatorname{OP}^{-q}(Y,\F))\to\CC
  \end{equation}
  with $$\langle g,h\rangle =\int_{\RR\times\RR} dx dy\,\omega^Y_\Gamma (g(x,y) h(x,y)^*) \,.$$
  Indeed, given  $g, h\in C^\infty_c (G_{\cyl})$, considered as elements in $L^2 (\RR\times\RR,\operatorname{OP}^{-p}(Y,\F))$ and 
  $L^2 (\RR\times\RR,\operatorname{OP}^{-q}(Y,\F))$ respectively, we have
   \begin{align*} 
 \langle\,g,h\rangle  &\leq \int_{\RR\times\RR} dx dy  | \omega^{Y}_\Gamma (g(x,y)h(x,y)^*)| \\
   &\leq C \int_{\RR\times\RR} dx dy ||| g(x,y) h(x,y)^* |||_{p+q}\\
   & \leq  C \int_{\RR\times\RR} dx dy ||| g(x,y)  |||_{p}  ||| h(x,y)  |||_{q}\\
   &\leq  \| g\|_{L^2 (\RR\times\RR,\operatorname{OP}^{-p}(Y,\F))}  \| h\|_{L^2 (\RR\times\RR,\operatorname{OP}^{-q}(Y,\F))}
  \end{align*}
  In the second step  we have used \cite{MN}, p. 508, Cor. 6.11.
  Thus \eqref{ell2-pairing} is bounded.
 Observe now that 
$ \omega^{\cyl}_\Gamma (gh)= \langle\,g,h^*\rangle$; thus the above inequality also implies that 
\begin{equation}\label{pairing-bounded} L^2 (\RR\times\RR,\operatorname{OP}^{-p}(Y,\F))\times L^2 (\RR\times\RR,\operatorname{OP}^{-q}(Y,\F))\ni (g,h)\rightarrow 
 \omega^{\cyl}_\Gamma (gh)\in\CC
 \end{equation}
 is bounded.
 
 \medskip
  The following Lemma will play a crucial role:
  \begin{lemma}\label{lemma:keylemma} (Key Lemma)
  \item{1)} If  $\ell\in \operatorname{OP}^{-p} (\cyl (Y),\F_{\cyl})$ and  $0\leq q < p-1/2$ then for each $x\in\RR$
    \begin{equation}\label{ell-inequality}
  ||| \hat{\ell} (x) |||_{q} \leq ||| \ell |||_p  \end{equation}
so that, in particular,   $\hat{\ell} (x)$ is an element
  of $B^{-q} (Y,\F)$ for each $x\in\RR$ and for each $0\leq q < p-1/2$.
  Moreover there is a constant $C>0$ such that 
  \begin{equation}\label{ellhatestimate}
  \|\hat{\ell}\|_{L^2 (\RR,\operatorname{OP}^{-q}(Y,\F))}\,\leq\, C ||| \ell |||_p \quad\text{for}\quad 0\leq q < p-1/2\,.
  \end{equation}
\item{2)}  If  $\ell\in  \operatorname{OP}^{-p} (\cyl (Y),\F_{\cyl})\cap {\rm Dom}\partial_{\alpha,p}$ 
then $\hat{\ell}$ is differentiable as a  function from $\RR$  with values in the Banach
algebra $\operatorname{OP}^{-q}(Y,\F)$, $0\leq q < p-1/2$. Moreover  there is a constant $C>0$ such that 
 $$
  \|\frac{d\hat{\ell}}{dx}\|_{L^2 (\RR,\operatorname{OP}^{-q}(Y,\F))}\,\leq\, C ||| \partial_\alpha \ell |||_p \quad\text{for}\quad 0\leq q < p-1/2\,.
$$
 \item{3)} Given $\ell\in \operatorname{OP}^{-p} (\cyl (Y),\F_{\cyl})\cap {\rm Dom}\partial_{\alpha,p}$, the commutator $[\chi^0,\ell]$
  admits a kernel function $k:\RR\times\RR\to \operatorname{OP}^{-q}(Y,\F)$ and there exists a constant
  $C$ such that 
   \begin{equation}\label{commutator-estimate}
    \| k\|_{L^2 (\RR\times\RR,\operatorname{OP}^{-q}(Y,\F))} \,\leq\, C \left( ||| \ell |||_p + ||| \partial_\alpha \ell |||_p \right)
    \quad\text{for}\quad 0\leq q < p-1/2\,.
  \end{equation}
\end{lemma}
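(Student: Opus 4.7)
The plan is to exploit the Fourier transform along the $\RR$-factor of the cylinder $\cyl(Y) = \RR \times Y$, which, under the identification $B^* \cong C^*(\RR)\otimes C^*(Y,\F)$ from Proposition \ref{prop:bstar-structure}, turns a translation-invariant operator $\ell$ into a $C^*(Y,\F)$-valued function $\hat\ell(x)$ of the dual variable. The underlying heuristic is that $\ell \in \operatorname{OP}^{-p}$ behaves like $(1 + (D^{\cyl})^2)^{-p/2}$ times an order-zero operator and since $\widehat{D^{\cyl}}(x) = ix + D^Y$ (modulo Clifford multiplication), $\hat\ell(x)$ should behave like $(1 + x^2 + (D^Y)^2)^{-p/2}$ times a uniformly bounded family of order-zero operators on $Y$.

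For (1), I would unwind the Sobolev calculus on the cylinder: the norm $W^s(\cyl(Y))$ decomposes via Fourier transform as a direct integral over $x$ of Sobolev spaces on $Y$ weighted by $(1 + x^2)^{s/2}$. Consequently the Sobolev operator norms entering $|||\ell|||_p$ dominate, for each $x$, a norm of $\hat\ell(x)$ weighted by $(1 + x^2)^{(p-q)/2}$, yielding the pointwise bound
\[
|||\hat\ell(x)|||_q \;\leq\; C(1+x^2)^{-(p-q)/2}\,|||\ell|||_p.
\]
Squaring and integrating and using that $\int_\RR (1+x^2)^{-(p-q)}dx$ converges precisely when $p-q > 1/2$ then yields \eqref{ellhatestimate}. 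For (2), observe that $\alpha_t(\ell) = e^{its}\ell e^{-its}$ multiplies a translation-invariant kernel by $e^{it(s-s')}$, so on the Fourier side $\widehat{\alpha_t\ell}(x) = \hat\ell(x-t)$. Therefore $\widehat{\partial_\alpha\ell}(x) = -d\hat\ell/dx$ as an $\operatorname{OP}^{-q}$-valued function, and applying (1) to the element $\partial_\alpha\ell \in \operatorname{OP}^{-p}$ simultaneously establishes the differentiability of $\hat\ell$ and the claimed $L^2$-bound on its derivative.

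For (3), the kernel of $[\chi^0,\ell]$ is given explicitly by \eqref{kernel-for-commutator}, supported on the two off-diagonal quadrants and equal there to $\pm\ell(y,y',s-s')$. Viewing this as an $\operatorname{OP}^{-q}(Y,\F)$-valued kernel $k(s,s')$ and performing the change of variables $u = s-s'$ reduces the relevant integral to
\[
\|k\|^2_{L^2(\RR\times\RR,\operatorname{OP}^{-q})} \;=\; 2\int_\RR |u|\,|||\ell(u)|||_q^2\,du.
\]
Cauchy--Schwarz then gives
\[
\int_\RR |u|\,|||\ell(u)|||_q^2\,du \;\leq\; \Bigl(\int|||\ell(u)|||_q^2\,du\Bigr)^{1/2}\Bigl(\int u^2\,|||\ell(u)|||_q^2\,du\Bigr)^{1/2},
\]
and since under inverse Fourier $u\ell(u)$ corresponds (up to a sign) to $d\hat\ell/dx$, a Plancherel-type comparison relates the first factor to $\|\hat\ell\|_{L^2(\RR,\operatorname{OP}^{-q})}$ (controlled by (1)) and the second to $\|d\hat\ell/dx\|_{L^2(\RR,\operatorname{OP}^{-q})}$ (controlled by (2)), producing the bound \eqref{commutator-estimate}.

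The main obstacle will be the last step: $|||\cdot|||_q$ is an operator norm between Sobolev Hilbert spaces and not itself a Hilbert norm, so the naive Plancherel identity for $\operatorname{OP}^{-q}$-valued functions is not available. To justify the comparison one has to return to the factorization $\hat\ell(x) = (1+x^2+(D^Y)^2)^{-p/2}g(x)$ with $g$ uniformly bounded in $x$ as an order-zero operator, which reduces everything to spectral/Sobolev estimates on the shifted resolvent $(1+x^2+(D^Y)^2)^{-s/2}$ and its $x$-derivative; these in turn can be handled by functional calculus along the fibres of $(Y,\F)$, as in \cite{MN}, Section 7.
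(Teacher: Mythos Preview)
Your arguments for parts (1) and (2) are essentially those of the paper: the Fourier transform in the cylindrical variable turns the Sobolev norms on $\cyl(Y)$ into $x$-weighted Sobolev norms on $Y$, producing the pointwise decay $|||\hat\ell(x)|||_q \leq (1+x^2)^{-(p-q)/2}|||\ell|||_p$; and since $\alpha_t$ becomes translation by $t$ after Fourier, $\partial_\alpha$ corresponds to $d/dx$, so part (1) applied to $\partial_\alpha\ell$ gives part (2).

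For part (3), however, your route diverges from the paper's and the obstacle you flag is genuine. Your scheme needs both $\int |||\ell(u)|||_q^2\,du$ and $\int u^2|||\ell(u)|||_q^2\,du$ controlled by the Fourier-side $L^2$ norms of $\hat\ell$ and $d\hat\ell/dx$, and for that a Plancherel-type comparison is unavoidable; but $|||\cdot|||_q$ is an operator norm between Sobolev spaces, not a Hilbert norm, and no such inequality holds in general. Your proposed workaround, factorising $\hat\ell(x)=(1+x^2+(D^Y)^2)^{-p/2}g(x)$ with $g$ uniformly order zero, does not rescue the argument: it is a statement about $\hat\ell$, not about the physical-side convolution kernel $\ell(u)$, and in any case such a factorisation is not available for an \emph{arbitrary} element of $\operatorname{OP}^{-p}$ (which is defined only as a norm closure).

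The paper stays on the Fourier side throughout. Using that $F\circ(2\chi^0-1)\circ F^{-1}$ is the Hilbert transform $\mathcal H$, one has $F\circ[\chi^0,\ell]\circ F^{-1}=\tfrac12[\mathcal H,\hat\ell]$, an integral operator with the divided-difference kernel
\[
\omega(u,v)=\frac{\hat\ell(u)-\hat\ell(v)}{u-v}\,.
\]
The $L^2(\RR\times\RR,\operatorname{OP}^{-q})$ norm of $\omega$ is then estimated directly: on $|u-v|\ge1$ one uses $|||\omega(u,v)|||_q\le(|||\hat\ell(u)|||_q+|||\hat\ell(v)|||_q)/|u-v|$ together with part (1); on $|u-v|<1$ one writes $\hat\ell(u)-\hat\ell(v)=\int_v^u \tfrac{d\hat\ell}{ds}\,ds$, applies Cauchy--Schwarz in $s$, and invokes part (2). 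No comparison between physical-side and Fourier-side Banach-valued $L^2$ norms is needed for this core estimate.
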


\begin{proof}

For the first item we need to show that given $\ell\in \operatorname{OP}^{-p} (\cyl (Y),\F_{\cyl})$,
$\hat{\ell} (x)$ is in $\operatorname{OP}^{-q}(Y,\F)$ and   there is a constant $C>0$ such that 
$$  \|\hat{\ell}\|_{L^2 (\RR,\operatorname{OP}^{-q}(Y,\F))}\,\leq\, C ||| \ell |||_p \quad\text{for}\quad 0\leq q < p-1/2\,.$$
 We consider $\ell\in\operatorname{OP}^{-p} (\cyl (Y))$ as a family of operators 
$(\ell_\theta)_{\theta\in T}$ with $\ell_\theta: L^2(\cyl (\tN)\times\{\theta\})\to L^2(\cyl (\tN)\times\{\theta\})$.
By definition one has
$$\| \ell_\theta \|_{n+p,n}=\| (1+\Delta)^{(n+p)/2} \ell_\theta (1+\Delta)^{-n/2} \|_{C^*}$$
with $\Delta$ denoting  the Laplacian on the cylinder. Applying the Fourier transformation
along the cylindrical variable we obtain:
\begin{align*}
\|\ell\|_{n+p,n}=&\sup_{x\in\RR}\|(1+x^2 +\Delta_{N})^{(n+p)/2} \hat{\ell_\theta} (x) (1+x^2+\Delta_N)^{-n/2}\|_{C^*}\\
& \geq \sup_{x\in\RR}\|(1+x^2)^{(p-q)/2} (1+\Delta_{N})^{(n+q)/2} \hat{\ell_\theta} (x) (1+x^2+\Delta_N)^{-n/2}\|_{C^*}\\
& \geq (1+x^2)^{(p-q)/2}\| (1+\Delta_{N})^{(n+q)/2} \hat{\ell_\theta} (x) (1+x^2+\Delta_N)^{-n/2}\|_{C^*}
\end{align*}
where we have used the fact that 
$$\| (1+x^2 +\Delta_N)^r \xi \|\geq \|(1+x^2)^r \xi\|\quad\text{and}\quad \| (1+x^2 +\Delta_N)^r \xi \|\geq 
\| (1 +\Delta_N)^r \xi \|\,.$$
Taking adjoints we also obtain:
\begin{align*}
\|\ell_{\theta}\|_{n+p,n}& \geq (1+x^2)^{(p-q)/2}\| (1+x^2+\Delta_N)^{-n/2} (\hat{\ell_{\theta}} (x))^*  (1+\Delta_{N})^{(n+q)/2}  \|_{C^*}\\
&\geq (1+x^2)^{(p-q)/2}\| (1+\Delta_N)^{-n/2} (\hat{\ell_\theta} (x))^*  (1+\Delta_{N})^{(n+q)/2}  \|_{C^*}\\
&= (1+x^2)^{(p-q)/2}\| (1+\Delta_N)^{(n+q)/2} \hat{\ell_\theta} (x)  (1+\Delta_{N})^{-n/2}  \|_{C^*}\\
&= (1+x^2)^{(p-q)/2}\|\hat{\ell_\theta} (x) \|_{n+q,n}
\end{align*}
Applying the same argument we prove also that $\|\ell_\theta\|_{-n,-n-p}\geq (1+x^2)^{(p-q)/2} \|\hat{\ell_{\theta}} (x)\|_{-n,-n-q}$.
These inequalities imply that
$$||| \ell_\theta |||_p \geq (1+x^2)^{(p-q)/2} ||| \hat{\ell_{\theta}} (x) ||| _{q}$$
Note now that $(1+x^2)^{- (p-q)/2}$ is a $L^2$-function if $q<p-1/2$; let $C$ be the $L^2$-norm of $(1+x^2)^{- (p-q)/2}$.
 We thus obtain 
$$C^2 ||| \ell_\theta |||^2_p \geq \int_{\RR} ||| \hat{\ell_{\theta}} (x) |||^2 _{q}$$
which implies that $$C ||| \ell |||_p \geq \sup_{\theta\in T} \left( \int_{\RR} ||| \hat{\ell_{\theta}} (x) |||^2 _{q}
\right)^{1/2} = \| \hat{\ell}\|_{L^2 (\RR,\B^q (Y))}\,.$$
The first part of the Lemma is proved.

Next we tackle the second item. We first show that if, in addition, $\ell\in {\rm Dom}\pa_{\alpha,p}$,
then $\hat{\ell}$ is differentiable as a function $\RR\to \operatorname{OP}^{-q}$ .  
Consider $\pa_{\alpha,p} (\ell)$, an element in $\operatorname{OP}^{-p}$ by hypothesis.
Remark that  the automorphism $\alpha_t$
appearing in the definition of $\partial_{\alpha}$, see \eqref{pa-alpha}, corresponds to the translation operator by $t$ under Fourier transformation.
Thus we have, using item 1),
$$\| \frac{\hat{\ell} (x+t)- \hat{\ell} (x)}{t}-\widehat{\pa_{\alpha,p} (\ell)} (x) \|_{\operatorname{OP}^{-q}}\leq \| \frac{\alpha_t (\ell)-\ell}{t}-
\pa_{\alpha,p} (\ell)\|_{\operatorname{OP}^{-p}}$$
and we know by hypothesis that the right hand side converges to 0 as $t\to 0$. Thus the limit
$$\lim_{t\to 0}\frac{\hat{\ell} (x+t)- \hat{\ell} (x)}{t}$$
exists in $\operatorname{OP}^{-q} (Y,\F)$ for each $x\in\RR$ and it is equal to $\widehat{\pa_{\alpha,p} (\ell)} (x)$. This proves the differentiability 
of $\hat{\ell}$. The estimate in this second item is now a consequence of the one in the first item.

Finally, we tackle the third item of the Lemma.
We must show that given $\ell\in \operatorname{OP}^{-p} (\cyl (Y),\F_{\cyl})\cap {\rm Dom}\pa_{\alpha,p}$, 
the commutator $[\chi^0,\ell]$
  admits a kernel function $k:\RR\times\RR\to \operatorname{OP}^{-q}(Y,\F)$ and there exists a constant
  $C$ such that 
 $$    \| k\|_{L^2 (\RR\times\RR,\operatorname{OP}^{-q}(Y,\F))} \,\leq\, C\left( ||| \ell |||_p + ||| \partial_\alpha \ell |||_p \right)
    \quad\text{for}\quad 0\leq q < p-1/2\,.
$$
Let $\ell$ be an element in $\operatorname{OP}^{-p} (\cyl (Y),\F_{\cyl})\cap {\rm Dom}\pa_{\alpha,p}$. We  know that 
it exists in $\operatorname{OP}^{-q}$ the limit
$$\lim_{t\to 0} (\hat{\ell} (x+t) - \hat{\ell} (x))/t$$ for each $x\in\RR.$ Set 
$$\omega (u,v)=\frac{\hat{\ell} (u) - \hat{\ell} (v)}{u-v}\,.$$
The above argument shows that $\omega$ is a continuous function from $\RR\times\RR$ into $\operatorname{OP}^{-q} (Y,\F)$.
Recall the Hilbert transformation $\H: L^2 (\RR)\to L^2 (\RR)$, see the proof of Proposition \ref{prop:hilbert-transf}. It can also be defined on $L^2(\RR,\operatorname{OP}^{-q} (Y,\F))$.
Here we recall that for  $\ell\in
\operatorname{OP}^{-p}(\cyl (Y),\F_{\cyl})$ we have proved that $\hat{\ell} \in L^2 (\RR,\operatorname{OP}^{-q} (Y,\F))$. 
We know that the Hilbert transformation corresponds to the multiplication operator by $2\chi^0-1$ under Fourier transformation $F$ (by this we mean that $F (2\chi^0-1)F^{-1}=\H$). 
Thus $[\chi^0,\ell]$ corresponds to $[\H,\hat{\ell}]/2$ under Fourier transform. 
  As already remarked from the very  definition of $\H$ we know that
  $[\H,\hat{\ell}]$ is the integral operator with kernel function equal to $-i/\pi\, \omega(u,v)$. This proves the first part of the statement
  in item 3) but for the operator  $F\circ [\chi^0,\ell]\circ F^{-1}$ . 
  We now establish the estimate claimed in item 3) but  for $\omega$; we thus estimate
  $\| \omega \|_{L^2 (\RR\times\RR,\operatorname{OP}^{-q}(Y,\F))}$,
  with $0\leq q < p-1/2$.

 Let $(u,v)$ be a point in $\RR\times\RR$, with $|u-v|\geq 1$. Setting $t=u-v$ we get 
 $$||| \omega (u,v) |||_{q}\leq \left( ||| \hat{\ell} (u) |||_q +  ||| \hat{\ell} (v) |||_q \right) /|t|$$
 which implies that
 \begin{align*}
 \int_{|u-v|\geq 1} du dv  ||| \omega (u,v) |||^2_{q} & \leq \int_{|t|\geq 1} \int_{\RR}dtdv \left(  ||| \hat{\ell} (v+t) |||_q +  ||| \hat{\ell} (v) |||_q \right)^2/t^2\\
 &= \int_{|t|\geq 1}\frac{dt}{t^2} \int_{\RR}dv (  ||| \hat{\ell} (v+t) |||^2_q   + ||| \hat{\ell} (v) |||^2_q  + 2 ||| \hat{\ell} (v) |||_q  ||| \hat{\ell} (v+t) |||_q ) 
 \end{align*}
 which is bounded by a constant times $||| \ell |||^2_p$ given that  $\hat{\ell} \in L^2 (\RR,\operatorname{OP}^{-q} (Y,\F))$. In the region $|u-v|<1$ we have
 $$\hat{\ell}(u)-\hat{\ell}(v)=\int_u^v ds \frac{d\hat{\ell}}{ds} (s)$$
 which gives $\omega (u,v)=\frac{1}{u-v}\int_u^v ds \frac{d\hat{\ell}}{ds}(s)$. It then follows that
 \begin{align*}
 \int_{|u-v|<1} dudv  ||| \omega (u,v) |||^2_{q} & \leq \int_{|t|<1} dt \int_{\RR} dv \frac{1}{t^2} \left( \int_v^{v+t} ds \,|||  \frac{d\hat{\ell}}{ds} (s) |||_q
 \right)^2 \\
 & \leq  \int_{|t|<1} dt \int_{\RR} dv \frac{1}{t^2} \big|  \int_v^{v+t} ds \big| \,\big| \int_v^{v+t} ds\, |||  \frac{d\hat{\ell}}{ds} (s)|||^2_q \big|\\
 &=  \int_{|t|<1} \frac{dt}{|t|}\int_{\RR}dv \big| \int_0^t dr  |||  \frac{d\hat{\ell}}{ds} (v+r) |||^2_q \big|\\
  &=  \int_{|t|<1} \frac{dt}{|t|} \big| \int_0^t dr \int_{\RR}dv\, |||  \frac{d\hat{\ell}}{ds} (v+r) |||^2_q \big|\\
  &=  \int_{|t|<1}dt \int_{\RR}dv\, |||  \frac{d\hat{\ell}}{ds} (v) |||^2_q
   \end{align*}
   which is bounded by a constant times $||| \partial_\alpha \ell |||_p^2$ (by item 2)). 
   The third item of the Key Lemma is 
   thus proved for the operator $F \circ [\chi^0,\ell]\circ F^{-1}$ .
Observe now that conjugation by Fourier transformation $F$ defines an isometry on  $L^2 (\RR\times\RR,
\operatorname{OP}^{-q})$; thus $ [\chi^0,\ell]$ admits a kernel function which is in $L^2 (\RR\times\RR, B^{-q} (Y,\F))$.
This means that 
$$  \int_{\RR\times\RR} dudv  ||| \omega (u,v) |||^2_{q} = \int_{\RR\times\RR} dudv  ||| k  (u,v) |||^2_{q} $$
and this implies  the estimate we wanted to prove.

\end{proof}

\subsubsection{Proof of Proposition \ref{prop:extended-cocycles-bis} (extension of the eta cocycle)}
We want to show that if  $m=2n+1$, with $2n$ equal to the dimension of leaves,
 then the  eta cocycle $\sigma_{m}$ 
extends to a  bounded
  cyclic cocycle on $\mathbf{\mathfrak{B}}_m$.
   
  \begin{proof}
  We begin by observing that from its very definition $\sigma_{2n+1}$ is the sum
  of elements of the following type $\omega_{\Gamma} (b [\chi^0,\ell] b')$ where
  
  - $b$ is a product of $p$ elements in $\operatorname{OP}^{-1}$;
  
  - $b'$ is  a product of $s$ elements in $\operatorname{OP}^{-1}$;
  
  - $m=2n+1$ is equal to $p+s$.
  
  \noindent
  Decompose $b$ according to the analogue of the direct sum decomposition explained around
  formula \eqref{decompose}. Then $b=\begin{pmatrix} b_{00} & b_{01}
\cr b_{10}  & b_{11} \cr
\end{pmatrix}$ with $b_{00}=\chi^0 b \chi^0$, $b_{01}= \chi^0 b (1-\chi^0)$, $b_{10}=
(1-\chi^0) b \chi^0$, $b_{11}= (1-\chi^0) b (1-\chi^0)$. Remark for later use that 
$b_{01}= \chi^0 [\chi^0, b] (1-\chi^0)$, $b_{10}=
(1-\chi^0) [b,\chi^0] \chi^0$. Remark also that $[\chi^0,b]= \begin{pmatrix} 0 & b_{01}
\cr -b_{10}  & 0 \cr
\end{pmatrix}$. 
Thus $$(b[\chi^0,\ell]b')_{00}= b_{00} [\chi^0,\ell]_{01} b' _{10} + b_{01} [\chi^0,\ell]_{10} b_{00}'$$
and similarly $$(b[\chi^0,\ell]b')_{11}= b_{10} [\chi^0,\ell]_{01} b' _{11} + b_{11} [\chi^0,\ell]_{10} b_{01}'$$
We then have that
$$\omega_{\Gamma} (b [\chi^0,\ell] b')=  \omega_{\Gamma} ((b [\chi^0,\ell] b')_{00}) + 
\omega_{\Gamma} ((b [\chi^0,\ell] b')_{11})\,.$$
Here we are using the fact that 
the intersection of the diagonal and the support of the kernels
defined by  the off-diagonal terms in the above decomposition have 
 measure zero. 
  We shall work on the term  $\omega_{\Gamma} (b_{00} [\chi^0,\ell]_{01} b' _{10})=
 \omega_{\Gamma} (b_{00} [\chi^0,\ell] [b', \chi^0]\chi^0)$ that appears in 
$ \omega_{\Gamma} (b [\chi^0,\ell] b' )$ (it is the first term in the first summand on the right hand side). Due to the key Lemma \ref{lemma:keylemma} one has
$[\chi^0,\ell]\in L^2 (\RR\times\RR,\operatorname{OP}^{-u} (Y,\F))$ and $[\chi^0,b' ]\in L^2 (\RR\times\RR,\operatorname{OP}^{-t} (Y,\F))$
with $u<1/2$ and $t<s-1/2$. 
Given $b\in 
\operatorname{OP}^{-p}(\cyl (Y),\F_{\cyl})$ and $k\in L^2 (\RR\times\RR,\operatorname{OP}^{-u} (Y,\F))$, we observe that the product $b k$
induces a bounded linear map
$$ \operatorname{OP}^{-p}(\cyl (Y),\F_{\cyl})\times L^2 (\RR\times\RR,\operatorname{OP}^{-u} (Y,\F))\longrightarrow 
L^2 (\RR\times\RR,\operatorname{OP}^{-r} (Y,\F))$$
with $r<p$. This is proved as follows.
Let $F$ denote the Fourier tranformation with respect to $\RR$ on the family of Hilbert spaces
$(L^2 (\RR\times N\times \{\theta\}))_{\theta\in T}$ and consider $F\circ k\circ F^{-1}$. 
It is obvious that $F\circ k\circ F^{-1} \in L^2 (\RR\times\RR,\operatorname{OP}^{-u} (Y,\F))$.
It is easy to see that $\hat{b}=F\circ b\circ F^{-1}$ with $\hat{b}$ equal to the Fourier transform
of $b$ already defined before the key Lemma; 
thus one has $ F \circ (b k) \circ F^{-1}=\hat{b} \circ  F  \circ k  \circ F^{-1}$. 
Now we apply the key Lemma and see that $\hat{b}$ belongs to 
$L^2 (\RR,\operatorname{OP}^{-q} (Y,\F))$ with $q< p-1/2$;
moreover $\hat{b}  \circ  F  \circ k  \circ F^{-1}\in L^2 (\RR\times\RR,\B^{-(q+u)} (Y,\F))$
since $||| \hat{b}( x)|||_q < +\infty$. Thus, thanks to the above formulas,
 $F \circ (b k) \circ F^{-1}$
is an  element of $L^2 (\RR\times\RR,\operatorname{OP}^{-r} (Y,\F))$, with $r:=q+u<p$,
which implies that $bk$ also belongs to $L^2 (\RR\times\RR,\operatorname{OP}^{-r} (Y,\F))$ with $r<p$.
Now, using the  key Lemma again we have a bounded linear map
$$\operatorname{OP}^{-p}(\cyl (Y),\F_{\cyl}) \otimes \operatorname{OP}^{-1}(\cyl (Y),\F_{\cyl}) \otimes \operatorname{OP}^{-s}(\cyl (Y),\F_{\cyl})
\rightarrow L^2 (\RR\times\RR, B^{-r} (Y,\F))\otimes L^2 (\RR\times\RR,\operatorname{OP}^{-t} (Y,\F))$$
defined by 
$$b\otimes\ell\otimes b' \rightarrow b_{00}[\chi^0,\ell]\otimes[b' , \chi^0] \chi^0$$
with $r<p$, $t<s-1/2$. 
Thus one has $r+t<p+s-1/2$ and hence can take $r+t>\dim N$.
Thus we conclude, see \eqref{pairing-bounded}, that $\omega_\Gamma (b_{00} [\chi^0,\ell]
[b' ,\chi^0]\chi^0)$ is a bounded linear functional with respect to $b$, $\ell$ and $b' $. A similar
argument can be applied to the remaining terms
\begin{align*}\omega_\Gamma (b_{01} [\chi^0,\ell]_{10} b' _{00}) = &\,
\omega_\Gamma (\chi^0 [\chi^0,b]
[\chi^0,\ell]b'_{00})\\
\omega_\Gamma (b_{10} [\chi^0,\ell]_{01} b' _{11}) = &\,
\omega_\Gamma ((1-\chi^0) [\chi^0,b]
[\chi^0,\ell]b'_{11})\\
\omega_\Gamma (b_{11} [\chi^0,\ell]_{10} b' _{01}) = &\,
\omega_\Gamma (b_{11} [\chi^0,\ell]
[\chi^0,b'](1-\chi^0))
\end{align*}
to conclude that they are also bounded with respect to $b$. $\ell$ and $b^{\prime}$. 
Thus we have proved that $\omega_{\Gamma} (b [\chi^0,\ell] b')$ 
is bounded with respect $b$, $\ell$ and $b' $.
Observing that  the derivations $\overline{\delta}_j$, $j=1,2$, are bounded from $\mathbf{\mathfrak{B}}_m$ to $\operatorname{OP}^{-1}$ we finally see that the eta cocycle
$\sigma_m$ extends to a continuous cyclic cocycle on $\mathbf{\mathfrak{B}}_m$.
This completes the proof.
  \end{proof}

\subsubsection{Proof of Proposition \ref{prop:extended-cocycles-tris} (extension of the regularized GV cyclic cochain)}
  Recall that we want to show that if  ${\rm deg} S^{p-1} \tau_{GV}^r=2p>m(m-1)^2-2=m^3-2m^2+m-2$, with $m=2n+1$ and $2n$ equal to the dimension of the leaves
  in $(X,\F)$,
  then the regularized Godbillon-Vey cochain $S^{p-1} \tau_{GV}^r$ extends to a  bounded 
  cyclic cochain on  $\mathbf{\mathfrak{A}}_{m}$.

  \begin{proof}
Recall the Banach space decomposition $\A_m = \J_m \oplus \chi^0
\B_m \chi^0 $. We consider elements in $\A_m$ of the the following type:

- $k^\alpha=k_1\cdots k_{n_\alpha}$,  the product of $n_\alpha$ elements in $\J_m$ 
  
  - $b^\beta= \chi^0 \ell_1 \chi^0 \ell_2\chi^0 \cdots \chi^0 \ell_{n_\beta}\chi^0$, the
  product of $n_{\beta}$ elements in $\chi^0
\B_m \chi^0$.

\noindent
We call $n_\alpha$ and $n_{\beta}$ the length of $k^\alpha$ and $b^{\beta}$ respectively.

Let $a_j = k_j + \chi^0 \ell_j \chi^0 \in \A_m$, $j=1,\dots,r$ and consider the product $a:= a_1 \cdots a_r$. We write
$a=\sum_{\gamma} a^\gamma$ with $a^\gamma$ a product of a certain number
of elements of type $k^\alpha$ and of type $b^\beta$.

\begin{lemma}\label{lemma:claim1}
Suppose that $r> s( t-1)+ s-1$. Then for $a= a_1 \cdots a_r$, $a=\sum_{\gamma} a^\gamma$,
at least one of the following will occur  for each $a^\gamma$. 

1) $a^\gamma$ contains at least $s$ elements in $\J_m$;

2)  $a^\gamma$ contains one element of the form $b^\beta$ whose length is at least $t$.
  \end{lemma}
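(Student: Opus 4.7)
The plan is to derive this lemma by a purely combinatorial counting argument on the expansion of $a = a_1 \cdots a_r$, using the idempotency $(\chi^0)^2 = \chi^0$ to collapse consecutive $b$-factors, followed by a pigeonhole estimate.

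First I would use the Banach space decomposition $\mathcal{A}_m \cong \mathcal{J}_m \oplus s(\mathcal{B}_m)$ established in Proposition \ref{prop:short-ec-seq-call} (and inherited, entry-by-entry, by $\mathbf{\mathfrak{A}}_m$) to write each factor uniquely as
\[
a_j = k_j + \chi^0 \ell_j \chi^0, \qquad k_j \in \J_m,\ \ell_j \in \B_m.
\]
Expanding the product $a = a_1 \cdots a_r$ by distributivity then yields the decomposition $a = \sum_\gamma a^\gamma$ indexed by the $2^r$ choices that assign to each $j \in \{1,\ldots,r\}$ either the label ``$k$'' (picking the factor $k_j$) or the label ``$b$'' (picking the factor $\chi^0 \ell_j \chi^0$).

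Next I would collapse consecutive $b$-factors. Given a fixed monomial $a^\gamma$, break the index set $\{1,\ldots,r\}$ into maximal consecutive runs of equal labels. A maximal run of $b$-labels $i_1 < i_2 < \cdots < i_u$ contributes the factor $\chi^0 \ell_{i_1} \chi^0 \cdot \chi^0 \ell_{i_2} \chi^0 \cdots \chi^0 \ell_{i_u} \chi^0$, which by $(\chi^0)^2 = \chi^0$ equals $\chi^0 \ell_{i_1} \chi^0 \ell_{i_2} \chi^0 \cdots \chi^0 \ell_{i_u} \chi^0$, i.e.\ a single $b^\beta$ of length $u$ in the notation of the lemma. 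A maximal run of $k$-labels of length $v$ produces a $k^\alpha$ of length $v$. Thus each $a^\gamma$ is an alternating product of $k^\alpha$-blocks and $b^\beta$-blocks.

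Finally I would conclude by contraposition. Suppose $a^\gamma$ satisfies neither (1) nor (2). Let its $k$-blocks have lengths $\mu_1,\ldots,\mu_{N_k}$ and its $b$-blocks have lengths $\lambda_1,\ldots,\lambda_{N_b}$, so that $\sum_j \mu_j + \sum_i \lambda_i = r$. The negation of (1) gives $\sum_j \mu_j \leq s-1$, while the negation of (2) gives $\lambda_i \leq t-1$ for all $i$, hence $\sum_i \lambda_i \leq N_b(t-1)$. Because $k$-blocks and $b$-blocks alternate, $N_b \leq N_k + 1$; each $k$-block has length at least one, so $N_k \leq \sum_j \mu_j \leq s-1$, whence $N_b \leq s$. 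Combining,
\[
r = \sum_j \mu_j + \sum_i \lambda_i \leq (s-1) + s(t-1) = s(t-1) + s - 1,
\]
contradicting the hypothesis $r > s(t-1)+s-1$. Hence at least one of (1), (2) must hold for every $\gamma$.

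There is no real obstacle here: the lemma is a combinatorial observation, and the only step requiring care is the collapsing of consecutive $b$-factors via the idempotency of $\chi^0$, which guarantees that adjacent $s(\ell_i)$'s genuinely merge into a single $b^\beta$ (so the ``blocks'' are well-defined and the alternation argument applies).
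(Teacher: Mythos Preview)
Your proof is correct and is essentially the same approach as the paper's: both identify the extremal configuration violating (1) and (2) as at most $s-1$ factors from $\J_m$ separating at most $s$ blocks $b^\beta$ each of length at most $t-1$, yielding total length $\le s(t-1)+s-1$. Your version makes the pigeonhole/alternation argument more explicit than the paper's brief sketch, but the underlying idea is identical.
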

  
  \begin{proof}
  The proof of the Lemma is elementary. Fix $r=s( t-1)+ s-1$. Then the generic  element $a^\gamma$ in the statement
  of the Lemma
   will satisfy at least one of the two above conditions or will be of the form
  $$b^{\gamma_1} k_1 b^{\gamma_2} k_2 b^{\gamma_3} \cdots b^{\gamma_{s-1}} k_{s-1} b^{\gamma_s}$$
  where the length of each $b^{\gamma_i}$ is $t-1$ and the total length is $r$. It is then easy to see
  that if now $r$
  is strictly larger than $s( t-1)+ s-1$ then one of the above two conditions must necessarily occur.
   \end{proof}
Observe now that $\chi^0 \ell_1\chi^0 \ell_2\chi^0 - \chi^0 \ell_1 \ell_2
\chi^0= \chi^0 [\chi^0,\ell_1][\chi^0,\ell_2]\chi^0$. This simple observation is at the basis
of the following  
  \begin{lemma}\label{lemma:claim2}
 Let $b^\beta$ be an element of length $t$, namely $b^\beta= \prod_{j=1}^t \chi^0 \ell_j \chi^0$
 with $\ell_j\in\B_m$. Then one has 
  \begin{equation}\label{claim2}
b^\beta=\chi^0 \left( \prod_{j=1}^t  \ell_j \right) \chi^0 + \chi^0 c \chi^0
 \end{equation}
where $c$ is a linear combination of $c_k$ and $c_k$  is the
product of  $t_1$ elements of type 
$[\chi^0, \ell_i]$ and $t_2$ elements of type $\ell_i$ with $t_1 + t_2= t$ and $t_1\geq 1$.
Moreover the number of such $c_k$ is at most $2^{t-1}-1$
    \end{lemma}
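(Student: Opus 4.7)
The plan is to argue by induction on $t$. The base case $t=1$ is immediate, taking $c=0$ and matching the bound $2^{0}-1=0$, so I may assume $t\geq 2$ and that the statement holds for $t-1$. The fundamental tool is the algebraic identity already highlighted by the authors,
\[
\chi^0 X \chi^0 Y \chi^0 \;=\; \chi^0 XY \chi^0 + \chi^0 [\chi^0,X][\chi^0,Y]\,\chi^0,
\]
which follows from $(\chi^0)^2=\chi^0$ together with the elementary facts $\chi^0 X(1-\chi^0)=\chi^0[\chi^0,X]$ and $(1-\chi^0)Y\chi^0=-[\chi^0,Y]\chi^0$. Writing $b^\beta=b^\beta_{t-1}\cdot\chi^0\ell_t\chi^0$ (where $b^\beta_{t-1}=\prod_{j=1}^{t-1}\chi^0\ell_j\chi^0$) and invoking the inductive decomposition $b^\beta_{t-1}=\chi^0(\prod_{j=1}^{t-1}\ell_j)\chi^0+\chi^0 c'\chi^0$, I apply this identity to each of the two pieces multiplied by $\chi^0\ell_t\chi^0$. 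This immediately produces the leading term $\chi^0(\prod_{j=1}^{t}\ell_j)\chi^0$, a clean piece $\chi^0 c'\ell_t\chi^0$ (already in the admissible form, since each $c'_k$ carries at least one commutator), and two ``commutator corrections'' of the shape $\chi^0[\chi^0,\cdot]\,[\chi^0,\ell_t]\chi^0$ that still need to be brought into the prescribed form by the Leibniz rule.

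The first correction, $\chi^0\,[\chi^0,\prod_{j<t}\ell_j][\chi^0,\ell_t]\,\chi^0$, is straightforward: Leibniz expansion gives a sum of admissible $c_k$'s, each with commutators at some position $i<t$ and at position $t$. The hard step, and the one I expect to be the main obstacle of the whole argument, is the second correction $\chi^0\,[\chi^0,c'_k][\chi^0,\ell_t]\,\chi^0$: Leibniz-differentiating a $c'_k$ of the form $f_1\cdots f_{t-1}$ with $f_j\in\{\ell_j,[\chi^0,\ell_j]\}$ produces \emph{iterated commutators} $[\chi^0,[\chi^0,\ell_j]]$ at every position where $f_j$ was already a commutator, and these do not fit the prescribed shape. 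To eliminate them I would use two sandwich reduction identities, both proved by direct computation from $(\chi^0)^2=\chi^0$: the pointwise reduction $\chi^0\,[\chi^0,[\chi^0,\ell]]=\chi^0[\chi^0,\ell]$, which collapses any double commutator sitting immediately to the right of the outer $\chi^0$, and the vanishing identity $\chi^0[\chi^0,P]\chi^0=0$ for every polynomial $P$ in the $\ell_j$, which provides the cancellations that discard the double-commutator terms that cannot be reduced directly.

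The counting bound $N_t\leq 2^{t-1}-1$ I would extract by tracking, at each of the $t-1$ applications of the key identity, the binary ``combine versus produce a commutator pair'' choice: starting from a single product and iterating $t-1$ times one obtains at most $2^{t-1}$ branches, exactly one of which (the all-combine branch) contributes the leading term, leaving at most $2^{t-1}-1$ branches in which at least one commutator pair has been generated. The sandwich cancellations provided by $\chi^0[\chi^0,P]\chi^0=0$ are precisely what prevents the Leibniz refinements inside each branch from producing genuinely new independent $c_k$'s beyond this count, so that the naive branch count remains a valid upper bound on the number of admissible products appearing in $c$.
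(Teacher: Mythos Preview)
Your inductive strategy is exactly what the paper intends (the paper merely says ``elementary induction''), but you manufacture a difficulty that is not there. The identity you use,
\[
\chi^0 X\chi^0 Y\chi^0=\chi^0 XY\chi^0+\chi^0[\chi^0,X][\chi^0,Y]\chi^0,
\]
has the equivalent and far more convenient form
\[
\chi^0 X\chi^0 Y\chi^0=\chi^0 XY\chi^0+\chi^0\, X\,[\chi^0,Y]\chi^0,
\]
which you can read off directly from $(\chi^0-1)Y\chi^0=[\chi^0,Y]\chi^0$, or deduce from your own vanishing identity: the difference of the two correction terms is $\chi^0(X-[\chi^0,X])[\chi^0,Y]\chi^0=\chi^0X\chi^0\cdot[\chi^0,Y]\chi^0=0$. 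With this form, both corrections in the inductive step are \emph{already} single admissible products: the leading piece contributes $\chi^0\,L_{t-1}[\chi^0,\ell_t]\,\chi^0$ (one term, not $t-1$), and each $\chi^0 c'_k\chi^0\cdot\chi^0\ell_t\chi^0$ contributes $\chi^0\,c'_k\ell_t\,\chi^0+\chi^0\,c'_k[\chi^0,\ell_t]\,\chi^0$ (two terms, no iterated commutators). The recursion $N_t=2N_{t-1}+1$, $N_1=0$, then yields $N_t=2^{t-1}-1$ on the nose.

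Your route, by contrast, Leibniz-expands $[\chi^0,\prod_{j<t}\ell_j]$ into $t-1$ terms and $[\chi^0,c'_k]$ into sums containing iterated commutators $[\chi^0,[\chi^0,\ell_j]]$, and then appeals to unspecified ``sandwich cancellations'' to bring the count back down to $2^{t-1}-1$. As written this is a gap: the reduction $\chi^0[\chi^0,[\chi^0,\ell]]=\chi^0[\chi^0,\ell]$ helps only when the double commutator sits immediately to the right of the outer $\chi^0$, not at an interior position, and your final paragraph asserts rather than proves that the Leibniz refinements introduce no new independent $c_k$'s beyond the binary branch count. Both claims can be repaired, but only by rederiving the one-commutator identity above---at which point the whole detour through double commutators is unnecessary.
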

The proof of Lemma \ref{lemma:claim2} is based on an elementary induction argument.\\
 Consider now the product, $a$, of $r$ elements $a_i\in\A_m$
and write $a=\sum_\gamma a^\gamma$ as above. 
Then, obviously, either one of the following will apply to each $a^\gamma$:
\begin{itemize}
\item[a)] $a^\gamma$ is of the form $b^\beta$ introduced after the statement of the Proposition, namely 
$a^\gamma=\prod_{i=1}^r \chi^0 \ell_i \chi^0$;
\item[b)] $a^\gamma$ contains at least one $k_j\in\J_m$.
\end{itemize}
Suppose now that $r> m (m-1) + m-1=m^2 - 2m$. Recall the definition of the map
$t:\A_m\to \J_m$, see \eqref{eq:t}. Clearly, by definition, in case b) we have that $t(a^\gamma)=a^\gamma$,
since $a^\gamma\in\J_m$ given that  $\J_m$ is an ideal in $\A_m$.
In case a) we can write
$$t(a^\gamma)= \prod_{i=1}^r \chi^0 \ell_i \chi^0 - \chi^0 (\prod_{i=1}^r \ell_i)\chi^0=\sum_j \chi^0 c_j \chi^0$$
according to Lemma \ref{lemma:claim2}. Here $c_j$ is a product of $r$ elements out of  $[\chi^0,\ell_i]$
and $\ell_i$. Then at least one  of the following will occur:\\

a-1) $c_j$ contains at least $m$ elements of the form $[\chi^0,\ell_j]$;

a-2) $c_j$ contains a consecutive product of at least $m$ elements in $\B_m$.\\

The latter claim is proved by the same reasoning in the proof of Lemma \ref{lemma:claim1}.
Now, in the case a-1) one has $c_j\in \I_1$, given that $[\chi^0,\ell]\in \J_m$. In case
a-2) we apply the following Lemma, Lemma \ref{lemma:lemmatwo}, in order to see that  $c_j\in\I_1$,
observing that $c_j$ contains a  consecutive product of at least $m$ elements $\ell_j$ and, according to
Lemma \ref{lemma:claim2},
at least one $[\chi^0,\ell_i]$ (which belongs to $\J_m$ by definition).
All things considered we have shown that in case a) the element $c_j$ and thus $t(a^\gamma)$ belongs to 
$\I_1$ for $a=a_1 \cdots a_r$ and $r> m^2-2m$.

\begin{lemma}\label{lemma:lemmatwo}
Recall the Banach algebra $\operatorname{OP}^{-p}$ on $(\cyl (Y),\F_{\cyl})$, defined as the closure
of $\Psi^{-p}_c (G_{\cyl}/\RR_{\Delta})$ with respect to the norm $|||\;\;|||_p$. If $p$
is greater than the dimension of the leaves, then for each natural number $\nu\geq 1$
\begin{equation}\label{lemmatwo}
\J_\nu \operatorname{OP}^{-p}
\subset \I_1 \quad \text{and}\quad 
 \operatorname{OP}^{-p} \J_\nu \subset \I_1\,.
\end{equation}
Moreover if $k\in\J_\nu$ and  $\ell\in\operatorname{OP}^{-p}$ then 
\begin{equation}\label{lemmatwo-estimate}
\| kb\|_{\I_1}\leq C \| k\|_{\J_\nu} ||| b |||_p\quad \text{and}\quad \| bk\|_{\I_1}\leq C ||| b |||_p  \| k\|_{\J_\nu}
\end{equation}
with $C$ is a constant depending only on the Dirac operator $D$ on $(Y,\F_Y)$.
\end{lemma}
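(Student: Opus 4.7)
The plan is to split off the translation-invariant behaviour of $b$ by using the weight $g$ built into the definition of $\J_\nu$, and then to use the trace-class properties of $(1+D^2)^{-p/2}$ on the leaves of $(Y,\F_Y)$ to absorb the regularity of $b$. I will prove the first inclusion $\J_\nu \operatorname{OP}^{-p} \subset \I_1$ with the stated bound, and the second inclusion will follow by passing to adjoints, since $\J_\nu$, $\operatorname{OP}^{-p}$ and $\I_1$ are all $\ast$-stable with norms equivalent under $\ast$.

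First I would dispose of the case $\nu=1$ directly: here $k\in\J_1\subset\I_1$, and for $b\in\operatorname{OP}^{-p}\subset B^*$ the bound $\|b\|_{C^*}\leq |||b|||_p$ of Proposition \ref{prop:b-o-sub-of-b-star} together with the ideal property of $\I_1$ in $\End_\Gamma(\H)$ gives the estimate immediately. For general $\nu\geq 1$ the decisive factorization I would use is
\begin{equation*}
k\,b \;=\; (k g)\cdot\bigl(g^{-1} b\bigr),
\end{equation*}
where by definition of the $\J_\nu$-norm we have $\|kg\|_{C^*}\leq \|k\|_{\J_\nu}$. The whole question is reduced to establishing an inequality of the form $\|g^{-1}b\|_{\I_1}\leq C\,|||b|||_p$ for a constant $C$ depending only on $D$ on $(Y,\F_Y)$.

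The hard step is this last bound, and my plan is to exploit the translation invariance of $b$ together with Weyl-type asymptotics for the leafwise Dirac operator on $Y$. I would write
\begin{equation*}
b \;=\; \bigl(b\,(1+D_{\cyl}^2)^{p/2}\bigr)\cdot(1+D_{\cyl}^2)^{-p/2},
\end{equation*}
so that the first factor is bounded on $L^2$ with norm $\leq C_1\,|||b|||_p$ (this is essentially the definition of $\operatorname{OP}^{-p}$), while the second factor is $\RR\times\Gamma$-invariant with kernel given, after Fourier transform in the cylindrical variable, by $(1+t^2+D_Y^2)^{-p/2}$. Then $g^{-1}b$ inherits the bounded factor, and it suffices to control $g^{-1}\cdot (1+D_{\cyl}^2)^{-p/2}$ in $\I_1(X,\F)$. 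Its $\Gamma$-trace norm is estimated by
\begin{equation*}
\int_{\cyl(\Gamma)} g^{-1}(s)\,\bigl|(1+D_{\cyl}^2)^{-p/2}(y,y,0,\theta)\bigr|\,ds\,dy\,d\theta \;\leq\; \|g^{-1}\|_{L^1(\RR)}\cdot \omega_\Gamma^Y\!\left((1+D_Y^2)^{-p/2}\right),
\end{equation*}
where $g^{-1}\sim(1+s^2)^{-1}$ is integrable on $\RR$ and, because $p>\dim Y=\dim N$, the operator $(1+D_Y^2)^{-p/2}$ lies in $\I_1(Y,\F_Y)$ by the foliated Weyl law (this is the point at which the constant $C$ acquires its dependence on $D$ on $(Y,\F_Y)$).

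The main obstacle I foresee is bookkeeping: the $g^{-1}b$ appearing in the factorization is an operator on $X$, not on $\cyl(Y)$ (where it would of course fail to be trace class on account of translation invariance), so I must use a cutoff $\chi^\lambda$ to pass from a cylindrical-end estimate to a genuine $\I_1(X,\F)$ estimate, noting that the difference lives in a $\Gamma$-compact set and is therefore automatically trace class with a uniform bound depending only on $|||b|||_p$. Once the factorization and these bookkeeping issues are in place, assembling
\begin{equation*}
\|k b\|_{\I_1}\;\leq\;\|k g\|_{C^*}\cdot\|g^{-1}b\|_{\I_1}\;\leq\;C\,\|k\|_{\J_\nu}\,|||b|||_p
\end{equation*}
is immediate, and the adjoint argument $\|b k\|_{\I_1}=\|k^* b^*\|_{\I_1}$ finishes the proof.
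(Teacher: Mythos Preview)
Your overall factorization $k\ell=(kg)\cdot g^{-1}(1+D_{\cyl}^2)^{-p/2}\cdot(1+D_{\cyl}^2)^{p/2}\ell$, with $kg$ bounded by $\|k\|_{\J_\nu}$ and $(1+D_{\cyl}^2)^{p/2}\ell$ bounded by $|||\ell|||_p$, is exactly the paper's; the whole question does reduce to putting the middle factor into $\I_1$. The gap is the step where you ``estimate its $\Gamma$-trace norm'' by
\[
\int_{\cyl(\Gamma)} g^{-1}(s)\,\bigl|(1+D_{\cyl}^2)^{-p/2}(y,y,0,\theta)\bigr|\,ds\,dy\,d\theta .
\]
This integral is $\omega_\Gamma^{\cyl}$ of the \emph{positive} operator $g^{-1/2}(1+D_{\cyl}^2)^{-p/2}g^{-1/2}$; it is not a bound for $\|g^{-1}(1+D_{\cyl}^2)^{-p/2}\|_{\I_1}$. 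The two factors $g^{-1}$ and $(1+D_{\cyl}^2)^{-p/2}$ do not commute, so their product is not positive, and for non-positive operators the diagonal of the kernel never controls the trace norm (already for $2\times 2$ matrices: with $A=\operatorname{diag}(1,0)$ and $B=\bigl(\begin{smallmatrix}1&1\\1&1\end{smallmatrix}\bigr)$ one has $\Tr(AB)=1$ but $\|AB\|_1=\sqrt 2$). So as written this step does not go through.

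The paper closes this gap by passing through $\I_2$, where the norm \emph{is} the $L^2$-norm of the kernel and hence genuinely computable. Writing $g=(s+i)(s-i)$, it suffices to show $(s\pm i)^{-1}(1+D_{\cyl}^2)^{-p/4}\in\I_2$; conjugating by the Fourier transform in the cylindrical variable turns $(1+D_{\cyl}^2)^{-p/4}$ into multiplication by $(1+t^2+D_Y^2)^{-p/4}$ and $(s+i)^{-1}$ into a convolution with an explicit exponentially decaying kernel, after which the Hilbert--Schmidt norm is estimated by a convergent double integral in $(s,t)$ times $\|(1+D_Y^2)^{-(p/2-1)/2}\|_{\I_2(Y,\F_Y)}$. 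This is where the dependence of $C$ on the Dirac operator on $(Y,\F_Y)$ enters, just as you anticipated, but via an $\I_2$ kernel estimate rather than a diagonal integral.

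One minor point: the Lemma is stated entirely on $(\cyl(Y),\F_{\cyl})$, so your ``bookkeeping'' with a cutoff $\chi^\lambda$ to pass from the cylinder to $X$ is not needed here; the paper handles that passage separately, as the one-line extension recorded immediately after the Lemma.
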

We remark that it is precisely for the validity of this Lemma that the extra condition
involving $g(s,y)=1+s^2$ was added in the definition of $\B_k$ and  $\J_k$.
\begin{proof}
Let $k\in\J_\nu$ and let $\ell\in\operatorname{OP}^{-p}$. One 
 can write
$$k  \ell  = k g g^{-1} (1+D^2)^{-p/2} (1+D^2)^{p/2} \ell\,.$$
Note that $k g$ and $(1+D^2)^{p/2}\ell$ are bounded since $k\in\J_\nu$
and $\ell\in \operatorname{OP}^{-p}$. Next we prove that $g^{-1} (1+D^2)^{-p/2}\in \I_1$. 
It suffices to show that $g^{-1/2} (1+D^2)^{-p/4}\in \I_2$;  equivalently, using that $g=(s+i)(s-i)$
we can prove that  $(s\pm i)^{-1} (1+D^2)^{-p/4}\in \I_2$. Let us fix the plus sign, for example.
We want to show that  $A:=(s+ i)^{-1} (1+D^2)^{-q/2}\in \I_2$ if $2q$ is greater than the dimension 
of the leaves. First, we conjugate $A$ with the Fourier transformation in the cylindrical
direction, obtaining $F\circ A\circ F^{-1}=(i+ (\frac{1}{i}\frac{d}{dt}))^{-1} (1+t^2+ D_Y)^{-q/2}$.
For fixed $t\in\RR$ let $K_{\theta,t}$ the Schwartz kernel of  $(1+t^2+ D_Y)^{-q/2}$
along $\tN\times\{\theta\}$. Using elementary
properties of the Fourier transformation one can check that $F\circ A_\theta \circ F^{-1}$
has Schwartz kernel $L_\theta (t,s,y,y')$ given, up to a multiplicative constant, by
$$ L_\theta (s,t,y,y')=u(s-t)e^{-|s-t|} K_{\theta,t} (y,y')\,,$$
with $u(x)=\chi_{[0,+\infty)}$.
Now we estimate
\begin{align*}
\| F\circ A \circ F^{-1}\|_{\I_2 (\cyl(Y),\F_{\cyl})} &= \sup_{\theta\in T} \left( \int_{\RR\times\RR} ds dt 
\int_{ \tN\times\tN} dy dy' \chi_{\Gamma} | L_\theta (s,t,
y,y')|^2 \right) \\
&\leq  \sup_{\theta\in T} \left( \int_{\RR\times\RR} ds dt e^{-2|s-t|} \int_{ \tN\times\tN} dy dy' \chi_{\Gamma} | K_{\theta,t} (y,y')|^2 \right)\\
&\leq  \int ds dt e^{-2|s-t|} \frac{1}{1+t^2} \| (1+ D_{Y}^2 )^{-(q-1)/2}\|_{\I_2 (Y,\F_Y)}^2<+\infty
\end{align*}
Here we have used the characteristic function $\chi_\Gamma$ for a fundamental domain for the action
of $\Gamma$ on $\tN$. We have also used  the inequality  of positive self-adjoint elements  
$(1+t^2+D^2_Y)^{-q/2} \leq (1+t^2)^{-1/2} (1+D^2_Y)^{-(q-1)/2}$ (we have already used this inequality
in the proof of the key Lemma). This implies that
$$ \sup_{\theta\in T} \left(   \int_{ \tN\times\tN} dy dy' \chi_{\Gamma} | K_{\theta,t} (y,y')|^2\right)
 =\| (1+t^2+D^2_Y)^{-q/2}\|^2_{\I_2 (Y,\F_Y)}\leq \frac{1}{1+t^2} 
\|(1+D^2_Y)^{-(q-1)/2}\|_{\I_2 (Y,\F_Y)}$$
which is what is used above.
Since we have proved that $\| F\circ A \circ F^{-1}\|_{\I_2 (\cyl(Y),\F_{\cyl})}$ is finite,
we conclude that $\|  A \|_{\I_2 (\cyl(Y),\F_{\cyl})}$ is also finite. Thus $g^{-1}(1+D^2)^{-p/2}$
is in $\I_1$ if $p$ is greater than the dimension of the leaves. 
Now it is obvious that $k\chi^0 \ell \chi^0\in \I_1$ from the ideal property of $\I_1$.
Similarly $\chi^0 \ell \chi^0 k\in\I_1$.
In order to get the estimate in \ref{lemmatwo-estimate} we use standard
properties:
\begin{align*}
\| kb\|_{\I_1}&= \| (kg) (g^{-1}   (1+D^2)^{-p/2}) ((1+D^2)^{p/2} b)\|_{\I_1}\\& \leq 
\| g^{-1}   (1+D^2)^{-p/2}\|_{\I_1}  \| kg\|_{C^*}  \|(1+D^2)^{p/2} b\|_{C^*}\\
& \leq C \| kg\|_{C^*}  \|(1+D^2)^{p/2} b\|_{C^*}\\
& \leq C \|k\|_{\J_\nu} ||| b |||_p
\end{align*}

The Lemma is proved.
\end{proof}
Now we consider the case b), namely $a^\gamma$ contains at least one $k_j\in\J_m$.
Applying the same argument as in Lemma \ref{lemma:claim1} we see that at least one of the following will
occur if $r>m(t-1)+m-1$:

b-1) $a^\gamma$ contains at least $m$ elements in $\J_m$;

b-2) $a^\gamma$ contains a 
$b^\beta$ of length $t$.

\noindent
In the case b-1) one has $a^\gamma\in\I_1$ in an obvious way. In case b-2), 
we apply Lemma \ref{lemma:claim2} in order to see that $b^\beta$ has the form $\chi^0 (\prod_{i=1}^t \ell_i)\chi^0+
\sum \chi^0 c_j\chi^0$. The first term belongs to $\chi^0 \operatorname{OP}^{-t}\chi^0$. Then,
the corresponding term in 
 $a^\gamma$ will be in $\I_1$ if $t\geq m$, since we can apply Lemma \ref{lemma:lemmatwo} once we recall that
 $a^\gamma$ contains at least one $k_j\in \J_m$. Here we are using a small extension of 
 Lemma \ref{lemma:lemmatwo}:\\
{\it if $p$ is greater than the dimension of the leaves, then 
$$\J_\nu (X,\F) (\chi^0 \operatorname{OP}^{-p} (\cyl{\pa X},\F_{\cyl})\chi^0) \subset \I_1 (X,\F)\quad\text{and}\quad 
 (\chi^0 \operatorname{OP}^{-p} (\cyl{\pa X},\F_{\cyl})\chi^0) \J_\nu (X,\F) \subset \I_1 (X,\F)\,.$$}
Now recall that $t (a^\gamma)\in\I_1$ for $a=a_1 \cdots a_r$ if $r>m^2 - 2m$. Applying the same reasoning to 
$b^\beta$
with length $t$ we obtain $c_j\in\I_1 (\cyl(\pa X),\F_{\cyl})$ if $t>m^2 - 2m$. Then the corresponding term
in $a^\gamma$ also belongs to $\I_1 (X,\F)$. Thus, if $r> m (t-1) + (m-1)$, with $t-1=m^2 -2m$, namely
$r> m(m-1)^2 -1$, then we can conclude that $c_j\in\I_1 (\cyl(\pa X),\F_{\cyl})$ in both cases
b-1) and b-2) ; consequently $a^\gamma$, which in this case is $t(a^\gamma)$, belongs to
$\I_1 (X,\F)$.

Now we put everything together and we show that the regularized cochain $\tau^r_{2p}$ extends
to a continuous cochain on $\mathbf{\mathfrak{A}}_{m}$, with $m=2n+1$.
Recall that the regularized cochain is defined through the regularized weight which
was shown to be equal to $\omega_\Gamma \circ t$ on $A_c\subset A^*$.
See Proposition \ref{prop:regularized-via-t}. Here we recall for later use  that $\omega_\Gamma$
extends continuously to $\I_1$. For an element such as
$$a_0 \cdots a_{i-1} \,\delta_1 (a_{i}) a_{i+1} \cdots a_{j-1} \,\delta_2 (a_{j}) a_{j+1}\cdots
a_{2p}$$
with $a_k\in A_c\subset \mathbf{\mathfrak{A}}_{m}$, we need to prove that
\begin{equation}\label{estimate-for-regularized}
|\omega_\Gamma (t (a_0 \cdots a_{i-1} \,\delta_1 (a_{i}) a_{i+1} \cdots a_{j-1} \,\delta_2 (a_{j}) a_{j+1}\cdots
a_{2p}))|\leq C \prod_{j=1}^{2p} \|a_j\|_{\mathbf{\mathfrak{A}}_{m}}.
\end{equation}
We shall prove a stronger statement, namely that the left hand side of \eqref{estimate-for-regularized}
makes already sense for $a_j\in \mathbf{\mathfrak{A}}_{m}$ and for the
closures $\overline{\delta}_j$ and that the estimate
in \eqref{estimate-for-regularized} holds. 

Let $a=a_1 a_2 \cdots a_r$, with $a_j\in\A_m$,
and write, as above, $a=\sum_\gamma a^\gamma$. Suppose that $r=2p+1$, $2p>m(m-1)^2-2$, so that
 $r>m(m-1)^2 -1$. We have proved that $t(a^\gamma)\in \I_1$ for each $\gamma$. We will now estimate the norm
$\| t(a^\gamma)\|$ in terms of the norms $\|a_j\|_{\A_m}$.
We shall analyze one by one the terms appearing in cases a-1), a-2), b-1), b-2).
To this end recall that if $q>\dim V$ then for 
$k\in\J_\nu$ and  $\ell\in\operatorname{OP}^{-q}$ the following estimate holds
\begin{equation}\label{lemmatwo-estimate-bis}
\| kb\|_{\I_1}\leq C \| k\|_{\J_\nu} ||| b |||_q\quad \text{and}\quad \| bk\|_{\I_1}\leq C ||| b |||_q  \| k\|_{\J_\nu}
\end{equation}
with $C$ depending only on the Dirac operator on $(Y,\F_Y)$.

Consider first the case  a); 
then $t(a^\gamma)=\sum_j \chi^0 c_j \chi^0$. 
In case a-1)   $c_j$ contains at least $m$ elements in $\J_m$, say $k_1=[\chi^0,\ell_1], \dots ,k_t=[\chi^0,\ell_t]$
with $t\geq m$;
thus, without loss of generality, we can assume that $c_j = [\chi^0,\ell_1] \cdots [\chi^0,\ell_t] \ell_{t+1} \cdots \ell_{r}$.
Then
\begin{align*}
\| \chi^0 c_j \chi^0\|_{\I_1 (X,\F)} & \leq \| c_j \|_{\I_1 (\cyl \pa X,\F_{\cyl}) }\leq \prod_{i=1}^t \| [\chi^0,\ell_i] \|_{\I_m}
\prod_{i=t+1}^r \| \ell_i\|_{B^*}\\
&\leq \prod_{i=1}^t \| [\chi^0,\ell_i] \|_{\J_m}
\prod_{i=t+1}^r \| \ell_i\|_{B^*}
\leq \prod_{i=1}^r \| a_i\|_{\A_m}
\end{align*}
where we recall that if $a=\chi^0\ell\chi^0 + k$ then $\| a\|_{\A_m}:= \| \ell\|_{\B_m} + \| k \|_{\J_m}$
and that 
$$\|\ell \|_{\mathcal{B}_{m}}:= ||| \ell  ||| + \| [\chi_{{\rm cyl}}^0,\ell]  \|_{\J_m} +  |||  \partial_\alpha \ell |||
+2 \|[f,\ell]\|_{B^*} + \| [f,[f,\ell]]\|_{B^*}$$
so that, clearly, 
$$\| a \|_{\A_m}\geq \| \ell\|_{\B_m}\geq ||| \ell |||+ \|[\chi^0,\ell]\|_{\J_m} \geq \| \ell\|_{B^*}+ \|[\chi^0,\ell]\|_{\J_m}\,.$$

Next we tackle the case a-2). Then we can assume without loss of generality
that $c_j$ is of the form $b(\ell_1\cdots \ell_t) b'$ with  $t\geq m$ and $b$ and $b'$ 
are certain products of $\ell_i $ and $[\chi^0,\ell_j]$ and either $b$ or $b'$ contains  at least one
$[\chi^0,\ell_k]$. Say that it is $b$ that contains $[\chi^0,\ell_k]$. Then, using \eqref{lemmatwo-estimate-bis}
for $q=t$ and $\nu=m$ we get
\begin{align*}
\| \chi^0 c_j \chi^0\|_{\I_1 (X,\F)} & \leq \| c_j \|_{\I_1 (\cyl \pa X,\F_{\cyl}) }\leq 
C \|b\|_{\J_m} \| b '\|_{B^*} ||| \ell_1\cdots \ell_t |||_t\\
&\leq C \|b\|_{\J_m} \| b ' \|_{B^*} \prod_{i=1}^t ||| \ell_i |||\leq C  \prod_{i=1}^r \| a_i\|_{\A_m}
\end{align*}
Thus, in case a) we have proved that $\|t(a^\gamma)\|_{\I_1}\leq C 2^r \prod_{i=1}^r \| a_i\|_{\A_m}$
since the number of $c_j$ is at most $2^r$.

Now we consider the case b). In the case b-1) we have that $a^\gamma$ is a product of
$[\chi^0,\ell_1],\dots,[\chi^0,\ell_t]$, $t\geq m$ and of $\ell_{t+1},\dots,\ell_{r}$. Then, 
as already remarked, $a^\gamma\in\I_1$, $t (a^\gamma)=a^\gamma$,  and moreover,
from standard estimates we have
$$\| a^\gamma \|_{\I_1 (X,\F)} \leq \prod_{i=1}^t \| [\chi^0,\ell_i]\|_{\I_m} \prod_{i=t+1}^r \| \ell_i \|_{B^*}
\leq \prod_{i=1}^t \| [\chi^0,\ell_i]\|_{\J_m} \prod_{i=t+1}^r \| \ell_i \|_{B^*}\leq  \prod_{i=1}^r \| a_i\|_{\A_m}\,.$$
In case b-2) we can write $a^\gamma= c b^\beta c'$ with
$b^\beta=\prod_{i=1}^t \chi^0 \ell_i \chi^0$ and $t\geq m$, and $c$, $c'$ are certain products of $k_i\in\J_m$
and $\chi^0\ell_i\chi^0$ for $a_i=k_i + \chi^0 \ell_i \chi^0$, $i=1,\dots,r$. 
Here we know that at least one $k_i\in\J_m$ will appear in  $c$ or $c'$. Say that it is $c$ that contains such $k_i$.
Then we can apply the
same argument in case a) and conclude that 
$$\| a^\gamma \|_{\I_1 (X,\F)} \leq \| cb^\beta\|_{\I_1} \|c '\|_{C^*}\leq C  \|c\|_{\J_m} |||\prod_{i=1}^t  \ell_i |||_t
\|c '\|_{C^*}\leq
  C \|c\|_{\J_m} \prod_{i=1}^t ||| \ell_i ||| \|c '\|_{C^*}\leq C \prod_{i=1}^r \| a_i\|_{\A_m}$$
  with \eqref{lemmatwo-estimate-bis} used in order to justify the second estimate.
  We finish the proof by observing that the two closed derivations $\overline{\delta}_1$ and $\overline{\delta}_2$  are bounded from $\mathbf{\mathfrak{A}}_{m}$ to $\A_m$ and that
  the inclusion $\mathbf{\mathfrak{A}}_{m}\subset \A_m$ is bounded; this proves that
 \eqref{estimate-for-regularized} holds for $a_j\in \mathbf{\mathfrak{A}}_{m}$ which is what
 is needed in order to conclude.


  \end{proof}

\begin{appendix}

\section{More on the Godbillon-Vey cocycle $\tau_{GV}$}\label{appendix:absolute-taugv}
Let $(Y,\F)$, $Y=\tN\times_\Gamma S^1$, be a foliated bundle {\it without} boundary. 
We adopt the notations of Subsection \ref{subsect:absolute-taugv}.


{\it The goal of this appendix is to recall the basic definitions and results of \cite{MN}
leading to the definition of Godbillon-Vey cyclic 2-cocycle, $\tau_{GV}$, defined on the 
algebra $\Psi^{-\infty}_c (G,E)$ of equivariant smoothing 
families with $\Gamma$-compact support.  }

\medskip
First, recall that for any $\gamma\in \Gamma$ we can define the positive real function $\lambda_\gamma$
on $T$  through
the formula
$\lambda_\gamma d\theta= \gamma (d\theta)$ with $\gamma$ acting by pull-back by the corresponding
orientation preserving diffeomorphism. 
Recall from the Subsection 
on the Godbillon-Vey class, Subsection \ref{subsect:gv-form},
the modular function  $\psi$ on $\tN\times T$ defined by $\tilde{\omega}\wedge d\theta=\psi \tilde{\Omega}$,
$\tilde{\omega}$ and $ \tilde{\Omega}$ denoting $\Gamma$-invariant volume forms on $\tN$ and $\tN\times T$ respectively.
The operator $\Delta^{it}$, $\Delta^{it}(\xi):= \psi^{-it}\xi$, sends $C^\infty_c (\tN\times T,\widehat{E})$ into itself and extend
to a linear operator from the $C(T)\rtimes \Gamma$-module $\E$ into itself which is continuous. 
The operator $\Delta^{it}$, however,
is not $C(T)\rtimes \Gamma$-linear
\footnote{indeed $\Delta^{it}(\xi f)= \Delta^{it}(\xi)\sigma_t ( f)$ if $f\in C(T)\rtimes \Gamma$,
with $(\sigma_t)$ the modular automorphism group defined by the weight $f\to \int_T f(e) d\theta$; 
$\sigma_t (\sum a_\gamma \gamma)= \sum (\lambda_\gamma^{-it} a_\gamma) \gamma$.}.
Nevertheless, the following does hold: 

if $P\in \L(\E)$ (in particular, it is $C(T)\rtimes \Gamma$-linear) then 
$\Delta^{it} P \Delta^{-it}\in \L(\E)$,

if $P\in \KK(\E)$ then 
$\Delta^{it} P \Delta^{-it}\in \KK(\E)$, 

if $P\in \Psi^{-\infty}_c (G,E)$ then $\Delta^{it} P \Delta^{-it}\in  \Psi^{-\infty}_c (G,E)$.\\
Set $\hat{\sigma}_t (P):= \Delta^{it} P \Delta^{-it}$;
denote by $\delta_2$ the generator for the corresponding $\RR$-action:
\begin{equation}\label{limit-of-sigma}
\delta_2 (P):= \lim_{t\to 0} \frac{ \hat{\sigma}_t (P) - P}{t}, \quad P\in \Psi^{-\infty}_c (G,E)\,.
\end{equation}
Then one proves that
\begin{equation}\label{a-delta_2}
\delta_2 (P)=[\phi,P]\quad\text{with}\quad  \phi=\log \psi\,.
\end{equation}
 $\delta_2$ is a derivation
on the algebra $\Psi^{-\infty}_c (G,E)$.
We should observe here that $\phi$ is not $\Gamma$-invariant nor it is compactly supported.
In fact $\phi$ is not even bounded.

Recall next the bundle $\widehat{E}^\prime$ on $\tN\times T$ introduced in \cite{MN}: this is the same as $\widehat{E}$
but with the new $\Gamma$-equivariant structure given by 
$v\gamma:= \lambda_\gamma (\theta)^{-1} v \gamma$ if $v\in  \widehat{E}_{(y,\theta)}$, $\gamma\in \Gamma\,.$
One can prove that 
\begin{equation}\label{a-iff}
P\in \Psi^{-\infty}_c (G,E)\quad \text{if and only if}\quad P\in \Psi^{-\infty}_c (G,E^\prime).
\end{equation}
We shall  freely use this identification without further comments.
 We shall be interested
in $\Psi^{-\infty}_c (G;E,E^\prime)$, which we will consider as a bimodule over $\Psi^{-\infty}_c (G,E)$ 
through the above identification.
Consider $\dot{\phi}$, the partial derivative of $\phi$ in the direction of $S^1$.
We consider the 
multiplication operator $\dot{\phi}: C^\infty_c (\tN\times T,\widehat{E})\to C^\infty_c (\tN\times T,\widehat{E}^\prime)$
and the operator $[\dot{\phi},\,\,]$. One can prove that if $P\in \Psi^{-\infty}_c (G,E)$,
then $[\dot{\phi},P]\in  \Psi^{-\infty}_c (G;E,E^\prime)$, so that
there is a well defined bimodule
derivation $\delta_1 : \Psi^{-\infty}_c (G,E)\to \Psi^{-\infty}_c (G;E,E^\prime)$:
\begin{equation}\label{a-delta_1}
\delta_1 (P)=[\dot{\phi},P]\quad\text{with}\quad  \phi=\log \psi\,.
\end{equation}
We shall also consider the modified map  
\begin{equation}\label{a-delta2prime}
\delta^\prime_2: \Psi^{-\infty}_c (G,E,E^\prime)\to \Psi^{-\infty}_c (G;E,E^\prime)
\,,\quad \delta^\prime_2 (P)=[\phi,P]\,.
\end{equation}
 with
the 
multiplication operator acting  as
an operator  $C^\infty_c (\tN\times T,\widehat{E})\to C^\infty_c (\tN\times T,\widehat{E}^\prime)$
and as an operator $C^\infty_c (\tN\times T,\widehat{E}^\prime)\to  C^\infty_c (\tN\times T,\widehat{E})$.
 If we consider $\Delta^{-it}$ acting on $C^\infty_c (\tN\times T,\widehat{E})$
and $\Delta^{it}$ acting on $C^\infty_c (\tN\times T,\widehat{E}^\prime)$
then one can show that $P\in \Psi^{-\infty}_c (G;E,E^\prime) \Rightarrow 
\Delta^{it}
P \Delta^{-it}\in \Psi^{-\infty}_c (G;E,E^\prime)$ and $\delta_2^\prime$ is associated to the 
$\RR$-action $\hat{\sigma}^\prime (P):= \Delta^{it}
P \Delta^{-it}$ through the analogue of the limit \eqref{limit-of-sigma}.\\
{\bf Summarizing:} we have defined a derivation $\delta_2$, a bimodule derivation $\delta_1$,
\begin{equation}\label{deltas}
\delta_2:  \Psi^{-\infty}_c (G,E)\to \Psi^{-\infty}_c (G;E)\,,\quad \delta_1 : \Psi^{-\infty}_c (G,E)\to \Psi^{-\infty}_c (G;E,E^\prime)\,,
\end{equation}
and a linear map $\delta_2^\prime : \Psi^{-\infty}_c (G;E,E^\prime)\to \Psi^{-\infty}_c (G;E,E^\prime).$
If $P\in \Psi^{-\infty}_c (G,E)$ and  $Q\in \Psi^{-\infty}_c (G;E,E^\prime)$ then, with
respect to the bimodule $\Psi^{-\infty}_c (G,E)$-structure of $\Psi^{-\infty}_c (G;E,E^\prime)$
we have 
\begin{equation}\label{a-delta2-prime-deriv}
\delta_2^\prime (PQ)= \delta_2 (P) Q +P \delta_2^\prime (Q)\,,\quad \delta_2^\prime (QP)= \delta_2^\prime (Q) P +
Q \delta_2 (P) \,.
\end{equation}
One can also check that
\begin{equation}\label{a-deriv-commute}
\delta_1 (\delta_2 (P))= \delta^\prime_2 (\delta_1 (P))
\end{equation}
Recall, see \eqref{weight-again}, the weight $\omega_{\Gamma}$
defined on the
algebra $\Psi^{-\infty}_c (G;E)$,
\begin{equation}\label{a-weight-again}
\omega_{\Gamma}(k)=
 \int_{Y(\Gamma)} \mathrm{Tr}_{(\tilde{n},\theta)} k(\tilde{n},\tilde{n},\theta)d\tilde{n}\,d\theta\,.
\end{equation}
with $Y(\Gamma)$  the fundamental domain in $\tN\times T$ for the free diagonal action
of $\Gamma$  on $\tN\times T$.
We shall be interested in the linear functional defined on the bimodule
 $\Psi^{-\infty}_c (G;E,E^\prime)$
by the analogue of \eqref{a-weight-again}. To be quite explicit
\begin{equation}\label{a-weight-bimodule}
\omega_{\Gamma}(k)=
 \int_{Y(\Gamma)} \mathrm{Tr}_{(\tilde{n},\theta)} k(\tilde{n},\tilde{n},\theta)d\tilde{n}\,d\theta\,.
\end{equation}
where we now identify $\widehat{E}_{(\tilde{n},\theta)}$ and $\widehat{E}_{(\tilde{n},\theta)}^\prime$
given that thet are identical vector spaces (it is only the $\Gamma$-actions that are different).
For reasons that will be clear in a moment, we name \eqref{a-weight-bimodule} the  {\it bimodule trace}.

One can prove the following alternative expression for the bimodule trace $\omega_\Gamma$:
\begin{equation}\label{weight-again.bis}
\omega_{\Gamma}(k)=
 \int_{S^1} \Tr (\sigma_\theta k(\theta) \sigma_\theta)d\theta\,
\end{equation}
with $\sigma$ a compactly supported smooth function on $\tN\times S^1$ such that $\sum_{\gamma\in \Gamma} \gamma(\sigma)^2=1$,
$\sigma_\theta:= \sigma |_{\tV\times\{\theta\}}$
and $\Tr$ denoting the usual trace functional on the Hilbert space $L^2(\tN\times\{\theta\},\widehat{E}_{\tN\times\{\theta\}})$. Here,
once again, we identify 
$L^2(\tN\times\{\theta\},\widehat{E}_{\tN\times\{\theta\}})$
with $L^2(\tN\times\{\theta\},\widehat{E}_{\tN\times\{\theta\}}^\prime)$.
One also establishes that \eqref{weight-again.bis}
does not depend on the choice of $\sigma$; this also proves that the bimodule trace
\eqref{a-weight-bimodule} does not depend on the choice of the fundamental domain.
For the sake of brevity we omit the proof of formula \eqref{weight-again.bis}.



With $1=\dim T$, 
the Godbillon-Vey cyclic $2$-cocycle 
on  $\Psi^{-\infty}_c (G;E)$  is defined to be 
$
\tau_{GV} (a_0, a_1, a_2)= \frac{1}{2!}\sum_{\alpha\in \mathfrak{S}_2} {\rm sign}(\alpha)
(a_0 \delta_{\alpha (1)} a_1 \delta_{\alpha (2)} a_2)$. We have proved in Proposition
\ref{taugv-cyclic-cocycle}
that $
\tau_{GV}$ defines a cyclic 2-cocycle on the algebra $\Psi^{-\infty}_c (G;E)$

\end{appendix}

{\small \bibliographystyle{plain}
\bibliography{ecrpgvit}
}

\end{document}